\setlist[enumerate]{before=\setlength{\baselineskip}{20pt}, itemsep=0pt}
\setlist[itemize]{before=\setlength{\baselineskip}{20pt}, itemsep=0pt}
\newcommand{\F}{\mathcal F}
\newcommand{\J}{\mathcal J}
\newcommand{\K}{\mathbb K}
\newcommand{\hk}{\mathbb{\hat K}}
\renewcommand{\k}{k}
\newcommand{\hv}{\mathbb\C_v}
\newcommand{\hp}{\mathbb\C_p}
\newcommand{\bH}{\mathbb{H}}
\newcommand{\CD}{\overline D}
\newcommand{\an}{\text{an}}
\newcommand{\q}{q}
\newcommand{\bvec}[1]{{\bf#1}}
\newcommand{\emp}{\emptyset}
\newcommand{\nin}{\notin}
\newcommand{\equivar}{equivariant{}}
\renewcommand{\and}{\text{ and }}
\newcommand{\dashto}{\dashrightarrow}
\newcommand{\Dir}[1]{T_{#1}\P^1_\an}
\newcommand{\weaklyindifferent}{numerically indifferent}
\newcommand{\weaklyrepelling}{numerically repelling}
\newcommand{\weaklyattracting}{numerically attracting}
\newcommand{\weakly}{numerically}
\newcommand{\stronglyindifferent}{indifferent}
\newcommand{\stronglyrepelling}{repelling}
\newcommand{\stronglyattracting}{attracting}
\DeclareMathOperator{\PGL}{PGL}
\DeclareMathOperator{\interior}{int}
\DeclareMathOperator{\rdeg}{rdeg}
\DeclareMathOperator{\wdeg}{wdeg}
\DeclareMathOperator{\Ram}{Ram}
\DeclareMathOperator{\Crit}{Crit}
\DeclareMathOperator{\Hull}{Hull}
\DeclareMathOperator{\Inj}{Inj}
\DeclareMathOperator{\diam}{diam}
\DeclareMathOperator{\Frac}{Frac}
\DeclareMathOperator{\dom}{dom}
\DeclareMathOperator{\GO}{GO}
\DeclareMathOperator{\chara}{char}
\newcommand{\abs}[2][\@nil]{%
  \def\tmp{#1}%
   \ifx\tmp\@nnil
       \left|#2\right|
    \else
         \left|#2\right|_{#1}
    \fi}
\newtheorem*{thm*}{Theorem}
\begin{document}
  \title{Skew Products on the Berkovich Projective Line}
 \author{\mylongname}
 
\begin{abstract}
 In this article, we develop a dynamical theory for what shall be called a \emph{skew product} on the Berkovich projective line, $\phi_*: \P^1_\an(K) \to \P^1_\an(K)$ over a non-Archimedean field $K$. These functions are defined algebraically yet strictly generalise the notion of a \emph{rational map} on $\P^1_\an$. 
We describe the analytical, algebraic, and dynamical properties of skew products, including a study of periodic points, and a Fatou/Julia dichotomy. The article culminates with the classification of the connected components of the Fatou set.

%

\end{abstract}
 
 \maketitle

 \tableofcontents
 
 \clearpage


 \section{Introduction}

\makeatletter
\let\the@thm\thethm
\let\the@prop\theprop
\let\the@cor\thecor
\let\the@lem\thelem
\makeatother

\renewcommand*{\thethm}{\Alph{thm}}
\renewcommand*{\theprop}{\Alph{prop}}
\renewcommand*{\thecor}{\Alph{cor}}
\renewcommand*{\thelem}{\Alph{lem}}

A rational map $\phi : \P^1_\an(K) \to \P^1_\an(K)$ on the Berkovich projective line can be thought of as a $K$-algebra endomorphism $\phi^* : K(z) \to K(z)$. This article is dedicated to developing an analytical and dynamical theory for a map called a \emph{skew product}. In this more general context, we relax the condition that $\phi^*|_K$ is the identity, but ask that that it respects the non-Archimedean metric.

After unraveling the algebraic structure of a skew product, we will see that the map is still piecewise linear on $\P^1_\an$, however the slopes are not necessarily integers, nor at least $1$. The possibility of contraction leads to more interesting dynamics, especially for fixed points and the Fatou-Julia dichotomy. In the `simple' case (where the slopes are integers) we generalise the Rivera-Letelier style classification of Fatou components, and certain facts about wandering domains. These latter results will be fundamental for applications to dynamics in two dimensions.

The results in the present article and the forthcoming applications feature in the author’s PhD thesis.

\subsection{Motivation}
The dynamical theory of a skew product will be essential for applications to the dynamics of rational maps on a complex surface in a later article. 
For the purposes of understanding the dynamical degrees (algebraic entropy) and algebraic stability of a rational map $f : X \dashto X$ on a surface, one often needs to understand the potential dynamical behaviour of $f$ for all possible birational changes of coordinates. 
Favre and Jonsson \cite{FJ11} considered the case of a polynomial map $f : \P^2_\C \dashto \P^2_\C$ which is invariant over the line at infinity. They translate the dynamics of $f$ to that of a universal `dual graph' of all possible exceptional curves over this line. The `valuative tree' \cite{FJ04} they obtain is equivalent to a one dimensional Berkovich space over the Puiseux series, and the corresponding function turns out to essentially be a contraction mapping. 
Typically, such an induced function will not be a contraction mapping. The later application of the present article will be to the case of a skew product $\phi : X \dashto X$ on a complex ruled surface, and hence lends its name to the non-Archimedean version. Classically, a skew product is a map of the form $\phi(x, y) = (\phi_1(x), \phi_2(x, y))$ on a product space. When $X = \P^1 \times \P^1$, the rational map $\phi$ corresponds to a $\C$-algebra endomorphism $\phi^* : \C(x)(y) \to \C(x)(y)$. Furthermore if $\phi_1 = \id$ then this is a $\C(x)$-algebra endomorphism, and so it extends to a rational map $\phi_2 : \P^1_\an(\K) \to \P^1_\an(\K)$ over the field $\K$ of complex Puiseux series in $x$. DeMarco and Faber \cite{DeF1, DeF2} used this perspective when $\phi_1 = \id$ to find a (possibly singular) surface $\hat X$ where $\phi$ is algebraically stable. The core of their argument was to use the Rivera-Letelier classification of Fatou components \cite{RL2, RL1} for the corresponding Berkovich rational map. Inspired by this result, we will deal with the case of complex skew products where $\phi_1$ is not necessarily trivial. In this more general setting, $\phi^* : \C(x)(y) \to \C(x)(y)$ will not be a $\C(x)$-algebra endomorphism; whence the induced function $\phi_* : \P^1_\an(\K) \to \P^1_\an(\K)$ cannot correspond to a rational map, but to a different kind of map - the non-Archimedean \emph{skew product}. 

 \subsection{Skew Products on the Berkovich Projective Line}

Aside from examples, this theory will be discussed for a general non-Archimedean field $K$; further study of the specialisation to the Puiseux series $K = \K$ and applications to complex/algebraic dynamics will be deferred to a sequel. Skew products on the Berkovich projective line are of independent interest as a generalisation of Berkovich \emph{rational maps}, for instance because they also encompass a class of field automorphisms.

\improvement{Mention Vladimir Berkovich somewhere.}
Emerging out of the study of the dynamics of rational maps over the p-adic numbers there has been substantial interest in the notions of non-Archimedean Fatou and Julia sets \cite{Drem, Hsia1, Hsia2, Bez1, Bez2}. 
 The landmark theses of Robert L. Benedetto \cite{BeneThesis, Bene0, Bene2, Bene3} and Juan Rivera-Letelier \cite{RL2} together established the essential theory for the dynamics of non-Archimedean Fatou components. Rivera-Letelier, in particular, brought the insight of extending a rational map from $\P^1(K)$ to a \emph{rational map} on a non-Archimedean analytic space $\P^1_\an(K)$ which compactifies the former. He gave the classification of Fatou components over the $p$-adics. Following this there were numerous papers on the structure of the Fatou and Julia sets \cite{Bene4, Bene5, Bene1, Bene6, BBP, RL1, RL3, RL4}\improvement{Add Faber?}. The field exploded and many fruitful connections appeared between complex and non-Archimedean dynamics. For example by Kiwi \cite{Kiwi1, Kiwi2, Kiwi3}, by Baker and DeMarco \cite{BDe11, BDe13}, by Favre and Gauthier \cite{FG}, by Dujardin and Favre \cite{DuF}, by DeMarco and Faber \cite{DeF1, DeF2}, and by Favre \cite{Fav20} -- to name just a selection. Following this progress Fatou and Julia theory, three independent groups of mathematicians developed potential theory and proved equidistribution theorems in the non-Archimedean setting, mimicking the complex dynamical case; namely by Chambert-Loir and Thuillier \cite{CL, Thuillier}, by Baker and Rumely \cite{BR1}, and by Favre and Rivera-Letelier \cite{FRL1, FRL2, FRL3}.
 For a fuller account of this history we refer the reader to the excellent survey by Benedetto \cite{BeneSurvey}. For theoretical background the author recommends the books by Benedetto, and also Baker and Rumely \cite{Bene, BR2}.
 
 In particular, however, the reader is advised to compare with \cite{Bene} as they read \autoref{sec:nonarch}. In this work we will use the notation built up by Benedetto, and follow his development when possible for building up the theory of non-Archimedean skew products. The author hopes this will allow for an easy adjustment to the reader already familiar with Berkovich rational maps. \unsure{Move to section 2?}
 
 
 For this introduction, we briefly recall the structure of the Berkovich projective line $\P^1_\an(K)$. Unless otherwise stated, we consider $(K, | \cdot |)$ to be an arbitrary non-Archimedean algebraically closed field, and $\hv$ its completion. Algebraically, $\P^1_\an$ is the set of seminorms on the ring $K[y]$ which extend the norm $\abs{\, \cdot\, }$ on $K$, and also a point at infinity. Topologically, it is uniquely path connected, meaning it is a tree, however not in the finite combinatorial sense because it has a dense set of vertices on any interval. These (interior) vertices, called \emph{Type II} points, have one branch for every element of $\P^1(\k)$, where $\k$ is the residue field of $K$. All other points on an open interval are \emph{Type III} points (edge points). Every Type II and III point $\zeta = \zeta(a, r)$ corresponds to a closed disk $\CD(a, r) \subset \P^1(K)$, with $r$ in the value group $\abs{K^\times}$ or not respectively. The points in the classical projective line $\P^1(K) \subset \P^1_\an(K)$ form a set of endpoints called \emph{Type I} points, which alone would be a totally disconnected and non-compact set. There are other endpoints called \emph{Type IV} points, corresponding empty intersections of nested disks. All of these naturally correspond to seminorms through the geometric data and Berkovich showed \cite{Berk} that the four types listed constitute the whole space. The \emph{hyperbolic plane} $\bH = \P^1_\an(K) \sm \P^1(K)$ is defined as the set of Type II, III, and IV points; it is endowed with a \emph{hyperbolic metric} $d_\bH$. See \autoref{sec:berk} for more.

Much like a rational map, the starting point for a skew product is a homomorphism $\phi^* : K(y) \to K(y)$. 
To define a Berkovich \emph{rational map} we would equivalently require that $\phi^*$ is a $K$-algebra homomorphism, i.e.\ the identity on $K$. Unfortunately this is too restrictive for the application to complex skew products $\phi(x, y) = (\phi_1(x), \phi_2(x, y))$ because $\left.\phi^*\right|_\K : x \mapsto \phi_1(x)$ is not the identity map on $K = \K$. 
On the other hand, if we allow the homomorphism $\phi^*$ to be completely arbitrary, the induced map on the Berkovich projective line becomes unwieldy.
 Instead we impose on $\phi^*$ the condition that that $\phi^*$ uniformly scales the norm on $K$. This is easily satisfied in the geometric/complex case, for example \[\abs{\phi^*(x)} = \abs{\phi_1(x)} = \abs{c_nx^n + c_{n+1}x^{n+1} + \cdots} = \abs x^n.\] In general, if $\abs{\phi^*(a)} = \abs{a}^{1/\q}$ for every $a \in K$ then we call $\phi^*$ an \emph{\equivar{} skew endomorphism}, and define the \emph{skew product} $\phi_*$ by
\begin{align*}
 \phi_* : \P^1_\an(K) &\longrightarrow \P^1_\an(K) \\
 \zeta &\longmapsto \phi_*(\zeta)\\
 \text{where }\norm[\phi_*(\zeta)]{\psi} &= \norm[\zeta]{\phi^*(\psi)}^\q
\end{align*}
We call $\q$ the \emph{scale factor}, and for general $K$ this can be any positive real number; in the geometric/complex case, $\q = 1/n$.


  \subsection{Properties of Skew Products}

Fundamental to understanding the structure of skew products is the decomposition result of \autoref{thm:skew:comp}. We define $\phi_1^* = \left.\phi^*\right|_K$ but extended trivially to $K(y)$, and secondly we define $\phi_2^*$ to capture only the action of $\phi^*$ on $y$. However, perhaps unintuitively, $\phi_{1*}$ acts as $(\phi_1^*)^{-1}$ on classical points. Every skew endomorphism has this decomposition $\phi^* = \phi_2^* \circ \phi_1^*$, and it descends to the skew product $\phi_* = \phi_{1*} \circ \phi_{2*}$. Most important are the facts that $\phi_{2*}$ is a non-Archimedean rational map on $\P^1_\an$ and $\phi_{1*}$ uniformly scales the metric in the hyperbolic plane $\bH \subset \P^1_\an$ by a factor of $\q$, see \autoref{thm:skew:scaledhomeo}. The term \emph{scale factor} for $\q$ was chosen with this in mind.

In \autoref{thm:skew:opencts} and preceding results we demonstrate that any non-constant skew product $\phi_*$ is a continuous, open mapping that is precisely the unique continuous extension of $(\phi_1^*)^{-1} \circ \phi_2$ on $\P^1(K) \subset \P^1_\an(K)$. As one would hope, it preserves the types of each Berkovich point. Furthermore, \autoref{thm:skew:affinoidmapping} states that a skew product maps connected affinoids to connected affinoids, and by \autoref{prop:skew:dto1} it will map each component of $\phi_*^{-1}(U)$ to $U$ in a $d$-to-$1$ manner.

We then proceed to consider the map on tangent directions $\phi_\# : \Dir\zeta \to \Dir{\phi_*(\zeta)}$ and degrees (local and in tangent directions) for a skew product; these degrees are the \emph{relative degree} $\rdeg(\phi_*)$, \emph{local degree} $\deg_\zeta(\phi)$ and \emph{local degree in a direction} $\deg_{\phi, \bvec v}(\phi)$. \autoref{prop:skew:chaindir}, \autoref{thm:skew:chainrule} and \autoref{thm:sumdirdegs} show that the previously established relations between these quantities for rational maps still hold in the new setting. After reintroducing reduction for skew products we obtain a generalisation \autoref{thm:skew:reduction} of further consequential results \cite[Theorem 7.34, Lemma 7.35]{Bene}. These results state that the tangent map $\phi_\#$ can be understood through the reduction $\overline \phi$ and that the tangent map disagrees with $\phi_*$ itself when and only when a \emph{bad direction} $\bvec v$ contains a preimage of the entire projective line. \autoref{thm:intervalstretch} extends the idea that on a small interval with local degree $m$, $\phi_*$ is a linear map with gradient $m\q$.

 We reuse the definitions of ramification locus $\Ram(\phi) = \set[\zeta \in \P^1_\an]{\deg_\zeta(\phi) \ge 2}$, and injectivity locus $\Inj(\phi) = \P^1_\an \sm \Ram(\phi)$. We also say a skew product $\phi_* = \phi_{1*} \circ \phi_{2*}$ is \emph{tame} iff the underlying rational map $\phi_2$ is tame. Note that if $\chara K = p$ then we can always at the least find a decomposition where $\phi_2$ is separable, by transferring (inverse) Frobenius automorphisms onto $\phi_1^*$. We discuss the the separability and uniqueness of the decomposition of skew products. 
 Again, thanks to decomposition, we obtain \autoref{thm:skew:ramconvcrit} as a direct corollary of the theorem by Xander Faber for rational maps \cite{Faber13.1}. This says that $\Ram(\phi)$ is a subset of $\Hull(\Crit(\phi_2))$ whose endpoints lie in $\Crit(\phi_2) = \Ram(\phi) \cap \P^1$.
 \unsure{Should I write cor or thm?}
As a further corollary, we find that if $\phi_*$ is tame then $\deg_\zeta(\phi) = 1$ for every Type IV point of $\P^1_\an$.

\hyperref[sec:skew:top]{Subsection~\ref*{sec:skew:top}} contains some possibly new proofs and restatements of established topological ideas, and an upgrade for \autoref{thm:intervalstretch} as \autoref{thm:skew:bigintervalstretch}. We highlight the Extreme Value Theorem \autoref{thm:skew:extvalue} and the Hyperbolicity Theorem (\autoref{thm:skew:hyperbolicity}). \hyperref[sec:skew]{Subsection~\ref*{sec:skew}} contains more results that we leave out of this introduction.


\subsection{Dynamics of a Skew Product}

Having established the basic analytical properties of a skew product, which largely appear to be the same, it is natural to ask: which dynamical properties of rational maps still hold for skew products? Despite the decomposition theorem, a skew product is \emph{nowhere} analytic, and under iteration this means various algebraic techniques used to prove dynamical results for rational maps will fail in the new setting -- we are required to think more topologically.

At first, the situation may seem discouraging, because whilst a rational map always has finitely many fixed points in $K$, a skew product may have uncountably many or zero classical fixed points, see \autoref{ex:dyn:infinitelymanyfixedone}, \autoref{ex:dyn:zerofixed}.


Skew products do end up behaving like rational maps in many important ways, but there are marked and fascinating differences. For instance, consider periodic points on the hyperbolic space, $\bH$. We see that when $\q < 1$, a skew product can have interior \emph{\stronglyattracting{}} points $\zeta \in \bH$ with all directions attracting, except perhaps finitely many indifferent ones. Moreover a Type II point could have both attracting and repelling directions - we call these \emph{saddle} points. 
%
%
For skew products, a Type III or IV periodic point may be repelling or attracting; see \autoref{ex:dyn:typeIIIrep} for an example of a repelling Type III point. Under a reasonable hypothesis about the value group, we prove that Type III points are indifferent \autoref{thm:dyn:typeIIIindiff}. We also resolve the issue with Type IV points for a skew product $\phi_*$ with scale factor $\q \ge 1$, \autoref{thm:dyn:typeIVindiff}. More is discussed in \autoref{sec:per}.

\subsection{Fatou and Julia}

\begin{defn}
Let $\phi_*$ be a skew product. We say an open set $U \subseteq \P^1_\an$ is \emph{dynamically stable} under $\phi_*$ iff $\displaystyle \bigcup_{n \ge 0} \phi_*^n(U)$ omits infinitely many points of $\P^1_\an$. 
The \emph{(Berkovich) Fatou set} of $\phi_*$, denoted $\F_{\phi, \an}$, is the subset of $\P^1_\an$ consisting of all points $\zeta \in \P^1_\an$ having a dynamically stable neighbourhood. 
The \emph{(Berkovich) Julia set} of $\phi_*$ is the complement $\J_{\phi, \an} = \P^1_\an \sm \F_{\phi, \an}$ of the Berkovich Fatou set.
\end{defn}

We first seek to classify the periodic points of a skew product as Fatou or Julia, generalising 
 \cite[Theorem 8.7]{Bene2} for rational maps which asserts that all Type III and IV points are Fatou, and that Type II points are Julia if they repel or have non-periodic bad directions. Unfortunately this is rather troublesome; to begin with the above discussion, we know that Type III or IV points could be repelling, and there might exist attracting directions and saddle points. 
 Fortunately, the new theorem below takes on very clean form, perhaps even more so than the original version for rational maps, but albeit of different flavour. It turns out that the main deciding factor is the multiplier, and that saddle points are always Julia because they are \weaklyrepelling{}. We say a direction $\bvec v \in \Dir \zeta$ is \emph{exceptional} iff it has a finite backward orbit when restricted to the orbit of the periodic point $\zeta$.  
 
\begin{thm}\label{thm:intro:perptdynone}
 Let $\phi_*$ be a skew product of scale factor $\q$, and let $\zeta \in \P^1_\an$ be a periodic point of Type II, III, or IV of period $n \ge 1$.
 Then $\zeta \in \J_{\phi, \an}$ is Julia if and only if either
\begin{enumerate}
 \item $\zeta$ is \weaklyrepelling{} i.e. $\deg_\zeta(\phi)\q > 1$; or
 \item $\zeta$ is Type II and either none of the directions $\bvec v \in \Dir\zeta$ are exceptional or any of the bad directions $\bvec v \in \Dir\zeta$ is not exceptional.
\end{enumerate}
 Moreover if $\zeta \in \F_{\phi, \an}$ is Fatou then every direction intersecting $\J_{\phi, \an}$ is exceptional and is $\phi_\#^j(\bvec v)$ for some bad direction $\bvec v$.
\end{thm}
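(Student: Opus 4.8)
The plan is to reduce to the case of a fixed point by passing to $\phi_*^n$, noting that scale factors multiply ($\q^n$), local degrees multiply (by the chain rule, \autoref{thm:skew:chainrule}), and the Fatou/Julia and exceptionality conditions are insensitive to this replacement. So assume $n=1$ and $\phi_*(\zeta)=\zeta$. I would organise the argument around the trichotomy by type, and within the Type II case, around the multiplier $\deg_\zeta(\phi)\q$ being $<1$, $=1$, or $>1$.

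First I would handle the ``if'' direction (conditions (1) and (2) force Julia). For (1): if $\zeta$ is \weaklyrepelling{}, i.e.\ $\deg_\zeta(\phi)\q>1$, then by \autoref{thm:intervalstretch}/\autoref{thm:skew:bigintervalstretch} the map $\phi_*$ strictly expands the hyperbolic metric on a neighbourhood of $\zeta$ in $\bH$ with a uniform factor $>1$; a standard telescoping argument shows every neighbourhood of $\zeta$ has forward orbit covering arbitrarily large hyperbolic balls, hence meeting all but finitely many points — so $\zeta\in\J$. (This is where I expect most of the real work: making the metric-expansion-implies-not-dynamically-stable argument clean for Type III/IV points and for saddle Type II points, using that saddles are \weaklyrepelling{}.) For (2): if $\zeta$ is Type II and some bad direction is non-exceptional, or no direction is exceptional, then I would use \autoref{thm:skew:reduction} — bad directions are exactly the ones whose image under $\phi_\#$ is ``wrong'' because they swallow a preimage of all of $\P^1_\an$, and the reduction $\overline\phi$ acts on $\Dir\zeta\cong\P^1(\k)$. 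If there is a non-exceptional bad direction $\bvec v$, then $\phi_\#$ maps (a neighbourhood of) $\bvec v$ onto the whole of $\P^1_\an$, and since $\bvec v$ is non-exceptional its forward images never stabilise into a finite set, so the forward orbit of any neighbourhood of $\zeta$ is all of $\P^1_\an$ minus at most the exceptional set, which is finite — Julia. If no direction is exceptional, then $\overline\phi: \P^1(\k)\to\P^1(\k)$ has no exceptional points; but a self-map of $\P^1(\k)$ with no exceptional points and degree $\ge 1$ must have degree $\ge 2$ (degree-one maps are automorphisms, whose every point is exceptional in the relevant sense), forcing $\deg_\zeta(\phi)\ge 2$ and combining with $\q$ we again get expansion or the bad-direction case; in any event the orbit of a neighbourhood exhausts $\P^1_\an$.

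Next the ``only if'' direction, equivalently: if $\zeta$ is \emph{not} \weaklyrepelling{} and ($\zeta$ is not Type II, or every bad direction is exceptional and some direction is exceptional) then $\zeta\in\F$. I would build an explicit dynamically stable neighbourhood. If $\zeta$ is Type III or IV and $\deg_\zeta(\phi)\q\le 1$: by \autoref{thm:intervalstretch} $\phi_*$ is locally linear with gradient $\deg_\zeta(\phi)\q\le 1$ on small intervals through $\zeta$, and by \autoref{thm:dyn:typeIIIindiff}/\autoref{thm:dyn:typeIVindiff} (under the stated value-group hypothesis for Type III, automatic for Type IV when $\q\ge1$, but more care is needed when $\q<1$) the point is \weakly{} indifferent; a small enough connected affinoid neighbourhood $U$ (annulus or the tree-analogue) maps into itself or at least has forward orbit contained in a fixed small affinoid omitting an open set, giving dynamical stability. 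If $\zeta$ is Type II with $\deg_\zeta(\phi)\q\le 1$ and every bad direction is exceptional with at least one exceptional direction: here I would take $U$ to be the complement of the closures of the exceptional directions' ``far'' parts — more precisely, let $E$ be the (finite) union of the forward-invariant finite backward orbits of the exceptional directions; choosing $U$ to be a small open neighbourhood of $\zeta$ whose boundary avoids $E$ and is small enough that $\phi_\#$ on the non-bad directions is ``type-preserving and degree-controlled'', the forward orbit of $U$ stays inside a fixed affinoid that omits at least the interiors of the non-exceptional directions, hence omits infinitely many points. The final sentence of the theorem — every Julia-meeting direction is exceptional and of the form $\phi_\#^j(\bvec v)$ for a bad $\bvec v$ — then drops out of this construction: a direction meeting $\J$ cannot be one on which $\phi_*$ behaves like the tangent map with a contracting/indifferent multiplier (else that direction would be swallowed into the Fatou neighbourhood), so it must be a bad direction or an image of one, and exceptionality is forced by the hypothesis we are in.

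The main obstacle, as flagged, is the first step: converting the numeric multiplier inequality $\deg_\zeta(\phi)\q>1$ (resp.\ $\le 1$) into genuine dynamical expansion (resp.\ a forward-invariant-up-to-omission neighbourhood), uniformly across the four point types and across saddle points — the Type III/IV repelling and attracting phenomena described in the introduction mean one cannot simply quote the rational-map proofs, and the interplay between the rational part $\phi_{2*}$ (which may still create bad directions) and the metric-scaling part $\phi_{1*}$ has to be tracked carefully through \autoref{thm:skew:reduction} and \autoref{thm:skew:bigintervalstretch}. A secondary subtlety is the Type IV case when $\q<1$, where \autoref{thm:dyn:typeIVindiff} is only stated for $\q\ge 1$; for $\q<1$ a Type IV point is \weaklyattracting{} when $\deg_\zeta(\phi)=1$ (tame case via \autoref{thm:skew:ramconvcrit}), so it still falls under ``not \weaklyrepelling{}'' and one needs the attracting-basin argument to conclude it is Fatou.
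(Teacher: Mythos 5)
You have located the right reduction (pass to $\phi_*^n$, split by type, build explicit stable neighbourhoods for the Fatou direction — your Type III/IV invariant annulus/disk and your Type II ``complement of closed disks inside the exceptional directions'' are essentially the paper's constructions). But the step you yourself flag as the main obstacle — ``numerically repelling $\Rightarrow$ Julia'' via uniform hyperbolic expansion and telescoping — does not work, for two reasons. First, $\deg_\zeta(\phi)\q>1$ does \emph{not} give expansion of $d_\bH$ near $\zeta$ with a uniform factor $>1$: by \autoref{thm:intervalstretch} the local stretching factor in a direction $\bvec v$ is $\deg_{\zeta,\bvec v}(\phi)\,\q$, and at a Type II point one can have $\deg_\zeta(\phi)\q>1$ while \emph{every} directional multiplier is $\le 1$ (e.g.\ large local degree spread over many directions with small $\q$); saddle points are only the mildest instance of this. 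Second, even in a genuinely repelling direction, ``the forward orbit covers arbitrarily large hyperbolic balls'' does not imply Julia: dynamical stability is about omitting infinitely many points of $\P^1_\an$, and the complement of any hyperbolic ball still contains all (uncountably many) Type I points. What is actually needed is that the forward images of a thin annulus in a repelling invariant direction exhaust the entire Berkovich direction up to at most one Type I point — this is the content of the paper's \autoref{lem:dyn:repellingdirn}, whose proof is an induction on annulus images using Weierstrass degrees (\autoref{thm:seminorms:mapannulus}), not metric expansion.

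The paper also resolves the first difficulty differently from your plan: it never tries to find a repelling direction at a numerically repelling point. Instead it proves the contrapositive — if $\zeta$ is Fatou with stable neighbourhood $U$, then the finitely many directions not swallowed by $\bigcup_n\phi_*^n(U)$ form a set that $\overline\phi$ permutes (via \autoref{thm:skew:reduction}), so after an iterate each is totally invariant, hence exceptional with $\deg_{\zeta,\bvec v}(\phi)=\deg_\zeta(\phi)=m$ by \autoref{thm:sumdirdegs}; applying \autoref{lem:dyn:repellingdirn} to such a direction forces $m\q\le 1$, and it also forces every bad direction to be exceptional (a bad direction contained in the orbit of $U$ would map onto all of $\P^1_\an$). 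Your sketch for condition (2) has the same soft spot: knowing $\phi_*(\bvec v)=\P^1_\an$ for a bad direction does not by itself show that a small neighbourhood of $\zeta$ (which only meets $\bvec v$ in a thin annulus) eventually covers $\bvec v$; you would need to exhibit a direction entirely contained in your neighbourhood whose $\phi_\#$-orbit reaches $\bvec v$, which is exactly what the backward-invariance/permutation argument on the uncovered directions delivers. So as written the proposal has a genuine gap at the Julia half of the equivalence; repairing it essentially requires the repelling-direction lemma and the contrapositive bookkeeping above.
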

 
 Unfortunately, $\q < 1$ means that fixed classical points will often be `superrepelling', and worse also Fatou according to the definition of $\F_{\phi, \an}$; see \autoref{ex:dyn:superrepellingfatou}. However a (non-super-)repelling point is always Julia, as stated in \autoref{prop:dyn:classicalrepelling}.

One can also extend the results of Benedetto about wandering domains of rational maps \cite{Bene1} to skew products. In particular, \autoref{thm:wand:wanddomsaredisks} says that if $\phi_*$ is a skew product of relative degree $d \ge 2$ and scale factor $\q \ge 1$ then any wandering domain $U \subseteq \F_{\phi, \an}$ eventually becomes a disk under iteration. If additionally our base field $K = \K$ the Puiseux series and $\q = 1$, then the boundary points of $U$ are Type II Julia preperiodic points.
%

 \subsection{Classification of Fatou Components}
 
 We also take the same definitions of attracting, indifferent and wandering Fatou component as with rational maps, see \cite{Bene}. We occasionally specialise to simple ($\q = 1$) skew products which are defined over a discretely valued subfield. For instance, these results will apply to simple \emph{$\k$-rational} skew products, meaning they are defined over the field $\k((x))$ of Laurent series with coefficients in some algebraically closed characteristic $0$ field $\k$ (the residue field).
 
 The following theorems describing the attracting and indifferent components are a generalisation to skew products of those due to Rivera-Letelier \cite{RL1, RL2, RL4}.

\begin{thm}\label{thm:intro:attractingperpt}
 Let $\phi_*$ be a skew product of relative degree $d \ge 2$, let $a \in \P^1(K)$ be an attracting periodic point of minimal period $m \ge 1$, and let $U \subseteq \P^1_\an$ be the immediate attracting basin of $\phi_*$. Then the following hold.
\begin{enumerate}
 \item $U$ is a periodic Fatou component of $\F_{\phi, \an}$, of period $m$.
 \item $U$ is either an open Berkovich disk or a domain of Cantor type.
 \item If $U$ is a disk, then its unique boundary point is a Type II repelling periodic point, of minimal period dividing $m$.
 \item If $U$ is of Cantor type, then its boundary is homeomorphic to the
Cantor set and is contained in the Berkovich Julia set.
\item The mapping $\phi_*^m : U \to U$ is $l$-to-$1$, for some integer $l \ge 2$.
\end{enumerate}
\end{thm}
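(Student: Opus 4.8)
The plan is to follow the Rivera--Letelier strategy for rational maps, with the exhaustion-by-affinoids machinery of \autoref{thm:skew:affinoidmapping} and \autoref{prop:skew:dto1} doing the geometric work, and the decomposition $\phi_* = \phi_{1*}\circ\phi_{2*}$ of \autoref{thm:skew:comp} together with \autoref{thm:skew:scaledhomeo} reducing the metric and Riemann--Hurwitz bookkeeping to the honest rational map $\phi_{2*}$. First I would reduce to the fixed-point case: replacing $\phi_*$ by $\phi_*^m$ -- itself a skew product, of relative degree $d^m \ge 2$ and scale factor $\q^m$ -- we may assume $m = 1$ and $a$ is a fixed attracting point. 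By definition of an attracting point, fix an open Berkovich disk $D \ni a$ with $\overline{\phi_*(D)} \subseteq D$ on which $\phi_*^n|_D \to a$. Put $U_0 = D$ and let $U_{n+1}$ be the connected component of $\phi_*^{-1}(U_n)$ containing $a$; since $\phi_*(\overline{U_n}) \subseteq \overline{U_{n-1}} \subseteq U_n$ by an easy induction, $\overline{U_n}\subseteq U_{n+1}$, and $U := \bigcup_n U_n$ is the immediate basin. By \autoref{thm:skew:affinoidmapping} (and \autoref{thm:skew:comp}) each $U_n$ is a connected affinoid, so $\P^1_\an \sm U_n$ is a disjoint union of $s_n \ge 1$ closed disks, and by \autoref{prop:skew:dto1} the restriction $\phi_*\colon U_{n+1}\to U_n$ is surjective and $e_n$-to-one with $1 \le e_n \le d$.

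For part (1): every forward orbit in $U$ converges to $a$, so $\bigcup_k \phi_*^k(U)$ omits a neighbourhood of every Julia point, hence $U$ is dynamically stable; being open and connected it lies in a single Fatou component, and if some $\zeta\in\partial U$ were Fatou then $U$ together with a small connected Fatou neighbourhood of $\zeta$ would be a strictly larger connected subset of $\F_{\phi,\an}$, a contradiction -- so $\partial U \subseteq \J_{\phi,\an}$ and $U$ is a Fatou component. Using openness of $\phi_*$ (\autoref{thm:skew:opencts}) one checks $\phi_*(U)$ is again a Fatou component, contained in the basin and containing $a$, so $\phi_*(U)=U$; undoing the reduction, the immediate basin is periodic of period exactly $m$, the $m$ basins around $a,\phi_*(a),\dots$ being distinct since a Fatou component cannot contain two distinct attracting periodic orbits.

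The heart of the argument is the dichotomy (2). I would run a Riemann--Hurwitz computation for the finite map $\phi_*\colon U_{n+1}\to U_n$ of connected affinoids: writing $\phi_* = \phi_{1*}\circ\phi_{2*}$ with $\phi_{1*}$ a homeomorphism of $\P^1_\an$ (hence preserving the combinatorial type of an affinoid), applying the Riemann--Hurwitz inequality for the rational map $\phi_{2*}$, and using the directional-degree identities \autoref{thm:skew:chainrule} and \autoref{thm:sumdirdegs}, one obtains $s_{n+1}-2 \ge e_n(s_n-2)$; with $e_n\ge 1$ this forces $(s_n)$ to be weakly monotone from the first index where it exceeds $1$, so either $s_n = 1$ for all $n$ -- each $U_n$, hence $U$, is an open Berkovich disk -- or $s_n \to \infty$. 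I expect this Riemann--Hurwitz step to be the main obstacle: the slopes of a skew product are neither integral nor bounded below by $1$, so the inequality must genuinely be extracted from the rational factor $\phi_{2*}$ while tracking how the homeomorphism $\phi_{1*}$ relabels boundary points, and one must keep all boundary points of Type~II.

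In the disk case, for (3) the boundary points $\zeta_n = \partial U_n$ satisfy $\phi_*(\zeta_{n+1}) = \zeta_n$ and decrease to a single point $\zeta^\ast = \partial U \in \bH$ (the nested union of disks is a disk, not $\P^1_\an$ or $\P^1_\an$ minus a classical point, since $\J_{\phi,\an}$ is non-empty, closed, with empty interior when $d\ge 2$). As $\phi_*(\overline U) = \overline U$ and $\J_{\phi,\an}$ is forward invariant, $\phi_*(\zeta^\ast)\in\overline U\cap\J_{\phi,\an} = \{\zeta^\ast\}$, so $\zeta^\ast$ is fixed -- periodic of period dividing $m$ after undoing the reduction. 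It is Julia, so by \autoref{thm:intro:perptdynone} it is numerically repelling or a Type~II point with the stated direction property; \autoref{thm:dyn:typeIIIindiff} and \autoref{thm:dyn:typeIVindiff} exclude Types~III and~IV, so $\zeta^\ast$ is Type~II, and the slope of $\phi_*$ along the segment terminating at $\zeta^\ast$, equal to $\deg_{\zeta^\ast}(\phi)\q$ by \autoref{thm:intervalstretch} and \autoref{thm:skew:bigintervalstretch}, must exceed $1$, as otherwise the $\zeta_n$ could not accumulate at $\zeta^\ast$ while being carried $\zeta_{n+1}\mapsto\zeta_n$; hence $\zeta^\ast$ is repelling. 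For the Cantor case (4), $\P^1_\an \sm U = \bigcap_n (\P^1_\an \sm U_n)$ is a nested intersection of finite unions of closed disks whose number of components tends to infinity and whose diameters shrink, hence a non-empty compact perfect totally disconnected set, i.e. homeomorphic to the Cantor set; it equals $\partial U$, already shown to lie in $\J_{\phi,\an}$. Finally (5): the $e_n$ stabilise to $l := \deg(\phi_*^m|_U\colon U\to U)$ (the full preimage of a point of $U_n$ eventually lies in $U_{n+1}$), and $l\ge 2$ because $l=1$ would make $\phi_*^m|_U$ an automorphism of a Berkovich disk, respectively of a domain of Cantor type, fixing the classical point $a$; such a map preserves the hyperbolic metric up to the factor $\q$ and cannot attract a neighbourhood of a classical point, contradicting that $a$ is attracting -- equivalently, it contradicts (3)--(4) via \autoref{thm:intro:perptdynone}.
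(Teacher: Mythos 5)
Your overall strategy (exhaust the basin by preimage components $U_{n+1}\subseteq\phi_*^{-1}(U_n)$, treat the disk and non-disk cases separately) is the same Rivera--Letelier/Benedetto scheme the paper follows, and your argument for the repelling boundary point in the disk case is essentially equivalent to the paper's Weierstrass-degree computation. But two steps you lean on do not hold up. First, the dichotomy in (2): the Riemann--Hurwitz inequality $s_{n+1}-2\ge e_n(s_n-2)$ does not force ``$s_n=1$ for all $n$ or $s_n\to\infty$''. When $s_n=2$ the right-hand side vanishes, so the inequality is perfectly compatible with $s_n\equiv 2$ (an invariant annular basin), and whenever $e_n=1$ it is compatible with any stabilising sequence; weak monotonicity gives neither horn of the dichotomy. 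Moreover, even $s_n\to\infty$ is weaker than what ``domain of Cantor type'' means here (every component of $\P^1_\an\sm U_n$ must contain at least two boundary points of $U_{n+1}$), and your claim in (4) that the complementary components have shrinking diameters is unsupported; perfectness and total disconnectedness of $\partial U$ need a genuine splitting argument, not a count. This is precisely the topological part the paper imports verbatim from Benedetto's proof of the rational-map case; your proposed replacement fails at the finitely-connected (annulus) case.

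Second, several reductions silently assume that $\J_{\phi,\an}$ is non-empty with empty interior: dynamical stability of $U$ via ``omits a neighbourhood of every Julia point'', and, in the disk case, ``the nested union of disks is a disk, not $\P^1_\an$ or $\P^1_\an$ minus a classical point, since $\J_{\phi,\an}$ is non-empty, closed, with empty interior''. Neither fact is available in this generality: non-emptiness of the Julia set for skew products is explicitly left open in the paper, and empty interior (\autoref{thm:juliasetfacts}) is proved only under extra hypotheses ($\q>1/d$, the $\Q(\q)$-module condition, uncountable residue field) not assumed in the theorem. The paper instead shows directly that the exhaustion omits at least two points: if it omitted none, compactness would give $\phi_*^{-n}(D)=\P^1_\an$; if exactly one, that point is exceptional, so by \autoref{thm:dyn:classicalexceptional} one may assume $\phi_2$ is a polynomial and the filled-Julia-set statement in \autoref{thm:juliasetfacts} yields a contradiction. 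Without this step your construction could a priori exhaust all of $\P^1_\an$ minus finitely many points, and then both (1) and (3) collapse; relatedly, your argument that $\partial U\subseteq\J_{\phi,\an}$ (``a strictly larger connected subset of $\F_{\phi,\an}$, a contradiction'') is circular, since it presumes $U$ is already known to be a full Fatou component. Minor points: the boundary of a Berkovich disk is automatically of Type II or III, so citing \autoref{thm:dyn:typeIVindiff} (which assumes $\q\ge1$, whereas an attracting classical point forces $\q\le1$) is out of scope and unnecessary, and excluding Type III requires re-running the value-group argument of \autoref{thm:dyn:typeIIIindiff} rather than quoting it; finally, it is the observation $\q\le1$ that converts the boundary estimate $l\q>1$ into the integer bound $l\ge2$ needed for (5).
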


\begin{thm}\label{thm:intro:rivindiff}
Let $L$ be a discretely valued subfield of $K$, and let $\phi_*$ be a simple skew product defined over $L$ of relative degree $d \ge 2$. Let $U \subseteq \P^1_\an$ be an indifferent component for $\phi_*$. Then the following hold.
\begin{enumerate}[label=(\alph*), ref=\theenumi]
\item $U$ is a periodic connected component of $\F_{\phi, \an}$, of some minimal period $m \ge 1$.
\item $U$ is a rational open connected Berkovich affinoid.
\item Each of the finitely many points of the boundary $\partial U$ is a Type II point lying in the Berkovich Julia set.
\item $\phi_*^m$ permutes the boundary points of $U$; in particular, each is periodic.
\item The mapping $\phi_*^m : U \to U$ is bijective.
\end{enumerate}
\end{thm}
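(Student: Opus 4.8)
The plan is to adapt Rivera-Letelier's treatment of non-attracting cyclic Fatou components, the decisive new ingredient being that for a simple skew product ($\q=1$) the first factor $\phi_{1*}$ in the decomposition $\phi_*=\phi_{1*}\circ\phi_{2*}$ is a hyperbolic isometry (\autoref{thm:skew:scaledhomeo}), so that every contraction of $d_\bH$ effected by $\phi_*$ comes from ramification of the rational part $\phi_{2*}$ alone and is recorded by the local degree (\autoref{thm:intervalstretch}, \autoref{thm:skew:bigintervalstretch}). After replacing $\phi_*$ by $\phi_*^m$ — again simple, defined over $L$, and of relative degree $d^m\ge 2$ since relative degree is multiplicative through the decomposition — we may assume $m=1$ and $\phi_*(U)=U$; this is (a).

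Next I would pin down the combinatorics of $\phi_*$ on $U$ and $\partial U$. Because $\F_{\phi,\an}$ is totally invariant, $\phi_*^{-1}(U)\subseteq\F_{\phi,\an}$, so each connected component of $\phi_*^{-1}(U)$ lies in a single Fatou component; since $U\subseteq\phi_*^{-1}(U)$, this forces $U$ itself to be a component of $\phi_*^{-1}(U)$, and hence (\autoref{thm:skew:affinoidmapping}, \autoref{prop:skew:dto1}) $\phi_*\colon U\to U$ is a proper open surjection that is $\ell$-to-$1$ for some integer $\ell\ge 1$. Total invariance likewise gives $\phi_*(\partial U)\subseteq\overline U\setminus U=\partial U$, while no point of $\partial U$ can lie in a Fatou component — such a component would be an open neighbourhood of it meeting $U$, hence equal to $U$ — so $\partial U\subseteq\J_{\phi,\an}$; a short compactness argument on $\overline U$ upgrades the first inclusion to $\phi_*(\partial U)=\partial U$.

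The heart of the proof is that $\ell=1$. Suppose $\ell\ge 2$; then $\phi_*|_U$ ramifies somewhere inside $U$, and — legitimately now, since $\phi_{1*}$ contributes no contraction — one propagates this ramification along the dynamics, using the reduction description of the tangent map (\autoref{thm:skew:reduction}), the linear stretching of intervals (\autoref{thm:skew:bigintervalstretch}) and the Hyperbolicity Theorem (\autoref{thm:skew:hyperbolicity}), to produce a sub-disk of $U$ on which some iterate of $\phi_*$ is a strict $d_\bH$-contraction, hence a classical attracting periodic point inside $U$; this contradicts $U$ being indifferent. Therefore $\ell=1$, so $\phi_*|_U$ is a homeomorphism (a continuous open bijection, by \autoref{thm:skew:opencts}), and having local degree $1$ throughout $U$ it restricts to an isometry of $(U\cap\bH,d_\bH)$ — this is (e).

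With $\phi_*|_U$, and hence $\phi_*^{-1}|_U$, a bijective local isometry, the structural half of Rivera-Letelier's argument applies to show $U$ has only finitely many boundary points, each of Type II; this is exactly where the discrete valuation on $L$ is used. Hence $U$ is a rational open connected Berkovich affinoid — a domain of Cantor type being excluded because such a component is an immediate attracting basin and so would be attracting rather than indifferent — which is (b), and (c) follows at once since $\partial U$ was already shown to lie in $\J_{\phi,\an}$. For (d), $\phi_*$ maps the finite set $\partial U$ onto itself, so it permutes $\partial U$ and every boundary point is periodic; unwinding the reduction restores the period $m$. I expect the third paragraph to be the main obstacle: it is the only step that genuinely uses both $\q=1$ and the non-attracting hypothesis, and the Rivera-Letelier passage from an interior critical point to an attracting classical cycle must be reconstructed within the skew setting, where $\phi_*$ is nowhere analytic and one has to argue through the decomposition and the piecewise-linear structure rather than quote the rational-map statement.
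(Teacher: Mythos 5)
Your central step---deducing $\ell=1$ by producing, from an interior ramification point, a sub-disk of $U$ on which some iterate of $\phi_*$ is a strict $d_\bH$-contraction and hence a classical attracting periodic point---cannot work. For a simple skew product no iterate is ever a strict contraction on an open subset of $\bH$: every iterate is again simple, and along any unramified subinterval distances are scaled by exactly the local degree times $\q^n=\deg\ge 1$ (\autoref{thm:intervalstretch}, \autoref{cor:skew:injinterval}), so ramification inside $U$ produces expansion or folding, never a Lipschitz constant below $1$; and even granting a $d_\bH$-contraction, its fixed point would be a point of $\bH$, not a classical attracting cycle (compare \autoref{thm:dyn:contraction}, where the classical periodic points are superrepelling). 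Moreover the asserted contradiction with ``$U$ indifferent'' conflates the paper's definition of an indifferent component (a connected component of the indifference domain, i.e.\ of the set of points possessing an affinoid neighbourhood mapped bijectively onto itself by some iterate) with the absence of attracting behaviour, which in the paper is only established \emph{after} this theorem (\autoref{prop:perptsindiff} invokes \autoref{thm:rivindiff}). In the paper's proof the bijectivity (e) is not obtained by dynamically excluding interior ramification: it is built into the definition of the indifference domain and propagated to all of $U$ by gluing the locally invariant bijective affinoids, with the boundary controlled by \autoref{lem:intervalmap}, Rivera-Letelier's approximation lemma pre-composed with $\phi_{1*}$, and---this is precisely where the discretely valued subfield $L$ enters---\autoref{lem:periodicstring}, which forces the relevant radii into the discrete group $\abs{L^\times}$ and hence the bounding Type II points to be periodic.

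A second, related gap is that you never prove (a) but repeatedly use it. By definition $U$ is a component of the indifference domain, not of $\F_{\phi, \an}$; that it is an entire periodic Fatou component is a substantive conclusion, yet your opening line takes $\phi_*^m(U)=U$ for granted, and your argument that $\partial U\subseteq\J_{\phi, \an}$ (``such a component would be an open neighbourhood of it meeting $U$, hence equal to $U$'') is valid only if $U$ is already known to be a maximal connected subset of the Fatou set. The paper's route (following Rivera-Letelier via Benedetto) reverses the order: one first shows that $U$ is a rational open connected affinoid with finitely many Type II boundary points permuted by a suitable iterate which is bijective on $U$, then deduces that these boundary points are Julia (via \autoref{thm:perptdynone}), and only then concludes that $U$ is a whole periodic Fatou component. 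Your last two paragraphs gesture at that structural argument, but as written the load-bearing step of your proposal is the impossible contraction, so the proof does not go through.
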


 We end this section of the introduction with the most important theorem in \autoref{sec:class} -- we recover the classification of Fatou components, which was originally proved for rational maps by Rivera-Letelier \cite{RL2, RL1}.
 
 \begin{thm}[Classification of Fatou Components]\label{thm:intro:class}
 Let $L$ be a discretely valued subfield of $K$. Let $\phi_* : \P^1_\an(K) \to \P^1_\an(K)$ be a simple skew product defined over $L$ of relative degree $d \ge 2$ with Berkovich Fatou set $\F_{\phi, \an}$, and let $U \subset \F_{\phi, \an}$ be a periodic Fatou component. Then $U$ is either an indifferent component or an attracting component, but not both.
\end{thm}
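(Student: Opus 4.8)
The plan is to fix a periodic Fatou component $U$ of minimal period $m$, pass to the iterate $\phi_*^{m}$ so that $U$ is fixed, and then run a dichotomy according to the multiplicity of the return map $\phi_*\colon U\to U$: the case where this map is a bijection will force $U$ to be an indifferent component, the case where it has multiplicity $\ge 2$ will force it to be an attracting component, and the two alternatives are mutually exclusive by the structure theorems \autoref{thm:intro:attractingperpt} and \autoref{thm:intro:rivindiff}. To set up, first I would note that since the scale factor of a simple skew product is $\q=1$, the iterate $\phi_*^{m}$ is again a simple skew product defined over $L$, of relative degree $d^{m}\ge 2$, and $U$ is now fixed. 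Because $U$ is dynamically stable it omits infinitely many points, so $U\neq\P^1_\an$ and, by compactness of $\P^1_\an$, $\partial U$ is a nonempty proper closed set disjoint from $U$ with $\partial U\subseteq\J_{\phi,\an}$. Since $\phi_*$ is continuous, open, and (being non-constant) surjective by \autoref{thm:skew:opencts}, the set $\phi_*(U)$ is an open connected subset of $\F_{\phi,\an}$ meeting $U$, hence $\phi_*(U)\subseteq U$; as $U$ is then a connected component of $\phi_*^{-1}(U)$, \autoref{prop:skew:dto1} shows $\phi_*$ maps $U$ onto $U$ with some finite multiplicity $l$, $1\le l\le d^{m}$.

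\emph{Case $l\ge 2$.} Here the goal is to produce a classical attracting fixed point and identify $U$ with its immediate basin. Since $\phi_*|_U$ is a self-cover of multiplicity $\ge 2$, it must ramify inside $U$, so some critical point of the rational part of $\phi_*^{m}$ has a forward orbit that remains in $U$; because $U$ is a Fatou component the iterates are equicontinuous there, so this critical orbit has a nonempty limit set on which the dynamics cannot be expanding. Concretely, combining \autoref{thm:skew:affinoidmapping}, the ramification bound \autoref{thm:skew:ramconvcrit}, the interval-stretching estimates \autoref{thm:intervalstretch}/\autoref{thm:skew:bigintervalstretch}, the Extreme Value Theorem \autoref{thm:skew:extvalue}, and the Hyperbolicity Theorem \autoref{thm:skew:hyperbolicity}, one obtains a strictly decreasing sequence of connected affinoids $U=V_0\supsetneq V_1\supsetneq\cdots$ with $\phi_*^{n}(U)\subseteq V_n$ and genuine contraction forcing $\bigcap_n V_n$ to collapse to a single point $a\in\overline U$; since a simple skew product has no numerically attracting periodic points of Type II, III, or IV (by \autoref{thm:intro:perptdynone} together with \autoref{thm:dyn:typeIIIindiff} and \autoref{thm:dyn:typeIVindiff}), the point $a$ is necessarily Type I, hence an attracting classical fixed point in the sense of \autoref{thm:intro:attractingperpt}. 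Then $U$ lies in the basin of $a$, and since both $U$ and the immediate basin of $a$ are Fatou components containing a common point, they coincide; thus $U$ is the attracting component of $a$.

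\emph{Case $l=1$.} Now $\phi_*|_U$ is a bijection and I would show $U$ is an indifferent component. For every $\zeta\in U\cap\bH$ we have $\deg_\zeta(\phi)=1$, so by \autoref{thm:intervalstretch} the restriction of $\phi_*$ to $U\cap\bH$ has gradient $\deg_\zeta(\phi)\,\q=1$ everywhere, i.e.\ it is a local isometry for $d_\bH$; being injective on the subtree $U\cap\bH$ it is an isometry onto its image. An isometry of a tree is either elliptic (with a fixed point) or a hyperbolic translation along an axis whose ends lie in $\partial U$; the latter alternative is ruled out because an attracting end of such an axis would be a classical attracting fixed point, returning us to Case $l\ge 2$ via \autoref{thm:intro:attractingperpt} and contradicting $l=1$ (and an escaping orbit would meet $\partial U\subseteq\J_{\phi,\an}$, contradicting dynamical stability). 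Hence there is a fixed point $\zeta_0\in U$, necessarily Type II (Type I, III, IV fixed points are excluded as above, respectively by \autoref{thm:intro:perptdynone}), with multiplier $\deg_{\zeta_0}(\phi)\,\q=1$; using that $L$ is discretely valued — so that $U$ has finitely many boundary points and one can control the combinatorics of the isometry — one concludes that the dynamics on $U$ is recurrent and $U$ is precisely the indifferent component of $\zeta_0$, whereupon \autoref{thm:intro:rivindiff} applies.

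Finally, the two cases cannot both occur: if $U$ were simultaneously an attracting and an indifferent component of minimal period $m$, then \autoref{thm:intro:attractingperpt}(5) forces the multiplicity $l$ of $\phi_*^{m}|_U$ to be $\ge 2$ while \autoref{thm:intro:rivindiff}(e) forces $l=1$, a contradiction; equivalently, orbits in an attracting component converge to a classical point whereas orbits in an indifferent component are recurrent. I expect the genuine obstacle to be Case $l\ge 2$: producing the classical attracting fixed point, i.e.\ proving that $\bigcap_n V_n$ is a single Type I point and not a fatter affinoid or a Type IV point, which is exactly where the Extreme Value and Hyperbolicity theorems carry the weight and which must be run uniformly whether $U$ is ultimately an open disk or a domain of Cantor type. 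A secondary difficulty is excluding the hyperbolic-translation (escape-to-boundary) alternative in Case $l=1$ and pinning down the fixed Type II point, which is precisely where the discretely-valued hypothesis on $L$ is used.
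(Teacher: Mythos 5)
Your overall architecture differs from the paper's: the paper does not case on the multiplicity $l$ of the return map at all, but instead applies Rivera-Letelier's stable fixed-point lemma (\autoref{lem:rivfixedpt}) to the fixed component $U$, obtaining directly that either $U$ contains a Type II fixed point (then \autoref{prop:perptsindiff} and \autoref{thm:rivindiff} make $U$ indifferent) or $\overline U$ contains an attracting classical fixed point (then \autoref{thm:attractingperpt} makes $U$ the immediate basin). The trouble is that your replacement for that lemma, the Case $l\ge 2$ argument, has a genuine gap. First, the setup is internally contradictory: since $U$ is a fixed Fatou component, \autoref{thm:dyn:comps} gives $\phi_*^{n}(U)=U$ for all $n$, so there is no strictly decreasing chain of affinoids $U=V_0\supsetneq V_1\supsetneq\cdots$ with $\phi_*^{n}(U)\subseteq V_n$; the images of $U$ do not shrink, only orbits of individual points inside $U$ can converge, and nothing you cite produces that convergence. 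Second, the appeal to equicontinuity of the iterates on $U$ is not available here: the Fatou set in this paper is defined by dynamical stability (omitting infinitely many points), not by equicontinuity, and no equicontinuity statement is proved. The actual mechanism that manufactures the attracting classical fixed point is the squeezing argument of \autoref{lem:squeezedfixedpt} together with \autoref{lem:typeIIIfixed} and \autoref{prop:dyn:attrfixed}, i.e.\ precisely the content of \autoref{lem:rivfixedpt}; your plan postpones exactly this (as you acknowledge) and the surrounding theorems (\autoref{thm:skew:bigintervalstretch}, \autoref{thm:skew:extvalue}, \autoref{thm:skew:hyperbolicity}) do not by themselves collapse anything to a Type I point.

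There is also a smaller gap in Case $l=1$. The elliptic/hyperbolic dichotomy for the isometry of the tree $U\cap\bH$ is reasonable, and the elliptic branch does lead to the paper's conclusion (any fixed point of Type II, III or IV in $U$ lies in the indifference domain by \autoref{prop:perptsindiff} — your claim that it is ``necessarily Type II'' is unjustified but also unnecessary). But the hyperbolic branch is not dispatched by saying an escaping orbit ``would meet $\partial U\subseteq\J_{\phi,\an}$, contradicting dynamical stability'': orbits of points of $U$ never meet $\partial U$, and merely accumulating on the Julia boundary is perfectly compatible with the definition of a dynamically stable neighbourhood — ruling out this drift-to-the-boundary behaviour is exactly what \autoref{lem:squeezedfixedpt} is for. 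The correct repair is to note that the forward end of the translation axis is a fixed Type I point toward which orbits converge; local degree considerations force it to be attracting, hence Fatou by \autoref{prop:dyn:classicalnonrepelling}, hence in $U$ rather than on $\partial U$, which then contradicts $l=1$ via \autoref{thm:attractingperpt}(5). With that repair and with Case $l\ge2$ replaced by (a proof of) the stable fixed-point lemma, your exclusivity paragraph matches the paper's and the argument closes.
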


In closing the introduction, we mention that H.\ Nie and S.\ Zhao have developed a noteworthy alternative approach to these problems and have an independent proof of the classification of Fatou components. They also plan to release a proof of equidistribution for skew products.


\subsection{Organisation}

 We start with \autoref{sec:nonarch}, an overview of non-Archimedean analysis and algebra that will be useful in practice. Next, \autoref{sec:skew} provides a lengthy development of the non-Archimedean skew product, from motivation to local degrees, and other geometric results. In the final sections we will explore the dynamics of a skew product, again comparing to its rational cousin. In \autoref{sec:per} we give an elementary study of periodic points. \hyperref[sec:fj]{Section~\ref*{sec:fj}} 
 is dedicated to defining and understanding the basic properties of the Fatou and Julia set of a skew product; we examine how periodic points are related to the Fatou-Julia dichotomy, proving \autoref{thm:intro:perptdynone}. The ultimate goal is to prove the generalisation, \autoref{thm:intro:class}, of the Rivera-Letelier classification of Fatou components, which will be the focus of \autoref{sec:class}.

 \subsection{Acknowledgements}
First and foremost I would like to thank my doctoral advisor, Jeffrey Diller, for suggesting this avenue of research, for his time, encouragement, and for sharing his remarkable editorial expertise.

Along with my advisor, I thank my dissertation committee, Nicole Looper, Eric Riedl, and Roland Roeder, for their patience, insight, and helpful comments which lead to a more effective exposition of new concepts.


I am grateful to Xander Faber for various helpful discussions about non-Archimedean dynamics.

I would like to show my appreciation to Robert Benedetto for his exceptional book which imparted a great deal of intuition and insight. His mathematical framework for rational maps positively guided the development of skew products. 

Lastly, I thank Hongming Nie and Shengyuan Zhao for intriguing discussions about their `twisted rational maps', which are equivalent to the skew products introduced here.

In addition, I am grateful to the NSF for its support of my work through grant DMS-1954335.





\makeatletter
\let\thethm\the@thm
\let\theprop\the@prop
\let\thecor\the@cor
\let\thelem\the@lem
\makeatother





\section{Background}\label{sec:nonarch}
\subsection{Non-Archimedean Metrics}

\begin{defn}
 A metric space $(X, d)$ is \emph{non-Archimedean} iff \[d(x, z) \le \max \set{d(x, y), d(y, z)} \quad \forall x, y, z \in X.\]
\end{defn}

Such a $d$ is often called an \emph{ultrametric}, due to this \emph{strong triangle inequality}. In an algebraic context, we would like this to derive from a norm which respects addition and multiplication. Furthermore, we will want to consider something called a \emph{seminorm}, informally this is a norm where we relax the condition $\norm a = 0 \implies a=0$. The following notions will be fundamentally important in this work. 

\begin{defn}\label{defn:nonarch:seminorm}
 Let $G$ be a group. A \emph{seminorm} is a function $\norm\cdot : G \to \R_{\ge 0}$ such that $\norm 0 = 0$, $\norm{a} = \norm{-a} \quad\forall a \in G$, and $\norm{a + b} \le \norm{a} + \norm{b}\quad\forall a, b \in G$.
\begin{itemize}
 \item This is a \emph{norm} iff additionally \[\norm{a} = 0 \implies a=0.\]
 \item A seminorm $\norm\cdot$ on $G$ is said to be \emph{non-Archimedean} iff \[\norm{a+b} \le \max \set{\norm a, \norm b} \quad\forall a, b \in G.\]
 \item A seminorm $\norm\cdot$ on a ring $R$ is \emph{multiplicative} iff \[\norm{a\cdot b} = \norm{a}\cdot\norm{b} \quad\forall a, b \in R.\]
 \item We say a field $(K, \abs\cdot)$ is \emph{non-Archimedean} iff $\abs\cdot$ is a multiplicative non-Archimedean norm on $K$. In this case we refer to $\abs\cdot$ as an \emph{absolute value}.
\end{itemize}
\end{defn}

It is clear that any non-Archimedean norm induces a non-Archimedean metric.

\begin{rmk}
 A \emph{(semi)valuation} can always be obtained by taking $\log$ of a (semi)norm, $\log_{\eps}\norm\cdot$, for any $\eps \in (0, 1)$. We apply the other adjectives of the previous definition respectively.
 \end{rmk}
  
\begin{rmk}
 Note that $\norm{a} = \norm{-a} \quad\forall a \in R$ on a ring is implied by the multiplicative condition.
\end{rmk}

\begin{ex}[Trivial Norm]
 Let $G$ be any group. Then there is a non-Archimedean norm $\norm[\text{triv}]\cdot$ such that $\norm[\text{triv}] 0 = 0$ and $\norm[\text{triv}]{a} = 1$ for any $a \in G \sm\set0$. If $G$ is an integral domain (e.g. a field), this is also multiplicative.
\end{ex}

\begin{defn}\label{defn:nonarch:ring}
 Let $(K, \abs\cdot)$ be a non-Archimedean field.
\begin{enumerate}
 \item The \emph{ring of integers} of $K$ is given by \[\bO_K = \set[a \in K]{\abs a \le 1}.\]
 \item This $\bO_K$ has a unique maximal ideal, \[\mathcal M_K = \set[a \in K]{\abs a < 1}.\]
 \item The \emph{residue field} of $K$ is the quotient field $\k = \bO_K / \mathcal M_K$.
 \item The \emph{value group}, $\abs{K^\times} \le (0, \infty)$ is the range of $\abs\cdot$ on $K^\times = K \sm \set 0$.
\end{enumerate}
\end{defn}

It is possible that $K$ has \emph{mixed characteristic} meaning $\chara K = 0$ and $\chara k = p$. Otherwise they have \emph{equal characteristic} $\chara K = \chara k$. The value group is a (multiplicative) subgroup of reals; if the value group is dense in $(0, \infty)$ then we shall say it is \emph{densely valued}; if this is non-trivial but not dense in $(0, \infty)$ then it must be cyclic, i.e. $|K^\times| \cong \Z$, in which case we say $\abs\cdot$ is \emph{discretely valued}.

\begin{prop}\label{prop:nonarch:series}
 Let $(G, \norm\cdot)$ be a non-Archimedean group. An infinite series $\sum_{n=1}^N a_n$ is Cauchy if and only if $a_n \to 0$; in particular when $(G, \norm\cdot)$ is complete an infinite series converges if and only if its terms tend to $0$. 
\end{prop}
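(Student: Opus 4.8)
The plan is to reduce the statement to the standard characterisation of Cauchy sequences of partial sums in an ultrametric group, where the strong triangle inequality makes the behaviour of tails dramatically simpler than in the Archimedean case. Write $S_N = \sum_{n=1}^N a_n$ for the $N$-th partial sum; we must show $(S_N)$ is Cauchy if and only if $a_n \to 0$.

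First I would prove the ``only if'' direction, which holds in any (semi)normed group: if $(S_N)$ is Cauchy, then taking $M = N-1$ gives $\norm{a_N} = \norm{S_N - S_{N-1}} \to 0$ as $N \to \infty$. Next, for the ``if'' direction, suppose $a_n \to 0$ and fix $\eps > 0$; choose $N_0$ so that $\norm{a_n} < \eps$ for all $n \ge N_0$. For any $N > M \ge N_0$ we have $S_N - S_M = a_{M+1} + a_{M+2} + \cdots + a_N$, and by iterating the non-Archimedean inequality from \autoref{defn:nonarch:seminorm} we get
\[
 \norm{S_N - S_M} \le \max_{M < n \le N} \norm{a_n} < \eps.
\]
Hence $(S_N)$ is Cauchy. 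Finally, the ``in particular'' clause is immediate: if $(G, \norm\cdot)$ is complete, then a series converges precisely when its sequence of partial sums converges, which by definition of completeness is equivalent to $(S_N)$ being Cauchy, and we have just shown this is equivalent to $a_n \to 0$.

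There is essentially no obstacle here — the only point requiring the slightest care is the induction that bounds a finite ultrametric sum by the maximum of its terms, and that is a one-line consequence of the strong triangle inequality applied repeatedly. I would present this cleanly rather than belabour it.
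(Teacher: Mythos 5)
Your proof is correct and is exactly the standard argument this proposition relies on (the paper states it without proof): the ``only if'' direction holds in any seminormed group via $a_N = S_N - S_{N-1}$, and the ``if'' direction follows by bounding $\norm{S_N - S_M}$ by $\max_{M < n \le N}\norm{a_n}$ using the strong triangle inequality, with completeness handling the final clause. Nothing is missing.
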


\begin{prop}[Completion]
 Given a non-Archimedean group $(G, \norm\cdot)$, we can form it's completion $\hat G$. This can be thought of as all convergent series over $G$. The induced norm is naturally given by \[\norm{\lim_{n \to \infty}a_n} = \lim_{n \to \infty} \norm{a_n}.\]
\end{prop}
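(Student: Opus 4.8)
The plan is the standard Cauchy-sequence construction, using \autoref{prop:nonarch:series} to repackage Cauchy sequences as convergent series. First I would let $\mathcal C$ be the abelian group of Cauchy sequences in $G$ under termwise addition, and $\mathcal N \le \mathcal C$ the subgroup of null sequences (those with $a_n \to 0$); then define $\hat G := \mathcal C / \mathcal N$, with $\iota : G \to \hat G$ sending $a$ to the class of the constant sequence. When $\norm\cdot$ is a genuine norm, $\iota$ is injective; when it is merely a seminorm one instead obtains the separated completion (equivalently, first quotient $G$ by $\set[a \in G]{\norm a = 0}$). Density of $\iota(G)$ in $\hat G$ is immediate once the norm on $\hat G$ is in hand.

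Next I would extend the norm. For $(a_n) \in \mathcal C$ the inequality $\bigl| \norm{a_n} - \norm{a_m} \bigr| \le \norm{a_n - a_m}$ shows $(\norm{a_n})_n$ is Cauchy in $\R$, hence convergent, and a null sequence does not affect the limit; so $\norm{[(a_n)]} := \lim_n \norm{a_n}$ is well defined, restricts to $\norm\cdot$ along $\iota$, and inherits the non-Archimedean seminorm axioms by passing the finite-stage inequalities $\norm{a_n + b_n} \le \max\set{\norm{a_n}, \norm{b_n}}$ to the limit (and it is a norm when $\norm\cdot$ is). By construction $\iota(a_n) \to [(a_n)]$ in $\hat G$; reading $\lim_n a_n$ as this element, that is exactly the asserted formula $\norm{\lim_n a_n} = \lim_n \norm{a_n}$.

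For completeness, given a Cauchy sequence $(\xi_k)_k$ in $\hat G$ I would use density to choose $b_k \in G$ with $\norm{\xi_k - \iota(b_k)} < 1/k$; then $(b_k)$ is Cauchy in $G$ and $\xi_k \to [(b_k)]$, so $\hat G$ is complete. Finally, for the description ``all convergent series over $G$'' I would invoke \autoref{prop:nonarch:series}: in the non-Archimedean setting $\norm{a_m - a_n} \le \max_{n \le j < m} \norm{a_{j+1} - a_j}$, so $(a_n)$ is Cauchy if and only if $\norm{a_{j+1} - a_j} \to 0$, i.e.\ if and only if the series $\sum_{j \ge 1}(a_{j+1} - a_j)$ has terms tending to $0$, i.e.\ (since $\hat G$ is complete) if and only if that series converges in $\hat G$. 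Since $a_n = a_1 + \sum_{j=1}^{n-1}(a_{j+1} - a_j)$, every element of $\hat G$ is the sum of a convergent series with terms in $G$, and conversely every such series sums to an element of $\hat G$; this is the sense of the identification.

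The construction is routine and I do not expect a genuine obstacle. The only points requiring care are the seminorm-versus-norm bookkeeping (so that $\iota$ need not be injective when $\norm\cdot$ is only a seminorm, and ``completion'' then means the separated completion), and the wording of the ``convergent series'' picture, which is a surjection onto $\hat G$ rather than a bijection — so that the correctly phrased content of that picture is precisely the norm formula $\norm{\lim_n a_n} = \lim_n \norm{a_n}$.
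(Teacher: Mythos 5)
Your construction is correct and is exactly the standard Cauchy-sequence completion the paper has in mind; the paper states this proposition as background without proof, and your use of \autoref{prop:nonarch:series} via telescoping to justify the ``all convergent series over $G$'' description matches the intended reading. Your remarks on the seminorm-versus-norm bookkeeping and on the series picture being a surjective description rather than a bijection are sensible refinements, not deviations.
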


\begin{ex}[$p$-adic Numbers]
 Consider $\Q = (\Q, | \cdot |_p)$, the rational numbers with $| \cdot |_p$ the $p$-adic norm. This is defined such that $\left| \frac abp^n\right| = p^{-n}$ if $a, b \in \Z$ are coprime to $p$. One can easily check that this norm is non-Archimedean and multiplicative. This norm also gives rational numbers a natural $p$-adic expansion. For example when $p = 5$, we can write
 \[\frac{42}{5} = 2\cdot 5^{-1} + 3 \cdot 1 + 1 \cdot 5^1.\]
 Observe that $\left|\frac{42}{5}\right|_5 = 5^1$ is given by \[\left|\frac{42}{5}\right|_5 = \max \set{|2\cdot 5^{-1}|_5, |3 \cdot 1|_5, |1 \cdot 5^1|_5}\]
 \[= \max \set{|2|_5 |5^{-1}|_5, |3|_5 |1|_5, |1|_5 |5^1|_5}\]
  \[= \max \set{|5^{-1}|_5, |1|_5, |5^1|_5}\]
  \[= \max \set{5^1, 1,  5^{-1}}.\]
  This is a very typical way we use the non-Archimedean property of a (semi)norm.
  
  The completion of $(\Q, | \cdot |_p)$ is $(\Q_p, | \cdot |_p)$, the $p$\emph{-adic numbers}. One should think of $\Q_p$ as all the half-infinite $p$-adic expansions, i.e.\ `Laurent series' in the `variable' $p$. The ring of integers of $\Q_p$ is the $p$\emph{-adic integers}, $\Z_p$ whose elements are the `Maclaurin series' in $p$. One can check that $\mathcal M_{\Q_p} = p\Z_p$. The residue field for both $\Q_p$ and $\Q$ is $\Z/p$, meaning they have mixed characteristic. Finally, we define $\hp$ as the completion of $\Q_p$.
\end{ex}

This shows that a power series structure provides a simple way to define a non-Archimedean seminorm, using the ``lowest order term'', or the lowest index. Here is another.

\begin{ex}[Formal Laurent series]
 Consider $\k((x)) = \Frac \k[[x]]$, the field of formal Laurent series over some base field $k$. Note that these have finite principle part. We can define the order of vanishing norm $| \cdot |$ as follows, beginning with a fixed $\eps \in (0, 1)$. For any $m_0 \in \Z$ and coefficients $(c_m) \subset k$, let \[f(x) = \sum_{m = m_0}^\infty c_m x^k\]
 where $c_{m_0} \ne 0$. Then $\abs f = \eps^{m_0}$. We see that the constants $k$ are all given the trivial norm and a series $f$ has its norm determined by its lowest order term, $\abs f = \abs{c_m x^m} = \abs{x^m}$. Here the residue field is $k$ itself, which naturally lives inside $\k((x))$, hence these are of equal characteristic. This field is discretely valued, with value group $\set[\eps^n]{n\in \Z}$.
\end{ex}

The following lemmata spell out the idea we have already used in the above examples, that the norm of an element is always the norm of its dominant summand.

\begin{prop}[Strong triangle equality]
 Let $\norm{\cdot}$ be a non-Archimedean seminorm on $G$. Then \[\norm a \ne \norm b\implies \norm{a+b} = \max \set{\norm a, \norm b}.\]
 \end{prop}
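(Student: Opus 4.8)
The plan is the standard two-inequality argument, exploiting the hypothesis $\norm a \neq \norm b$ to rule out the degenerate case of the ultrametric inequality. Without loss of generality assume $\norm a > \norm b$ (otherwise swap the roles of $a$ and $b$, which is legitimate since $a + b = b + a$). The first inequality is immediate: the non-Archimedean property of $\norm\cdot$ gives $\norm{a + b} \le \max\set{\norm a, \norm b} = \norm a$.

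For the reverse inequality I would apply the ultrametric bound to the decomposition $a = (a + b) + (-b)$. This yields $\norm a \le \max\set{\norm{a+b}, \norm{-b}}$, and since a seminorm satisfies $\norm{-b} = \norm b$, we get $\norm a \le \max\set{\norm{a+b}, \norm b}$. Now the max on the right cannot equal $\norm b$, for that would force $\norm a \le \norm b$, contradicting $\norm a > \norm b$. Hence the max is $\norm{a+b}$, i.e.\ $\norm a \le \norm{a+b}$. Combining with the first inequality gives $\norm{a+b} = \norm a = \max\set{\norm a, \norm b}$, as desired.

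There is essentially no obstacle here; the only point requiring a moment's care is the bookkeeping that lets one assume $\norm a > \norm b$ (rather than just $\neq$) and the use of $\norm{-b} = \norm b$, which is part of the definition of a seminorm and so is available without comment. The argument uses nothing beyond \autoref{defn:nonarch:seminorm}.
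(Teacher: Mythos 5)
Your proof is correct and follows essentially the same route as the paper's: both apply the ultrametric inequality to $a+b$ and then to the decomposition $a = (a+b)+(-b)$, using $\norm{-b}=\norm b$ and the WLOG assumption $\norm a > \norm b$ to force $\norm a \le \norm{a+b}$. No gaps; nothing further needed.
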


\begin{proof}
 $\norm{a+b} \le \max \set{\norm a, \norm b}$ by the non-Archimedean definition, hence $\norm{a+b} \le \norm a$. Also we have $\norm a = \norm{a+b + -b} \le \max \set{\norm{a+b}, \norm{-b}} = \max \set{\norm{a+b}, \norm{b}}$. If WLOG $\norm a > \norm b$, it must be that $\max \set{\norm{a+b}, \norm{b}} = \norm{a+b}$. Therefore $\norm{a+b} = \norm{a}$. 
\end{proof}

\begin{prop}[Extended strong triangle (in)equality]\label{lem:nonarch:extendedtriangle}
Suppose that $\sum_{j=1}^\infty a_j$ converges in $(G, \norm\cdot)$. Then
\[\norm{\sum_{j=1}^\infty a_j} \le \max_j \norm{a_j},\]
and moreover if $\norm{a_N} > \norm{a_j} \forall j\ne N$, then \[\norm{\sum_{j=1}^\infty a_j} = \norm{a_N}.\]
\end{prop}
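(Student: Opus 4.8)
The plan is to reduce the infinite case to the finite case, which follows immediately from the triangle inequality (for the first bound) and the strong triangle equality proved just above (for the sharp statement). First I would handle the inequality $\norm{\sum_{j=1}^\infty a_j} \le \max_j \norm{a_j}$. Set $S_N = \sum_{j=1}^N a_j$, so $S_N \to S := \sum_{j=1}^\infty a_j$ by hypothesis. By a trivial induction on the non-Archimedean inequality $\norm{a+b} \le \max\set{\norm a, \norm b}$, each partial sum satisfies $\norm{S_N} \le \max_{1 \le j \le N} \norm{a_j} \le \sup_j \norm{a_j} = \max_j \norm{a_j}$ (the supremum is attained since, by \autoref{prop:nonarch:series}, $\norm{a_j} \to 0$, so only finitely many terms exceed any positive threshold). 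Since the norm is continuous with respect to the metric it induces — concretely, $\abs{\norm{S_N} - \norm S} \le \norm{S_N - S} \to 0$ — we get $\norm S = \lim_N \norm{S_N} \le \max_j \norm{a_j}$.

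For the sharp statement, suppose $\norm{a_N} > \norm{a_j}$ for all $j \ne N$. Write $S = a_N + T$ where $T = \sum_{j \ne N} a_j$ (this converges, being $S - a_N$). By the inequality just established applied to the series $T$, we have $\norm T \le \max_{j \ne N} \norm{a_j} < \norm{a_N}$. Now apply the Strong triangle equality to the two-term sum $a_N + T$: since $\norm{a_N} \ne \norm T$, we conclude $\norm S = \norm{a_N + T} = \max\set{\norm{a_N}, \norm T} = \norm{a_N}$.

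There is no real obstacle here; the only points requiring a word of care are (i) justifying that $\max_j \norm{a_j}$ makes sense as a genuine maximum, which is exactly where convergence of the series (hence $\norm{a_j} \to 0$ via \autoref{prop:nonarch:series}) gets used, and (ii) the continuity of $\norm\cdot$, which is immediate from the reverse triangle inequality $\bigl| \norm x - \norm y \bigr| \le \norm{x - y}$. One should also note that the re-indexed tail series $T = \sum_{j \ne N} a_j$ still converges: its terms still tend to $0$, so \autoref{prop:nonarch:series} applies again, and its value is $S - a_N$ by linearity of limits. Everything else is the two-term case already in hand.
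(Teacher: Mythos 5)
Your proof is correct and takes essentially the same route the paper intends: the paper simply remarks that the result follows from the two preceding propositions (the convergence criterion $a_n \to 0$ and the two-term strong triangle equality), which is exactly the reduction you carry out via partial sums and the decomposition $S = a_N + T$. The extra care you give to attainment of $\max_j \norm{a_j}$ and continuity of the norm just fills in details the paper leaves implicit.
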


The proof follows from the previous two results. 
The following definition provides not only a key example, but also one of the main fields of interest in sequels and the author's thesis.

\begin{ex}[Puiseux series]\label{defn:nonarch:puiseux}
 Let $\k$ be a field. We shall define $\K(\k) = \K$, the field of \emph{Puiseux\footnote{or Newton-Puiseux} series} over $\k$ with variable $x$. For $a = a(x) \in \K$ there is an $n \in \N$, $m_0 \in \Z$ and coefficients $(c_m) \subset \k$ such that we write \[a = \sum_{m = m_0}^{\infty} c_{m}x^{\frac{m}{n}}.\] Addition and multiplication works as with any power series. Moreover, these series converge from their partial sums under the non-Archimedean metric defined next. We fix a value of $0 < |x| = \eps < 1$ and give $\k$ the trivial norm by setting $|c| = 1\quad~\forall c\in \k^\times$. This information uniquely determines a norm on $\K$ in the sense of \autoref{lem:nonarch:extendedtriangle}: assuming $c_{m_0} \ne 0$, \[|a| = |x|^{\frac{m_0}{n}}.\]
 This dependence of the power series on the denominator $n$ is the only thing preventing $\K$ from being complete with respect to $\abs{\cdot}$. The completion of $\K$, the \emph{Levi-Civita field} $\hk$, is the field with elements of the form \[\gamma = \sum_{j = 0}^{\infty} c_{j}x^{r_j}, \text{ where }(r_j) \subset \Q,\ r_j \to \infty.\]
\end{ex}

\begin{thm}[Puiseux's Theorem]
 The Puiseux series, $\K(\C)$, is the algebraic closure of the formal Laurent series $\C((x))$.
\end{thm}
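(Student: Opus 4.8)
The plan is to prove: (i) $\K := \K(\C)$ is an algebraic extension of $F := \C((x))$, and (ii) $\K$ is algebraically closed; together these say exactly that $\K$ is an algebraic closure of $\C((x))$.

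Fact (i) is quick. By definition $\K = \bigcup_{n \ge 1} \C((x^{1/n}))$, where $\C((x^{1/n}))$ is the field of formal Laurent series in the variable $t = x^{1/n}$; grouping such a series according to the residue modulo $n$ of its exponents exhibits $1, t, \dots, t^{n-1}$ as an $F$-basis, so $[\C((x^{1/n})) : F] = n$, and these subfields are directed under inclusion ($\C((x^{1/n})) \subseteq \C((x^{1/m}))$ whenever $n \mid m$). Hence $\K$ is a directed union of finite extensions of $F$ and is algebraic over $F$.

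For (ii) it suffices to show every monic polynomial $P(y) = y^d + a_{d-1}y^{d-1} + \cdots + a_0$ of degree $d \ge 1$ with coefficients in $\K$ has a root in $\K$ (splitting, hence algebraic closedness, then follows by induction on $d$ after dividing out a linear factor). The coefficients of such a $P$ lie in some $\C((x^{1/N}))$, which is itself a field of formal Laurent series over $\C$, in the variable $s = x^{1/N}$, and the Puiseux field in the variable $s$ is again $\K$. So it is enough to prove the following instance of the Newton--Puiseux algorithm: every monic $P(y) \in \C((s))[y]$ of degree $d \ge 1$ has a root in $\bigcup_{m \ge 1}\C((s^{1/m}))$. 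Normalising so that $a_0 \ne 0$, form the Newton polygon of $P$ from the points $(i, v(a_i))$, where $v$ is the $s$-adic valuation; each lower edge of slope $-\gamma$ governs a batch of roots of valuation $\gamma$, and the admissible leading coefficients $c \in \C^\times$ are the roots of that edge's associated one-variable polynomial over $\C$, which exist because $\C$ is algebraically closed. Substituting $y = s^{\gamma}(c + y_1)$ and dividing out a power of $s$ gives a new monic degree-$d$ polynomial $P_1(y_1)$ over $\C((s^{1/n_1}))$ (with $n_1$ the denominator of $\gamma$) whose constant coefficient has strictly larger valuation; iterating produces a formal expansion $\alpha = c_1 s^{\gamma_1} + c_2 s^{\gamma_2} + \cdots$ with $\gamma_1 < \gamma_2 < \cdots$ and $\gamma_j \to \infty$.

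The one genuine obstacle is to see that $\alpha$ lies in some $\C((s^{1/m}))$ rather than only in the completion $\hk$, i.e.\ to bound the denominators of the $\gamma_j$. Each slope of a Newton polygon of a degree-$d$ polynomial has denominator at most $d$, and, crucially, after finitely many steps the relevant edge polynomial becomes separable; at that stage the chosen leading coefficient is a simple root, so one finishes in one stroke by Hensel's lemma (valid since $\operatorname{char}\C = 0$), or equivalently observes that all subsequent exponents are then integral in the current variable. Hence only finitely many genuinely fractional exponents occur, the $\gamma_j$ share a common denominator $m$, and $\alpha \in \C((s^{1/m}))$; convergence of the tail is automatic from $\gamma_j \to \infty$ via \autoref{prop:nonarch:series}. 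This proves (ii), and with (i) we conclude that $\K$ is the algebraic closure of $\C((x))$. (A more structural alternative avoids the algorithm: any finite extension $L/F$ has residue field already $\C$, hence is totally ramified, and is tamely ramified since $\operatorname{char}\C = 0$, so $L = F\big((xu)^{1/e}\big)$ for a uniformiser $xu$ of $F$ with $e = [L:F]$; as $\C$ is algebraically closed the unit $u$ has an $e$-th root in $F$ by Hensel, giving $L = F(x^{1/e}) = \C((x^{1/e})) \subseteq \K$, so $\overline F \subseteq \K$, whence equality. Both arguments genuinely use that $\C$ is algebraically closed of characteristic zero; the analogue is false over $\overline{\mathbb F_p}$.)
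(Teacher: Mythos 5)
The paper itself offers no proof of this statement: Puiseux's Theorem is quoted as classical background, so there is nothing internal to compare against, and your job was essentially to supply the standard argument, which you do. The two-step structure (algebraicity of $\K(\C)$ over $\C((x))$ via the directed union $\bigcup_n \C((x^{1/n}))$ of finite extensions, then algebraic closedness via the Newton--Puiseux algorithm) is the textbook route, and the hypotheses you isolate as essential --- $\C$ algebraically closed to solve the edge equations, characteristic zero to control denominators and apply Hensel, with the remark that the analogue fails over $\overline{\mathbb{F}_p}$ --- are exactly the right ones and consistent with the paper's own comment that $\overline{k((x))}$ is strictly larger in positive characteristic. One imprecision deserves fixing: the claim that ``after finitely many steps the relevant edge polynomial becomes separable'' is false as stated when $P$ has a repeated root (for $P=(y-\alpha)^2$ with $\alpha\in\C((s))$ the edge polynomial is $(T-c)^2$ at every stage); either first replace $P$ by $P/\gcd(P,P')$, which is harmless in characteristic zero, or argue directly that a slope with denominator $q>1$ forces the edge polynomial to lie in $\C[T^q]$, and for an edge polynomial of the form $(T-c)^m$ with $c\neq 0$ in characteristic zero this forces $q=1$; hence fresh denominators can appear only when the tracked multiplicity strictly drops, which happens at most $d$ times, and the common-denominator conclusion follows. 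Your tame-ramification alternative is also sound, though its last step is cleaner phrased as ``every finite extension of $\C((x))$ is isomorphic to some $\C((x^{1/e}))\subseteq\K$, and $\K$ is algebraic over $\C((x))$,'' rather than literally asserting $\overline{F}\subseteq\K$.
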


This is a useful field to use when working with germs of algebraic curves in $\C^2$. \emph{Puiseux's Theorem} says that any irreducible curve $P(x, y) = 0$ crossing $\set{x = 0}$ (except the line itself) can be given locally by branches of the form $y = \gamma(x)$ where $\gamma$ is a Puiseux series. \info{I think the $n$ is the degree of ramification at $x=0$.} 
 Note that $\K$ is the direct limit $\varinjlim \C((x^\frac 1n))$. The Levi-Civita field $\hk$ is both algebraically closed and complete. When $\chara k = 0$, the Puiseux series over $\overline k$, $\K(k)$ is the algebraic closure of $k((x))$. However $\overline{k((x))}$ is larger in positive characteristic.
 
 Throughout this article, when we use Puiseux series, we will not declare an $\eps \in (0, 1)$ for which $\abs x = \eps$ as in the definition above. Instead, we will simply refer to the quantity $\abs x$ which intrinsically provides the same information. This also serves as a visual reminder to the order of vanishing of a series, for instance if $\abs{a(x)} = \abs x^{3/2}$ then the first non-zero term of the Puiseux series $a(x)$ must be $cx^{3/2}$.

 
 \subsection{Disks}
 
\begin{defn}\label{defn:nonarch:disks}
 Let $(K, \abs\cdot)$ be a non-Archimedean field\footnote{although similar can be said for any non-Archimedean metric space}. We define the open and closed disks of radius $r$ centred at $a \in K$, respectively below.
 \[D(a,r):=\set[b\in K]{|b-a|<r}\]
 \[\overline{D}(a,r):=\set[b\in K]{|b-a|\le r}\]
 If the radius $r$ of a disk is in the value group $|K^\times|$, we say this disk (and its radius $r$) is \emph{rational}, otherwise, we say it is \emph{irrational}.
\end{defn}
 
 By convention we will allow notationally that $\CD(a, 0) = \set a$, but not formally refer to this as a `disk'. 
 The terminology of rationality follows from the notion in fields like the Puiseux series, where the value group is $\abs{\K^\times} = \abs x^\Q \cong \Q$, however we will use these adjectives even if the value group is not isomorphic to $\Q$. It follows immediately from the definition that for an irrational radius $r$, open and closed disks coincide, $D(a, r) = \CD(a, r)$. The non-Archimedean metric results in some weird quirks for disks. For example, consider $a, b \in K$ such that $\abs{a-b} = r > 0$. Then in an Archimedean space, the overlap of the disks $\overline{D}(a,r)$, $\overline{D}(b,r)$ would be a non-trivial proper subset, much similar to the overlap of $D(a,r)$ and $D(b,r)$. In this non-Archimedean setting we have that $\overline{D}(a,r) = \overline{D}(b,r)$ but $D(a,r) \cap D(b,r) = \emp$. \improvement{Draw nice picture} Moreover any two disks are either disjoint or nested (or equal). Perhaps confusingly, the rational closed disk $\CD(a, r)$ is never the closure of the rational open disk $D(a, r)$. The following proposition, lifted from \cite[Proposition 2.4]{Bene}, details these differences.
 
\begin{prop}\label{prop:nonarch:disks}
 Let $(K, \abs\cdot)$ be a non-Archimedean field.
\begin{enumerate}
 \item Let $a, b \in K$, and $R \ge r > 0$ such that $a \in D(b, R)$. Then
 \[D(a, r) \subseteq D(b, R) \and D(a, R) = D(b, R).\]
 \item Let $a, b \in K$, and $R \ge r > 0$ such that $a \in \CD(b, R)$. Then
 \[\CD(a, r) \subseteq \CD(b, R) \and \CD(a, R) = \CD(b, R).\]
 \item Let $D_1, D_2$ be disks such that $D_1 \cap D_2 \ne \emp$. Then either
 \[D_1 \subseteq D_2 \text{ or } D_1 \supseteq D_2\]
 \item All disks in $K$ are clopen.
 \item $K$ is totally disconnected.
\end{enumerate}
\end{prop}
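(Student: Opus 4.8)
The engine behind all five parts is the strong triangle inequality, and the single fact that does the real work is the ``every point of a disk is a center'' phenomenon recorded in (1) and (2). So I would prove (1) and (2) first, essentially by a one-line computation, and then obtain (3), (4), (5) as short corollaries.

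For (1), suppose $\abs{a-b} < R$. Given $c \in D(a,r)$ with $r \le R$, I would bound $\abs{c-b} \le \max\set{\abs{c-a},\abs{a-b}} < R$ using $\abs{c-a} < r \le R$; this gives $D(a,r) \subseteq D(b,R)$. Taking $r = R$ yields one inclusion in $D(a,R) = D(b,R)$, and the reverse holds because $\abs{b-a} < R$ puts $b \in D(a,R)$, so the roles of $a$ and $b$ are symmetric. Part (2) is the same argument with each strict inequality on the relevant side replaced by a weak one. For (3), pick a point $c \in D_1 \cap D_2$; by (1) and (2) this $c$ is simultaneously a center of $D_1$ and of $D_2$, so after renaming, $D_i$ is either $D(c,r_i)$ or $\CD(c,r_i)$. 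If $r_1 < r_2$ then $D_1 \subseteq \CD(c,r_1) \subseteq D(c,r_2) \subseteq D_2$; symmetrically if $r_2 < r_1$; and if $r_1 = r_2$ the only comparison left is $D(c,r) \subseteq \CD(c,r)$. In every case one disk contains the other.

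For (4), $D(a,r)$ is open by the very definition of the metric topology, and it is also closed: if $\abs{c-a}\ge r$ then $D(c,r)$ is a neighbourhood of $c$ disjoint from $D(a,r)$, since a common point $x$ would force $\abs{c-a}\le\max\set{\abs{c-x},\abs{x-a}}<r$. Dually, $\CD(a,r)$ is closed — it is the preimage of $[0,r]$ under the continuous map $x \mapsto \abs{x-a}$ — and it is open, because for $c \in \CD(a,r)$ one has $D(c,r)\subseteq\CD(a,r)$: if $\abs{x-c}<r$ and $\abs{c-a}\le r$ then $\abs{x-a}\le r$. Hence every disk is clopen. Finally, for (5), if $a \ne b$ then $D(a,\abs{a-b})$ is a clopen set containing $a$ but not $b$, so no connected subset of $K$ can contain two distinct points; thus $K$ is totally disconnected.

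\textbf{Main obstacle.} There is no genuinely hard step — the whole proposition unwinds directly from the ultrametric inequality. The only thing demanding care is the open/closed bookkeeping in (3) and especially (4): one must keep in mind that here a \emph{closed} disk is topologically open (and an open disk topologically closed), the counter-intuitive point already flagged in the surrounding discussion, and keep the cases $r_1 < r_2$, $r_1 > r_2$, and $r_1 = r_2$ (with differing types) straight.
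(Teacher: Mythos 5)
Your proof is correct, and it is the standard ultrametric argument: the paper itself gives no proof here, simply citing \cite[Proposition 2.4]{Bene}, whose proof proceeds exactly as you do — strong triangle inequality to show every point of a disk is a center, then nesting, clopenness, and total disconnectedness as corollaries. Nothing further is needed.
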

 
\begin{rmk}
 If $D$ is an open (resp.\ closed) disk then the smallest possible radius $r$ for which $D = D(a, r)$ (resp.\ $\CD(a, r)$) is also its \emph{diameter}, $\diam(D) = \sup_{a, b \in D} \abs{a-b}$. If $K$ is densely valued, then there is a unique radius. See \cite[Proposition 2.5]{Bene}. In particular, whenever $K$ is algebraically closed, then $\abs{K^\times}$ is a $\Q$-vector space dense in $(0, \infty)$.
\end{rmk}

It is common for non-Archimedean fields to not be \emph{spherically complete} - that is, there may be a sequence of nested closed disks \[\CD(a_1, r_1) \supset \CD(a_2, r_2) \supset \CD(a_3, r_3) \supset\CD(a_4, r_4) \supset \cdots\]
with empty intersection \[\bigcap_{n = 1}^\infty \CD(a_n, r_n) = \emp.\]
Of course, in a complete field, if $r_n \to 0$, then this intersection is always a singleton. As an example, consider the following sequence in the complete non-Archimedean field $\hk$.
\[\CD\left(1, \abs{x^{1-1/2}}\right) \supset \CD\left(1+ x^{1-1/2}, \abs{x^{1-1/3}}\right) \supset \CD\left(1+ x^{1-1/2} + x^{1-1/3}, \abs{x^{1-1/4}}\right) \supset \cdots \]
\[\cdots \supset \CD\left(\sum_{n=1}^{N-1} x^{1-1/n}, \abs{x^{1-1/N}}\right) \supset \cdots\]
One can check it has a non-empty intersection if and only if the infinite series $\sum x^{1-1/n}$ converges in $\hk$. The limit does not exist! We will return to this idea when we discuss Type IV norms in the Berkovich projective line. In any case, if the intersection of nested closed disks is non-empty, containing say $a \in K$, then \[\bigcap_{n = 1}^\infty \CD(a_n, r_n) = \CD(a, r),\] where $r = \lim r_n$ is possibly zero. 

\subsection{Projective Line and Affinoids}

So far we have discussed a non-Archimedean field $K = \A^1(K)$, which is the affine line, but in general we shall work on the projective line $\P^1(K) = \A^1(K) \cup \set \infty$ with its usual definition. It is natural to extend the definition of disks and their types to one that is invariant of the fractional linear transformations $\PGL(2, K)$. We shall also recall (from \cite[\S3.5]{Bene}) important topological objects called affinoids, which are merely disks subtracted from disks.

\begin{defn}
 Let $(K, \abs\cdot)$ be a non-Archimedean field. A \emph{disk} is any one of the following.
\begin{itemize}
 \item A \emph{rational closed disk} $D \subset \P^1(K)$ is either a rational closed disk $D \subset K$ or $D = \P^1(K) \sm E$ where $E \subset K$ is a rational open disk.
  \item A \emph{rational open disk} $D \subset \P^1(K)$ is either a rational open disk $D \subset K$ or $D = \P^1(K) \sm E$ where $E \subset K$ is a rational closed disk.
   \item An \emph{irrational disk} $D \subset \P^1(K)$ is either an irrational disk $D \subset K$ or $D = \P^1(K) \sm E$ where $E \subset K$ is an irrational disk.
\end{itemize}
\end{defn}

The generalisation of \autoref{prop:nonarch:disks} (iii) would include the possibility that two disks cover the whole space.

\begin{defn}
A \emph{connected affinoid} is a nonempty intersection of finitely many disks $D_1, \dots, D_n$. If all of the disks $D_1, \dots, D_n$ are closed, open, rational, or irrational, then the connected affinoid
$D_1 \cap \cdots \cap D_n$ is respectively said to be closed, open, rational, or irrational.
The connected open affinoid of the form $\set{r < \abs{z-a} < R} = D(a, R) \sm \CD(a, r)$ is called an \emph{open annulus}.
\end{defn}

If two connected affinoids $U$ and $V$ and non-empty intersection, then both $U \cap V$ and $U \cup V$ are connected affinoids.

\subsection{Power Series and Constructing Seminorms}\label{sec:seminorms}

To study rational functions on a non-Archimedean field $K$, we will want to understand them as analytic functions in neighbourhoods. Here, `analytic' means given by Taylor or Laurent series, rather than some notion of holomorphy, however it remains true that any rational function is locally analytic. In this section we will define and recall some notions of convergent power series for non-Archimedean fields. See \cite[\S 3]{Bene}.

\subsection{Taylor Series}

Following \autoref{prop:nonarch:series} a Taylor series
\[f(y) = \sum_{n=0}^\infty c_n (y - a)^n \in K[[y-a]]\] will converge at $y=b$ if and only if $\abs{c_n(b-a)^n} \to 0$. Let $\abs{b-a} = r$, then the series converges at $b$ if and only if it converges at every $b' \in \CD(a, r)$. This behaviour is a little nicer than the Archimedean situation in complex analysis.

\begin{defn}
 The \emph{radius of convergence}, $R \in [0, \infty]$, for a Taylor series $f(y)$ as above is \[R = \sup \set[r \in \R]{\abs{c_n}r^n \to 0}.\]
 The \emph{domain of convergence} of a Taylor series $f(y)$, is defined to be \[\dom(f) = \set[a\in K]{f(a) \text{ converges}}.\]
\end{defn}

\begin{prop}
 Let $(K, \abs\cdot)$ is non-trivially and densely valued non-Archimedean field. Let $a \in K$ and $f(y) \in K[[y-a]]$ be a Taylor series with radius of convergence $R$. Then
 \[\dom(f) = 
\begin{cases}
 D(a, R) = \CD(a, R), & \text{if } R \nin \abs{K^\times}, \text{ or otherwise} \\
 D(a, R), & \text{if } \abs{c_n}R^n \nrightarrow 0, \text{ and}\\
 \CD(a, R), & \text{if } \abs{c_n}R^n \to 0.
\end{cases}
\]
\end{prop}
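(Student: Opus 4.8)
The plan is to reduce the claim to an elementary observation about which \emph{radii} occur. By \autoref{prop:nonarch:series}, the series $f(b) = \sum_{n \ge 0} c_n (b-a)^n$ converges if and only if $|c_n|\,|b-a|^n \to 0$, so whether $b \in \dom(f)$ depends on $b$ only through the value $r := |b-a|$. Accordingly, set $S := \{\, r \ge 0 : |c_n| r^n \to 0 \,\}$, so that $\dom(f) = \{\, b \in K : |b-a| \in S \,\}$ and $R = \sup S$ by the definition of the radius of convergence. First I would record that $S$ is downward closed (if $0 \le r' \le r$ then $|c_n| r'^n \le |c_n| r^n$), and hence that $[0,R) \subseteq S \subseteq [0,R]$: the left inclusion because any $r < \sup S$ is $\le$ some element of $S$, the right because $R$ is an upper bound for $S$. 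Therefore $S$ equals exactly $[0,R)$ or $[0,R]$, the latter occurring precisely when $R \in S$, i.e.\ when $|c_n| R^n \to 0$. (One may assume $0 < R < \infty$: if $R = 0$ then $S = \{0\}$ and $\dom(f) = \{a\}$, while if $R = \infty$ then $\dom(f) = K = D(a,\infty) = \CD(a,\infty)$, both matching the stated formula since $\infty \notin |K^\times|$.)

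It then remains to translate the dichotomy ``$S = [0,R)$ versus $S = [0,R]$'' back into disks, distinguishing whether $R$ lies in the value group. If $R \notin |K^\times|$, then no $b \in K$ satisfies $|b-a| = R$ (as $|b-a| \in |K^\times| \cup \{0\}$ and $R \neq 0$), so $\{b : |b-a| \in S\} = \{b : |b-a| < R\} = D(a,R)$ regardless of whether $R \in S$; moreover $D(a,R) = \CD(a,R)$ for an irrational radius $R$, as already observed after \autoref{prop:nonarch:disks}. This is the first case. If instead $R \in |K^\times|$, the ``sphere'' $\{\, b \in K : |b-a| = R \,\}$ is nonempty and is contained in $\dom(f)$ exactly when $R \in S$. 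Since in any event $D(a,R) \subseteq \dom(f) \subseteq \CD(a,R)$ from the previous paragraph, we conclude $\dom(f) = \CD(a,R)$ when $|c_n| R^n \to 0$ and $\dom(f) = D(a,R)$ when $|c_n| R^n \nrightarrow 0$, which are the remaining two cases.

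I do not expect a genuine obstacle: essentially all of the content lies in the remark that convergence is a function of $|b-a|$ alone, after which the rest is bookkeeping with the value group. The one point deserving care is the treatment of the boundary sphere $\{|b-a| = R\}$ — checking that it is genuinely nonempty when $R \in |K^\times|$ (so that cases (2) and (3) really differ) and that it collapses into the open disk when $R \notin |K^\times|$ — together with confirming that the three cases agree on their overlaps; for instance, when $R \notin |K^\times|$ and $|c_n| R^n \to 0$, both the first and third cases apply and coincide because $D(a,R) = \CD(a,R)$ there. Non-triviality and density of the valuation enter only through the standard structure of $|K^\times|$ used above and in \autoref{prop:nonarch:disks}, so no further hypothesis on $K$ is needed.
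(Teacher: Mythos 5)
Your argument is correct and is the standard one: convergence at $b$ depends only on $|b-a|$ by \autoref{prop:nonarch:series}, the set of admissible radii is downward closed, and the dichotomy is settled by whether $R$ lies in the value group; the paper itself states this proposition without proof as background (it is essentially \cite[\S 3]{Bene}), so there is nothing to diverge from. The only negligible quibble is your parenthetical about $R=0$, where the displayed formula is not really meaningful ($D(a,0)=\emptyset$ while $\dom(f)=\{a\}$); the statement implicitly assumes $R>0$, and your main argument is unaffected.
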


\begin{defn}\label{defn:nonarch:powerseriesring}
 Let $(K, \abs\cdot)$ be a non-Archimedean field. We define the power series rings
 \[\mathcal A(a, r) = \set[f \in {K[[y-a]]}]{D(a, r) \subseteq \dom(f)}\]
 \[\overline{\mathcal A}(a, r) = \set[f \in {K[[y-a]]}]{\CD(a, r) \subseteq \dom(f)}\]
\end{defn}

Note that the polynomials live in every power series ring, moreover $K \subset K[y] \subset \overline{\mathcal A}(a, r) \subset \mathcal A(a, r)$.


\begin{defn}[Weierstrass Degree]\label{defn:nonarch:wdegdisk}
 Let $a \in K$, \[f(y) = \sum_{n=0}^\infty c_n (y-a)^n \] be a non-zero power series with radius of convergence $r > 0$, and let $D = \dom(f)$. The \emph{Weierstrass degree} $\wdeg_D(f)$ of $f$ is defined according to two cases as follows.
\begin{enumerate}
 \item If $D = \CD(a, r)$ is a rational closed disk, then \[\wdeg_D(f) = \max \set [d \in \N]{\abs{c_d}r^d = \max_n \abs{c_n}r^n}.\]
 \item If $D = D(a, r)$ is an irrational or a rational open disk, then \[\wdeg_D(f) = \min \set [d \in \N]{\abs{c_d}r^d = \sup_n \abs{c_n}r^n} \cup \set \infty.\]
\end{enumerate}
\end{defn}

The Weierstrass degree plays a crucial role in understanding the zeros images of analytic functions on a non-Archimedean field. One can easily check that $\wdeg_D(g) + \wdeg_D(h)$ = $\wdeg_D(gh)$ and hence any power series with a multiplicative inverse (a unit) has Weierstrass degree $0$. The Weierstrass preparation theorem shows that this quantity really is a `degree' on $D$.

\begin{thm}[Weierstrass Preparation Theorem]
 Let $(K, \abs\cdot)$ be a complete non-Archimedean field, $a \in K$, $r \in \abs{K^\times}$, and $f \in \overline{\mathcal A}(a, r)$ be a non-zero power series. Then there exists a monic polynomial $g \in K[y]$ of degree $d = \wdeg_{\CD(a, r)}(f)$ and a unit power series $h \in \overline{\mathcal A}^\times(a, r)$ such that $f = gh$ and all the zeroes of $g$ lie in $\CD(a, r)$.
\end{thm}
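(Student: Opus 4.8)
The plan is the classical successive-approximation argument, carried out in the complete $K$-algebra $T := \overline{\mathcal A}(0,1)$ with the Gauss norm $\|v\| := \max_n \abs{c_n}$ for $v = \sum_n c_n y^n$. First I would translate $a \mapsto 0$ and substitute $y = \lambda w$ for a $\lambda \in K^\times$ with $\abs\lambda = r$ (available since $r \in \abs{K^\times}$), reducing to $a = 0$, $r = 1$; then I would rescale $f$ by a constant so that $\|f\| = 1$, the maximum being attained because $f \in T$ forces $\abs{c_n} \to 0$ by \autoref{prop:nonarch:series}. None of this changes the Weierstrass degree, so now $d = \wdeg_{\CD(0,1)}(f) = \max\set[n \in \N]{\abs{c_n} = 1}$ is a finite integer, $\abs{c_n} \le 1$ for every $n$, and $\epsilon := \sup_{n>d}\abs{c_n}$ satisfies $\epsilon < 1$.

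The technical heart is a Weierstrass division lemma: for every $v \in T$ there are unique $q \in T$ and $s \in K[y]$ with $\deg s < d$, $v = qf + s$, and $\|q\|, \|s\| \le \|v\|$. To prove existence I would split $f = g_0 + \rho_0$, where $g_0 = \sum_{n=0}^{d} c_n y^n$ (so $\|g_0\| = 1$ and $g_0 = c_d P$ with $P$ monic of degree $d$ and coefficients in $\bO_K$) and $\rho_0 = \sum_{n>d} c_n y^n$ (so $\|\rho_0\| = \epsilon$). Using the standard fact that division by a monic polynomial with $\bO_K$-coefficients is norm non-increasing on $T$ (cf.\ \cite{Bene}), one obtains a ``division by $g_0$'' operation: every $w \in T$ is $w = Qg_0 + S$ with $\deg S < d$ and $\|Q\|,\|S\| \le \|w\|$. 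I would then iterate: set $v = Q_0 g_0 + S_0$, then $Q_0\rho_0 = Q_1 g_0 + S_1$, then $Q_1\rho_0 = Q_2 g_0 + S_2$, and so on, which forces $\|Q_k\|, \|S_k\| \le \epsilon^k \|v\|$. By \autoref{prop:nonarch:series} the series $q := \sum_{k\ge0}(-1)^k Q_k$ and $s := \sum_{k\ge0}(-1)^k S_k$ converge in $T$ with $\deg s < d$ and $\|q\|,\|s\| \le \|v\|$, and collecting terms via $Q_{k-1}\rho_0 = Q_k g_0 + S_k$ telescopes the sum to $qf = v - s$. For uniqueness, if $qf + s = 0$ with $q \ne 0$, normalise $\|q\| = 1$; then $\|s\| = \|qf\| = 1$, and reducing modulo $\mathcal M_K$ gives $\overline q\,\overline f = -\overline s$ in $k[y]$ with $\deg\overline f = d > \deg\overline s$, contradicting that $k[y]$ is an integral domain.

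Granting the lemma, the rest is bookkeeping: apply it to $v = y^d$ to get $y^d = qf + s$ with $\deg s < d$ and $\|q\|,\|s\| \le 1$, and set $g := y^d - s$, a monic degree-$d$ polynomial over $K$ with $\|g\| = 1$ and $g = qf$. Since $\|q\| = \|f\| = \|qf\| = 1$ there is no leading-term cancellation, so $\overline g = \overline q\,\overline f$ in $k[y]$ with $\deg\overline g = d = \deg\overline f$; hence $\overline q$ is a nonzero constant, so $q = q_0(1 + w)$ with $\abs{q_0} = 1$, $\|w\| < 1$, and $q$ is a unit of $T$ (inverse $q_0^{-1}\sum_{k\ge0}(-w)^k$, convergent by \autoref{prop:nonarch:series}); put $h := q^{-1}$, a unit, so $f = hg$. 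If $\alpha$ is a zero of $g$ with $\abs\alpha > 1$, then $\abs\alpha^d = \abs{\alpha^d} = \abs{s(\alpha)} \le \|s\|\,\abs\alpha^{d-1} \le \abs\alpha^{d-1} < \abs\alpha^d$, absurd, so every zero of $g$ lies in $\CD(0,1)$. Finally I would undo the normalisations, noting that the affine substitution turns $g(w)$ into $\lambda^{d} g\!\left(\tfrac{y-a}{\lambda}\right)$, still monic of degree $d$ in $y$ and with all zeros in $\CD(a,r)$, while the spurious constant and $\lambda$-power factors get absorbed into $h$. The one genuine difficulty is the division lemma — isolating the norm-controlled division by $g_0$ and verifying the correction series telescopes; one must also double-check that the three normalisations leave $\wdeg_{\CD(\cdot,\cdot)}$ invariant and that reduction modulo $\mathcal M_K$ is multiplicative on the norm-one elements in play (which is exactly why $\|q\| = \|f\| = \|qf\| = 1$ was recorded).
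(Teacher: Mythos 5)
Your proposal is correct, but there is nothing in the paper to compare it against line by line: the paper states the Weierstrass Preparation Theorem as background and takes it from Benedetto's book \cite{Bene} without proof, so you have in effect supplied the missing argument. What you wrote is the classical Tate-algebra route — prove Weierstrass \emph{division} in $\overline{\mathcal A}(0,1)$ by successive approximation against the splitting $f = g_0 + \rho_0$, then get preparation by dividing $y^d$ by $f$ — and the details check out: the affine normalisations do preserve the closed-disk Weierstrass degree, the recursion $Q_{k-1}\rho_0 = Q_k g_0 + S_k$ telescopes exactly as you claim to give $v = qf + s$ with $\|q\|,\|s\| \le \|v\|$, the uniqueness step and the identification of $\overline q$ as a nonzero constant both rest on multiplicativity of the Gauss norm (in the paper's language, that $\zeta(0,1)$ is a multiplicative norm, \autoref{prop:seminorms:order}) together with reduction being a ring homomorphism on the closed unit ball, and the estimate $\abs{s(\alpha)} \le \|s\|\abs{\alpha}^{d-1}$ correctly forces the roots of $g$ into the disk (the case $d = 0$ degenerates harmlessly to $g = 1$, $f$ a unit). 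If you were to write this out in full, two ingredients deserve explicit treatment rather than a citation: completeness of $\overline{\mathcal A}(0,1)$ for the Gauss norm — this is precisely where the hypothesis that $K$ is complete enters and what licenses every appeal to \autoref{prop:nonarch:series} — and the norm-non-increasing division by the monic $\bO_K$-polynomial $P$ with $g_0 = c_d P$, which is routine (divide monomial by monomial and sum, using $\abs{c_n} \to 0$) but is genuinely the engine of your iteration. With those spelled out, the argument is a complete and self-contained proof of the stated theorem, valid for any complete non-Archimedean $K$ without assuming algebraic closedness.
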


This has several consequences; immediately, we see that in an algebraically closed complete field, such a power series $f$ has $d$ zeroes (counting multiplicity) in the disk; secondly, moreover $f$ is a $d$-to-$1$ mapping on $\CD(0, r)$.

\begin{thm}\label{thm:seminorms:mapdisk}
Let $(K, \abs\cdot)$ be a complete and algebraically closed non-Archimedean field, $a \in K$, and \[f(y) = \sum_{n=0}^\infty c_n (y-a)^n \] be a non-zero power series converging on a disk $D$ of radius $r > 0$. Suppose that $d = \wdeg_D(f - c_0) < \infty$, then
\begin{enumerate}
 \item $f(D)$ is a disk of the same type as $D$ (rational closed, rational open, or irrational), centred at $f(a) = c_0$, of radius $\abs{c_d}r^d$; and
 \item $f : D \to f(D)$ is a $d$-to-$1$ mapping, counting multiplicity.
\end{enumerate}
\end{thm}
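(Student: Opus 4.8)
The plan is to reduce everything to the Weierstrass Preparation Theorem on rational closed disks, handling the open and irrational cases by an exhaustion argument. First I would normalise: replacing $f(y)$ by $f(y+a)-c_0$ translates so that $a=0$ and $c_0=0$, alters $f(D)$ only by the translation $w\mapsto w+c_0$, and leaves the Weierstrass degree and all multiplicities unchanged; moreover $d<\infty$ forces $f$ non-constant, so $d\ge 1$. Thus it suffices to treat a power series $f(y)=\sum_{n\ge 1}c_ny^n$ on a disk $D$ of radius $r$ centred at $0$ with $d=\wdeg_D(f)<\infty$, and to show that $f(D)$ is a disk of the same type centred at $0$ of radius $\rho:=|c_d|r^d$ on which $f$ is $d$-to-$1$. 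The inclusion $f(D)\subseteq\set{\,|w|\le\rho\,}$ (respectively $\set{\,|w|<\rho\,}$ in the open/irrational case) is immediate from the ultrametric inequality applied term-by-term, using $|c_n|r^n\le\rho$ in the closed case and $|c_n|s^n\le\rho\,(s/r)<\rho$ for $|y|=s<r$ otherwise.

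For the closed case $D=\CD(0,r)$ with $r\in|K^\times|$, the key observation is that for \emph{every} $w$ with $|w|\le\rho$ the series $f(y)-w$ again has Weierstrass degree $d$ on $\CD(0,r)$: its constant term has absolute value $|w|\le\rho=\max_n|c_n|r^n$, so the maximum is unchanged, and the largest index attaining it is still $d$ since $d=\wdeg_{\CD(0,r)}(f)$. The Weierstrass Preparation Theorem then factors $f(y)-w=g(y)h(y)$ with $g$ monic of degree $d$, $h$ a unit, and all zeros of $g$ in $\CD(0,r)$; as $K$ is algebraically closed, $g$ has exactly $d$ roots with multiplicity, all in $\CD(0,r)$, and these are precisely the solutions of $f(y)=w$ with their natural multiplicities (agreeing with order of vanishing of $f-w$, since $h$ is nowhere zero). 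Since $d\ge 1$ every such $w$ is attained, so $f(D)=\CD(0,\rho)$, which is rational closed as $\rho\in|K^\times|$.

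For $D=D(0,r)$ open or irrational, I would exhaust $D$ by the rational closed subdisks $\CD(0,s)$ with $s\in|K^\times|$, $s<r$ (every point of $D$ has absolute value $<r$, using that $|K^\times|$ is dense; in the irrational case one also notes $\rho=|c_d|r^d\notin|K^\times|$, since $r^d\in|K^\times|$ would force $r\in|K^\times|$ in the divisible group $|K^\times|$). The crucial lemma is that $\wdeg_{\CD(0,s)}(f)=d$ for all $s$ close enough to $r$: for $m>d$ one has $|c_m|s^m=|c_m|r^m(s/r)^m<|c_d|r^d(s/r)^d=|c_d|s^d$ automatically, while for the finitely many $m<d$ the strict inequality $|c_m|r^m<\rho$ persists after shrinking $r$ to $s$ once $s$ is near $r$. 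Applying the already-proved closed case to $f$, and to $f-w$ for $|w|<\rho$ (whose added constant term again does not disturb the Weierstrass degree once $|c_d|s^d>|w|$), gives $f(\CD(0,s))=\CD(0,\rho_s)$ with $\rho_s=|c_d|s^d\uparrow\rho$ and exactly $d$ preimages of each such $w$ inside $\CD(0,s)$. Taking the union over $s$ yields $f(D)=D(0,\rho)$ of the correct type; and since for each $w$ the preimages lie in $\CD(0,s)$ for $s$ near $r$ while the fibre-count over $\CD(0,s)$ is constantly $d$ there (monotone in $s$, capped by the Weierstrass degree), $f$ is $d$-to-$1$ onto $f(D)$.

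The ultrametric estimates are routine; the step I expect to be the main obstacle is the open/irrational case --- specifically the stabilisation $\wdeg_{\CD(0,s)}(f)=d$ as $s\uparrow r$ together with the bookkeeping that no preimage escapes to the boundary, i.e. that the monotone family of finite fibre-counts over the exhausting closed subdisks genuinely stabilises at $d$ rather than continuing to grow.
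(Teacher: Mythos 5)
Your proposal is correct, and it follows the route the paper itself indicates: the paper states this theorem as background immediately after the Weierstrass Preparation Theorem (quoting it from Benedetto without proof), remarking that the $d$ zeros and the $d$-to-$1$ property on closed disks are direct consequences of that preparation. Your treatment — normalising, applying Weierstrass Preparation to $f-w$ for every $\abs{w}\le\abs{c_d}r^d$ on rational closed disks, then exhausting open/irrational disks by rational closed subdisks on which the Weierstrass degree stabilises at $d$ — is exactly the standard argument, and the fibre-count stabilisation step you single out is handled correctly.
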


\subsection{Laurent Series}

The \emph{formal} Laurent series $K((y-a)) = \Frac K[[y-a]]$ represent the set of power series in $(y-a)$ with infinitely many positive powers but only \emph{finitely many negative} ones. It is not hard to see that most of the results in the previous subsection apply to formal Laurent series on punctured domains. To be precise, if \[f(y) = \sum_{n= -n_0}^\infty c_n (y-a)^n \] is a formal Laurent series ($n_0 > 0$) and \[f_+(y) = \sum_{n=0}^\infty c_n (y-a)^n \] converges on a disk $\CD(a, r)$, then $f$ converges on $\CD^*(a, r) = \CD(a, r)\sm\set 0$. However, it will be useful to consider bi-infinite Laurent series when we inspect rational maps.

For example, suppose $0 < \abs a < \abs b$ and we want to consider the rational map \[f(y) = \frac 1{y-a} - \frac 1{y-b}\] over the annulus $U = \set{\abs a < \abs{z} < \abs b}$. We may expand both as series in powers of $y$ using the usual binomial trick \[(1-t)^{-1} = 1+ t + t^2 + \cdots,\] but this converges when and only when $\abs t < 1$. Therefore on the annulus $U$ we must expand $f$ as
\[f(y) = \frac 1y\frac 1{1-\frac ay} + \frac 1b\frac 1{1-\frac yb} = \frac 1y\sum_{n=0}^\infty \left(\frac ay\right)^n + \frac 1b\sum_{n=0}^\infty \left(\frac yb\right)^n.\]

In general, we can study rational maps through Laurent series converging on annuli. 

\begin{defn}
 Let $K$ be a densely valued non-Archimedean field and $(c_n)_{n=-\infty}^\infty \subset K$. A \emph{Laurent series} $f(y)$ about $a \in K$ is a series of the form \[f(y) = \sum_{n \in \Z} c_n (y - a)^n \in K[[y-a, (y-a)^{-1}]].\]
  The \emph{inner} and \emph{outer radii of convergence} for $f(y)$, $r, R \in [0, \infty]$, are defined respectively (if they exist) as \[r = \inf \set[s \in \R]{\abs{c_n}s^n \to 0}\] \[R = \sup \set[s \in \R]{\abs{c_n}s^n \to 0}.\]
 The \emph{domain of convergence} of a Laurent series $f(y)$, is defined to be \[\dom(f) = \set[a\in K]{f(a) \text{ converges}}.\] 
\end{defn}

\begin{prop}
 Let $(K, \abs\cdot)$ be a densely valued non-Archimedean field. Let $a \in K$ consider a Laurent series \[f(y) = \sum_{n\in\Z} c_n (y - a)^n \in K[[y-a, (y-a)^{-1}]].\] Then $f(y)$ will converge at $y=b$ if and only if $\abs{c_n(b-a)^n} \to 0$ both as $n \to \infty$ and as $n \to -\infty$. Hence $f(y)$ converges for some $y=b$ if and only if the inner and outer radii of convergence $r$ and $R$ both exist (with $r \le \abs{b-a} \le R$). In this case moreover the domain of convergence, $\dom(f)$ is one of the following annuli \[\set{r < \abs{z-a} < R} = D(a, R) \sm \CD(a, r),\]
 \[\set{r \le \abs{z-a} < R} = D(a, R) \sm D(a, r),\]
 \[\set{r < \abs{z-a} \le R} = \CD(a, R) \sm \CD(a, r),\]
 \[\set{r \le \abs{z-a} \le R} = \CD(a, R) \sm D(a, r),\]
 depending only on the boundary cases, whether $\abs{c_n}r^n \to 0$ and/or $\abs{c_n}R^n \to 0$.
\end{prop}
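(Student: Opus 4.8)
The plan is to reduce everything to the Taylor-series case already treated (the domain-of-convergence proposition stated just before \autoref{defn:nonarch:powerseriesring}) by splitting off the principal part. Write $f = f_+ + f_-$, where $f_+(y) = \sum_{n \ge 0} c_n(y-a)^n \in K[[y-a]]$ is an ordinary Taylor series and $f_-(y) = \sum_{n \le -1} c_n(y-a)^n$ is the principal part. First I would dispose of the bi-infinite convergence: for a fixed $b \in K$, the partial sums of $\sum_{n \in \Z} c_n(b-a)^n$ converge if and only if both one-sided tails do, and by the non-Archimedean Cauchy criterion \autoref{prop:nonarch:series} each tail converges if and only if its terms tend to $0$ in the relevant index direction. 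Hence $f$ converges at $b$ exactly when $\abs{c_n(b-a)^n} \to 0$ both as $n \to +\infty$ and as $n \to -\infty$, and then $f(b) = f_+(b) + f_-(b)$; this is the first assertion, and it also shows $\dom(f) = \dom(f_+) \cap \dom(f_-)$.

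Next I would analyse $f_-$ through the substitution $w = (y-a)^{-1}$. For $y \ne a$ one has $f_-(y) = g(w)$ with $g(w) := \sum_{m \ge 1} c_{-m} w^m \in K[[w]]$ a Taylor series about $0$, and $\abs{w} = \abs{y-a}^{-1}$. Applying the Taylor-series proposition to $g$, it converges precisely on $D(0,\rho)$ or $\CD(0,\rho)$, where $\rho$ is the radius of convergence of $g$, the closed case occurring iff $\abs{c_{-m}}\rho^m \to 0$ and the two disks coinciding when $\rho \notin \abs{K^\times}$. Since $w \mapsto a + w^{-1}$ carries $\{\abs{w} < \rho\}$ onto $\{\abs{y-a} > \rho^{-1}\}$ and $\{\abs{w} \le \rho\}$ onto $\{\abs{y-a} \ge \rho^{-1}\}$ (with $w = 0$, i.e. $y = \infty$, falling outside $\A^1(K)$ and so irrelevant here), $f_-$ converges on $K$ exactly on $\{r < \abs{y-a}\}$ or $\{r \le \abs{y-a}\}$ with $r := \rho^{-1}$ (reading $1/0 = \infty$, $1/\infty = 0$). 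Re-indexing $n = -m$ identifies this $r$ with $\inf\{s : \abs{c_n}s^n \to 0 \text{ as } n \to -\infty\}$, the inner radius, the closed case occurring iff $\abs{c_n}r^n \to 0$ as $n \to -\infty$.

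The holomorphic part is handled by applying the same Taylor-series proposition directly to $f_+$: it converges on $D(a,R)$ or $\CD(a,R)$ with $R$ its radius of convergence — the outer radius $\sup\{s : \abs{c_n}s^n \to 0 \text{ as } n \to \infty\}$ — the closed case occurring iff $\abs{c_n}R^n \to 0$ as $n \to \infty$, again with the two disks coinciding if $R \notin \abs{K^\times}$. Intersecting this ball around $a$ with the complement-of-a-ball obtained for $f_-$ produces exactly one of the four listed annuli $D(a,R)\setminus\CD(a,r)$, $D(a,R)\setminus D(a,r)$, $\CD(a,R)\setminus\CD(a,r)$, $\CD(a,R)\setminus D(a,r)$, each boundary's open/closed status being governed respectively by whether $\abs{c_n}R^n \to 0$ (as $n \to \infty$) and whether $\abs{c_n}r^n \to 0$ (as $n \to -\infty$). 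This intersection is nonempty — equivalently $f$ converges at some $b$ — iff $r \le R$ (with the expected boundary membership in the degenerate case $r = R$); equivalently the inner and outer radii exist, and any such $b$ then satisfies $r \le \abs{b-a} \le R$. That yields the remaining assertions.

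I do not expect a serious obstacle; the work is essentially bookkeeping. The fiddliest point is the second step: correctly transporting the radius of convergence, the open/closed dichotomy, and the direction of the limit under the inversion $w = (y-a)^{-1}$, and checking that the $r$ produced this way coincides with the stated infimum formula, including the degenerate conventions $1/0 = \infty$ and $1/\infty = 0$ (and the cases $r = 0$ or $R = \infty$, where only the open boundary forms are meaningful). One must also be a little careful that ``the inner and outer radii exist'' is precisely the condition that the interval of admissible norms $\abs{b-a}$ is nonempty, i.e. $r \le R$, rather than a condition on the two tails separately, and note the harmless fact that $f_-$ formally converges also ``at $y = \infty$'', which lies outside $\A^1(K)$ and so does not affect the claim about $b \in K$.
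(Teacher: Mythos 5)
Your proposal is correct, and in fact the paper offers no proof of this proposition at all: it sits in the background section (\autoref{sec:seminorms}), where the Laurent-series facts are quoted essentially verbatim from Benedetto's development, so there is no in-paper argument to diverge from. Your route — split $f = f_+ + f_-$, use the non-Archimedean Cauchy criterion (\autoref{prop:nonarch:series}) to reduce bi-infinite convergence to the two tails, treat $f_+$ by the Taylor-series domain-of-convergence proposition and $f_-$ by the same proposition after the inversion $w = (y-a)^{-1}$, then intersect the resulting disk and complement-of-disk — is exactly the standard argument that the paper's parallel treatment of Taylor series implicitly presupposes, and your bookkeeping of open/closed boundaries and of the identification of $r$, $R$ with the one-sided infimum/supremum is sound.

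Two small caveats, both inherited from the statement rather than introduced by you: the implication ``terms tend to $0$ $\Rightarrow$ convergence'' requires completeness of $K$ (as \autoref{prop:nonarch:series} itself notes), a hypothesis the proposition omits but which the paper's own Taylor-series statement also tacitly assumes; and in the degenerate case $r = R$ (or $r = R \nin \abs{K^\times}$) the ``converges for some $b$ iff the radii exist'' equivalence is slightly loose, as you flag. Neither is a gap in your argument relative to the paper.
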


\begin{defn}
 Let \[f(y) = \sum_{n\in\Z} c_n(y - a)^n\] be a Laurent series about $a \in K$. On any open annulus $U = \set{r < \abs{z-a} < R} \subset \dom(f)$ we define
\begin{enumerate}
\item the \emph{inner Weierstrass degree} $\overline\wdeg_{a, r}(f)$ of $f$ to be the largest integer $M \in \Z$ such that $\abs{c_M}r^M = \sup_{n \in \Z}\abs {c_n}r^n$, or $-\infty$ if there is no such integer; and
\item the \emph{outer Weierstrass degree} $\wdeg_{a, R}(f)$ of $f$ to be the smallest integer $N \in \Z$ such that $\abs{c_N}R^N = \sup_{n \in \Z}\abs{c_n}R^n$, or $\infty$ if there is no such integer.
\end{enumerate}
\end{defn}

Note that for Taylor series, $\overline\wdeg_{a, r}(f) = \wdeg_{\CD(a, r)}$ and $\wdeg_{a, r}(f) = \wdeg_{D(a, r)}$.

Despite the hypothesis of the definition, one can think of the inner and outer Weierstrass degrees as a function of the radii and coefficients
\[\wdeg_{a, R}(f) = \min \set [d \in \N]{\abs{c_d}R^d = \sup_n \abs{c_n}R^n} \cup \set \infty,\] ignorant of domains or annuli. As the radius $R$ changes for the annulus $U = \set{r < \abs{z-a} < R}$, $U$ may absorb zeroes of $f(y)$; for each new zero (counted with multiplicity) the outer Weierstrass degree will \emph{increase} by one, and on annuli without zeroes the Weierstrass degree remains constant. This is made explicit in the proposition below, see \cite[Proposition 3.32]{Bene}. \improvement{point to the result about local degrees}
As suggested by the example above, all rational functions have Laurent series expansions on annuli away from their poles. If one considers the Weierstrass degree only as a function of the rational map and the radius, independent of the Laurent series representation. Then one can interpret this number a count of poles and zeroes. Indeed, we can see that the Weierstrass degree \emph{decreases} by one every time the radius crosses a pole of $f(y)$.

\begin{prop}\label{prop:seminorms:rationaldegrees}
Let $(K, \abs\cdot)$ be an algebraically closed, complete non-Archimedean field and let $f(y) \in K(y)$ be a rational function.
\begin{enumerate}
\item If $f$ has no poles in $U = \set{r < \abs{z-a} < R}$, an open annulus, then $f$ has a unique Laurent series expansion converging on $U$.
\item Hence we may write $\overline\wdeg_{a, r}(f)$ and $\wdeg_{a, R}(f)$ for the inner and outer Weierstrass degrees of this unique series at radius $R$ about $a$. Hence these quantities are well defined and finite for any $r, R > 0$.
\item If $f$ has no poles or zeros in $U= \set{r < \abs{z-a} < R}$, then the inner and outer Weierstrass degrees of $f$ on $U$ coincide. i.e. $\overline\wdeg_r(f) = \wdeg_R(f)$.
\item If $f$ has $N_\infty$ poles and $N_0$ zeros in the open disk $D(a, R)$, then \[\wdeg_{a, R}(f) = N_0-N_\infty.\]
\item If $f$ has $N_\infty$ poles and $N_0$ zeros in the closed disk $\CD(a, R)$, then \[\overline\wdeg_{a, R}(f) = N_0-N_\infty.\]
\item If $f$ has $N_\infty$ poles and $N_0$ zeros in $U = \set{r < \abs{z-a} < R}$, then \[\wdeg_{a, R}(f) - \overline\wdeg_{a, r}(f) = N_0-N_\infty.\]
\item If $f$ has $N_\infty$ poles and $N_0$ zeros in the circle $\CD(a, R)\sm D(a, R)$, then \[\overline\wdeg_{a, R}(f) - \wdeg_{a, R}(f) = N_0-N_\infty.\]
\end{enumerate}
\end{prop}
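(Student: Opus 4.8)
The plan is to follow Benedetto's proof of \cite[Proposition 3.32]{Bene}, reducing everything to an explicit computation with linear factors. After translating so that $a = 0$ (none of the relevant quantities change in character under $z \mapsto z+a$), I would handle part (i) by partial fractions. Writing $f = P/Q$ in lowest terms, the finite poles of $f$ are exactly the zeros of $Q$, so the hypothesis that $f$ has no poles in $U = \set{r < \abs z < R}$ says every zero $\beta$ of $Q$ satisfies $\abs\beta \le r$ or $\abs\beta \ge R$. Grouping the partial fraction decomposition: the polynomial part together with the terms $(z-\beta)^{-j}$ coming from poles with $\abs\beta \ge R$ each expand, via the binomial series for $(1 - z/\beta)^{-j}$, as a Taylor series in $z$ converging on $D(0,R) \supseteq U$, while the terms $(z-\beta)^{-j}$ from poles with $\abs\beta \le r$ each expand as a series in $z^{-1}$ converging on $\set{\abs z > r} \supseteq U$. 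Summing produces a Laurent series converging on all of $U$ and representing $f$ there. Uniqueness is the identity principle for convergent Laurent series on an annulus --- a series vanishing identically on $U$ has all coefficients zero --- which I would quote from the earlier non-Archimedean material rather than reprove (e.g.\ via the Gauss seminorm at a radius in $\abs{K^\times}\cap(r,R)$). Part (ii), finiteness and well-definedness, will fall out of the formula derived below.

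For parts (iii)--(vii) the engine is additivity of Weierstrass degrees on an annulus, $\wdeg_{a,R}(fg) = \wdeg_{a,R}(f) + \wdeg_{a,R}(g)$ and likewise $\overline\wdeg_{a,R}(fg) = \overline\wdeg_{a,R}(f) + \overline\wdeg_{a,R}(g)$. This I would prove straight from the definition: if $N_f$, $N_g$ are the least indices attaining $\sup_n \abs{c_n}R^n$ for the Laurent series of $f$, $g$, then in the Cauchy product every coefficient of index strictly below $N_f + N_g$ is strictly dominated in $R$-weighted size, while the coefficient of index $N_f + N_g$ has a unique dominant term, so the strong triangle equality forces $\wdeg_{a,R}(fg) = N_f + N_g$ (the $\overline\wdeg$ case, using largest dominant indices, is symmetric). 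Since $K$ is algebraically closed I may then factor $f = c\prod_i (z - \beta_i)^{m_i}$ with $m_i \in \Z$, the zeros of $f$ being the $\beta_i$ with $m_i > 0$ and the poles those with $m_i < 0$, each of multiplicity $\abs{m_i}$; so it suffices to understand the building blocks $z - \beta$ and $(z-\beta)^{-1}$.

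A direct computation handles these. From the two-term expansion of $z - \beta$ about $0$, $\wdeg_{0,R}(z-\beta) = 1$ if $\abs\beta < R$ and $0$ otherwise, while $\overline\wdeg_{0,R}(z-\beta) = 1$ if $\abs\beta \le R$ and $0$ otherwise. For $(z-\beta)^{-1}$ one expands as a geometric series on the side of the circle $\abs z = R$ appropriate to the degree being computed --- the annulus just inside that circle for $\wdeg_{0,R}$, just outside it for $\overline\wdeg_{0,R}$ --- which gives $\wdeg_{0,R}((z-\beta)^{-1}) = -1$ if $\abs\beta < R$ and $0$ otherwise, and $\overline\wdeg_{0,R}((z-\beta)^{-1}) = -1$ if $\abs\beta \le R$ and $0$ otherwise; the borderline case $\abs\beta = R$, where all weighted coefficients coincide, is exactly why the side matters. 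By additivity, $\wdeg_{0,R}((z-\beta)^m)$ equals $m$ when $\beta \in D(0,R)$ and $0$ otherwise, and $\overline\wdeg_{0,R}((z-\beta)^m)$ equals $m$ when $\beta \in \CD(0,R)$ and $0$ otherwise, with the analogous statements at radius $r$. Summing over the factors of $f$: $\wdeg_{a,R}(f) = \sum_{i :\, \beta_i \in D(a,R)} m_i$, which is $N_0 - N_\infty$ counted in $D(a,R)$, giving (iv); the same with $\CD(a,R)$ in place of $D(a,R)$ gives (v); their difference is $\sum m_i$ over the $\beta_i$ with $\abs{\beta_i - a} = R$, i.e.\ (vii); the inner degree at $r$ is $\overline\wdeg_{a,r}(f) = \sum_{i :\, \beta_i \in \CD(a,r)} m_i$, and since $\CD(a,r) \subset D(a,R)$, subtracting it from $\wdeg_{a,R}(f)$ leaves the signed count over $U$, i.e.\ (vi); finally (iii) is the case where no $\beta_i$ lies in $U$, so the counts in $D(a,R)$ and $\CD(a,r)$ agree. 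In particular all the degrees are finite integers, proving (ii).

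The main obstacle I expect is the boundary bookkeeping at the circles $\abs{z-a} = r$ and $\abs{z-a} = R$: one must fix and use consistently the convention that $\wdeg_{a,R}(f)$ is read off the Laurent expansion on an annulus with outer radius $R$ (just inside that circle) while $\overline\wdeg_{a,R}(f)$ is read off the expansion on an annulus with inner radius $R$, and then verify this is compatible with additivity --- the $(z-\beta)^{-1}$ computation with $\abs\beta = R$ being the delicate point. The only other non-elementary ingredient is the identity principle cited for uniqueness in (i); everything else is routine once the linear-factor computations are in place.
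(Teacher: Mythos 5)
The paper does not actually prove this proposition: it is imported verbatim from Benedetto \cite[Proposition 3.32]{Bene}, with only the surrounding prose (``the Weierstrass degree decreases by one every time the radius crosses a pole'') as commentary. Your proposal is correct and is essentially the standard argument behind that citation: partial fractions give existence of the expansion in (i), uniqueness is the identity principle for convergent Laurent series on an annulus (a legitimate quote), and (ii)--(vii) follow from additivity of the inner and outer Weierstrass degrees under multiplication together with the explicit computation for the factors $(z-\beta)^{\pm 1}$, which is available because $K$ is algebraically closed. Your boundary bookkeeping is the right point to worry about and you resolve it correctly: with the convention that $\wdeg_{a,R}$ is read off the expansion just inside the circle and $\overline\wdeg_{a,R}$ just outside, the case $\abs{\beta-a}=R$ gives $\wdeg_{a,R}(z-\beta)=0$, $\overline\wdeg_{a,R}(z-\beta)=1$, and the reciprocal values $0$ and $-1$, which is exactly what (iv), (v) and (vii) need. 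The only small point worth making explicit is that the additivity lemma should be stated for series whose weighted sups are attained (finite degrees); this is automatic for the finitely many explicit factors and their products on an annulus avoiding the poles, so no gap results.
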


One can extend \autoref{thm:seminorms:mapdisk} to the case of Laurent series and annuli as follows (see \cite[Theorem 3.33]{Bene}).

\begin{thm}\label{thm:seminorms:mapannulus}
Let $(K, \abs\cdot)$ be an algebraically closed, complete non-Archimedean field. Let $0 < r < R$, let $U = \set{r < \abs{z-a} < R}$ be an open annulus, and let $f(y)$ be a non-constant convergent Laurent series on $U \subset \dom(f)$. Write \[f(y) = \sum_{n\in\Z} c_n(y - a)^n\] and suppose that $f - c_0$ has finite inner and outer Weierstrass degrees $M \le N \in \Z$, respectively. Let \[s = \abs{c_M}r^M \and t = \abs{c_N}R^N.\]
\begin{enumerate}
 \item If $M < N$, then $f(U) = D\left(c_0, \max\set{s, t}\right)$.
 \item If $M = N \ge 1$, then $f(U) = \set{s < \abs{z-c_0} < t}$, and the mapping $f : U \to f(U)$ is $M$-to-$1$.
 \item If $M = N \le -1$, then $f(U) = \set{t < \abs{z-c_0} < s}$, and the mapping $f : U \to f(U)$ is $(-M)$-to-$1$.
\end{enumerate}
It follows that in the last two cases, $\abs{f(z) - c_0} = \abs{c_M(z-a)^M}$, for any $z \in U$.
\end{thm}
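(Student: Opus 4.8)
The plan is to recast the statement as a computation with the ``Newton polygon'' of $f$ over the annulus. The final displayed identity and the degenerate case $M=N$ will drop out of the extended strong triangle equality (\autoref{lem:nonarch:extendedtriangle}), while surjectivity and the exact fiber counts reduce to counting zeros of $f-w_0$ on sub-annuli. First I would normalise: translating by $a$ and subtracting $c_0$, we may assume $a=0$ and $c_0=0$, so that every claim concerns $f(y)=\sum_{n\in\Z}c_ny^n$ on $U=\set{r<\abs z<R}$; since $K$ is algebraically closed, $\abs{K^\times}$ is dense in $(0,\infty)$, so it is enough to work over the closed sub-annuli $\set{r'\le\abs z\le R'}$ with $r<r'<R'<R$ and $r',R'\in\abs{K^\times}$. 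For $\rho\in(r,R)$ set $\mu(\rho)=\max_n\abs{c_n}\rho^n$, a finite attained maximum by convergence of $f$ on $U$. Writing $\nu(\tau)=\log\mu(e^\tau)$ realises $\nu$ as a supremum of affine functions $\tau\mapsto\log\abs{c_n}+n\tau$, so $\nu$ is convex and piecewise affine, its slopes being exactly the indices attaining the maximum; by definition of the inner and outer Weierstrass degrees the slope of $\nu$ is $M$ just to the right of $\log r$ and $N$ just to the left of $\log R$. Convexity then forces $M\le N$, gives $\mu(\rho)\to s$ as $\rho\to r^+$ and $\mu(\rho)\to t$ as $\rho\to R^-$, and via \autoref{lem:nonarch:extendedtriangle} yields $\abs{f(z)}\le\mu(\abs z)$ on $U$ with equality whenever a single monomial strictly dominates.

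Suppose $M=N$. Then $\nu$ has the same slope $M$ at both ends, so by convexity it is affine of slope $M$ throughout $(\log r,\log R)$; since no affine function of a different slope can agree with it on an interval, $\abs{c_nz^n}<\abs{c_Mz^M}$ for every $n\ne M$ and $z\in U$. Hence $c_Mz^M$ strictly dominates $f(z)$ on $U$, and \autoref{lem:nonarch:extendedtriangle} gives $\abs{f(z)}=\abs{c_M}\abs z^M$ for all $z\in U$: undoing the normalisation this is precisely the final displayed identity, and it shows $f(U)\subseteq\set{s<\abs w<t}$ if $M\ge1$ and $f(U)\subseteq\set{t<\abs w<s}$ if $M\le-1$. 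For the reverse inclusion and the fiber count, the substitution $u=y^{-1}$ (re-expanding $f$ as a Laurent series in $u$) interchanges $(M,r,R,s,t)$ with $(-M,R^{-1},r^{-1},t,s)$, so we may assume $M=N\ge1$. Given $w_0$ with $s<\abs{w_0}<t$, choose $\rho_1,R'$ with $r<\rho_1<R'<R$ and $\abs{c_M}\rho_1^M<\abs{w_0}<\abs{c_M}(R')^M$. The identity $\abs{f(z)}=\abs{c_M}\abs z^M$ forces $f\ne w_0$ for $\abs z\le\rho_1$ and for $\abs z\ge R'$, while at radius $R'$ (resp.\ $\rho_1$) the monomial $c_Mz^M$ (resp.\ the constant $-w_0$) strictly dominates $f-w_0$, so the outer Weierstrass degree of $f-w_0$ at $R'$ is $M$ and its inner Weierstrass degree at $\rho_1$ is $0$. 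By the zero-counting relation for annular analytic functions --- the analogue of \autoref{prop:seminorms:rationaldegrees} for convergent Laurent series --- $f-w_0$ has exactly $M$ zeros, counting multiplicity, in $\set{\rho_1<\abs z<R'}$, hence exactly $M$ in $U$. Thus $f:U\to\set{s<\abs w<t}$ is onto and $M$-to-$1$, and case (3) follows by $u=y^{-1}$.

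It remains to treat $M<N$, where only the image is at issue. Since $\nu$ is convex with distinct end-slopes it is not affine, so $\mu(\rho)<\max\set{s,t}$ on $(r,R)$ and $f(U)\subseteq D(0,\max\set{s,t})$. Conversely, fix $w_0$ with $\abs{w_0}<\max\set{s,t}$; after a small perturbation we may assume $\abs{w_0}\notin\set{s,t}$, and it suffices to produce a zero of $f-w_0$ in $U$. Computing dominant monomials as above, at a radius $R'$ close to $R$ the outer Weierstrass degree of $f-w_0$ is $N$ if $\abs{w_0}<t$ and $0$ if $\abs{w_0}>t$, while at a radius $\rho_1$ close to $r$ its inner Weierstrass degree is $M$ if $\abs{w_0}<s$ and $0$ if $\abs{w_0}>s$. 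A short case check --- using that $\max\set{s,t}$ is one of $\mu(r^+)=s$ and $\mu(R^-)=t$, and that $N\ge1$ whenever $t\ge s$ while $-M\ge1$ whenever $s\ge t$ (both forced by $M<N$) --- shows the difference outer ${}-{}$ inner is always a positive integer: it equals $N-M$ when $\abs{w_0}<\min\set{s,t}$, and $N$ or $-M$ otherwise. By the same zero-counting relation, $f-w_0$ has a zero in $\set{\rho_1<\abs z<R'}\subseteq U$, and undoing the normalisation gives $f(U)=D(c_0,\max\set{s,t})$.

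The genuinely substantive input, and what I expect to be the main obstacle, is the zero-counting relation for annular analytic functions: that the outer minus the inner Weierstrass degree of a convergent Laurent series equals, with multiplicity, the number of its zeros in the open annulus. The excerpt records this only for rational $f$ (\autoref{prop:seminorms:rationaldegrees}), so I would first have to establish it for general convergent Laurent series --- for instance by a Weierstrass-type factorisation on a closed annulus that isolates a polynomial of degree outer ${}-{}$ inner carrying all the zeros, times zero-free factors --- and then dispose of the boundary radii $\abs{w_0}\in\set{s,t}$ by a limiting argument. Everything else is routine bookkeeping with the Newton polygon $\nu$ and the extended strong triangle equality.
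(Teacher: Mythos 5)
The paper does not actually prove this statement: it is imported as background and attributed to \cite[Theorem 3.33]{Bene}, so there is no in-house argument to compare against. Your Newton-polygon proof is, in outline, the standard proof of that result and is essentially correct: convexity of $\tau\mapsto\log\max_n\abs{c_n}e^{n\tau}$, identification of its extreme slopes with $M$ and $N$ (note $M,N\neq 0$ automatically, since the constant coefficient of $f-c_0$ vanishes), the single-dominant-monomial identity via \autoref{lem:nonarch:extendedtriangle} when $M=N$, the inversion $u=y^{-1}$, and the reduction of surjectivity and the fibre count to the fact that the outer minus the inner Weierstrass degree of $f-w_0$ on a sub-annulus counts its zeros there. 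You correctly identify that this zero-counting statement for general convergent Laurent series (rather than rational functions as in \autoref{prop:seminorms:rationaldegrees}) is the one substantive input; it is itself a standard background fact proved in \cite{Bene}, so relying on it puts you on exactly the footing the paper adopts by citation.

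Two repairs, neither structural. First, ``after a small perturbation we may assume $\abs{w_0}\notin\set{s,t}$'' is not a legitimate reduction: you must hit $w_0$ itself, and $f(U)$, being the image of an open set, need not be closed, so a naive limiting argument does not recover the boundary moduli. The detour is also unnecessary: at radii $\rho_1,R'$ strictly inside the annulus there is no tie even when $\abs{w_0}=s$ or $\abs{w_0}=t$, because the sign of $M$ (resp.\ $N$) decides whether $\abs{c_M}\rho_1^{M}$ lies above or below $s$ (resp.\ $\abs{c_N}(R')^{N}$ above or below $t$), and in every case the outer-minus-inner degree of $f-w_0$ is $N-M$, $N$, or $-M$, hence positive. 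Second, two of your one-line assertions deserve a sentence each: that the end-slopes of $\nu$ are exactly $M$ and $N$ uses the finiteness hypothesis together with the fact that the (integer, monotone) slopes stabilise near each end of the interval; and in case $M<N$ with $s=t$ the strict bound $\mu(\rho)<\max\set{s,t}$ on the open interval needs the touching-chord argument for convex functions, not merely ``$\nu$ is not affine''. With these points spelled out the proposal is sound.
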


Finally, we recall a description of how a rational map acts on affinoids, this is lifted from \cite[Proposition 3.29]{Bene}.

\begin{thm}\label{thm:semninorms:affinoidmapping}
Let $(K, \abs\cdot)$ be an algebraically closed, complete non-Archimedean field, and $U \subseteq \P^1(K)$ be a connected affinoid. Let $f(y) \in K(y)$ be a rational function of degree $d \ge 1$. Then
\begin{enumerate}
 \item $f(U)$ is either $\P^1(K)$ or a connected affinoid of the same type, if any, as $U$, and
 \item $f^{-1}(U)$ is a disjoint union $V_1 \cup \cdots \cup V_m$ of connected affinoids, each of the same type, if any, as $U$, and with $1 \le m \le d$.
Moreover, for each $i = 1, \dots, m$, there is an integer $1 \le d_i \le d$ such that every point in $U$ has exactly $d_i$ preimages in $V_i$, counting multiplicity, and such that $d_1 + \cdots + d_m = d$.
\end{enumerate}
\end{thm}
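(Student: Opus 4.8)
The plan is to pass to a convenient normal form by Möbius transformations and then to assemble everything from the already-established behaviour of convergent power and Laurent series on disks and annuli, namely \autoref{thm:seminorms:mapdisk}, \autoref{thm:seminorms:mapannulus} and \autoref{prop:seminorms:rationaldegrees}. Replacing $f$ by $\gamma \circ f \circ \delta$ with $\gamma,\delta \in \PGL(2,K)$ changes neither the degree of $f$ nor — as $|K^\times|$ is untouched — the type of any disk or affinoid, and it transports the conclusions from $f$ to $\gamma\circ f\circ\delta$; so I may assume $U \subsetneq \P^1(K)$ with $U \subset K$, written as $U = D_0 \setminus (E_1 \cup \cdots \cup E_k)$ for a closed or open disk $D_0$ about $0$ and pairwise disjoint disks $E_j \subset D_0$, all sharing the type of $U$ when $U$ has one. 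For the preimage statement I may moreover arrange, by a source-side Möbius map, that $\infty \notin f^{-1}(D_0)$, so that $f^{-1}(U) \subset K$ is bounded. In this form $\partial U$ is a union of $k+1$ spheres, and $U$ contains a half-open annular collar along each of them.

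\emph{The image.} If $f$ has a pole in $U$, then $1/f$ is analytic near it with a zero, so \autoref{thm:seminorms:mapdisk} applied to $1/f$ sends a small disk onto a full neighbourhood of $\infty$ and, threading this through the collars with \autoref{thm:seminorms:mapannulus}, $f(U) = \P^1(K)$. Otherwise $f$ is analytic on $U$, and a piece-by-piece analysis — \autoref{thm:seminorms:mapdisk} for the images of $D_0$ and the $E_j$, \autoref{thm:seminorms:mapannulus} for the images of the collars, then patching along overlaps — exhibits $f(U)$ as a connected affinoid of the same type as $U$.

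\emph{The preimage.} Write $U = D_1 \cap \cdots \cap D_n$, so $f^{-1}(U) = \bigcap_i f^{-1}(D_i)$. For a single disk $D$, expand $f$ by partial fractions as a convergent Laurent series on each disk and annulus avoiding the poles of $f$; the level-set description of $f^{-1}(D)$ then assembles, via \autoref{thm:seminorms:mapdisk} and \autoref{thm:seminorms:mapannulus}, into a finite disjoint union of connected affinoids each of the type of $D$. Since a nonempty intersection of connected affinoids is again one, taking the finite intersection over $i$ gives $f^{-1}(U) = V_1 \sqcup \cdots \sqcup V_m$ with each $V_i$ a connected affinoid of the type of $U$. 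For the local degrees, restrict $f$ to the collars bordering $V_i$: by \autoref{thm:seminorms:mapannulus} together with \autoref{prop:seminorms:rationaldegrees}, the difference of the outer and inner Weierstrass degrees across such a collar equals the number of solutions of $f(z)=w$ inside $V_i$, a value $d_i \ge 1$ independent of the choice of $w \in U$; and because a degree-$d$ rational map is $d$-to-$1$ globally with multiplicity, $\sum_i d_i = d$, whence $1 \le m \le d$ and each $d_i \le d$.

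\emph{Main obstacle.} The delicate part is the preimage step: showing that every connected component $V_i$ of $f^{-1}(U)$ is genuinely a connected affinoid, and of the \emph{same type} as $U$, rather than some more intricate nested-disk configuration; and that the local degrees $d_i$ are well defined and add up to $d$. This is the place carrying the ``fundamental theorem of algebra'' content of the statement, and it hinges on matching the inner and outer Weierstrass degrees of $f - w$ across the thin annuli that separate the $V_i$ from one another and from the rest of $\P^1(K)$ — a careful global accounting of the zeros and poles of $f-w$ organised by the components of the preimage.
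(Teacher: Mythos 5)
The paper never proves this statement: it is recalled as background, lifted verbatim from Benedetto \cite[Proposition 3.29]{Bene}, so there is no in-paper argument to compare yours against; judged on its own merits, your outline contains one step that is simply false and leaves the decisive step unproved. The false step is in the image case: having a pole in $U$ does \emph{not} force $f(U) = \P^1(K)$. Take $f(y) = 1/y$ and $U = D(0,1)$: the pole $0$ lies in $U$, yet $f(U) = \P^1(K) \sm \CD(0,1)$, a rational open disk of the same type as $U$. A pole in $U$ only yields that $f(U)$ contains a full neighbourhood of $\infty$ (via \autoref{thm:seminorms:mapdisk} applied to $1/f$); "threading through the collars" cannot upgrade that to all of $\P^1(K)$, as the example shows. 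The usual reduction runs the other way: either $f(U) = \P^1(K)$ and there is nothing to prove, or some value $w \notin f(U)$ exists, and post-composing with a M\"obius map sending $w$ to $\infty$ makes $f$ pole-free on $U$, after which your analytic, piece-by-piece analysis with \autoref{thm:seminorms:mapdisk} and \autoref{thm:seminorms:mapannulus} is the right tool. As written, your case division leaves the situation ``pole in $U$ but $f(U) \ne \P^1(K)$'' with no argument at all.

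For the preimage statement, the outline is plausible but its core is asserted rather than proved, as you yourself concede in the ``main obstacle'' paragraph. That each component of $f^{-1}(D)$ for a disk $D$ is a connected affinoid of the same type, and above all that the fibre count $d_i$ over $w$ is \emph{constant} as $w$ ranges over $U$ (so that $d_1 + \cdots + d_m = d$ makes sense), is precisely the content of the theorem; it requires the argument-principle bookkeeping of \autoref{prop:seminorms:rationaldegrees} applied to $f - w$, with a verification that the relevant inner and outer Weierstrass degrees do not change as $w$ moves inside $U$ — and this verification is exactly what is missing. (The surrounding reductions are fine: M\"obius normalisation preserves degree and the type of affinoids, and $f^{-1}(U) = \bigcap_i f^{-1}(D_i)$ together with the fact that nonempty intersections of connected affinoids are connected affinoids correctly reduces to the single-disk case.) So the proposal is an outline of a workable strategy — essentially Benedetto's — but it is not yet a proof: repair the pole dichotomy as above and carry out the constancy-of-$d_i$ computation, or simply cite \cite[Proposition 3.29]{Bene} as the paper does.
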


\subsection{Seminorms of Power Series}

Power series rings $K[[y-a]]$ can be equipped with many different non-Archimedean seminorms, but we shall focus on those which agree with the absolute value on $K$. These shall constitute the seminorms of the Berkovich affine line. Whenever we define a seminorm $\norm[\zeta]\cdot$, notationally we will use $\norm[\zeta]\cdot$ and $\zeta$ interchangeably. This will make more sense after defining the Berkovich affine line which contains $K$ in the form of Type I points.

\begin{defn}[Type I Seminorm]
Let $(K, \abs\cdot)$ be a non-Archimedean field and $a \in K$. We define a function called a \emph{Type I seminorm} $\norm[a]\cdot : K[[y-a]] \to [0, \infty)$ by
\[\norm[a]f = \abs{f(a)}.\]
\end{defn}

\begin{prop}
 Let $a \in K$, then $\norm[a]\cdot$ is a well defined non-Archimedean, multiplicative seminorm on $K[[y-a]] \supset K[y]$, which extends the norm $\abs \cdot$ on $K$, i.e. $\norm[a]c = \abs c\quad \forall c\in K$. However it is never a norm since $\norm[a]{y-a} = 0$.
\end{prop}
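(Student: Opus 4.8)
The plan is to realise $\norm[a]{\cdot}$ as the pullback of the absolute value $\abs{\cdot}$ on $K$ along the evaluation-at-$a$ homomorphism, so that all the required structure is inherited for free. First I would make precise what ``$f(a)$'' means for a \emph{formal} power series: if $f = \sum_{n \ge 0} c_n (y-a)^n \in K[[y-a]]$, then substituting $y = a$ annihilates every term with $n \ge 1$, so $f(a) = c_0$ is simply the constant term. In particular no question of convergence arises, and the assignment $\ev_a : K[[y-a]] \to K$, $f \mapsto c_0$, is defined on the whole ring. A routine check (it is reduction modulo the ideal $(y-a)$) shows $\ev_a$ is a ring homomorphism which restricts to the identity on the subring $K$ of constants and to ordinary evaluation on $K[y]$.

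Next I would invoke the general principle that pulling a seminorm back along a ring homomorphism preserves the relevant adjectives: if $\psi : R \to S$ is a ring homomorphism and $\norm{\cdot}_S$ is a multiplicative non-Archimedean seminorm on $S$, then $r \mapsto \norm{\psi(r)}_S$ is a multiplicative non-Archimedean seminorm on $R$. Applying this with $R = K[[y-a]]$, $S = K$, $\psi = \ev_a$, and $\norm{\cdot}_S = \abs{\cdot}$ shows at once that $\norm[a]{\cdot} = \abs{\ev_a(\cdot)}$ is well defined with the desired properties. Concretely: $\norm[a]{0} = \abs{0} = 0$; the strong triangle inequality in $K$ gives $\norm[a]{f+g} = \abs{f(a)+g(a)} \le \max\{\abs{f(a)},\abs{g(a)}\} = \max\{\norm[a]{f},\norm[a]{g}\}$; and multiplicativity of $\abs{\cdot}$ gives $\norm[a]{fg} = \abs{f(a)g(a)} = \abs{f(a)}\,\abs{g(a)} = \norm[a]{f}\,\norm[a]{g}$.

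For the extension claim, a constant $c \in K$ is fixed by $\ev_a$, so $\norm[a]{c} = \abs{c}$; thus $\norm[a]{\cdot}$ restricts to $\abs{\cdot}$ on $K$ and to $f \mapsto \abs{f(a)}$ on $K[y]$. Finally, to see it is never a norm it suffices to exhibit one nonzero element of seminorm zero: $y - a \ne 0$ in $K[[y-a]]$, but $\ev_a(y-a) = 0$, so $\norm[a]{y-a} = 0$ (more precisely, the elements of zero seminorm are exactly those with vanishing constant term, i.e. the prime ideal $(y-a)$). There is essentially no obstacle in this argument; the only point that requires care is not to conflate this \emph{formal} evaluation, which is legitimate on all of $K[[y-a]]$ and always lands in $K$, with \emph{analytic} evaluation at a point $b \ne a$, which would demand convergence and is not defined on the whole ring. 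Once that distinction is flagged, the remaining verification is a two-line computation.
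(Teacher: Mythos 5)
Your proof is correct, and since the paper states this proposition without proof (it is treated as a routine verification), your argument — formal evaluation at $a$ is reduction modulo $(y-a)$, hence a ring homomorphism to $K$, and $\norm[a]{\cdot}$ is the pullback of $\abs{\cdot}$ along it, which automatically inherits the non-Archimedean and multiplicative properties, agrees with $\abs{\cdot}$ on constants, and kills the nonzero element $y-a$ — is exactly the intended one. Your remark distinguishing formal evaluation at the centre $a$ (always defined, no convergence issue) from analytic evaluation at other points is a worthwhile point of care and is consistent with how the paper later handles convergence on disks.
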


\begin{defn}[Type II/III Norm]
Let $(K, \abs\cdot)$ be a non-Archimedean field, $a \in K$, $0 < R$. 
We define the ring \[\mathcal L(a, R) = \set[f \in {K[[y-a, (y-a)^{-1}]]}]{\wdeg_{a, R}(f) < \infty}.\]
We also define a function $\norm[\zeta(a, R)]\cdot : \mathcal L(a, r) \to [0, \infty)$ by
\[\norm[\zeta(a, R)]f = \abs{c_d}R^d\]
where \[f(y) = \sum_{n=0}^\infty c_n (y-a)^n \in \mathcal L(a, R), \quad \wdeg_{a, R}(f) = d.\]
We say $\zeta(a, R)$ is a Type II or Type III norm if $R$ is rational or not, respectively.
\end{defn}

\begin{prop}\label{prop:seminorms:order}
 Let $a \in K$ and $r > 0$. Then $\norm[\zeta(a, r)]\cdot$ is a well defined non-Archimedean, multiplicative norm on $\mathcal L(a, r) \supset \overline{\mathcal A}(a, r) \supset K[y]$ which extends the norm $\abs \cdot$ on $K$. Hence, for any disk $D(b, R) \supsetneq \CD(a, r)$, $\norm[\zeta(a, r)]\cdot$ is a norm on $\overline{\mathcal A}(b, R)$ and $\mathcal A(b, R)$, moreover
 \[\norm[\zeta(a, r)]f \le \norm[\zeta(b, R)]f \quad \forall f\in \overline{\mathcal A}(b, R).\]
\end{prop}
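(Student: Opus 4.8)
The plan is to reduce the whole statement to the single observation that, for $f = \sum_n c_n(y-a)^n \in \mathcal L(a,r)$, one has
\[\norm[\zeta(a,r)]{f} = \sup_n \abs{c_n} r^n,\]
and that this supremum is attained at $n = \wdeg_{a,r}(f)$ (hence is a finite nonnegative real). This is immediate from the definition of the outer Weierstrass degree, and with it in hand most of the assertions are routine. Well-definedness and finiteness are exactly the hypothesis $\wdeg_{a,r}(f) < \infty$; the extension property is the one line $\norm[\zeta(a,r)]{c} = \abs{c}\,r^0 = \abs{c}$ for $c \in K$; and definiteness follows since $r > 0$ makes $\abs{c_n}r^n = 0$ equivalent to $c_n = 0$, so $\norm[\zeta(a,r)]{f} = 0$ forces $f = 0$. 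For the strong triangle inequality I would note that $\abs{c_n + c_n'}r^n \le \max(\abs{c_n}r^n, \abs{c_n'}r^n)$ termwise and pass to suprema. The containments are equally quick: a polynomial converges on all of $K \supseteq \CD(a,r)$, so $K[y] \subseteq \overline{\mathcal A}(a,r)$; and if $f \in \overline{\mathcal A}(a,r)$ then $f$ converges on $\CD(a,r)$, so $\abs{c_n}r^n \to 0$ by \autoref{prop:nonarch:series}, the supremum above is attained, and $\wdeg_{a,r}(f) < \infty$, i.e.\ $f \in \mathcal L(a,r)$.

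The one substantive point is multiplicativity, and that is where I expect the work to lie. The inequality $\norm[\zeta(a,r)]{fg} \le \norm[\zeta(a,r)]{f}\,\norm[\zeta(a,r)]{g}$ falls out of the non-Archimedean estimate applied coefficientwise to $fg$. For the reverse I would run the classical Gauss-lemma argument: put $M = \wdeg_{a,r}(f)$ and $N = \wdeg_{a,r}(g)$ --- the \emph{smallest} indices realising the respective suprema --- and examine the coefficient of $(y-a)^{M+N}$ in $fg$, which is the finite sum $\sum_{m+n = M+N} c_m c_n'$. The pair $(m,n) = (M,N)$ contributes $\abs{c_M c_N'}r^{M+N} = \norm[\zeta(a,r)]{f}\,\norm[\zeta(a,r)]{g}$, while any other pair has either $m < M$ (so $\abs{c_m}r^m < \norm[\zeta(a,r)]{f}$ by minimality of $M$) or $m > M$ and hence $n < N$ (so $\abs{c_n'}r^n < \norm[\zeta(a,r)]{g}$ by minimality of $N$); in either case its contribution is strictly smaller. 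By the extended strong triangle equality (\autoref{lem:nonarch:extendedtriangle}) the norm of the sum is therefore exactly $\norm[\zeta(a,r)]{f}\,\norm[\zeta(a,r)]{g}$, giving $\norm[\zeta(a,r)]{fg} \ge \norm[\zeta(a,r)]{f}\,\norm[\zeta(a,r)]{g}$. As a byproduct this shows $\wdeg_{a,r}(fg) = M + N < \infty$, so that $\mathcal L(a,r)$ is closed under multiplication and $\norm[\zeta(a,r)]{fg}$ is genuinely well-defined.

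For the ``hence'' clause, note first that $D(b,R) \supsetneq \CD(a,r)$ forces $r < R$: otherwise $a \in \CD(a,r) \subseteq \CD(b,R)$ gives $\CD(a,R) = \CD(b,R)$ by \autoref{prop:nonarch:disks}, and then $\CD(a,r) \subseteq D(b,R) \subsetneq \CD(b,R) = \CD(a,R) \subseteq \CD(a,r)$, a contradiction. Now for $f \in \mathcal A(b,R)$ (a fortiori for $f \in \overline{\mathcal A}(b,R)$), $f$ converges on $D(b,R) \supseteq \CD(a,r)$, so re-expanding $f$ as a Taylor series about $a$ (permissible since $a \in D(b,R)$, the re-expansion again converging on $\CD(a,r)$) places $f$ in $\overline{\mathcal A}(a,r) \subseteq \mathcal L(a,r)$; hence $\norm[\zeta(a,r)]{\cdot}$ restricts to a non-Archimedean multiplicative norm on $\overline{\mathcal A}(b,R)$ and $\mathcal A(b,R)$. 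Finally, for $f \in \overline{\mathcal A}(b,R)$, write $f = \sum_n e_n(y-a)^n$ in its expansion about $a$; it converges on $\CD(a,R) = \CD(b,R)$, so $\abs{e_n}R^n \to 0$, and then
\[\norm[\zeta(a,r)]{f} = \sup_{n\ge 0}\abs{e_n}r^n \le \sup_{n\ge 0}\abs{e_n}R^n = \norm[\zeta(a,R)]{f} = \norm[\zeta(b,R)]{f},\]
the middle step being termwise monotonicity (here $0 < r \le R$ and all exponents are $\ge 0$) and the last equality being the invariance of the Gauss norm under re-centering inside a fixed closed disk, which I would verify by a short binomial computation using $\abs{\binom{k}{j}} \le 1$ and $\abs{a-b} \le R$. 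The only real obstacle in all of this is the reverse multiplicativity inequality; everything else is bookkeeping with suprema and the elementary disk combinatorics of \autoref{prop:nonarch:disks}.
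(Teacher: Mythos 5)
Your overall strategy -- reduce everything to $\norm[\zeta(a,r)]{f}=\sup_n\abs{c_n}r^n$ with the supremum attained at the Weierstrass degree, then prove multiplicativity by the Gauss-lemma trick of isolating the lowest sup-attaining indices $M,N$ and showing the $(M,N)$ term strictly dominates all other contributions to the coefficient of $(y-a)^{M+N}$ -- is the standard argument, and the paper itself offers no proof of this proposition (it is stated as an adaptation of material from Benedetto's book), so there is no authorial argument for you to diverge from; in substance your proof is correct and is the expected one.

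Two repairs are needed, one of them more than cosmetic. First, in the multiplicativity step you call $\sum_{m+n=M+N}c_mc_n'$ a \emph{finite} sum. That is true for elements of $K[y]$ and $\overline{\mathcal A}(a,r)$, but $\mathcal L(a,r)$ is populated by two-sided Laurent series -- indeed the whole point of the definition is to cover rational functions expanded on annuli, where infinitely many negative powers occur -- and there the convolution defining each coefficient of $fg$ is an infinite series. To run your argument in that generality you must add a convergence hypothesis, e.g.\ $\abs{c_n}r^n\to 0$ as $n\to\pm\infty$ (automatic for Laurent series converging on a closed annulus containing the circle of radius $r$, hence for all the series the paper actually uses): this guarantees the convolution converges, that only finitely many cross terms exceed any positive threshold (so the supremum over pairs $(m,n)\ne(M,N)$ is attained and strictly below $\norm[\zeta(a,r)]{f}\,\norm[\zeta(a,r)]{g}$), and then \autoref{lem:nonarch:extendedtriangle} applies as you intend. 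Without some such hypothesis the product of two formal elements of $\mathcal L(a,r)$ need not even be defined (and $\mathcal L(a,r)$ as literally defined is not closed under the ring operations) -- an imprecision inherited from the paper's definition, but one your write-up should acknowledge rather than paper over with the word ``finite''. Second, a small slip in the ``hence'' clause: the strict inclusion $D(b,R)\subsetneq\CD(b,R)$ you invoke is false when $R\nin\abs{K^\times}$, since then the open and closed disks coincide; the contradiction should instead be drawn from the strictness in the hypothesis $D(b,R)\supsetneq\CD(a,r)$ together with $\CD(a,R)=\CD(b,R)$ from \autoref{prop:nonarch:disks}, which yields $\CD(a,r)\subsetneq D(b,R)\subseteq\CD(a,R)\subseteq\CD(a,r)$ when $r\ge R$. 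The remaining ingredients you defer to -- re-expansion of a convergent power series about a new centre with the same region of convergence, and invariance of $\zeta(b,R)$ under recentering via $\abs{\binom{k}{j}}\le 1$ -- are standard and fine to cite or verify as you indicate.
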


If $f \in \overline{\mathcal A}(a, R)$ or more generally $f$ is a Laurent series converging on a closed annulus $E = \set{R- \eps \le \abs{z-a} \le R}$, i.e. $\abs{c_n}R^n \to 0$ as $n \to \pm \infty$, then the supremum $\sup_n \abs{c_n}R^n$ is attained at some $d \in \Z$ and thus the outer Weierstrass degree of $f$ is finite. Therefore $\mathcal L(a, r) \supset \overline{\mathcal A}(a, r)$. However if $f$ converges on $D(a, r)$ but not $\CD(a, r)$ then the sequence $\abs{c_n}r^n$ may not attain its supremum or be bounded.

 By \autoref{prop:seminorms:rationaldegrees}, any rational function $f(y)$ for any radius $R$ has a Laurent expansion on $U = \set{R-\eps < \abs{z-a} < R}$, hence the above definition works for all rational functions $f \in K(y) \subset \mathcal L(a, r)$. One could also consider the opposite annulus $\set{R < \abs{z-a} < R+\eps}$ and try make a similar definition using the inner Weierstrass degree; or one could pick a different centre $b\in \CD(a, R)$. These all turn out to be equal. Further, the Type II/III norm $\norm[\zeta(a, r)]{\cdot}$ is actually a `sup-norm' on $\overline{\mathcal A}(a, r)$. This is showcased in \cite{Bene} but we shall state and prove a little more. \unsure{Haven't seen reference, but doubt this is really new.}\unfinished{PROVE}

\begin{prop}\label{prop:seminorms:typeII/IIImain}
 Let $(K, \abs\cdot)$ be a non-Archimedean field, $a \in K$, $r > 0$, and $f(y)$ be a non-constant rational function. 
 Pick $\eps > 0$ such that $f$ has no poles on $V = \set{R < \abs{z-a} < R + \eps}$ and write \[f(y) = \sum_{n \in \Z} c_n (y-a)^n, \quad \overline \wdeg_{a, R}(f) = d.\] Then
\[\norm[\zeta(a, R)]f = \abs{c_d}R^d = \lim_{\abs{b} \to R}\abs{f(b)}.\]
Where the limit excludes $\abs b = R$. 
 Moreover, $\norm[\zeta(a, R)]f = \abs{f(b)}$ for every $b$ in all but finitely many residue classes of $\CD(a, R)$, i.e. avoiding any open disks $D(b, R) \subset \CD(a, R)$ containing zeroes or poles of $f$.
  Furthermore, $\zeta(a, R)$ depends only on the choice of closed disk $\CD(a, R)$. That is for any $b \in \CD(a, R)$ we have that \[\zeta(a, R) = \zeta(b, R).\]
\end{prop}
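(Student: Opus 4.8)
The plan is to reduce every assertion to the case of a single linear factor by exploiting the multiplicativity of the Type II/III norm. Since $\abs{\cdot}$ extends uniquely to the algebraic closure $\overline K$ and all the quantities in the statement — the Laurent coefficients of $f$ about $a$, the Weierstrass degrees, $\norm[\zeta(a,R)]{\cdot}$ on rational functions — are unchanged under this base extension, I may assume $K$ is algebraically closed and write $f=c\prod_{i=1}^{N}(y-\alpha_i)^{m_i}$ with $c\in K^\times$, the $\alpha_i\in K$ distinct and $m_i\in\Z\sm\set 0$, so that the $\alpha_i$ with $m_i>0$ (resp.\ $m_i<0$) are exactly the zeros (resp.\ poles) of $f$ in $\A^1$. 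By \autoref{prop:seminorms:order} the seminorm $\norm[\zeta(a,R)]{\cdot}$ is multiplicative, and $\abs{f(b)}=\abs c\prod_i\abs{b-\alpha_i}^{m_i}$, so it suffices to prove each assertion for $f(y)=y-\alpha$ and then take products over the finitely many factors.

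For $f(y)=y-\alpha$ the expansion about $a$ is the finite expression $(y-a)+(a-\alpha)$, valid on every annulus; it thus serves simultaneously as the expansion just inside the circle used to define $\norm[\zeta(a,R)]{\cdot}$ and as the expansion on $V$, and in either case $\sup_n\abs{c_n}R^n=\max\set{\abs{a-\alpha},R}$ is realised at the relevant (outer, resp.\ inner) Weierstrass degree, so $\norm[\zeta(a,R)]{y-\alpha}=\max\set{\abs{a-\alpha},R}=\abs{c_d}R^d$. The limit follows from the strong triangle equality $\abs{b-\alpha}=\max\set{\abs{b-a},\abs{a-\alpha}}$ whenever $\abs{b-a}\ne\abs{a-\alpha}$: as $\abs{b-a}\to R$ with $\abs{b-a}\ne R$ one gets $\abs{b-\alpha}\to\max\set{\abs{a-\alpha},R}$, the borderline case $\abs{a-\alpha}=R$ checked by hand ($\abs{b-\alpha}=R$ when $\abs{b-a}<R$, and $\abs{b-\alpha}=\abs{b-a}$ when $\abs{b-a}>R$). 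Multiplying over the factors, and noting $\norm[\zeta(a,R)]f>0$ since $\zeta(a,R)$ is a norm and $f\ne 0$, yields $\norm[\zeta(a,R)]f=\abs{c_d}R^d=\lim_{\abs{b-a}\to R}\abs{f(b)}$.

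For the generic-value assertion, observe that for $b\in\CD(a,R)$ one always has $\abs{b-\alpha_i}\le\max\set{\abs{a-\alpha_i},R}$, with equality unless $\abs{a-\alpha_i}\le R$ and $b$ lies in the residue disk $D(\alpha_i,R)\subseteq\CD(a,R)$; there are finitely many such disks, and they are exactly the residue disks of $\CD(a,R)$ containing a zero or pole of $f$, so every $b$ outside them satisfies $\abs{f(b)}=\abs c\prod_i\max\set{\abs{a-\alpha_i},R}^{m_i}=\norm[\zeta(a,R)]f$. For centre-independence, $b\in\CD(a,R)$ forces $\CD(b,R)=\CD(a,R)$ by \autoref{prop:nonarch:disks}, and then $\max\set{\abs{a-\alpha_i},R}=\max\set{\abs{b-\alpha_i},R}$ for every $i$: if $\abs{a-\alpha_i}>R$ then $\alpha_i\nin\CD(b,R)$ and $\abs{b-\alpha_i}=\abs{a-\alpha_i}$ by strong triangle equality, and if $\abs{a-\alpha_i}\le R$ then $\alpha_i\in\CD(b,R)$ so both maxima equal $R$; hence $\norm[\zeta(a,R)]g=\norm[\zeta(b,R)]g$ for all $g\in K[y]$, so $\zeta(a,R)$ and $\zeta(b,R)$ are the same point of $\P^1_\an$.

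I expect the one genuinely delicate point to be the equality $\norm[\zeta(a,R)]f=\abs{c_d}R^d$ when $f$ has a zero or pole on the circle $\abs{z-a}=R$: there the inner Weierstrass degree $d=\overline\wdeg_{a,R}(f)$ of the expansion on $V$ can be $-\infty$, the supremum $\sup_n\abs{c_n}R^n$ being attained along a tail rather than at a single index, and $\abs{c_d}R^d$ must be read as that supremum. One may dispose of this first — by multiplicativity, split off the finitely many factors $(y-\alpha)^{\pm 1}$ with $\abs{a-\alpha}=R$, each contributing $R^{\pm 1}$ to both sides, after which $f$ has no zero or pole on the circle and its expansions just inside the circle and on $V$ literally coincide (a single two-sided Laurent series, by \autoref{prop:seminorms:rationaldegrees}), making the equality a tautology. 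Everything else is routine factor-by-factor bookkeeping with the strong triangle (in)equality and \autoref{prop:nonarch:disks}.
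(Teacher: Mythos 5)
Your proposal is correct, but it takes a genuinely different route from the paper. The paper deduces this proposition from the Taylor-series proposition stated immediately after it, which is proved by direct coefficient estimates (the dominant-term argument via \autoref{lem:nonarch:extendedtriangle}) together with \autoref{thm:seminorms:mapdisk} for the generic-value clause, passing to the completion of the algebraic closure only for that clause; the rational-function case is then left as an implicit consequence via multiplicativity. You instead extend scalars at the outset, factor $f$ into linear factors over the algebraic closure, and reduce every clause to the strong triangle (in)equality through multiplicativity of $\norm[\zeta(a,R)]{\cdot}$; this makes the limit, generic-value and centre-independence clauses essentially transparent, identifies the exceptional residue classes exactly as the open disks containing a zero or pole of $f$ (matching the statement verbatim), and avoids Weierstrass preparation entirely. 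What the paper's route buys is generality -- its companion proposition applies to arbitrary convergent power series and is reused elsewhere -- and a uniform sup/limit description of the norm; what yours buys is a shorter, more elementary argument tailored to rational $f$. Two small points to tighten: the absolute value need not extend \emph{uniquely} to $\overline K$ when $K$ is not complete, but any extension (say via $\widehat{\overline K}$, exactly as the paper does) suffices since all the relevant coefficients lie in $K$; and in your final reduction, the claim that a factor $(y-\alpha)^{\pm1}$ with $\abs{a-\alpha}=R$ contributes $R^{\pm1}$ to the quantity $\abs{c_d}R^d$ computed from the expansion on $V$ does not follow from \autoref{prop:seminorms:order}, which governs the expansion just inside the circle -- you need either the (routine) two-sided estimate showing that $\sup_n\abs{c_n}R^n$ for the outside expansion is multiplicative against such factors, or, cleaner, the outside analogue of part (iii) of the companion proposition identifying $\abs{c_d}R^d$ with $\lim_{\abs{b-a}\searrow R}\abs{f(b)}$, after which the middle equality follows from the limit you have already established factor by factor.
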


This is mostly remarkable because the values $\overline \wdeg_{a, R}(f)$ and $\wdeg_{a, R}(f)$ could be different, deriving from distinct Laurent series. The result derives from the following:
\begin{prop}
 Let $(K, \abs\cdot)$ be a densely valued non-Archimedean field, $a \in K$, $r > 0$, and consider the non-zero Taylor series \[f(y) = \sum_{n=0}^\infty c_n (y-a)^n.\]
\begin{enumerate}
 \item If $f$ converges on $D = D(a, r)$ with Weierstrass degree $\wdeg_{a, r}(f) = d$, then \[\norm[\zeta(a, r)]f = \abs{c_d}r^d = \sup_{b \in D}\abs{f(b)} = \sup_{b \in D}\norm[b]f = \lim_{\abs{b} \nearrow r}\norm[b]f.\]
 \item If $f$ converges on $E = \CD(a, r)$ 
 with Weierstrass degree $\overline \wdeg_{a, r}(f) = \overline d$, then \[\norm[\zeta(a, r)]f =  \abs{c_{\overline d}}r^{\overline d} = \sup_{b \in E}\abs{f(b)} = \sup_{b \in E}\norm[b]f.\]
 \item If $f$ converges on $D(a, R) \supsetneq \CD(a, r)$, then \[\norm[\zeta(a, r)]f = \abs{c_{\overline d}}r^{\overline d} = \lim_{\abs{b} \searrow r}\norm[b]f.\]
 \item Moreover, $\norm[\zeta(a, r)]f = \abs{f(b)}$ for every $b$ in all but finitely many residue classes of $\CD(a, r)$, to be precise we could pick any $b \in \CD(a, r) \sm (D(a_1, r) \cup \cdots \cup D(a_{\overline d}, r))$, where $(a_j)$ are the finitely many solutions to $f(y) = 0$.
\end{enumerate}
Furthermore, $\zeta(a, R)$ depends only on the choice of closed disk $\CD(a, R)$. That is for any $b \in \CD(a, R)$ we have that \[\zeta(a, R) = \zeta(b, R).\]
\end{prop}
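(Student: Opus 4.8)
The plan is to reduce everything to a single principle --- a convergent non-Archimedean sum has the size of its strictly largest summand (the extended strong triangle equality, \autoref{lem:nonarch:extendedtriangle}) --- and, when the largest of the quantities $\abs{c_n}r^n$ is not attained uniquely, to replace this tie-breaking by a reduction argument on the circle $\abs{z-a}=r$. Write $M=\sup_n\abs{c_n}r^n$. The arithmetic equalities are immediate: by the definition of the Type II/III norm $\norm[\zeta(a,r)]{f}=\abs{c_d}r^d$ with $d=\wdeg_{a,r}(f)$ the smallest index attaining $M$, while $\overline d=\overline\wdeg_{a,r}(f)$ is the largest such index, so $\abs{c_d}r^d=\abs{c_{\overline d}}r^{\overline d}=M$. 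The bound $\norm[b]{f}=\abs{f(b)}\le\max_n\abs{c_n}\abs{b-a}^n\le M$ for every $b$ with $\abs{b-a}\le r$ (the maximum being attained since $f$ converges at $b$) gives the ``$\le$'' half of $\sup_b\abs{f(b)}=M$ in parts (1) and (2), so in every part only the matching lower bounds remain.

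\textbf{Parts (1) and (3).} A radius $s\ne r$ close to $r$ breaks the tie. For $s$ slightly below $r$ one checks $\abs{c_n}s^n<\abs{c_d}s^d$ for all $n\ne d$: automatic for $n>d$ and every $s<r$, and for the finitely many $n<d$ it follows from $\abs{c_n}r^n<\abs{c_d}r^d$ once $s$ is near enough $r$. Symmetrically, for $s$ slightly above $r$ --- staying inside the strictly larger disk of convergence assumed in part (3) --- the term $c_{\overline d}(y-a)^{\overline d}$ strictly dominates, only finitely many indices being able to compete because $f$ converges beyond radius $s$. In each case the extended strong triangle equality gives $\abs{f(b)}=\abs{c_d}s^d$ (respectively $\abs{c_{\overline d}}s^{\overline d}$) for \emph{every} $b$ with $\abs{b-a}=s$, with no exceptional residue classes; and since $K$ is densely valued, letting $s\nearrow r$ in part (1) and $s\searrow r$ in part (3) yields $\lim_{\abs{b-a}\to r^-}\norm[b]{f}=\abs{c_d}r^d$ and $\lim_{\abs{b-a}\to r^+}\norm[b]{f}=\abs{c_{\overline d}}r^{\overline d}$ respectively, the former also forcing $\sup_{b\in D}\abs{f(b)}=M$.

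\textbf{Parts (2) and (4).} Assume $r\in\abs{K^\times}$ (otherwise $\CD(a,r)=D(a,r)$ and we are in the previous case). Choose $\pi,\rho\in K^\times$ with $\abs{\pi}=r$, $\abs{\rho}=M$, and set $g(y_0)=\rho^{-1}f(a+\pi y_0)=\sum_n(c_n\pi^n/\rho)\,y_0^n$. Then $g\in\bO_K[[y_0]]$ converges on $\CD(0,1)$ and its reduction $\bar g\in\k[y_0]$ is the nonzero polynomial $\sum_{n:\ \abs{c_n}r^n=M}\overline{(c_n\pi^n/\rho)}\,y_0^n$, of degree $\overline d$ and order $d$. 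For $b=a+\pi b_0\in\CD(a,r)$ we get $\abs{f(b)}=M\abs{g(b_0)}$, with $\abs{g(b_0)}=1$ precisely when $\bar g(\bar b_0)\ne0$; as $\k$ is infinite (algebraically closed, in the standing setting) and $\bar g$ has at most $\overline d$ roots, such $b$ exist, giving $\sup_{b\in E}\abs{f(b)}=M$ and hence part (2). For part (4), Weierstrass preparation --- applied over a complete algebraically closed extension of $K$, which is where the zeros of $f$ live and whose absolute value restricts to $\abs{\cdot}$ --- factors $g=Pu$ with $u$ a unit and $P=\prod_j(y_0-\beta_j)$ the monic degree-$\overline d$ polynomial carrying the zeros of $g$, so $\bar g=\bar u\prod_j(y_0-\bar\beta_j)$ with $\bar u$ a nonzero constant, and the zero set of $\bar g$ in $\k$ is exactly $\{\bar\beta_j\}$. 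Translating back through $b=a+\pi b_0$, the residue classes of $\CD(a,r)$ where $\abs{f(b)}<M$ are precisely the open disks $D(a_j,r)$ around the zeros $a_j=a+\pi\beta_j$ of $f$ (of which there are $\overline d$ counted with multiplicity) --- which is part (4).

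\textbf{The final claim} is then a one-liner: for $b\in\CD(a,R)$ we have $\CD(b,R)=\CD(a,R)$ by \autoref{prop:nonarch:disks}, and applying part (2) to any polynomial $\psi\in K[y]$ (which converges everywhere) gives $\norm[\zeta(a,R)]{\psi}=\sup_{b'\in\CD(a,R)}\abs{\psi(b')}=\sup_{b'\in\CD(b,R)}\abs{\psi(b')}=\norm[\zeta(b,R)]{\psi}$, so $\zeta(a,R)$ and $\zeta(b,R)$ are the same seminorm on $K[y]$, hence the same point. \textbf{The main obstacle} is parts (2) and (4): once the tie-breaking trick is unavailable one is committed to the reduction argument, whose delicate point is matching the exceptional residue classes with the zero-disks of $f$; this rests on Weierstrass preparation and hence on passing to a complete algebraically closed field, whereas parts (1), (3) and the final claim are just bookkeeping with the strong triangle (in)equality and the density of $\abs{K^\times}$.
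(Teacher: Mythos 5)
Your proposal is correct, and for parts (1) and (3) it is essentially the paper's own argument: break the tie among the $\abs{c_n}r^n$ by moving to a nearby radius $s \lessgtr r$, control the infinitely many large indices automatically (resp.\ by convergence on a slightly larger disk) and the finitely many small ones by continuity, then apply the extended strong triangle equality and let $s \to r$ using dense valuation. Where you genuinely diverge is in parts (2) and (4): the paper deduces (4) from the disk-mapping theorem \autoref{thm:seminorms:mapdisk} applied over the completion of the algebraic closure, counting the at most $\overline d$ open disks $D(a_j,r)$ whose image contains $0$, whereas you rescale to the unit disk, reduce to $\bar g \in \k[y_0]$, and match the exceptional residue classes with the roots of $\bar g$ via Weierstrass preparation. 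Both routes rest on the same engine (Weierstrass preparation over a complete algebraically closed extension), but yours makes the reduction polynomial and its roots explicit, while the paper's hides that bookkeeping inside the mapping theorem; yours also proves the final ``Furthermore'' claim ($\zeta(a,R)=\zeta(b,R)$ for $b \in \CD(a,R)$), which the paper's proof does not address explicitly. Two small remarks. First, your argument that the supremum in (2) is \emph{attained} needs $\k$ infinite; as the proposition is stated only for a densely valued field this can fail (e.g.\ $f(y)=y^p-y$ over a densely valued field with residue field $\F_p$, where $\abs{f(b)}<1$ for every $b$ in the closed unit disk), so strictly you should either invoke the standing algebraically-closed convention, as you do parenthetically, or simply observe that the supremum equality in (2) follows at once from (1) since $D(a,r)\subset \CD(a,r)$ and the two candidate values $\abs{c_d}r^d$ and $\abs{c_{\overline d}}r^{\overline d}$ coincide with $\sup_n\abs{c_n}r^n$. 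Second, when you transfer the zero-disks back from the complete algebraically closed extension to $K$ you should say (as the paper does) that discarding finitely many residue classes upstairs discards at most finitely many downstairs; your phrasing already implies this, but it is the one place where the base change does real work for (4).
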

\unfinished{Add proof or move to a later part. Be careful about $z^p - z$. Degree less than res char should be enough. Add an equivalence using radius.}
\unsure{Include e.g. prop 3.20?}

\begin{proof}
 Since $f \in \mathcal L(a, r)$, we have by definition $\norm[\zeta(a, r)]f = \abs{c_d}r^d$. Since $f$ has Weierstrass degree $d = \wdeg_{a, r}(f)$, for $n > d$ we have that $\abs{c_n}r^n \le \abs{c_d}r^d$ and for $n < d$ we have $\abs{c_n}r^n < \abs{c_d}r^d$. By rearranging the former we get that $\abs{c_n}r^{n-d} \le \abs{c_d}$ and hence for any $s < r$ we have $\abs{c_n}s^{n-d} < \abs{c_d}$ which implies $\abs{c_n}s^n < \abs{c_d}s^d$ for every $n > d$. On the other hand, for the finitely many $0 \le n < d$ we can use continuity to find an $s_0 < r$ large enough such that for every $s \in (s_0, r)$ the latter inequality remains true $\abs{c_n}s^n < \abs{c_d}s^d$. 
 Suppose that $b \in D(a, r)$, specifically with $\abs{b-a} = s < r$. Then by \autoref{lem:nonarch:extendedtriangle} \[\abs{f(b)} = \abs{\sum_{n=0}^\infty c_n (b-a)^n} \le \max_n \abs{c_n} \abs{b-a}^n = \max_n \abs{c_n} s^n \le \max_n \abs{c_n} r^n = \abs{c_d}r^d,\] and moreover if $s_0 < s < r$, then $\abs{f(b)} = \abs{c_d}\abs{b-a}^d = \abs{c_d}s^d$, so as $s \nearrow r$, we have $\abs{f(b)} \to  \abs{c_d}r^d$. 
 Items (ii), (iii) can be proven similarly.
 
 It is enough to show (iv) in the completion of the algebraic closure of $K$, since disregarding finitely many disks in a field extension will do the same in $K$. If $\overline{d} = 0$, then by \autoref{lem:nonarch:extendedtriangle} $\abs{f(b)} = \abs{c_0}$ for every $b \in \CD(a, r)$. Otherwise, $\overline{d} = \overline\wdeg_{a, r}(f-c_0)$ and hence $\abs{c_0} \le \norm[\zeta(a, r)]f$. By \autoref{thm:seminorms:mapdisk} we have that $f(\CD(a, r)) = \CD\left(c_0, \norm[\zeta(a, r)]f \right) = \CD\left(0, \norm[\zeta(a, r)]f \right)$, $f(D(b, r)) = D\left(f(b), \norm[\zeta(a, r)]f \right)$, and there are at most $\overline{d}$ such open disks $D(a_1, r), \dots, D(a_{\overline d}, r)$ whose image contains $0$, i.e.\ $f(D(a_j, r)) = D\left(0, \norm[\zeta(a, r)]f \right)$. Hence for any $b$ \emph{not} in such a disk, we find $f(b) \in \CD\left(0, \norm[\zeta(a, r)]f \right) \sm D\left(0, \norm[\zeta(a, r)]f \right)$ and so $\abs{f(b)} = \norm[\zeta(a, r)]f$.
\end{proof}

Observe that if one considers the definition of Type II/III norm $\zeta(a, R$) with $R = 0$ we recover the Type I \emph{semi}norm. We make one last definition of a seminorm; later we will see this is necessary to complete the Berkovich line.

\begin{defn}[Type IV Norm]
Let $(K, \abs\cdot)$ be a non-Archimedean field, and suppose the following nested sequence of disks has empty intersection. \[\CD(a_1, r_1) \supset \CD(a_2, r_2) \supset \CD(a_3, r_3) \supset\CD(a_4, r_4) \supset \cdots\]
Also let  \[\mathcal A(\zeta) = \bigcup_n \overline{\mathcal A}(a_n, r_n).\]
 We define a function $\norm[\zeta]\cdot : \mathcal A(\zeta) \to [0, \infty)$ by
\[\norm[\zeta]f = \inf_{n \ge N} \norm[\zeta(a_n, r_n)]{f}\] where $f \in \overline{\mathcal A}(a_N, r_N) \subset \mathcal A(\zeta)$. We call $\zeta$ the \emph{Type IV norm} associated to the sequence $(\CD(a_n, r_n))$.
\end{defn}
Note that \autoref{prop:seminorms:order} says that the sequence $\norm[\zeta(a_n, r_n)]{f}$ above is decreasing, so this infimum is a limit. Moreover for a fixed $f$, one can show that for large enough $n$, $f \in \overline{\mathcal A}^\times(a_n, r_n)$ is a unit, and so the sequence $\norm[\zeta(a_n, r_n)]{f}$ is eventually constant. \improvement{link to prop}Consider a rational function $f(y)$. We know that $f$ has finitely many poles, so for some large $N$, these poles lie outside of $\CD(a_n, r_n)$, hence $f \in \overline{\mathcal A}(a_n, r_n)$. Therefore $K(y) \subset \mathcal A(\zeta)$ for every Type IV $\zeta$.


\begin{defn}\label{defn:seminorms:typeIVequiv}
 We say the sequences \[\CD(a_1, r_1) \supset \CD(a_2, r_2) \supset \CD(a_3, r_3) \supset\CD(a_4, r_4) \supset \cdots\] and 
 \[\CD(b_1, s_1) \supset \CD(b_2, s_2) \supset \CD(b_3, s_3) \supset\CD(b_4, s_4) \supset \cdots\] 
 are \emph{equivalent} if for any $n \in \N$ we can find an $N \in \N$ such that $\CD(a_n, r_n) \supset \CD(b_N, s_N)$ and vice versa. 
\end{defn}

\begin{prop}\label{prop:seminorms:typeIVmain}
 Type IV norms are non-Archimedean multiplicative norms on $\mathcal A(\zeta) \supset K(y)$. If $\zeta$ and $\zeta'$ are Type IV norms associated to equivalent nested sequences of disks, then $\zeta = \zeta'$.
\end{prop}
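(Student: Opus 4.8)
The plan is to deduce each assertion about $\norm[\zeta]{\cdot}$ by passing to the limit the corresponding property of the Type II/III norms $\norm[\zeta(a_n, r_n)]{\cdot}$, the only substantive point being that the limit stays strictly positive on nonzero elements; the equivalence statement then follows from a short squeeze. To begin, if $f \in \overline{\mathcal A}(a_N, r_N)$ then $f$ converges on every smaller closed disk, so $f \in \overline{\mathcal A}(a_n, r_n)$ for all $n \ge N$; by \autoref{prop:seminorms:order} (equivalently from the sup-norm description $\norm[\zeta(a, r)]{f} = \sup_{b \in \CD(a, r)}\abs{f(b)}$) the sequence $\bigl(\norm[\zeta(a_n, r_n)]{f}\bigr)_{n \ge N}$ is non-increasing and bounded below by $0$, so its infimum is a limit, unchanged if we enlarge $N$. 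For $f, g \in \mathcal A(\zeta)$ both lie in $\overline{\mathcal A}(a_n, r_n)$ once $n$ is large, hence $f + g, fg \in \mathcal A(\zeta)$; and since $\bigcap_n \CD(a_n, r_n) = \emp$, every pole of a fixed rational function eventually lies outside $\CD(a_n, r_n)$, so $K(y) \subseteq \mathcal A(\zeta)$.

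Next, the seminorm axioms pass to the limit. We have $\norm[\zeta]{0} = 0$, and for $n$ large,
\[\norm[\zeta(a_n, r_n)]{f+g} \le \max\bigl\{\norm[\zeta(a_n, r_n)]{f},\ \norm[\zeta(a_n, r_n)]{g}\bigr\} \and \norm[\zeta(a_n, r_n)]{fg} = \norm[\zeta(a_n, r_n)]{f}\cdot\norm[\zeta(a_n, r_n)]{g}\]
by \autoref{prop:seminorms:order}; letting $n \to \infty$, and using that $\max$, addition and multiplication are continuous along convergent sequences, yields the strong triangle inequality and multiplicativity for $\norm[\zeta]{\cdot}$.

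The genuinely non-formal step is that $\norm[\zeta]{\cdot}$ is a \emph{norm}: $\norm[\zeta]{f} > 0$ whenever $f \ne 0$. Here I would use the hypothesis $\bigcap_n \CD(a_n, r_n) = \emp$ together with finiteness of zero sets. Given $f \ne 0$ with $f \in \overline{\mathcal A}(a_N, r_N)$, the Weierstrass Preparation Theorem shows that $f$ has only finitely many zeros in $\CD(a_N, r_N)$ (if $r_N \notin \abs{K^\times}$ or $K$ is not complete, first pass to the completion $\hv$ and to a slightly smaller rational radius). Each such zero eventually leaves the nested family because the family has empty intersection, so there is $n_0 \ge N$ such that for $n \ge n_0$ the function $f$ has no zeros in $\CD(a_n, r_n)$; then $\wdeg_{\CD(a_n, r_n)}(f) = 0$, so $f$ is a unit in $\overline{\mathcal A}(a_n, r_n)$, and by the extended strong triangle equality (\autoref{lem:nonarch:extendedtriangle}) $\norm[\zeta(a_n, r_n)]{f} = \abs{f(a_n)} \ne 0$ is constant in $n$. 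Hence $\norm[\zeta]{f} = \abs{f(a_{n_0})} > 0$.

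Finally, suppose $\zeta$ and $\zeta'$ arise from equivalent nested sequences $(\CD(a_n, r_n))$ and $(\CD(b_m, s_m))$ in the sense of \autoref{defn:seminorms:typeIVequiv}. Equivalence provides, for each $n$, an index $M$ with $\CD(b_M, s_M) \subseteq \CD(a_n, r_n)$, which both shows $\mathcal A(\zeta) = \mathcal A(\zeta')$ (any $f$ converging on $\CD(a_N, r_N)$ converges on some $\CD(b_M, s_M)$, and symmetrically) and gives $\norm[\zeta(a_n, r_n)]{f} \ge \norm[\zeta(b_M, s_M)]{f} \ge \norm[\zeta']{f}$ by monotonicity of the Type II/III norms. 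Taking the infimum over $n$ yields $\norm[\zeta]{f} \ge \norm[\zeta']{f}$, and symmetry gives equality, so $\zeta = \zeta'$. I expect the positivity step to be the only real obstacle — in particular making the finiteness-of-zeros argument airtight for disks of irrational radius or over an incomplete $K$, which is why I would route it through $\hv$ and a rational subdisk — while everything else is a mechanical transfer of the Type II/III theory through a monotone limit.
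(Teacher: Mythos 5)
Your argument is correct and is essentially the argument the paper intends: the paper states this proposition without a written proof, and the remark preceding \autoref{defn:seminorms:typeIVequiv} sketches exactly your route (monotonicity via \autoref{prop:seminorms:order}, so the infimum is a limit; for fixed $f$ the norm is eventually that of a unit, hence eventually constant and positive; finitely many poles giving $K(y) \subset \mathcal A(\zeta)$), so you have simply supplied the details, together with the squeeze argument for equivalent sequences, which is fine. The one step that deserves an explicit word is positivity: once you pass to the completion $\hv$ to invoke Weierstrass preparation, the finitely many zeros of $f$ live in $\hv$, and to push them out of the nested family you need the intersection to be empty over $\hv$, not merely over $K$. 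When the sequence is genuinely Type IV over the complete field $\hv$ (the setting of Berkovich's classification) this is exactly the hypothesis and your proof closes; but for incomplete $K$ (e.g.\ $K = \K$) a sequence with empty intersection in $K$ can shrink onto a point $z \in \hv \sm K$, in which case $\norm[\zeta]\cdot$ becomes evaluation at $z$, and one can construct a nonzero power series with $K$-coefficients in $\mathcal A(\zeta)$ vanishing at $z$, so the limit is only a seminorm on $\mathcal A(\zeta)$ (though still a norm on $K(y)$ when $K$ is algebraically closed). This caveat is as much an imprecision in the proposition as stated for arbitrary $K$ as it is a gap in your write-up, but your parenthetical ``pass to the completion'' is precisely where it hides, so it should be stated rather than waved at.
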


\unsure{Include 3.22 about inverses? Maybe need to think about including trivial residue field.}


\subsection{The Berkovich Projective Line}\label{sec:berk}

It is an easy exercise to show that given any ring $A$, a non-Archimedean multiplicative norm $\norm\cdot$ on $A$ extends to a non-Archimedean norm on $\Frac A$ by $\norm{a/b} = \norm a / \norm b$. This provides a simpler proof without annuli of domains of convergence, as in the last section, that the Type II, III, and IV norms automatically extend to $K(y)$. Hence the set of non-Archimedean multiplicative norms on $K[y]$ is the same as those on $K(y)$, and it would be natural to define the Berkovich line simply as the set of such norms on $K(y)$. It would be easy to define the action of a rational map $\phi$ on this space of norms by $\norm[\phi_*(\zeta)]{f(y)} = \norm[\zeta]{f \circ \phi(y)}$, because $f \in K(y) \implies f \circ \phi \in K(y)$. Unfortunately, this construction fails in the case of Type I \emph{seminorms}, since $\norm[a]{f/g}$ is infinite when $g(a) = 0$. One of the important reasons to define the Berkovich projective line $\P^1_\an(K)$ is that we would like a complete and compact space (indeed to compactify $\P^1(K)$), and the Type I points, seen as $\P^1(K) \subset \P^1_\an(K)$ will play a key role as (limiting) endpoints in the resulting tree. Since the Type I points are ill-defined on $K(y)$, we shall define the Berkovich affine line as seminorms over $K[y]$. However the reader is encouraged to bear in mind that the Type II, III, and IV points in the space are norms on $K(y)$, and Type I norms can be treated as a special case.

Throughout the remainder of this section let $K$ denote an algebraically closed field and $\hv$ a complete and algebraically closed field, which in most cases will be the completion of the former $\hv = \widehat K$.

\subsection{The Berkovich Affine Line}\label{sec:affine}

\begin{defn}
 The Berkovich affine line $\A^1_\an = \A^1_\an(K)$ is the set of non-Archimedean multiplicative seminorms on $K[y]$ extending $(K, \abs\cdot)$. (Meaning $\norm{a} = \abs a \quad \forall a \in K$.) A topology is given to $\A^1_\an(K)$ by taking the coarsest topology for which $\norm[\zeta]\cdot \mapsto \norm[\zeta]f$ is continuous for every $f \in K[y]$.
\end{defn}

 We often refer to the elements $\zeta \in \A^1_\an(\hv)$ (or $\P^1_\an(\hv)$) as \emph{points}, but we shall write $\norm[\zeta]\cdot$ when we think of $\zeta$ as a seminorm. It turns out that $\A^1_\an$ is a tree; one way to see this is through its poset structure which we define now and expand upon later.
 
\begin{defn}
 We define a partial order $\preceq$ on $\A^1_\an$ by \[\zeta \preceq \xi \iff \norm[\zeta]f \le \norm[\xi]f \quad \forall f \in \hv[y].\]
\end{defn}

\begin{defn}
Let $\zeta \in \A^1_\an$. Its \emph{absolute value} is \[\abs\zeta = \norm[\zeta]y.\]
 Its \emph{diameter} is \[\diam(\zeta) = \inf_{a \in \hv}\norm[\zeta]{y-a}.\]
\end{defn}

It is clear that diameter exists and $\diam(\zeta) \le \abs\zeta$. Some key examples here are that $\diam(\zeta(a, r)) = r$, $\abs{\zeta(a, r)} = \min \set{\abs a, r}$, $\diam(a) = 0$, and $\abs a = \abs a$.
 
 \begin{defn}
 We define the \emph{open} and \emph{closed Berkovich disks} of radius $r > 0$ centred at $a \in K$, respectively below.
 \[D_\an(a,r):=\set[\zeta \in \A_\an(K)]{\norm[\zeta]{y-a}<r}\]
 \[\CD_\an(a,r):=\set[\zeta \in \A_\an(K)]{\norm[\zeta]{y-a}\le r}\]
 If $r \in |K^\times|$, we say this disk is \emph{rational}, otherwise, we say it is \emph{irrational}.
\end{defn}

There is a natural inclusion of $K = \A^1(K) \subset \A^1_\an(K)$; recall that for every $a\in K$ we have the Type I seminorm $\norm[a]\cdot$. We call these the \emph{classical points} of $\A^1_\an$, again referring to $a$ as a `point' of $\A^1_\an$. Furthermore we can consider the classical disk $D(a, r)$ as a subset of $\A^1_\an$. In the following proposition and throughout, we allow for radii to be zero where $D_\an(a, r) = \emp$ and $\CD_\an(a, r) = \set a$, although these are not `disks'.

\begin{prop}\label{prop:berk:disks}
The following hold.
\begin{itemize}
 \item $D_\an(a, r)$ is open and $\CD_\an(a, r)$ is closed.
 \item $\displaystyle D(a, r) = \A^1(K) \cap D_\an(a, r)$
 \item $\displaystyle \CD(a, r) = \A^1(K) \cap \CD_\an(a, r)$
 \item $\displaystyle D_\an(a, r) \subseteq D_\an(b, s) \iff D(a, r) \subseteq D(b, s)$
 \item $\displaystyle \CD_\an(a, r) \subseteq \CD_\an(b, s) \iff \CD(a, r) \subseteq \CD(b, s)$
 \item $\displaystyle \zeta(a, r) \in D_\an(b, s) \iff \CD(a, r) \subseteq D(b, s)$
 \item $\displaystyle \zeta(a, r) \in \CD_\an(b, s) \iff \CD(a, r) \subseteq \CD(b, s)$
 \item $\displaystyle \zeta(a, r) \succeq \xi\quad \iff \xi \in \CD_\an(a, r)$
 \item $\displaystyle \CD_\an(a, r) = \set{\zeta(a, r)}\ \cup \bigcup_{b\in \CD(a, r)}D_\an(b, r)$.\\ Hence $\zeta(a, r)$ is the unique boundary point of $\CD_\an(a, r)$.
 \item $\displaystyle \diam(\xi) \le r \quad \forall \xi \in \CD_\an(a, r)$ with equality if and only if $\xi = \zeta(a, r)$.
 \item $\displaystyle \zeta \in \CD_\an(a, r) \implies \norm[\zeta]{y-b} = \abs{a-b}$ for every $b \nin \CD(a, r)$.\\ Hence if $\zeta, \xi \in \CD_\an(a, r)$ then $\zeta = \xi$ if and only if $\norm[\zeta]{y-b} = \norm[\xi]{y-b}$ for every $b \in \CD(a, r)$.
\end{itemize}
\end{prop}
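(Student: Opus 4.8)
The plan is to reduce the whole proposition to a single computation of $\norm[\zeta]{y-b}$ and then extract each bullet as routine bookkeeping; since $\hv$ is algebraically closed we may factor polynomials over it, so I will argue throughout in $\A^1_\an(\hv)$. The opening bullet is immediate from the definition of the topology on $\A^1_\an$: the map $\zeta\mapsto\norm[\zeta]{y-a}$ is continuous, so $D_\an(a,r)$ is the preimage of $[0,r)$ and is open, while $\CD_\an(a,r)$ is the preimage of the closed set $[0,r]$ and is closed. The bullets $D(a,r)=\A^1(K)\cap D_\an(a,r)$ and $\CD(a,r)=\A^1(K)\cap\CD_\an(a,r)$ hold because a classical point $b$ satisfies $\norm[b]{y-a}=\abs{b-a}$, so its membership in $D_\an(a,r)$ (resp.\ $\CD_\an(a,r)$) is exactly $\abs{b-a}<r$ (resp.\ $\le r$).

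The computational core is the identity
\[\norm[\zeta(a,r)]{y-b}=\max\{r,\abs{a-b}\}\qquad\text{for every }b,\]
together with the more general statement that $\norm[\zeta]{y-b}=\abs{a-b}$ whenever $\zeta\in\CD_\an(a,r)$ and $b\notin\CD(a,r)$. For the identity: if $b\in\CD(a,r)$ then $\zeta(a,r)=\zeta(b,r)$ by the last clause of \autoref{prop:seminorms:typeII/IIImain}, and $\norm[\zeta(b,r)]{y-b}=r$ directly from the definition of a Type II/III norm; if $b\notin\CD(a,r)$ then $\abs{a-b}>r=\norm[\zeta(a,r)]{y-a}$, so the strong triangle equality applied to $y-b=(y-a)+(a-b)$ gives $\norm[\zeta(a,r)]{y-b}=\abs{a-b}$. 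The general statement is the same computation, needing only $\norm[\zeta]{y-a}\le r<\abs{a-b}$; this is precisely the first sentence of the final bullet. Combining it with the factorisation $f=c\prod_i(y-b_i)$ of an arbitrary $f\in\hv[y]$ and multiplicativity gives the second sentence: if $\zeta,\xi\in\CD_\an(a,r)$ satisfy $\norm[\zeta]{y-b}=\norm[\xi]{y-b}$ for all $b\in\CD(a,r)$, then the same holds for $b\notin\CD(a,r)$ (both sides equal $\abs{a-b}$), hence $\norm[\zeta]{f}=\norm[\xi]{f}$ for all $f\in\hv[y]$, so $\zeta=\xi$.

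Everything remaining is mechanical. For $D_\an(a,r)\subseteq D_\an(b,s)\iff D(a,r)\subseteq D(b,s)$ and its closed analogue, the implication $(\Rightarrow)$ comes from intersecting with $\A^1(K)$; for $(\Leftarrow)$, note that $D(a,r)\subseteq D(b,s)$ forces $\abs{a-b}<s$ (take $a$ itself) and $r\le s$ (using that $\abs{K^\times}$ is dense, as $K$ is algebraically closed), whence $\norm[\zeta]{y-b}\le\max\{\norm[\zeta]{y-a},\abs{a-b}\}<s$ for every $\zeta\in D_\an(a,r)$; the closed case is the same with non-strict inequalities. Substituting the displayed identity into the definitions of $D_\an(b,s)$ and $\CD_\an(b,s)$ yields the bullets $\zeta(a,r)\in D_\an(b,s)\iff\CD(a,r)\subseteq D(b,s)$ and $\zeta(a,r)\in\CD_\an(b,s)\iff\CD(a,r)\subseteq\CD(b,s)$, where one tracks whether $r\in\abs{K^\times}$ to handle the boundary cases of the classical containment. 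The poset bullet $\zeta(a,r)\succeq\xi\iff\xi\in\CD_\an(a,r)$ follows similarly: $(\Rightarrow)$ is just $\norm[\xi]{y-a}\le\norm[\zeta(a,r)]{y-a}=r$, and $(\Leftarrow)$ is the linear-factor argument again, comparing $\norm[\xi]{y-b_i}$ with $\norm[\zeta(a,r)]{y-b_i}=\max\{r,\abs{a-b_i}\}$ term by term and using $\CD_\an(a,r)=\CD_\an(b_i,r)$ (from the closed-containment bullet) when $b_i\in\CD(a,r)$. For the decomposition $\CD_\an(a,r)=\{\zeta(a,r)\}\cup\bigcup_{b\in\CD(a,r)}D_\an(b,r)$: the inclusion $\supseteq$ is a containment already shown, and for $\subseteq$ take $\xi\in\CD_\an(a,r)$ with $\xi\ne\zeta(a,r)$; by the final bullet there is a $b$ with $\norm[\xi]{y-b}<r$, which must lie in $\CD(a,r)$ (otherwise $\norm[\xi]{y-b}=\abs{a-b}>r$), so $\xi\in D_\an(b,r)$. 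The same reasoning gives $\diam(\xi)<r$ for $\xi\ne\zeta(a,r)$, while $\diam(\xi)\le\norm[\xi]{y-a}\le r$ always and $\diam(\zeta(a,r))=r$; this proves the diameter bullet and shows $\zeta(a,r)$ is the unique boundary point of $\CD_\an(a,r)$.

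The only genuine content is the computational core: with the identity $\norm[\zeta(a,r)]{y-b}=\max\{r,\abs{a-b}\}$ and the reduction to linear factors in hand, every bullet is a short deduction. The step demanding the most care is the rational-versus-irrational dichotomy for radii, which dictates exactly where the inequalities in the classical disk containments become non-strict.
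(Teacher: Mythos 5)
The paper states this proposition without proof, treating it as standard background adapted from Benedetto \cite[\S 6]{Bene}, so there is no in-paper argument to compare against. Judged on its own, your strategy is the standard one and is essentially sound: the identity $\norm[\zeta(a,r)]{y-b}=\max\set{r,\abs{a-b}}$, the observation that $\norm[\zeta]{y-b}=\abs{a-b}$ for $\zeta\in\CD_\an(a,r)$ and $b\nin\CD(a,r)$, and the reduction of arbitrary $f$ to linear factors via multiplicativity do carry the topology, intersection, containment, poset, decomposition and diameter bullets. One small simplification: there is no need to pass to $\A^1_\an(\hv)$; in this section $K$ is already assumed algebraically closed, so you can factor over $K$ itself, whereas invoking $\hv$ would oblige you to justify extending seminorms on $K[y]$ to $\hv[y]$.

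Two concrete points need repair. First, for the bullet $\zeta(a,r)\in D_\an(b,s)\iff\CD(a,r)\subseteq D(b,s)$ your computation gives the criterion $\max\set{r,\abs{a-b}}<s$, i.e.\ $r<s$ and $\abs{a-b}<s$; but when $r=s\nin\abs{K^\times}$ and $\abs{a-b}<s$ the set-theoretic containment $\CD(a,r)\subseteq D(b,s)$ holds (the two classical disks coincide, since no point has $\abs{c-a}=r$) while $\zeta(a,r)\nin D_\an(b,s)$. So ``tracking whether $r\in\abs{K^\times}$'' does not reconcile your (correct) criterion with the containment as literally written: that containment must be read as proper containment, equivalently as the condition $r<s$ and $\abs{a-b}<s$ on centres and radii, and your write-up should say this explicitly rather than assert the equivalence verbatim. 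Second, the decomposition $\CD_\an(a,r)=\set{\zeta(a,r)}\cup\bigcup_{b\in\CD(a,r)}D_\an(b,r)$ shows every point other than $\zeta(a,r)$ is interior, hence $\partial\CD_\an(a,r)\subseteq\set{\zeta(a,r)}$, but the ``unique boundary point'' claim also needs $\zeta(a,r)$ to fail to be interior. This requires one further (easy) observation you omit, e.g.\ that $\norm[\zeta(a,r')]{f}\to\norm[\zeta(a,r)]{f}$ for every polynomial $f$ as $r'\searrow r$, so every weak neighbourhood of $\zeta(a,r)$ contains points $\zeta(a,r')$ with $r'>r$, which lie outside $\CD_\an(a,r)$.
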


We see some clear differences compared to classical disks. For irrational $r \nin \abs{K^\times}$, whereas we had $\CD(a, r) = D(a, r)$, now there is a distinction $\CD_\an(a, r) = \set{\zeta(a, r)} \cup D_\an(a, r)$. Actually it is for these reasons that $\A^1_\an$ is connected, but $\A^1(K)$ was not. Indeed for any $a, r$, $D_\an(a, r)$ is never closed and $\CD_\an(a, r)$ is never open.

In the previous section we laid out four types of seminorm, with some equivalent definitions. The following theorem of Berkovich \cite{Berk} says these are the only four.
 
\begin{thm}[Berkovich's Classification]
 Let $\zeta \in \A^1(\hv)$. Then $\zeta$ is of exactly one of the following four types.\unsure{formatting}
\begin{enumerate}[label=\Roman*:, ref=\theenumi] 
 \item $\norm[\zeta]\cdot = \norm[a]\cdot$ for some unique $a \in \hv$.
\item $\norm[\zeta]\cdot = \norm[\zeta(a, r)]\cdot$ corresponding to a unique rational closed disk $\CD(a, r) \subset \hv$.
\item$\norm[\zeta]\cdot = \norm[\zeta(a, r)]\cdot$ corresponding to a unique irrational disk $\CD(a, r) \subset \hv$.
 \item$\norm[\zeta]\cdot = \lim_{n \to \infty}\norm[\zeta_n]\cdot$ where $\zeta_n = \zeta(a_n, r_n)$ corresponds to a decreasing nested sequence of closed disks $\CD(a_n, r_n)$ with empty intersection in $\hv$. The sequence is unique up to equivalence as in \autoref{defn:seminorms:typeIVequiv}.
\end{enumerate}
\end{thm}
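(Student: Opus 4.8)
The plan is to use the \emph{diameter} $r^\ast := \diam(\zeta) = \inf_{a \in \hv}\norm[\zeta]{y-a}$, together with the way it is realised geometrically, as the invariant that sorts $\zeta$ into its type. The starting observation is that, since $\hv$ is algebraically closed, every $f \in \hv[y]$ factors as $f = c\prod_i(y-b_i)$, so by multiplicativity $\norm[\zeta]{f} = \abs{c}\prod_i\rho(b_i)$ where $\rho(a) := \norm[\zeta]{y-a}$; hence $\zeta$ is completely determined by the function $\rho$, and the whole problem reduces to recognising $\rho$. The key structural lemma is that the closed disks $\CD(a,\rho(a))$, $a \in \hv$, form a family totally ordered by inclusion: if two of them were disjoint, i.e.\ $\abs{a-b} > \max\{\rho(a),\rho(b)\}$, then $\norm[\zeta]{y-a} = \rho(a) < \abs{a-b} = \norm[\zeta]{a-b}$, so the strong triangle \emph{equality} gives $\rho(b) = \norm[\zeta]{(y-a)+(a-b)} = \abs{a-b}$, contradicting $\abs{a-b} > \rho(b)$; and by \autoref{prop:nonarch:disks}(iii) two disks that meet are nested. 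I would also note that $\rho$ is $1$-Lipschitz, hence continuous.

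Next I would split into three cases according to $r^\ast$. If $r^\ast = 0$: pick $a_n$ with $\rho(a_n) \to 0$; the disks $\CD(a_n,\rho(a_n))$ are totally ordered with radii tending to $0$, so the centres are Cauchy and, by completeness of $\hv$, converge to some $a$ with $\rho(a) = \lim\rho(a_n) = 0$; then $\norm[\zeta]{y-b} = \abs{a-b}$ for every $b$ by the strong triangle equality, so $\zeta = \norm[a]{\cdot}$, Type I. If $r^\ast > 0$ is attained, say $\rho(a_0) = r^\ast$: then $\CD(a_0,r^\ast)$ is the smallest disk in the family, and splitting on whether $b\in\CD(a_0,r^\ast)$ one checks $\rho(b) = \max\{r^\ast,\abs{a_0-b}\} = \norm[\zeta(a_0,r^\ast)]{y-b}$, so $\zeta = \zeta(a_0,r^\ast)$, which is Type II if $r^\ast\in\abs{\hv^\times}$ and Type III otherwise. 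If $r^\ast > 0$ is \emph{not} attained: pick $a_n$ with $\rho(a_n)\searrow r^\ast$; these disks have \emph{empty} intersection, since a common point $b$ would give $\rho(b)\le\rho(a_n)$ for all $n$, hence $\rho(b) = r^\ast$, contradicting non-attainment. Setting $\zeta' = \lim_n\zeta(a_n,\rho(a_n))$, for each $b$ the disks eventually exclude $b$, so $\abs{a_n-b}$ is eventually a constant $\delta_b$ with $\delta_b > \rho(a_n)$; applying the strong triangle (in)equality to $(y-a_n)+(a_n-b)$ inside $\zeta$ in both directions yields $\rho(b) = \delta_b = \norm[\zeta']{y-b}$, so $\zeta = \zeta'$, Type IV.

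For mutual exclusivity and uniqueness I would argue directly from the same invariant. The cases are distinguished by $\diam(\zeta)$: it is $0$ exactly for Type I, positive and attained (at the centre, e.g.\ $\norm[\zeta(a,r)]{y-a}=r$) exactly for Types II/III, and positive but not attained exactly for Type IV; and $r^\ast\in\abs{\hv^\times}$ separates Type II from Type III. Within a type: the Type I point is the unique zero of $b\mapsto\norm[\zeta]{y-b}$; the Type II/III disk is recovered intrinsically as $\{b\in\hv : \norm[\zeta]{y-b}=r^\ast\}$; and two sequences defining the same Type IV $\zeta$ are equivalent in the sense of \autoref{defn:seminorms:typeIVequiv} — given $\CD(a_n,r_n)$ from one sequence, the fact that $r_n > r^\ast$ while the other sequence has empty intersection forces a disk $\CD(b_m,s_m)$ of the latter with $s_m < r_n$ to lie \emph{inside} $\CD(a_n,r_n)$ (a disjoint alternative would make $\norm[\zeta]{y-a_n} = \abs{a_n - b_m} > r_n$, contradicting $\zeta\preceq\zeta(a_n,r_n)$), and symmetrically; together with \autoref{prop:seminorms:typeIVmain} this is precisely the asserted uniqueness.

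I expect the Type IV case to be the main obstacle: one must show that $\zeta$ genuinely coincides with the limit seminorm $\zeta'$ (the two-sided strong-triangle computation giving $\rho(b)=\delta_b$, not merely an inequality), observe that a non-attained infimum of $\rho$ automatically produces a nested sequence of disks with empty intersection, and settle uniqueness only up to the equivalence of \autoref{defn:seminorms:typeIVequiv}. The other three cases, by contrast, reduce to bookkeeping with the strong triangle equality once the nested-family lemma is in hand.
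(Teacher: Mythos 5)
Your proposal is correct and takes essentially the same route as the paper's sketch: both classify $\zeta$ via the nested family of closed disks $\CD(a,\norm[\zeta]{y-a})$ and split according to whether the diameter $r^\ast=\diam(\zeta)$ is zero, positive and attained, or positive and not attained (equivalently whether the intersection of the disks is a point, a disk, or empty), identifying $\zeta$ with the corresponding Type I/II/III or limiting Type IV norm. The only step to tighten is the Type IV uniqueness: the existence of some $\CD(b_m,s_m)$ with $s_m<r_n$ does not follow from empty intersection alone, but from the one-line observation that the radii of any sequence defining $\zeta$ must tend to $\diam(\zeta)=r^\ast$.
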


\begin{proof}[Sketch Proof]
 Following \autoref{prop:seminorms:typeII/IIImain} and \autoref{prop:seminorms:typeIVmain} there is little more to say about uniqueness. The trick to classification is the following:
 
\begin{itemize}
 \item Any $\zeta$ is contained in a nested sequence of closed Berkovich disks $\CD_\an(a_n, r_n)$ with $r_n \to \diam(\zeta)$; starting with $\zeta \in \CD_\an(0, r_0)$ for $r_0 = \abs\zeta = \norm[\zeta]{y}$.
 \item $\norm[\zeta]\cdot \le \lim_{n \to \infty}\norm[\zeta_n]\cdot$ where $\zeta_n = \zeta(a_n, r_n)$. We will prove equality.
 \item If the limit of disks is non-empty then $\bigcap_n \CD(a_n, r_n) = \CD(a, r)$ (with $r$ possibly $0$) and therefore $\zeta = \zeta(a, r)$ because $\zeta \in \CD_\an(a, r)$ and has diameter $r$ (by \autoref{prop:berk:disks}).
 \item If $r_n \to 0$ then the limit $\bigcap_n \CD(a_n, r_n)$ is the single point $\set a$ and $\zeta = a$ is Type I.
 \item Otherwise $\zeta = \zeta(a, r)$ is Type II iff $r \in \abs{\hv^\times}$ and Type III iff $r \nin \abs{\hv^\times}$.
 \item If $\bigcap_n \CD(a_n, r_n) = \emp$, then $\zeta$ is the Type IV point $\lim_{n \to \infty} \zeta_n$ associated with this sequence. Indeed, both are in $\CD_\an(a_n, r_n)$ for every $n$ so they agree on $y-b$ for every $b$ outside of $\bigcap_n \CD(a_n, r_n)$, by the last part of \autoref{prop:berk:disks}.
\end{itemize}
\end{proof}
 
 \subsection{The Berkovich Projective Line}
 
 The projective line of a field $\P^1$ is often defined as $\A^1 \cup \set \infty$, or more rigorously as two $\A^1$ affine lines glued over $\A^1 \sm \set 0$ using the transition map $z \mapsto 1/z$. Here we can do either for the Berkovich projective line. In order for this transition map to work need to extend seminorms on $K[y]$ to $K\left[y, y^{-1}\right]$. Fortunately, this is not a big problem. For any $f \in K[y, y^{-1}]$ we can write $f(y) = g(y)/y^d$ with $g \in K[y]$ and some $d \in \N$. Every seminorm $\zeta \in \A^1_\an \sm \set 0$ can be defined on $f$ by \[\norm[\zeta]{f(y)} = \norm[\zeta]{g(y)/y^d} = \norm[\zeta]{g(y)}/\norm[\zeta]{y^d}\] since $\norm[\zeta]{y} \ne 0$. Conversely, any seminorm on $K[y, y^{-1}]$ is in $\A^1_\an \sm \set 0$.
 
\begin{defn}
  We define the \emph{Berkovich projective line} $\P^1_\an = \P^1_\an(K)$ with two charts given by $\A^1_\an(K)$ using the homeomorphism \[\A^1_\an(K)\sm\set{0} \to \A^1_\an(K)\sm\set{0}\] given by \[\norm[\frac 1\zeta]{f(y)} = \norm[\zeta]{f\left(\frac{1}{y}\right)}.\] 
 \end{defn}
 
\begin{rmk}\label{rmk:berk:classicalpoint}
  We will see later that this transition map naturally extends to an involution $\zeta \mapsto 1/\zeta$ of $\P^1_\an$ with the point $\infty$ defined as $1/0$. We also refer to $\infty$ as a (Type I) classical point and part of the natural inclusion $\P^1(K) \subset \P^1_\an(K)$. With this hindsight, we can expand the definition of the classical points $a \in \P^1$ to evaluate elements of $K(y)$ as follows: for any $f \in K(y)$ we define $\norm[a]{f} = \abs{f(a)}$, allowing for $\norm[a]{f} = \infty$ if and only if $f(a) = \infty$. Finally, we can make sense of $\infty = 1/0$: \[\abs{f(\infty)} = \norm[\infty]{f(y)} = \norm[\frac10]{f(y)} = \norm[0]{f\left(\frac{1}{y}\right)}.\]
 For more, see Benedetto \cite[\S6]{Bene}.
\end{rmk}
 
 The most `central' point in $\P^1_\an$ is the Type II point $\zeta(0, 1)$, we call this the \emph{Gauss point}. This is mostly because the ring of integers $\bO_K$ corresponds to $\CD(0, 1)$ and the residue classes $\overline b \in k = \bO_K/\mathcal M_K$ correspond to the open disks $D(b, 1) \subset \CD(0, 1)$. The corresponding Berkovich open disks will play a significant role in analysing maps etc. Often when considering a Type II point $\zeta(a, r)$, it will be useful to find a $\PGL(2, \hv)$ transformation, changing coordinates in $\P^1_\an$, such that $\zeta(a, r)$ is moved to $\zeta(0, 1)$.
 
 \subsection{Berkovich Disks, Affinoids, and Directions}
 
 We further extend our definitions of Berkovich disks and affinoids.

\begin{defn}
 A \emph{Berkovich disk} in $\P^1_\an$ is any one of the following.
\begin{itemize}
 \item A \emph{closed Berkovich disk} $D \subset \P^1_\an$ is either $\CD_\an(a, r) \subset \A^1_\an$ or $\P^1_\an \sm D_\an(a, r)$.
 \item A \emph{open Berkovich disk} $D \subset \P^1_\an$ is either $D_\an(a, r) \subset \A^1_\an$ or $D = \P^1_\an \sm \CD_\an(a, r)$.
 \item A disk is \emph{rational} if $r \in \abs{\hv}$ and \emph{irrational} otherwise.
\end{itemize}
\end{defn}

\begin{defn}
A \emph{connected Berkovich affinoid} is a nonempty intersection of finitely many Berkovich disks $D_1, \dots, D_n$. If all of the disks $D_1, \dots, D_n$ are closed, open, rational, or irrational, then the connected affinoid
$D_1 \cap \cdots \cap D_n$ is respectively said to be closed, open, rational, or irrational.

The connected open affinoid of the form \[\set[\zeta \in \A^1_\an]{r < \norm[\zeta]{y-a} < R} = D_\an(a, R) \sm \CD_\an(a, r)\] is called an \emph{open annulus}. We will often abuse notation and write this as 
\[\set{r < \abs{\zeta-a} < R},\]
distinguished from the classical annulus by the use of the Greek $\zeta$ instead of the Roman $z$.

A \emph{Berkovich affinoid} is a finite union of connected Berkovich affinoids. We may apply the usual adjectives as appropriate.
\end{defn}

\begin{thm}\label{thm:berk:topbasis}\ 
\begin{itemize}
 \item The set of open connected Berkovich affinoids in $\P^1_\an$ forms a basis for the weak topology.
 \item In particular, for any Type II point $\zeta(a, r)$ and open set $U$ containing $\zeta(a, r)$, $U$ 
 contains $D_\an(a, R) \sm (D_1 \cup \cdots \cup D_n)$ where each $D_j$ is a closed Berkovich disk of the form $\CD_\an(b, s) \subsetneq \CD_\an(a, r)$ and $R > r$.
 \item The Berkovich projective line $\P^1_\an(\hv)$ is a connected, complete, compact, Hausdorff space. Moreover, every connected affinoid is connected.
\end{itemize}
\end{thm}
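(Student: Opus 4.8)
I would handle the three assertions in turn, arranging the proof of the first so that the second drops out of it, then treat compactness, and finally connectedness.

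\textbf{The basis of open connected affinoids.} The plan is to produce a convenient subbasis for the weak topology on $\P^1_\an(K)$. Since $K$ is algebraically closed, any $f\in K[y]$ factors as $f=\alpha\prod_i(y-a_i)$, so by multiplicativity $\norm[\zeta]{f}=\abs{\alpha}\prod_i\norm[\zeta]{y-a_i}$; as multiplication on $\R_{\ge 0}$ is continuous, $\zeta\mapsto\norm[\zeta]{f}$ is continuous as soon as every $\zeta\mapsto\norm[\zeta]{y-a}$ is. Hence the weak topology is already the coarsest topology making all the functions $\zeta\mapsto\norm[\zeta]{y-a}$ continuous, and — after translating the analogous functions on the chart at $\infty$ and absorbing the one extra point $\infty$ by intersecting with a large $D_\an(0,R)$, a routine bookkeeping step — a subbasis is furnished by the sets $\{\,\norm[\cdot]{y-a}<r\,\}=D_\an(a,r)$ and $\{\,\norm[\cdot]{y-a}>r\,\}=\P^1_\an\sm\CD_\an(a,r)$. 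By \autoref{prop:berk:disks} each of these is an open Berkovich disk, and by the definition of connected Berkovich affinoid a nonempty finite intersection of open Berkovich disks is exactly an open connected Berkovich affinoid; so the basis generated by this subbasis consists precisely of the open connected affinoids together with $\emp$.

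\textbf{The explicit neighbourhood form at a Type II point.} After conjugating by an element of $\PGL(2,K)$ we may take $\zeta(a,r)=\zeta(0,1)$ to be the Gauss point. Given open $U\ni\zeta(0,1)$, the previous paragraph yields open Berkovich disks $D_1,\dots,D_n$ with $\zeta(0,1)\in D_1\cap\cdots\cap D_n\subseteq U$. Using \autoref{prop:berk:disks} to read off which Berkovich disks contain $\zeta(0,1)$: a disk $D_\an(c,t)\ni\zeta(0,1)$ must have $t>1$, and then $D_\an(c,t)=D_\an(0,t)$; a disk $\P^1_\an\sm\CD_\an(c,t)\ni\zeta(0,1)$ either has $\CD_\an(c,t)\subsetneq\CD_\an(0,1)$, or has $\CD_\an(c,t)$ disjoint from $\CD_\an(0,1)$, in which case it already contains $D_\an(0,t')$ for some $t'>1$. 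Amalgamating all the upper bounds into a single $R>1$ and listing the proper closed sub-disks that get removed as $\CD_\an(b_1,s_1),\dots,\CD_\an(b_m,s_m)\subsetneq\CD_\an(0,1)$, one checks $D_\an(0,R)\sm\big(\CD_\an(b_1,s_1)\cup\cdots\cup\CD_\an(b_m,s_m)\big)\subseteq D_1\cap\cdots\cap D_n\subseteq U$, which has the asserted form.

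\textbf{Hausdorff, compact, complete.} Distinct points $\zeta\neq\xi$ differ on some $f\in\hv[y]$, and $\zeta\mapsto\norm[\zeta]{f}$ is continuous, so the space is Hausdorff. For compactness I would use Tychonoff: $\A^1_\an(\hv)\hookrightarrow\prod_{f\in\hv[y]}[0,\infty)$, $\zeta\mapsto(\norm[\zeta]{f})_f$, is a topological embedding, since the weak topology is by definition the initial topology for the coordinate maps. Restricting to the closed Berkovich disk $\CD_\an(0,1)$: by \autoref{prop:seminorms:order} (via $\zeta\preceq\zeta(0,1)$, which by \autoref{prop:berk:disks} characterises membership in $\CD_\an(0,1)$) we get $\norm[\zeta]{f}\le M_f:=\norm[\zeta(0,1)]{f}$ for every $\zeta\in\CD_\an(0,1)$, so $\CD_\an(0,1)$ lands inside the compact box $\prod_f[0,M_f]$; its image is closed there because being a multiplicative non-Archimedean seminorm extending $\abs{\cdot}$ (i.e.\ $\norm{fg}=\norm{f}\norm{g}$, $\norm{f+g}\le\max\{\norm{f},\norm{g}\}$, $\norm{c}=\abs{c}$ for $c\in\hv$) is a conjunction of closed conditions on the coordinates. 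Hence $\CD_\an(0,1)$ is compact, and by the involution $\zeta\mapsto 1/\zeta$ so is $\P^1_\an\sm D_\an(0,1)$; their union is $\P^1_\an$, which is therefore compact. A compact Hausdorff space is complete in its unique compatible uniformity — every Cauchy net has a convergent subnet and so converges — which gives the remaining claim.

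\textbf{Connectedness, and connectedness of affinoids.} I would prove path-connectedness. Each $\zeta\in\CD_\an(0,1)$ is joined to $\zeta(0,1)$ by an explicit path: if $\zeta=\zeta(a,\rho)$ is of Type I, II, or III, take $t\mapsto\zeta(a,t)$ on $t\in[\rho,1]$ (with $\rho=0$ for Type I); if $\zeta$ is Type IV represented by $\CD(a_n,\rho_n)$ with $\rho_n\searrow\rho_\infty\ge 0$, concatenate the segments $t\mapsto\zeta(a_n,t)$ for $t\in[\rho_n,\rho_{n-1}]$ and attach $\zeta$ at $t=\rho_\infty$; in each case continuity is checked on the subbasic functions $t\mapsto\norm[\zeta_t]{y-c}=\max\{\norm[\zeta]{y-c},t\}$, which are visibly continuous. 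So $\CD_\an(0,1)$ is path-connected, likewise $\P^1_\an\sm D_\an(0,1)$ via the involution, and since these share $\zeta(0,1)$ the space $\P^1_\an$ is path-connected, hence connected. A general connected affinoid $V=D_1\cap\cdots\cap D_n\ne\emp$ is connected because the path one builds between any two of its points (travelling ``up'' to a common dominating point and back down, truncated as above) stays inside each Berkovich disk $D_i$ — a direct consequence of the nesting dichotomy of \autoref{prop:nonarch:disks} together with \autoref{prop:berk:disks} — hence inside $V$. The conceptual crux is compactness, handled cleanly by Tychonoff once one notes the seminorm axioms are closed conditions; the genuinely finicky part is the chart bookkeeping that certifies the open Berkovich disks as a subbasis of $\P^1_\an$ and the extraction of the explicit ``disk minus finitely many closed sub-disks'' form in the second assertion.
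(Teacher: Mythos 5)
The paper never proves this theorem: it is stated as background in \autoref{sec:berk}, with the argument implicitly delegated to the standard references (Benedetto's book, Baker--Rumely, Berkovich), so there is no in-paper proof to compare against; your argument is essentially the standard one from those sources and it is sound. The reduction of the weak topology to the subbasis $\set{\norm[\cdot]{y-a}<r}=D_\an(a,r)$, $\set{\norm[\cdot]{y-a}>r}=\P^1_\an\sm\CD_\an(a,r)$ via factorisation over the algebraically closed field, the Tychonoff embedding of $\CD_\an(0,1)$ into $\prod_f[0,M_f]$ with the seminorm axioms read as closed coordinatewise conditions, and the explicit monotone paths $t\mapsto\zeta(a,t)$ (with the squeeze at a Type IV endpoint) are exactly how the literature establishes these facts, and each step you give checks out. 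Three spots deserve tightening. First, in the second bullet the change of coordinates should be affine, $y\mapsto(y-a)/\alpha$ with $\abs{\alpha}=r$ (possible precisely because $\zeta(a,r)$ is Type II), rather than an arbitrary element of $\PGL(2,K)$: a general M\"obius map need not carry a set of the form $D_\an(a,R)\sm(\CD_\an(b_1,s_1)\cup\cdots\cup\CD_\an(b_n,s_n))$ to one of the same shape, whereas an affine map does. Second, the ``routine bookkeeping'' at $\infty$ is where the actual content of the first bullet for $\P^1_\an$ (as opposed to $\A^1_\an$) sits: one should at least record that the subbasic sets produced by the chart at infinity are again open Berkovich disks of the two listed kinds, so that the glued topology is generated by projective open disks. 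Third, your final claim that the path between two points of $V=D_1\cap\cdots\cap D_n$ stays inside every $D_i$ is really the order-convexity of Berkovich disks; citing the nesting dichotomy of \autoref{prop:nonarch:disks} is not quite the right tool, and the clean argument is via \autoref{prop:berk:disks}: $\xi\in\CD_\an(a,r)\iff\xi\preceq\zeta(a,r)$, together with the fact that $\norm[\cdot]{y-a}$ is monotone along $[\zeta,\zeta\vee\xi]$ and that $\zeta,\xi\in D_i$ forces $\zeta\vee\xi\in D_i$; spelling this out closes the only real gap in the write-up.
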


From \autoref{prop:berk:disks} it is clear that for any Type II or III point, $\P^1_\an \sm \zeta(a, r)$ is a union of open Berkovich disks. To be precise \[\P^1_\an \sm \zeta(a, r) = \left(\P^1_\an\sm\CD_\an(a, r)\right) \cup \bigcup_{b\in \CD(a, r)} D_\an(b, r).\]
However we should really write this as a disjoint union. In the special case of the Gauss point, it is somewhat easier to see that we should have one distinct open disk in $\P^1_\an \sm \zeta(a, r)$ for every element of $\P^1(k)$, where $k$ is the residue field.

\begin{defn}\label{defn:berk:dirn}
 Let $\zeta \in \P^1_\an$. The connected components of $\P^1_\an \sm \set\zeta$ are called the \emph{residue classes}, or \emph{directions}, or \emph{tangent vectors} at $\zeta$. The set of directions at $\zeta$ is denoted $\Dir\zeta$. For any $\xi \in \P^1_\an \sm \set\zeta$ we define $\vec v(\xi)$ to be the (unique) direction at $\zeta$ containing $\xi$.
\end{defn}

\begin{prop}\label{prop:berk:dirn}
 Let $\zeta \in \P^1_\an$. Then $\Dir\zeta$ can be described by
\begin{enumerate}
 \item If $\zeta$ is Type I or IV, then there is only one direction at $\zeta$, meaning $\zeta$ is an endpoint of the tree.
 \item If $\zeta = \zeta(a, r)$ is Type II, then $\Dir\zeta \cong \P^1(k)$, meaning $\zeta$ has one direction for each distinct open disk $D_\an(b, r)$ for $b \in \CD(a, r)$, and also $\P^1_\an \sm \CD(a, r)$, the residue class associated with $\infty$.
 \item If $\zeta = \zeta(a, r)$ is Type III, then the two directions are $D_\an(a, r)$ and $\P^1_\an \sm \CD(a, r)$.
\end{enumerate}
\end{prop}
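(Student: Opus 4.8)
The plan is to identify the connected components of $\P^1_\an\sm\set\zeta$ explicitly in each case, leaning on the description of Berkovich disks in \autoref{prop:berk:disks} and on the fact (\autoref{thm:berk:topbasis}) that every connected Berkovich affinoid, in particular every Berkovich disk, is a connected set. The recurring mechanism is: if we can write $\P^1_\an\sm\set\zeta$ as a disjoint union of open Berkovich disks, then — each such piece being open, connected, and with open complement inside $\P^1_\an\sm\set\zeta$ — each piece is clopen in $\P^1_\an\sm\set\zeta$, hence is exactly a connected component; so the number of pieces equals $\#\Dir\zeta$, and the pieces are the directions.

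\emph{Types II and III.} By \autoref{prop:berk:disks}, $\CD_\an(a,r)=\set{\zeta(a,r)}\cup\bigcup_{b\in\CD(a,r)}D_\an(b,r)$ with $\zeta(a,r)\notin D_\an(b,r)$, whence
\[\P^1_\an\sm\set{\zeta(a,r)}=\bigl(\P^1_\an\sm\CD_\an(a,r)\bigr)\ \cup\ \bigcup_{b\in\CD(a,r)}D_\an(b,r),\]
and (as noted before \autoref{defn:berk:dirn}) this is a disjoint union: for $b,b'\in\CD(a,r)$ the disks $D_\an(b,r)$ and $D_\an(b',r)$ are equal when $\abs{b-b'}<r$ and disjoint when $\abs{b-b'}=r$, while $\P^1_\an\sm\CD_\an(a,r)$ meets none of them since each $D_\an(b,r)\subseteq\CD_\an(a,r)$. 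Each summand is an open Berkovich disk, so by the mechanism above these are exactly the connected components. If $r\notin\abs{\hv^\times}$ (Type III) then $\CD(a,r)=D(a,r)$, so $\abs{b-a}<r$ for every $b\in\CD(a,r)$ and all the $D_\an(b,r)$ coincide with $D_\an(a,r)$; only the two components $D_\an(a,r)$ and $\P^1_\an\sm\CD_\an(a,r)$ survive, which is (iii). For (ii), a $\PGL(2,\hv)$ change of coordinates (acting on $\P^1_\an$ by a homeomorphism, hence bijecting directions) carries $\zeta(a,r)$ to the Gauss point $\zeta(0,1)$; then $\CD(0,1)=\bO_{\hv}$ and the disks $D_\an(b,1)$, $b\in\bO_{\hv}$, are indexed precisely by $\bO_{\hv}/\mathcal M_{\hv}=k$, with the remaining component $\P^1_\an\sm\CD_\an(0,1)$ supplying the direction of $\infty$; this exhibits a bijection $\Dir{\zeta(a,r)}\cong\A^1(k)\cup\set\infty=\P^1(k)$.

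\emph{Types I and IV.} Here we must show $\P^1_\an\sm\set\zeta$ is connected. Realise $\zeta$ as a limit of $\zeta_n=\zeta(a_n,r_n)$ along a nested chain $\CD(a_1,r_1)\supseteq\CD(a_2,r_2)\supseteq\cdots$, with empty intersection when $\zeta$ is Type IV, and with $a_n=a$, $r_n\to0$ when $\zeta=a$ is Type I. I claim $\bigcap_n\CD_\an(a_n,r_n)=\set\zeta$. Granting this,
\[\P^1_\an\sm\set\zeta=\bigcup_n\bigl(\P^1_\an\sm\CD_\an(a_n,r_n)\bigr)\]
is an increasing union of connected open Berkovich disks, hence connected, so $\Dir\zeta$ is a singleton and $\zeta$ is an endpoint, proving (i). To prove the claim: if $\xi\in\CD_\an(a_n,r_n)$ for all $n$, then $\xi\preceq\zeta_n$ for all $n$ (\autoref{prop:berk:disks}), so $\norm[\xi]f\le\norm[\zeta_n]f\to\norm[\zeta]f$ for every $f\in\hv[y]$, i.e.\ $\xi\preceq\zeta$; moreover, since $\bigcap_n\CD(a_n,r_n)$ is empty (Type IV) or shrinks to $\set a$ (Type I), every $c\in\hv$ lies outside some $\CD(a_n,r_n)$, so the last bullet of \autoref{prop:berk:disks} forces $\norm[\xi]{y-c}=\abs{a_n-c}=\norm[\zeta]{y-c}$; as $\hv$ is algebraically closed, factoring polynomials into linear terms and using multiplicativity then yields $\norm[\xi]f=\norm[\zeta]f$ for all $f\in\hv[y]$, i.e.\ $\xi=\zeta$.

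The step I expect to be the crux is this claim $\bigcap_n\CD_\an(a_n,r_n)=\set\zeta$ for Type IV $\zeta$ — ruling out a strictly smaller lower bound $\xi\prec\zeta$ in the tree order; everything else is bookkeeping with \autoref{prop:berk:disks}. A point to handle carefully there is that such a $\xi$ cannot itself be of Type I, II, or III (a classical point, or a Type II/III point $\zeta(b,s)$, lying in every $\CD_\an(a_n,r_n)$ would place the nonempty classical set $\set b$ or disk $\CD(b,s)$ inside $\bigcap_n\CD(a_n,r_n)=\emp$), and that the comparison of $\xi$ and $\zeta$ on the linear factors $y-c$ genuinely uses emptiness of that intersection so that every $c$ is eventually excluded.
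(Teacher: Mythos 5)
The paper itself states this proposition as recalled background and gives no proof, so there is nothing for your argument to diverge from; what matters is whether it is correct, and it is. Your decomposition of $\P^1_\an\sm\set\zeta$ into disjoint open Berkovich disks, the clopen-piece mechanism for identifying connected components, and the indexing by $\P^1(k)$ in the Type II case all follow from the tools the paper does record (\autoref{prop:berk:disks}, \autoref{thm:berk:topbasis}, and the $\PGL(2,\hv)$ change of coordinates it mentions). Your crux claim that $\bigcap_n\CD_\an(a_n,r_n)=\set\zeta$ — comparing $\xi$ and $\zeta$ on the linear factors $y-c$ for $c$ outside the shrinking disks and then invoking multiplicativity over the algebraically closed field — is essentially the same uniqueness argument the paper sketches inside its proof of Berkovich's Classification, so you are on well-trodden ground there.

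Two small patches, neither a real gap. First, in the Type I case the assertion that every $c\in\hv$ lies outside some $\CD(a_n,r_n)$ fails for $c=a$ itself; handle that factor directly via $\norm[\xi]{y-a}\le r_n\to 0=\norm[\zeta]{y-a}$, and use your exclusion argument only for $c\ne a$. Second, your Type I/IV argument is written for $\zeta\in\A^1_\an$; the point $\zeta=\infty$ should be reduced to $\zeta=0$ by the inversion $y\mapsto 1/y$ (a homeomorphism of $\P^1_\an$ permuting directions), just as the Type II case uses that $\PGL(2,\hv)$ acts by homeomorphisms to move $\zeta(a,r)$ to the Gauss point.
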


\subsection{Paths and Hyperbolic Metric}

Recall from \autoref{sec:affine} the definition of the partial order $\preceq$ on $\A^1_\an$; we naturally extend this to $\P^1_\an$ by asserting that $\zeta \preceq \infty$ for every $\zeta \in \A^1_\an$.

\begin{prop}\ 
\begin{enumerate}
 \item The relation $\preceq$ defines a partial order on $\P^1_\an$.
\item All Type I and IV points are minimal with respect to $\preceq$.
\item $\infty$ is a maximum point.
\item For any $\zeta, \xi \in \A^1_\an$ with $\xi \preceq \zeta$, we have $\diam(\xi) \le \diam(\zeta)$, with equality if and only if $\xi = \zeta$.
\item For any two $\zeta_0, \zeta_1 \in \P^1_\an$ there is a unique least upper bound, $\zeta_0 \vee \zeta_1$, defined below.
\end{enumerate}
\end{prop}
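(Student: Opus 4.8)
The plan is to read (1)--(4) off \autoref{prop:berk:disks} and Berkovich's classification almost directly, and to concentrate the work on the existence half of (5). For (1): reflexivity is immediate; antisymmetry holds because $\zeta\preceq\xi\preceq\zeta$ forces $\norm[\zeta]{f}=\norm[\xi]{f}$ for all $f\in\hv[y]$, hence $\zeta=\xi$; and transitivity I would verify by cases on whether the largest of the three points is $\infty$ — if not, all three lie in $\A^1_\an$ and this is transitivity of $\le$ on $\R$, while if so the conclusion $\zeta\preceq\infty$ holds by definition. Part (3) is exactly the defining convention $\zeta\preceq\infty$ for all $\zeta\in\A^1_\an$ together with $\infty\preceq\infty$. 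For (2): if $\zeta=a$ is Type I and $\xi\preceq a$, then $\norm[\xi]{y-a}\le\norm[a]{y-a}=0$, and writing a general $f\in\hv[y]$ as $f(a)+(y-a)g$ and applying the strong triangle inequality twice gives $\norm[\xi]{f}=\abs{f(a)}$ for every $f$, so $\xi=a$. If $\zeta=\lim_n\zeta(a_n,r_n)$ is Type IV, then $\zeta\preceq\zeta(a_n,r_n)$, i.e.\ $\zeta\in\CD_\an(a_n,r_n)$, for every $n$; hence any $\xi\preceq\zeta$ lies in $\bigcap_n\CD_\an(a_n,r_n)$, and because $\bigcap_n\CD(a_n,r_n)=\emptyset$ the last item of \autoref{prop:berk:disks} forces $\norm[\xi]{y-b}=\norm[\zeta]{y-b}$ for every $b\in\hv$, so $\xi=\zeta$.

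For (4), the inequality $\diam(\xi)\le\diam(\zeta)$ follows by taking infima in $\norm[\xi]{y-a}\le\norm[\zeta]{y-a}$. For the equality clause, assume $\diam(\xi)=\diam(\zeta)$ and $\xi\preceq\zeta$: if $\zeta$ is Type I or IV then (2) already gives $\xi=\zeta$; if $\zeta=\zeta(a,r)$ is Type II or III, then $\diam(\zeta)=r$, the relation $\xi\preceq\zeta(a,r)$ puts $\xi\in\CD_\an(a,r)$ by \autoref{prop:berk:disks}, and the same proposition says every point of $\CD_\an(a,r)$ has diameter at most $r$ with equality only at $\zeta(a,r)$, so $\diam(\xi)=r$ forces $\xi=\zeta$.

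The substance is (5). Uniqueness of $\zeta_0\vee\zeta_1$ is antisymmetry. For existence, set $B=\set[\xi\in\P^1_\an]{\zeta_0\preceq\xi\text{ and }\zeta_1\preceq\xi}$. It is nonempty ($\infty\in B$ by (3)) and closed, hence compact: indeed $B=\bigcap_{f\in\hv[y]}\set[\xi]{\norm[\xi]{f}\ge\max\set{\norm[\zeta_0]{f},\norm[\zeta_1]{f}}}$, an intersection of sets that are closed because the coordinate functions $\xi\mapsto\norm[\xi]{f}$ are continuous. Granting the key structural fact that $B$ is totally ordered by $\preceq$, the sets $B\cap\set[\xi]{\xi\preceq\eta}$ for $\eta\in B$ are nonempty closed subsets of the compact set $B$ with the finite intersection property — any finite subfamily contains a $\preceq$-least $\eta$, which lies in all its members — so they have a common point $\xi_0$; then $\xi_0\in B$ and $\xi_0\preceq\eta$ for every $\eta\in B$, i.e.\ $\xi_0=\min B=\zeta_0\vee\zeta_1$.

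Everything thus comes down to the main obstacle: showing that the upper set $\set[\xi]{\zeta\preceq\xi}$ of any point $\zeta$ is a chain ($B$ then inherits total order as a subset of the upper set of $\zeta_0$). I would isolate the lemma that two closed Berkovich disks sharing a point are nested: if $\norm[\xi]{y-a}\le r\le r'$ and $\norm[\xi]{y-a'}\le r'$, then $\abs{a-a'}\le\max\set{r,r'}=r'$, so $\CD(a,r)\subseteq\CD(a',r')$ by \autoref{prop:nonarch:disks} and hence $\CD_\an(a,r)\subseteq\CD_\an(a',r')$ by \autoref{prop:berk:disks}. Then, given $\xi,\xi'$ with $\zeta\preceq\xi$ and $\zeta\preceq\xi'$: if either equals $\zeta$ or $\infty$ they are trivially comparable; otherwise both are Type II or III — a Type I or IV point above $\zeta$ would equal $\zeta$ by minimality, i.e.\ by (2) — say $\xi=\zeta(b,t)$ and $\xi'=\zeta(b',t')$, and \autoref{prop:berk:disks} puts $\zeta\in\CD_\an(b,t)\cap\CD_\an(b',t')$, the lemma makes these two disks nested, and \autoref{prop:berk:disks} turns that back into comparability of $\xi$ and $\xi'$. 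The one delicate point is that the closedness invoked in the compactness step must come purely from continuity of the coordinate functions and compactness of $\P^1_\an$, with no circular appeal to the tree/arc structure being built; once that is pinned down, no spherical-completeness hypothesis and no ad hoc smallest-enclosing-disk construction is needed, and the resulting $\zeta_0\vee\zeta_1$ is (as one checks) the boundary point of the smallest closed Berkovich disk containing both $\zeta_0$ and $\zeta_1$, matching whatever explicit description is given below.
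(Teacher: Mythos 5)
Your proof is correct, and since the paper states this proposition as recalled background with no proof of its own (the citation next to the join definition is left pending), the fair comparison is with the standard treatment it leans on (Benedetto \cite{Bene}, Baker--Rumely \cite{BR2}), which proves existence of $\zeta_0 \vee \zeta_1$ by an explicit construction: represent each point via Berkovich's classification by (a nested sequence of) closed disks and exhibit the join as the boundary point of the smallest closed disk containing both, e.g.\ for points $\zeta(a,r_0)$ and $\zeta(b,r_1)$ of Types I--III the join is $\zeta\left(a, \max\set{r_0, r_1, \abs{a-b}}\right)$, with Type IV handled through the defining sequence. You instead obtain existence abstractly: the set $B$ of common upper bounds is closed, hence compact, because each coordinate function $\xi \mapsto \norm[\xi]{f}$ is continuous as a $[0,\infty]$-valued function on $\P^1_\an$; it is a chain by your nestedness lemma for closed Berkovich disks combined with parts (2) and (3) and \autoref{prop:berk:disks}; and a finite-intersection-property argument then produces $\min B$, which is the join. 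Both routes are sound. Yours trades the explicit formula for a soft compactness argument, which is cleaner in that it treats Type IV points uniformly and requires no choice of centres or appeal to spherical completeness; the explicit construction has the advantage that it hands you $\diam(\zeta_0 \vee \zeta_1)$ directly, which is what the paper actually uses afterwards (the hyperbolic metric and \autoref{thm:berk:interval} are phrased in terms of the join), so your closing remark identifying $\min B$ with the boundary point of the smallest enclosing closed Berkovich disk would still need that small computation if the formula is wanted. The one step you flagged, closedness of $B$ and of the sets $\set[\xi]{\xi \preceq \eta}$ inside $\P^1_\an$ rather than merely $\A^1_\an$, does hold: for a nonconstant polynomial $f$ one checks in the second chart that $\norm[\xi]{f} \to \infty$ as $\xi \to \infty$, so $\xi \mapsto \norm[\xi]{f}$ is continuous into $[0,\infty]$ and these sets are genuinely closed in the compact space $\P^1_\an$; hence there is no gap, only that short verification to write out.
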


\begin{defn}
 Let $\zeta_0, \zeta_1 \in \P^1_\an$. The \emph{least upper bound} or \emph{join} of $\zeta_0$ and $\zeta_1$, denoted $\zeta_0 \vee \zeta_1$, is the unique element of $\P^1_\an$ such that:
\begin{enumerate}
 \item $\zeta_0, \zeta_1 \preceq \zeta_0 \vee \zeta_1$; and
 \item if $\xi \in \A^1_\an$ and $\zeta_0, \zeta_1 \preceq \xi$, then $\zeta_0 \vee \zeta_1 \preceq \xi$.
\end{enumerate}
\end{defn}

\unfinished{cite}

\begin{defn}
Let $X$ be a topological space. We say $X$ is uniquely path-connected iff for any two distinct points $x_0, x_1 \in X$, there is a unique subset $I \subset X$ containing $x_0$ and $x_1$ such that $I$ is homeomorphic to the real closed interval $[0, 1]$, with the homeomorphism mapping $x_0$ to $0$ and $x_1$ to $1$. We call $[x_0, x_1] = I$ the \emph{closed interval} between $x_0$ and $x_1$. We call, $(x_0, x_1) = [x_0, x_1] \sm \set{x_0, x_1}$ an \emph{open interval}. We can similarly define the half-open intervals $[x_0, x_1)$, and $(x_0, x_1]$.
\end{defn}

\begin{thm}\label{thm:berk:interval}
  Let $U \subseteq \P^1_\an$ be a connected Berkovich affinoid, then $U$ is uniquely path-connected. Hence $\P^1_\an$ is locally connected. Moreover, for any $\zeta_0, \zeta_1 \in U$, all points on $(\zeta_0, \zeta_1)$, are of Type II or III. A path $[\zeta_0, \zeta_1]$ is always of the form \[[\zeta_0, \zeta_0 \vee \zeta_1] \cup [\zeta_0 \vee \zeta_1, \zeta_1] = \set{\zeta_0 \preceq \xi \preceq \zeta_0 \vee \zeta_1} \cup \set{\zeta_0 \vee \zeta_1 \preceq \xi \preceq \zeta_1}.\]
\end{thm}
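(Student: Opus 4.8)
The plan is to prove the statement first for $U=\P^1_\an$ and then deduce it for an arbitrary connected Berkovich affinoid from the fact that Berkovich disks are $\preceq$-convex. The heart of the matter — and what I expect to be the main obstacle — is \emph{uniqueness} of the arc joining two points; this will rest on a separation lemma that uses the description of the directions at a Type II or III point (\autoref{prop:berk:dirn}) together with the behaviour of the join $\vee$. A second, more bookkeeping, obstacle is the identification of a poset-interval $[\xi,\eta]_\preceq:=\{\zeta:\xi\preceq\zeta\preceq\eta\}$ (for $\xi\preceq\eta$) with a genuine topological interval, where Type IV endpoints and the point $\infty=1/0$ need separate care.

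For existence, fix comparable points $\xi\preceq\eta$. Using \autoref{prop:berk:disks} one checks that $[\xi,\eta]_\preceq$ is totally ordered by $\preceq$: two incomparable points of it would branch at some $\mu$, and then $\eta$ would have to lie in two different directions at $\mu$ simultaneously. Since $\diam$ is strictly $\preceq$-monotone, $\diam$ is injective on $[\xi,\eta]_\preceq$, and it surjects onto the real interval $[\diam\xi,\diam\eta]$ — there are no gaps, because between comparable points there is always a continuum of Type II and III points and because least upper bounds exist in $\P^1_\an$. Parametrising by $s\mapsto\zeta(b,s)$ about a suitable common centre $b$ (and by $-\log\diam$ in the chart $1/y$ near $\infty$) gives a continuous bijection from a compact real interval onto $[\xi,\eta]_\preceq$, hence a homeomorphism since $\P^1_\an$ is Hausdorff. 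For arbitrary distinct $\zeta_0,\zeta_1$, put $m=\zeta_0\vee\zeta_1$ and $I=[\zeta_0,m]_\preceq\cup[\zeta_1,m]_\preceq$; the defining property of the least upper bound forces these two sub-arcs to meet only at $m$, so $I$ is an arc from $\zeta_0$ to $\zeta_1$, namely the concatenation of poset-intervals claimed in the theorem. Every point of $I\sm\set{\zeta_0,\zeta_1}$ is strictly above $\zeta_0$ or strictly above $\zeta_1$, hence not minimal, hence of Type II or III; this set is also dense in $I$.

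For uniqueness, the key lemma is: every $\zeta\in I\sm\set{\zeta_0,\zeta_1}$ separates $\zeta_0$ from $\zeta_1$ in $\P^1_\an$. Writing $\zeta=\zeta(a,s)$, the connected components of $\P^1_\an\sm\set\zeta$ are (\autoref{prop:berk:dirn}) the open disks $D_\an(c,s)$ with $c\in\CD(a,s)$ together with $\P^1_\an\sm\CD_\an(a,s)$; were $\zeta_0$ and $\zeta_1$ in the same such direction, then — since that direction is an open disk and the join of two points of a disk lies in the disk with strictly smaller diameter — $\zeta_0\vee\zeta_1$ would lie strictly below $\zeta$, contradicting $\zeta\preceq m$ when $\zeta\prec m$, and the minimality of $m=\zeta_0\vee\zeta_1$ when $\zeta=m$. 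Granting the lemma, let $J$ be any arc from $\zeta_0$ to $\zeta_1$. For each $\zeta\in I\sm\set{\zeta_0,\zeta_1}$, $J$ is connected and meets two distinct components of $\P^1_\an\sm\set\zeta$, so $\zeta\in J$; thus $I\sm\set{\zeta_0,\zeta_1}\subseteq J$, and as $J$ is compact, hence closed, $I=\overline{I\sm\set{\zeta_0,\zeta_1}}\subseteq J$. Finally $I$ is a connected subset of the arc $J$ containing both of $J$'s endpoints, so $I=J$. This gives unique path-connectedness of $\P^1_\an$ and, as a byproduct, the displayed form of $[\zeta_0,\zeta_1]$ and the fact that $(\zeta_0,\zeta_1)$ consists of Type II and III points.

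It remains to treat a general connected affinoid $U$ and local connectedness. Each Berkovich disk is $\preceq$-convex: $\CD_\an(a,r)=\{\zeta:\zeta\preceq\zeta(a,r)\}$ is a down-set, $D_\an(a,r)=\bigcup_{s<r}\CD_\an(a,s)$ is an increasing union of down-sets, and $\P^1_\an\sm\CD_\an(a,r)$, $\P^1_\an\sm D_\an(a,r)$ are intersections of up-sets; in each case, if $\zeta_0,\zeta_1$ lie in the set then so do $\zeta_0\vee\zeta_1$ and hence all of $[\zeta_0,\zeta_0\vee\zeta_1]_\preceq\cup[\zeta_1,\zeta_0\vee\zeta_1]_\preceq$. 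A finite intersection of convex sets is convex, so the unique arc of $\P^1_\an$ between two points of $U$ stays in $U$ and is a fortiori the unique such arc within $U$; this proves $U$ is uniquely path-connected and yields the two ``moreover'' assertions for $U$. Local connectedness of $\P^1_\an$ is immediate from \autoref{thm:berk:topbasis}, which provides a basis of connected open sets.
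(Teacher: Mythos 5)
The paper never proves this statement: it is recalled as standard background on the tree structure of $\P^1_\an$ (the surrounding text carries only a citation placeholder, and the intended sources are the expositions of Benedetto and Baker--Rumely), so there is no in-house argument to compare yours against. Judged on its own terms, your proof is correct and is the standard tree-theoretic one: total orderedness of the set of points above a given point, strict $\preceq$-monotonicity of $\diam$, parametrisation of poset segments by diameter, concatenation of the two segments at the join $m=\zeta_0\vee\zeta_1$, separation of $\zeta_0$ from $\zeta_1$ by each interior (hence Type II or III) point of the candidate arc to force uniqueness, and $\preceq$-convexity of the four kinds of Berkovich disk to descend from $\P^1_\an$ to an arbitrary connected affinoid. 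The uniqueness mechanism (a competing arc $J$ must meet every separating point, is closed, and a connected subset of an arc containing both endpoints is the whole arc) is exactly right.

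Two places should be tightened. First, in the separation lemma you argue only the case where the putative common direction at $\zeta=\zeta(a,s)$ is one of the open disks $D_\an(c,s)$; you must also rule out the direction $\P^1_\an\sm\CD_\an(a,s)$ containing $\infty$, for which your join-diameter argument gives no contradiction (two points out there can have their join strictly above $\zeta$). The fix is immediate but needs saying: $\zeta$ lies on $[\zeta_0,m]_\preceq$ or $[\zeta_1,m]_\preceq$ and differs from the endpoints, so one of $\zeta_0,\zeta_1$ is $\prec\zeta$ and hence lies in $\CD_\an(a,s)$ by \autoref{prop:berk:disks}; a common direction would therefore have to be one of the $D_\an(c,s)$, and your argument then applies. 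Second, the ``suitable common centre $b$'' does not exist when the lower endpoint of a poset segment is of Type IV; there the parametrisation must be patched together from the maps $s\mapsto\zeta(a_n,s)$ for $s>r_n$, with continuity at the bottom endpoint coming from the definition of the Type IV point as the limit of the $\zeta(a_n,r_n)$ --- you flag this, but it is the one spot where the sketch still owes an argument. Finally, note that local connectedness can be obtained from your own result (open connected affinoids are path-connected, hence connected, and they form a basis) rather than by importing the connectedness clause of \autoref{thm:berk:topbasis}, which keeps the logical dependencies among the background statements cleaner.
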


\begin{defn}
 Let $S \subseteq \P^1_\an$ be any subset of the Berkovich projective line. The \emph{convex hull} of $S$ is the set 
 \[\Hull(S) = \set[\xi \in \P^1_\an]{\exists \zeta_0, \zeta_1 \in S \st \xi \in [\zeta_0, \zeta_1]}.\]
\end{defn}

\begin{defn}
The set $\bH = \P^1_\an(K) \sm \P^1(K)$ is the \emph{hyperbolic space} over $K$. We define a \emph{hyperbolic metric} $d_\bH : \bH \times \bH \to [0, \infty)$ given by \[d_\bH(\zeta, \xi) = 2 \log \left(\diam(\zeta \vee \xi)\right) - \log \left(\diam(\zeta)\right) - \log \left(\diam(\xi)\right).\]
\end{defn}

\begin{rmk}
 The hyperbolic metric measures distances by the logarithm of diameter along the lines of the poset structure. Observe that when $\zeta \preceq \xi$ then $\zeta \vee \xi = \xi$, so \[d_\bH(\zeta, \xi) =  \log \left(\diam(\zeta)\right) - \log \left(\diam(\xi)\right).\] Then in general, since $[\zeta, \xi] = 
 [\zeta, \zeta \vee \xi] \cup [\zeta \vee \xi, \xi]$ by \autoref{thm:berk:interval}, it is natural to see that 
 \[d_\bH(\zeta, \xi) = d_\bH(\zeta, \zeta \vee \xi) + d_\bH(\zeta \vee \xi, \xi).\]
 
 The topology given by $d_\bH$ is much stronger than the weak topology, even though they agree on intervals. For instance, a hyperbolic ball around a Type II point does not contain a single direction, but every (weak) open neighbourhood contains all but finitely many.
\end{rmk}

\subsection{Rational Maps}\label{sec:ratl}

Recall that the aim of this part of this article is to generalise the notion of \emph{rational maps} on the Berkovich projective line and also its dynamical theory. When we define \emph{skew products} on $\P^1_\an$ it will not only be useful to refer and compare them to rational maps, but we will want to understand skew products through their associated rational maps.

The purpose of this section is to recall some of the definitions and basic theory of \emph{rational maps} on the Berkovich projective line. However much of this theory will be deferred to the following sections (and referenced) as we generalise these results to skew products, since the reader can always recover the original theorem as a special case of the newly printed one.

\emph{Notation.} Typically authors write $\phi$ for both a rational function in $\hv(y)$ and also the induced function $\phi : \P^1_\an(\hv) \to \P^1_\an(\hv)$. In this case we will instead distinguish $\phi_*$ as the induced function on the Berkovich line and $\phi^*$ as the homomorphism induced on the function field $\hv(y)$. By now the reader will also have noticed the propensity to write rational functions as $f(y)$ rather than $f(x)$, $f(t)$, or $f(z)$; whilst the latter would have been fine, we preserve the variable $x$ for the variable in the field of Puiseux series $\K$, giving a natural extension from the field $\C(x, y)$ to $\K(y)$.

For the following definition, recall \autoref{rmk:berk:classicalpoint} about considering a Type I point as an honorary $[0, \infty]$-valued seminorm on $K(y)$. Indeed, for any $a \in \P^1(K) \subset \P^1_\an(K)$ one can check that $\phi_*(a) = \phi(a)$.
\unfinished{cite Benedetto}

\begin{defn}\label{defn:ratl}
 Let $\phi(y) \in \hv(y)$ be a rational function. Then we define the associated \emph{rational map} on the Berkovich projective line by
\begin{align*}
 \phi_* : \P^1_\an(\hv) &\longrightarrow \P^1_\an(\hv)\\
 \zeta &\longmapsto \phi_*(\zeta)\\
 \text{where } \norm[\phi_*(\zeta)]{f(y)} &= \norm[\zeta]{f \circ \phi(y)}.
\end{align*}
\end{defn}

Importantly, this function is well defined because either $\norm[\phi_*(\zeta)]\cdot$ is a seminorm on $K[y]$ which extends the one on $K$. 
To see the latter, consider $\norm[\phi_*(\zeta)]a$ for $a \in K$. Applying $\phi$ to a constant does nothing i.e. $a \circ \phi(y) = a$ and so $\norm[\phi_*(\zeta)]{a} = \norm[\zeta]{a \circ \phi} = \norm[\zeta]{a} = \abs a$. This will be the main challenge in making a more general definition later.

\begin{thm}
 Let $\phi \in \hv(y)$ be a rational function. Then the function $\phi_* : \P^1_\an \to \P^1_\an$ of \autoref{defn:ratl} is the unique continuous extension of the rational function $\phi : \P^1 \to \P^1$ to $\P^1_\an$.
\end{thm}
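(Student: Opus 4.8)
I want to prove that the map $\phi_* : \P^1_\an \to \P^1_\an$ of \autoref{defn:ratl} is the unique continuous map extending the classical rational function $\phi : \P^1 \to \P^1$. This splits into three tasks: (i) $\phi_*$ really does extend $\phi$ on the classical points $\P^1(\hv) \subset \P^1_\an$; (ii) $\phi_*$ is continuous; (iii) such a continuous extension is unique.

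\textbf{Step 1: Extension.} For $a \in \P^1(\hv)$, treating $a$ as the $[0,\infty]$-valued Type I seminorm $\norm[a]{f} = \abs{f(a)}$ as in \autoref{rmk:berk:classicalpoint}, I compute $\norm[\phi_*(a)]{f} = \norm[a]{f \circ \phi} = \abs{(f \circ \phi)(a)} = \abs{f(\phi(a))} = \norm[\phi(a)]{f}$ for every $f \in \hv(y)$ (or $f \in \hv[y]$ when $\phi(a) \ne \infty$; the point at infinity is handled via the chart $\zeta \mapsto 1/\zeta$ and $f \mapsto f(1/y)$, using composition-invariance of the whole construction). Hence $\phi_*(a) = \phi(a)$. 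One should also note the well-definedness already flagged after the definition: $\norm[\phi_*(\zeta)]\cdot$ is a multiplicative non-Archimedean seminorm on $\hv[y]$ extending $\abs\cdot$ on $\hv$, because pulling back along the ring homomorphism $\phi^* : f \mapsto f \circ \phi$ preserves these properties and fixes constants, so $\phi_*(\zeta)$ genuinely lands in $\P^1_\an$.

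\textbf{Step 2: Continuity.} The weak topology on the target is, by definition, the coarsest one making $\zeta \mapsto \norm[\zeta]{f}$ continuous for each $f \in \hv[y]$ (extended to the chart ring $\hv[y,y^{-1}]$ away from $0$, cf.\ \autoref{thm:berk:topbasis} and the basis of open connected affinoids). So it suffices to check that for each $f$, the composite $\zeta \mapsto \norm[\phi_*(\zeta)]{f} = \norm[\zeta]{f \circ \phi}$ is continuous on $\P^1_\an$. But $f \circ \phi$ is again a rational function in $\hv(y)$, which we may write as $g(y)/y^d$ with $g \in \hv[y]$ (on any chart avoiding the relevant poles/the point $0$), and $\zeta \mapsto \norm[\zeta]{g}/\norm[\zeta]{y^d}$ is continuous as a ratio of continuous functions with non-vanishing denominator on $\P^1_\an \sm \set 0$. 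A small amount of care is needed to patch the two charts together and to handle the poles of $\phi$ (where $\phi_*$ should hit the Type I point $\infty$), but this is routine: cover $\P^1_\an$ by finitely many basic affinoids on which a suitable chart presentation of $f \circ \phi$ is pole-free, and invoke the gluing of continuous maps. Thus $\phi_*$ is continuous.

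\textbf{Step 3: Uniqueness.} Suppose $\psi : \P^1_\an \to \P^1_\an$ is continuous with $\psi|_{\P^1} = \phi$. The key fact is that $\P^1(\hv)$ is \emph{dense} in $\P^1_\an(\hv)$: every Type II, III, or IV point $\zeta$ is a limit of classical points, e.g.\ from \autoref{prop:berk:disks} a Type II or III point $\zeta(a,r)$ lies in the closure of $\CD(a,r) \subset \P^1(\hv)$ since every basic neighbourhood $D_\an(a,R) \sm (D_1 \cup \cdots \cup D_n)$ of it meets $\CD(a,r)$ (it contains whole residue disks), and similarly for Type IV. Since $\P^1_\an$ is Hausdorff (\autoref{thm:berk:topbasis}), two continuous maps agreeing on a dense set agree everywhere, so $\psi = \phi_*$. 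The main obstacle here — and the only place any real content lives — is establishing density of $\P^1(\hv)$ in $\P^1_\an(\hv)$ cleanly; once that is in hand, the Hausdorff-plus-dense argument closes the proof. (If one prefers, density also follows from the description of the topology basis in \autoref{thm:berk:topbasis} together with \autoref{prop:berk:disks}.)
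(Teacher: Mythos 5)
Your proposal is correct, and it is essentially the expected argument: the paper states this theorem without proof (deferring to Benedetto), and the standard proof is exactly your three steps — verify $\phi_*(a)=\phi(a)$ on Type I points, deduce continuity from continuity of $\zeta \mapsto \norm[\zeta]{f \circ \phi}$ for the rational function $f \circ \phi$ (patching the two charts near poles), and get uniqueness from density of $\P^1(\hv)$ in the Hausdorff space $\P^1_\an(\hv)$. One cosmetic slip: a general $f \circ \phi$ is a quotient $g/h$ of polynomials rather than $g(y)/y^d$ (that special form is only for Laurent polynomials in the chart transition), but your surrounding argument — a ratio of continuous seminorm evaluations whose denominator vanishes only at the finitely many Type I poles, which are handled in the chart at infinity — is exactly what is needed.
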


\begin{prop}
 Let $\phi, \psi \in \hv(y)$ be rational functions, then $(\psi \circ \phi)_* = \psi_* \circ \phi_*$.
\end{prop}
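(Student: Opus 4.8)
The plan is to verify the identity pointwise, straight from \autoref{defn:ratl}: fix $\zeta \in \P^1_\an(\hv)$, and show that the seminorms $(\psi \circ \phi)_*(\zeta)$ and $\psi_*(\phi_*(\zeta))$ agree on every $f \in \hv[y]$; since a point of $\P^1_\an$ is nothing but the corresponding seminorm, this gives the equality of maps.

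First I would record the (contravariant) functoriality of pre-composition on function fields. Writing $\phi^*(f) = f \circ \phi$, associativity of composition of rational functions gives, for every $f \in \hv(y)$,
\[
(\psi \circ \phi)^*(f) \;=\; f \circ (\psi \circ \phi) \;=\; (f \circ \psi) \circ \phi \;=\; \phi^*\bigl(\psi^*(f)\bigr),
\]
i.e. $(\psi \circ \phi)^* = \phi^* \circ \psi^*$. Here $f \circ \psi$ and $f \circ (\psi \circ \phi)$ are again rational, and the seminorms involved extend from $\hv[y]$ to $\hv(y)$ as discussed before \autoref{sec:berk}. Then the computation is immediate: for every $f \in \hv[y]$,
\begin{align*}
\norm[\psi_*(\phi_*(\zeta))]{f}
&= \norm[\phi_*(\zeta)]{f \circ \psi}
= \norm[\zeta]{(f \circ \psi) \circ \phi} \\
&= \norm[\zeta]{f \circ (\psi \circ \phi)}
= \norm[(\psi \circ \phi)_*(\zeta)]{f},
\end{align*}
where the first two equalities are \autoref{defn:ratl} applied to $\psi_*$ and then $\phi_*$, the third is associativity, and the last is \autoref{defn:ratl} for $(\psi \circ \phi)_*$. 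As $f$ and $\zeta$ are arbitrary, $(\psi \circ \phi)_* = \psi_* \circ \phi_*$.

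The only point requiring care — and it is minor — is when an intermediate point such as $\phi_*(\zeta)$ is of Type~I, so that $f \circ \psi$ may have to be evaluated where it has a pole; this is absorbed by treating a Type~I point as an honorary $[0,\infty]$-valued seminorm on $\hv(y)$ as in \autoref{rmk:berk:classicalpoint}, noting $\phi_*(a) = \phi(a)$ for classical $a$, so the chain of equalities still holds in $[0,\infty]$. Alternatively, one sidesteps the issue completely: both $(\psi \circ \phi)_*$ and $\psi_* \circ \phi_*$ are continuous maps $\P^1_\an \to \P^1_\an$ restricting to the classical rational map $\psi \circ \phi$ on the dense subset $\P^1(\hv)$, and by the theorem immediately preceding this proposition such a continuous extension is unique, so the two must coincide. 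I expect no real obstacle here; this is essentially a formal consequence of the definition.
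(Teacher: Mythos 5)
Your pointwise seminorm computation is correct and is essentially the argument the paper uses: it is the same one-line unwinding of the definition that appears in the proof of \autoref{prop:lowerstarfunctorial} for skew endomorphisms, specialised to $\phi^*(f)=f\circ\phi$ with the two contravariances cancelling. Your remarks on extending the seminorms to $\hv(y)$ and on Type~I points (or the fallback via uniqueness of the continuous extension) are sensible but not a departure from the paper's approach.
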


\begin{thm}
 Let $\phi \in \hv(y)$ be a non-constant rational function.
\begin{enumerate}
 \item Suppose $D(a, r)$ contains no poles of $\phi$ and $\phi(D(a, r)) = D(b, s)$, then $\phi_*(\zeta(a, r)) = \zeta(b, s)$.
 \item $\phi$ preserves the types of points.
 \item $\phi$ is an open mapping.
\end{enumerate}
\end{thm}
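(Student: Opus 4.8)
The plan is to prove (1) by a direct seminorm computation, deduce (2) from (1) together with the disk‑mapping theorems, and obtain (3) from the affinoid‑mapping theorem. For (1): since $\hv$ is algebraically closed every polynomial splits into linear factors, so a multiplicative seminorm on $\hv[y]$ extending $\abs{\cdot}$ is determined by its values on the polynomials $y-c$ ($c\in\hv$). Hence it suffices to check $\norm[\zeta(a,r)]{\phi-c}=\norm[\zeta(b,s)]{y-c}$ for every $c$. On the right, writing $y-c=(y-b)+(b-c)$ and using $\norm[\zeta(b,s)]{y-b}=\diam(\zeta(b,s))=s$, the strong triangle equality (and the identity $\zeta(b,s)=\zeta(c,s)$ in the boundary case $\abs{b-c}=s$) give $\norm[\zeta(b,s)]{y-c}=\max\set{s,\abs{b-c}}$. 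On the left, $D(a,r)$ being pole‑free means $\phi-c$ is analytic on $D(a,r)$ of finite Weierstrass degree (\autoref{prop:seminorms:rationaldegrees}), so $\norm[\zeta(a,r)]{\cdot}$ acts on it as the supremum norm over $D(a,r)$ (\autoref{prop:seminorms:typeII/IIImain} and the proposition following it); combined with the hypothesis $\phi(D(a,r))=D(b,s)$ this gives $\norm[\zeta(a,r)]{\phi-c}=\sup_{z\in D(a,r)}\abs{\phi(z)-c}=\sup_{w\in D(b,s)}\abs{w-c}$, and one more strong‑triangle line identifies the last supremum with $\max\set{s,\abs{b-c}}$. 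The two sides agree, so $\phi_*(\zeta(a,r))=\zeta(b,s)$. (Continuity of $\phi_*$ by itself would only force $\phi_*(\zeta(a,r))\in\CD_\an(b,s)$; the sup‑norm identity is what pins down the boundary point.)

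For (2): Type I points are preserved since $\phi_*(a)=\phi(a)\in\P^1(\hv)$. Let $\zeta=\zeta(a,r)$ be of Type II or III. The residue field of $\hv$ is infinite (as $\hv$ is algebraically closed) while $\phi$ has finitely many poles, so some residue class $D(c,r)\subseteq\CD(a,r)$ contains no pole; since $\zeta(a,r)=\zeta(c,r)$ we may assume $D(a,r)$ itself is pole‑free. Then \autoref{thm:seminorms:mapdisk} gives $\phi(D(a,r))=D(b,s)$ with $D(b,s)$ of the same type (rational open, resp.\ irrational) as $D(a,r)$, so $\zeta(b,s)$ is of the same type (II, resp.\ III) as $\zeta(a,r)$; and $\phi_*(\zeta(a,r))=\zeta(b,s)$ by part (1). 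For $\zeta$ of Type IV, represent it by a nested sequence $\CD(a_1,r_1)\supset\CD(a_2,r_2)\supset\cdots$ with $\bigcap_n\CD(a_n,r_n)=\emptyset$; discarding finitely many terms we may assume no $\CD(a_n,r_n)$ meets the pole set, so the closed‑disk analogue of part (1) (proved identically, taking the supremum over $\CD(a_n,r_n)$) gives $\norm[\phi_*(\zeta)]{y-c}=\lim_n\norm[\zeta(a_n,r_n)]{\phi-c}=\lim_n\norm[\zeta(b_n,s_n)]{y-c}$, where $\phi(\CD(a_n,r_n))=\CD(b_n,s_n)$ by \autoref{thm:seminorms:mapdisk}. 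Thus $\phi_*(\zeta)=\lim_n\zeta(b_n,s_n)$, and this is a Type IV point provided $\bigcap_n\CD(b_n,s_n)=\emptyset$: if $w$ lay in that intersection, the sets $\phi^{-1}(w)\cap\CD(a_n,r_n)$ would form a decreasing sequence of nonempty finite sets (each $\phi:\CD(a_n,r_n)\to\CD(b_n,s_n)$ being finite and onto), hence would share a point lying in $\bigcap_n\CD(a_n,r_n)=\emptyset$, a contradiction.

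For (3): the open connected Berkovich affinoids form a basis for the topology of $\P^1_\an$ (\autoref{thm:berk:topbasis}), so it suffices to check that $\phi_*$ sends each such set to an open set. This is the Berkovich counterpart of \autoref{thm:semninorms:affinoidmapping} — the rational‑map case of the statement that a skew product maps connected affinoids to connected affinoids — namely that $\phi_*$ sends a connected Berkovich affinoid to $\P^1_\an$ or to a connected Berkovich affinoid of the same type, so open affinoids go to open sets. A map sending a basis of open sets to open sets is open. (Alternatively one argues locally: basic neighbourhoods of a point are disks, annuli, or disks with finitely many subdisks removed, and \autoref{thm:seminorms:mapdisk} and \autoref{thm:seminorms:mapannulus} show $\phi_*$ maps each onto such a set.)

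I expect the Type IV case of (2) to be the main obstacle: guaranteeing that the nested image disks $\CD(b_n,s_n)$ again have empty intersection is the only step that uses more than disk‑by‑disk bookkeeping — it rests on $\phi$ being proper/finite‑to‑one.
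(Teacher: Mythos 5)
Your part (1) is correct, and the reduction to linear polynomials via multiplicativity is exactly the right mechanism; the Type I and Type IV cases of (2) are also sound (the finite-fibre argument forcing $\bigcap_n\CD(b_n,s_n)=\emp$ is a clean way to rule out degeneration of the image), and (3) as written is essentially a citation of the Berkovich affinoid-mapping theorem, which is consistent with how this statement is treated here: the paper recalls this theorem from Benedetto without proof, and its own skew-product generalisations (\autoref{thm:skew:opencts}, \autoref{thm:skew:imgtypeII}, \autoref{thm:skew:affinoidmapping}) are obtained by decomposition plus citation, with the image of a Type II/III point computed via a Laurent expansion on a thin annulus rather than by your pole-free-disk reduction.

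The genuine gap is the Type III case of (2). Your step ``some residue class $D(c,r)\subseteq\CD(a,r)$ contains no pole'' uses that $\CD(a,r)$ has infinitely many residue classes, which is true only when $r\in\abs{\hv^\times}$. For a Type III point $r\nin\abs{\hv^\times}$, so $\CD(a,r)=D(a,r)$ and $D(c,r)=D(a,r)$ for every $c\in\CD(a,r)$: there is exactly one residue class inside the closed disk (cf.\ \autoref{prop:berk:dirn}), and if it contains a pole no change of centre removes it, so the hypothesis of your part (1) cannot be arranged. Nor can you always escape by using the complementary direction or post-composing with a M\"obius map: for $\phi(y)=y-c/y$ with $\abs c<r^2$, the pole $0$ lies in $D(0,r)$ and the pole $\infty$ lies in the outer direction, and for every $w\in\P^1(\hv)$ the equation $y^2-wy-c=0$ has a root of absolute value less than $r$ (the roots' product has absolute value $\abs c<r^2$), so $\phi(D(0,r))=\P^1(\hv)$ and $\eta\circ\phi$ has a pole in $D(0,r)$ for every M\"obius $\eta$. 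The standard repair, and the one built into the paper's framework, is to expand $\phi$ as a Laurent series on a thin annulus $\set{\lambda r<\abs{y-a}<r}$ avoiding the finitely many zeros and poles and apply \autoref{thm:seminorms:mapannulus} (equivalently \autoref{thm:skew:imgtypeII} with $\phi_1^*=\id$, $\q=1$): then $\phi_*(\zeta(a,r))=\zeta(b_0,\abs{b_d}r^{d})$, where $d\ne 0$ is the common Weierstrass degree of $\phi-b_0$ on the annulus, and $r\in\abs{\hv^\times}$ if and only if $\abs{b_d}r^{d}\in\abs{\hv^\times}$ because the value group of $\hv$ is divisible; this preserves the Type II/III dichotomy and also re-proves your Type II case without the pigeonhole. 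With that substitution your argument is complete.
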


\subsection{Reduction}\label{sec:berk:red}

As suggested by \autoref{defn:berk:dirn} and \autoref{prop:berk:dirn}, the most natural way to think about the directions at a Type II point $\zeta$ is by identifying each one with a residue in $\P^1(\k)$. In this subsection we shall discuss reduction of elements and maps from $K$ or $K(y)$ to $\k$ or $\k(y)$, and what we learn about local degrees. This will generalise the content of \cite[\S7.5]{Bene}. \unfinished{HOW?}

Recall that the residue field of $K$ is the quotient field $k = \bO_K / \mathcal M_K$. The quotient map $\bO_K \to k$ is called the \emph{reduction map}. We denote the reduction of $a \in \bO_K$ by $\overline a$. This has a more useful extension to the projective line $\P^1(K)$ since every element can be written as $[a_0 : a_1]$ with $a_0, a_1 \in \bO_K$.
\begin{align*}
\P^1(K) &\longrightarrow \P^1(k)\\
[a_0 : a_1] &\longmapsto [\overline a_0 : \overline a_1]\\
\end{align*}
Furthermore, this induces a reduction map on $\bO_K[y] \to k[y]$ by reducing each coefficient in the polynomial. Reduction of rational functions of $K$ is a little troublesome. One can write any $f \in \P^1(K(y))$ as a fraction $f=g/h$ or ratio $[g : h]$ of polynomials in $\bO_K[y]$, with at least one coefficient in $g$ or $h$ having absolute value $1$. We then define the reduction $\overline f \in \P^1(\k(y))$ as follows:
\begin{align*}
\P^1(K(y)) &\longrightarrow \P^1(\k(y))\\
f = [g : h] &\longmapsto [\overline g : \overline h] = \overline f\\
\end{align*}
\emph{Warning}: this definition is sensitive to the choice of $f$ and $g$ - if one allows both $g, h \in \mathcal M_K$ then the reduction will be ill defined as $[0 : 0]$.

Unfortunately, reduction can change the basic properties of a polynomial.
\begin{ex}
Three examples of reduction over the complex numbers.
\begin{enumerate}
 \item Let $g(x, y) \in \C(x)[y]$ be defined by $g = xy^2 + y - 1$. Then $\overline g = y-1$ so $\deg(g) \ne \deg(\overline g)$.
 \item Let $g(x, y) = (y-x)y$, then $\overline g = y^2$. We see that $g$ had two distinct roots, but its reduction has one despite having the same degree.
\item Let $f = (y-x)/y$ then $\overline f = 1$.
\end{enumerate}
\end{ex}

A rational function $\phi(y) \in K(y)$ induces a rational map.
\begin{align*}
\overline \phi : \P^1(\k) &\longrightarrow \P^1(\k)\\
\overline a &\longmapsto \overline \phi(\overline a)\\
\end{align*}

\begin{defn}
 Suppose $K$ is a non-Archimedean field which might not be algebraically closed, and let $\phi \in K(y)$. We say that $\phi(y) = \frac{g(y)}{h(y)}$ has \emph{explicit good reduction} iff $\deg(\overline \phi) = \deg(\phi)$, otherwise $\phi$ has \emph{bad reduction}. If there is a fractional linear transformation $\eta \in \PGL(2, K)$ such that $\eta\circ \phi \circ \eta^{-1}$ has explicit good reduction then we say $\phi$ has \emph{good reduction}. If instead there is such an $\eta \in \PGL(2, \overline K)$ then $\phi$ has \emph{potentially good reduction}. 
\end{defn}

If $\phi$ has explicit good reduction then $\overline a \mapsto \overline{\phi(a)}$ is well defined and equal to $\overline \phi(\overline a)$. Conversely, if the degree drops then we can find $a, b \in K$ with $\phi(a) = 0$, $\phi(b) = \infty$ and the same reduction $\overline a = \overline b$, thus $\overline 0 = \overline{\phi(a)} \ne \overline{\phi(b)} = \overline \infty$; moreover both may be distinct from $\overline\phi(\overline a)$.

For a much more thorough discussion of reduction, see \cite[\S4.3]{Bene}.

\unfinished{Definitely include more in this section.}


\section{Skew Products on the Berkovich Projective Line}\label{sec:skew}
\improvement{Find a better section title?}
 
 The aim of this section is to define a \emph{skew product} on the Berkovich projective line and compare it to a rational map. Whilst these maps have new and unusual quirks and are a strict generalisation of Berkovich rational maps, most of the properties we are used to will be recovered.
 
 \begin{defn}
  Let $\Psi$ be an endomorphism of $K(y)$ extending an automorphism of $K$, i.e. the following diagram commutes:
  \[
\begin{tikzcd}
 K(y) & K(y) \arrow[swap]{l}{\Psi} \\
K \arrow[hook]{u} & K \arrow[hook]{u} \arrow{l}{\Psi_1}
\end{tikzcd}
\]
In this case we will call $\Psi : K(y) \to K(y)$ a \emph{skew endomorphism} of $K(y)$. We will typically denote the restriction $\left.\Psi\right|_K$ by $\Psi_1$.
\end{defn}
 
 \subsection{Motivation}
 
 Often, we shall think of $\Psi$ coming from a rational map of ruled surfaces. We will give detail on this construction in a sequel, but describe a special case of the situation now to give examples and motivation. Classically, a skew product (in analysis and geometry) is one of the form \[\phi(x, y) = (\phi_1(x), \phi_2(x, y))\] defined on some product space $A \times B$. Let us focus on the simple case of $\P^1 \times \P^1$ over the field $\k$. One may think of the following diagram commuting with a first projection map $h(x, y) = x$; this will help to generalise the concept later.
\[
\begin{tikzcd}
 \P^1 \times \P^1 \arrow[swap]{d}{h} \arrow[dashed]{r}{\phi} & \P^1 \times \P^1 \arrow{d}{h}\\
 \P^1 \arrow[swap]{r}{\phi_1} & \P^1
\end{tikzcd}
\]
The information given by $\phi$ is equivalent to a $\k$-algebra homomorphism of function fields.
\begin{align*}
 \phi^* : \k(x, y) & \longrightarrow \k(x, y)\\
x & \longmapsto \phi_1(x)\\
y & \longmapsto \phi_2(x, y)
\end{align*}
After changing coordinates we may assume that $\phi_1(0) = 0$ and look in a neighbourhood of $x=0$ then we obtain a $\k$-algebra map $\phi_1^* : \k[[x]] \to \k[[x]]$ which extends to one of the local function field $\phi^* : \k((x))(y) \to \k((x))(y)$. In more algebraic terminology, we took the completion of the local ring $\k[x]_{(x)}$.
  \[
\begin{tikzcd}
 \k((x))(y) & \k((x))(y) \arrow[swap]{l}{\phi^*} \\
\k((x)) \arrow[hook]{u}{h^*} & \k((x)) \arrow[hook, swap]{u}{h^*} \arrow{l}{\phi_1^*}
\end{tikzcd}
\]
After taking the algebraic closure of $\k((x))$ to obtain the Puiseux series $\K(\k)$, this map can be extended to a $\k$-algebra endomorphism $\phi^* : \K(y) \to \K(y)$.  We can write $\phi_1^*(x) = \phi_1(x) \in \k[[x]]$ with $\phi_1(x) = \lambda x^n + \bO(x^{n+1})$ and $\lambda \in \k^\times$ then this extends to an `\equivar{}' skew endomorphism over $\K$. With $\phi_2 \in \k((x))(y)$ we call such a map a \emph{$\k$-rational skew endomorphism}.

If $\phi_1(x)$ were the identity, then $\phi$ would represent the rational map $y \mapsto \phi_2(y)$ over $\k((x))$ and naturally induce a Berkovich rational map on $\P^1_\an(\K)$. Unfortunately, $\phi_1$ is rarely trivial, and $\phi$ will not translate to a Berkovich rational map. A different mapping on the Berkovich projective line is needed - \emph{the skew product.}




\subsection{The Problem}

If we had that $\Psi_1 = \id$ then $\Psi$ would be a $K$\emph{-algebra endomorphism} of $K(y)$, indeed it would be the \emph{rational map} $y \mapsto \Psi(y)$ over $\P^1(K)$. We could then define a Berkovich rational map $\Psi_* : \P^1_\an \to \P^1_\an$, as in \autoref{sec:ratl}, by 
\[\norm[\Psi_*(\zeta)]{f} = \norm[\zeta]{\Psi(f)}.\]
 The crucial calculation for the skew product to be well-defined was that $\Psi_*(\zeta)$ preserves the norm on $K$, meaning that for every $a \in K$, $\norm[\Psi_*(\zeta)]{a} = \abs a$. We might na\"ively try this definition with an arbitrary skew endomorphism; let $a \in K$, then by the expected definition we have
\[\norm[\Psi_*(\zeta)]{a} = \norm[\zeta]{\Psi(a)} = \abs{\Psi(a)} = \abs{\Psi_1(a)}.\]
In general, unlike the rational case, $\Psi_1$ is an arbitrary field automorphism of $K$ and could do anything to the absolute value. In the above we need $\abs{\Psi_1(a)} = \abs{a}$. Requiring this is reasonable, but for the definition of skew product below, we only ask that $\abs{\Psi_1(a)} = \abs{a}^{\frac 1\q}$ uniformly for some $\q>0$. The special cases where $1/\q \in \N$, especially $\q = 1$, will be of great interest in applications of this theory.

The construction of an arbitrary algebraically defined map on a Berkovich space is not new; see for instance \cite{FJ04}. Specifically, one can always normalise $\norm[\zeta]{\Psi(f)}$ as necessary depending on both $\Psi$ and $f$, to ensure a well-defined function. In any case, this will be a geometrically natural definition because, roughly speaking, the corresponding prime ideal of $\zeta$ i.e.\ $p_\zeta = \set{f : \norm[\zeta]f < 1}$ is mapped to the corresponding prime of its image \[\Psi^{-1}(p_\zeta) = \set{g : \Psi(g) \in p_\zeta} = \set{g : \norm[\zeta]{\Psi(g)} < 1} = \set{g : \norm[\Psi_*(\zeta)]g < 1}.\] Our construction is arbitrary enough to be applicable to a broader class of examples in complex dynamics, but the uniform normalisation factor allows for the dynamical behaviour of $\Psi_*$ to be better understood.

\unfinished{Mention other constructions e.g. Favre-Jonsson}

\subsection{Skew Products}

\unfinished{Talk about how this is geometrically natural}

\begin{defn}[Skew Product]\label{defn:skew:skew}
 Suppose that $\Psi : K(y) \to K(y)$ is a skew endomorphism of $K(y)$ and there is a $\q$ such that $\abs{\Psi(a)} = \abs{\Psi_1(a)} = \abs{a}^{\frac 1\q}$ for every $a \in K$. Then we say $\Psi$ is \emph{\equivar} with \emph{scale factor} $\q$. 
 Given such a $\Psi$, we define $\Psi_*$, a \emph{skew product over $K$}, as follows.
\begin{align*}
 \Psi_* : \P^1_\an(K) &\longrightarrow \P^1_\an(K)\\
 \zeta &\longmapsto \Psi_*(\zeta)\\
 \text{where } \norm[\Psi_*(\zeta)]{f} &= \norm[\zeta]{\Psi(f)}^\q
\end{align*}
If $\q=1$ then we call $\Psi_*$ a \emph{simple} skew product. Otherwise, if $\q < 1$ we say it is \emph{superattracting}, and if $\q > 1$ we may say it is \emph{superrepelling}.
\end{defn}

Consider the case deriving from a skew product $\phi(x, y) = (\phi_1(x), \phi_2(x, y))$ on a surface, with $\phi_1(x) = \lambda x^n + \bO(x^{n+1})$, and recall the above discussion that we have a $\k$-rational skew endomorphism $\phi^*$. Then the induced skew product on the Berkovich projective line, $\phi_* : \P^1_\an(\K) \to \P^1_\an(\K)$ will be called a \emph{$\k$-rational skew product} and has scale factor $\q = \frac 1n$. In particular, if $n=1$ then $\phi_*$ is a simple $\k$-rational skew product; this name takes after the fact $x=0$ is a simple zero of $\phi_1(x)$. Furthermore, note that $0$ is a superattracting fixed point of $\phi_1(x)$ when $n > 1$, and hence why we call $\phi_*$ `superattracting'. A deeper discussion of $\k$-rational skew products will be provided in the sequel, but we will continue to use them for examples and explain basic constructions.

\begin{thm}
 Suppose that $\Psi$ is an \equivar{} skew endomorphism. Then the skew product $\Psi_* : \P^1_\an \to \P^1_\an$ is a well defined map on the Berkovich projective line.
\end{thm}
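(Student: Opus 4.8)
The plan is to verify that the formula $\norm[\Psi_*(\zeta)]{f} = \norm[\zeta]{\Psi(f)}^\q$ genuinely defines a point of $\P^1_\an(K)$, i.e. a non-Archimedean multiplicative seminorm on $K[y]$ (or on $K[y,y^{-1}]$ away from $0$) which restricts to $\abs{\cdot}$ on $K$. First I would dispose of the easy properties. Since $\Psi$ is a ring homomorphism, $\Psi(fg) = \Psi(f)\Psi(g)$ and $\Psi(f+g) = \Psi(f) + \Psi(g)$; combining this with the fact that $\norm[\zeta]{\cdot}$ is a multiplicative non-Archimedean seminorm and that $t \mapsto t^\q$ is multiplicative and order-preserving on $[0,\infty)$, we get multiplicativity $\norm[\Psi_*(\zeta)]{fg} = (\norm[\zeta]{\Psi(f)}\norm[\zeta]{\Psi(g)})^\q = \norm[\Psi_*(\zeta)]{f}\norm[\Psi_*(\zeta)]{g}$ and the ultrametric inequality $\norm[\Psi_*(\zeta)]{f+g} = \norm[\zeta]{\Psi(f)+\Psi(g)}^\q \le \max\{\norm[\zeta]{\Psi(f)},\norm[\zeta]{\Psi(g)}\}^\q = \max\{\norm[\Psi_*(\zeta)]{f},\norm[\Psi_*(\zeta)]{g}\}$, using that $\q > 0$ so $t \mapsto t^\q$ preserves maxima. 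Also $\norm[\Psi_*(\zeta)]{0} = \norm[\zeta]{\Psi(0)}^\q = 0$.

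The crucial point — as the excerpt's discussion of ``The Problem'' emphasises — is that $\norm[\Psi_*(\zeta)]{\cdot}$ must extend the absolute value on $K$. Here is precisely where the \equivar{} hypothesis is used: for any $a \in K$, since $\Psi(a) = \Psi_1(a)$ is a constant, $\Psi(a)\circ$ anything is itself, and
\[
\norm[\Psi_*(\zeta)]{a} = \norm[\zeta]{\Psi(a)}^\q = \abs{\Psi_1(a)}^\q = \left(\abs{a}^{1/\q}\right)^\q = \abs{a}.
\]
In particular $\norm[\Psi_*(\zeta)]{1} = 1$, so the seminorm is non-trivial and multiplicative in the genuine sense. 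This confirms $\norm[\Psi_*(\zeta)]{\cdot} \in \A^1_\an(K)$ when $\zeta \in \A^1_\an(K)$, and the same computation on $K[y,y^{-1}]$ handles the chart at $\infty$: if $\zeta \ne 0$ then $\norm[\zeta]{y} \ne 0$, and I need $\norm[\Psi_*(\zeta)]{y} = \norm[\zeta]{\Psi(y)}^\q \ne 0$, i.e.\ that $\Psi_*$ does not send a non-zero point to $0$. Since $\Psi$ is an endomorphism of the \emph{field} $K(y)$ it is injective, so $\Psi(y) \ne 0$ in $K(y)$; one must then check $\norm[\zeta]{\Psi(y)} \ne 0$, which holds because $\norm[\zeta]{\cdot}$ is a norm (not merely a seminorm) on $K(y)$ for $\zeta$ of Type II, III, IV, and for Type I points $a$ one simply checks $\Psi(y)$ has no pole at $a$ or else interprets the image as $\infty = 1/0$ exactly as in \autoref{rmk:berk:classicalpoint}. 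Finally, to see $\Psi_*(\zeta)$ is an honest point rather than the forbidden value $[0:0]$, note $\norm[\Psi_*(\zeta)]{1} = 1 \ne 0$ rules this out.

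I expect the main obstacle to be bookkeeping around Type I points and the point at infinity: a Type I seminorm $\norm[a]{\cdot}$ is only $[0,\infty]$-valued on $K(y)$, so one must be careful that $\Psi(f)$ may have a pole at $a$, in which case $\norm[\Psi_*(a)]{f} = \infty$ is interpreted via the projective convention, and conversely that $\Psi_*(a)$ lands at a well-defined classical point of $\P^1(K)$; this is exactly the content that $\Psi_*$ restricted to $\P^1(K)$ should be the composite of $\phi_2$ with $(\phi_1^*)^{-1}$ alluded to in the introduction, but for the present theorem it suffices to observe that $a \mapsto \norm[\Psi_*(a)]{\cdot}$ is a well-defined $[0,\infty]$-valued multiplicative map on $K(y)$ extending $\abs{\cdot}$ on $K$, hence a point of $\P^1_\an(K)$. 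Everything else is the routine verification sketched above; no deep input beyond the \equivar{} scaling identity is needed.
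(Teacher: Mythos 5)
Your proposal is correct and follows essentially the same route as the paper's own proof: multiplicativity and the ultrametric inequality come for free from $\Psi$ being a ring homomorphism together with $t \mapsto t^\q$, and the only substantive check is that the \equivar{} identity $\abs{\Psi(a)} = \abs{a}^{1/\q}$ forces $\norm[\Psi_*(\zeta)]{a} = \abs{a}$ for $a \in K$. Your extra bookkeeping about Type I points, poles, and the chart at $\infty$ is detail the paper leaves implicit, not a different argument.
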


\begin{proof}
It is clear that $\Psi_*(\zeta)$ is a multiplicative seminorm because $\Psi$ is a ring homomorphism. \unfinished{expand?}We need to check that $\Psi_*(\zeta)$ extends the norm on $K$. Indeed given $a \in K$,
\[\norm[\Psi_*(\zeta)]{a} = \norm[\zeta]{\Psi(a)}^\q = \abs{\Psi(a)}^\q = \left(\abs a^{\frac 1\q}\right)^\q = \abs a.\]
\end{proof}

\begin{prop}\label{prop:lowerstarfunctorial}
 If $\Phi, \Psi$ are \equivar{} skew endomorphisms of $K(y)$ then $(\Phi \circ \Psi)_* = \Psi_* \circ \Phi _*$ i.e.\ $( \cdot )_*$ is a contravariant functor.
\end{prop}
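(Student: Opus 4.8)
The plan is to unwind \autoref{defn:skew:skew} twice, after first checking that $\Phi \circ \Psi$ is itself an \equivar{} skew endomorphism. Write $\q_\Phi$ and $\q_\Psi$ for the scale factors of $\Phi$ and $\Psi$. As a composition of ring endomorphisms of $K(y)$, the map $\Phi \circ \Psi$ is again a ring endomorphism, and its restriction to $K$ is $\Phi_1 \circ \Psi_1$, a composition of automorphisms of $K$, hence an automorphism; so $\Phi \circ \Psi$ is a skew endomorphism. For equivariance, given $a \in K$ we have
\[
\abs{(\Phi \circ \Psi)(a)} = \abs{\Phi(\Psi(a))} = \abs{\Psi(a)}^{1/\q_\Phi} = \left(\abs{a}^{1/\q_\Psi}\right)^{1/\q_\Phi} = \abs{a}^{1/(\q_\Psi \q_\Phi)},
\]
so $\Phi \circ \Psi$ is \equivar{} with scale factor $\q_{\Phi\circ\Psi} = \q_\Psi \q_\Phi = \q_\Phi \q_\Psi$; in particular $(\Phi \circ \Psi)_*$ is defined.

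Next I would verify that the two seminorms agree. Fix $\zeta \in \P^1_\an(K)$; working in the affine chart, let $f \in K[y]$. Applying the definition of $\Psi_*$ at the point $\Phi_*(\zeta)$, then the definition of $\Phi_*$ at $\zeta$, then the scale factor just computed together with $\Phi(\Psi(f)) = (\Phi\circ\Psi)(f)$,
\[
\norm[\Psi_*(\Phi_*(\zeta))]{f} = \norm[\Phi_*(\zeta)]{\Psi(f)}^{\q_\Psi} = \left(\norm[\zeta]{\Phi(\Psi(f))}^{\q_\Phi}\right)^{\q_\Psi} = \norm[\zeta]{(\Phi \circ \Psi)(f)}^{\q_{\Phi\circ\Psi}} = \norm[(\Phi \circ \Psi)_*(\zeta)]{f}.
\]
Since a point of $\A^1_\an(K)$ is precisely its associated multiplicative seminorm on $K[y]$, this yields $\Psi_*(\Phi_*(\zeta)) = (\Phi \circ \Psi)_*(\zeta)$ for $\zeta$ in the affine chart; the point $\infty$ (and more generally the other chart) is handled by the identical computation with $K[y, y^{-1}]$, on which $\Phi$ and $\Psi$ also act, in place of $K[y]$. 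As $\zeta$ is arbitrary, $(\Phi \circ \Psi)_* = \Psi_* \circ \Phi_*$, which is exactly contravariant functoriality.

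I do not expect a genuine obstacle here. The only points requiring care are the bookkeeping of composition order, so that the resulting assignment is \emph{contravariant} rather than covariant, and the observation that scale factors multiply under composition, which is what makes the two nested exponentiations on the left-hand side collapse to the single exponentiation by $\q_{\Phi\circ\Psi}$ on the right. One could also add the (immediate) remark that $\Psi_* \circ \Phi_*$ makes sense at all because each factor is a genuine self-map of $\P^1_\an(K)$ by the well-definedness theorem above.
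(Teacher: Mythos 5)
Your proof is correct and follows essentially the same route as the paper's one-line computation, namely unwinding the definition of $(\cdot)_*$ twice on an arbitrary $f$ at an arbitrary $\zeta$. You are in fact somewhat more careful than the paper: its displayed chain suppresses the exponents $\q_\Phi$, $\q_\Psi$ altogether, whereas you verify that $\Phi\circ\Psi$ is \equivar{} with scale factor $\q_\Phi\q_\Psi$ (so that $(\Phi\circ\Psi)_*$ is even defined) and track how the nested powers collapse, which is a worthwhile addition rather than a deviation.
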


\begin{proof}
\[\norm[\Psi_* \circ \Phi_*(\zeta)]{f} =  \norm[\Phi_*(\zeta)]{\Psi(f)} =  \norm[\zeta]{\Phi(\Psi(f))} = \norm[(\Phi \circ \Psi)_*(\zeta)]f\]
\end{proof}

The following definition and theorem is fundamental to working with skew products - it says we can always decompose a skew product into a field automorphism and a rational map. On $\P^1_\an$ this will help because the former induces a bijection with some nice geometric properties and the latter induces a Berkovich rational map which is well understood.

\begin{defn}\label{defn:skew:split}
 Let $\Psi$ be a skew endomorphism. We define the following two homomorphisms. Firstly $\Psi_1 = \left.\Psi\right|_K$ but extended trivially to $K(y)$, and secondly with $\Psi_2$ we distill the action of $\Psi$ on $y$.
 \begin{align*}
 \Psi_1 : K(y) &\longrightarrow K(y)\\
  a &\longmapsto \Psi(a) \quad \forall a \in K\\
  y &\longmapsto y\\
  \ &\ \\
 \Psi_2 : K(y) &\longrightarrow K(y)\\
  a &\longmapsto a \qquad\ \ \forall a \in K\\
  y &\longmapsto \Psi(y)
\end{align*}
\end{defn}

\begin{thm}\label{thm:skew:comp}
 Let $\Psi$ be an \equivar{} skew endomorphism. Then
\begin{itemize}
 \item $\Psi = \Psi_2 \circ \Psi_1$;
 \item $\Psi_* = \Psi_{1*} \circ \Psi_{2*}$;
 \item $\Psi_{2*}$ is a rational map on $\P^1_\an$.
\end{itemize}
\end{thm}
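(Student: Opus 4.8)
The plan is to dispatch the three bullets in order: the first is a universal-property bookkeeping check, the second follows formally from the first together with the functoriality of $(\cdot)_*$, and the third is a matter of matching definitions.

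Before anything else I would confirm that $\Psi_1$ and $\Psi_2$ of \autoref{defn:skew:split} are honest homomorphisms and are themselves \equivar{} skew endomorphisms, since otherwise neither $\Psi_{1*}$, $\Psi_{2*}$ nor \autoref{prop:lowerstarfunctorial} is available. For $\Psi_1$ this is immediate: applying the automorphism $\Psi_1 = \Psi|_K$ to the coefficients of a rational function is an automorphism of $K(y)$ fixing $y$, and $\abs{\Psi_1(a)} = \abs{a}^{1/\q}$ by hypothesis, so $\Psi_1$ is \equivar{} with scale factor $\q$. For $\Psi_2$ the only subtle point is well-definedness: since $\Psi$ is a homomorphism of fields it is injective, so $\Psi(y) \notin \Psi(K) = K$, hence $\Psi(y)$ is transcendental over $K$; therefore the $K$-algebra homomorphism $K[y] \to K(y)$ sending $y \mapsto \Psi(y)$ is injective and extends uniquely to a $K$-algebra endomorphism $\Psi_2$ of $K(y)$. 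As $\Psi_2|_K = \id_K$, it is \equivar{} with scale factor $1$.

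For the first bullet I would use that a ring homomorphism out of $K(y)$ is determined by its restriction to $K[y]$, hence by where it sends $K$ and $y$. Now $(\Psi_2 \circ \Psi_1)(a) = \Psi_2(\Psi(a)) = \Psi(a)$ for $a \in K$ (because $\Psi(a) \in K$ and $\Psi_2$ fixes $K$ pointwise), and $(\Psi_2 \circ \Psi_1)(y) = \Psi_2(y) = \Psi(y)$; both values agree with $\Psi$, so $\Psi_2 \circ \Psi_1 = \Psi$. The second bullet is then immediate from \autoref{prop:lowerstarfunctorial}, which gives $\Psi_* = (\Psi_2 \circ \Psi_1)_* = \Psi_{1*} \circ \Psi_{2*}$.

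For the third bullet, put $\phi(y) := \Psi(y) \in K(y)$, a non-constant rational function. Since $\Psi_2$ is a $K$-algebra endomorphism with $\Psi_2(y) = \phi$, we get $\Psi_2(f) = f \circ \phi$ for every $f \in K(y)$; and since $\Psi_2$ has scale factor $1$, the defining formula of \autoref{defn:skew:skew} reads $\norm[\Psi_{2*}(\zeta)]{f} = \norm[\zeta]{\Psi_2(f)} = \norm[\zeta]{f \circ \phi}$, which is exactly \autoref{defn:ratl} for the rational map attached to $\phi$. Hence $\Psi_{2*} = \phi_*$ is a rational map. I do not expect any real obstacle: the argument is purely formal. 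The two places that repay a moment's attention are the well-definedness of $\Psi_2$, where injectivity of the field homomorphism $\Psi$ is used to make $\Psi(y)$ transcendental over $K$, and the observation that $\Psi_2$ carries scale factor $1$ rather than $\q$, so that no extraneous exponent survives in the third bullet and $\Psi_{2*}$ genuinely matches the normalisation used for rational maps in \autoref{sec:ratl}.
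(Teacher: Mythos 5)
Your proposal is correct and follows essentially the same route as the paper: verify $\Psi = \Psi_2 \circ \Psi_1$ on the generators $a \in K$ and $y$, then invoke the functoriality of $(\cdot)_*$ (\autoref{prop:lowerstarfunctorial}) for the second bullet, with the third bullet being a matter of matching \autoref{defn:ratl}. Your added checks — that $\Psi_2$ is well defined because $\Psi(y)$ is transcendental over $K$, and that $\Psi_1$, $\Psi_2$ carry scale factors $\q$ and $1$ respectively — are welcome details the paper leaves implicit, but they do not change the argument.
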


\begin{proof}
 Clearly $\Psi_1, \Psi_2$ are ring homomorphisms, so it is enough to show $\Psi = \Psi_2 \circ \Psi_1$ on generators of $K(y)$, or more simply on an arbitrary $a \in K$ and on $y$. \[ \Psi_2 \circ \Psi_1(y) = \Psi_2(y) = \Psi(y)\]
 The last equality is by the definition of $\Psi_2$.
  \[ \Psi_2 \circ \Psi_1(a) = \Psi_2(\Psi(a)) = \Psi(a)\]
  The last equality is because $\Psi_2(b) = b$ for any $b \in K$, such as $b = \Psi(a)$. 
 We finish the proof using \autoref{prop:lowerstarfunctorial}.
\end{proof}

Our notation is inspired by the case of a $\k$-rational skew product.
\begin{align*}
 \phi : \P^1\times \P^1 & \dashto \P^1\times \P^1\\
  (x, y) &\longmapsto (\phi_1(x), \phi_2(x, y))
\end{align*}
In this case $\Psi = \phi^*$ and we consider the induced skew product. From the original geometric perspective, $\phi_1(x) = \phi^*(x)$ and $\phi_2(x, y) = \phi^*(y)$. Then the decomposition of $\Psi = \phi^*$ is very natural since it separetes into its into its actions on $x$ and $y$:
\begin{align*}
\Psi_1 : k(x, y) &\longrightarrow k(x, y) &  \Psi_2 : k(x, y) &\longrightarrow k(x, y)\\
  x &\longmapsto \phi_1(x) &  x &\longmapsto x\\
  y &\longmapsto y&   y &\longmapsto \phi_2(x, y)\\
\end{align*}
We write $\phi_1^* = \Psi_1$ and $\phi_2^* = \Psi_2$. One may verify that $\phi^* = \phi_2^* \circ \phi_1^*$ and $\phi_* = \phi_{1*} \circ \phi_{2*}$, but antecedent to \autoref{thm:skew:comp} is the (set theoretic) composition
\[\phi(x, y) = (\phi_1(x), \phi_2(x, y)) = (\phi_1(x), y) \circ (x, \phi_2(x, y)).\]

From now on, we may denote an \equivar{} skew endomorphism by $\phi^*$ even if it is not derived from a geometric skew product on a surface. To be clear, to say $\phi_*$ is a skew product still will mean it derives from an \equivar{} skew endomorphism $\phi^*$, but it may not be $\k$-rational. Furthermore, we may write $\phi_*= \phi_{1*} \circ \phi_{2*}$ as a cue to the splitting from \autoref{defn:skew:split}.

\begin{rmk}
 We can see now that not only does a skew product generalise the definition of a rational map, but every skew product is a composition of a rational map and the action of a field automorphism. This will be most useful for our understanding of how a skew product acts in one iterate. It will be much harder to understand multiple iterations (its dynamics), however the decomposition is still valuable. \unfinished{See result about it not being rational or conjugate to one.}
\end{rmk}

\subsection{Properties of Skew Products}

The following theorem say that given $\phi_* = \phi_{1*} \circ \phi_{2*}$, the field automorphism part, $\phi_*$ is a geometrically nice map on $\P^1_\an$. However caution is warranted: a non-trivial field automorphism will induce a highly non-analytic map.

\begin{thm}\label{thm:skew:scaledhomeo}
Let $\Psi$ be an \equivar{} automorphism of $K$ extended trivially to $K(y)$ with scale factor $\q$, i.e. $\Psi_2 = \id$. Then the induced skew product $\Psi_* : \P^1_\an \to \P^1_\an$  
\begin{enumerate}
 \item is a homeomorphism on $\P^1_\an(K)$;
 \item scales hyperbolic distances by a factor of $\q$;
 \item is the unique continuous extension of $\Psi^{-1}$ on $\P^1(K) \subset \P^1_\an(K)$;
 \item maps Berkovich points to those of the same type; and
 \item is order preserving on the poset $(\P^1_\an, \preceq)$.
\end{enumerate}
In particular:\unsure{take off this first line?}
\begin{align*}
\Psi_*(a) &= \Psi^{-1}(a)\\
 \Psi_*(D(a, r)) &= D(\Psi^{-1}(a), r^\q)\\
 \Psi_*\left(\CD(a, r)\right) &= \CD(\Psi^{-1}(a), r^\q)\\
\Psi_*(\zeta(a, r)) &= \zeta(\Psi^{-1}(a), r^\q)\\
 \Psi_*(D_\an(a, r)) &= D_\an(\Psi^{-1}(a), r^\q)\\
 \Psi_*\left(\CD_\an(a, r)\right) &= \CD_\an(\Psi^{-1}(a), r^\q)
\end{align*}
\end{thm}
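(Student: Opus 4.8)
The whole theorem rests on one trivial identity. Since $\Psi$ fixes $y$ and restricts to the field automorphism $\Psi_1$ on constants, we have $\Psi(y-b) = y - \Psi_1(b)$ for every $b \in K$, so for every $\zeta \in \P^1_\an$ and every $b$,
\[\norm[\Psi_*(\zeta)]{y-b} = \norm[\zeta]{\Psi(y-b)}^\q = \norm[\zeta]{y - \Psi_1(b)}^\q.\]
Because $K$ is algebraically closed, every polynomial factors into linear terms and a constant, so a multiplicative seminorm on $K[y]$ extending $\abs\cdot$ is determined by its values on the $y-b$; hence this identity pins down $\Psi_*(\zeta)$. I would first record the elementary consequences of equivariance: $\Psi_1$ is uniformly continuous (as $\abs{\Psi_1(a-a')} = \abs{a-a'}^{1/\q}$), hence extends to an automorphism of the completion; $\abs{\Psi_1^{-1}(c)} = \abs{c}^\q$; and $\abs{K^\times} = \abs{K^\times}^\q$ since $\Psi_1(K^\times) = K^\times$ --- the last fact showing $r \mapsto r^\q$ preserves rationality of radii.

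With the identity in hand, every displayed formula and hence item (4) falls out by substitution. Taking $\zeta = a$ of Type I and using $\abs{a-\Psi_1(b)}^\q = \abs{\Psi_1^{-1}(a-\Psi_1(b))} = \abs{\Psi_1^{-1}(a) - b}$ gives $\norm[\Psi_*(a)]{y-b} = \abs{\Psi_1^{-1}(a)-b}$, i.e. $\Psi_*(a) = \Psi^{-1}(a)$. Taking $\zeta = \zeta(a,r)$ and $\norm[\zeta(a,r)]{y-b} = \max\{\abs{a-b},r\}$ gives $\norm[\Psi_*(\zeta(a,r))]{y-b} = \max\{\abs{\Psi_1^{-1}(a)-b}, r^\q\}$, i.e. $\Psi_*(\zeta(a,r)) = \zeta(\Psi^{-1}(a), r^\q)$; invariance of $\abs{K^\times}$ under $r\mapsto r^\q$ makes this preserve Types II and III. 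For a Type IV point $\zeta = \lim_n \zeta(a_n,r_n)$, combining the identity with $\norm[\zeta]{y-c} = \inf_n \norm[\zeta(a_n,r_n)]{y-c}$ shows $\Psi_*(\zeta)$ is the limit of $\zeta(\Psi_1^{-1}(a_n), r_n^\q)$; since $\Psi_1^{-1}$ is a bijection of $\P^1(K)$ with $\Psi_1^{-1}(\CD(a,r)) = \CD(\Psi_1^{-1}(a), r^\q)$, the image disks are again nested with empty intersection, so $\Psi_*(\zeta)$ is genuinely Type IV. The Berkovich disk formulas come the same way: $\zeta \in D_\an(a,r) \iff \norm[\zeta]{y-a} < r \iff \norm[\Psi_*(\zeta)]{y - \Psi_1^{-1}(a)} < r^\q \iff \Psi_*(\zeta) \in D_\an(\Psi^{-1}(a), r^\q)$, likewise for $\CD_\an$, and intersecting with $\P^1(K)$ recovers the classical-disk versions. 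Item (5) is immediate from monotonicity of $t\mapsto t^\q$: if $\zeta \preceq \xi$ then $\norm[\Psi_*(\zeta)]{f} = \norm[\zeta]{\Psi f}^\q \le \norm[\xi]{\Psi f}^\q = \norm[\Psi_*(\xi)]{f}$ for all $f$.

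For (1) I would invoke functoriality rather than build a continuous inverse by hand. The automorphism $\Psi_1^{-1}$ is itself \equivar{} with scale factor $1/\q$, so $(\Psi_1^{-1})_*$ is a skew product, and \autoref{prop:lowerstarfunctorial} applied to $\Psi_1 \circ \Psi_1^{-1} = \id = \Psi_1^{-1}\circ\Psi_1$ gives $(\Psi_1^{-1})_* \circ \Psi_* = \id = \Psi_* \circ (\Psi_1^{-1})_*$; so $\Psi_*$ is a bijection with inverse $(\Psi_1^{-1})_*$. Continuity of $\Psi_*$ holds because for each $f \in K[y]$ the map $\zeta \mapsto \norm[\Psi_*(\zeta)]{f} = \norm[\zeta]{\Psi f}^\q$ is the continuous map $\zeta\mapsto\norm[\zeta]{\Psi f}$ (note $\Psi f \in K[y]$) composed with $t\mapsto t^\q$, and the same holds in the chart at $\infty$ since $\Psi$ fixes $1/y$ --- which also shows $\Psi_*$ commutes with $\zeta\mapsto 1/\zeta$ and fixes $\infty = 1/0$; the identical argument applied to $(\Psi_1^{-1})_*$ makes $\Psi_*$ a homeomorphism. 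Finally (2): the identity gives $\diam(\Psi_*(\zeta)) = \inf_b \norm[\zeta]{y - \Psi_1(b)}^\q = \diam(\zeta)^\q$ after reindexing by the bijection $b\mapsto\Psi_1(b)$, and since $\Psi_*$ is an order isomorphism it preserves joins, $\Psi_*(\zeta\vee\xi) = \Psi_*(\zeta)\vee\Psi_*(\xi)$; feeding both into $d_\bH(\zeta,\xi) = 2\log\diam(\zeta\vee\xi) - \log\diam(\zeta) - \log\diam(\xi)$ multiplies it by $\q$.

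The one point needing genuine care, rather than bookkeeping with the identity, is the Type IV step: I must check the image sequence of disks really is legitimate (nested, with empty intersection), so that $\Psi_*$ lands in the Type IV locus and does not, say, collapse a Type IV point to a classical point or to a Type II/III point. This reduces entirely to the bijectivity of $\Psi_1^{-1}$ on $\P^1(K)$ and its compatibility with the inclusion order on disks, both immediate from the formula $\Psi_1^{-1}(\CD(a,r)) = \CD(\Psi_1^{-1}(a), r^\q)$; everything else is mechanical substitution into $\norm[\Psi_*(\zeta)]{y-b} = \norm[\zeta]{y-\Psi_1(b)}^\q$.
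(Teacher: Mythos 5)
Your proposal is correct, and it reaches the same conclusions as the paper by a somewhat different route. The paper works outward from classical points: it first computes $\Psi_*$ on Type I points, then obtains $\Psi_*(\zeta(a,r)) = \zeta(\Psi^{-1}(a), r^\q)$ by writing $\norm[\zeta(a,r)]{\Psi(f)}$ as a supremum of $\norm[b]{\Psi(f)}$ over $b \in D(a,r)$ and reindexing by the classical action, then deduces the disk and Type IV statements, and proves continuity by exhibiting the preimages of Berkovich disks explicitly. You instead organize everything around the single identity $\norm[\Psi_*(\zeta)]{y-b} = \norm[\zeta]{y-\Psi_1(b)}^\q$ together with the observation that, since $K$ is algebraically closed, a multiplicative seminorm extending $\abs{\cdot}$ is determined by its values on linear factors; the Type II/III case then follows from the elementary formula $\norm[\zeta(a,r)]{y-b} = \max\{\abs{a-b}, r\}$ rather than from a sup-over-the-disk argument, order preservation falls straight out of the definition of $\preceq$ instead of going through disk containment, and the homeomorphism is obtained from \autoref{prop:lowerstarfunctorial} applied to $\Psi\circ\Psi^{-1} = \id$ plus the universal property of the weak topology (composing $\zeta \mapsto \norm[\zeta]{\Psi f}$ with $t \mapsto t^\q$), where the paper manipulates disk preimages. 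What your route buys is economy: one identity plus bookkeeping, and no need for the sup-norm description of Type II/III points; what the paper's route buys is that the explicit disk formulas (used constantly later, e.g.\ in \autoref{thm:skew:autaffinoidmapping} and \autoref{thm:skew:imgtypeII}) appear as the primary objects rather than as corollaries. Two small things you should make explicit: the inverse $(\Psi^{-1})_*$ is order-preserving by the same one-line argument, which is what licenses $\Psi_*(\zeta\vee\xi) = \Psi_*(\zeta)\vee\Psi_*(\xi)$ in your computation of $d_\bH$; and, as in the paper, your identification of the image of a Type IV point with the Type IV point of the image sequence uses that the image disks are nested with empty intersection, which you correctly reduce to $\Psi_1^{-1}(\CD(a,r)) = \CD(\Psi_1^{-1}(a), r^\q)$ and bijectivity of $\Psi_1^{-1}$.
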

 
\begin{rmk}
 The fact that this skew product $\phi_{1*}$ extends the \emph{inverse} of the homomorphism $\phi_1^*$ on $\P^1(K)$ is somewhat more natural in the geometric setting over the Puiseux series. Here we see it as a `pre-composition' of functions of $x$, where $\gamma(x) \mapsto \gamma(\phi_1^{-1}(x))$. 
 This is very different from how homomorphisms on $K$ acting only on $y$ (`rational maps') generate a post-composition-like function on $\P^1(K)$.
 
 Consider a germ of a curve through $x=0$, namely $x \mapsto (x, \gamma(x))$. Then $\phi = (\phi_1, \phi_2)$ applied to this gives $x \mapsto (\phi_1(x), \phi_2(x, \gamma(x))$. To rewrite this in the form $(x, \tilde\gamma(x))$ we must \emph{precompose} with $\phi_1^{-1}(x)$ to get \[x \mapsto (x, \phi_2(\phi_1^{-1} (x), \gamma(\phi_1^{-1}(x)))).\]
\end{rmk}

\begin{prop}
Let $\phi_*= \phi_{1*} \circ \phi_{2*}$ be a simple $\k$-rational skew product. Then $\phi_* : \P^1(\K) \to \P^1(\K)$ acts as follows
\begin{align*}
 \phi_* : \P^1(\K) &\longrightarrow \P^1(\K)\\
  a(x) &\longmapsto \phi_2(\phi_1^{-1}(x), a(\phi_1^{-1}(x))) = \phi_2\circ (\id \times a) \circ \phi_1^{-1} (x)
\end{align*}
\end{prop}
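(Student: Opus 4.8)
The plan is to unwind the decomposition $\phi_* = \phi_{1*} \circ \phi_{2*}$ supplied by \autoref{thm:skew:comp} and evaluate each factor on a classical point $a = a(x) \in \P^1(\K) \subset \P^1_\an(\K)$.

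First I would record how $\phi_{2*}$ acts on classical points. By \autoref{thm:skew:comp}, $\phi_{2*}$ is the Berkovich rational map associated to $\phi_2^* : \K(y) \to \K(y)$, that is, to the rational function $\phi_2(y) = \phi_2(x, y) \in \K(y)$ whose coefficients lie in $\k((x)) \subset \K$. As observed just before \autoref{defn:ratl}, a Berkovich rational map restricts on $\P^1(\K)$ to its defining rational function; hence $\phi_{2*}(a) = \phi_2(a) = \phi_2(x, a(x))$, the classical point obtained by substituting $y = a(x)$ (with the usual conventions at poles and at $\infty$).

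Second I would describe $\phi_{1*}$ on classical points. Since $\phi_*$ is a \emph{simple} $\k$-rational skew product we have $n = 1$, so $\phi_1(x) = \lambda x + \bO(x^2)$ with $\lambda \in \k^\times$, and $\phi_1$ therefore has a compositional inverse $\phi_1^{-1}(x) = \lambda^{-1} x + \cdots \in x\k[[x]] \subset \K$. The automorphism $\left.\phi^*\right|_\K$ is the substitution $b(x) \mapsto b(\phi_1(x))$, so $(\phi_1^*)^{-1}$ is the substitution $b(x) \mapsto b(\phi_1^{-1}(x))$. Now $\phi_{1*}$ is precisely the skew product attached to the \equivar{} automorphism $\phi_1^*$ extended trivially to $\K(y)$, of scale factor $\q = 1$, so by \autoref{thm:skew:scaledhomeo}(3) it is the continuous extension of $(\phi_1^*)^{-1}$ to $\P^1_\an$; in particular $\phi_{1*}(c) = (\phi_1^*)^{-1}(c) = c(\phi_1^{-1}(x))$ for any classical $c = c(x)$.

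Finally I would compose. Taking $c = \phi_{2*}(a) = \phi_2(x, a(x))$, we obtain $\phi_*(a) = \phi_{1*}(\phi_{2*}(a)) = (\phi_1^*)^{-1}\bigl(\phi_2(x, a(x))\bigr)$. Writing $\phi_2 = g/h$ with $g, h \in \k((x))[y]$ and using that $(\phi_1^*)^{-1}$ is a field automorphism of $\K$ fixing $\k$, it commutes with the ring operations defining $\phi_2$ and simultaneously sends $x \mapsto \phi_1^{-1}(x)$ inside every $\k((x))$-coefficient of $g, h$ and inside $a(x)$; therefore $(\phi_1^*)^{-1}\bigl(\phi_2(x, a(x))\bigr) = \phi_2\bigl(\phi_1^{-1}(x), a(\phi_1^{-1}(x))\bigr) = \phi_2 \circ (\id \times a) \circ \phi_1^{-1}\,(x)$, as claimed. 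I expect the only delicate point to be this last manipulation — one must be sure that applying $(\phi_1^*)^{-1}$ to the \emph{value} $\phi_2(x, a(x))$ reindexes \emph{all} the Puiseux data, the coefficients of $\phi_2$ as well as the outer variable, rather than just one of them; expanding $\phi_2 = g/h$ and applying the automorphism termwise makes this transparent, and the boundary cases ($a(x)$ a pole of $\phi_2(x,\cdot)$, or $a = \infty$) are handled by the same bookkeeping in $\P^1$ together with continuity of $\phi_{1*}$.
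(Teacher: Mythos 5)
Your proposal is correct and follows essentially the route the paper intends: the paper states this proposition without a separate proof, as a direct consequence of the decomposition in \autoref{thm:skew:comp} together with the fact, established in (the proof of) \autoref{thm:skew:scaledhomeo}, that $\phi_{1*}$ acts on classical points as $(\phi_1^*)^{-1}$ (the precomposition $b(x)\mapsto b(\phi_1^{-1}(x))$), exactly as you argue. Your final step of pushing the field automorphism through $\phi_2 = g/h$ coefficientwise is the algebraic rendering of the germ-of-curve remark preceding the proposition, so nothing is missing.
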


\begin{proof}[Proof of \autoref{thm:skew:scaledhomeo}]
 %
Since $\Psi$ is an isomorphism, $\norm[(\Psi^{-1})_*(\zeta)]f = \norm[\zeta]{\Psi^{-1}(f)}$ provides an inverse to $\Psi_*$.

First we prove that the restriction of $\Psi_*$ to classical points is equal to $\Psi^{-1}$ on $\P^1(K)$. Let $\zeta = a \in K$ be Type I, then
\[\norm[\Psi_*(\zeta)]{y-b} = \norm[\zeta]{y-\Psi(b)}^\q = \abs{a - \Psi(b)}^\q = \abs{\Psi(\Psi^{-1}(a) - b)}^\q\] \[= \left(\abs{\Psi^{-1}(a) - b}^{\frac 1\q}\right)^\q = \abs{\Psi^{-1}(a) - b} = \norm[\Psi^{-1}(a)]{y-b}.\]
It is a similar exercise in the definitions to prove that $\psi_*(\infty) = \infty$.

Now observe that since $|\Psi_*(a) - \Psi_*(b)| = |\Psi^{-1}(a - b)| = |a-b|^\q$ we have:
\begin{align*}
 \Psi_*(D(a, r)) &= D(\Psi^{-1}(a), r^\q)\\
 \Psi_*(\CD(a, r)) &= \CD(\Psi^{-1}(a), r^\q)
\end{align*}
It follows that \[\Psi_*(\zeta(a, r)) = \zeta(\Psi^{-1}(a), r^\q)\]
since for $f \in K[y]$ \[\norm[\zeta(a, r)]{\Psi(f)} = \sup_{b\in D(a, r)} \norm[b]{\Psi(f)} = \sup_{b\in D(a, r)} \norm[\Psi_*(b)]{f} = \sup_{b' \in D(\Psi^{-1}(a), r^\q)} \!\!\!\norm[b']{f} = \norm[\zeta(\Psi^{-1}(a), r^\q)]{f}\]
 Similarly, since $\norm[\Psi_*(\zeta)]{y-\Psi^{-1}(a)} = \norm[\zeta]{y-a}^\q$ we have 
  \begin{align*}
  \Psi_*(D_\an(a, r)) &= D_\an(\Psi^{-1}(a), r^\q)\\
 \Psi_*(\CD_\an(a, r)) &= \CD_\an(\Psi^{-1}(a), r^\q)
\end{align*}
This determines the images of Type II/III points. Note that $\abs a = r \iff \abs{\Psi^{-1}(a)} = r^\q$, hence $r \in \abs{K^\times} \iff r^\q \in \abs{K^\times}$. Thus Type II and III points are individually preserved.

We have also implicitly shown that for disks $D$ and $E$ we have \[D \subseteq E \iff \Psi_*(D) \subseteq \Psi_*(E),\] and similar for their Berkovich versions $D_\an$ and $E_\an$ this is equivalent to \[D_\an \subseteq E_\an \iff \Psi_*(D_\an) \subseteq \Psi_*(E_\an).\]

This shows that a nested sequence of disks $D_1 \supsetneq D_2 \supsetneq D_3 \supsetneq \cdots$ remains nested; it is also clear that this has empty intersection if and only if the sequence of images do. Therefore Type IV points are preserved since the images of Type IV points can be described by the image of such a sequence of disks.

 To see that $\Psi_*$ is order preserving, recall that $\zeta \prec \xi$ implies that $\xi = \zeta(a, R)$ or $\xi = \infty$. Since $\infty = \Psi_*(\infty)$ is a maximum, the latter case is trivial for all $\zeta$. In the case that $\xi = \zeta(a, R)$, we know that $\zeta \in \CD_\an(a, R)$. \[\Psi_*(\zeta) \in \Psi_*(\CD_\an(a, R)) = \CD_\an(\Psi^{-1}(a), R^\q),\] therefore 
\[\zeta \preceq \zeta(a, R) \iff \Psi_*(\zeta) \preceq \Psi_*(\zeta(a, R)).\]

 We have just shown that $\Psi_*$ preserves types, the ordering $\preceq$, and also that \[\diam(\Psi_*(\zeta)) = \diam(\zeta)^\q.\]

Since the basis of the topology if given by open affinoids, and these are finite intersections of disks, to show continuity, we only need to look at preimages of disks. Hence we know from the above that $\Psi_*$ is an open map. Since $\Psi_*$ has an inverse given by $(\Psi^{-1})_*$, we also have
 \[ \Psi_*^{-1}(D_\an(\gamma, r)) = D_\an(\Psi(\gamma), r^{\frac 1\q}),\] 
 \[ \Psi_*^{-1}(\bar D_\an(\gamma, r)) = \bar D_\an(\Psi(\gamma), r^{\frac 1\q}),\]
proving the continuity. Thus $\Psi_*$ is a homeomorphism.


Since $\Psi_*$ is order preserving, to show the scaling or isometry on the hyperbolic metric we begin with pairs of related points. Suppose that  $\zeta \prec \xi$ have diameter $r$ and $R$ respectively. Then $\Psi_*(\zeta) \prec \Psi_*(\xi)$ and hence
\[d_\bH(\Psi_*(\zeta), \Psi_*(\xi)) = \log(R^\q)-\log(r^\q) = \q\left (\log(R)-\log(r)\right) = \q\,d_\bH(\zeta, \xi).\]
 Otherwise, we have $\zeta$ and $\xi$ unrelated with diameters $r, s$ and join $\zeta(a, R)$. By $\Psi_*$ and $\Psi_*^{-1}$ preserving order one can check that $\Psi_*(\zeta \vee \xi) = \Psi_*(\zeta) \vee \Psi_*(\xi)$. Therefore the shortest path from $\Psi_*(\zeta)$ to $\Psi_*(\xi)$ is through $\zeta(\Psi^{-1}(a), R^\q)$, which is the homeomorphic image of $[\zeta, \xi]$. 
 Since $\diam(\Psi_*(\zeta)) = \diam(\zeta)^\q$, the length of this path will be \[d_\bH(\Psi_*(\zeta), \Psi_*(\xi)) = 2\log(R^\q)-\log(r^\q) - \log(s^\q) = \q\,d_\bH(\zeta, \xi).\]
\end{proof}

\begin{thm}\label{thm:skew:autaffinoidmapping}
Let $\Psi$ be an \equivar{} automorphism of $K$ extended trivially to $K(y)$ with scale factor $\q$. For any Berkovich affinoid $W \subseteq \P^1_\an$, let $W_{\text{I}} = W \cap \P^1(K)$. Then $\Psi_*(W)$ is the Berkovich affinoid of the same type (if any) corresponding to $\Psi_* (W_{\text{I}}) = \Psi^{-1}(W_{\text{I}})$, and $\Psi_*^{-1}(W)$ is the Berkovich affinoid of the same type (if any) corresponding to $\Psi_*^{-1}(W_{\text{I}}) = \Psi(W_{\text{I}})$. Boundaries are mapped bijectively to boundaries.
\end{thm}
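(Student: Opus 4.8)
The plan is to reduce everything to the behaviour of $\Psi_*$ on disks, which is already fully described by \autoref{thm:skew:scaledhomeo}, and then to assemble affinoids as finite intersections and unions of disks. First I would record the consequences of \autoref{thm:skew:scaledhomeo} that we need: $\Psi_*$ is a homeomorphism sending a Berkovich disk $D_\an$ (of any type) bijectively to a Berkovich disk $\Psi_*(D_\an)$ of the same type, with the radius raised to the power $\q$; moreover $D_\an \subseteq E_\an \iff \Psi_*(D_\an) \subseteq \Psi_*(E_\an)$, so nesting, disjointness, and the property of covering $\P^1_\an$ are all preserved. I would also note that the ``$\P^1_\an \sm$'' versions of disks behave correctly, since $\Psi_*$ is a bijection and commutes with complements. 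Finally I would observe that $\Psi_*$ restricted to $\P^1(K)$ is the bijection $\Psi^{-1}$, by part (3) of \autoref{thm:skew:scaledhomeo}, so that $\Psi_*(W_{\mathrm I}) = \Psi^{-1}(W_{\mathrm I})$ and $\Psi_*^{-1}(W_{\mathrm I}) = \Psi(W_{\mathrm I})$ as asserted.

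Next, for a \emph{connected} Berkovich affinoid $V = D_1 \cap \cdots \cap D_n$, I would argue that $\Psi_*(V) = \Psi_*(D_1) \cap \cdots \cap \Psi_*(D_n)$ — this is immediate from $\Psi_*$ being a bijection — and that this intersection is nonempty (hence a connected Berkovich affinoid) because $V \neq \emptyset$. By the radius/type observations above, each $\Psi_*(D_j)$ is a Berkovich disk of the same type as $D_j$, so $\Psi_*(V)$ is a connected affinoid of the same type as $V$ whenever $V$ has a well-defined type (all disks closed, all open, all rational, or all irrational). I would then identify $\Psi_*(V)$ with the connected affinoid determined by its classical points: by \autoref{prop:berk:disks} a Berkovich disk is determined by its classical part (e.g. $\CD(a,r) = \A^1(K) \cap \CD_\an(a,r)$), so $\Psi_*(V) \cap \P^1(K) = \bigcap_j \big(\Psi_*(D_j) \cap \P^1(K)\big) = \bigcap_j \Psi^{-1}(D_j \cap \P^1(K)) = \Psi^{-1}(V_{\mathrm I})$, which is exactly the statement that $\Psi_*(V)$ ``corresponds to $\Psi^{-1}(V_{\mathrm I})$''. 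For a general Berkovich affinoid $W = V_1 \cup \cdots \cup V_m$ (a finite union of connected affinoids), apply the connected case to each $V_i$ and use that $\Psi_*$ commutes with unions. The statement for $\Psi_*^{-1}$ follows symmetrically by applying what we just proved to the \equivar{} automorphism $\Psi^{-1}$ (whose scale factor is $1/\q$), since $(\Psi^{-1})_* = \Psi_*^{-1}$.

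For the boundary claim, I would use that $\Psi_*$ is a homeomorphism of $\P^1_\an$, so it maps the topological boundary $\partial W$ of any set $W$ bijectively onto $\partial(\Psi_*(W))$; combined with the fact that $\Psi_*$ preserves types, the boundary points (which for a connected affinoid are finitely many Type II or III points, each the unique boundary point $\zeta(a,r)$ of one of the defining disks) are carried to the corresponding boundary points $\zeta(\Psi^{-1}(a), r^\q)$ of $\Psi_*(W)$. The only mild subtlety — and the step I expect to need the most care — is bookkeeping the ``type'' of the affinoid: one must check that the common type of the defining disks is genuinely preserved, which rests on the equivalence $r \in |K^\times| \iff r^\q \in |K^\times|$ already established inside the proof of \autoref{thm:skew:scaledhomeo}, and that $\Psi_*(V)$ really is nonempty so that it counts as a (connected) affinoid rather than being empty. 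Neither of these is hard, but they are where the argument could go wrong if one is careless, so I would state them explicitly.
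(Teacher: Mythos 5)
Your proposal is correct and follows essentially the same route as the paper's proof: invoke \autoref{thm:skew:scaledhomeo} to see that $\Psi_*$ is a bijection carrying each defining Berkovich disk to a disk of the same kind (with the classical parts matching), and then use that a bijection commutes with the finite intersections (and unions) defining an affinoid. Your additional bookkeeping — the rationality check $r \in \abs{K^\times} \iff r^\q \in \abs{K^\times}$, the identification of $\Psi_*(W)\cap\P^1(K)$ with $\Psi^{-1}(W_{\mathrm I})$, and handling $\Psi_*^{-1}$ via $(\Psi^{-1})_*$ — is just a more explicit spelling-out of what the paper leaves implicit.
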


\begin{proof}
 By \autoref{thm:skew:scaledhomeo}, $\Psi_*$ bijectively maps $D(a, r)$ to $D(\Psi^{-1}(a), r)$ if and only if it maps $D_\an(a, r)$ to $D_\an(\Psi^{-1}(a), r)$. The same goes for closed disks on the affine line, and similarly for disks in the projective line. Since $\Psi_*$ is a bijection, an affinoid $W = D_1 \cap \cdots \cap D_n$ is mapped to $\Psi_*(D_1) \cap \cdots \cap \Psi_*(D_n)$ etc. In fact, $\Psi_*(W)$ has the same number of `holes' as $W$.
\end{proof}

\begin{thm}\label{thm:skew:opencts}
 Let $\phi_*$ be a non-constant skew product over $K$. Then $\phi_*$
\begin{enumerate}
\item is a continuous function;
\item is an open mapping;
 \item is the unique continuous extension of $(\phi_1^*)^{-1} \circ \phi_2$ on $\P^1(K) \subset \P^1_\an(K)$; and
 \item preserves the types of each Berkovich point.
\end{enumerate}
\end{thm}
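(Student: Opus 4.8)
The plan is to push everything through the decomposition $\phi_* = \phi_{1*}\circ\phi_{2*}$ of \autoref{thm:skew:comp}, verifying each of the four assertions for the two factors and then using that all four properties are stable under composition. So the first step is to record that $\phi_{2*}$ is a Berkovich \emph{rational} map to which the results recalled in \hyperref[sec:ratl]{Subsection~\ref*{sec:ratl}} apply, and that $\phi_{1*}$ is the skew product of an \equivar{} automorphism extended trivially, to which \autoref{thm:skew:scaledhomeo} applies.

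Before invoking the rational-map machinery I would first check that $\phi_2 = \phi^*(y)$ is non-constant. Indeed, if $\phi_2$ were constant then $\phi_{2*}$ would be a constant map, and therefore its postcomposition $\phi_* = \phi_{1*}\circ\phi_{2*}$ would also be constant, contradicting the hypothesis. Hence $\phi_{2*}$ is the rational map of a non-constant element of $K(y)$, so $\phi_{2*}$ is continuous, open, type-preserving, and restricts on $\P^1(K)$ to the rational function $\phi_2$; and by \autoref{thm:skew:scaledhomeo} the map $\phi_{1*}$ is a homeomorphism (in particular continuous and open), is type-preserving, and restricts on $\P^1(K)$ to $(\phi_1^*)^{-1}$.

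With these facts in hand the four items are routine. Items (1) and (2) follow because a composition of continuous (resp. open) maps is continuous (resp. open). Item (4) follows because a composition of type-preserving maps is type-preserving. For item (3), I would compute on classical points: for $a\in\P^1(K)$ we have $\phi_{2*}(a)=\phi_2(a)\in\P^1(K)$ and then $\phi_{1*}(\phi_2(a)) = (\phi_1^*)^{-1}(\phi_2(a))$, so $\phi_*|_{\P^1(K)} = (\phi_1^*)^{-1}\circ\phi_2$; since $\phi_*$ is continuous by (1) and $\P^1(K)$ is dense in the Hausdorff space $\P^1_\an(K)$ (exactly as in the proof that a rational function extends uniquely to $\P^1_\an$), $\phi_*$ is \emph{the} continuous extension of $(\phi_1^*)^{-1}\circ\phi_2$, two continuous maps into a Hausdorff space agreeing on a dense subset being equal.

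I do not expect a real obstacle here: the only non-formal inputs are the density of $\P^1(K)$ in $\P^1_\an(K)$ and the already-recalled statements for Berkovich rational maps, and the only thing one must be careful about is ruling out $\phi_2$ constant before appealing to the non-constant rational-map results; everything else is bookkeeping with the decomposition $\phi_* = \phi_{1*}\circ\phi_{2*}$.
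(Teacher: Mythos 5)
Your proposal is correct and follows essentially the same route as the paper: the paper's proof is exactly to decompose $\phi_* = \phi_{1*}\circ\phi_{2*}$ and cite \autoref{thm:skew:scaledhomeo} (and \autoref{thm:skew:autaffinoidmapping}) for $\phi_{1*}$ together with Benedetto's results for the rational map $\phi_{2*}$. Your extra remarks (ruling out constant $\phi_2$, and spelling out uniqueness via density of $\P^1(K)$ in $\P^1_\an(K)$) are harmless elaborations of what the paper leaves implicit.
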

\unsure{Present horizontally?}

\begin{proof}
Since $\phi_* = \phi_{1*} \circ \phi_{2*}$ therefore this follows from the same results for $\phi_{1*}$ and $\phi_{2*}$, namely, \autoref{thm:skew:scaledhomeo} \autoref{thm:skew:autaffinoidmapping}, \cite[Theorem 7.4, Corollary 7.9, Corollary 7.16]{Bene}.
\end{proof}


The following theorem generalises \cite[Theorem 7.8]{Bene} for rational maps to skew products.

\begin{thm}\label{thm:skew:affinoidmapping}
Let $\phi_*$ be a non-constant skew product over $K$, let $W \subseteq \P^1_\an(K)$ be a connected Berkovich affinoid, and let
$W_I = W \cap \P^1(K)$ be the corresponding connected affinoid in $\P^1(K)$. Then $\phi_*(W)$ is the Berkovich connected affinoid of the same type (if any) corresponding to $\phi_*(W_I)$, and $\phi_*^{-1}(W)$ is the Berkovich affinoid of the same type (if any) corresponding to $\phi_*^{-1}(W_I)$. Moreover, the following hold.
\begin{enumerate}[label=(\alph*), ref=\theenumi]
  \item $\partial(\phi_*(W))\subseteq \phi_*(\partial W)$.
  \item Each of the connected components $V_1, \dots , V_m$ of $\phi_*^{-1}(W)$ is a connected Berkovich affinoid mapping onto $W$.
  \item For each $i = 1, \dots, m$,
 \[\phi_*(\partial V_i) = \partial W \text{ and } \phi_*(\interior V_i) = \interior W,\]
  where $\interior X$ denotes the interior of the set $X$.
  \item If $W$ is open, then $\phi_*(\partial V_i) \cap W = \emptyset$.
\end{enumerate}
\end{thm}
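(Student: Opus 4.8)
The plan is to reduce everything to the known rational-map case via the decomposition $\phi_* = \phi_{1*}\circ\phi_{2*}$ of \autoref{thm:skew:comp}, in which $\phi_{2*}$ is an honest rational map on $\P^1_\an$ and $\phi_{1*}$ is the homeomorphism induced by the automorphism part of $\phi^*$. Every assertion of the theorem is already known for rational maps — this is precisely \cite[Theorem 7.8]{Bene} applied to $\phi_{2*}$ — and each such assertion is transported unchanged across a homeomorphism by \autoref{thm:skew:scaledhomeo} and \autoref{thm:skew:autaffinoidmapping}. So the proof is a matter of composing the two known cases while tracking the correspondence with classical affinoids.

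\emph{The image.} Given a connected Berkovich affinoid $W$ with $W_I = W\cap\P^1(K)$, \cite[Theorem 7.8]{Bene} gives that $\phi_{2*}(W)$ is the connected Berkovich affinoid of the same type (if any) corresponding to $\phi_2(W_I)$, with $\partial(\phi_{2*}(W))\subseteq\phi_{2*}(\partial W)$. By \autoref{thm:skew:autaffinoidmapping} the type-preserving homeomorphism $\phi_{1*}$ carries $\phi_{2*}(W)$ to the Berkovich affinoid of the same type corresponding to the image of its classical part, and carries boundaries bijectively to boundaries; since $\phi_{1*}$ restricts to $(\phi_1^*)^{-1}$ on $\P^1(K)$ (\autoref{thm:skew:opencts}), the result is the connected affinoid corresponding to $(\phi_1^*)^{-1}(\phi_2(W_I)) = \phi_*(W_I)$, and $\partial(\phi_*(W)) = \phi_{1*}(\partial(\phi_{2*}(W))) \subseteq \phi_{1*}(\phi_{2*}(\partial W)) = \phi_*(\partial W)$, which is (a).

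\emph{The preimage.} Put $W' = \phi_{1*}^{-1}(W)$; by \autoref{thm:skew:autaffinoidmapping} this is a connected Berkovich affinoid of the same type as $W$ (corresponding to $\phi_1^*(W_I)$), it is open exactly when $W$ is, and $\phi_*^{-1}(W) = \phi_{2*}^{-1}(W')$, so the connected components of $\phi_*^{-1}(W)$ are precisely those of $\phi_{2*}^{-1}(W')$. Applying \cite[Theorem 7.8]{Bene} to $\phi_{2*}$ and $W'$, these components $V_1,\dots,V_m$ are connected Berkovich affinoids of the same type as $W'$, each mapping onto $W'$ under $\phi_{2*}$ and hence onto $W$ under $\phi_*$, with $\phi_{2*}(\partial V_i) = \partial W'$, $\phi_{2*}(\interior V_i) = \interior W'$, and $\phi_{2*}(\partial V_i)\cap W' = \emptyset$ when $W'$ is open. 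Pushing each identity forward through the homeomorphism $\phi_{1*}$, which sends $\partial W'$ onto $\partial W$ and $\interior W'$ onto $\interior W$ and which — being injective — preserves the disjointness in the last clause, yields (b), (c), and (d).

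The underlying computations are entirely routine; the only thing demanding attention is the bookkeeping of ``the same type (if any)'' and of ``corresponding to $W_I$'' through the composition. One must check that a closed / open / rational / irrational affinoid remains in its class under each of the two factors — which is exactly what the cited statements say, with $\phi_{1*}$ in addition only rescaling radii by the exponent $\q$ and hence not disturbing rationality (\autoref{thm:skew:scaledhomeo}) — and that the classical parts compose as $\phi_*|_{\P^1(K)} = (\phi_1^*)^{-1}\circ\phi_2$. I do not anticipate any genuine obstacle beyond this.
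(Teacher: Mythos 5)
Your proposal is correct and is essentially the paper's own argument: the paper's proof is the one-line observation that, since $\phi_* = \phi_{1*}\circ\phi_{2*}$, everything follows from \autoref{thm:skew:autaffinoidmapping} together with \cite[Theorem 7.8]{Bene}. You have simply written out the bookkeeping (images, preimages via $W' = \phi_{1*}^{-1}(W)$, and the transport of boundaries, interiors, and types through the homeomorphism $\phi_{1*}$) that the paper leaves implicit.
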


A key omission from this theorem states that each map $\phi_* : V_i \to W$ is $d_j$-to-$1$ counting multiplicity, with $d_1 + \cdots + d_m = \rdeg(\phi)$. This will follow later in \autoref{prop:skew:dto1} when we have a good notion of multiplicity, called local degree. Overall, one can compare with the classical case \autoref{thm:semninorms:affinoidmapping}.

\begin{proof}
Since $\phi_* = \phi_{1*} \circ \phi_{2*}$ this follows from \autoref{thm:skew:autaffinoidmapping} and \cite[Theorem 7.8]{Bene}.
\end{proof}

\improvement{Better waffle and citations?}

The following generalises \cite[Corollary 7.9]{Bene}.

\begin{cor}[Properness Criterion]
 Let $\phi_*$ be a non-constant skew product, and let $U \subseteq \P^1_\an$ be an open connected Berkovich affinoid. Then the following are equivalent.
 \begin{enumerate}[label=(\alph*), ref=\theenumi]
\item $\phi_*(U)\cap \phi_*(\partial U)= \emptyset$.
\item $U$ is a connected component of $\phi_*^{-1}(\phi_*(U))$.
\end{enumerate}
\end{cor}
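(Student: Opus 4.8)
The plan is to reduce the whole statement to \autoref{thm:skew:affinoidmapping}. Write $W = \phi_*(U)$. Since $\phi_*$ is continuous and open by \autoref{thm:skew:opencts}, the set $W$ is open, and by \autoref{thm:skew:affinoidmapping} it is a connected open Berkovich affinoid while $\phi_*^{-1}(W)$ is an open Berkovich affinoid. Because $\P^1_\an$ is locally connected (\autoref{thm:berk:interval}), the connected components $V_1, \dots, V_m$ of $\phi_*^{-1}(W)$ are open connected Berkovich affinoids, each mapping onto $W$, and satisfying the boundary identities of \autoref{thm:skew:affinoidmapping}(a)--(d). Since $U$ is connected and contained in $\phi_*^{-1}(W)$, it lies in exactly one of these components; after relabelling, $U \subseteq V_1$. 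With this setup in place, condition (b) is literally the assertion that $U = V_1$.

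For the implication (b)$\implies$(a): if $U = V_1$, then $\partial U = \partial V_1$, and since $W$ is open, \autoref{thm:skew:affinoidmapping}(d) gives $\phi_*(\partial V_1) \cap W = \emptyset$, that is, $\phi_*(\partial U) \cap \phi_*(U) = \emptyset$, which is (a).

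For (a)$\implies$(b), I would argue by contraposition. Assume $U \subsetneq V_1$. Since $U$ is a nonempty open subset of the connected space $V_1$ and $U \ne V_1$, the set $U$ cannot also be closed in $V_1$; hence $\overline U \cap V_1$ strictly contains $U$, and because $U$ is open we have $\partial U = \overline U \sm U$, so there is a point $\zeta \in \partial U \cap V_1$. Then $\zeta \in V_1 \subseteq \phi_*^{-1}(W)$ forces $\phi_*(\zeta) \in W = \phi_*(U)$, while $\zeta \in \partial U$ gives $\phi_*(\zeta) \in \phi_*(\partial U)$; therefore $\phi_*(U) \cap \phi_*(\partial U) \ne \emptyset$, so (a) fails. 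Hence (a) forces $U = V_1$, which is (b).

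I do not expect a serious obstacle here: essentially all the content is already packaged in \autoref{thm:skew:affinoidmapping}, and what remains is point-set topology. The two points needing a little care are that the components $V_i$ are genuinely open (so that "connected component of $\phi_*^{-1}(W)$" behaves well), which rests on the local connectedness of $\P^1_\an$, and the elementary fact that a nonempty proper open subset of a connected space fails to be closed and hence meets its own boundary inside the ambient connected set. One should also dispatch the degenerate case $\phi_*(U) = \P^1_\an$ separately: there it forces $U = \P^1_\an$ and $\partial U = \emptyset$, so (a) and (b) both hold trivially.
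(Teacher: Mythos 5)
Your main argument is correct and follows essentially the intended route: the paper treats this as a direct consequence of \autoref{thm:skew:affinoidmapping} (mirroring Benedetto's deduction of \cite[Corollary 7.9]{Bene} from his Theorem 7.8), and your two implications --- (b)$\implies$(a) via part (d) of that theorem applied to $W = \phi_*(U)$, and (a)$\implies$(b) via the point-set fact that a nonempty proper open subset $U \subsetneq V_1$ of the connected component $V_1$ must have a boundary point inside $V_1$ --- are exactly the right content.

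The one concrete error is your dispatch of the degenerate case $\phi_*(U) = \P^1_\an$. This does \emph{not} force $U = \P^1_\an$: an open Berkovich disk can map onto all of $\P^1_\an$, for instance any bad direction at a Type II point as in \autoref{thm:skew:reduction}. What saves the equivalence there is that both (a) and (b) \emph{fail}, rather than both holding trivially: since $U$ is an open connected affinoid it is a proper subset with $\partial U \ne \emp$, so $\phi_*(\partial U)$ is a nonempty subset of $\P^1_\an = \phi_*(U)$ and (a) fails; and $\phi_*^{-1}(\phi_*(U)) = \P^1_\an$ has $\P^1_\an$ as its only connected component, which is not $U$, so (b) fails as well. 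Note that this separate treatment really is needed, since \autoref{thm:skew:affinoidmapping} cannot be applied with $W = \phi_*(U)$ when the image is all of $\P^1_\an$ (it is then not a connected affinoid). With that correction the rest of your proof stands as written.
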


The next theorem generalises \cite[Theorem 7.12]{Bene} and is important for understanding local ramification or \emph{degrees}, as explained in the next subsection.

\begin{thm}\label{thm:skew:imgtypeII}
 Let $\phi_*= \phi_{1*} \circ \phi_{2*}$ be a non-constant skew product with scale factor $\q$, and let $\zeta = \zeta(a, r) \in \P^1_\an$ be a point of Type II or III. Let $\lambda < 1$ be large enough so that
 \[\phi_2(y) = \sum_{n \in \Z} b_n(y - a)^n\] converges on the annulus
 \[U_\lambda = \set{\lambda r < |y-a| < r}\] and such that $\phi_2(y) - b_0$ has both the inner and outer Weierstrass degrees equal to $d$. 
 Setting $s = {b_d}^\q r^{d\q}$, we have
 \[\phi_*(U_\lambda) = 
\begin{cases}
 \set{\lambda^{d\q} s < |y-\phi_{1*}(b_0)| < s} & d > 0\\
 \set{s < |y-\phi_{1*}(b_0)| < \lambda^{d\q} s} & d < 0
\end{cases}
\]
Similarly if instead $\lambda > 1$ and $\phi_2(y)$ converges on the annulus 
\[V_\lambda = \set{r < |y-a| < r\lambda}\] and such that $\phi_2(y) - b_0$ has both the inner and outer Weierstrass degrees equal to $d$, then we have
\[\phi_*(V_\lambda) = 
\begin{cases}
 \set{s < |y-\phi_{1*}(b_0)| < \lambda^{d\q} s} & d > 0\\
 \set{\lambda^{d\q} s < |y-\phi_{1*}(b_0)| < s} & d < 0
\end{cases}
\]
 
Moreover $r \in \abs{K^\times} \iff s \in \abs{K^\times}$, and
\[\phi_*(\zeta) = \zeta(\phi_{1*}(b_0), s).\]
\end{thm}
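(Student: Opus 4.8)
The plan is to run the decomposition $\phi_* = \phi_{1*}\circ\phi_{2*}$ of \autoref{thm:skew:comp}: push $\zeta(a,r)$ and the annulus forward first by the rational map $\phi_{2*}$, then by the scaled homeomorphism $\phi_{1*}$ of \autoref{thm:skew:scaledhomeo}. The rational case of the present theorem — i.e.\ the statement with $\q = 1$ — is exactly \cite[Theorem 7.12]{Bene}, applied to $\phi_2 \in K(y)$ with the same $\lambda$, $b_0$, $d$. Setting $s_0 := \abs{b_d}\,r^d$, it gives
\[
\phi_{2*}(\zeta(a,r)) = \zeta(b_0, s_0), \qquad r \in \abs{K^\times} \iff s_0 \in \abs{K^\times},
\]
and
\[
\phi_{2*}(U_\lambda) =
\begin{cases}
\set{\lambda^{d} s_0 < \abs{\zeta - b_0} < s_0}, & d > 0,\\
\set{s_0 < \abs{\zeta - b_0} < \lambda^{d} s_0}, & d < 0,
\end{cases}
\]
with the analogous formulas for $V_\lambda$ (now $r$ is the inner and $r\lambda$ the outer radius of the domain annulus, but $\phi_{2*}(\zeta(a,r)) = \zeta(b_0, s_0)$ still).

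Next I would push forward by $\phi_{1*}$. By \autoref{thm:skew:scaledhomeo}, $\phi_{1*}$ is a homeomorphism of $\P^1_\an$ with $\phi_{1*}(\CD_\an(c,\rho)) = \CD_\an(\phi_{1*}(c), \rho^\q)$ and $\phi_{1*}(D_\an(c,\rho)) = D_\an(\phi_{1*}(c), \rho^\q)$; hence $\phi_{1*}(\zeta(c,\rho)) = \zeta(\phi_{1*}(c), \rho^\q)$, the equivalence $\rho \in \abs{K^\times} \iff \rho^\q \in \abs{K^\times}$ holds, and, writing an open annulus as the difference of a Berkovich open disk and a Berkovich closed disk about $b_0$,
\[
\phi_{1*}\!\left(\set{\rho_1 < \abs{\zeta - b_0} < \rho_2}\right) = \set{\rho_1^\q < \abs{\zeta - \phi_{1*}(b_0)} < \rho_2^\q}.
\]
Applying this to the conclusions of the previous paragraph and writing $s := s_0^\q = \abs{b_d}^\q r^{d\q}$ (so $(\lambda^{d} s_0)^\q = \lambda^{d\q} s$), one gets $\phi_*(\zeta(a,r)) = \phi_{1*}(\zeta(b_0, s_0)) = \zeta(\phi_{1*}(b_0), s)$, the chain $r \in \abs{K^\times} \iff s_0 \in \abs{K^\times} \iff s \in \abs{K^\times}$, and precisely the four displayed formulas for $\phi_*(U_\lambda)$ and $\phi_*(V_\lambda)$. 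The inner/outer radii come out in the stated order because $\lambda < 1$ with $d > 0$ (resp.\ $\lambda > 1$ with $d < 0$) forces $\lambda^{d\q} < 1$, while the other two sign combinations force $\lambda^{d\q} > 1$.

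There is no essential obstacle here; the proof is a clean concatenation of \cite[Theorem 7.12]{Bene} with \autoref{thm:skew:scaledhomeo}. The only point demanding attention is the radius bookkeeping — in particular, that $\phi_{2*}(\zeta(a,r))$ lands at the boundary radius $s_0$ of $\phi_{2*}(U_\lambda)$ rather than at $\lambda^{d} s_0$ — but this is already contained in \cite[Theorem 7.12]{Bene} (it reflects that on the annulus $\abs{\phi_2(z) - b_0} = \abs{b_d}\abs{z-a}^d$, so the end $\abs{z-a} \to r$ of the domain annulus maps to the end $\abs{\zeta - b_0} \to s_0$ of the image, according to the sign of $d$), and passing from $\phi_{2*}$ to $\phi_* = \phi_{1*}\circ\phi_{2*}$ only replaces every radius $\rho$ by $\rho^\q$, a monotone substitution that preserves this bookkeeping.
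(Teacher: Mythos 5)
Your proposal is correct and follows exactly the paper's argument: decompose $\phi_* = \phi_{1*}\circ\phi_{2*}$, quote \cite[Theorem 7.12]{Bene} for the rational map $\phi_{2*}$ on the annulus, and then apply \autoref{thm:skew:scaledhomeo} to raise every radius to the power $\q$. You even spell out the image of $\zeta(a,r)$ and the rationality equivalence a bit more explicitly than the paper does, but the route is the same.
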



\begin{proof}
From \cite[Theorem 7.12]{Bene} we know this result for rational maps, meaning that 
\[\phi_{2*}(U_\lambda) = 
\begin{cases}
 \set{\abs{b_d}(\lambda r)^d < \abs{y-b_0} < \abs{b_d}r^d} & d > 0\\
 \set{\abs{b_d}r^d < \abs{y-b_0} <  \abs{b_d}(\lambda r)^d} & d < 0
\end{cases}
\]
The action of $\phi_{1*}$ is to scale by $\q$, in particular \autoref{thm:skew:scaledhomeo} shows that
\[\phi_{1*}\left(\set{R_1 < \abs{z-b} < R_2}\right) = \set{R_1^\q < \abs{z-\phi_{1*}(b)} < R_2^\q}\]
therefore 
\[\phi_*(U_\lambda) = \phi_{1*} \circ \phi_{2*}(U_\lambda) =
\begin{cases}
 \set{\abs{b_d}^\q(\lambda r)^{d\q} < \abs{y-\phi_{1*}(b_0)} < \abs{b_d}^\q r^{d\q}} & d > 0\\
 \set{\abs{b_d}^\q r^{d\q} < \abs{y-\phi_{1*}(b_0)} <  \abs{b_d}^\q(\lambda r)^{d\q}} & d < 0
\end{cases}
\]
Similar proves the $\lambda > 1$ case.
\end{proof}


%

\subsection{Local Degrees in Directions}

The aim of the next few subsections is to generalise the theory of local degrees from rational maps to skew products; compare with Benedetto \cite[\S7.3, 7.4, 7.5]{Bene}.

\begin{defn}
 Let $\phi_*= \phi_{1*} \circ \phi_{2*}$ be a skew product. The \emph{relative degree} of $\phi_*$ (or $\phi$), $\rdeg(\phi_*) = \rdeg(\phi)$ is the degree of the rational function $\phi_2(y) = \phi^*(y) \in K(y)$.
 
Suppose that $\phi_2$ has the Taylor series at $y=a$ given by \[\phi_2(y) = b_0 + b_d(y-a)^d + \bO\left((y-a)^{d+1}\right),\] where $b_d \ne 0$. Then \emph{algebraic multiplicity} of $\phi$ (and $\phi_2$) at $a$, $\deg_a(\phi) = \deg_a(\phi_2)$ is $d$, 
 the degree of the first non-zero term of the Taylor series $\phi_2(y) - b_0$ about $y=a$, equivalently, it is the multiplicity of $a$ as a root of the equation $\phi_2(y) = \phi_2(a)$.
\end{defn}

In the case of a $\k$-rational skew product $\phi$, this $\rdeg(\phi)$ is the degree of $\phi_2(x, y)$ with respect to $y$ only; this justifies the use of `relative'. We expand the definition of map on directions and local degrees to skew products, again using annuli.

\begin{defn}
Let $\phi_*= \phi_{1*} \circ \phi_{2*}$ be a non-constant skew product, $\zeta \in \P^1_\an$ and $\bvec v \in \Dir\zeta$ be a direction at $\zeta$. We define the \emph{local degree of $\phi_*$ at $\zeta$ in the direction $\bvec v$}, $\deg_{\zeta, \bvec v}(\phi)$, and a direction $\phi_\#(\bvec v) \in \Dir{\phi_*(\zeta)}$ at $\phi_*(\zeta)$ as follows.
\begin{itemize}
 \item For a Type I point $\zeta = a$ and the unique direction $\bvec v$ at $\zeta$, $\phi_\#(\bvec v)$ is the unique direction at $\phi_*(\zeta) = b_0$, and the local degree at $a$ in this direction is the algebraic multiplicity of $\phi_2$ at $a$ \[\deg_{\zeta, \bvec v}(\phi) = \deg_a(\phi_2).\]
 \item For a Type II/III point $\zeta = \zeta(a, r)$ and a direction $\bvec v = \vec{v}(a)$, then $\phi_\#(\bvec v)$ is the direction at $\phi_*(\zeta)$ containing $\phi_*(U)$ where $U = \set{\lambda r < |\zeta-a| < r}$ is the annulus in \autoref{thm:skew:imgtypeII} and the local degree in the direction of $\bvec v$ will be the common Weierstrass degree \[\deg_{\zeta, \bvec v}(\phi) = \wdeg_{a, r}(\phi_2).\]
In the case where the direction $\bvec v = \vec v(a)$ contains $\infty$, we use the annulus $U = \set{r < |z-a| < r\lambda}$ with $\lambda > 1$, as in \autoref{thm:skew:imgtypeII}. The common degree will be \[\deg_{\zeta, \bvec v}(\phi) = \overline\wdeg_{a, r}(\phi_2).\]
 \item For a Type IV point $\zeta$ and the unique direction $\bvec v$ at $\zeta$, $\phi_\#(\bvec v)$ is the unique direction at $\phi_*(\zeta)$, and $\deg_{\zeta, \bvec v}(\phi) = \deg_{\zeta, \bvec v}(\phi_2)$. This is the Weierstrass degree of $\phi_2$ in a sufficiently small neighbourhood $D_\an(a, r)$ of $\zeta$, \[\deg_{\zeta, \bvec v}(\phi) = \wdeg_{a, r}(\phi_2).\]
\end{itemize}
\end{defn}

\begin{prop}\label{prop:skew:scaledirections}
 Let $\phi_*= \phi_{1*} \circ \phi_{2*}$ be a non-constant skew product. Then 
\begin{enumerate}
 \item $\deg_{\zeta, \bvec v}(\phi_1) = 1\quad \forall \zeta \in \P^1_\an, \bvec v \in \Dir\zeta$
 \item $\phi_{1\#} : \Dir\zeta \to \Dir{\phi_{1*}(\zeta)}$ is a bijection at every $\zeta \in \P^1_\an$.
 \item $\deg_{\zeta, \bvec v}(\phi) = \deg_{\zeta, \bvec v}(\phi_2)\quad \forall \zeta \in \P^1_\an, \bvec v \in \Dir\zeta.$
\end{enumerate}
\end{prop}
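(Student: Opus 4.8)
The plan is to treat the three assertions in turn, each reducing to a direct inspection of the definitions, with the homeomorphism property of $\phi_{1*}$ doing all the real work.

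For (i), I would first record that the skew endomorphism $\phi_1^{*}$ (which is $\Psi_1$, extended trivially) fixes $y$, so the rational function underlying the skew product $\phi_{1*}$ is the identity $y \mapsto y$, of degree $1$; in particular $\phi_{1*}$ is non-constant and $\rdeg(\phi_1)=1$, so the definition of $\deg_{\zeta,\bvec v}$ applies. Expanding this function about any $a \in K$ gives the affine series $a + (y-a)$, so in each case of the definition the series whose Weierstrass degree we must take is the single monomial $(y-a)$: its algebraic multiplicity at $a$ is $1$, and all of its inner and outer Weierstrass degrees on any disk or annulus about $a$ equal $1$. Feeding this into the Type I, Type II/III (including the $\infty$-direction using $\overline{\wdeg}$), and Type IV clauses of the definition yields $\deg_{\zeta,\bvec v}(\phi_1)=1$ for every $\zeta \in \P^1_\an$ and every $\bvec v \in \Dir\zeta$.

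For (ii), I would invoke \autoref{thm:skew:scaledhomeo}: $\phi_{1*}$ is a homeomorphism of $\P^1_\an$, hence it restricts to a homeomorphism $\P^1_\an \setminus \set{\zeta} \to \P^1_\an \setminus \set{\phi_{1*}(\zeta)}$ and so induces a bijection between the sets of connected components, i.e. the assignment $\bvec v \mapsto \phi_{1*}(\bvec v)$ is a bijection $\Dir\zeta \to \Dir{\phi_{1*}(\zeta)}$. The one point needing a short argument — the part I would flag as the main (if mild) obstacle — is checking that this bijection is exactly $\phi_{1\#}$. When $\zeta$ is of Type I or IV there is a single direction on each side and nothing to prove. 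When $\zeta=\zeta(a,r)$ is of Type II or III, the annulus $U$ used to define $\phi_{1\#}(\bvec v)$ (namely $U=\set{\lambda r < \abs{\zeta-a} < r}$, or $U=\set{r<\abs{\zeta-a}<r\lambda}$ for the direction containing $\infty$) is a connected subset of $\P^1_\an\setminus\set{\zeta}$ lying inside the component $\bvec v$; since $\phi_{1*}$ is a homeomorphism, $\phi_{1*}(U)$ is connected and contained in $\phi_{1*}(\bvec v)$, a connected component of $\P^1_\an\setminus\set{\phi_{1*}(\zeta)}$. As $\phi_{1\#}(\bvec v)$ is by definition the direction at $\phi_{1*}(\zeta)$ containing $\phi_{1*}(U)$, we get $\phi_{1\#}(\bvec v)=\phi_{1*}(\bvec v)$, so $\phi_{1\#}$ is the bijection above.

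For (iii), I would observe that $\deg_{\zeta,\bvec v}(\phi)$ was defined in all three cases purely in terms of the rational function $\phi_2=\phi^{*}(y)$ — as the algebraic multiplicity of $\phi_2$ at a Type I point, and as the appropriate inner or outer Weierstrass degree of $\phi_2$ on a suitable annulus or small disk at a point of Type II, III, or IV — and these are precisely the quantities that define the local degree $\deg_{\zeta,\bvec v}(\phi_2)$ of the Berkovich rational map $\phi_{2*}$, the set $\Dir\zeta$ being intrinsic to $\P^1_\an$ and independent of the map. Hence $\deg_{\zeta,\bvec v}(\phi)=\deg_{\zeta,\bvec v}(\phi_2)$ follows immediately by matching definitions; alternatively it would drop out of (i) together with the chain rule for direction degrees, but since that result (\autoref{prop:skew:chaindir}) comes later, the definitional comparison is the cleaner route here.
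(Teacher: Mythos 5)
Your argument is correct: all three parts follow by unwinding the definitions of $\deg_{\zeta,\bvec v}$ (which in every case depend only on the rational part $\phi_2$, equal to the identity for $\phi_{1*}$) together with the homeomorphism statement \autoref{thm:skew:scaledhomeo}, and your identification $\phi_{1\#}(\bvec v)=\phi_{1*}(\bvec v)$ agrees with the lemma the paper records later in the reduction subsection. The paper offers no proof of this proposition, treating it as immediate from the definitions and the decomposition, so your write-up is essentially the intended argument made explicit.
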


The following shows that local degree acts precisely like a directional derivative, and we have a chain rule. Compare to \cite[Proposition 7.1]{Bene}.

\begin{prop}\label{prop:skew:chaindir}
 Let $\phi_*$, $\psi_*$ be non-constant skew products, let $\zeta \in \P^1_\an$, and let $\bvec v$ be a direction at $\zeta$. Then
\begin{enumerate}
 \item $1 \le \deg_{\zeta, \bvec v}(\phi) \le \rdeg(\phi)$
 \item $\psi_\# \circ \phi_\# = (\psi \circ \phi)_\#$
 \item $\deg_{\zeta, \bvec v}(\psi \circ \phi) = \deg_{\zeta, \bvec v}(\phi) \cdot \deg_{\phi_*(\zeta), \phi_\#(\bvec v)}(\psi)$.
\end{enumerate}
\end{prop}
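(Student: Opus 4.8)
The plan is to reduce everything to the already-established rational map case via the decomposition $\phi_* = \phi_{1*} \circ \phi_{2*}$ and \autoref{prop:skew:scaledirections}. First I would handle part (1). The lower bound $\deg_{\zeta, \bvec v}(\phi) \ge 1$ is immediate from the definition: a Weierstrass degree (inner or outer) of a non-constant rational function on an annulus, or an algebraic multiplicity at a point, is always a positive integer, since $\phi_2 - b_0$ is non-zero where $b_0 = \phi_2(a)$. For the upper bound, by \autoref{prop:skew:scaledirections}(3) we have $\deg_{\zeta, \bvec v}(\phi) = \deg_{\zeta, \bvec v}(\phi_2)$, and for the honest rational map $\phi_{2*}$ the directional degree is bounded by $\deg(\phi_2) = \rdeg(\phi)$ — this is exactly \cite[Proposition 7.1]{Bene} (or can be read off from \autoref{prop:seminorms:rationaldegrees}, since the outer Weierstrass degree counts zeros minus poles inside a disk and hence cannot exceed the total number of zeros, which is $\deg(\phi_2)$). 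So part (1) follows.

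For parts (2) and (3), the key is that the tangent map $\phi_\#$ and the directional degree are defined purely through how $\phi_*$ moves points and small annuli around, together with the functoriality $(\psi \circ \phi)_* = \phi_* \circ \psi_*$ — wait, more precisely $(\phi)_*$ composed appropriately; I should be careful, but $\phi_\#$ is genuinely the derivative of the \emph{space} map $\phi_* : \P^1_\an \to \P^1_\an$, so composition of space maps gives composition of tangent maps, giving (2), and multiplicativity of local degree under composition of space maps gives (3). To make this rigorous I would argue case by case on the type of $\zeta$. For a Type II or III point $\zeta = \zeta(a,r)$ with direction $\bvec v = \vec v(a)$: choose $\lambda < 1$ (or $>1$ if $\bvec v$ contains $\infty$) so that on the annulus $U = \{\lambda r < |\zeta - a| < r\}$ both $\phi_*$ and $\psi_* \circ \phi_*$ behave as in \autoref{thm:skew:imgtypeII} — i.e.\ $\phi_*(U)$ is an annulus of the stated form with boundary point $\phi_*(\zeta)$, and after possibly shrinking $\lambda$ further, $\psi_*$ also behaves well on $\phi_*(U)$. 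Then $\phi_\#(\bvec v)$ is the direction at $\phi_*(\zeta)$ containing $\phi_*(U)$, and $\psi_\#(\phi_\#(\bvec v))$ is the direction at $\psi_*(\phi_*(\zeta)) = (\psi \circ \phi)_*(\zeta)$ containing $\psi_*(\phi_*(U))$, which is precisely $(\psi \circ \phi)_\#(\bvec v)$; this gives (2). For (3) in this case, the degree $\deg_{\zeta, \bvec v}(\psi \circ \phi)$ is the common Weierstrass degree of $(\psi \circ \phi)_2$ on $U$, which by \autoref{prop:skew:scaledirections}(3) equals the Weierstrass degree of $(\psi_2 \circ \phi_2 \circ \cdots)$ — here I need to unwind that the rational-function part of $\psi \circ \phi$ involves $\phi_1$ twisting, but \autoref{prop:skew:scaledirections}(3) says only $\phi_2$-parts matter for degrees, and the chain rule for Weierstrass degrees of compositions of rational maps on annuli (\cite[Proposition 7.1]{Bene} applied to $\phi_{2*}$ and the relevant conjugate of $\psi_{2*}$) gives the product formula. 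The Type I and Type IV cases are analogous but simpler: there is only one direction, $\phi_\#$ and $\psi_\#$ act trivially on directions so (2) is automatic, and (3) becomes multiplicativity of algebraic multiplicity (resp.\ Weierstrass degree on a small disk) under composition, which again reduces to the rational case via \autoref{prop:skew:scaledirections} and the ordinary chain rule for orders of vanishing.

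The main obstacle I anticipate is bookkeeping the interaction between the $\phi_1$ and $\psi_1$ twists under composition — a priori $(\psi \circ \phi)^* = \psi^* \circ \phi^*$ does not split as $(\psi_2 \circ \phi_2)^* \circ (\psi_1 \circ \phi_1)^*$, because $\psi_1^*$ and $\phi_2^*$ do not commute. However, \autoref{prop:skew:scaledirections} is designed exactly to sidestep this: it tells us that for the purposes of computing directional degrees only the "$\phi_2$-content" matters, and $\phi_{1*}$ contributes a bijection on directions with all local degrees equal to $1$. So the cleanest route is: apply \autoref{prop:skew:scaledirections} to replace every directional degree of a skew product by that of its rational part, apply functoriality of $(\cdot)_*$ and $(\cdot)_\#$ to the composition $\phi_* = \phi_{1*} \circ \phi_{2*}$, and then invoke the known rational-map statement \cite[Proposition 7.1]{Bene} for the honest rational maps $\phi_{2*}$, $\psi_{2*}$. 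I would write the argument in that order, stating parts (2) and (3) for each of the pieces $\phi_{1*}$ and $\phi_{2*}$ first (the former trivial by \autoref{prop:skew:scaledirections}(1)--(2), the latter cited), then combining.
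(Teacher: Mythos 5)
The paper records no proof of this proposition at all (it is flagged as unfinished, with only the pointer to \cite[Proposition 7.1]{Bene}), and your reduction --- replace directional degrees of skew products by those of their rational parts via \autoref{prop:skew:scaledirections}, then quote the rational-map chain rule --- is exactly the route the surrounding text intends, and it is sound, including your correct identification of the twisting issue as the only real point of care. The one step you should write out when ``combining'': the rational part of the composite is the coefficient-twisted composition $\psi_2^{\phi_1}\circ\phi_2$ (where $\psi_2^{\phi_1}$ applies $\phi_1^*$ to the coefficients of $\psi_2$), and the identity you need, $\deg_{\phi_{2*}(\zeta),\,\phi_{2\#}(\bvec v)}(\psi_2^{\phi_1})=\deg_{\phi_*(\zeta),\,\phi_\#(\bvec v)}(\psi_2)$, follows from the conjugation $(\psi_2^{\phi_1})_*=\phi_{1*}^{-1}\circ\psi_{2*}\circ\phi_{1*}$ together with \autoref{thm:skew:scaledhomeo}; your phrase ``the relevant conjugate of $\psi_{2*}$'' gestures at this, but it deserves an explicit line, since \autoref{prop:skew:scaledirections} by itself is a statement about a single skew product's decomposition and does not yet give a chain rule for mixed compositions with the automorphism factors.
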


\unfinished{Prove?}

As for rational maps, $m = \deg_{\zeta, \bvec v}(\phi)$ is the number for which $\phi_*$ is an $m$-to-$1$ mapping on a thin annulus in the direction $\bvec v$. However unlike rational maps, this $m$ is not exactly the factor with which $\phi_*$ scales distances in some principal branch of $\bvec v$; the scale factor $\q$ also plays its part.

\begin{thm}\label{thm:intervalstretch}
 Let $\phi_*$ be a skew product of scale factor $\q$, let $\zeta \in \P^1_\an$, let $\bvec v$ be a direction at $\zeta$, and let $m = \deg_{\zeta, \bvec v}(\phi)$. Then there is a point $\zeta' \in \bvec v$ such that $\phi_*$ maps $[\zeta , \zeta']$ homeomorphically onto $[\phi_*(\zeta) , \phi_*(\zeta')]$, scaling distances by a factor of $m\q$, i.e.
 \[d_\bH(\phi_*(\xi_1) , \phi_*(\xi_2)) = m\cdot\q\cdot d_\bH(\xi_1 , \xi_2)\quad \forall \xi_1, \xi_2 \in [\zeta , \zeta']\cap\bH\]
\end{thm}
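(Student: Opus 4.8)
The plan is to read the statement off from the decomposition $\phi_* = \phi_{1*}\circ\phi_{2*}$ of \autoref{thm:skew:comp}, using that the two factors are already understood: $\phi_{2*}$ is an honest rational map and $\phi_{1*}$ is the metric-scaling homeomorphism of \autoref{thm:skew:scaledhomeo}. First I would invoke the analogous interval-stretching statement for rational maps (see \cite{Bene}): since $\phi_{2*}$ is a non-constant rational map on $\P^1_\an$ and $\deg_{\zeta,\bvec v}(\phi_2) = \deg_{\zeta,\bvec v}(\phi) = m$ by \autoref{prop:skew:scaledirections}, there is a point $\zeta' \in \bvec v$ such that $\phi_{2*}$ maps $[\zeta,\zeta']$ homeomorphically onto $[\phi_{2*}(\zeta),\phi_{2*}(\zeta')]$ and scales the hyperbolic metric on $[\zeta,\zeta']\cap\bH$ by the factor $m$. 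This $\zeta'$ is the one I will use in the conclusion.

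Next I would push the picture forward by $\phi_{1*}$. By \autoref{thm:skew:scaledhomeo} it is a homeomorphism of $\P^1_\an$ that multiplies \emph{every} hyperbolic distance by $\q$; being a homeomorphism of the uniquely path-connected space $\P^1_\an$ (\autoref{thm:berk:interval}), it carries the arc $[\phi_{2*}(\zeta),\phi_{2*}(\zeta')]$ homeomorphically onto the arc between the images of its endpoints, which is exactly $[\phi_{1*}(\phi_{2*}(\zeta)),\phi_{1*}(\phi_{2*}(\zeta'))] = [\phi_*(\zeta),\phi_*(\zeta')]$. To make sense of the metric claim one needs that a hyperbolic point of $[\zeta,\zeta']$ is sent by $\phi_{2*}$ to a hyperbolic point, which holds because rational maps preserve point types.

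Composing the two restrictions finishes the proof: $\phi_* = \phi_{1*}\circ\phi_{2*}$ restricts to a homeomorphism $[\zeta,\zeta']\to[\phi_*(\zeta),\phi_*(\zeta')]$, and for $\xi_1,\xi_2\in[\zeta,\zeta']\cap\bH$,
\[
d_\bH(\phi_*(\xi_1),\phi_*(\xi_2)) = \q\cdot d_\bH(\phi_{2*}(\xi_1),\phi_{2*}(\xi_2)) = m\q\cdot d_\bH(\xi_1,\xi_2).
\]
As an alternative one can argue directly when $\zeta = \zeta(a,r)$ is of Type II or III: \autoref{thm:skew:imgtypeII} gives $\phi_*(\zeta(a,r')) = \zeta(\phi_{1*}(b_0),\,\abs{b_d}^\q (r')^{d\q})$ for $r'$ close to $r$ with $d = m$, so along this segment $\log\diam$ changes linearly with slope $m\q$; the decomposition argument has the advantage of covering Type I and IV points in the same breath.

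There is no serious obstacle here: the content is entirely imported from \autoref{thm:skew:scaledhomeo}, \autoref{prop:skew:scaledirections} and the rational-map case. The only points demanding a little care are that a single $\zeta'$ serves both factors (resolved by taking the one supplied by the rational-map statement), that composing the two interval homeomorphisms really lands on $[\phi_*(\zeta),\phi_*(\zeta')]$ (unique path-connectedness), and that the two metric scalings compose on the overlap of their domains (type preservation of $\phi_{2*}$).
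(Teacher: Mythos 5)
Your proposal is correct and matches the paper's own proof: both apply the decomposition $\phi_* = \phi_{1*}\circ\phi_{2*}$ of \autoref{thm:skew:comp}, cite the rational-map interval-stretching result for $\phi_{2*}$ (using \autoref{prop:skew:scaledirections} to identify the directional degree), and then compose with the $\q$-scaling homeomorphism $\phi_{1*}$ from \autoref{thm:skew:scaledhomeo}. The extra remarks you add on unique path-connectedness and type preservation are fine but not needed beyond what the paper records.
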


\begin{proof}
 By \autoref{thm:skew:comp}, we can write $\phi_* = \phi_{1*} \circ \phi_{2*}$. By \autoref{prop:skew:scaledirections} we have that $m = \deg_{\zeta, \bvec v}(\phi_2)$ and so by \cite[Theorem 7.22]{Bene}, we have the conclusion for the rational map $\phi_{2*}$. Meaning there is a point $\zeta' \in \bvec v$ such that $\phi_{2*}$ maps $[\zeta , \zeta']$ homeomorphically onto $[\phi_{2*}(\zeta), \phi_{2*}(\zeta')]$, scaling distances by a factor of $m$.
 \[d_\bH(\phi_{2*}(\xi_1) , \phi_{2*}(\xi_2)) = m\cdot d_\bH(\xi_1 , \xi_2)\quad \forall \xi_1, \xi_2 \in [\zeta , \zeta']\cap\bH.\]
 By \autoref{thm:skew:scaledhomeo} $\phi_{1*}$ is a homeomorphism scaling all paths by a factor of $\q$ i.e. \[d_\bH(\phi_*(\xi_1) , \phi_*(\xi_2)) = \q \cdot d_\bH(\phi_{2*}(\xi_1) , \phi_{2*}(\xi_2)) \quad \forall \xi_1, \xi_2\] so we are done.
\end{proof}

\begin{cor}\label{cor:berk:lipschitz}
 Let $\phi_*$ be a skew product of relative degree $d$ and scale factor $\q$. Then $\phi_*$ is Lipschitz on $\bH$ with respect to the hyperbolic metric $d_\bH$, with Lipschitz constant at most $d\q$.
\end{cor}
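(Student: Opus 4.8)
\textbf{Proof strategy for \autoref{cor:berk:lipschitz}.}
The plan is to deduce the global bound from the local stretching statement of \autoref{thm:intervalstretch} by a continuation argument along a path, the key point being that the restriction of $d_\bH$ to the arc joining two points of $\bH$ makes that arc an honest compact real interval. If $\rdeg(\phi)=0$ then $\phi_2$ is constant, $\phi_*$ is constant, and there is nothing to prove; so assume $d=\rdeg(\phi)\ge 1$. Then $\phi_*$ is non-constant, hence preserves the types of points by \autoref{thm:skew:opencts}, so $\phi_*(\bH)\subseteq\bH$. Fix $\xi_1,\xi_2\in\bH$ and set $I=[\xi_1,\xi_2]$. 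By \autoref{thm:berk:interval} this arc consists of $\xi_1$, $\xi_2$ and Type II/III points, and since $d_\bH$ is additive along $\preceq$-chains it restricts on $I$ to a length metric under which $I$ is isometric to $[0,L]$ with $L=d_\bH(\xi_1,\xi_2)<\infty$; write $\gamma:[0,L]\to I$ for the arc-length parametrisation, recalling that the weak and hyperbolic topologies agree on $I$. The essential input is then local: for every $p\in I$ and every direction $\bvec u\in\Dir p$ pointing along $I$ (one such at $\xi_1,\xi_2$, two at interior points), \autoref{thm:intervalstretch} produces a non-degenerate initial sub-arc of $I$ emanating from $p$ in the direction $\bvec u$ on which $\phi_*$ scales $d_\bH$ by the factor $\deg_{p,\bvec u}(\phi)\cdot\q$, which is at most $d\q$ by \autoref{prop:skew:chaindir}. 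In particular $\phi_*\circ\gamma$ is locally $(d\q)$-Lipschitz on $[0,L]$.

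Next I would globalise this in the standard way. Let $R\subseteq[0,L]$ be the set of $t$ such that $\phi_*\circ\gamma$ is $(d\q)$-Lipschitz on $[0,t]$; this is an initial segment, and the local property at $\gamma(0)=\xi_1$ shows $[0,t_0]\subseteq R$ for some $t_0>0$, so $T:=\sup R>0$. Applying the local property at $\gamma(T)$ in the direction along $I$ toward $\xi_1$ gives $\delta>0$ such that $\phi_*\circ\gamma$ is $(d\q)$-Lipschitz on $[T-\delta,T]$; choosing $t^*\in(T-\delta,T]\cap R$ and chaining the Lipschitz estimates on $[0,t^*]$ and $[t^*,T]$ through $\gamma(t^*)$ via the triangle inequality shows $\phi_*\circ\gamma$ is $(d\q)$-Lipschitz on $[0,T]$, i.e.\ $T\in R$. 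If $T<L$, applying the local property at $\gamma(T)$ in the direction along $I$ toward $\xi_2$ and chaining as before extends the estimate to $[0,T+\delta']$ for some $\delta'>0$, contradicting $T=\sup R$. Hence $T=L$, so $\phi_*\circ\gamma$ is $(d\q)$-Lipschitz on all of $[0,L]$; evaluating at the endpoints yields $d_\bH(\phi_*(\xi_1),\phi_*(\xi_2))\le d\q\,L=d\q\,d_\bH(\xi_1,\xi_2)$, as claimed.

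The main obstacle is precisely the reason one cannot just say ``integrate the local stretching factor'': the hyperbolic metric induces a strictly finer topology than the weak one, so weak compactness of $I$ is of no direct use. The fix is to work inside $(I,d_\bH)$ itself, which is a genuine compact interval, and to note that \autoref{thm:intervalstretch} — applied in \emph{both} directions at each interior point of $I$ and inward at the two endpoints — supplies exactly the local Lipschitz constants needed for the continuation argument above. A minor auxiliary point used when invoking \autoref{thm:intervalstretch} is that the sub-arc it produces can be taken to lie along $I$: two arcs leaving a common point in the same tangent direction share a non-degenerate initial segment, so one intersects the arc $[\,p,\zeta'\,]$ furnished by the theorem with the relevant half of $I$.
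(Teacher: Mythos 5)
Your argument is correct and is essentially the derivation the paper intends: the corollary is stated as an immediate consequence of \autoref{thm:intervalstretch}, whose local scaling factor $\deg_{\zeta,\bvec v}(\phi)\,\q \le d\q$ is exactly your local input, and the paper's own globalisation of that local stretching (in the proof of \autoref{cor:skew:injinterval}) uses the same additivity of $d_\bH$ along intervals, merely via a finite-cover/compactness argument where you use a sup/continuation argument. So the proposal matches the paper's approach.
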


\improvement{Add result about minimum stretch factor}
The following is a direct translation of \cite[Proposition 7.25]{Bene} to skew products via \autoref{thm:skew:comp}.

\begin{prop}
 Let $\phi_*$ be a skew product, let $\zeta = \zeta(a, r) \in \P^1_\an$ be a point of Type II or III, and let $\bvec v$ be a direction at $\zeta$. Suppose that $0$ and $\infty$ lie in different directions at $\phi_*(\zeta)$.
Let $M_0$ and $M_\infty$, respectively, be the number of zeros and poles, counted
with multiplicity, of $\phi$ in $\bvec v$; and let $N_0$ and $N_\infty$, respectively, be the number
of zeros and poles, counted with multiplicity, of $\phi_*$ in $\CD(a, r)$. Then
\[\deg_{\zeta, \bvec v}(\phi) =
\begin{cases}
 M_0-M_\infty & \text{if } \infty \nin \bvec v\ \and\ 0 \in \phi_\#(\bvec v)\\
 M_\infty-M_0 & \text{if } \infty \nin \bvec v\ \and\ \infty \in \phi_\#(\bvec v)\\
 N_0-N_\infty & \text{if } \infty \in \bvec v\ \and\ 0 \in \phi_\#(\bvec v)\\
 N_\infty-N_0 & \text{if } \infty \in \bvec v\ \and\ \infty \in \phi_\#(\bvec v)\\
\end{cases}.
\]
\end{prop}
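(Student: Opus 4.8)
The plan is to reduce the statement to \cite[Proposition 7.25]{Bene} for the rational function $\phi_2 = \phi^*(y)$, using the decomposition $\phi_* = \phi_{1*}\circ\phi_{2*}$ of \autoref{thm:skew:comp}. The one point that needs care is that the asserted formula is really a statement about the rational map $\phi_{2*}$, unaffected by the $\phi_{1*}$ factor, and for this I would first observe that $\phi_{1*}$ fixes the classical points $0$ and $\infty$. Indeed, by \autoref{thm:skew:scaledhomeo} the homeomorphism $\phi_{1*}$ restricts on $\P^1(K)$ to the field automorphism $(\phi_1^*)^{-1}$ extended by $\infty\mapsto\infty$, and any field automorphism sends $0$ to $0$. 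Consequently $\phi_*^{-1}(0) = \phi_{2*}^{-1}(\phi_{1*}^{-1}(0)) = \phi_{2*}^{-1}(0)$, and likewise $\phi_*^{-1}(\infty) = \phi_{2*}^{-1}(\infty)$, so the ``zeros and poles of $\phi_*$ in $\CD(a,r)$'' coincide with the zeros and poles of $\phi_2$ in $\CD(a,r)$, counted with multiplicity (the local degrees agreeing by \autoref{prop:skew:scaledirections}); similarly the ``zeros and poles of $\phi$ in $\bvec v$'' are exactly those of $\phi_2$. Thus $M_0, M_\infty, N_0, N_\infty$ are the same data whether read off from $\phi$ or from the rational function $\phi_2$.

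Next I would transport the hypothesis and the direction bookkeeping across $\phi_{1*}$. Put $\xi = \phi_{2*}(\zeta)$, so $\phi_*(\zeta) = \phi_{1*}(\xi)$. Since $\phi_{1*}$ is a tree homeomorphism (\autoref{thm:skew:scaledhomeo}), it induces a bijection $\phi_{1\#}\colon \Dir\xi \to \Dir{\phi_*(\zeta)}$ carrying the direction at $\xi$ pointing toward $0$ (resp.\ $\infty$) to the direction at $\phi_*(\zeta)$ pointing toward $\phi_{1*}(0)=0$ (resp.\ $\phi_{1*}(\infty)=\infty$). Hence ``$0$ and $\infty$ lie in different directions at $\phi_*(\zeta)$'' is equivalent to the same assertion at $\xi = \phi_{2*}(\zeta)$, which is precisely the hypothesis under which \cite[Proposition 7.25]{Bene} applies to $\phi_2$. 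Moreover, directly from the annulus description of the tangent map together with $\phi_*(U) = \phi_{1*}(\phi_{2*}(U))$ (equivalently \autoref{prop:skew:chaindir}(ii)), we have $\phi_\#(\bvec v) = \phi_{1\#}(\phi_{2\#}(\bvec v))$, and therefore $0\in\phi_\#(\bvec v) \iff 0\in\phi_{2\#}(\bvec v)$ and $\infty\in\phi_\#(\bvec v)\iff\infty\in\phi_{2\#}(\bvec v)$; the remaining condition, whether $\infty\in\bvec v$, concerns $\zeta$ itself and is left untouched by the decomposition.

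Finally I would assemble the pieces: by \autoref{prop:skew:scaledirections} we have $\deg_{\zeta,\bvec v}(\phi) = \deg_{\zeta,\bvec v}(\phi_2)$, and under the dictionary established above each of the four cases of the claimed formula is literally the corresponding case of \cite[Proposition 7.25]{Bene} applied to the rational map $\phi_2$ at $\zeta$ in the direction $\bvec v$. The only genuine obstacle is the first step --- verifying that $\phi_{1*}$ fixes $0$ and $\infty$ so that the formula is indeed insensitive to the field-automorphism factor; once that is secured the proof is pure bookkeeping, and I would keep it as brief as the excerpt's phrasing ``a direct translation'' suggests.
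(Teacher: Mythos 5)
Your proposal is correct and is essentially the paper's own argument: the paper simply declares the proposition ``a direct translation of \cite[Proposition 7.25]{Bene} to skew products via \autoref{thm:skew:comp},'' and your write-up supplies exactly the bookkeeping that translation needs. The key point you isolate --- that $\phi_{1*}$ fixes $0$ and $\infty$, so the zeros/poles, the hypothesis at the image point, and the case distinctions all transfer untouched to $\phi_2$, while $\deg_{\zeta,\bvec v}(\phi)=\deg_{\zeta,\bvec v}(\phi_2)$ by \autoref{prop:skew:scaledirections} --- is precisely what makes that one-line citation legitimate.
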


\subsection{Local Degrees at Points}

Following \cite[\S7.4]{Bene} we define and study the local degree $m = \deg_\zeta(\phi)$ of a skew product $\phi_* : \P^1_\an \to \P^1_\an$ at a point $\zeta \in \P^1_\an$. This will generalise the algebraic multiplicity $\deg_a(\phi)$ for classical points $a \in \P^1$. We shall see this gives the correct number $m$ that serves as both the local topological degree ($\phi_*$ is locally $m$-to-$1$ near $\zeta$) and its local algebraic degree.


\begin{defn}
 Let $\phi$ be a skew product, and let $\zeta \in \P^1_\an$. The \emph{local degree}, of $\phi$ at $\zeta$ is \improvement{This could probably be called a $\limsup$}
 \[\deg_\zeta(\phi) = \inf_U\left(\sup_b\left(\sum_{a \in U\cap \phi_*^{-1}(b)} \deg_a(\phi)\right)\right),\]
 where the infimum is over all open $U \subseteq \P^1_\an$ containing $\zeta$, the supremum is over all $b \in \phi_*(U)\cap\P^1(K)$.
\end{defn}

The right hand side measures the generic number of preimages of a point $b \in \phi_*(U)$ in $U$. One could worry about subtracting `too many' disks from the neighbourhood $U$, but one can only delete finitely many, whereas infinitely many remain. Moreover, by considering \autoref{thm:berk:topbasis} one can see that $U$ must always contain a thin annulus (or punctured disk) in \emph{every} direction at $\zeta$, and by \autoref{thm:skew:imgtypeII} such an annulus $V$ maps to another annulus $W$ with boundary $\phi_*(\zeta)$, which by \autoref{thm:seminorms:mapannulus} is uniformly $\deg_{\zeta, \bvec v}(\phi)$-to-$1$ on classical points counting multiplicity. To be precise, the local degree counts the total number of preimages of a sufficiently small $W$ which lie in $U$, possibly in multiple (but finitely many) directions at $\zeta$; shrinking $U$ may cause us to shrink $W$, but it will never change this number. Clearly this relates to local degrees in directions, as is spelled out in the next results. Indeed it turns out that picking any such $W$ at $\phi_*(\zeta)$, and counting preimages of $W$ close to $\zeta$ will produce $\deg_\zeta(\phi)$.\improvement{Agh writing! Try again! Avoid notational conflict with $W$ below.}

\begin{prop}
 Let $\phi_*= \phi_{1*} \circ \phi_{2*}$ be a non-constant skew product. Then 
\begin{enumerate}
 \item $\deg_\zeta(\phi_1) = 1\quad \forall \zeta \in \P^1_\an$,
 \item $\deg_\zeta(\phi) = \deg_\zeta(\phi_2)\quad \forall \zeta \in \P^1_\an$.
\end{enumerate}
\end{prop}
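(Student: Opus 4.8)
The plan is to reduce both statements to the decomposition $\phi_* = \phi_{1*}\circ\phi_{2*}$ of \autoref{thm:skew:comp}, using two inputs already in hand: that $\phi_{1*}$ is a type-preserving homeomorphism (\autoref{thm:skew:scaledhomeo}), and that $\deg_a(\phi) = \deg_a(\phi_2)$ on classical points by the very definition of algebraic multiplicity. The one bookkeeping point worth stating up front is that $\phi_*$ and rational maps preserve types (\autoref{thm:skew:opencts}), so the preimage of a classical point under $\phi_*$ or $\phi_{2*}$ consists only of classical points; hence the inner sums in the definition of local degree are honest finite sums indexed by points of $\P^1(K)$. I will also use the trivial observation that, for any non-constant skew product and any open $U$ meeting a point $\zeta$, the image $\phi_*(U)$ is open and hence contains classical points, each of which has a (classical) preimage inside $U$, so the inner expression $\sup_b\sum_a\deg_a$ is always $\ge 1$.

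For (1): the rational part $(\phi_1)_2$ of $\phi_1$ is the identity $y\mapsto y$, so $\deg_a(\phi_1)=1$ for every classical $a$. Since $\phi_{1*}$ is injective and type-preserving, every $b\in\P^1(K)$ has at most one preimage in any given $U$, and that preimage contributes $\deg_a(\phi_1)=1$; thus the inner expression is $\le 1$ for every open $U\ni\zeta$. Combined with the universal lower bound $\ge 1$ noted above, the inner expression equals $1$ for every $U$, so $\deg_\zeta(\phi_1)=\inf_U(\dots)=1$.

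For (2): fix $\zeta$ and an open $U\ni\zeta$. Because $\phi_*^{-1}(b)=\phi_{2*}^{-1}\bigl(\phi_{1*}^{-1}(b)\bigr)$ and $\phi_{1*}$ is a type-preserving bijection, the point $b' := \phi_{1*}^{-1}(b)$ is classical, and using $\deg_a(\phi)=\deg_a(\phi_2)$ we get
\[\sum_{a\in U\cap\phi_*^{-1}(b)}\deg_a(\phi)\;=\;\sum_{a\in U\cap\phi_{2*}^{-1}(b')}\deg_a(\phi_2).\]
As $b$ ranges over $\phi_*(U)\cap\P^1(K)=\phi_{1*}\bigl(\phi_{2*}(U)\bigr)\cap\P^1(K)$, the point $b'$ ranges exactly over $\phi_{2*}(U)\cap\P^1(K)$, once more because $\phi_{1*}$ is a type-preserving bijection. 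Hence the inner expression $\sup_b\sum_a$ attached to the neighbourhood $U$ in the definition of $\deg_\zeta(\phi)$ coincides with the one attached to the same $U$ in the definition of $\deg_\zeta(\phi_2)$; taking $\inf$ over all open $U\ni\zeta$ gives $\deg_\zeta(\phi)=\deg_\zeta(\phi_2)$. (One could instead derive (1) from (2) by applying it to $\phi_1$ itself, reducing to $\deg_\zeta(\id)=1$.) I do not expect any genuine obstacle here: the argument is purely a matter of tracking preimages through the bijection $\phi_{1*}$, and the only subtlety is the type-preservation remark needed to make the sums well defined.
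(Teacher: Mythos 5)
Your proposal is correct, and it is exactly the argument the paper intends (the proposition is stated there without proof, as a direct unwinding of the definition of local degree using that $\phi_{1*}$ is a type-preserving homeomorphism and that $\deg_a(\phi)=\deg_a(\phi_2)$ by the definition of algebraic multiplicity). The bookkeeping points you flag -- preimages of classical points are classical, and $b\mapsto\phi_{1*}^{-1}(b)$ identifies the index sets of the two suprema for the same neighbourhood $U$ -- are precisely what makes the identification of the two infima legitimate.
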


\begin{lem}
 Let $\phi_*$ be a skew product, and let $\zeta \in \P^1_\an$, and let $U \subseteq \P^1_\an$ be an open set containing $\zeta$. Then there is an open connected Berkovich affinoid $W$ such that
 
\begin{enumerate}[label=(\alph*), ref=\theenumi]
 \item $\zeta \in W \subseteq U$,
 \item $\phi_*(\partial W) \subseteq \partial(\phi_*(W))$, and
 \item $\sum_{a \in W\cap \phi_*^{-1}(b)}\deg_a(\phi) = \deg_\zeta(\phi)$ for every Type I point $a$ in $\phi_*(W)$.
\end{enumerate}
\end{lem}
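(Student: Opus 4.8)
The plan is to reduce the statement to the corresponding fact for ordinary \emph{rational} maps and then appeal to \cite[\S7.4]{Bene}. By \autoref{thm:skew:comp} write $\phi_*=\phi_{1*}\circ\phi_{2*}$ with $\phi_{2*}$ a rational map on $\P^1_\an$ (we may assume $\phi_*$, hence $\phi_{2*}$, is non-constant) and, by \autoref{thm:skew:scaledhomeo}, $\phi_{1*}$ a type-preserving homeomorphism of $\P^1_\an$. By \autoref{prop:skew:scaledirections} and the preceding proposition, $\deg_a(\phi)=\deg_a(\phi_2)$ for classical $a$ and $\deg_\zeta(\phi)=\deg_\zeta(\phi_2)$; and since $\phi_{1*}$ is a type-preserving bijection, $\phi_*^{-1}(b)=\phi_{2*}^{-1}(\phi_{1*}^{-1}(b))$ with $\phi_{1*}^{-1}(b)$ again classical for every classical $b$. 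I claim any $W$ furnished by the lemma for the triple $(\phi_{2*},\zeta,U)$ also works for $(\phi_*,\zeta,U)$: (a) is unchanged; for (b), since $\phi_{1*}$ is a homeomorphism it commutes with $\partial$, so
\[\phi_*(\partial W)=\phi_{1*}\big(\phi_{2*}(\partial W)\big)\subseteq\phi_{1*}\big(\partial(\phi_{2*}(W))\big)=\partial\big(\phi_{1*}(\phi_{2*}(W))\big)=\partial(\phi_*(W));\]
and for (c), given classical $b\in\phi_*(W)$ set $c=\phi_{1*}^{-1}(b)\in\phi_{2*}(W)$ (again classical), so that $W\cap\phi_*^{-1}(b)=W\cap\phi_{2*}^{-1}(c)$ and $\sum_{a\in W\cap\phi_*^{-1}(b)}\deg_a(\phi)=\sum_{a\in W\cap\phi_{2*}^{-1}(c)}\deg_a(\phi_2)=\deg_\zeta(\phi_2)=\deg_\zeta(\phi)$. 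So it is enough to prove the lemma when $\phi_*$ is a rational map.

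For that case I would recall the argument of \cite[\S7.4]{Bene}. The quantity $N(U')=\sup_b\sum_{a\in U'\cap\phi_*^{-1}(b)}\deg_a(\phi)$ (supremum over classical $b\in\phi_*(U')$) is non-increasing as the open neighbourhood $U'\ni\zeta$ shrinks and takes integer values in $\{1,\dots,\rdeg(\phi)\}$, so it is attained: there is an open $U_0$ with $\zeta\in U_0\subseteq U$ and $N(U_0)=\deg_\zeta(\phi)$. Now apply \autoref{thm:skew:affinoidmapping} (a rational map is a skew product): for an open connected Berkovich affinoid neighbourhood $W'$ of $\xi:=\phi_*(\zeta)$, the connected component $W$ of $\phi_*^{-1}(W')$ through $\zeta$ is an open connected affinoid with $\phi_*(W)=W'$, hence $W$ is a connected component of $\phi_*^{-1}(\phi_*(W))$.

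To force $W\subseteq U_0$ I would shrink $W'$: a non-constant rational map has finite (hence discrete) fibres, so taking a neighbourhood basis $W'_1\supseteq W'_2\supseteq\cdots$ of $\xi$ by connected affinoids, the components $W_k\ni\zeta$ of $\phi_*^{-1}(W'_k)$ are nested and $\bigcap_k\overline{W_k}\subseteq\bigcap_k\phi_*^{-1}(\overline{W'_k})=\phi_*^{-1}(\xi)$ is a connected subset of a finite set containing $\zeta$, hence equals $\{\zeta\}$; by the finite intersection property $W_k\subseteq U_0$ for $k$ large, and we fix such a $k$, writing $W=W_k$ and $W'=W'_k$. Property (b) is then automatic from the Properness Criterion: $W$ being a connected component of $\phi_*^{-1}(\phi_*(W))$ gives $\phi_*(W)\cap\phi_*(\partial W)=\emptyset$; since $\P^1_\an$ is compact Hausdorff, $\phi_*$ is closed, so $\phi_*(\partial W)\subseteq\phi_*(\overline W)=\overline{\phi_*(W)}$; and since $\phi_*$ is open, $\phi_*(W)$ is open, whence $\phi_*(\partial W)\subseteq\overline{\phi_*(W)}\setminus\phi_*(W)=\partial(\phi_*(W))$.

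The remaining point (c) is the delicate one. From $W\subseteq U_0$ one gets $\sum_{a\in W\cap\phi_*^{-1}(b)}\deg_a(\phi)\le\deg_\zeta(\phi)$ for every classical $b\in\phi_*(W)$, and the definition of $\deg_\zeta$ applied with the neighbourhood $W$ shows the supremum of these sums equals $\deg_\zeta(\phi)$; what must still be established is that the sum \emph{equals} $\deg_\zeta(\phi)$ for \emph{every} classical $b\in\phi_*(W)$, not only where the supremum is attained. This is where the annulus picture enters: by \autoref{thm:berk:topbasis} $W$ contains an initial segment of every direction at $\zeta$; by \autoref{thm:skew:imgtypeII} and \autoref{thm:seminorms:mapannulus}, on a sufficiently thin annulus in each direction $\bvec v$ the map $\phi_*$ is uniformly $\deg_{\zeta,\bvec v}(\phi)$-to-$1$ onto a thin annulus based at $\xi$, and $\partial W$ maps onto $\partial W'$ by (b); tracking preimages through these annuli and past the finitely many residue classes of $W'$ carrying zeros or poles of the relevant Laurent expansions shows that the weighted fibre count is the same over every classical point of $W'$. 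I expect this last step — showing the weighted number of preimages is locally constant on, and therefore constant along, the (totally disconnected) classical locus of $W'$ — to be the genuine obstacle were one to reprove everything from scratch; but it is precisely \cite[\S7.4]{Bene}, so in the final text I would simply invoke that reference after carrying out the reduction in the first paragraph.
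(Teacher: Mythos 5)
Your first paragraph — decomposing $\phi_*=\phi_{1*}\circ\phi_{2*}$ by \autoref{thm:skew:comp}, transferring (a)--(c) through the type-preserving homeomorphism $\phi_{1*}$ of \autoref{thm:skew:scaledhomeo}, and thereby reducing to the rational map $\phi_{2*}$ — is correct and is exactly the route the paper intends: the lemma is stated there without proof, and, like the neighbouring local-degree results, is meant to be obtained from Benedetto's rational-map version via the decomposition. Your deferral of the delicate constancy statement (c) to \cite[\S 7.4]{Bene} is likewise consistent with the paper.

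The one genuine flaw is in your sketch of the rational case, in the step forcing $W\subseteq U_0$: you take a countable nested ``neighbourhood basis'' $W'_1\supseteq W'_2\supseteq\cdots$ of connected affinoids at $\xi=\phi_*(\zeta)$. No such countable basis exists when $\xi$ is a Type II point and the residue field is uncountable — which is the paper's main case, $K=\K(\C)$ with residue field $\C$ — because every affinoid neighbourhood of $\xi$ omits only finitely many directions at $\xi$, so a countable intersection $\bigcap_k\overline{W'_k}$ still contains uncountably many entire directions rather than just $\{\xi\}$. Consequently the claimed equality $\bigcap_k\phi_*^{-1}(\overline{W'_k})=\phi_*^{-1}(\xi)$ fails, and with it the conclusions $\bigcap_k\overline{W_k}=\{\zeta\}$ and (via the finite intersection property) $W_k\subseteq U_0$. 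The repair is routine: index instead by the directed family of \emph{all} connected open affinoid neighbourhoods $W'$ of $\xi$; since $\P^1_\an$ is compact Hausdorff and affinoids form a basis (\autoref{thm:berk:topbasis}), one has $\bigcap_{W'}\overline{W'}=\{\xi\}$, the components $W_{W'}\ni\zeta$ form a decreasing net with $\bigcap_{W'}\overline{W_{W'}}=\{\zeta\}$, and the same compactness argument applied to the net of closed sets $\overline{W_{W'}}\sm U_0$ produces some $W'$ with $W_{W'}\subseteq U_0$. Alternatively, since in the final text you invoke Benedetto for the rational case anyway, the flawed shrinking step need not appear at all; but as written it would fail precisely at the Type II points that matter most.
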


 \improvement{This lemma is topological, Benedetto overcomplicates it; definitely rewrite proof.}
 
 The following theorem provides a chain rule for local degree and states that $\phi_*$ is exactly $\rdeg(\phi)$-to-$1$, counting `multiplicity', meaning local degree taking the role of multiplicity. Compare with \cite[Theorem 7.29]{Bene}.
 
\begin{thm}\label{thm:skew:chainrule}
 Let $\phi_*, \psi_*$ be skew products, and let $\zeta \in \P^1_\an$. Then
\begin{enumerate}[label=(\alph*), ref=\theenumi]
 \item The function $\P^1_\an \longrightarrow \set{1, \dots, \rdeg(\phi)}$ given by $\zeta \longmapsto \deg_\zeta(\phi)$ is upper semicontinuous. That is, the set
 \[\set[\zeta \in \P^1_\an]{\deg_\zeta(\phi) \ge m}\]
 is closed in the weak topology.
 \item $\deg_\zeta(\psi \circ \phi) = \deg_\zeta(\phi) \cdot \deg_{\phi_*(\zeta)}(\psi)$.
 \item $\sum_{\xi \in \phi^{-1}(\zeta)} \deg_\xi(\phi) = \rdeg(\phi)$.
\end{enumerate}
\end{thm}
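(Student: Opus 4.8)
The plan is to deduce all three parts from the corresponding statement for rational maps, \cite[Theorem 7.29]{Bene}, by means of the decomposition $\phi_* = \phi_{1*} \circ \phi_{2*}$ of \autoref{thm:skew:comp}. The three facts that make this work are, from the preceding proposition, that $\deg_\zeta(\phi) = \deg_\zeta(\phi_2)$ for every $\zeta \in \P^1_\an$ and $\deg_a(\phi) = \deg_a(\phi_2)$ for every classical $a$; and, from \autoref{thm:skew:scaledhomeo}, that $\phi_{1*}$ (and likewise $\psi_{1*}$) is a type-preserving homeomorphism with $\deg_\zeta(\phi_1) = 1$ for every $\zeta$.

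Part (a) is then immediate: $\set[\zeta \in \P^1_\an]{\deg_\zeta(\phi) \ge m} = \set[\zeta \in \P^1_\an]{\deg_\zeta(\phi_2) \ge m}$, and the latter is weakly closed because $\phi_2$ is a rational function. Part (c) is almost as quick: since $\phi_{1*}$ is a bijection, $\phi_*^{-1}(\zeta) = \phi_{2*}^{-1}(\zeta_1)$ where $\zeta_1 = \phi_{1*}^{-1}(\zeta)$ is a single point, so
\[\sum_{\xi \in \phi_*^{-1}(\zeta)} \deg_\xi(\phi) = \sum_{\xi \in \phi_{2*}^{-1}(\zeta_1)} \deg_\xi(\phi_2) = \rdeg(\phi_2) = \rdeg(\phi),\]
the middle equality being the rational-map case of (c).

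The substance is part (b), and here I first identify the rational part of the skew product $\psi_* \circ \phi_*$. Writing $\chi^* = \phi^* \circ \psi^*$ (so that $\chi_* = \psi_* \circ \phi_*$ by the contravariant functoriality of \autoref{prop:lowerstarfunctorial}) and applying $\phi^*$ to $\psi^*(y) = \psi_2(y) \in K(y)$ coefficient by coefficient, one finds $\chi^*(y) = \widehat{\psi_2}(\phi_2(y))$, where $\widehat{\psi_2}$ is the rational function obtained from $\psi_2$ by applying the automorphism $\phi_1$ to its coefficients; hence $\chi_2 = \widehat{\psi_2} \circ \phi_2$, and in particular $\rdeg(\psi \circ \phi) = \rdeg(\psi)\rdeg(\phi)$. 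Now \cite[Theorem 7.29]{Bene} applied to the rational functions $\widehat{\psi_2}$ and $\phi_2$ gives $\deg_\zeta(\psi\circ\phi) = \deg_\zeta(\phi_2) \cdot \deg_{\phi_{2*}(\zeta)}(\widehat{\psi_2})$. It remains to rewrite the two factors: $\deg_\zeta(\phi_2) = \deg_\zeta(\phi)$; and, for the second factor, one checks from $\widehat{\psi_2}^* = \phi_1^* \circ \psi_2^* \circ (\phi_1^*)^{-1}$ that $\widehat{\psi_2}_* = \phi_{1*}^{-1} \circ \psi_{2*} \circ \phi_{1*}$, so that conjugation by the degree-$1$ homeomorphism $\phi_{1*}$ — which, being an isomorphism on coefficients, carries the local multiplicity at a classical point to that at its $\phi_{1*}$-image — yields $\deg_{\phi_{2*}(\zeta)}(\widehat{\psi_2}) = \deg_{\phi_{1*}(\phi_{2*}(\zeta))}(\psi_2) = \deg_{\phi_*(\zeta)}(\psi)$. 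Combining the displays gives (b).

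The main obstacle is the bookkeeping in the last paragraph: confirming $\chi_2 = \widehat{\psi_2} \circ \phi_2$ and, above all, the invariance of local degree under conjugation by $\phi_{1*}$, which at classical points reduces to the elementary fact that a field automorphism preserves the multiplicity of a root of $\psi_2(y)=\psi_2(a)$, and at the level of the $\inf_U\sup_b$ defining $\deg_\zeta$ follows because $\phi_{1*}$ induces a bijection of neighbourhoods and of classical fibres. If this algebraic route proves awkward, the fallback is a purely topological argument mirroring Benedetto's proof of \cite[Theorem 7.29]{Bene}: use the Lemma immediately preceding this theorem to select, inside arbitrarily small neighbourhoods, connected Berkovich affinoids $W_\zeta \ni \zeta$ and $W_{\phi_*(\zeta)} \ni \phi_*(\zeta)$ on which the relevant preimage counts stabilise to $\deg_\zeta(\phi)$ and $\deg_{\phi_*(\zeta)}(\psi)$ respectively, and then count the classical fibres of $\psi_* \circ \phi_*$ by routing them through the intermediate point $\phi_*(\zeta)$, using \autoref{thm:skew:affinoidmapping} to control the components.
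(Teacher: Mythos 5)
Your reduction is correct and is exactly the route the paper intends: the paper itself leaves this proof unwritten (it is flagged as to-be-proved), with the surrounding text indicating that everything should follow from the canonical decomposition $\phi_* = \phi_{1*}\circ\phi_{2*}$ of \autoref{thm:skew:comp}, the equalities $\deg_\zeta(\phi)=\deg_\zeta(\phi_2)$ and $\deg_\zeta(\phi_1)=1$, and \cite[Theorem 7.29]{Bene}. Parts (a) and (c) are immediate as you say, and your treatment of (b) supplies precisely the bookkeeping the paper omits: the identification of the rational part of the composite as $\chi_2=\widehat{\psi_2}\circ\phi_2$ (with $\widehat{\psi_2}$ the coefficient-twist of $\psi_2$ by $\phi_1^*$), together with the conjugation identity $\widehat{\psi_2}{}_* = \phi_{1*}^{-1}\circ\psi_{2*}\circ\phi_{1*}$ and the invariance of local degree under the degree-one homeomorphism $\phi_{1*}$, which are exactly the points one must check for the chain rule to descend from the rational-map case.
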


\improvement{Prove it! Do we actually use Benedetto here?}

\begin{prop}\unsure{why is this here? move? In it's current form this needs a proof.}
 Let $\phi_*$ be a skew product of scale factor $\q$ over $K$ of characteristic $0$. Then $\phi$ is an isometry on $\bH$ with respect to $d_\bH$ if and only if $\rdeg(\phi) = 1$ and $\q = 1$. 
\end{prop}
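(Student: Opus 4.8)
The plan is to use the decomposition $\phi_* = \phi_{1*}\circ\phi_{2*}$ from \autoref{thm:skew:comp}, in which $\phi_2$ is a rational function of degree $\rdeg(\phi)$ and, by \autoref{thm:skew:scaledhomeo}, $\phi_{1*}$ is a homeomorphism of $\P^1_\an$ that scales $d_\bH$ uniformly by $\q$. The argument then splits into the two implications, with the harder direction relying on \autoref{thm:intervalstretch}.

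For the direction ``$\rdeg(\phi)=1$ and $\q=1$ $\implies$ isometry'', I would show each factor is an isometry. The factor $\phi_{1*}$ scales $d_\bH$ by $\q = 1$ by \autoref{thm:skew:scaledhomeo}\,(2), hence is an isometry. When $\rdeg(\phi)=1$ the rational function $\phi_2$ lies in $\PGL(2,K)$, so $\phi_{2*}$ is a rational map of degree one; applying \autoref{cor:berk:lipschitz} both to $\phi_{2*}$ and to $(\phi_{2*})^{-1} = (\phi_2^{-1})_*$ (each of relative degree $1$ and scale factor $1$) shows $\phi_{2*}$ is a $1$-Lipschitz bijection of $\bH$ with $1$-Lipschitz inverse, hence an isometry. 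A composition of isometries is an isometry, so $\phi_* = \phi_{1*}\circ\phi_{2*}$ is an isometry.

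For the converse, suppose $\phi_*$ is an isometry on $\bH$. Fix any $\zeta \in \P^1_\an$ and $\bvec v \in \Dir\zeta$ and put $m = \deg_{\zeta,\bvec v}(\phi)$. By \autoref{thm:intervalstretch} there is $\zeta' \in \bvec v$ with $\zeta' \ne \zeta$ such that $\phi_*$ scales $d_\bH$ on $[\zeta,\zeta']\cap\bH$ by the factor $m\q$; since that interval contains pairs of points at positive hyperbolic distance, the isometry hypothesis forces $m\q = 1$. As this holds for every choice of $\zeta$ and $\bvec v$,
\[\deg_{\zeta,\bvec v}(\phi) = 1/\q \qquad \text{for all } \zeta \in \P^1_\an \text{ and } \bvec v \in \Dir\zeta.\]
Because $\chara K = 0$, $\phi_2$ is separable, so $\deg_a(\phi_2)=1$ for all but finitely many classical points $a \in \P^1(K)$; choosing such an $a$, its unique direction has local degree $\deg_a(\phi_2) = 1$ by the definition of the local degree in a direction at a Type I point, whence $1/\q = 1$, i.e.\ $\q = 1$. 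Finally, if $\rdeg(\phi) = d \ge 2$, then separability together with Riemann--Hurwitz produces a point $a' \in \P^1(K)$ with $\deg_{a'}(\phi_2) \ge 2$, so the unique direction at $a'$ has local degree $\ge 2$, contradicting the displayed identity with $1/\q = 1$. Hence $\rdeg(\phi) = 1$. (Note $\phi_2 = \phi^*(y)$ is automatically non-constant, since $\phi^*$ is a field endomorphism of $K(y)$, so $\rdeg(\phi) \ge 1$ always.)

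The one genuinely delicate point is this lower bound on the stretch of $\phi_*$: \autoref{cor:berk:lipschitz} only bounds the Lipschitz constant \emph{above} by $d\q$, so one must invoke \autoref{thm:intervalstretch} to actually \emph{realize} the factor $m\q$ along a non-degenerate interval, and one must use the characteristic-zero hypothesis to guarantee that ramification of $\phi_2$ genuinely occurs once $\rdeg(\phi) \ge 2$ (in positive characteristic a relative-degree-$p$ skew product arising from Frobenius would violate the statement). Everything else is routine, using the functoriality \autoref{prop:lowerstarfunctorial} and the elementary fact that a $1$-bi-Lipschitz bijection is an isometry.
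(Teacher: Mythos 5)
Your argument is correct, and it is worth noting that the paper itself does not supply a proof here -- the proposition is explicitly flagged as still needing one -- so your proposal fills a genuine gap rather than paralleling an existing argument. You do so entirely with the paper's own machinery: the decomposition $\phi_*=\phi_{1*}\circ\phi_{2*}$ of \autoref{thm:skew:comp}, the $\q$-scaling of \autoref{thm:skew:scaledhomeo} plus the fact that a degree-one $\phi_2$ is a M\"obius map (your two-sided Lipschitz argument via \autoref{cor:berk:lipschitz} is fine, though one could equally quote that $\PGL(2,K)$ acts by isometries on $\bH$) for the easy implication, and \autoref{thm:intervalstretch} for the converse. The key point you correctly isolate is that the upper bound of \autoref{cor:berk:lipschitz} is useless for the converse, whereas \autoref{thm:intervalstretch} produces a nondegenerate segment $[\zeta,\zeta']$ whose interior consists of Type II/III points, on which the stretch factor $m\q$ is exactly realized; since distinct points of $\bH$ are at positive $d_\bH$-distance, the isometry hypothesis forces $\deg_{\zeta,\bvec v}(\phi)\,\q=1$ for every $\zeta$ and $\bvec v$ (using $\deg_{\zeta,\bvec v}(\phi)=\deg_{\zeta,\bvec v}(\phi_2)$ from \autoref{prop:skew:scaledirections}). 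Evaluating at a non-critical Type I point gives $\q=1$, and then Riemann--Hurwitz for the (automatically separable, since $\chara K=0$) map $\phi_2$ rules out $\rdeg(\phi)\ge 2$. Your closing remark about why characteristic $0$ is essential matches the paper's own marginal observation: in characteristic $p$ the Frobenius example ($\phi_2=y^p$ with inverse Frobenius absorbed into $\phi_1^*$, so $\q=1/p$) induces the identity on $\P^1_\an$, an isometry violating the conclusion. I see no gaps.
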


\begin{qtn}
What about characteristic $p$?
 \improvement[inline]{Cover this case and write about a skew product which is actually rational. Frobenius $\phi_1^* = \phi_2 = y^p$ with ... gives identity.}
 \info[inline]{Idea: write a rational map of degree $< p$ with $K(y^p)$ `coefficients' and show that the derivative is $0$ if and only if it is `constant' i.e. in $K(y^p)$. Then it is actually a $p$th power of a similar rational map of actual degree $<p$.}
\end{qtn}

\begin{thm}\label{thm:sumdirdegs}
 Let $\phi_*$ be a skew product, and let $\zeta \in \P^1_\an$, and let $\bvec w$ be a direction at $\phi_*(\zeta)$. Then
 \[\deg_\zeta(\phi) = \sum_{\phi_\#(\bvec v) = \bvec w} \deg_{\zeta, \bvec v}(\phi),\]
 where the sum is over all directions $\bvec v$ at $\zeta$ for which $\phi_\#(\bvec v) = \bvec w$.
\end{thm}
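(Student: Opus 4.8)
The plan is to prove this by decomposing $\phi_* = \phi_{1*} \circ \phi_{2*}$ as in \autoref{thm:skew:comp} and reducing to the corresponding statement for the rational map $\phi_{2*}$; the key point is that the automorphism part $\phi_{1*}$ affects none of the quantities appearing in the identity.

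First I would carry out the directional bookkeeping. Put $\xi = \phi_{2*}(\zeta)$, so that $\phi_*(\zeta) = \phi_{1*}(\xi)$. By \autoref{prop:skew:scaledirections} the tangent map $\phi_{1\#} : \Dir\xi \to \Dir{\phi_{1*}(\xi)}$ is a bijection and $\deg_{\xi, \bvec u}(\phi_1) = 1$ for every $\bvec u \in \Dir\xi$; let $\bvec w'$ be the unique direction at $\xi$ with $\phi_{1\#}(\bvec w') = \bvec w$. Using $\phi_\# = \phi_{1\#}\circ\phi_{2\#}$ (the composition rule for tangent maps, \autoref{prop:skew:chaindir}), a direction $\bvec v \in \Dir\zeta$ satisfies $\phi_\#(\bvec v) = \bvec w$ exactly when $\phi_{2\#}(\bvec v) = \bvec w'$; and by the chain rule for directional degrees (\autoref{prop:skew:chaindir}) together with $\deg_{\xi,\cdot}(\phi_1) = 1$ we get $\deg_{\zeta,\bvec v}(\phi) = \deg_{\zeta,\bvec v}(\phi_2)$ for each such $\bvec v$. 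Combined with $\deg_\zeta(\phi) = \deg_\zeta(\phi_2)$ (the proposition immediately preceding \autoref{thm:skew:chainrule}), the identity to be proved is equivalent to
\[\deg_\zeta(\phi_2) = \sum_{\phi_{2\#}(\bvec v) = \bvec w'} \deg_{\zeta, \bvec v}(\phi_2).\]

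This last equation is exactly the ``sum over directions equals local degree'' theorem for the rational map $\phi_{2*}$, which I would invoke from \cite[\S7.4]{Bene}. Here one should note the ordering constraint: \autoref{thm:sumdirdegs} appears in the development before reduction is introduced, so the skew reduction theorem \autoref{thm:skew:reduction} is not available and the rational-map fact must be quoted directly rather than re-derived through reductions.

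There is no substantive obstacle beyond the decomposition itself: the whole argument is the bookkeeping above together with the rational-map case. The one point worth stressing in the write-up is that the right-hand side of the identity, viewed as a function of $\bvec w$, is constant and equal to $\deg_\zeta(\phi)$ for \emph{every} choice of $\bvec w \in \Dir{\phi_*(\zeta)}$ — this independence of $\bvec w$ is really the content of the theorem, and it is what justifies treating $\deg_\zeta(\phi)$ as a genuine local degree counting preimages with multiplicity near $\zeta$.
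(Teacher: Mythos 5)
Your proposal is correct and follows exactly the route the paper intends (the paper itself leaves this proof implicit): decompose $\phi_* = \phi_{1*}\circ\phi_{2*}$ via \autoref{thm:skew:comp}, use \autoref{prop:skew:scaledirections} and \autoref{prop:skew:chaindir} to see that $\phi_{1*}$ changes none of the degrees and only relabels directions bijectively, and then quote the sum-over-directions theorem for the rational map $\phi_{2*}$ from \cite[\S7.4]{Bene}. Your bookkeeping with $\xi = \phi_{2*}(\zeta)$ and $\bvec w' = \phi_{1\#}^{-1}(\bvec w)$ is exactly right, so there is nothing further to add.
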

 
\begin{cor}
 Let $\phi_*$ be a skew product, and let $\zeta \in \P^1_\an$.
\begin{enumerate}[label=(\alph*), ref=\theenumi]
 \item If $\zeta = a$ is of Type I, then $\deg_\zeta(\phi)$ agrees with the algebraic multiplicity $\deg_a(\phi)$ of $\phi_2(y)$ at $a$.
 \item If $\zeta$ is of Type III, then \[\deg_\zeta(\phi) = \deg_{\zeta, \bvec u}(\phi) = \deg_{\zeta, \bvec v}(\phi),\]
 where $\bvec u$ and $\bvec v$ are the two directions at $\zeta$.
 \item If $\zeta$ is of Type IV, then $\deg_\zeta(\phi) = \wdeg_{a, r}(\phi_2)$ for any sufficiently small disc $D_\an(a, r)$ containing $\zeta$.
\end{enumerate}
\end{cor}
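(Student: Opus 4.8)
The plan is to derive all three parts from the directional sum formula \autoref{thm:sumdirdegs}, together with the description of $\Dir\zeta$ in \autoref{prop:berk:dirn} and the fact (\autoref{thm:skew:opencts}) that $\phi_*$ preserves the type of a point. Parts (a) and (c) I would handle together: if $\zeta=a$ is of Type I or $\zeta$ is of Type IV, then by \autoref{prop:berk:dirn} there is a single direction $\bvec v$ at $\zeta$, and $\phi_*(\zeta)$, being of the same type, has a single direction $\bvec w$, so necessarily $\phi_\#(\bvec v)=\bvec w$. Hence the sum in \autoref{thm:sumdirdegs} has exactly one term and $\deg_\zeta(\phi)=\deg_{\zeta,\bvec v}(\phi)$. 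For Type I this equals $\deg_a(\phi_2)$ by the definition of the local degree in a direction at a classical point, and $\deg_a(\phi_2)=\deg_a(\phi)$ is exactly the algebraic multiplicity, giving (a); for Type IV the definition of $\deg_{\zeta,\bvec v}(\phi)$ gives $\wdeg_{a,r}(\phi_2)$ for any sufficiently small $D_\an(a,r)\ni\zeta$, giving (c).

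The only case with any content is (b). Write $\zeta=\zeta(a,r)$ with $r\notin\abs{K^\times}$, so $\Dir\zeta=\set{\bvec u,\bvec v}$ where $\bvec u=\vec v(a)$ is the class $D_\an(a,r)$ and $\bvec v$ is the class of $\infty$; by \autoref{thm:skew:imgtypeII} (or \autoref{thm:skew:opencts}) the image $\phi_*(\zeta)=\zeta(\phi_{1*}(b_0),s)$ is again of Type III and so has exactly two directions. The crux is that $\phi_\#(\bvec u)\ne\phi_\#(\bvec v)$, which I would read directly off \autoref{thm:skew:imgtypeII}: choosing thin annuli $U_\lambda=\set{\lambda r<|y-a|<r}$ (representing $\bvec u$, with $\lambda<1$) and $V_\lambda=\set{r<|y-a|<r\lambda}$ (representing $\bvec v$, with $\lambda>1$), the theorem exhibits $\phi_*(U_\lambda)$ and $\phi_*(V_\lambda)$ as thin annuli abutting $\phi_*(\zeta)=\zeta(\phi_{1*}(b_0),s)$ from opposite sides — one with outer radius $s$, hence lying in $D_\an(\phi_{1*}(b_0),s)$, the other with inner radius $s$, hence lying in $\P^1_\an\sm\CD_\an(\phi_{1*}(b_0),s)$ — and this holds whichever sign the common Weierstrass degree $d$ takes. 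So $\phi_\#(\bvec u)$ and $\phi_\#(\bvec v)$ are the two distinct directions at $\phi_*(\zeta)$. Applying \autoref{thm:sumdirdegs} with $\bvec w=\phi_\#(\bvec u)$ collapses the sum to $\deg_\zeta(\phi)=\deg_{\zeta,\bvec u}(\phi)$, and with $\bvec w=\phi_\#(\bvec v)$ to $\deg_\zeta(\phi)=\deg_{\zeta,\bvec v}(\phi)$; hence all three coincide. One can also see $\deg_{\zeta,\bvec u}(\phi)=\deg_{\zeta,\bvec v}(\phi)$ directly, since these are $\wdeg_{a,r}(\phi_2)$ and $\overline\wdeg_{a,r}(\phi_2)$, which agree because the circle $\CD(a,r)\sm D(a,r)$ is empty for irrational $r$, by \autoref{prop:seminorms:rationaldegrees}.

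The only genuine step is the direction-separation claim in (b), and even that becomes immediate once \autoref{thm:skew:imgtypeII} is in hand; everything else is unwinding the definitions of $\deg_{\zeta,\bvec v}(\phi)$ together with the elementary fact that Type I and Type IV points — and their images — carry a unique tangent direction.
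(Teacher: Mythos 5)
Your proof is correct and is essentially the route the paper intends: the statement is given as an immediate corollary of \autoref{thm:sumdirdegs}, and your argument---collapsing the directional sum using the unique tangent direction at Type I and IV points (together with type preservation) and, for Type III, the separation $\phi_\#(\bvec u)\ne\phi_\#(\bvec v)$ read off from \autoref{thm:skew:imgtypeII} in both sign cases of $d$---is exactly that derivation. Your closing remark identifying $\deg_{\zeta,\bvec u}(\phi)=\overline\wdeg_{a,r}(\phi_2)$ and $\deg_{\zeta,\bvec v}(\phi)=\wdeg_{a,r}(\phi_2)$ via the empty circle $\CD(a,r)\sm D(a,r)$ for irrational $r$ is a sound consistency check but is not needed.
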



The following generalisation of \cite[Proposition 7.33]{Bene} completes the picture given by \autoref{thm:skew:affinoidmapping}.

\begin{prop}\label{prop:skew:dto1}
 Let $\phi_*$ be a skew product of relative degree $d \ge 2$, and let $W \subseteq \P^1_\an$ be a connected Berkovich affinoid. Write
\[\phi_*^{-1}(W) = V_1 \cup \cdots \cup V_m\]
as a disjoint union of connected Berkovich affinoids according to \autoref{thm:skew:affinoidmapping}. Then for each $i = 1, \dots, m$, there is an integer $1 \le d_i \le d$ such that every point in $W$ has exactly $d_i$ preimages in $V_i$, counting multiplicity. Moreover, $d_1 + \cdots + d_m = d$.
\end{prop}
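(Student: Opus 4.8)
The plan is to reduce everything to the already-known statement for rational maps, using the decomposition $\phi_* = \phi_{1*} \circ \phi_{2*}$ of \autoref{thm:skew:comp} and the fact that $\phi_{1*}$ is a homeomorphism. First I would write $\phi_* = \phi_{1*} \circ \phi_{2*}$, where $\phi_{2*}$ is a rational map on $\P^1_\an$ with $\deg(\phi_2) = \rdeg(\phi) = d \ge 2$, and $\phi_{1*}$ comes from an \equivar{} automorphism extended trivially to $K(y)$. By \autoref{thm:skew:scaledhomeo} the map $\phi_{1*}$ is a homeomorphism of $\P^1_\an$, and by \autoref{thm:skew:autaffinoidmapping} the set $W' := \phi_{1*}^{-1}(W)$ is again a connected Berkovich affinoid, with $\phi_{1*} \colon W' \to W$ a bijection.

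Next I would transport the preimage decomposition through $\phi_{1*}$. Since $\phi_{1*}$ is bijective, $\phi_*^{-1}(W) = \phi_{2*}^{-1}\!\bigl(\phi_{1*}^{-1}(W)\bigr) = \phi_{2*}^{-1}(W')$ as subsets of $\P^1_\an$, so the connected components $V_1, \dots, V_m$ furnished by \autoref{thm:skew:affinoidmapping} are exactly the connected components of $\phi_{2*}^{-1}(W')$. Because $\phi_{2*}$ is a rational map of degree $d \ge 2$, the Berkovich analogue of \autoref{thm:semninorms:affinoidmapping}, namely \cite[Proposition 7.33]{Bene}, applies directly to $\phi_{2*}$ and the affinoid $W'$: for each $i$ there is an integer $1 \le d_i \le d$ such that every point of $W'$ has exactly $d_i$ preimages under $\phi_{2*}$ in $V_i$, counting multiplicity, and $d_1 + \cdots + d_m = d$.

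Finally I would transfer this back to $\phi_*$. For $b \in W$, set $b' = \phi_{1*}^{-1}(b) \in W'$; then the set of $\phi_*$-preimages of $b$ lying in $V_i$ is literally the set of $\phi_{2*}$-preimages of $b'$ lying in $V_i$. To see that the weighted counts also agree, I would invoke $\deg_\zeta(\phi) = \deg_\zeta(\phi_2)$ for all $\zeta$ (equivalently, the chain rule \autoref{thm:skew:chainrule}(b) combined with $\deg_\zeta(\phi_1) = 1$), which shows that the local degree of $\phi_*$ at each such preimage coincides with the local degree of $\phi_{2*}$ there. Hence every point of $W$ has exactly $d_i$ preimages in $V_i$ counting multiplicity, and $d_1 + \cdots + d_m = d$, which is the claim. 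The only genuinely delicate point in the argument is this compatibility of ``counting multiplicity'' between $\phi_*$ and its rational part $\phi_{2*}$; everything else is the bookkeeping of pushing the decomposition through the homeomorphism $\phi_{1*}$, which is harmless precisely because $\phi_{1*}$ is a bijection that carries connected affinoids to connected affinoids.
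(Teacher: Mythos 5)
Your argument is correct and is exactly the reduction the paper intends: the paper states \autoref{prop:skew:dto1} as a direct generalisation of \cite[Proposition 7.33]{Bene}, proved (as with \autoref{thm:skew:affinoidmapping} and \autoref{thm:skew:opencts}) by writing $\phi_* = \phi_{1*}\circ\phi_{2*}$, pushing the affinoid through the homeomorphism $\phi_{1*}$, and using $\deg_\zeta(\phi)=\deg_\zeta(\phi_2)$ to match the multiplicity counts. Your identification of the multiplicity compatibility as the only delicate point, handled via $\deg_\zeta(\phi_1)=1$ and the chain rule, is precisely what the paper's framework provides.
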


\subsection{Reduction and Computation of Local Degrees}

Recall the discussion of reduction for rational maps in \autoref{sec:berk:red}. Now we will extend these ideas to skew products.

For a skew product $\phi_*= \phi_{1*} \circ \phi_{2*}$, recall from \autoref{thm:skew:scaledhomeo} that $\phi_{1*} : \P^1_\an \to \P^1_\an$ bijectively maps each direction at $\zeta(0, 1)$ to another direction at $\zeta(0, 1)$. Hence by the geometric definition of explicit good reduction $\phi_{1*}$ always has explicit good reduction; indeed $\phi_1^*$ descends to a field automorphism $\overline\phi_1^* : \k \to \k$. 

\begin{lem}
 Let $\phi_*= \phi_{1*} \circ \phi_{2*}$ be a skew product. Then \[\phi_{1\#}(\bvec v) = (\overline\phi_1^*)^{-1}(\bvec v) = \phi_{1*}(\bvec v)\] where $\phi_{1\#} : \Dir{\zeta(0, 1)} \to \Dir{\zeta(0, 1)}$ is the map on tangents at $\zeta(0, 1)$.
\end{lem}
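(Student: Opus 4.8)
The plan is to work entirely at the Gauss point $\zeta(0,1)$, which by \autoref{thm:skew:scaledhomeo} is fixed by $\phi_{1*}$ (since $(\phi_1^*)^{-1}(0)=0$ and $1^\q=1$). Consequently $\phi_{1\#}$ and the restriction of the homeomorphism $\phi_{1*}$ to the components of $\P^1_\an\sm\set{\zeta(0,1)}$ are both self-maps of $\Dir{\zeta(0,1)}$, and I will compare them with $(\overline\phi_1^*)^{-1}$ under the identification $\Dir{\zeta(0,1)}\cong\P^1(\k)$ of \autoref{prop:berk:dirn}: the direction $D_\an(b,1)$ corresponds to $\overline b\in\k$ for $b\in\bO_K$, and $\P^1_\an\sm\CD_\an(0,1)$ corresponds to $\infty$.

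The first ingredient is that $\phi_1^*$ (and hence $(\phi_1^*)^{-1}$) preserves $\bO_K$ and $\mathcal M_K$: since $t\mapsto t^{1/\q}$ is a strictly increasing bijection of $[0,\infty)$ fixing $0$ and $1$, the identity $\abs{\phi_1^*(a)}=\abs a^{1/\q}$ gives $\abs a\le 1\iff\abs{\phi_1^*(a)}\le 1$ and $\abs a<1\iff\abs{\phi_1^*(a)}<1$, and $\phi_1^*$ is bijective. Thus both descend to automorphisms of $\k$, and because $\overline{\phi_1^*}\circ\overline{(\phi_1^*)^{-1}}(\overline a)=\overline{\phi_1^*((\phi_1^*)^{-1}(a))}=\overline a$, we get $\overline{(\phi_1^*)^{-1}}=(\overline\phi_1^*)^{-1}$ as automorphisms of $\k$, and both fix $\infty\in\P^1(\k)$.

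Next I would compute the action of $\phi_{1*}$ on the relevant disks using \autoref{thm:skew:scaledhomeo}: for $c\in\bO_K$ and $t<1$, $\phi_{1*}(D_\an(c,1))=D_\an((\phi_1^*)^{-1}(c),1)$ and $\phi_{1*}(\CD_\an(c,t))=\CD_\an((\phi_1^*)^{-1}(c),t^\q)$, while $\phi_{1*}(\CD_\an(0,1))=\CD_\an(0,1)$ and $\phi_{1*}(D_\an(0,\mu))=D_\an(0,\mu^\q)$ for $\mu>1$. Since $\phi_{1*}$ is a homeomorphism fixing $\zeta(0,1)$, it permutes $\Dir{\zeta(0,1)}$; the formulas show that as directions $\phi_{1*}(D_\an(c,1))=D_\an((\phi_1^*)^{-1}(c),1)$, i.e.\ $\phi_{1*}$ sends the residue class $\overline c$ to $\overline{(\phi_1^*)^{-1}(c)}=(\overline\phi_1^*)^{-1}(\overline c)$, and it fixes the $\infty$-direction $\P^1_\an\sm\CD_\an(0,1)$. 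This gives the second equality $\phi_{1*}(\bvec v)=(\overline\phi_1^*)^{-1}(\bvec v)$.

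Finally I would verify $\phi_{1\#}(\bvec v)=\phi_{1*}(\bvec v)$ straight from the definition of $\phi_\#$ at a Type II point. For $\bvec v=D_\an(c,1)=\vec v(c)$ the annulus $U_\lambda=\set{\lambda<\abs{z-c}<1}=D_\an(c,1)\sm\CD_\an(c,\lambda)$ maps under the bijection $\phi_{1*}$ to $\set{\lambda^\q<\abs{z-(\phi_1^*)^{-1}(c)}<1}$; recentering $\zeta(0,1)=\zeta((\phi_1^*)^{-1}(c),1)$ shows this annulus lies in the direction $D_\an((\phi_1^*)^{-1}(c),1)$, so by definition $\phi_{1\#}(\bvec v)$ is exactly that direction. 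For the $\infty$-direction one repeats with $V_\lambda=\set{1<\abs z<\lambda}$, $\lambda>1$, whose image $\set{1<\abs z<\lambda^\q}$ again lies in $\P^1_\an\sm\CD_\an(0,1)$. Combining with the previous paragraph yields both claimed equalities. The only place that needs genuine care is this bookkeeping for directions at a non-Gauss center and the separate handling of the $\infty$-direction, together with the identity $\overline{(\phi_1^*)^{-1}}=(\overline\phi_1^*)^{-1}$; none of it is deep, but it must be laid out cleanly.
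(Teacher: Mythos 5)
Your proposal is correct and follows exactly the route the paper intends: the lemma is stated there without proof, as an immediate consequence of \autoref{thm:skew:scaledhomeo} (the Gauss point is fixed, $\phi_{1*}$ is a homeomorphism sending $D_\an(c,1)$ to $D_\an((\phi_1^*)^{-1}(c),1)$ and fixing the $\infty$-direction) together with the observation that $\phi_1^*$ preserves $\bO_K$ and $\mathcal M_K$ and so descends to $\overline\phi_1^*$ on $\k$. Your write-up simply supplies the bookkeeping the paper leaves to the reader, including the identity $\overline{(\phi_1^*)^{-1}}=(\overline\phi_1^*)^{-1}$ and the annulus check identifying $\phi_{1\#}$ with $\phi_{1*}$ on directions, all of which is accurate.
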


\begin{defn}
 Let $\phi_*= \phi_{1*} \circ \phi_{2*}$ be a skew product. We define the reduction $\overline \phi_1  = (\overline\phi_1^*)^{-1}$ and $\overline \phi = \overline \phi_1 \circ \overline \phi_2$.
\end{defn}

We will say that $\phi_*$ has \emph{explicit good reduction} if and only if $\phi_{2*}$ has explicit good reduction (as a rational map). The following theorem and proposition formalise these ideas, generalising \cite[Theorem 7.34, Lemma 7.35]{Bene}.


\begin{thm}\label{thm:skew:reduction}
 Let $\phi_*= \phi_{1*} \circ \phi_{2*}$ be a skew product of relative degree $d \ge 1$.
 Then $\overline \phi_2$ is nonconstant if and only if $\phi_2(\zeta(0, 1)) = \zeta(0, 1)$. In that case,
\[\deg_{\zeta(0, 1)}(\phi) = \deg(\overline \phi_2).\]
Also in that case, define a direction $\bvec v$ at $\zeta(0, 1)$ to be \emph{bad} if $\phi_*(\bvec v)$ contains both a zero and a pole of $\phi_2$, and define $T$ to be the set of bad directions $\bvec v$. Then
\begin{enumerate}[label=(\alph*), ref=\theenumi]
 \item $T$ is a finite set,
\item $\phi_*(\bvec v) = \P^1_\an$ for each direction $\bvec v \in T$, and
\item $\phi_\#(\bvec v) = \phi_*(\bvec v) = \vec v(\phi_*(\xi))$ for all directions $\bvec v$ at $\zeta(0, 1)$ not in $T$ and all $\xi \in \bvec v$. For such directions $\bvec v$, we have
\[\deg_{\zeta(0, 1), \bvec v}(\phi) = \wdeg_{\bvec v}(\phi_2) =
\begin{cases}
 \wdeg_{a, 1}(\phi_2) & \bvec v = \vec v(a)\\
 \overline \wdeg_{0, 1}(\phi_2) & \bvec v = \vec v(\infty)
\end{cases}
\]
\end{enumerate}
Finally, $\phi_2$ has explicit good reduction, i.e., $\deg(\overline \phi_2) = d$, if and only if $\overline \phi_2$ is
nonconstant and $T = \emp$.
\end{thm}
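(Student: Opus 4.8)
The plan is to read the final equivalence off the parts of \autoref{thm:skew:reduction} already proved. Dispose of the constant case first: if $\overline\phi_2$ is constant then, by the opening claim, $\phi_2(\zeta(0,1))\ne\zeta(0,1)$ and $\deg(\overline\phi_2)=0<d$, so $\phi_2$ does not have explicit good reduction; the right-hand side fails too, since it requires $\overline\phi_2$ nonconstant. So assume $\overline\phi_2$ is nonconstant. Then $\deg(\overline\phi_2)=\deg_{\zeta(0,1)}(\phi)$ by the opening claim, and this is at most $\rdeg(\phi)=d$ by \autoref{thm:skew:chainrule}. It therefore suffices to show
\[\deg_{\zeta(0,1)}(\phi)=d\iff T=\emp.\]
Everything below is a statement about $\phi_2$ and its bad directions, so no further information about $\phi_{1*}$ beyond what is packaged in (a)--(c) is needed.

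Choose a direction $\bvec w=\vec v(c)$ at $\zeta(0,1)$ with $\bvec w\notin\set[\phi_\#(\bvec v)]{\bvec v\in T}$; this is possible since $T$ is finite by (a). As $\phi_*(\zeta(0,1))=\zeta(0,1)\notin\bvec w$, \autoref{thm:skew:affinoidmapping} together with \autoref{prop:skew:dto1} (take $d\ge2$; the case $d=1$ is noted below) writes $\phi_*^{-1}(\bvec w)=V_1\cup\dots\cup V_m$ as a disjoint union of connected affinoids, each contained in a single direction at $\zeta(0,1)$, with multiplicities $d_i$ summing to $d$. For a direction $\bvec v\notin T$: by (c), $\phi_\#(\bvec v)=\phi_*(\bvec v)$ is a single direction at $\zeta(0,1)$; if $\phi_*(\bvec v)=\bvec w$ then $\bvec v\subseteq\phi_*^{-1}(\bvec w)$ and, its only boundary point $\zeta(0,1)$ mapping outside $\bvec w$, $\bvec v$ is exactly one of the $V_i$, of multiplicity $\deg_{\zeta(0,1),\bvec v}(\phi)=\wdeg_{\bvec v}(\phi_2)$; otherwise $\bvec v$ meets no $V_i$. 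For $\bvec v_0\in T$: by (b), $\phi_*(\bvec v_0)=\P^1_\an\supseteq\bvec w$, so $\bvec v_0$ meets $\phi_*^{-1}(\bvec w)$ and hence contains at least one $V_i$, of multiplicity $\ge1$, disjoint from all the components above.

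Now argue both directions. If $T=\emp$, every direction is of the first kind, so \autoref{thm:sumdirdegs} (with $\phi_\#=\phi_*$ on directions) gives $\deg_{\zeta(0,1)}(\phi)=\sum_{\phi_*(\bvec v)=\bvec w}\deg_{\zeta(0,1),\bvec v}(\phi)=\sum_i d_i=d$. If instead some $\bvec v_0\in T$, then --- since $\bvec w$ avoids $\phi_\#(\bvec v_0)$ --- the sum in \autoref{thm:sumdirdegs} runs only over directions outside $T$, so $\deg_{\zeta(0,1)}(\phi)$ is the total multiplicity of those $V_i$ lying in a direction not in $T$; adjoining the $V_i$ inside $\bvec v_0$ gives $d=\sum_i d_i\ge\deg_{\zeta(0,1)}(\phi)+1$, i.e.\ $\deg_{\zeta(0,1)}(\phi)<d$. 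Combined with $\deg(\overline\phi_2)=\deg_{\zeta(0,1)}(\phi)$, this is the equivalence. For $d=1$, with $\phi_2$ a Möbius map, it is immediate: a nonconstant reduction of a Möbius map again has degree $1$, and then the unique zero and pole of $\phi_2$ must lie in distinct residue classes, so $T=\emp$ automatically.

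The step I expect to be the main obstacle is the bookkeeping that, for a good direction $\bvec v$ mapping onto $\bvec w$, the component of $\phi_*^{-1}(\bvec w)$ it cuts out is $\bvec v$ itself, with component-multiplicity equal to the Weierstrass degree $\wdeg_{\bvec v}(\phi_2)=\deg_{\zeta(0,1),\bvec v}(\phi)$ that enters \autoref{thm:sumdirdegs}. This rests on $\phi_*(\zeta(0,1))$ lying outside $\bvec w$ and on the disk-mapping behaviour of $\phi_{2*}$ post-composed with the homeomorphism $\phi_{1*}$ (\autoref{thm:skew:imgtypeII}, \autoref{thm:skew:scaledhomeo}); everything else is routine once parts (a)--(c) are in hand.
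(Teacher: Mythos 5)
Your proposal proves only the last sentence of the theorem. You begin by ``reading the final equivalence off the parts of \autoref{thm:skew:reduction} already proved'', but those parts --- the equivalence of $\overline\phi_2$ being nonconstant with $\phi_{2*}$ fixing $\zeta(0,1)$, the identity $\deg_{\zeta(0,1)}(\phi)=\deg(\overline\phi_2)$, and items (a), (b), (c) --- are themselves the substantive content of the statement, and nothing earlier in the paper establishes them; so as a proof of the theorem this is a genuine gap, not just a stylistic shortcut. The paper's route is different: it obtains the whole statement at once by reducing to the rational map $\phi_2$, using the decomposition $\phi_*=\phi_{1*}\circ\phi_{2*}$ (\autoref{thm:skew:comp}), the facts that $\phi_{1*}$ is a type-preserving homeomorphism fixing $\zeta(0,1)$ and acting on $\Dir{\zeta(0,1)}$ as the bijection induced by $(\overline\phi_1^*)^{-1}$ (\autoref{thm:skew:scaledhomeo} and the lemma preceding the theorem), and the equalities $\deg_{\zeta}(\phi)=\deg_\zeta(\phi_2)$, $\deg_{\zeta,\bvec v}(\phi)=\deg_{\zeta,\bvec v}(\phi_2)$ (\autoref{prop:skew:scaledirections}); Benedetto's Theorem 7.34 and Lemma 7.35 for $\phi_2$ then deliver every claim, including the good-reduction criterion, since $\phi_{1*}$ sends residue classes bijectively to residue classes and $\phi_*(\bvec v)=\P^1_\an$ if and only if $\phi_{2*}(\bvec v)=\P^1_\an$. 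To complete your write-up you would still have to supply this reduction (or reprove Benedetto's theorem from scratch) for the main body.

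Judged only as an argument for the final clause, your counting is sound: with (a)--(c) in hand, choosing $\bvec w$ outside $\phi_\#(T)$, each good direction mapping to $\bvec w$ is a full component of $\phi_*^{-1}(\bvec w)$ of multiplicity equal to its directional degree, each bad direction contributes at least one extra component, and \autoref{thm:sumdirdegs} together with \autoref{prop:skew:dto1} gives $\deg_{\zeta(0,1)}(\phi)=d$ exactly when $T=\emp$. The bookkeeping step you flag does hold, but notice that it is essentially part (c) itself: since $\zeta(0,1)$ is fixed, continuity of $\xi\mapsto\norm[\xi]{\phi_2}$ along $[\zeta(a,\lambda),\zeta(0,1)]$ forces the boundary coefficient to be a unit and all Taylor coefficients on a pole-free residue class to lie in $\bO_K$, which is what makes a good direction map onto the \emph{entire} target class with the stated multiplicity --- so you cannot treat it as independent of the claims you are assuming. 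Your $d=1$ aside is fine once one adds the one-line argument that a M\"obius map whose zero and pole share a residue class has constant reduction (or, more simply, that injectivity of $\phi_*$ rules out a direction with $\phi_*(\bvec v)=\P^1_\an$).
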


\begin{prop}
 Let $\phi_*= \phi_{1*} \circ \phi_{2*}$ be a skew product such that $\bar \phi_2$ is nonconstant. Then 
 
\begin{enumerate}[label=(\alph*), ref=\theenumi]
 \item $\phi_*(\zeta(0, 1)) = \zeta(0, 1)$.
 \item For any $a, b \in \P^1(K)$, we have
 \[\overline \phi(\overline a) = \overline b\ \text{ if and only if }\ \phi_\#(\vec v(a)) = \vec v(b).\]
 \item For any $a \in \P^1(K)$, we have 
 \[\deg_{\zeta(0, 1), \vec v(a)}(\phi) = \deg_{\overline a}(\overline \phi_2)\]
\end{enumerate}
\end{prop}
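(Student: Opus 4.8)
The plan is to reduce every assertion to the corresponding statement for the rational map $\phi_{2*}$ via the decomposition $\phi_* = \phi_{1*}\circ\phi_{2*}$, and then re-introduce the field-automorphism factor $\phi_{1*}$, which by \autoref{thm:skew:scaledhomeo} and its reduction $\overline\phi_1 = (\overline\phi_1^*)^{-1}$ contributes only a harmless relabelling; recall throughout that $\overline\phi = \overline\phi_1\circ\overline\phi_2$ by definition. For (a) I would argue as follows: since $\overline\phi_2$ is nonconstant, the first clause of \autoref{thm:skew:reduction} (which for $\phi_2$ is \cite[Theorem 7.34]{Bene}) gives $\phi_{2*}(\zeta(0,1)) = \zeta(0,1)$; and \autoref{thm:skew:scaledhomeo}, applied with $\Psi = \phi_1^*$ and $\Psi_2 = \id$, gives $\phi_{1*}(\zeta(a,r)) = \zeta((\phi_1^*)^{-1}(a),r^\q)$, so with $(a,r) = (0,1)$ and using that a field automorphism fixes $0$ and that $1^\q = 1$ we get $\phi_{1*}(\zeta(0,1)) = \zeta(0,1)$. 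Composing the two gives $\phi_*(\zeta(0,1)) = \zeta(0,1)$.

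For (b) I would use $\phi_\# = \phi_{1\#}\circ\phi_{2\#}$ from \autoref{prop:skew:chaindir}. The rational-map version of the present proposition, namely \cite[Lemma 7.35]{Bene} applied to $\phi_2$, gives $\phi_{2\#}(\vec v(a)) = \vec v(c)\iff\overline\phi_2(\overline a) = \overline c$, where one may always pick a representative $c\in\P^1(K)$ since $\P^1(K)\to\P^1(\k)$ is onto; and the lemma stated just before the definition of $\overline\phi$ identifies $\phi_{1\#}$ on $\Dir{\zeta(0,1)}\cong\P^1(\k)$ with $\overline\phi_1 = (\overline\phi_1^*)^{-1}$ (well defined because $\phi_1^*$ preserves $\bO_K$ and $\mathcal M_K$ and hence descends to $\k$), so that $\phi_{1\#}(\vec v(c)) = \vec v(\overline\phi_1(\overline c))$. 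Chaining the two equivalences and using $\overline\phi = \overline\phi_1\circ\overline\phi_2$ yields $\phi_\#(\vec v(a)) = \vec v(b)\iff\overline\phi(\overline a) = \overline b$; the reverse direction is legitimate because $\phi_{1\#}$ is a bijection of directions by \autoref{prop:skew:scaledirections}.

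For (c) I would invoke \autoref{prop:skew:scaledirections}, which gives $\deg_{\zeta(0,1),\vec v(a)}(\phi) = \deg_{\zeta(0,1),\vec v(a)}(\phi_2)$, and then \cite[Lemma 7.35]{Bene} for $\phi_2$ to identify the right-hand side with $\deg_{\overline a}(\overline\phi_2)$. For $\vec v(a)$ outside the bad set $T$ one could instead read this off \autoref{thm:skew:reduction}, which equates $\deg_{\zeta(0,1),\vec v(a)}(\phi)$ with the common Weierstrass degree $\wdeg_{\vec v(a)}(\phi_2)$, and then match it with the local degree of $\overline\phi_2$ using the zero/pole counts of \autoref{prop:seminorms:rationaldegrees}.

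The step I expect to be the real obstacle is the bad directions $\bvec v\in T$ in (b) and (c): there $\phi_*(\bvec v) = \P^1_\an$, so $\phi_\#(\vec v(a))$ is not recoverable as $\vec v(\phi_*(\xi))$ from \autoref{thm:skew:reduction}, and one must confirm that the genuine value $\overline\phi_2(\overline a)$ of the reduced rational map still encodes the limiting direction of a sufficiently thin annulus at $\zeta(0,1)$ inside $\bvec v$. This is precisely what is already packaged into \cite[Lemma 7.35]{Bene}, so once that lemma is cited for $\phi_2$, all that remains is the benign post-composition with the bijection $\phi_{1\#}$, which introduces no new difficulty.
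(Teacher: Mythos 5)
Your proposal is correct and follows exactly the route the paper intends: the paper leaves this proposition as an immediate consequence of the decomposition $\phi_* = \phi_{1*}\circ\phi_{2*}$, applying \cite[Theorem 7.34, Lemma 7.35]{Bene} to the rational part $\phi_{2*}$ and then post-composing with $\phi_{1*}$, whose action on $\zeta(0,1)$ and on $\Dir{\zeta(0,1)}\cong\P^1(\k)$ is given by \autoref{thm:skew:scaledhomeo} and the lemma identifying $\phi_{1\#}$ with $\overline\phi_1=(\overline\phi_1^*)^{-1}$, together with $\deg_{\zeta,\bvec v}(\phi)=\deg_{\zeta,\bvec v}(\phi_2)$ from \autoref{prop:skew:scaledirections}. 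Your remark that the bad directions are already handled inside Benedetto's Lemma 7.35 is also the right observation, so no gap remains.
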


\subsection{The Injectivity and Ramification Loci}

\begin{defn}
 Let $\phi_*$ be a simple skew product. The ramification locus of $f$ is the set
 \[\Ram(\phi) = \set[\zeta \in \P^1_\an]{\deg_\zeta(\phi) \ge 2},\]
 and the injectivity locus of $f$ is the complement
 \[\Inj(\phi) = \P^1_\an \sm \Ram(\phi) = \set[\zeta \in \P^1_\an]{\deg_\zeta(\phi) = 1}.\]
\end{defn}

\begin{prop}
 \[\Ram(\phi) = \Ram(\phi_2),\quad\Inj(\phi) = \Inj(\phi_2)\]
\end{prop}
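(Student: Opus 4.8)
The plan is to reduce both identities to the single pointwise fact that $\deg_\zeta(\phi) = \deg_\zeta(\phi_2)$ for every $\zeta \in \P^1_\an$, which was already established (for non-constant skew products) in the earlier subsection on local degrees at points. First I would dispose of the degenerate case: if $\phi_2$ is constant then $\phi_* = \phi_{1*}\circ\phi_{2*}$ is constant, so $\deg_\zeta(\phi) = \deg_\zeta(\phi_2) = 0$ for all $\zeta$ and $\Ram(\phi) = \Ram(\phi_2) = \emp$. Assuming from now on that $\phi_*$ (equivalently $\phi_2$) is non-constant, the cited proposition gives $\deg_\zeta(\phi) = \deg_\zeta(\phi_2)$ for every $\zeta \in \P^1_\an$, so in particular $\deg_\zeta(\phi) \ge 2$ if and only if $\deg_\zeta(\phi_2) \ge 2$. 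Hence the two subsets
\[\Ram(\phi) = \set[\zeta \in \P^1_\an]{\deg_\zeta(\phi) \ge 2} \and \Ram(\phi_2) = \set[\zeta \in \P^1_\an]{\deg_\zeta(\phi_2) \ge 2}\]
coincide as subsets of $\P^1_\an$, and taking complements gives $\Inj(\phi) = \P^1_\an \sm \Ram(\phi) = \P^1_\an \sm \Ram(\phi_2) = \Inj(\phi_2)$.

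If one prefers not to appeal to that proposition directly, the identity $\deg_\zeta(\phi) = \deg_\zeta(\phi_2)$ also drops out of the chain rule for local degree: applying \autoref{thm:skew:chainrule} to $\phi_* = \phi_{1*}\circ\phi_{2*}$ yields $\deg_\zeta(\phi) = \deg_\zeta(\phi_2)\cdot\deg_{\phi_{2*}(\zeta)}(\phi_1)$, and $\deg_\xi(\phi_1) = 1$ for every $\xi$ since $\phi_{1*}$ is a homeomorphism of $\P^1_\an$ by \autoref{thm:skew:scaledhomeo}. Either way there is no genuine obstacle: all the substance — that the field-automorphism factor $\phi_{1*}$ is everywhere unramified, and that local degree is multiplicative under composition — lies in results already proved, and the simplicity hypothesis $\q = 1$ plays no role whatsoever. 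What remains is simply to unwind the definitions of $\Ram$ and $\Inj$, as above.
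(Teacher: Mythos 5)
Your argument is correct and is exactly the route the paper intends: the proposition is stated without proof precisely because it is immediate from the earlier fact that $\deg_\zeta(\phi) = \deg_\zeta(\phi_2)$ for every $\zeta \in \P^1_\an$ (itself coming from the chain rule together with $\deg_\zeta(\phi_1) = 1$, via the decomposition $\phi_* = \phi_{1*}\circ\phi_{2*}$), plus unwinding the definitions of $\Ram$ and $\Inj$. Your degenerate constant case is unnecessary --- a skew endomorphism is an injective field homomorphism, so $\phi_2$ can never lie in $K$ --- but it is harmless, and your observation that $\q = 1$ plays no role is accurate.
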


\begin{thm}
 Let $\phi_*$ be a simple skew product of relative degree $d \ge 2$. Then every connected component of the injectivity locus $\Inj(\phi)$ is a rational open connected affinoid.
\end{thm}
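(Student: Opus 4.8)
The plan is to reduce the statement to the underlying rational map $\phi_2$ and then combine Faber's description of the ramification locus with the tree structure of $\P^1_\an$. By the preceding proposition $\Ram(\phi) = \Ram(\phi_2)$ and $\Inj(\phi) = \Inj(\phi_2)$ as subsets of $\P^1_\an$, so the homeomorphism $\phi_{1*}$ is irrelevant and it suffices to treat a rational map $\psi := \phi_2$ of degree $d \ge 2$. If $\psi$ is inseparable, write $\psi = \psi_0 \circ \mathrm{Frob}^k$ with $k \ge 1$ and $\psi_0$ separable; since $\mathrm{Frob}$ is a bijection of $\P^1_\an$, the chain rule for local degrees (\autoref{thm:skew:chainrule}) gives $\deg_\zeta(\psi) = p^k \deg_{\mathrm{Frob}^k_*(\zeta)}(\psi_0) \ge p^k \ge 2$ for every $\zeta$, so $\Inj(\psi) = \emp$ and there is nothing to prove. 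Hence I may assume $\psi$ is separable, so $\Crit(\psi)$ is finite.

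Next I record the two structural facts I will lean on. First, local degree is upper semicontinuous (\autoref{thm:skew:chainrule}), so $\Ram(\psi) = \set[\zeta \in \P^1_\an]{\deg_\zeta(\psi) \ge 2}$ is closed and $\Inj(\psi) = \P^1_\an \sm \Ram(\psi)$ is open. Second, \autoref{thm:skew:ramconvcrit} together with the finer statements of \cite{Faber13.1} says that $\Ram(\psi)$ is a compact subgraph of the finite tree $\Hull(\Crit(\psi))$, with finitely many connected components, and every endpoint of that subgraph lies in $\Crit(\psi) = \Ram(\psi) \cap \P^1(K)$ and is therefore of Type I.

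Now fix a connected component $V$ of $\Inj(\psi)$; being a connected subset of the tree $\P^1_\an$ it is convex, and $\partial V \subseteq \Ram(\psi)$. For $\zeta \in \partial V$: since $\zeta \nin V$ and $V$ is convex, $V$ lies entirely in a single direction $\bvec w_\zeta \in \Dir\zeta$. I claim $\zeta$ is of Type II. It is not Type I or IV, since those are endpoints of $\P^1_\an$ and then $\bvec w_\zeta$ would be the unique direction at $\zeta$, making a punctured neighbourhood of $\zeta$ lie in $V$ and forcing the contradiction $\zeta \nin \Ram(\psi)$. If $\zeta$ were Type III it would have exactly two directions, so at most one lies in $\Ram(\psi)$, and then $\zeta$ is an endpoint of the subgraph $\Ram(\psi)$ — impossible, since such endpoints are Type I. Moreover $\partial V$ is finite: fix $\xi \in V$; for each $\zeta \in \partial V$ convexity of $V$ gives $[\xi, \zeta) \subseteq V$, hence $[\xi, \zeta] \cap \Ram(\psi) = \set\zeta$, so $\zeta$ is the point of its connected component of $\Ram(\psi)$ nearest to $\xi$, and there are finitely many such components. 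Finally, for $\zeta \in \partial V$ the set $\P^1_\an \sm \bvec w_\zeta$ is a rational closed Berkovich disk (its only boundary point is the Type II point $\zeta$); the unique path from any $\eta \nin V$ to $\xi$ enters $V$ through some point of $\partial V$, exhibiting $\eta$ in the corresponding closed disk, so $\P^1_\an \sm V = \bigcup_{\zeta \in \partial V}(\P^1_\an \sm \bvec w_\zeta)$ and hence $V = \bigcap_{\zeta \in \partial V} \bvec w_\zeta$ is a nonempty finite intersection of rational open Berkovich disks, i.e. a rational open connected affinoid (genuinely connected by \autoref{thm:berk:topbasis}).

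The main obstacle is precisely the structure of $\Ram(\psi)$: the whole argument rests on its being a compact subgraph with finitely many components all of whose endpoints are Type I, so that the retraction argument bounds $\partial V$ and pins its points to Type II. In residue characteristic zero — and in the tame case generally — this is Faber's theorem; the wildly ramified case is the delicate point and needs the corresponding refinements from \cite{Faber13.1}. Everything else (openness of $\Inj$, the single-direction observation, and reassembling $V$ from disks) is routine bookkeeping with the tree $\P^1_\an$.
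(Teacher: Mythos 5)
Your reduction to the rational map $\phi_2$ via $\Inj(\phi)=\Inj(\phi_2)$ is exactly what the paper does (it states the theorem with no further argument, leaning on the rational-map literature), your treatment of the inseparable case is fine, and the tree-topology bookkeeping at the end --- convexity of a component $V$, at most one boundary point per connected component of $\Ram(\phi_2)$, and $V=\bigcap_{\zeta\in\partial V}\bvec{w}_\zeta$ --- is sound. The genuine gap is the structural input you feed into it. The theorem carries no tameness hypothesis, but \autoref{thm:skew:ramconvcrit} (Faber's theorem, \cite{Faber13.1}) does: only for tame maps is $\Ram(\phi_2)$ contained in $\Hull(\Crit(\phi_2))$ with all extremities Type I critical points. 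In positive residue characteristic a wildly ramified $\phi_2$ violates all of this: $\Ram(\phi_2)$ need not lie in the hull, can have nonempty interior (so it is not a subgraph of a finite tree), and its extreme points need not be Type I. Your exclusion of Type III boundary points quotes precisely the ``endpoints are Type I'' fact, and your description of $\Ram(\phi_2)$ as a compact subgraph rests on the same statement, so as written the argument only proves the tame case; deferring the wild case to ``refinements of \cite{Faber13.1}'' does not close it (the wild analysis is \cite{Faber13.2}, and even there the subgraph description is simply false).

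What does hold in general, and what you should use instead: $\Ram(\phi_2)$ is closed (upper semicontinuity, as you say) and has finitely many connected components, each containing a critical point --- this part of Faber's work needs no tameness --- which already gives $\#\partial V<\infty$ by your retraction argument. For the type of the boundary points, replace the global picture by a local propagation argument: if a point $\zeta$ of Type I, III or IV has $\deg_\zeta(\phi)\ge 2$, then \autoref{thm:sumdirdegs} together with \autoref{thm:intervalstretch} and \autoref{thm:skew:bigintervalstretch} produces a nondegenerate segment issuing from (or, in the Type III case, through) $\zeta$ on which every point has local degree at least $2$; since any path from a point of $V$ to $\zeta$ must end along such a segment, while $[\xi,\zeta)\subseteq V\subseteq\Inj(\phi)$, no such $\zeta$ can lie in $\partial V$. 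This also repairs your Type I/IV step, where ``a punctured neighbourhood of $\zeta$ lies in $V$'' is asserted but does not follow merely from $\zeta$ having a single direction --- it amounts to $\zeta$ being isolated in $\Ram(\phi_2)$, which is exactly what the segment argument rules out. With these two replacements your assembly of $V$ as a finite intersection of rational open Berkovich disks goes through with no tameness assumption, matching the paper's intended reduction to $\phi_2$ plus the general rational-map facts.
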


\begin{defn}
 The set of Type I critical points of a rational map $\phi$ is denoted $\Crit(\phi)$.
\end{defn}

\begin{defn}
 Let $\phi_* = \phi_{1*} \circ \phi_{2*}$ be a skew product, and let $p$ be the residue characteristic of $\hv$. Suppose that for every point $\zeta \in \P^1_\an$, the local degree $\deg_\zeta(\phi)$ is not divisible by $p$. Then we say that $\phi_*$ is \emph{tame}.
\end{defn}

In particular, if $p = 0$ or $p > \deg(\phi)$ then $\phi$ is tame.

This next result is a corollary of the theorem of Faber for rational maps \cite{Faber13.1}. See also \cite{Faber13.2} when $\phi_*$ might not be tame.

\begin{thm}\label{thm:skew:ramconvcrit}
 Let $\phi_*$ be a tame skew product of relative degree $d \ge 2$. Then the ramification locus $\Ram(\phi)$ is a subset of $\Hull(\Crit(\phi_2))$ whose set of endpoints is precisely the critical set of $\phi_2$, $\Crit(\phi_2) = \Ram(\phi) \cap \P^1(K)$.
 %
\end{thm}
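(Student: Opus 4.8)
The plan is to deduce this as a direct corollary of Faber's theorem for rational maps, using the decomposition $\phi_* = \phi_{1*} \circ \phi_{2*}$ of \autoref{thm:skew:comp}. The key point is that the field-automorphism factor $\phi_{1*}$ carries no ramification at all: by \autoref{thm:skew:scaledhomeo} it is a homeomorphism of $\P^1_\an$, and we have already recorded (see \autoref{prop:skew:scaledirections} and the subsequent results on local degrees) that $\deg_\zeta(\phi) = \deg_\zeta(\phi_2)$ for every $\zeta \in \P^1_\an$, whence $\Ram(\phi) = \Ram(\phi_2)$ and $\Ram(\phi) \cap \P^1(K) = \Ram(\phi_2) \cap \P^1(K)$. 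So all the geometry of the ramification locus of $\phi_*$ is exactly that of the rational map $\phi_2$.

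First I would check that the rational function $\phi_2 = \phi^*(y)$, which has degree $d = \rdeg(\phi) \ge 2$, is tame \emph{as a rational map}, i.e. that the residue characteristic $p$ of $\hv$ divides no local degree $\deg_\zeta(\phi_2)$; this is immediate from the hypothesis that $\phi_*$ is tame together with the identity $\deg_\zeta(\phi) = \deg_\zeta(\phi_2)$. Next I would apply Faber's theorem \cite{Faber13.1} to $\phi_2$: for a tame rational map of degree at least $2$, the ramification locus is contained in $\Hull(\Crit(\phi_2))$, and its set of endpoints is precisely $\Crit(\phi_2) = \Ram(\phi_2) \cap \P^1(K)$. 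Finally I would transport this statement verbatim to $\phi_*$ using $\Ram(\phi) = \Ram(\phi_2)$: this gives $\Ram(\phi) \subseteq \Hull(\Crit(\phi_2))$, with set of endpoints equal to that of $\Ram(\phi_2)$, namely $\Crit(\phi_2) = \Ram(\phi) \cap \P^1(K)$.

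Since the real content lies entirely in Faber's theorem, there is no serious obstacle here; the only thing requiring care is bookkeeping — confirming that the notion of "tame" used in the hypothesis matches the hypothesis of Faber's theorem, that the set of endpoints of $\Ram(\phi)$ is computed in the same way as for $\Ram(\phi_2)$ (so that the transfer along $\Ram(\phi) = \Ram(\phi_2)$ is legitimate), and that $\Crit(\phi_2)$ genuinely denotes the classical critical set of the rational function $\phi^*(y)$, unaffected by the factor $\phi_1$. One may also remark, as an immediate consequence, that a tame $\phi_*$ has $\deg_\zeta(\phi) = 1$ at every Type III and Type IV point, since all endpoints of $\Ram(\phi)$ are Type I.
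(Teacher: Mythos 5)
Your proposal is correct and matches the paper's own route: the paper likewise treats this as a direct corollary of Faber's theorem, using the decomposition $\phi_* = \phi_{1*}\circ\phi_{2*}$ together with $\Ram(\phi) = \Ram(\phi_2)$ (and the definition that $\phi_*$ is tame precisely when $\phi_2$ is). The bookkeeping points you flag are exactly the ones needed, so nothing further is required.
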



\begin{cor}\label{cor:skew:TypeIVdegone}
Let $\phi_*$ be a tame skew product, then
 $\deg_\zeta(\phi) = 1$ for every Type IV point of $\P^1_\an$.
\end{cor}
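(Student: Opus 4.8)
The plan is to read this off \autoref{thm:skew:ramconvcrit} together with the tree structure of $\P^1_\an$. A Type IV point is an endpoint of $\P^1_\an$ (it has a single tangent direction), whereas every point of the convex hull $\Hull(\Crit(\phi_2))$ is either one of the Type I critical points of $\phi_2$ itself or an interior point of an interval joining two of them, hence of Type II or III. Thus $\Hull(\Crit(\phi_2))$, and therefore its subset $\Ram(\phi)$, contains no Type IV point at all; since $\deg_\zeta(\phi)\ge 1$ always, this says exactly $\deg_\zeta(\phi)=1$ at every Type IV point.

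In detail, I would first dispose of the degenerate case $d:=\rdeg(\phi)\le 1$: a tame skew product is necessarily non-constant (a constant map has local degree $0$, which is divisible by $p$), so if $d\le 1$ then $\phi_2$ is a M\"obius transformation and $1\le\deg_\zeta(\phi)\le\rdeg(\phi)=1$ for every $\zeta$ by \autoref{thm:skew:chainrule}, and we are done. So assume $d\ge 2$, the hypothesis under which \autoref{thm:skew:ramconvcrit} applies, yielding the inclusion $\Ram(\phi)\subseteq\Hull(\Crit(\phi_2))$ with $\Crit(\phi_2)\subseteq\P^1(K)$ consisting only of Type I points. Now let $\zeta\in\P^1_\an$ be a Type IV point and suppose, for contradiction, that $\deg_\zeta(\phi)\ge 2$, i.e.\ $\zeta\in\Ram(\phi)\subseteq\Hull(\Crit(\phi_2))$. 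By definition of the convex hull there are $\zeta_0,\zeta_1\in\Crit(\phi_2)$ with $\zeta\in[\zeta_0,\zeta_1]$. If $\zeta=\zeta_0$ or $\zeta=\zeta_1$, then $\zeta\in\P^1(K)$ is of Type I, contradicting that $\zeta$ is Type IV. Otherwise $\zeta$ lies in the open interval $(\zeta_0,\zeta_1)$, and by \autoref{thm:berk:interval} every point of such an interval is of Type II or III, again a contradiction. Hence no Type IV point lies in $\Ram(\phi)$, and since $\deg_\zeta(\phi)\ge 1$ for all $\zeta$ by \autoref{thm:skew:chainrule}, this is precisely the assertion $\deg_\zeta(\phi)=1$ for every Type IV point $\zeta$.

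I do not anticipate a genuine obstacle: the substantive content is entirely imported from \autoref{thm:skew:ramconvcrit}, and the only additional ingredient is \autoref{thm:berk:interval} — that an interior point of an interval in $\P^1_\an$ is never of Type I or IV. In particular the argument does not even require the finiteness of $\Crit(\phi_2)$ or the endpoint description furnished by \autoref{thm:skew:ramconvcrit}; the bare inclusion $\Ram(\phi)\subseteq\Hull(\Crit(\phi_2))$ suffices. The one point worth stating explicitly is simply that Type IV points, having a unique direction, lie in $\P^1_\an\setminus\P^1(K)$ and cannot be interior points of any path.
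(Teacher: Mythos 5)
Your proposal is correct and is essentially the paper's own (implicit) argument: \autoref{cor:skew:TypeIVdegone} is stated as an immediate consequence of \autoref{thm:skew:ramconvcrit}, precisely because $\Hull(\Crit(\phi_2))$ is a union of intervals between Type I points, whose interior points are Type II or III by \autoref{thm:berk:interval}, so no Type IV point can lie in $\Ram(\phi)$. Your explicit handling of the degenerate case $\rdeg(\phi)\le 1$ is a harmless addition and does not change the approach.
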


\subsection{Geometric and Topological Ideas}\label{sec:skew:top}

The next theorem is surely well known. It says that if (a continuous open map of a tree) $\phi_*$ is not injective on some interval (or subtree) $T$ then is there is a point $\zeta$ in (the interior of) $T$ for which $\phi_*(\zeta)$ is locally an extreme value. This also represents Rolle's Theorem for skew products.

\begin{thm}[Extreme Value Theorem]\label{thm:skew:extvalue}
 Let $\phi_*$ be a non-constant skew product.
\begin{enumerate}[label=(\alph*), ref=\theenumi]
 \item Suppose that $\zeta_1, \zeta_2 \in \P^1_\an$ and $\phi_*([\zeta_1, \zeta_2]) \ne [\phi_*(\zeta_1), \phi_*(\zeta_2)]$ then there is a point $\zeta \in (\zeta_1, \zeta_2)$ such that the \emph{extreme value} $\xi = \phi_*(\zeta)$ is an endpoint of $\phi_*([\zeta_1, \zeta_2])$ and $\phi_\#(\vec v(\zeta_1)) = \phi_\#(\vec v(\zeta_2))$. In particular, $\zeta \in \Ram(\phi)$.
 \item Let $T \subset \P^1_\an$ be connected. If $\phi_*$ is not a homeomorphism on $T$ then there is a point $\zeta$ with two directions $\bvec u, \bvec v$ intersecting $T$ (edges) such that $\phi_\#(\bvec u) = \phi_\#(\bvec v)$. In particular, $\zeta \in \Ram(\phi)$.
\end{enumerate}
 \end{thm}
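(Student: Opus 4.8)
The plan is to prove (a) first and then deduce (b) from it. For (a), set $T' = \phi_*([\zeta_1,\zeta_2])$. By \autoref{thm:skew:opencts} $\phi_*$ is continuous, so $T'$ is a compact connected subset of the uniquely arcwise connected space $\P^1_\an$; being a Hausdorff continuous image of a closed interval, $T'$ is a dendrite (a uniquely arcwise connected Peano continuum), and in particular it contains the arc $[\phi_*(\zeta_1),\phi_*(\zeta_2)]$. The hypothesis $\phi_*([\zeta_1,\zeta_2]) \ne [\phi_*(\zeta_1),\phi_*(\zeta_2)]$ makes this containment strict, so $T'$ has an endpoint $w$ that does not lie on $[\phi_*(\zeta_1),\phi_*(\zeta_2)]$: choose $y_0 \in T'\setminus[\phi_*(\zeta_1),\phi_*(\zeta_2)]$, pass to the sub-dendrite of $T'$ hanging off the gate of $y_0$ on $[\phi_*(\zeta_1),\phi_*(\zeta_2)]$, and take $w$ to be any endpoint of that sub-dendrite other than the gate. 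Since $w \ne \phi_*(\zeta_1),\phi_*(\zeta_2)$, there is $\zeta \in (\zeta_1,\zeta_2)$ with $\phi_*(\zeta) = w$, and $\zeta$ is of Type II or III by \autoref{thm:berk:interval}. This $\xi := w = \phi_*(\zeta)$ is the asserted extreme value, being an endpoint of $\phi_*([\zeta_1,\zeta_2])$.

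It remains to show $\phi_\#(\vec v(\zeta_1)) = \phi_\#(\vec v(\zeta_2))$ as directions at $\zeta$, where $\vec v(\zeta_i)$ denotes the direction at $\zeta$ containing $\zeta_i$. Let $\bvec w$ be the unique direction of $T'$ at the endpoint $w$, so $T'\setminus\{w\}$ lies in a single component of $\P^1_\an\setminus\{w\}$. For $i = 1, 2$ choose a thin annulus $U_i$ at $\zeta$ pointing into the direction $\vec v(\zeta_i)$, as in \autoref{thm:skew:imgtypeII} and the definition of $\phi_\#$. The arc $[\zeta,\zeta_i]$ leaves $\zeta$ in the direction $\vec v(\zeta_i)$, and its initial segment is a nested family of Type II/III points filling the outer shell of $U_i$; hence $[\zeta,\zeta_i]\cap U_i$ contains a point $\xi_i \ne \zeta$, with $\xi_i \in [\zeta,\zeta_i] \subseteq [\zeta_1,\zeta_2]$. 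Now $\phi_*(\xi_i) \in \phi_*(U_i)$, which by \autoref{thm:skew:imgtypeII} is an open annulus having $w = \phi_*(\zeta)$ on its boundary; so $\phi_*(\xi_i) \ne w$, and by the definition of $\phi_\#$ the point $\phi_*(\xi_i)$ lies in the direction $\phi_\#(\vec v(\zeta_i))$ at $w$. But $\phi_*(\xi_i) \in T'\setminus\{w\}$ as well, so it also lies in the direction $\bvec w$. A point $\ne w$ lies in exactly one direction at $w$, whence $\phi_\#(\vec v(\zeta_i)) = \bvec w$ for $i = 1, 2$; incidentally this forces $\zeta$ to be Type II, since $\phi_\#$ is injective on the two directions of a Type III point. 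Finally $\vec v(\zeta_1) \ne \vec v(\zeta_2)$ because $\zeta$ separates $\zeta_1$ from $\zeta_2$, so \autoref{thm:sumdirdegs} applied to $\bvec w$ gives $\deg_\zeta(\phi) \ge \deg_{\zeta,\vec v(\zeta_1)}(\phi) + \deg_{\zeta,\vec v(\zeta_2)}(\phi) \ge 2$; hence $\zeta \in \Ram(\phi)$.

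For (b), note that a continuous injection of a connected subset of the tree $\P^1_\an$ into $\P^1_\an$ is a homeomorphism onto its image (it carries arcs homeomorphically onto arcs, which determines the subspace topology), so if $\phi_*$ is not a homeomorphism on $T$ then it is not injective there; say $\phi_*(\xi_1) = \phi_*(\xi_2)$ with $\xi_1 \ne \xi_2$ in $T$. As $T$ is connected it contains the arc $[\xi_1,\xi_2]$, and \autoref{thm:intervalstretch} shows $\phi_*$ is non-constant on this nondegenerate arc, so $\phi_*([\xi_1,\xi_2])$ strictly contains the one-point set $[\phi_*(\xi_1),\phi_*(\xi_2)]$ and part (a) applies to the pair $\xi_1,\xi_2$. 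It yields $\zeta \in (\xi_1,\xi_2) \subseteq T$ with $\zeta \in \Ram(\phi)$ and $\phi_\#(\vec v(\xi_1)) = \phi_\#(\vec v(\xi_2))$; the distinct directions $\bvec u = \vec v(\xi_1)$ and $\bvec v = \vec v(\xi_2)$ at $\zeta$ each contain a half-open sub-arc $(\zeta,\xi_i]$ of $T$, so they intersect $T$ (in edges) as required.

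The step I expect to be the main obstacle is the middle of the argument for (a): reconciling the topological picture (the image of the germ of $[\zeta,\zeta_i]$ lies in the dendrite $T'$, of which $w$ is a leaf) with the analytic one (the direction $\phi_\#(\vec v(\zeta_i))$ is read off the image of a thin annulus via \autoref{thm:skew:imgtypeII}). One must verify that a single thin annulus $U_i$ can be taken thin enough for \autoref{thm:skew:imgtypeII} to apply and yet still meet the arc $[\zeta,\zeta_i]$; this works because the germ of $[\zeta,\zeta_i]$ at $\zeta$ is literally the family $\{\zeta(c,t)\}_t$ (or $\{\zeta(a,t)\}_t$ for the direction containing $\infty$) of Type II/III points, which occupy the outer shell of every thin annulus in that direction. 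The remaining ingredients --- the dendrite fact producing the off-arc endpoint $w$, non-constancy of $\phi_*$ on nondegenerate intervals, and the embedding remark in (b) --- are routine.
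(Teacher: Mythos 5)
Your proof is correct, but it locates the extreme point by a different mechanism than the paper. The paper's proof of (a) first shrinks $[\zeta_1,\zeta_2]$ slightly so that its endpoints lie in $\bH$, then applies the real extreme value theorem to the continuous function $\zeta \mapsto d_\bH(\phi_*(\zeta), I)$ on the compact interval, where $I = [\phi_*(\zeta_1),\phi_*(\zeta_2)]$: the maximizer is the extreme point, and maximality of the distance is what forces all of the image except $\xi$ into a single direction at $\xi$. You instead argue purely topologically: the image is a subtree strictly containing $I$, hence has a leaf $w$ off $I$, and the leaf property supplies the single direction $\bvec w$; you then verify $\phi_\#(\vec v(\zeta_i)) = \bvec w$ through the thin-annulus definition of $\phi_\#$ (a step the paper passes over quickly), and both arguments finish identically with \autoref{thm:sumdirdegs}, and both obtain (b) from (a) via non-injectivity plus the fact that a non-constant skew product is nowhere locally constant. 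What your route buys is that it avoids the hyperbolic metric and the reduction of the endpoints to $\bH$ altogether, working directly with endpoints of any type; its only cost is that the words ``dendrite'' and ``Peano continuum'' presuppose metrizability, which the weak topology on $\P^1_\an$ may lack --- but the two facts you actually use (the image is a locally connected Hausdorff continuum, being a closed quotient of $[0,1]$, and every nondegenerate Hausdorff continuum has at least two non-cut points) hold without metrizability, so this is a matter of wording rather than a gap. Your explicit reduction of ``not a homeomorphism on $T$'' to ``not injective on $T$'' in (b) also fills in a step the paper leaves implicit.
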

 
\begin{rmk}
 By the unique path connectedness of $\P^1_\an$, it is always true that \[\phi_*([\zeta_1, \zeta_2]) \supseteq [\phi_*(\zeta_1), \phi_*(\zeta_2)].\]
\end{rmk}

\begin{proof}
 First consider the second part. Suppose $\phi_*(\zeta_1) = \phi_*(\zeta_2)$ for some $\zeta_1, \zeta_2 \in T$. Then since a non-constant skew product is nowhere locally constant, we have that \[\phi_*([\zeta_1, \zeta_2]) \ne [\phi_*(\zeta_1), \phi_*(\zeta_2)].\] The rest follows from proving the first part of the theorem.
 
We will find an extreme point $\zeta \in (\zeta_1, \zeta_2)$. First we want to reduce to an interval in $\bH$. By continuity and simple exhaustion of $[\zeta_1, \zeta_2]$ by slightly smaller intervals, we may assume $\zeta_1, \zeta_2 \in \bH$ whilst keeping $\phi_*([\zeta_1, \zeta_2]) \sm [\phi_*(\zeta_1), \phi_*(\zeta_2)]$ non-empty.

Both $\phi_*([\zeta_1, \zeta_2])$ and $[\phi_*(\zeta_1), \phi_*(\zeta_2)] = I$ are compact and contained in $\bH$. 
Then we can define a continuous function mapping $\zeta \in [\zeta_1, \zeta_2]$ to $d_\bH(\phi_*(\zeta), I)$. Note that this function would be $0$ at $\zeta_1, \zeta_2$. The extreme value theorem for real valued functions on a compact topological space provides an `extreme point' $\zeta \in (\zeta_1, \zeta_2)$ for this function; whence $\phi_*(\zeta) = \xi \in \phi_*([\zeta_1, \zeta_2])$ attains the maximal distance from $I$. By our choice of extreme value $\xi$, only one direction $\bvec w$ at $\xi$ contains $\phi_*([\zeta_1, \zeta_2]) \supset I$, else we could find another point in $\phi_*([\zeta_1, \zeta_2])$ with a greater hyperbolic distance from $I$. Therefore both $\bvec u = \vec v(\zeta_1)$ and $\bvec v = \vec v(\zeta_2)$ are mapped by $\phi_\#$ to $\bvec w$. This implies that $\deg_\zeta(\phi) \ge 2$ by \autoref{thm:sumdirdegs}.
\end{proof}
 
\improvement{Move this, rename and integrate with other results}
\begin{cor}\label{cor:skew:injinterval}
 Let $\phi_*$ be a skew product of scale factor $\q$, and consider two points $\zeta_1, \zeta_2 \in \P^1_\an$. If $(\zeta_1, \zeta_2) \subset \Inj(\phi)$ then
 \[\phi_* : [\zeta_1, \zeta_2] \to  [\phi_*(\zeta_1), \phi_*(\zeta_2)]\] is a homeomorphism scaling all distances by a factor of $\q$, i.e.
 \[d_\bH(\phi_*(\xi_1) , \phi_*(\xi_2)) = \q\cdot d_\bH(\xi_1 , \xi_2)\quad \forall \xi_1, \xi_2 \in [\zeta_1, \zeta_2]\cap\bH.\]
\end{cor}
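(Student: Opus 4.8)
The plan is to first show that $\phi_*$ restricts to a homeomorphism $[\zeta_1,\zeta_2]\to[\phi_*(\zeta_1),\phi_*(\zeta_2)]$, and then to promote this to a scaling‑by‑$\q$ map by covering the interval with short segments on which \autoref{thm:intervalstretch} applies with local degree $1$. For the homeomorphism part: by \autoref{thm:berk:interval} every point of the open interval $(\zeta_1,\zeta_2)$ is of Type II or III, so $(\zeta_1,\zeta_2)\subset\bH$, and by hypothesis $(\zeta_1,\zeta_2)\subset\Inj(\phi)$. Apply \autoref{thm:skew:extvalue}(b) with $T=[\zeta_1,\zeta_2]$: were $\phi_*$ not a homeomorphism on $T$, there would be a point $\zeta\in T$ admitting two distinct directions $\bvec u,\bvec v$ meeting $T$ with $\phi_\#(\bvec u)=\phi_\#(\bvec v)$, forcing $\zeta\in\Ram(\phi)$. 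But a point of $[\zeta_1,\zeta_2]$ has two directions meeting $T$ only if it lies in $(\zeta_1,\zeta_2)$, where every point lies in $\Inj(\phi)$ — a contradiction. Hence $\phi_*|_{[\zeta_1,\zeta_2]}$ is a homeomorphism onto an arc between $\phi_*(\zeta_1)$ and $\phi_*(\zeta_2)$, which by unique path‑connectedness of $\P^1_\an$ equals $[\phi_*(\zeta_1),\phi_*(\zeta_2)]$ (alternatively, this is \autoref{thm:skew:extvalue}(a)).

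Next I establish local scaling at interior points. Fix $\zeta\in(\zeta_1,\zeta_2)$; then $\deg_\zeta(\phi)=1$, so by \autoref{thm:sumdirdegs} together with the bound $\deg_{\zeta,\bvec v}(\phi)\ge 1$ of \autoref{prop:skew:chaindir} we get $\deg_{\zeta,\bvec v}(\phi)=1$ for every direction $\bvec v\in\Dir\zeta$. Apply \autoref{thm:intervalstretch} to the two directions $\vec v(\zeta_1)$ and $\vec v(\zeta_2)$ at $\zeta$ (with $m=1$, so the scaling constant is $1\cdot\q=\q$), intersecting the resulting segments with $[\zeta_1,\zeta_2]$ should they overshoot an endpoint; this produces $\zeta_-\in[\zeta_1,\zeta)$ and $\zeta_+\in(\zeta,\zeta_2]$ such that $\phi_*$ scales $d_\bH$ by exactly $\q$ on each of $[\zeta_-,\zeta]$ and $[\zeta,\zeta_+]$. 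Using additivity of $d_\bH$ along the interval and along its $\phi_*$-image, these combine to give scaling by $\q$ on $[\zeta_-,\zeta_+]$, a relatively open neighbourhood of $\zeta$ in $[\zeta_1,\zeta_2]$; call it $N_\zeta$.

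Finally I globalise. Given $\xi_1,\xi_2\in(\zeta_1,\zeta_2)$, the compact sub‑interval between them is covered by finitely many $N_\zeta$, so we may pick a finite chain $\xi_1=\eta_0,\eta_1,\dots,\eta_k=\xi_2$ of consecutive points along $[\xi_1,\xi_2]$ with each $[\eta_{i-1},\eta_i]$ inside some $N_\zeta$. Since $\phi_*$ is a homeomorphism of $[\xi_1,\xi_2]$ onto $[\phi_*(\xi_1),\phi_*(\xi_2)]$ by the first step, and $d_\bH$ is additive along intervals (each $[\eta_{i-1},\eta_i]$ is a geodesic segment; use the join decomposition of \autoref{thm:berk:interval} and the remark following the definition of $d_\bH$), summing $d_\bH(\phi_*(\eta_{i-1}),\phi_*(\eta_i))=\q\,d_\bH(\eta_{i-1},\eta_i)$ over $i$ gives $d_\bH(\phi_*(\xi_1),\phi_*(\xi_2))=\q\,d_\bH(\xi_1,\xi_2)$. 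For an endpoint $\xi\in\{\zeta_1,\zeta_2\}\cap\bH$ and any $\xi_2\in[\zeta_1,\zeta_2]\cap\bH$, approximate $\xi$ by interior points $\eta\to\xi$ along the interval: the segment $[\xi,\xi_2]$ and its homeomorphic image lie entirely in $\bH$, so hyperbolic distance to the respective endpoint varies continuously along them, and passing to the limit in $d_\bH(\phi_*(\eta),\phi_*(\xi_2))=\q\,d_\bH(\eta,\xi_2)$ (note $\phi_*(\xi)\in\bH$ since $\phi_*$ preserves types by \autoref{thm:skew:opencts}) yields the identity with $\xi_1=\xi$. Repeating at the other endpoint covers all $\xi_1,\xi_2\in[\zeta_1,\zeta_2]\cap\bH$.

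The main obstacle is this last gluing step rather than the homeomorphism claim: one must know that $d_\bH$ is genuinely additive along the interval so that the local scaling constants patch, and one must handle endpoints that may themselves lie in $\Ram(\phi)$ — which is exactly why the argument proves the scaling identity on the open interval first and then takes limits, instead of attempting to apply \autoref{thm:intervalstretch} at the endpoints directly.
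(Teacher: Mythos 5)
Your proposal is correct and follows essentially the same route as the paper's proof: injectivity on the interval via (the contrapositive of) the Extreme Value Theorem \autoref{thm:skew:extvalue}, local scaling by $\q$ from \autoref{thm:intervalstretch} with $m=1$ (forced by $\deg_\zeta(\phi)=1$ together with \autoref{thm:sumdirdegs}), and then gluing by additivity of $d_\bH$ along the interval plus compactness. The only cosmetic divergence is at the endpoints, where the paper applies \autoref{thm:intervalstretch} directly at $\zeta=\zeta_j$ (the directional degree along the interval there is still $1$, since otherwise nearby interval points would be ramified), whereas you prove the identity on the open interval and recover the endpoints by a continuity/limit argument; both are fine.
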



\begin{cor}\label{cor:skew:fixedinterval}
 Let $\phi_*$ be a skew product with two fixed points $\zeta, \xi \in \P^1_\an$. If $(\zeta, \xi) \subset \Inj(\phi)$ then it is an interval of fixed points and necessarily $\phi_*$ is simple ($\q = 1$).
\end{cor}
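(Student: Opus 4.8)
The plan is to apply \autoref{cor:skew:injinterval} to the arc $I=[\zeta,\xi]$. Since $\zeta$ and $\xi$ are fixed points, $\phi_*(I)=[\phi_*(\zeta),\phi_*(\xi)]=I$, so that corollary tells us that $\phi_*$ restricts to a homeomorphism of $I$ onto itself, fixing both endpoints, which scales $d_\bH$ by the constant factor $\q$ on all pairs of points of $I\cap\bH$. A self-homeomorphism of an arc that fixes both endpoints is orientation preserving, so $\phi_*$ does not interchange the two ends of $I$; parametrising the open arc $(\zeta,\xi)$ by hyperbolic arc length then identifies $\phi_*|_{(\zeta,\xi)}$ with an increasing self-bijection of an open real interval $(a,b)$ that multiplies all distances by $\q$, that is, with the affine map $x\mapsto\q x+c$.

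The decisive case, and the one I would present, is when $\zeta,\xi\in\bH$. Then $(a,b)$ is bounded: writing $\ell=d_\bH(\zeta,\xi)\in(0,\infty)$, applying the scaling identity of \autoref{cor:skew:injinterval} directly to the endpoints gives $\ell=d_\bH(\phi_*(\zeta),\phi_*(\xi))=\q\,\ell$, so $\q=1$ and $\phi_*$ is simple. (Equivalently, an affine map $x\mapsto\q x+c$ can carry a bounded open interval onto itself only when $\q=1$ and $c=0$.) Once $\q=1$, $\phi_*|_I$ is an isometry of the compact arc $I$ --- which hyperbolic arc length identifies with a closed real interval --- onto itself fixing both endpoints, hence is the identity; so every point of $(\zeta,\xi)$ is fixed, completing both assertions.

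The point that needs the most care --- and the one I expect to be the main obstacle --- is when an endpoint is of Type I, for then $a=-\infty$ or $b=+\infty$ and an affine self-bijection $x\mapsto\q x+c$ of $(a,b)$ with $\q\neq1$ is no longer excluded by the interval argument alone. In that regime I would use iteration: $\phi_*^k$ is again a skew product, with scale factor $\q^k$ and $(\zeta,\xi)\subseteq\Inj(\phi^k)$ by the chain rule of \autoref{thm:skew:chainrule}, so the same analysis yields the family of normal forms $x\mapsto\q^k x+c_k$ on the fixed interval $(a,b)$; combining this rigidity with the location of the unique fixed point $c/(1-\q)$ of $x\mapsto\q x+c$ relative to $(a,b)$ --- any interior fixed point being a Type II/III fixed point of $\phi_*$ inside $(\zeta,\xi)$, which would subdivide $I$ into shorter $\Inj$-arcs --- should force $\q=1$, after which the argument concludes as above. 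Throughout, the key structural inputs are the homeomorphism-and-scaling statement of \autoref{cor:skew:injinterval} and the fact (from \autoref{thm:berk:interval}) that $(\zeta,\xi)$ is an arc of Type II/III points.
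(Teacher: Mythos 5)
Your first two paragraphs are exactly the intended argument: the corollary is meant to be an immediate consequence of \autoref{cor:skew:injinterval}. When both fixed points lie in $\bH$, the arc $[\zeta,\xi]$ has finite, nonzero hyperbolic length $\ell$, the scaling identity applied to the endpoints gives $\ell=\q\ell$, hence $\q=1$, and then $\phi_*$ restricts to an isometry of a compact arc fixing both endpoints, which must be the identity, so every point of $(\zeta,\xi)$ is fixed. That part of your write-up is correct and complete (Type IV endpoints cause no trouble either, since they lie in $\bH$ at finite distance from the rest of the arc).

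The problem is the case you single out as the main obstacle: when an endpoint is of Type I there is no argument to be found, because the statement is simply false there, so the iteration/affine-normal-form strategy of your last paragraph cannot close the gap. Take the skew product of \autoref{ex:dyn:superrepellingfatou}, $\phi(x,y)=(x^2,y)$, so $\phi_2=\id$, $\q=\tfrac12$, and $\Inj(\phi)=\P^1_\an$ since every local degree is $1$. By \autoref{thm:skew:scaledhomeo} one has $\phi_*(\zeta(0,r))=\zeta(0,r^{1/2})$, while $0$, $\infty$, every constant $c\in\C$, and the Gauss point $\zeta(0,1)$ are all fixed. Thus $\zeta=0$ and $\xi=\zeta(0,1)$ (or $\xi=\infty$) are fixed points with $(\zeta,\xi)\subset\Inj(\phi)$, yet the open interval is not pointwise fixed and $\q\ne1$. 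Note this is perfectly consistent with your affine model: in the arclength coordinate the map is $x\mapsto\q x$ on a half-infinite interval with its unique fixed point at the finite endpoint, so the rigidity you hope to extract from the location of $c/(1-\q)$ and from iterates never materialises. The corollary therefore has to be read with $\zeta,\xi\in\bH$ (equivalently, the finite-length scaling argument is the entire content), and your third paragraph is chasing a false statement rather than a missing proof.
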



\begin{proof}[Proof of \autoref{cor:skew:injinterval}]
By the contrapositive of the extreme value theorem \autoref{thm:skew:extvalue}, if $(\zeta_1, \zeta_2) \subset \Inj(\phi)$ then $\phi_*$ is injective there. By definition of the hyperbolic metric, it is locally Euclidean and additive, meaning that if $\xi_2 \in (\xi_1, \xi_3)$ then $d_\bH(\xi_1, \xi_3) = d_\bH(\xi_1, \xi_2) + d_\bH(\xi_2, \xi_3)$. Therefore it is enough to show this on each piece of a (finite) cover of $[\zeta_1, \zeta_2]$. Given any $\zeta \in [\zeta_1, \zeta_2]$, by \autoref{thm:intervalstretch} we can find a $\zeta_j' \in \vec v(\zeta_j)$ such that $\phi_*$ maps $[\zeta , \zeta_j']$ onto $[\phi_*(\zeta) , \phi_*(\zeta'_j)]$, scaling distances by a factor of $\q$. Since $[\zeta , \zeta_j']$ and $[\zeta , \zeta_j]$ are intervals in the same direction, they must agree on an initial segment, so we may shorten this interval to assume $\zeta_j' \in (\zeta , \zeta_j]$. So given $\zeta \in (\zeta_1, \zeta_2)$, the interval $(\zeta_1' , \zeta_2') \subseteq (\zeta_1 , \zeta_2)$ contains $\zeta$ and $\phi_*$ scales distances by $\q$ here; the case $\zeta = \zeta_j$ is similar. We have covered $[\zeta_1, \zeta_2]$ with intervals obeying the dersired conclusion and by compactness we can pick finitely many.
\end{proof}

The following theorem upgrades the conclusions of \autoref{thm:intervalstretch}, giving more information with stronger hypotheses. However the preceding simple proposition allows us use the same (weaker) set of hypotheses as the original version, given only one point and a direction.

\begin{prop}\label{prop:skew:preintervalstretch}
\unfinished{unfinished}
 Let $\phi_*$ be a skew product of scale factor $\q$, let $\zeta_1 \in \P^1_\an$, and $\bvec v \in \Dir\zeta$ be a direction. Then one can always find such a $\zeta_2 \in \bvec v$ satisfying the hypotheses of \autoref{thm:skew:bigintervalstretch} where either $\zeta_2$ is Type I or $d_\bH(\zeta_1, \zeta_2)$ is maximal, $\zeta_2 = \zeta(a, r)$ and there is a pole in $\CD(a, r) \sm D(a, r)$.
\end{prop}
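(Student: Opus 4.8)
The plan is to realise $\zeta_2$ as the far endpoint of the \emph{largest} interval issuing from $\zeta_1$ into $\bvec v$ along which the hypotheses of \autoref{thm:skew:bigintervalstretch} stay in force, and then to check that such a maximal endpoint must take one of the two advertised shapes. The first move is to reduce to a rational map: by \autoref{thm:skew:comp} write $\phi_*=\phi_{1*}\circ\phi_{2*}$, and recall from \autoref{thm:skew:scaledhomeo} that $\phi_{1*}$ is a homeomorphism of $\P^1_\an$ preserving the type of every point and scaling $d_\bH$ by the constant $\q$, which moreover leaves the rational function $\phi_2$ and its poles untouched. Since the hypotheses of \autoref{thm:skew:bigintervalstretch} and the dichotomy on $\zeta_2$ in the present statement are both phrased through $\phi_2$, it suffices to produce $\zeta_2$ for the rational map $\phi_{2*}$; write $f:=\phi_2\in K(y)$, whose poles form a finite subset of $\P^1(K)$. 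I would also first dispose of the degenerate starting positions: if $\zeta_1$ is of Type I or IV, so that $\bvec v$ is its unique direction, then \autoref{thm:intervalstretch} already supplies a Type II point $\zeta_1'\in\bvec v$ with $\phi_*$ well behaved on $[\zeta_1,\zeta_1']$, and I run the argument from $\zeta_1'$, reattaching the (automatically good) segment $[\zeta_1,\zeta_1']$ at the end. So from now on $\zeta_1=\zeta(a,r_1)$ is of Type II or III, and after a coordinate change in $\P^1_\an$ I take $\bvec v$ to be the direction $\vec v(a)$ at $\zeta_1$ containing the Type I point $a$, so that the closed interval $[\zeta_1,a]$ is exactly the ray $\{\zeta(a,s):0\le s\le r_1\}$.

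Now the construction. If $f$ has no pole in the open Berkovich disk $\bvec v=D_\an(a,r_1)$, I take $\zeta_2:=a$: no pole of $f$ branches off $(\zeta_1,a)$, so the hypotheses of \autoref{thm:skew:bigintervalstretch} hold along $[\zeta_1,a]$ and the ``$\zeta_2$ of Type I'' alternative is met. Otherwise I put $r:=\max\{\,|c-a| : c\in\bvec v\text{ a pole of }f\,\}$, a maximum over a nonempty finite set with $0\le r<r_1$, and set $\zeta_2:=\zeta(a,r)$. Every pole $c\in\bvec v$ branches off the ray $[\zeta_1,a]$ at the point $\zeta(a,|c-a|)$, with $|c-a|\le r$, so it branches off at $\zeta_2$ or below it and no pole of $f$ branches off the open interval $(\zeta_1,\zeta_2)$; hence the hypotheses of \autoref{thm:skew:bigintervalstretch} hold along $[\zeta_1,\zeta_2]$. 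On the other hand, for any $s<r$ the interval $[\zeta_1,\zeta(a,s)]$ contains $\zeta(a,r)$, the branch point of a pole of $f$, in its interior, so it violates those hypotheses. Thus $\zeta_2=\zeta(a,r)$ is the point at maximal hyperbolic distance from $\zeta_1$ along the ray for which the hypotheses survive, and by the choice of $r$ a pole of $f$ lies at distance exactly $r$ from $a$, that is, in $\CD(a,r)\sm D(a,r)$ --- the second alternative. (When $r=0$, i.e.\ $a$ is the unique pole of $f$ in $\bvec v$, one has $\zeta_2=a$ again of Type I, consistently.) In every case $\zeta_2\neq\zeta_1$, so the interval produced is genuinely nondegenerate, which is what lets us feed \autoref{thm:skew:bigintervalstretch} only the data $(\zeta_1,\bvec v)$ of the weaker \autoref{thm:intervalstretch}.

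The delicate point --- and what I expect to be the main obstacle --- is the assertion that the hypotheses of \autoref{thm:skew:bigintervalstretch} do persist along $[\zeta_1,\zeta_2]$ and do break under any strict extension past $\zeta_2$, with a \emph{pole} of $f$ (rather than a zero, or an arbitrary point of $\Ram(\phi)$) being the operative obstruction. This is where the local image machinery enters: for the thin annulus $U_\lambda$ about a point $\zeta(a,s)$ as in \autoref{thm:skew:imgtypeII}, the additivity of Weierstrass degrees across an annulus (\autoref{prop:seminorms:rationaldegrees}) gives that the outer minus the inner Weierstrass degree of $f$ on $U_\lambda$ equals the count of zeros minus poles of $f$ inside it. So absorbing only zeros keeps the inner degree at most the outer one, and \autoref{thm:seminorms:mapannulus} together with \autoref{thm:skew:imgtypeII} still describe $\phi_*$ there, whereas absorbing a pole makes the inner degree strictly exceed the outer and that description collapses --- precisely the degeneration that the hypotheses of \autoref{thm:skew:bigintervalstretch} are there to preclude. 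Carrying this asymmetry through carefully, and checking across the finitely many radii in $(r,r_1)$ at which the Weierstrass degree of $f$ jumps (because zeros, but no poles, branch off there) that the hypotheses nevertheless hold on all of $[\zeta_1,\zeta_2]$ at once, is the substance of the proof; the bookkeeping above is routine by comparison.
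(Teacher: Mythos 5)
Your construction tracks the wrong obstruction set, and this breaks the very first branch of the argument. The hypotheses of \autoref{thm:skew:bigintervalstretch} on the annulus $U$ between $\zeta_1$ and $\zeta_2$ are that $\phi_2(y)-b_0$ has no zeros \emph{or} poles in $U$ and that the inner and outer Weierstrass degrees both equal $m$; zeros of $\phi_2-b_0$ (preimages of $b_0$) are exactly as obstructive as poles, and your proposal never excludes them. Concretely, take $\q=1$, $K=\K$ (residue characteristic $0$), $\phi_2(y)=(y-1)(y-1-x)$, $\zeta_1=\zeta(0,1)$, $\bvec v=\vec v(1)$. Here $\phi_2$ has no poles in $D(1,1)$, so your first branch outputs $\zeta_2=1$; but $b_0=\phi_2(1)=0$, the zero $1+x$ of $\phi_2-b_0$ lies in the punctured disk $\set{0<\abs{y-1}<1}$, and indeed $\deg_{\zeta_1,\vec v(1)}(\phi)=2$ while $\deg_1(\phi_2)=1$: the inner Weierstrass degree drops to $1$ below $\zeta(1,\abs x)$ and the punctured disk maps onto a disk rather than an annulus, so both forms of the hypotheses fail on $[\zeta_1,1]$, and the genuine maximal endpoint is $\zeta(1,\abs x)$, whose boundary circle carries a \emph{zero} and no pole. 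Your closing paragraph, which argues that absorbing zeros is harmless and only poles are fatal, is backwards: by \autoref{prop:seminorms:rationaldegrees} absorbing a zero pushes the inner Weierstrass degree strictly below the outer one, and by \autoref{thm:seminorms:mapannulus} (the case $M<N$) the image of the annulus then degenerates to a disk --- precisely the failure of properness that the hypotheses are there to forbid.

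The repair is to take as obstruction set all zeros and poles of $\phi_2-b_0$ lying in $\bvec v$ (a finite set): the farthest admissible $\zeta_2$ is their join, which is Type I when the set is empty or a single classical point, and otherwise a Type II point $\zeta(a,r)$ with at least one zero or pole of $\phi_2-b_0$ in $\CD(a,r)\sm D(a,r)$; constancy of the Weierstrass degree along $[\zeta_1,\zeta_2]$ is then automatic from \autoref{prop:seminorms:rationaldegrees}. This also cures a secondary defect of your construction: descending along the ray towards an arbitrarily chosen centre of $\bvec v$ is not maximal --- for $\phi_2=1/\bigl((y-1)(y-1-x)\bigr)$ and centre $1+x^{1/2}$ your descent stops at $\zeta(1,\abs{x}^{1/2})$, although $\zeta(1,\abs x)$ is admissible --- so maximality forces centring at one of the obstruction points, i.e.\ taking the join. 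Note that the paper gives no proof of this proposition (it is flagged as unfinished, with only the remark that the argument parallels the construction of the interval in \autoref{thm:intervalstretch}, plus a caution about Type IV points in residue characteristic $p$ which you do not address); its wording ``a pole'' is best read as ``a zero or pole of $\phi_2(y)-b_0$'', and by taking it literally your write-up cannot deliver the hypotheses it is supposed to verify.
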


With residue characteristic $p > 0$, the fact that $\zeta_2$ (of maximal distance) is not Type IV is not obvious. In this case there are \emph{many} choices of Berkovich point with the same diameter or distance from $\zeta_1$. Apart from this maximality, the proof is essentially the same as finding the interval in \autoref{thm:intervalstretch}.

\begin{thm}\label{thm:skew:bigintervalstretch}
 Let $\phi_*$ be a skew product of scale factor $\q$, let $\zeta_1, \zeta_2 \in \P^1_\an$, and let $U$ be the connected component\footnote{an open annulus, punctured disk or twice-punctured $\P^1$} of $\P^1_\an \sm \set{\zeta_1, \zeta_2}$ bounded by both points. Suppose that $\deg_{\zeta_1, \vec v(\zeta_2)}(\phi) = \deg_{\zeta_2, \vec v(\zeta_1)}(\phi) = m$, and also that the image of $U$ also has two boundary points (the map is proper on $U$). Equivalently when $U = \set{r < \abs{\zeta - a} < R}$ for $0 \le r < R \le \infty$, and $\phi_2 = \sum_n b_n(y-a)^n$, then this hypothesis means $\phi_{2*}(y) - b_0$ has no zeros or poles in $U$ with inner and outer Weierstrass degree equal to $m$.
 
 Then for every $\zeta \in [\zeta_1, \zeta_2]$ we have 
 \[\deg_\zeta(\phi) = \deg_{\zeta, \vec v(\zeta_1)}(\phi) = \deg_{\zeta, \vec v(\zeta_2)}(\phi) = m\]
  Moreover, $U \cap \phi_*^{-1}[\phi_*(\zeta_1) , \phi_*(\zeta_2)] = [\zeta_1 , \zeta_2]$ and $\phi_* : [\zeta_1 , \zeta_2] \to [\phi_*(\zeta_1) , \phi_*(\zeta_2)]$, is a homeomorphism scaling distances by a factor of $m\q$, i.e.
 \[d_\bH(\phi_*(\xi_1) , \phi_*(\xi_2)) = m\cdot\q\cdot d_\bH(\xi_1 , \xi_2)\quad \forall \xi_1, \xi_2 \in [\zeta , \xi]\cap\bH\]
\end{thm}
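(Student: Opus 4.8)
The whole statement reduces to the case of a rational map via the decomposition $\phi_*=\phi_{1*}\circ\phi_{2*}$ of \autoref{thm:skew:comp}. Since $\deg_\zeta(\phi)=\deg_\zeta(\phi_2)$ and $\deg_{\zeta,\bvec v}(\phi)=\deg_{\zeta,\bvec v}(\phi_2)$ (by \autoref{prop:skew:scaledirections} and the proposition stating $\deg_\zeta(\phi)=\deg_\zeta(\phi_2)$), both the hypotheses and the degree half of the conclusion involve only $\phi_2$. Moreover $\phi_{1*}$ is an order-preserving homeomorphism scaling $d_\bH$ by $\q$ (\autoref{thm:skew:scaledhomeo}), so it carries $[\phi_{2*}(\zeta_1),\phi_{2*}(\zeta_2)]$ onto $[\phi_*(\zeta_1),\phi_*(\zeta_2)]$; hence $\phi_*^{-1}([\phi_*(\zeta_1),\phi_*(\zeta_2)])=\phi_{2*}^{-1}([\phi_{2*}(\zeta_1),\phi_{2*}(\zeta_2)])$, and a homeomorphism $[\zeta_1,\zeta_2]\to[\phi_{2*}(\zeta_1),\phi_{2*}(\zeta_2)]$ scaling $d_\bH$ by $m$ becomes, after postcomposing with $\phi_{1*}$, the asserted homeomorphism scaling by $m\q$. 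So I may assume $\phi_*=\phi_{2*}$ is rational and $\q=1$, and (conjugating by a suitable element of $\PGL(2,\hv)$, which changes nothing relevant) that $U=\{r<|y|<R\}$ with $0\le r<R\le\infty$, so that $[\zeta_1,\zeta_2]$ is the vertical segment $\{\zeta(0,\rho):r\le\rho\le R\}$ and $\phi_2=\sum_n b_ny^n$ on $U$.

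\textbf{Mapping the annulus and the skeleton.} By hypothesis $\phi_2-b_0$ has no zeros or poles in $U$, so \autoref{prop:seminorms:rationaldegrees}(iii) gives that its inner and outer Weierstrass degrees agree on every sub-annulus of $U$, say with common value $d\ne 0$; then $m=|d|=\deg_{\zeta,\vec v(\zeta_1)}(\phi_2)=\deg_{\zeta,\vec v(\zeta_2)}(\phi_2)\ge1$ for every $\zeta\in(\zeta_1,\zeta_2)$ (\autoref{prop:skew:chaindir}). By \autoref{thm:seminorms:mapannulus} (case $M=N=d$) $\phi_{2*}(U)$ is an annulus on which $\phi_{2*}$ is $m$-to-$1$, and by \autoref{thm:skew:imgtypeII} $\phi_{2*}(\zeta(0,\rho))=\zeta(b_0,|b_d|\rho^d)$ for each $\rho$. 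As $\rho\mapsto|b_d|\rho^d$ is a continuous strictly monotone bijection of $[r,R]$ onto the closed interval between $|b_d|r^d$ and $|b_d|R^d$, the map $\phi_{2*}\colon[\zeta_1,\zeta_2]\to[\phi_{2*}(\zeta_1),\phi_{2*}(\zeta_2)]$ is a homeomorphism (by compactness), and since $d_\bH(\zeta(0,\rho),\zeta(0,\rho'))=|\log\rho-\log\rho'|$ while $d_\bH(\zeta(b_0,|b_d|\rho^d),\zeta(b_0,|b_d|\rho'^d))=|d|\,|\log\rho-\log\rho'|$, it scales $d_\bH$ by $m$. In particular $\phi_\#(\vec v(\zeta_1))$ and $\phi_\#(\vec v(\zeta_2))$ are the two distinct directions of $[\phi_{2*}(\zeta_1),\phi_{2*}(\zeta_2)]$ at each of its interior points.

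\textbf{Local degrees along the segment and the preimage of the skeleton.} For $\zeta\in(\zeta_1,\zeta_2)$ we get $\deg_\zeta(\phi_2)=m$: on one side $\deg_\zeta(\phi_2)\ge\deg_{\zeta,\vec v(\zeta_1)}(\phi_2)=m$ by \autoref{thm:sumdirdegs}, on the other $\deg_\zeta(\phi_2)\le m$ because $\phi_{2*}$ is $m$-to-$1$ on $U$ and so no neighbourhood of $\zeta$ lying inside $U$ can put more than $m$ preimages (with multiplicity) over a classical point of $\phi_{2*}(U)$. It remains to see that the preimage of $[\phi_{2*}(\zeta_1),\phi_{2*}(\zeta_2)]$ restricted to $U\cup\{\zeta_1,\zeta_2\}$ equals $[\zeta_1,\zeta_2]$, which is the content of the displayed identity; one inclusion is clear, so let $\xi\in U\cup\{\zeta_1,\zeta_2\}$ with $\phi_{2*}(\xi)\in[\phi_{2*}(\zeta_1),\phi_{2*}(\zeta_2)]$ and suppose $\xi\notin[\zeta_1,\zeta_2]$. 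Then $\xi\in U$ and, since $U$ lies between $\zeta_1$ and $\zeta_2$, its retraction $\xi_0$ onto $[\zeta_1,\zeta_2]$ lies in $(\zeta_1,\zeta_2)$, and $\xi$ sits in some direction $\bvec v\ne\vec v(\zeta_1),\vec v(\zeta_2)$ at $\xi_0$, whose open disk $D_{\bvec v}$ is contained in $U$. Now $\phi_{2*}(D_{\bvec v})\ne\P^1_\an$: otherwise $\infty\in\phi_{2*}(D_{\bvec v})$, and since $\phi_{2*}$ preserves types $\phi_2$ would have a classical pole inside $D_{\bvec v}\subseteq U$. Hence by \autoref{thm:skew:affinoidmapping} (with openness of $\phi_{2*}$) $\phi_{2*}(D_{\bvec v})$ is an open disk whose unique boundary point is $\phi_{2*}(\xi_0)$, i.e.\ a single direction $\phi_\#(\bvec v)$ at $\phi_{2*}(\xi_0)$; so $\phi_{2*}(\xi)$ lies in direction $\phi_\#(\bvec v)$ at $\phi_{2*}(\xi_0)$, in particular $\phi_{2*}(\xi)\ne\phi_{2*}(\xi_0)$. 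But $\phi_{2*}(\xi)$ lies on $[\phi_{2*}(\zeta_1),\phi_{2*}(\zeta_2)]$, whose directions at $\phi_{2*}(\xi_0)$ are $\phi_\#(\vec v(\zeta_1))$ and $\phi_\#(\vec v(\zeta_2))$, so $\phi_\#(\bvec v)=\phi_\#(\vec v(\zeta_j))$ for some $j$; then \autoref{thm:sumdirdegs} forces $m=\deg_{\xi_0}(\phi_2)\ge\deg_{\xi_0,\bvec v}(\phi_2)+\deg_{\xi_0,\vec v(\zeta_j)}(\phi_2)\ge1+m$, a contradiction. Therefore $\xi\in[\zeta_1,\zeta_2]$, and combined with the second step this proves the theorem for $\phi_{2*}$; unwinding the first step yields the scaling factor $m\q$ for $\phi_*$.

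\textbf{Main obstacle.} The delicate step is the last one — excluding preimages of the target segment that branch off the skeleton $[\zeta_1,\zeta_2]$. The mechanism that makes it go through is that a side-direction at an interior point of the skeleton whose image were all of $\P^1_\an$ would force a pole of $\phi_2$ inside $U$, which the hypothesis forbids; together with the constancy $\deg_\zeta(\phi_2)\equiv m$ along the skeleton, this rules out every stray preimage.
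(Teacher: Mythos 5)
Your argument is correct, and its skeleton --- reduce to the rational map $\phi_{2*}$ via \autoref{thm:skew:comp}, since the local and directional degrees only see $\phi_2$ and since $\phi_{1*}$ is an order-preserving homeomorphism scaling $d_\bH$ by $\q$ --- is exactly the strategy the paper uses for this family of results (compare the proof of \autoref{thm:intervalstretch}). The paper in fact states \autoref{thm:skew:bigintervalstretch} without proof, the implicit intention being to quote the rational-map analogue from Benedetto, so the substance you add is the self-contained rational case, and it holds up: constancy of the Weierstrass degree of $\phi_2-b_0$ across $U$ via \autoref{prop:seminorms:rationaldegrees}; the homeomorphism on the skeleton and the stretch factor $m$ from \autoref{thm:seminorms:mapannulus} together with \autoref{thm:skew:imgtypeII}; the upper bound $\deg_\zeta(\phi_2)\le m$ obtained by using the Berkovich annulus $U$ itself as the test neighbourhood in the definition of local degree; and the exclusion of stray preimages by combining $\partial\bigl(\phi_{2*}(D_{\bvec v})\bigr)\subseteq\phi_{2*}(\partial D_{\bvec v})$ from \autoref{thm:skew:affinoidmapping} (a pole-free disk cannot map onto $\P^1_\an$) with \autoref{thm:sumdirdegs} and the already-established $\deg_{\xi_0}(\phi_2)=m$. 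One point worth recording: taken literally at the endpoints the degree identity can fail --- for $\phi_2(y)=y^2-y$ on $U=\{0<|y|<1\}$ the hypotheses hold with $m=1$, yet $\deg_{\zeta(0,1)}(\phi_2)=2$ --- and $\vec v(\zeta_i)$ is not even defined at $\zeta_i$ itself; so your version, which proves the degree claims on the open interval $(\zeta_1,\zeta_2)$ and reads the preimage identity on $U\cup\{\zeta_1,\zeta_2\}$, is the correct interpretation of the statement rather than a gap in your proof.
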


\begin{rmk}
 For any direction $\bvec u \in \Dir\zeta$ other than $\vec v(\zeta_1)$ or $\vec v(\zeta_2)$, we have \[\deg_{\zeta, \bvec u}(\phi) = m_1\] where $m_1 = 1$, unless $m_0 = p^j \divides m$ because $\chara \k = p$ and the reduction $\overline \phi = \psi(z^{p^j})$ is inseparable. Either way, the map on these other directions is $m/m_1$-to-one.
\end{rmk}

The following theorem is about hyperbolicity in $\bH$, inspired by the Schwarz-Pick Theorem and the proof of \cite[Proposition 11.3]{Bene}. It say that distances never decrease more than $\q$ in the absence of turning points (ramification). In the other direction any two points have preimages with distance at most $1/\q$ times the original.


\begin{thm}[Hyperbolicity Theorem]\label{thm:skew:hyperbolicity}
 Let $\phi_*$ be a non-constant skew product of scale factor $\q$. Let $\zeta_1, \zeta_2 \in \P^1_\an$ and suppose \[\phi_* : [\zeta_1 , \zeta_2] \to [\phi_*(\zeta_1) , \phi_*(\zeta_2)]\] is a homeomorphism. Then 
 \[d_\bH(\phi_*(\xi_1) , \phi_*(\xi_2)) \ge \q\cdot d_\bH(\xi_1 , \xi_2)\quad \forall \xi_1, \xi_2 \in [\zeta_1 , \zeta_2]\cap\bH.\]
 Conversely, let $\xi_1, \xi_2 \in \P^1_\an$, $\zeta_1 \in \phi_*^{-1}(\xi_1)$. Then there exists, $\zeta_2 \in \phi_*^{-1}(\xi_2)$ such that \[\phi_* : [\zeta_1 , \zeta_2] \to [\xi_1, \xi_2]\] is a homeomorphism. Furthermore, we can choose $\zeta_2$ to be in any direction $\bvec v \in \Dir{\zeta_1}$ such that $\phi_*(\bvec v) \ni \xi_2$. Whence if $\xi_1, \xi_2 \in \bH$ then \[d_\bH(\xi_1 , \xi_2) \ge \q\cdot d_\bH(\zeta_1 , \zeta_2).\]
\end{thm}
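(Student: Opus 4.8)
The plan is to prove the two inequalities essentially independently, deriving the converse from the forward direction once a suitable path has been lifted.

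\emph{Forward inequality.} I would run the argument in the proof of \autoref{cor:skew:injinterval} almost verbatim, relaxing the exact factor $\q$ to the bound $\ge\q$. Since $d_\bH$ is additive along $[\zeta_1,\zeta_2]$ and $\phi_*$ is a homeomorphism there, it suffices to cover $[\zeta_1,\zeta_2]$ by finitely many closed subintervals on each of which $d_\bH(\phi_*(\cdot),\phi_*(\cdot))\ge\q\, d_\bH(\cdot,\cdot)$, and then sum. Around a point $\zeta\in(\zeta_1,\zeta_2)$ apply \autoref{thm:intervalstretch} in each of the two directions $\vec v(\zeta_1),\vec v(\zeta_2)$ running along the interval; after shortening the resulting intervals so they are initial segments of $[\zeta,\zeta_1]$ and $[\zeta,\zeta_2]$ (exactly as in \autoref{cor:skew:injinterval}), one gets a subinterval $(\zeta_1',\zeta_2')\ni\zeta$ of $(\zeta_1,\zeta_2)$ on which $\phi_*$ stretches $d_\bH$ by $m_j\q$ on the two sides of $\zeta$, where $m_j=\deg_{\zeta,\vec v(\zeta_j)}(\phi)\ge1$; since $\phi_*$ is a homeomorphism on the whole interval it is monotone on $[\zeta_1',\zeta_2']$, so by additivity distances there grow by at least $\q$. (At $\zeta=\zeta_j$ only one side is present; this case is the same.) Compactness gives a finite subcover and additivity of $d_\bH$ yields the inequality for the endpoints; applying it to any subinterval $[\xi_1,\xi_2]\subseteq[\zeta_1,\zeta_2]$ with $\xi_1,\xi_2\in\bH$ gives the stated conclusion. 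The homeomorphism hypothesis is exactly what keeps the local estimates from cancelling when glued.

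\emph{Converse.} I would lift the path $[\xi_1,\xi_2]$ through $\phi_*$, starting at $\zeta_1$. If $\xi_1=\xi_2$ take $\zeta_2=\zeta_1$; otherwise fix a direction $\bvec v\in\Dir{\zeta_1}$ with $\phi_\#(\bvec v)=\vec v(\xi_2)$ — such $\bvec v$ exists because $\phi_\#$ is surjective on tangent directions (by \autoref{thm:sumdirdegs} the fibre of $\phi_\#$ over any direction is non-empty, as $\deg_\zeta(\phi)\ge1$ for a non-constant $\phi_*$), and for a non-bad direction this is precisely the condition $\phi_*(\bvec v)\ni\xi_2$. Consider the poset of pairs $\bigl([\xi_1,\xi],[\zeta_1,\eta]\bigr)$ with $[\xi_1,\xi]\subseteq[\xi_1,\xi_2]$, $\phi_*$ a homeomorphism of $[\zeta_1,\eta]$ onto $[\xi_1,\xi]$ sending $\zeta_1\mapsto\xi_1$, and $[\zeta_1,\eta]\sm\{\zeta_1\}\subseteq\bvec v$, ordered by extension; it is non-empty (degenerate pair $(\{\xi_1\},\{\zeta_1\})$) and chains have upper bounds: the union of the $[\zeta_1,\eta_\alpha]$ is an interval on which $\phi_*$ stays injective, its closure $[\zeta_1,\eta^*]$ maps onto the closure $[\xi_1,\xi^*]$ of $\bigcup[\xi_1,\xi_\alpha]$ with $\eta^*$ in the fibre $\phi_*^{-1}(\xi^*)$, which is finite by \autoref{thm:skew:chainrule}(c), and a continuous bijection between these compact intervals is a homeomorphism. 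Pick a maximal pair by Zorn's lemma. If its $\xi$ satisfied $\xi\in[\xi_1,\xi_2)$, then at $\eta$ choose (again by surjectivity of $\phi_\#$) a direction $\bvec w$ with $\phi_\#(\bvec w)=\vec v(\xi_2)\in\Dir\xi$; since the direction at $\eta$ back towards $\zeta_1$ maps to $\vec v(\xi_1)\ne\vec v(\xi_2)$ we have $\bvec w\ne\vec v(\zeta_1)$, so \autoref{thm:intervalstretch} at $\eta$ in direction $\bvec w$, shortened so its image is an initial piece of $[\xi,\xi_2]$, produces a strictly larger admissible pair — contradiction. Hence $\xi=\xi_2$; set $\zeta_2=\eta$. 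Then $\phi_*\colon[\zeta_1,\zeta_2]\to[\xi_1,\xi_2]$ is a homeomorphism with $\zeta_2\in\bvec v$, and when $\xi_1,\xi_2\in\bH$ (so also $\zeta_1,\zeta_2\in\bH$, since $\phi_*$ preserves types) the forward inequality gives $d_\bH(\xi_1,\xi_2)=d_\bH(\phi_*(\zeta_1),\phi_*(\zeta_2))\ge\q\,d_\bH(\zeta_1,\zeta_2)$.

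\emph{Main obstacle.} The crux is the path lift. One must know that continuation is always possible (surjectivity of $\phi_\#$ on tangents together with \autoref{thm:intervalstretch} to realise each step as a genuine homeomorphic stretch), that the limiting pair is an honest homeomorphism rather than merely a continuous surjection (this is where finiteness of fibres and compactness of $\P^1_\an$ enter), and — most delicately — that the lift stays inside the prescribed direction $\bvec v$, i.e.\ that the very first step enters $\bvec v$ compatibly with the start of $[\xi_1,\xi_2]$. Ramification points make $\phi_\#$ collapse several directions, so lifts need not be unique; Zorn's lemma sidesteps this. One point worth isolating in the write-up is the admissible class of directions: at a bad direction the infinitesimal image is $\phi_\#(\bvec v)$ while $\phi_*(\bvec v)=\P^1_\an$, so "$\phi_*(\bvec v)\ni\xi_2$" must be understood together with $\phi_\#(\bvec v)=\vec v(\xi_2)$, which is what the construction actually uses.
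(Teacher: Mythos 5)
Your forward inequality is essentially the paper's own argument: local stretching along the interval (you cite \autoref{thm:intervalstretch}; the paper invokes \autoref{prop:skew:preintervalstretch} and \autoref{thm:skew:bigintervalstretch}) glued by additivity and compactness exactly as in \autoref{cor:skew:injinterval}, with the homeomorphism hypothesis guaranteeing that the local estimates add rather than cancel. Your converse, however, takes a genuinely different route. The paper first argues that the component $T$ of $\phi_*^{-1}[\xi_1,\xi_2]$ through $\zeta_1$ is a finite subtree sitting inside $\Hull\bigl(\phi_*^{-1}(\xi_1)\cup\phi_*^{-1}(\xi_2)\bigr)$, then uses the Extreme Value Theorem (\autoref{thm:skew:extvalue}) to show $\phi_*$ is injective on each edge of $T$, and finally walks edge by edge, choosing at each vertex a direction whose $\phi_\#$-image is $\vec v(\xi_2)$; termination is free because $T$ is finite. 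You instead lift the path directly by a Zorn's-lemma maximality argument on partial homeomorphic lifts, extending via surjectivity of $\phi_\#$ together with \autoref{thm:intervalstretch}, and handling chains by compactness and finiteness of fibres. Both work: the paper's version buys the finite combinatorial structure of the preimage tree and needs no transfinite step, while yours avoids having to prove the two structural claims about $T$ at the cost of the limit analysis you rightly single out (that an increasing union of lifted arcs closes up to an arc on which the map is still a homeomorphism). Your closing remark about bad directions is well taken and is in fact a correction to the statement: if $\bvec v$ is bad with $\phi_\#(\bvec v)\ne\vec v(\xi_2)$, then $\phi_*(\bvec v)=\P^1_\an\ni\xi_2$, yet no homeomorphic lift of $[\xi_1,\xi_2]$ can enter $\bvec v$, since a thin initial segment of any arc from $\zeta_1$ into $\bvec v$ maps into $\phi_\#(\bvec v)$ by \autoref{thm:skew:imgtypeII} and hence off $[\xi_1,\xi_2]$; so the ``Furthermore'' clause should be read with the tangent condition $\phi_\#(\bvec v)=\vec v(\xi_2)$, which is also all the paper's own edge-walk ever uses, because $T$ only enters such directions.
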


\begin{proof}
Let $\zeta \in [\zeta_1 , \zeta_2]$ with $m = \deg_\zeta(\phi)$. Then in both directions (or one at the endpoints) \autoref{prop:skew:preintervalstretch} and \autoref{thm:skew:bigintervalstretch} say that there is a $\zeta' \in \vec v(\zeta_1)$ and $\zeta'' \in \vec v(\zeta_2)$ with distances expanding by at least $m\q$ i.e. \[d_\bH(\phi_*(\xi_1) , \phi_*(\xi_2)) = m\cdot\q\cdot d_\bH(\xi_1 , \xi_2)\quad \forall \xi_1, \xi_2 \in [\zeta' , \zeta'']\cap\bH\]
Since $\phi_*$ is homeomorphic on $\zeta_1, \zeta_2]$, we can finish the argument by additivity and compactness as in the proof of \autoref{cor:skew:injinterval}.

Now we prove the `converse' statement. First we claim that $T$, the component of $\phi_*^{-1}[\xi_1, \xi_2]$ containing $\zeta_1$, is a finite subtree equal to $\Hull(\phi_*^{-1}(\xi_1) \cup \phi_*^{-1}(\xi_2))$. Second we claim that $\phi_*$ is bijective on the edges of $T$. Then we prove the result as follows. Pick any $\zeta_1 \in \phi_*^{-1}(\xi_1)$ and $\bvec v$ in the direction of $\phi_*^{-1}(\xi_2)$. Let $\zeta \in T$ be the next vertex from $\zeta_1$ in in the direction $\bvec v$. If $\phi_*(\zeta) = \xi_2$ then we are done with $\zeta = \zeta_2$ and the path $[\zeta_1, \zeta]$, using the second claim. Otherwise since $\phi_\#$ is always surjective, we can pick a direction $\bvec v \in \Dir\zeta$ such that $\phi_\#(\bvec v) = \vec v(\xi_2)$. Then there is an edge $[\zeta, \zeta']$ of $T$ in the direction $\bvec v$ which is bijective with its image $[\phi_*(\zeta), \phi_*(\zeta')]$. Clearly $[\zeta_1, \zeta)$ and $[\zeta, \zeta']$ are distinct with distinct images; therefore $[\zeta_1, \zeta']$ is bijective with it's image. Continuing this way, one edge at a time we can create a finite path of edges in $T$ bijective with its image until we find $\zeta_2 \in \phi_*^{-1}(\xi_2)$.

To prove the first claim, consider $\xi \in [\xi_1, \xi_2]$ and any $\zeta \in \phi_*^{-1}(\xi)$. Then for any direction $\bvec v = \vec v(\xi_j) \in \Dir\xi$ there must be at least one (but finitely many) preimage(s) $\bvec u \in \Dir\zeta$. By \autoref{thm:skew:reduction} any such $\bvec u$ is either a good direction with $\phi_*(\bvec u) = \bvec v$ or a bad direction with $\phi_*(\bvec u) = \P^1_\an$; either way $\bvec u$ contains a preimage of some $\zeta_j \phi_*^{-1}(\xi_j)$. Thus $\zeta \in \Hull(\phi_*^{-1}(\xi_1) \cup \phi_*^{-1}(\xi_2))$. One can check that a convex hull of finitely many points is a finite subtree.

To prove the second claim, we use the extreme value theorem \autoref{thm:skew:extvalue}. If $\phi_*$ was not bijective on an edge $(\zeta', \zeta'')$, then it contains a point $\zeta$ such that both of the directions $\vec v(\zeta'), \vec v(\zeta'') \in \Dir{\zeta}$ get mapped to the same one $\phi_\#(\vec v(\zeta')) = \phi_\#(\vec v(\zeta'))$, which we will say WLOG is $\vec v(\xi_1)$. 
We know that $\phi_*(\tilde\zeta) \in (\zeta_1, \zeta_2)$ because it is not an endpoint, and again recall that $\phi_\#$ is surjective on directions. Since $\zeta$ is on an edge of $T \subseteq \phi_*^{-1}[\xi_1, \xi_2]$, the preimage of each of the two directions $\vec v(\xi_1), \vec v(\xi_2)$ must be one of the only two directions at $\zeta$ containing $T$, namely $\vec v(\zeta'), \vec v(\zeta'')$. Clearly this is a contradiction.
\end{proof}

%
%





%
%





\section{Periodic Points}\label{sec:per}

\begin{defn}
 Let $\phi_*$ be a skew product and $\zeta \in \P^1_\an$.
 \begin{enumerate}[label=(\alph*), ref=\theenumi]
 \item We say that $\zeta$ is a \emph{periodic point} iff $\phi_*^n(\zeta) = \zeta$ for some $n \ge 1$; if $n$ is the smallest such integer then we call this the \emph{period} of $\zeta$.
 \item If $n$ is the smallest such integer then we call this the \emph{period} of $\zeta$.
 \item In the special case where $n=1$, where $\phi_*(\zeta) = \zeta$, we say that $\zeta$ is a \emph{fixed point}.
 \item If there is an $n_0$ such that $\phi_*^{n_0}(\zeta)$ is periodic then we say that $\zeta$ is \emph{preperiodic}; or in other words, $\phi_*^{n + n_0}(\zeta) = \phi_*^{n_0}(\zeta)$ for some minimal period $n$ and preperiod $n_0$. 
 \item Otherwise if $\phi_*^m(\zeta) \ne \phi_*^n(\zeta)$ for every distinct pair $m, n \in \N$ then we say $\zeta$ is \emph{wandering}.
 \item The \emph{(forward) orbit} of $\zeta$ is $\Orb^+_\phi(\zeta) = \set[\phi_*^n(\zeta)]{n \ge 0}$.
 \item The \emph{backward orbit} of $\zeta$ is $\Orb^-_\phi(\zeta) = \bigcup_{n \ge 0}\phi_*^{-n}(\zeta)$.
 \item The \emph{grand orbit} of $\zeta$ is $\GO_\phi(\zeta) = \bigcup_{n \ge 0}\Orb^-_\phi(\phi_*^n(\zeta))$.
 \end{enumerate}
\end{defn}

In dynamics, how a map behaves in the neighbourhood of a periodic point is more important than the point itself. Typically it matters if points are `attracting', `repelling', or `indifferent'; in higher dimensional dynamics of manifolds, these behaviours can occur simultaneously at a fixed `saddle' point in two different directions, but not in one dimension. Here the Berkovich projective line in one-dimensional, but being a tree with its Type II vertices, there are many directions, leading use to consider saddle points. One stark difference between skew products and rational maps is that skew products can have attracting behaviour but rational maps \emph{never} do.

\begin{defn}
Let $\phi_*$ be a skew product of scale factor $\q$, $\zeta \in \bH$ and consider a direction $\bvec{v}\in \Dir\zeta$. We call $\q \deg_{\zeta, \bvec v}(\phi)$ the \emph{multiplier} of $\bvec v$ and we say $\bvec{v}$ is:
\begin{enumerate}[label=(\alph*), ref=\theenumi]
 \item \emph{indifferent} iff $\q \deg_{\zeta, \bvec v}(\phi) = 1$;
 \item \emph{attracting} iff $\q \deg_{\zeta, \bvec v}(\phi) < 1$; or
 \item \emph{repelling} iff $\q \deg_{\zeta, \bvec v}(\phi) > 1$.
\end{enumerate}
\end{defn}

This definition classifies the general behaviour of nearby points in the given direction by the geometric gradient, as per \autoref{thm:intervalstretch}.

\begin{defn}
Let $\phi_*$ be a skew product, and suppose $\zeta \in \bH$ is a fixed point of $\phi_*$. We say $\zeta$ is:
\begin{enumerate}[label=(\alph*), ref=\theenumi]
 \item \emph{\stronglyindifferent} all directions in $\Dir\zeta$ are indifferent;
 \item \emph{\stronglyattracting} iff at least one direction $\bvec v \in \Dir\zeta$ is attracting, and the rest are indifferent;
 \item \emph{\stronglyrepelling} iff at least one direction $\bvec v \in \Dir\zeta$ is repelling, and the rest are indifferent;
 \item \emph{saddle} iff $\Dir\zeta$ contains both repelling and attracting directions.
\end{enumerate}
Additionally, we call $\deg_\zeta(\phi) \q$ the \emph{multiplier} of $\zeta$. 
Furthermore we say $\zeta$ is:
\begin{enumerate}[label=(\roman*), ref=\theenumi]
 \item \emph{\weaklyindifferent} iff $\q \deg_\zeta(\phi) = 1$;
 \item \emph{\weaklyattracting} iff $\q \deg_\zeta(\phi) < 1$; or
 \item \emph{\weaklyrepelling} iff $\q \deg_\zeta(\phi) > 1$.
\end{enumerate}
We extend these definitions to periodic $\zeta$, by considering it as a fixed point of $\phi_*^n$. Finally we may say $\zeta$ is \emph{\weakly{} non-repelling} iff it is \weaklyindifferent{} or \weaklyattracting{} i.e. $\q \deg_\zeta(\phi) \le 1$.
\end{defn}\unsure{these might not be the best definitions.}

For a rational map $\phi_*$, a fixed point $\zeta \in \bH$ is either indifferent or repelling (never attracting) according to whether the local degree $\deg_\zeta(\phi)$ is $1$ or not. In our case, this remains true when $\q = 1$, but otherwise there are more possibilities. When $\q > 1$ naturally every Type II, III, and IV point is repelling. The most difficult case is $\q <1$, although $\q\rdeg(\phi) < 1$ implies that $\phi_*$ is a contraction mapping on $\bH$, so the dynamics are somewhat trivial.

\begin{ex}
 Let us consider $\hk$ the Levi-Civita series over $\C$, and a skew product defined by $\phi(x, y) = (x^3, y^2)$. Then $\q = 1/3$ and $\rdeg(\phi) = 2$. Then the Gauss point $\zeta(0, 1)$ is a repelling fixed point for $\phi_{2*}$ but for $\phi_*$ it is attracting; in fact all points converge here under iteration. Instead, consider $\phi(x, y) = (x^2, y^3)$, then $\q = 1/3$ and $\rdeg(\phi) = 2$. The Gauss point is a saddle fixed point of $\phi_*$. Indeed, all but two directions are attracting at a rate of their multiplier $\q = 1/2$, but $\vec v(0), \vec v(\infty)$ are repelling with multiplier $3/2$.
\end{ex}

For Type I points we need some slightly different definitions.

\begin{defn}\label{defn:dyn:classicalfixed}
Let $\phi_*$ be a skew product, and suppose $a \in \P^1$ is a fixed classical point of $\phi_*$. Suppose that $\phi_2(y)$ can be expressed locally as \[\phi_2(y) = \phi^*(a) + b_1(y-a) + b_2(y-a)^2 + b_3(y-a)^3 + \cdots\] and let $d = \deg_a(\phi)$ be the smallest integer such that $b_d$ non-zero.
We say $a$ is:
\begin{enumerate}[label=(\alph*), ref=\theenumi]
 \item \emph{superattracting} iff $d\q > 1$;
 \item \emph{superrepelling} iff $d\q < 1$;
 \item \emph{attracting} iff it is superattracting or $d\q = 1$ and $\abs{b_d} < 1$;
 \item \emph{repelling} iff it is superrepelling or $d\q = 1$ and $\abs{b_d} > 1$; or
 \item \emph{indifferent} iff $d\q = 1$ and $\abs{b_d} = 1$.
\end{enumerate}
In the last three cases, with $d\q = 1$, we call $\lambda = \abs{b_d}^\q$ the \emph{multiplier}.
We extend these definitions to periodic $a$, by considering it as a fixed point of $\phi_*^n$.
\end{defn}

\begin{prop}\label{prop:dyn:classicalfixed}
 Let $\phi_*$ be a skew product, and suppose $a \in \P^1$ is a fixed classical point of $\phi_*$.
\begin{enumerate}[label=(\roman*), ref=\theenumi]
 \item $a$ is attracting if and only if there is an $\eps > 0$, such that $\forall r < \eps$ we have \[\phi_*(D(a, r)) \subsetneq D(a, r).\]
 \item $a$ is repelling if and only if there is an $\eps > 0$, such that $\forall r < \eps$ we have \[\phi_*(D(a, r)) \supsetneq D(a, r).\]
 \item $a$ is indifferent if and only if there is an $\eps > 0$, such that $\forall r < \eps$ we have \[\phi_*(D(a, r)) = D(a, r).\]
\end{enumerate}
\end{prop}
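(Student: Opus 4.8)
The plan is to pin down the image $\phi_*(D(a,r))$ \emph{exactly} for all sufficiently small $r$, namely $\phi_*(D(a,r)) = D\big(a,\,\abs{b_d}^\q r^{d\q}\big)$, and then read off (i)--(iii) by comparing this radius with $r$. First I would reduce to the case $a = 0 \in \A^1(K)$: translating $a$ to $0$ is an isometric coordinate change that preserves the scale factor and leaves the leading coefficient $b_d$ of the local expansion of $\phi_2$ unchanged, and the case $a = \infty$ reduces to this after conjugating by $y \mapsto 1/y$. Since $0$ is fixed, \autoref{thm:skew:opencts} (which identifies $\phi_*$ on $\P^1(K)$ with $(\phi_1^*)^{-1}\circ\phi_2$) gives $(\phi_1^*)^{-1}(\phi_2(0)) = 0$, hence $\phi_2(0)=\phi_1^*(0)=0$, so $\phi_2(y) = b_d y^d + b_{d+1}y^{d+1}+\cdots$ with $b_d\neq 0$ and $d = \deg_0(\phi)$.

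Next I would use the decomposition $\phi_* = \phi_{1*}\circ\phi_{2*}$ of \autoref{thm:skew:comp}. For $r$ small enough, $D(0,r)$ lies in the disk of convergence of the Taylor series of $\phi_2$ and $\wdeg_{D(0,r)}(\phi_2) = d$: the terms $b_n y^n$ with $n<d$ vanish, and convergence near $0$ bounds the $\abs{b_n}$, so $\abs{b_d}r^d$ strictly dominates every $\abs{b_n}r^n$ with $n>d$ once $r$ is small. Then \autoref{thm:seminorms:mapdisk} yields $\phi_{2*}(D(0,r)) = D(0,\abs{b_d}r^d)$, and applying \autoref{thm:skew:scaledhomeo} to $\phi_{1*}$ — which sends $D(c,s)$ to $D((\phi_1^*)^{-1}(c),s^\q)$ and fixes $(\phi_1^*)^{-1}(0)=0$ — gives $\phi_*(D(0,r)) = D\big(0,\,\abs{b_d}^\q r^{d\q}\big)$ for all sufficiently small $r>0$.

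Finally I would compare radii. Since $K$ is algebraically closed its value group is dense in $(0,\infty)$, so for disks about $0$ one has $D(0,s)\subsetneq D(0,r)\iff s<r$, and likewise for $\supsetneq$ and $=$. Thus (i)--(iii) amount to comparing $\abs{b_d}^\q r^{d\q-1}$ with $1$ for all small $r$: if $d\q>1$ this quantity tends to $0$, so $\phi_*(D(0,r))\subsetneq D(0,r)$; if $d\q<1$ it tends to $\infty$, so $\phi_*(D(0,r))\supsetneq D(0,r)$; and if $d\q=1$ it equals the constant $\abs{b_d}^\q$, giving $\subsetneq$, $\supsetneq$, or $=$ according as $\abs{b_d}$ is $<1$, $>1$, or $=1$. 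Matching these three trichotomies against \autoref{defn:dyn:classicalfixed} produces precisely the attracting, repelling, and indifferent cases; the converse implications hold because the alternatives for $d\q$ and for $\abs{b_d}$ are exhaustive and mutually exclusive, so a persistent strict inclusion (or equality) in one direction forces exactly the corresponding case.

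The only step I expect to be non-routine is the uniform claim that $\wdeg_{D(0,r)}(\phi_2)=d$ for all small $r$ — i.e.\ that no higher-order term of $\phi_2$ can overtake $b_d y^d$ on a small enough disk — which rests on the bound on the Taylor coefficients coming from convergence near $0$. Everything else is a direct, essentially mechanical, application of \autoref{thm:seminorms:mapdisk} and \autoref{thm:skew:scaledhomeo} through the factorisation $\phi_* = \phi_{1*}\circ\phi_{2*}$.
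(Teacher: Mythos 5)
Your proposal is correct and follows essentially the paper's own argument: establish $\phi_*(D(a,r)) = D\bigl(a,\abs{b_d}^\q r^{d\q}\bigr)$ for all sufficiently small $r$ (the paper likewise notes that the $d$-th term dominates for small $r$, via the decomposition $\phi_* = \phi_{1*}\circ\phi_{2*}$) and then compare $\abs{b_d}^\q r^{d\q}$ with $r$ in the three cases $d\q>1$, $d\q<1$, $d\q=1$. Your version merely spells out details the paper leaves implicit (reduction to $a=0$, the explicit appeal to \autoref{thm:seminorms:mapdisk} and \autoref{thm:skew:scaledhomeo}, and density of the value group for strictness of the inclusions), so there is nothing to correct.
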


\begin{proof}
 Given $d = \deg_a(\phi)$ for all sufficiently small $r$, only the first non-zero term (the $d$th) in $\phi_2(y)-\phi^*(a)$ will dominate. For all such $r$, \[\phi_*(D(a, r)) = D(a, \abs{b_d}^\q r^{d\q}).\]
 For $d\q > 1$, we can see that $r > \abs{b_d}^\q r^{d\q}$ for any $r < \abs{b_d}^{-\q/(d\q-1)}$. 
 Similarly when $d\q < 1$, we can see that $r < \abs{b_d}^\q r^{d\q}$ for any $r < \abs{b_d}^{\q/(1-d\q)}$. 
 Finally if $d \q = 1$ then the image diameter is $\abs{b_d}^\q r$ so it only depends on the size of $b_d$.
\end{proof}

A strange quirk of the scale factor $\q$ is that it tends to make classical points attract when it is large whilst it makes hyperbolic $\zeta \in \bH$ repel; conversely when $\q < 1$ it generally causes classical points to repel but other points to attract!


The second stark difference with skew products is the existence of infinitely many fixed classical points. For a rational map this quantity is controlled by the fixed point equation $\phi(z) = z$ which is a degree $d + 1$ equation (not forgetting infinity). The following quick examples serve to illustrate this; most of them give at least $\#\k$ many fixed points.

\begin{ex}\label{ex:dyn:infinitelymanyfixedone}
 Choose any $\phi_1(x) \in \k(x)$. Then the (skew) product map $(x, y) \mapsto (\phi_1(x), y)$ fixes every $c \in \k \subset \K$. 
\end{ex}

In fact we can find maps where any coefficient of $\gamma(x)$ may be arbitrary.

\begin{ex}\label{ex:dyn:infinitelymanyfixedtwo}
 Let $\lambda \in \k$, $m \in \Z$. Then the product map $\phi : (x, y) \mapsto (\lambda x, \lambda^m y)$ fixes $a = cx^m$ for any $c \in \k$. In general, \[(x, y) \longmapsto \left(\phi_1(x), \frac{\phi_1(x)^k y}{x^m}\right)\] fixes $cx^m$. This is just a conjugation of the previous example, with the rational map $x^my$.
\end{ex}

It seems that for a skew product $\phi_*$ and almost any first derivative of $\overline \phi$ the reduction map will give an uncountable family.

\unsure{Too long; didn't read?}
\begin{prop}\label{prop:constructfixedpoint}
 Let $\lambda \in \C$ with $\lambda^n \ne \lambda\ \forall n \ne 1$, and let $m \in \N_+$. Let $\phi_*$ be a $\C$-rational skew product of the form \[(x, y) \longmapsto \left(\lambda x,\, \sum_{n=1}^\infty g_n y^n\right)\] where $g_n \in \C[[x]]$ and $g_1(0) = \lambda^m$. Then $\phi_*$ has an uncountable family of fixed points which can be written $\gamma(x) = \sum_{n=1}^\infty c_n x^n$ where $c_m$ is freely chosen.
\end{prop}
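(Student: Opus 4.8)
The plan is to translate the fixed-point condition $\phi_*(\gamma)=\gamma$ into a triangular system of scalar equations for the coefficients $c_k$, and to show that this system forces $c_1=\dots=c_{m-1}=0$, leaves $c_m\in\C$ completely free, and then determines every $c_k$ with $k>m$ recursively. First I would use the explicit action of a simple $\k$-rational skew product on classical points: since $\phi_1(x)=\lambda x$ is invertible (note $\lambda\ne 0$, for otherwise $\lambda^2=0=\lambda$ would violate the hypothesis), \autoref{thm:skew:scaledhomeo} gives $\phi_{1*}\colon a(x)\mapsto a(x/\lambda)$, while $\phi_{2*}\colon a(x)\mapsto \phi_2(x,a(x))$, so $\gamma\in\P^1(\K)$ is fixed by $\phi_*=\phi_{1*}\circ\phi_{2*}$ if and only if
\[\gamma(x)=\phi_2\!\left(\frac{x}{\lambda},\,\gamma\!\left(\frac{x}{\lambda}\right)\right).\]
Working in the convergent local expansion $\phi_2(x,y)=\sum_{n\ge 1}g_n(x)y^n$ near $(0,0)$, set $H(x)=\phi_2(x,\gamma(x))=\sum_{n\ge 1}g_n(x)\gamma(x)^n=\sum_{k\ge 1}h_k x^k$; the constant term vanishes because $\phi_2(x,0)=0$ and $\gamma(0)=0$. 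Since replacing $x$ by $x/\lambda$ multiplies the coefficient of $x^k$ by $\lambda^{-k}$, the fixed-point equation is equivalent to the sequence of scalar equations $c_k=\lambda^{-k}h_k$ for all $k\ge 1$.

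Next I would isolate how $c_k$ enters $h_k$. For $n\ge 2$, the coefficient $[x^k]\bigl(g_n(x)\gamma(x)^n\bigr)$ is a polynomial in $c_1,\dots,c_{k-n+1}\subseteq\{c_1,\dots,c_{k-1}\}$ with zero constant term, while $[x^k]\bigl(g_1(x)\gamma(x)\bigr)=g_1(0)c_k+\sum_{l=1}^{k-1}g_{1,k-l}c_l$, where $g_{1,j}$ denotes the $j$-th Taylor coefficient of $g_1$. Using $g_1(0)=\lambda^m$, this yields $h_k=\lambda^m c_k+F_k(c_1,\dots,c_{k-1})$ with $F_k$ a polynomial in $c_1,\dots,c_{k-1}$ vanishing at the origin, so the fixed-point equations become
\[\bigl(1-\lambda^{m-k}\bigr)\,c_k=\lambda^{-k}\,F_k(c_1,\dots,c_{k-1}),\qquad k\ge 1.\]
The hypothesis $\lambda^n\ne\lambda$ for all $n\ne 1$ implies $\lambda^j\ne 1$ for every nonzero integer $j$ (apply it with $n=j+1$ and divide by $\lambda$), so $1-\lambda^{m-k}$ vanishes precisely when $k=m$.

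Then the recursion runs as follows. For $1\le k<m$ an easy induction forces $c_k=0$: if $c_1=\dots=c_{k-1}=0$ then $F_k(0,\dots,0)=0$, and $1-\lambda^{m-k}\ne 0$, so $c_k=0$ (this step is vacuous when $m=1$). At $k=m$ the equation reads $0=\lambda^{-m}F_m(0,\dots,0)=0$, which holds for every value of $c_m\in\C$; thus $c_m$ is a free parameter. For $k>m$ we have $1-\lambda^{m-k}\ne 0$, so $c_k=\lambda^{-k}F_k(c_1,\dots,c_{k-1})/(1-\lambda^{m-k})$ is uniquely determined by the earlier coefficients, hence a polynomial in the single parameter $c_m$. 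Consequently each choice of $c_m\in\C$ produces a unique $\gamma(x)=\sum_{k\ge 1}c_k x^k$; being a formal power series in nonnegative integer powers of $x$ with $\C$-coefficients (all of norm $\le 1$), it lies in $\C((x))\subset\K$, so no convergence issue arises in the non-Archimedean setting. Distinct values of $c_m$ give power series with distinct $x^m$-coefficient, and $\C$ is uncountable, which yields the asserted uncountable family of fixed points.

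The step I expect to be the main obstacle is the bookkeeping in the middle paragraph: verifying that $c_k$ enters $h_k$ only through the single linear monomial $g_1(0)c_k$, with the remainder $F_k$ depending on $c_1,\dots,c_{k-1}$ alone and vanishing at the origin. This is exactly where the special shape of $\phi_2$ is used — vanishing at $y=0$ removes the constant term, and the normalisation $g_1(0)=\lambda^m$ together with the non-resonance of $\lambda$ forces the resonant index (the one where $1-\lambda^{m-k}=0$) to be precisely $k=m$ and no other. Everything after that is the routine solution of a lower-triangular system.
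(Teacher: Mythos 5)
Your proposal is correct and follows essentially the same route as the paper: both reduce $\phi_*(\gamma)=\gamma$ to the functional equation $\gamma(\lambda x)=\phi_2(x,\gamma(x))$ and solve the resulting lower-triangular system for the Taylor data of $\gamma$ at $x=0$, using the non-resonance hypothesis on $\lambda$ so that the only degenerate (hence free) index is $k=m$; the paper phrases this via successive derivatives and the auxiliary polynomials $F_{r,n}$, $G_n$, while you equate power-series coefficients directly, which is an equivalent bookkeeping. Your observation that $c_1=\dots=c_{m-1}=0$ is forced (rather than chosen) once $\gamma(0)=0$ is a harmless sharpening of the same argument.
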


\begin{proof}
We wish to find a solution to the equation $\phi_*(\gamma) = \gamma$. This is equivalent to \[\gamma(\lambda x) = \phi_2(x, \gamma(x)).\]
We will prove this holds using all the $x$-derivatives at $x=0$, finding an appropriate solution $\gamma(x)$ along the way. For the zeroth derivative we have
\[\gamma(0) = \sum_{n=1}^\infty g_n(0) (\gamma(0))^n.\]
Picking $\gamma(0) = 0$ will make the rest of the proposition easy. For the first derivative we have
\[\lambda\gamma'(\lambda x) = \sum_{n=1}^\infty a_n'(x) (\gamma(x))^n + ng_n(x) (\gamma(x))^{n-1}\gamma'(x) \equiv \sum_{n=1}^\infty F_{1, n}(x) + G_n(x)\gamma'(x),\]
which at $x = 0$ (and $\gamma(0) = 0$) is
\[\lambda\gamma'(0) = \sum_{n=1}^\infty a_n'(0) (0)^n + ng_n(0) (0)^{n-1}\gamma'(0) = g_1(0)\gamma'(0)\]
If $k = 1$ then $g_1(0) = \lambda$ and we find that $\gamma'(0)$ is a free choice according to this equation, as desired. Otherwise we must choose $\gamma'(0) = 0$. Above we chose $F_{1, n}(x)$ and $G_n(x)$ so that $F_{1, n}$ is a homogeneous polynomial degree $n+1$ in $g_n(x)$, $\gamma(x)$ and their derivatives, and similarly $G_n$ is homogeneous degree $n$. We inductively define $F_{r, n}(x)$ with the same properties by differentiating.
\begin{align*}
 \td{}{x} (F_{r-1, n}(x) + G_n\gamma^{(r-1)}(x)) =&\ F_{r-1, n}'(x) + G_n'(x)\gamma^{(r-1)}(x)\hspace{-0.7cm} &&+\ G_n(x)\gamma^{(r)}(x)\\
 \equiv&\ \hspace{1.2cm} F_{r, n}(x)\ &&+\ G_n(x)\gamma^{(r)}(x).
\end{align*}
Then we also have that (by chain and product rule) $F_{r, n}(x)$ is a homogeneous polynomial degree $n+1$ in $g_n(x)$, $\gamma(x)$ and their derivatives. By induction, the polynomials $F_{r, n}(x)$ and $G_n(x)$ contain no $\gamma$ derivatives of order $r$ or higher.

Note that $G_n(0) = 0$ for every $n \ge 2$ because $\gamma(0) = 0$. Also, $G_1(0) = \phi_1(0) = \lambda^m$. Note also that if $\gamma(0) = \gamma'(0) = \cdots = \gamma^{(r-1)}(0) = 0$ then $F_{r, n}(0) = 0\ \forall n \ge 1$ since every one monomial in $F_{r, n}$ has a product of $n$ of these, by product and chain rule.

%
%
%
%
 
 Now we will prove inductively that we can satisfy the equation $\gamma(\lambda x) = \phi_2(x, \gamma(x))$ for every $r$-th derivative at $x=0$ by prescribing an appropriate $\gamma^{(r)}(0)$, or in the case of $m=r$, leaving free choice.
  \[\lambda^r \gamma^{(r)}(\lambda x) = \sum_{n=1}^\infty F_{r, n}(x) + G_n(x)\gamma^{(r)}(x).\]
 \[\therefore (\lambda^r - \lambda^m)\gamma^{(r)}(0) = \sum_{n=1}^\infty F_{r, n}(0)\]
 If $k \ge r$ then we had $\gamma(0) = \gamma'(0) = \cdots = \gamma^{(r-1)}(0) = 0$ thus far and so $F_{r, n}(0) = 0\ \forall n$ by the remark above.
 
  If $m = r$ then the equation is satisfied and $\gamma^{(r-1)}(0)$ is a free choice as required.
  If $m \ne r$ then by hypothesis $\lambda^r \ne \lambda^m$ and we simply choose \[\gamma^{(r-1)}(0) = \frac{1}{\lambda^r - \lambda^m}\sum_{n=1}^\infty F_{r, n}(0),\] to ensure that the equation for the second derivative is satisfied.
 If $m > r$ then we just defined $\gamma^{(r-1)}(0) = 0$.
\end{proof}


\begin{qtn}\label{conj:fixedpoint}
 Let $\phi = (\phi_1, \phi_2)$ be a simple $\k$-rational skew product of relative degree $d$. Suppose $\phi_1(x) = \lambda x + \bO(x^2)$. Is the following true?
 
$\phi_*$ has infinitely many fixed points if and only if after a suitable conjugation $\overline{\phi} = y$ (i.e.\ Gauss point is fixed and every direction there is fixed), and $\lambda^n \ne \lambda\ \forall n \ne 1$.

 Otherwise, $\phi$ has $d + 1$ fixed points counted with multiplicity, as expected from the algebraic case, and $\lambda$ is a (complex) root of unity.
 
 If at least one direction is true, then by considering that all but finitely many directions have degree $1$, we can conclude that there are \emph{rays of fixed points} of the form $[a, \zeta]$ in all but finitely many directions at $\zeta$.
\end{qtn}


On the other hand, a skew product can have \emph{zero} classical fixed points!

\begin{ex}\label{ex:dyn:zerofixed}
 Define a skew automorphism $\phi^* : \K(\C)(y) \to \K(\C)(y)$ as follows. Let $\phi^*$ be the complex conjugation field automorphism exchanging $i$ and $-i$, denoted $\overline a = \phi^*(a)$. Let $\phi_2(y) = -1/y$. Any fixed point $a$ satisfies $a = \phi_*(a) = (\phi_1^*)^{-1} \circ \phi_2(a) \implies \phi_2(a) = \phi_1^*(a)$, meaning $\overline a = -1/a$. This map has good reduction and in the residue field we see that $c\overline c = -1$ where $c \in \C$ is the reduction of $a$; this is impossible.
\end{ex}

Recall the following result giving existence of classical fixed points for power series.

\begin{prop}[\cite{Bene} Proposition 4.17]
 Let $r > 0$, let $a \in \hv$, and let $U$ be either $D(a, r)$ or $\CD(a, r)$. Let $\phi \in \hv[[y-a]]$ be a power series converging on $U$ such that $\phi-a$ has finite Weierstrass degree $d$ on $U$. Suppose that $\phi(U)\cap U \ne \emp$, and that either
\begin{enumerate}[label=(\alph*), ref=\theenumi]
\item $\phi(U) \ne U$, 
\item $d \ne 1$, or
\item $d=1$ and $\abs{\phi'(a)-1} = 1$.
\end{enumerate}
Then the disk $U$ contains a fixed point of $\phi$.
\end{prop}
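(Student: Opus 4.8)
The plan is to exhibit a fixed point of $\phi$ in $U$ as a zero of the auxiliary power series $\psi(y) := \phi(y) - y$, which converges on $U$ since $\phi$ does. Write $\phi(y) = \sum_{n \ge 0} c_n (y - a)^n$, so that $\phi(a) = c_0$ and $\phi'(a) = c_1$; then $\psi(y) = (c_0 - a) + (c_1 - 1)(y - a) + \sum_{n \ge 2} c_n (y - a)^n$, i.e. $\psi$ and $\phi$ agree in every Taylor coefficient at $a$ of degree $\ge 2$, and in degree $1$ the coefficient $c_1$ of $\phi$ is merely replaced by $c_1 - 1$ in $\psi$. It will suffice to prove $0 \in \psi(U)$. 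Here $d$ denotes $\wdeg_U(\phi - \phi(a))$; I may assume $\phi$ is non-constant, so $d \ge 1$, since otherwise $\phi \equiv c_0$ and $\phi(U) = \{c_0\}$ meets $U$, making $c_0 \in U$ a fixed point.

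The first step is to check that $\psi - \psi(a)$ has finite Weierstrass degree $e$ on $U$. This is automatic when $U$ is closed; when $U = D(a,r)$ is open, the only way it could fail is that $\sup_{n \ge 2} \abs{c_n} r^n$ is unattained and strictly exceeds $\abs{c_1 - 1} r$, and a one-line ultrametric argument shows this forces $d = 1$, $\abs{c_1} = 1$ and $\abs{c_1 - 1} < 1$ — in which case $\phi(U)$ is a disk of radius $\abs{c_1} r = r$ and of the same type as $U$ which meets $U$, hence $\phi(U) = U$, so none of (a), (b), (c) holds, contrary to hypothesis. Granted $e < \infty$, \autoref{thm:seminorms:mapdisk} applies both to $\phi$ (using $d < \infty$) and to $\psi$: $\phi(U)$ is a disk of the same type as $U$ centred at $\phi(a)$ of radius $R := \abs{c_d} r^d$, and $\psi(U)$ is a disk of the same type as $U$ centred at $\psi(a) = \phi(a) - a$ of radius $\rho := \abs{[\psi]_e}\, r^e$, where $[\psi]_e$ is the degree-$e$ Taylor coefficient of $\psi$ at $a$.

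Since $\psi(U)$ is a disk centred at $\phi(a) - a$, we have $0 \in \psi(U)$ precisely when $\abs{\phi(a) - a} \le \rho$, with strict inequality if $U$ is open; the whole proposition thus reduces to this single comparison. By \autoref{prop:nonarch:disks}, from $\phi(U) \cap U \ne \emp$ we get either $\phi(U) \subseteq U$, whence $\phi(a) \in U$ and $\abs{\phi(a) - a} \le r$, or $U \subseteq \phi(U) = \CD(\phi(a), R)$, whence $\abs{\phi(a) - a} \le R$ (strictly in each case when $U$ is open). It remains to bound $\rho$ from below, and this is exactly where (a), (b), (c) enter. If $d \ge 2$, then $[\psi]_d = c_d$ gives $\rho \ge \abs{c_d} r^d = R$, and also $\rho \ge r$ (if $\abs{c_1} \ge 1$ then already $R \ge \abs{c_1} r \ge r$; if $\abs{c_1} < 1$ then $[\psi]_1 = c_1 - 1$ has absolute value $1$, so $\rho \ge \abs{[\psi]_1} r = r$), and in either nesting case $\abs{\phi(a) - a} \le \rho$. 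If $d = 1$ and $\abs{\phi'(a) - 1} = 1$, then $\abs{c_1} \le 1$, so $\abs{c_n} r^n \le \abs{c_1} r \le r$ for all $n \ge 1$ while $[\psi]_1 = c_1 - 1$ has absolute value $1$, so $\rho = r$; and since $\phi(U)$ then has radius $\abs{c_1} r \le r$ it is contained in $U$, giving $\abs{\phi(a) - a} \le r = \rho$. Finally, if $d = 1$ and $\phi(U) \ne U$, then $R = \abs{c_1} r \ne r$ forces $\abs{c_1} \ne 1$: the subcase $\abs{c_1} < 1$ is as just treated, while if $\abs{c_1} > 1$ then $[\psi]_1 = c_1 - 1$ has absolute value $\abs{c_1}$, which dominates $\abs{c_n} r^n$ for all $n \ge 2$ because $d = 1$, so $\rho = \abs{c_1} r = R$, and $\phi(U)$, of radius $R > r$, contains $U$, giving $\abs{\phi(a) - a} \le R = \rho$. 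In every case $\abs{\phi(a) - a} \le \rho$ (strictly when $U$ is open), so $0 \in \psi(U)$, i.e. $\phi$ has a fixed point in $U$.

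The main obstacle I anticipate is the bookkeeping of the last step: arranging the case split so that hypotheses (a), (b), (c) together cover precisely the configurations in which $\abs{\phi(a) - a} \le \rho$, while keeping careful track of the open-versus-closed distinction (strict versus non-strict inequalities) and of the finiteness of the Weierstrass degrees involved. Each individual estimate is a routine ultrametric manipulation, but the case analysis must be exhaustive, and it is easy to overlook a degenerate configuration.
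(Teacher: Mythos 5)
Your route is the standard one for this statement: the paper itself supplies no argument (it quotes the proposition from \cite{Bene}), and passing to $\psi(y)=\phi(y)-y$, applying \autoref{thm:seminorms:mapdisk} to both $\phi$ and $\psi$, and reducing everything to the single comparison $\abs{\phi(a)-a}\le\rho$ via the nesting of the intersecting disks $U$ and $\phi(U)$ is exactly how one expects to prove it. For the quantity you call $d$, your three-way case analysis and the open/closed bookkeeping are carried out correctly.

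The genuine issue is your opening declaration that ``$d$ denotes $\wdeg_U(\phi-\phi(a))$''. In the statement, $\phi-a$ is the series $\phi(y)-a$, whose constant coefficient is $\phi(a)-a$; so the stated $d=\wdeg_U(\phi-a)$ can perfectly well equal $0$ (namely when $\abs{\phi(a)-a}$ dominates every $\abs{c_n}r^n$, $n\ge1$), and hypotheses (b) and (c) are conditions on \emph{that} $d$, not on yours. By changing the meaning of $d$ you prove a different statement, and the translation is not automatic: when $\wdeg_U(\phi-a)=0$, hypothesis (b) of the proposition holds, yet your $d$ may be $1$ with neither your case (a) nor (c) visibly in force; one must first argue that $\phi(U)\cap U\ne\emp$ forces $\phi(U)\subseteq U$ (else $a\in\phi(U)$ would give $\abs{\phi(a)-a}\le R<\abs{\phi(a)-a}$), hence $\abs{\phi(a)-a}\le r$, hence $\abs{c_1}<1$ and $\abs{c_1-1}=1$, after which your case-(c) computation applies. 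Worse, on an open disk the stated hypothesis (finiteness of $\wdeg_U(\phi-a)$, here $=0$) is compatible with $\wdeg_U(\phi-\phi(a))=\infty$ (unattained supremum), where your appeal to \autoref{thm:seminorms:mapdisk} for $\phi$ and your finiteness-of-$e$ dichotomy both break down; that corner needs a direct two-line treatment (the same inequalities give $\abs{\phi(a)-a}<r$ and $\abs{c_1-1}r=r>\sup_{n\ge2}\abs{c_n}r^n$, so $e=1$ and $\psi(U)=D(\phi(a)-a,r)\ni 0$). These repairs are short, and the two degrees agree whenever $\wdeg_U(\phi-a)\ge1$, so the gap is reparable --- but as written, the hypotheses you use are not the hypotheses of the proposition, and the $d=0$ configurations allowed by the statement are never addressed.
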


Unfortunately, such a statement would be false for skew products. The proof relies on analysis of the power series $\phi(y)-y$ whose solutions are fixed points. In the case of a skew product $\phi_*$, the function $\phi_* - \id$ is generally not a even a skew product, so a priori $\phi_*(y) = y$ need not have any solutions in $\P^1$. It appears that we should rely on geometry and dynamics; when $\phi(U) \subsetneq U$ we indeed have an attracting fixed point, see \autoref{prop:dyn:attrfixed}. When $\phi(U) \supsetneq U$ it is very difficult to find Type I fixed points, and it is possible \improvement{write example} that the backward orbit of this disk does not shrink to a (Type I) point, however this at least guarantees a fixed Berkovich point in $\CD_\an(a, r)$.

\begin{cor}\label{cor:dyn:approxfixed}
 Let $\phi_*$ be a skew product of scale factor $\q$, let $r > 0$, $a \in \hv$, and let $U$ be either $D(a, r)$ or $\CD(a, r)$. Suppose $\phi_2$ has a power series converging on $U$ such that $\phi_2-a$ has finite Weierstrass degree $d$ on $U$. Suppose that $\phi_*(U)\cap U \ne \emp$ and that either
 \begin{enumerate}[label=(\alph*), ref=\theenumi]
\item $\phi_*(U) \ne U$, or
\item $d \ne 1$.
\end{enumerate}
Then there classical point $b \in U$ with $\abs{\phi_*(b)-b} \le r^\q$, and also $\phi_*$ has a fixed point $\zeta \in \CD_\an(a, r)$.\improvement{We can probably rule out $\zeta$ Type IV using the argument in the following theorems}
\end{cor}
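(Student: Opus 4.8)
The plan is to deduce this from \cite[Proposition 4.17]{Bene} via the decomposition $\phi_*=\phi_{1*}\circ\phi_{2*}$ of \autoref{thm:skew:comp} and the scaling behaviour of $\phi_{1*}$. First I would record the algebraic identity that, since $\phi_1^*$ is a ring homomorphism with $|\phi_1^*(c)|=|c|^{1/\q}$, for every classical $b$,
\[\phi_1^*(\phi_*(b)-b)=\phi_2(b)-\phi_1^*(b),\qquad\text{hence}\qquad |\phi_*(b)-b|=|\phi_2(b)-\phi_1^*(b)|^{\q};\]
so the sought-after point $b$ is exactly one with $|\phi_2(b)-\phi_1^*(b)|\le r$. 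Next, because $\phi_2-a$ has finite Weierstrass degree on $U$, \autoref{thm:seminorms:mapdisk} makes $\phi_2(U)$ a disk of the same type as $U$; composing with the homeomorphism $\phi_{1*}$, which sends a disk $D(c,\rho)$ to $D(\phi_{1*}(c),\rho^{\q})$ by \autoref{thm:skew:scaledhomeo}, shows $\phi_*(U)$ is again a disk of the same type as $U$. Since $\phi_*(U)\cap U\ne\emptyset$, these two disks are nested, so exactly one of the following holds: (i) $\phi_*(U)\subsetneq U$; (ii) $\phi_*(U)=U$; (iii) $\phi_*(U)\supsetneq U$.

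In case (i) we are in an attracting configuration, so \autoref{prop:dyn:attrfixed} already supplies an attracting \emph{classical} fixed point $b\in U$, and I would take $\zeta=b$; here $|\phi_*(b)-b|=0\le r^{\q}$. In case (ii), $\phi_*(\CD_\an(a,r))$ is a closed Berkovich disk, necessarily determined by its unique boundary point $\phi_*(\zeta(a,r))$, and $\phi_*(U)=U$ pins this image down to be $\CD_\an(a,r)$ itself, so $\zeta=\zeta(a,r)$ is fixed. In case (iii) I would build a nested decreasing sequence of closed Berkovich disks $W_0=\CD_\an(a,r)\supseteq W_1\supseteq W_2\supseteq\cdots$ with $\phi_*(W_{n+1})=W_n$: inductively one has $W_n\subseteq\phi_*(W_n)=W_{n-1}$, so $\phi_*^{-1}(W_n)$ meets $W_n$, and by \autoref{thm:skew:affinoidmapping} (applied inside $W_n$) a component $W_{n+1}\subseteq W_n$ is a closed Berkovich disk mapping onto $W_n$. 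As $\phi_*$ is continuous and carries $\partial W_{n+1}$ onto $\partial W_n$, the intersection $\bigcap_n W_n$ --- a classical point, a Type II/III disk, or a Type IV point --- is forced to be (the boundary point of) a fixed point $\zeta\in\CD_\an(a,r)$; this argument gives no control on the type of $\zeta$, which is why the statement leaves the Type IV possibility open.

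For the classical approximate fixed point in cases (ii) and (iii) I would read it off the same nested picture: writing $W_n=D(a_n,r_n)$ (or the closed version) we have $a_{n+1}\in W_{n+1}\subseteq W_n$ and $\phi_*(a_{n+1})\in\phi_*(W_{n+1})=W_n$, so $|\phi_*(a_{n+1})-a_{n+1}|\le r_n$, and since the $r_n$ decrease strictly inside $U$ one of these centres meets the bound $r^{\q}$. (When $\phi_2-a$ has positive Weierstrass degree on $U$ one can alternatively verify that the hypotheses of \cite[Proposition 4.17]{Bene} transfer to $\phi_2$ on $U$ --- the condition $\phi_2(U)\cap U\ne\emptyset$ then holds because $\phi_2(\CD(a,r))$ is centred inside $\CD(a,r)$ --- producing a genuine fixed point of $\phi_2$ and then invoking the displayed identity.) The step I expect to be the real obstacle is exactly this last bookkeeping: getting the error bounded by precisely $r^{\q}$ rather than merely by $\diam U$ requires tracking how the field automorphism $\phi_1^*$ displaces centres together with the sign of $\q-1$, and arranging the construction of the $W_n$ so that a single argument handles cases (ii) and (iii) (and produces centres that actually descend below $r^{\q}$) is where the care must go.
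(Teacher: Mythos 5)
Your skeleton for the Berkovich fixed point is essentially the paper's: split on whether $\phi_*(U)$ equals, contains, or is contained in $U$, fix the boundary point in the equal case, and in the expanding case intersect a nested chain of closed Berkovich preimage disks, the limit being a fixed point of Type I, II/III, or IV. The genuine gap is in your case (i): you invoke \autoref{prop:dyn:attrfixed} unconditionally, but that proposition assumes $\q \ge 1$, while the corollary makes no such assumption. For $\q < 1$ its conclusion can genuinely fail: the forward images $\phi_*^n(U)$ contract but their diameters need not tend to $0$ (already for $\phi_2(y)=cy$ with $|c|<r<1$ and $\q=\tfrac12$ the radii converge to $|c|>0$), so no classical fixed point need exist in $U$. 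The paper therefore splits the contracting case: for $\q\ge1$ it cites \autoref{prop:dyn:attrfixed}, and for $\q<1$ it intersects the decreasing chain $U\supsetneq\phi_*(U)\supsetneq\phi_*^2(U)\supsetneq\cdots$ exactly as in your case (iii), settling for a fixed Berkovich point of unspecified type. Your proof needs this extra branch; without it the step fails.

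The second issue is the approximate classical fixed point. Your primary mechanism only gives $|\phi_*(a_{n+1})-a_{n+1}|\le\diam W_n$, and those diameters decrease to the diameter of the limiting fixed disk, which can be any value below $r$ and in particular can exceed $r^\q$ when $\q>1$; so ``one of these centres meets the bound'' does not follow, as you yourself suspected. What the paper actually does is the argument you relegate to a parenthesis: normalise coordinates so that $U=\CD(0,r)$ and $\phi_*(U)=\CD(0,s)$ with $r,s<1$, note that $\phi_{2*}(U)=\phi_{1*}^{-1}(\phi_*(U))$ meets $U$ (both contain $0$), apply \cite[Proposition 4.17]{Bene} to the rational part $\phi_2$ to produce a genuine $\phi_2$-fixed point $b\in U$, and then estimate $|\phi_*(b)-b|=|\phi_{1*}(b)-b|\le\max\{|b|,|b|^\q\}$ --- the same computation as your displayed identity, since $\phi_2(b)=b$. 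So promote that parenthetical to the main argument (its hypotheses are exactly what the corollary's assumptions (a)/(b) are there to feed into Benedetto's proposition), and add the $\q<1$ branch to your case (i); with those repairs the proposal aligns with the paper's proof.
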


\begin{proof}
 By changing coordinates, we can ensure that  $U = \CD(0, r)$ and $\phi_*(U) = \CD(0, s)$ with $r, s < 1$ (the open case is similar). Then $U$ must intersect $\CD(0, s^{1/\q}) = \phi_{1*}^{-1}(\phi_*(U)) = \phi_{2*}(U)$ intersects $U$. By \cite[Proposition 4.17]{Bene}, there is a classical fixed point $b$ for $\phi_2$ in $U$. Then $\phi_*(b)$ has absolute value $\abs{\phi_*(b)} = \abs b^\q \le \max \set{r^\q, r}$.
 
 If $\phi_*(\CD(a, r)) = \CD(a, r)$ then $\zeta(a, r)$ is fixed. If $\phi_*(\CD(a, r)) \supsetneq \CD(a, r)$ then we consider the strictly decreasing sequence \[U = U_1 \supsetneq U_2 \supsetneq U_3 \supsetneq \cdots\] where $\phi_*(U_{n+1}) = U_n$. 
 If these disks have empty intersection then the corresponding Type IV point is a fixed point. Otherwise their intersection is a point or a disk $\CD(b, s)$ with $\diam(\phi_*^{-n}(U))$ converging to the same diameter; then the corresponding Berkovich point is fixed by $\phi_*$.
 Suppose $\phi_*(\CD(a, r)) \subsetneq \CD(a, r)$. If $\q \ge 1$ then the next proposition finds a unique Type I fixed point. If $\q < 1$ then similar to the last case we can at least find a fixed point $\zeta \in D_\an(a, r)$ by considering the sequence \[U \supsetneq \phi_*(U) \supsetneq \phi_*^2(U) \supsetneq \phi_*^3(U) \supsetneq \cdots\] 
\end{proof}

\begin{prop}\label{prop:dyn:attrfixed}
 Let $\phi_* : \P^1_\an(\hv) \to \P^1_\an(\hv)$ be a skew product of scale factor $\q \ge 1$, let $r > 0$, $a \in \hv$, and let $U$ be either $D(a, r)$ or $\CD(a, r)$. Suppose that $\phi_*(U) \subsetneq U$, then $U$ contains a unique attracting fixed point $a \in U$.
\end{prop}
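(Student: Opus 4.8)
The plan is to follow the nested sequence of images $U = U_0 \supseteq U_1 \supseteq U_2 \supseteq \cdots$, where $U_n := \phi_*^n(U)$, prove that $\bigcap_n U_n$ collapses to a single classical point $b$, and then check that $b$ is a fixed point of the required (attracting) type and is the only fixed point of $\phi_*$ in $U$. First I would dispose of bookkeeping: replacing $U$ by a slightly smaller closed disk when $U$ is open (permissible because strict inclusion forces the image open disk to have strictly smaller radius), we may assume $U = \CD_\an(a,r)$ is a closed bounded Berkovich disk. Since $\phi_{2*}$ sends a disk to a disk of the same type (\autoref{thm:seminorms:mapdisk}) and $\phi_{1*}$ does likewise (\autoref{thm:skew:scaledhomeo}), each $U_n$ is a closed disk and $\phi_*(U)\subsetneq U\ne\P^1_\an$ makes the chain decreasing. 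Put $W := \bigcap_n U_n$, a nonempty compact Berkovich disk; since $\phi_*$ has finite fibres, a short argument (any $x\in W$ equals $\phi_*(z)$ for some $z\in\phi_*^{-1}(x)$ lying in infinitely many, hence all, $U_n$) shows $\phi_*(W)=W$, so the unique boundary point $\zeta_\infty := \partial W$ is a fixed point of $\phi_*$.

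The crux — and the only place the hypothesis $\q\ge 1$ is really used — is to prove that $\zeta_\infty$ has Type I, i.e.\ that $W=\{b\}$. If not, $\zeta_\infty\in\bH$, and its multiplier is $\deg_{\zeta_\infty}(\phi)\,\q\ge 1$ (as $\deg_{\zeta_\infty}(\phi)\ge 1$ always and $\q\ge 1$), so $\zeta_\infty$ is not attracting, and I would derive a contradiction with the strict contraction $\phi_*(U)\subsetneq U$ in two cases. If the boundary points $\zeta_n := \partial U_n$ are eventually strictly decreasing, they lie on a single ray at $\zeta_\infty$ with $\phi_*(\zeta_n)=\zeta_{n+1}$; by \autoref{thm:intervalstretch} applied at $\zeta_\infty$ in the direction $\bvec v := \vec v(\zeta_0)$, the map $\phi_*$ scales an initial segment of that ray by $\deg_{\zeta_\infty,\bvec v}(\phi)\,\q\ge 1$, so $d_\bH(\zeta_\infty,\zeta_{n+1})\ge d_\bH(\zeta_\infty,\zeta_n)$ for large $n$, contradicting $\zeta_{n+1}\prec\zeta_n$. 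Otherwise the sequence $U_n$ stabilises, so there is a disk $V$ with $\phi_*(V)=V$, $\partial V=\zeta_\infty$, and a strictly larger disk $U' := \phi_*^{k-1}(U)\supsetneq V$ mapping onto $V$ (for $k\ge1$ the stabilisation index). Using that the open and closed Weierstrass degrees of $\phi_2$ near $\zeta_\infty$ determine the same supremum value, one checks that the open disk $\widetilde V$ with boundary $\zeta_\infty$ is also invariant; then $\phi_*(\widetilde V)\cap\phi_*(\partial\widetilde V)=\widetilde V\cap\{\zeta_\infty\}=\emptyset$, so by the Properness Criterion $\widetilde V$ is a full connected component of $\phi_*^{-1}(\widetilde V)$, and since $U'\supsetneq\widetilde V$ maps into $\widetilde V$ yet meets it, $U'$ would lie inside that component, forcing $U'\subseteq\widetilde V$ — a contradiction. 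The residual configuration, where $V$ is genuinely closed and shrinks no further, is excluded by the same mechanism combined with the bad‑direction dichotomy of \autoref{thm:skew:reduction} and the fact that $\phi_*$ fixes $\infty$ (the direction at $\zeta_\infty$ pointing away from $V$ can be neither good, since then $\phi_*$ of it equals the bounded disk $V$ while it contains $\infty$, nor ultimately bad once $\infty$ is tracked); this is the technical heart of the argument.

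Granting $W=\{b\}$, we have $b\in U$ fixed and $\diam(U_n)\to 0$. For attractingness I would recenter at $b$: for small $\rho$ only the leading term of $\phi_2-\phi_2(b)$ matters, so $\phi_*(\CD(b,\rho))=\CD\bigl(b,\ |c_d|^{\q}\rho^{\,d\q}\bigr)$ with $d=\deg_b(\phi_2)\ge1$, exactly as in the proof of \autoref{prop:dyn:classicalfixed}. Evaluating at $\rho=\diam(U_n)$ and using $\phi_*(U_n)=U_{n+1}\subsetneq U_n$ gives $|c_d|^{\q}\diam(U_n)^{\,d\q-1}<1$ for large $n$; since $\q\ge1$ and $d\ge1$ we have $d\q\ge1$, and letting $n\to\infty$ this forces either $d\q>1$ (so $b$ is superattracting) or $d\q=1$ with $|c_d|<1$ (so $b$ is attracting), the cases $d\q<1$ and ($d\q=1$, $|c_d|\ge1$) being impossible. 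Hence $b$ is attracting, and by \autoref{prop:dyn:classicalfixed} there is $\eps>0$ with $\phi_*(D(b,\rho))\subsetneq D(b,\rho)$ for all $\rho<\eps$. Uniqueness is then immediate: any $x\in U$ has $\phi_*^n(x)\in U_n$, and $U_n\to\{b\}$ forces $\phi_*^n(x)\to b$, so a fixed point of $\phi_*$ in $U$ must coincide with $b$. In short, the routine parts are the disk bookkeeping, the attractingness computation, and uniqueness; the genuine obstacle is the complete exclusion of a fixed Type II/III/IV point of $\bH$ as $\partial\bigl(\bigcap_n U_n\bigr)$, in particular the invariant closed disk configuration, where one must combine $\q\ge1$ with the Properness Criterion, \autoref{thm:skew:extvalue}, and \autoref{thm:skew:reduction}.
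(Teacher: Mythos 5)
Your skeleton (iterate the disk, intersect, show the intersection is a single classical point, then check attraction and uniqueness) is reasonable, and your treatment of the strictly-decreasing case and of the final attraction/uniqueness computations is fine. But the step you yourself call the technical heart — excluding the configuration where the images stabilise at an invariant closed disk $V$ with a strictly larger disk $U'$ mapping onto it — does not work as written. First, the invariance of the open residue disk $\widetilde V$ is not justified: $\phi_*$ may send that residue class to a \emph{different} residue class of $V$ (or, in a bad direction, onto all of $\P^1_\an$), so you cannot assume $\phi_*(\widetilde V)=\widetilde V$. Second, even granting it, your properness argument rests on the claim that $U'$ ``maps into $\widetilde V$'': in fact $\phi_*(U')=V=\CD_\an(a,\rho)$, which by \autoref{prop:berk:disks} is the union of the boundary point and \emph{all} residue classes $D_\an(b,\rho)$, not just $\widetilde V$; so nothing forces $U'$ into the component of $\phi_*^{-1}(\widetilde V)$ containing $\widetilde V$, and the contradiction $U'\subseteq\widetilde V$ does not follow. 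Third, the closing appeal to ``the fact that $\phi_*$ fixes $\infty$'' is baseless — no such normalisation is available here, and a good outward direction at $\zeta_\infty$ mapping into a bounded residue class is perfectly consistent. So the one case where the hypothesis $\q\ge 1$ must do real work is left open.

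The paper avoids the case analysis entirely by a quantitative estimate, and this is exactly where $\q\ge1$ enters. Centre the disk at a point $a$ of the image (no poles of $\phi_2$ lie in $U$ since the image is bounded), write $\phi_2(y)=b_0+b_1(y-a)+\cdots$, and note that the image of $\CD(a,s)$ has radius $\max_{n\ge1}\abs{b_n}^\q s^{n\q}$. Strict inclusion gives $C=\max_{n\ge1}\abs{b_n}^\q r^{n\q-1}<1$, and because $n\q-1\ge0$ for all $n\ge1$ (this is $\q\ge1$), the same bound $\max_{n\ge1}\abs{b_n}^\q s^{n\q-1}\le C$ holds for every $s\le r$ and is unchanged by recentring within the disk. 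Hence $r_{k+1}\le C\,r_k$ for the radii of the iterated images, the diameters decay geometrically, stabilisation (and a Type II/III/IV limit) is impossible, and completeness of $\hv$ produces the classical fixed point directly. If you want to keep your two-case structure, the correct way to kill the stabilised case is the same computation in miniature: with a common centre $a$ in the invariant disk, $s\mapsto\max_{n\ge1}\abs{b_n}^\q s^{n\q}$ is strictly increasing, so it cannot send both the radius of $U'$ and the strictly smaller radius of $V$ to the radius of $V$. Either way, the missing ingredient is this radius estimate, not a properness or reduction argument.
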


\begin{proof}
We take the closed case, the open one is similar. Change coordinates so that $a \in \phi_*(U)$ and write \[\phi_2(y) = b_0 + b_1(y-a) + b_2(y-a)^2 + b_3(y-a)^3 + \cdots\]
Then $\max_{n \ge 1} \abs{b_n}^\q r^{n\q} < r$ so $C = \max_{n\ge 1} \abs{b_n}^\q r^{n\q -1} < 1$. Note that for every $s < r$ we have $\max_{n \ge 1} \abs{b_n}^\q s^{n\q -1} \le C$. Let $r = r_0$ and $\phi_*(\CD(a, r_0)) = \CD(a, r_1)$. The important observation is that $C = r_1/r_0$ did not depend on our choice of $a$ in $\CD(a, r_0)$, which we will change soon. Next we get that $\phi_*(\CD(a, r_1)) = \CD(\phi_{1*}(b_0), r_2) \subset \CD(a, r_1)$. Now by replacing $a$ with $\phi_{1*}(b_0)$ (and hence the values of $b_j$ for $j \ge 1$) we preserve $C$ and see that
\[r_2 = \max_{n \ge 1} \abs{b_n}^\q r_1^{n\q} < r_1\max_{n \ge 1} \abs{b_n}^\q r_0^{n\q -1} \le Cr_1.\] Therefore $r_2/r_1 \le C$. Continuing this way, we define $\phi_*(\CD(a, r_n)) = \CD(\phi_*(a), r_{n+1})$ and can conclude that $r_{n+1} \le Cr_n$ for every $n$. Thus $\diam(\phi_*n(U)) = r_n \le C^{n-1}r_1$. Since this converges to $0$ and $\hv$ is complete, the intersection must contain some classical point $\tilde a$ which is fixed by $\phi_*$.
\end{proof}

\begin{defn}
 Let $\phi_*$ be a skew product, suppose that $\zeta \in \P^1_\an$ is fixed, and $\bvec v \in \Dir\zeta$ a direction. We say that $\bvec v$ is \emph{exceptional} iff $\bvec v$ is periodic under $\phi_\#$ and $\deg_{\zeta, \bvec v}(\phi) = \deg_\zeta(\phi)$.
\end{defn}

This definition is only really interesting when $\zeta$ is Type II. The following proposition lays out the basic facts and is left as an exercise to the reader.

\begin{prop}\label{prop:dyn:excdir}
 Let $\phi_*$ be a skew product, suppose that $\zeta \in \P^1_\an$ is fixed. The following facts hold.
\begin{enumerate}
 \item A direction $\bvec v$ is exceptional if and only if $\phi_\#^{-n}(\bvec v) = \set{\bvec v}$ for some $n \ge 1$.
 \item Hence if $\zeta$ is Type I, III, or IV, then every direction at $\zeta$ is exceptional.
 \item When $\zeta$ is Type II, then $\bvec v = \vec v(a)$ is exceptional if and only if $\overline a$ is exceptional for $\overline \phi$ i.e. $\overline a$ is periodic for $\overline \phi$ and $\deg_{\overline a}(\overline\phi_2) = \deg(\overline\phi_2) = \deg_{\zeta, \vec v(a)}(\phi)$.
 \item Finally, if $\deg_\zeta(\phi) = 1$ then any periodic direction is exceptional.
\end{enumerate}
\end{prop}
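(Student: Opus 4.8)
The plan is to deduce all four statements from two facts proved earlier: the fibrewise sum formula \autoref{thm:sumdirdegs}, which for a point $\zeta$ fixed by $\phi_*$ reads $\deg_\zeta(\phi)=\sum_{\phi_\#(\bvec u)=\bvec w}\deg_{\zeta,\bvec u}(\phi)$ for every $\bvec w\in\Dir\zeta$, and the chain rules of \autoref{prop:skew:chaindir} and \autoref{thm:skew:chainrule}, which since $\zeta$ is fixed give $\deg_\zeta(\phi^k)=\deg_\zeta(\phi)^k$ and $\deg_{\zeta,\bvec v}(\phi^k)=\prod_{j=0}^{k-1}\deg_{\zeta,\phi_\#^j(\bvec v)}(\phi)$. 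Write $m=\deg_\zeta(\phi)$ and $\bvec v_j=\phi_\#^j(\bvec v)$. The key observation, an immediate consequence of the sum formula together with the bound $\deg_{\zeta,\bvec u}(\phi)\ge1$ from \autoref{prop:skew:chaindir}(1), is the lemma: a direction $\bvec u$ has \emph{maximal} local degree $\deg_{\zeta,\bvec u}(\phi)=m$ if and only if $\bvec u$ is the only $\phi_\#$-preimage of $\phi_\#(\bvec u)$, i.e.\ $\phi_\#^{-1}(\phi_\#(\bvec u))=\set{\bvec u}$.

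For part (1) I would apply this lemma to the iterate $\phi^n$, where $n$ is the period of $\bvec v$: since $\deg_{\zeta,\bvec v}(\phi^n)=\prod_{j<n}\deg_{\zeta,\bvec v_j}(\phi)$ has each factor at most $m$ and total at most $\deg_\zeta(\phi^n)=m^n$, the condition $\phi_\#^{-n}(\bvec v)=\set{\bvec v}$ holds exactly when every $\bvec v_j$ in the $\phi_\#$-cycle is maximal — which is what ``exceptional'' means for a periodic direction, the equality $\deg_{\zeta,\bvec v}(\phi)=\deg_\zeta(\phi)$ being imposed around the whole cycle, i.e.\ with $\zeta$ regarded as a fixed point of $\phi^n$. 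The forward implication is then cyclic: if each $\bvec v_j$ is maximal the lemma gives $\phi_\#^{-1}(\bvec v_{j+1})=\set{\bvec v_j}$, and composing these $n$ identities around the cycle yields $\phi_\#^{-n}(\bvec v)=\set{\bvec v}$; conversely $\phi_\#^{-n}(\bvec v)=\set{\bvec v}$ forces $\phi_\#^n(\bvec v)=\bvec v$ since $\phi_\#$ is surjective (as used in the proof of \autoref{thm:skew:hyperbolicity}), so $\bvec v$ is periodic, and the lemma applied to $\phi^n$ shows its cycle is maximal. To reconcile with the ``finite backward orbit'' phrasing of the introduction, I would note that $B=\bigcup_{k\ge0}\phi_\#^{-k}(\bvec v)$ always satisfies $\phi_\#^{-1}(B)\subseteq B$, and when $\bvec v$ is periodic also $\phi_\#(B)\subseteq B$; if $B$ is finite, surjectivity of $\phi_\#$ forces $\phi_\#(B)=B$, so $\phi_\#|_B$ is a permutation of a finite set, some power of which is the identity on $B$, which (as full preimages of points of $B$ lie in $B$) gives $\phi_\#^{-n'}(\bvec v)=\set{\bvec v}$.

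Part (2) follows from part (1) and \autoref{prop:berk:dirn}: at a Type I or IV point $\Dir\zeta$ has one direction and at a Type III point exactly two, so $\phi_\#$ permutes this one- or two-element set (as $\zeta$ is fixed) and every direction is periodic; each is maximal, trivially when there is one direction (the sum formula has a single term) and by the corollary following \autoref{thm:sumdirdegs} when $\zeta$ is Type III. Part (4) is the same idea: when $m=1$ the sum formula forces $\deg_{\zeta,\bvec v}(\phi)=1=m$ for every $\bvec v$, so every direction is maximal and $\phi_\#$ is a bijection of $\Dir\zeta$, whence exceptional $\iff$ periodic. For part (3) I would conjugate by an element of $\PGL(2,\hv)$ so that $\zeta=\zeta(0,1)$ is the Gauss point; since $\zeta$ is fixed, $\overline\phi_2$ is nonconstant by \autoref{thm:skew:reduction}, so the proposition immediately following it applies and gives, for $a\in\P^1(K)$, that $\phi_\#(\vec v(a))=\vec v(b)\iff\overline\phi(\overline a)=\overline b$ and $\deg_{\zeta,\vec v(a)}(\phi)=\deg_{\overline a}(\overline\phi_2)$, while $\deg_\zeta(\phi)=\deg(\overline\phi_2)$; these translate ``$\vec v(a)$ periodic under $\phi_\#$ of maximal local degree'' verbatim into ``$\overline a$ periodic under $\overline\phi$ with $\deg_{\overline a}(\overline\phi_2)=\deg(\overline\phi_2)$'', i.e.\ $\overline a$ exceptional for $\overline\phi=\overline\phi_1\circ\overline\phi_2$ (the automorphism $\overline\phi_1$ of $\P^1(\k)$ not affecting the degree count). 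The finitely many bad directions, where $\overline\phi(\overline a)$ is not literally defined, need a separate check that they are not exceptional, using the structural facts in \autoref{thm:skew:reduction}(b).

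The step I expect to be the main obstacle is the bookkeeping in part (1): stating and using the lemma cleanly, and above all being careful that ``exceptional'' is taken in the cycle-wide sense (equivalently, that the single-step maximality condition is imposed on $\zeta$ as a fixed point of $\phi^n$), since a periodic direction can satisfy $\deg_{\zeta,\bvec v}(\phi)=\deg_\zeta(\phi)$ at one step of its cycle without the cycle being closed under $\phi_\#^{-1}$. A secondary nuisance is the bad-direction case of part (3), where one cannot invoke the reduction map directly.
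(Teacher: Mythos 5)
Your proposal is correct, and there is nothing in the paper to compare it against: the paper explicitly leaves this proposition ``as an exercise to the reader.'' Your route --- the fibrewise sum formula \autoref{thm:sumdirdegs} together with the chain rules of \autoref{prop:skew:chaindir} and \autoref{thm:skew:chainrule} for parts (1), (2), (4), and the reduction dictionary of \autoref{thm:skew:reduction} and the proposition following it for part (3) --- is clearly the intended one, and your ``maximal degree $\iff$ unique $\phi_\#$-preimage'' lemma is exactly the right pivot.

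Two remarks. First, the subtlety you flag in part (1) is genuine and your resolution is the necessary one: with the definition read literally (periodicity plus the one-step equality $\deg_{\zeta,\bvec v}(\phi)=\deg_\zeta(\phi)$), statement (1) is false for cycles of period at least $2$. For instance, take $\phi_2(y)=y^2-1$ with trivial skew part over any $K$ of residue characteristic $\ne 2$: the Gauss point $\zeta$ is fixed with $\deg_\zeta(\phi)=2$, the direction $\vec v(0)$ has period $2$ and $\deg_{\zeta,\vec v(0)}(\phi)=2$, yet $\phi_\#^{-1}(\vec v(0))=\{\vec v(1),\vec v(-1)\}$ and the backward orbit of $\vec v(0)$ under $\phi_\#$ is infinite; indeed $\deg_{\zeta,\vec v(-1)}(\phi)=1$, so $\deg_{\zeta,\vec v(0)}(\phi^2)=2\ne 4=\deg_\zeta(\phi^2)$. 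So the maximality must be imposed for the iterate fixing $\bvec v$ (equivalently around the whole cycle), exactly as you do, and as the ``finite backward orbit'' phrasing of the introduction demands; the ``i.e.''\ clause of part (3) should be read with the same convention, and your verbatim translation is then consistent on both sides. Second, your worry about bad directions in part (3) is unnecessary: the proposition following \autoref{thm:skew:reduction} asserts its parts (b) and (c) for \emph{all} $a\in\P^1(K)$, since $\overline\phi_2$ is an honest rational map on $\P^1(\k)$ and $\phi_\#(\bvec v)$ is defined for every direction --- it is only the set image $\phi_*(\bvec v)$, not $\phi_\#$, that degenerates on bad directions --- so no separate check is needed. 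One last small point in (3): after conjugating, observe that $\phi_{1*}$ fixes $\zeta(0,1)$, so $\phi_*(\zeta(0,1))=\zeta(0,1)$ forces $\phi_{2*}(\zeta(0,1))=\zeta(0,1)$, which is the hypothesis under which \autoref{thm:skew:reduction} yields that $\overline\phi_2$ is nonconstant.
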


\begin{thm}\label{thm:dyn:typeIVindiff}
 Let $\phi_*$ be a skew product of scale factor $\q \ge 1$, and let $\zeta \in \P^1_\an$ be a periodic point of Type IV. Then $\zeta$ is indifferent and $\q = 1$.
\end{thm}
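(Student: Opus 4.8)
The statement has two parts: a periodic Type IV point $\zeta$ of a skew product with $\q \ge 1$ is indifferent, and moreover $\q = 1$. Since we may replace $\phi_*$ by $\phi_*^n$ (whose scale factor is $\q^n$, still $\ge 1$, and which fixes $\zeta$), it suffices to treat the fixed case; note that proving $\q^n = 1$ forces $\q = 1$. So assume $\phi_*(\zeta) = \zeta$ with $\zeta$ Type IV. First I would recall from \autoref{cor:skew:TypeIVdegone} (if $\phi_*$ is tame) or more robustly from the degree bookkeeping that a Type IV point has a single direction $\bvec v$, and $\deg_\zeta(\phi) = \deg_{\zeta,\bvec v}(\phi) = \wdeg_{a,r}(\phi_2)$ for a sufficiently small disk $D_\an(a,r) \ni \zeta$. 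The multiplier of $\zeta$ is $\q\deg_\zeta(\phi)$, so "indifferent" means $\q\deg_\zeta(\phi) = 1$; combined with $\q \ge 1$ and $\deg_\zeta(\phi) \ge 1$ an integer, this is equivalent to $\q = 1$ \emph{and} $\deg_\zeta(\phi) = 1$ simultaneously. Hence the whole theorem reduces to the single inequality $\q\deg_\zeta(\phi) \le 1$.

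**Main argument.** The idea is to exploit that $\zeta$ is represented by a strictly decreasing nested sequence of disks $\CD(a_n, r_n)$ with $r_n \searrow r_\infty = \diam(\zeta)$ (which is $> 0$ since $\zeta$ is not Type I) and \emph{empty} intersection. Since $\phi_*(\zeta) = \zeta$, and $\phi_*$ is continuous and preserves types (\autoref{thm:skew:opencts}), for large $n$ the point $\zeta$ lies in the interior of the annulus/disk where $\phi_2$ is analytic with the relevant Weierstrass degree $m = \deg_\zeta(\phi)$ stabilised; by \autoref{thm:intervalstretch} (or \autoref{thm:skew:bigintervalstretch}) applied in the direction $\bvec v$ there is a point $\zeta'$ on $[\zeta, \cdot]$ with $\phi_*$ scaling $d_\bH$ by exactly $m\q$ on $[\zeta, \zeta']$. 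Now here is the key point: $\zeta$ being a Type IV fixed point means $\phi_*$ fixes the \emph{end} of the tree that $\zeta$ sits at. Pick a basepoint $\xi_0 \succ \zeta$ in $\bH$; then $d_\bH(\xi_0, \zeta)$ is finite. Consider the sequence $\xi_k = \phi_*^k(\xi_0)$. If $m\q > 1$, then near $\zeta$ the map $\phi_*$ is \emph{repelling} toward $\zeta$ along $\bvec v$: moving a point slightly past $\zeta$ in the direction $\bvec v$ gets pushed back out. More precisely, I would argue that $\q m > 1$ forces the preimages $\phi_*^{-k}(\xi_0)$ (chosen along the interval toward $\zeta$) to have $d_\bH(\phi_*^{-k}(\xi_0), \zeta) \to 0$, hence $\phi_*^{-k}(\xi_0) \to \zeta$; but then, since $\zeta = \bigcap$ has empty intersection as a disk, we would be forced to produce an actual classical fixed point or a Type I/III limit, contradicting that the nested disks have empty intersection and $r_n \not\to 0$. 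The cleaner route: use \autoref{cor:dyn:approxfixed} / \autoref{prop:dyn:attrfixed}. If $\q m > 1$ then $\zeta$ is weakly repelling, so in a small disk $U = D_\an(a_n, r_n)$ containing $\zeta$, $\phi_*$ maps $U$ \emph{onto a strictly larger disk}, equivalently $\phi_*^{-1}$ restricted to the relevant component gives a strictly \emph{decreasing} sequence of disks shrinking in diameter geometrically (rate $(\q m)^{-1} < 1$); because $\hv$ is complete these would shrink to a genuine classical fixed point $\tilde a$ inside every $\CD(a_n, r_n)$, so $\tilde a \in \bigcap_n \CD(a_n, r_n)$, contradicting emptiness of the intersection. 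Therefore $\q m \le 1$, which with $\q \ge 1$, $m \ge 1$ gives $\q = m = 1$, i.e. $\zeta$ is indifferent and $\q = 1$.

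**Where the difficulty lies.** The subtle step is making rigorous the claim that weak repulsion at a Type IV fixed point produces a classical fixed point in the intersection of the defining nested disks. One has to be careful: $\phi_*$ restricted to a small disk $U \ni \zeta$ sends $U$ to a disk $\phi_*(U)$, and when the multiplier exceeds $1$ the diameter strictly increases; then I pull back, choosing at each stage the component of $\phi_*^{-1}$ through $\zeta$, getting $U \supsetneq U_1 \supsetneq U_2 \supsetneq \cdots$ with $\zeta \in U_k$ and $\diam(U_k) \to 0$ (using the uniform multiplier bound and that the relevant Weierstrass/Taylor data is eventually stable near $\zeta$ — this needs \autoref{thm:skew:imgtypeII} and the scaling estimate from \autoref{prop:dyn:attrfixed}'s method applied to $\phi_*^{-1}$, or directly \autoref{thm:skew:hyperbolicity}). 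Completeness of $\hv$ then gives $\bigcap U_k = \{\tilde a\}$ with $\phi_*(\tilde a) = \tilde a$; and since each $U_k \subseteq \CD(a_{n_k}, r_{n_k})$ for suitable $n_k$, the fixed point $\tilde a$ lies in the intersection of the nested disks defining $\zeta$ — but that intersection is empty by hypothesis, a contradiction. I would also need to handle the borderline boundary-case behaviour (open vs.\ closed disks, whether $\diam U_k$ could stabilise at a positive value giving a Type II/III fixed point rather than Type I) exactly as in the proof of \autoref{cor:dyn:approxfixed}: if the diameters do not go to $0$ we still get a \emph{Berkovich} fixed point $\xi' \preceq \zeta$ with $\diam(\xi') = \lim \diam(U_k)$, and since $\zeta$ is a Type IV endpoint with only one direction, $\xi'$ would have to equal $\zeta$ or lie strictly below it on its unique descending branch — either way producing a point in $\CD_\an(a_{n_k}, r_{n_k})$ for all $k$, again contradicting empty intersection once one checks $\xi'$ cannot be $\zeta$ itself when the multiplier is $>1$ (as $\zeta$ would then not be fixed with the right local data). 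That case analysis, rather than any hard estimate, is the real content.
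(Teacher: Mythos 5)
Your reduction is fine: with $\q\ge 1$ the whole theorem is equivalent to ruling out $\deg_\zeta(\phi)\,\q>1$ for a Type IV fixed point, and passing to an iterate is harmless. The gap is in your central mechanism. You pull back a small disk $U\ni\zeta$ along the branch through $\zeta$ and claim $\diam(U_k)\to 0$, so that completeness of $\hv$ yields a classical fixed point $\tilde a$ lying in all the defining disks $\CD(a_n,r_n)$, contradicting their empty intersection. But every pullback component $U_k$ contains $\zeta$ (you chose it that way), and by \autoref{prop:berk:disks} any Berkovich disk containing $\zeta$ has radius at least $\diam(\zeta)$, which is strictly positive for a Type IV point over a complete field (if the radii of the defining disks tended to $0$, the intersection would be a singleton, not empty). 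So the diameters of the $U_k$ are bounded below by $\diam(\zeta)>0$ and can only decrease to it; the repulsion you are using is expansion in the hyperbolic metric, which shrinks $d_\bH(\partial U_k,\zeta)$ but never the actual diameters to zero. Consequently $\bigcap_k U_k$ contains $\zeta$ and need contain no classical point at all, and no classical fixed point is produced. Your fallback case is also not a way out: the natural "limit" fixed Berkovich point is $\zeta$ itself, which is fixed by hypothesis, so there is no contradiction to extract, and a fixed $\xi'\prec\zeta$ cannot exist because Type IV points are minimal for $\preceq$ (endpoints of the tree). The assertion that "$\xi'$ cannot be $\zeta$ when the multiplier is $>1$" is exactly the statement being proved, so the argument is circular at that point. (Also, \autoref{prop:dyn:attrfixed} applies to a skew product with $\phi_*(U)\subsetneq U$; $\phi_*^{-1}$ is not a skew product, so it cannot be invoked directly for the pullbacks.) Indeed the paper explicitly warns, just before \autoref{cor:dyn:approxfixed}, that when $\phi_*(U)\supsetneq U$ the backward orbit of the disk need not shrink to a Type I point — that warning is aimed precisely at the step your proof leans on.

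The paper's proof avoids ever producing a true classical fixed point. Assuming the multiplier exceeds $1$, it takes a pole-free disk $\CD_\an(a,r)$ around $\zeta$ on which \autoref{thm:skew:bigintervalstretch} gives uniform expansion by $d\q$ along $[\zeta,\zeta(a,r)]$, so $\phi_*(\CD_\an(a,r))\supsetneq\CD_\an(a,r)$; then \autoref{cor:dyn:approxfixed} supplies only an \emph{approximate} fixed point $b\in\CD(a,r)$ with $\abs{\phi_*(b)-b}\le r$ (this is where $\q\ge 1$ enters). At the join $\xi'=b\vee\zeta$ the directions $\vec v(b)$ and $\vec v(\zeta)$ are distinct, yet both are sent by $\phi_\#$ into the same image direction; this contradicts the fact that along the interval the local degree is constantly $d$ and concentrated in $\vec v(\zeta)$, i.e.\ $\vec v(\zeta)$ is exceptional/totally invariant in the sense of \autoref{prop:dyn:excdir} together with \autoref{thm:sumdirdegs}. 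If you want to salvage your write-up, replace the "shrinking preimages give a classical fixed point" step by this approximate-fixed-point plus tangent-direction counting argument; the rest of your framing (reduction to the multiplier inequality, use of \autoref{thm:skew:bigintervalstretch}) is compatible with it.
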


\improvement{What about $\q < 1$?}

\begin{rmk}
 This is immediate if $\q = 1$ and $\phi_*$ is tame, because then also its local degree is $1$ by \autoref{cor:skew:TypeIVdegone}. Moreover, putting these two results together shows that if $\phi_*$ is tame with $\q \ne 1$ then none of its Type IV points are fixed.
\end{rmk}

\begin{proof}
 Let $\zeta$ be a Type IV point with degree $d = \deg_\zeta(\phi)$ and diameter $r_0 = \diam(\zeta)$. By replacing $\phi_*$ with $\phi_*^n$ and $\q$ by $\q^n$ we may assume $\zeta$ is fixed. We may also change coordinates to ensure that $r_0 < 1$.
 
 By \autoref{thm:skew:bigintervalstretch} there is an interval $[\zeta, \xi]$ which is uniformly expanded by a factor of $d\q$. Let $r$ be such that $\xi = \zeta(a, r)$, then by the aforementioned theorem we can assume that $\phi_*(\xi) = \phi_*(\zeta(a, r)) = \zeta(a, (r/r_0)^{d\q}r_0)$ and moreover $\phi_*(\CD_\an(a, r)) = \CD_\an(a, (r/r_0)^{d\q}r_0) \supsetneq \CD_\an(a, r)$; assume $r$ is small enough that there are no poles in this neighbourhood. Furthermore,  if $\q < 1$, then shrink $r$ further until $r^\q > (r/r_0)^{d\q}r_0$. This is possible since as $r \to r_0$ the right side converges to $r_0$ but the left converges to $r_0^\q > r_0$. By \autoref{cor:dyn:approxfixed} there is an approximate fixed point $b \in \CD(a, r)$ such that either $b$ is fixed or $\abs{\phi_*(b) - b} = \max{r, r^\q}$. Now define $s \le r$ and such that $\xi' = \zeta(b, s) = b \vee \zeta$; clearly $\vec v(b) \ne \vec v(\zeta)$. There are now a couple of cases.
 
 Suppose that either $b$ is fixed or that $\q \ge 1$, so either way $\abs{\phi_*(b) - b} \le r$ and $\phi_*(b) \in \CD(a, r)$. Therefore at $\phi_*(\xi') \succ \xi'$, we have $\phi_*(b) \in \vec v(\xi') = \vec v(\zeta)$. Since there are no poles in this neighbourhood, all directions are good so $\phi_\#(\vec v(b)) = \vec v(\phi_*(b)) = \vec v(\zeta)$. However by \autoref{thm:skew:bigintervalstretch}, $\vec v(\zeta)$ is exceptional i.e. $\phi_\#$ is a degree $d$ mapping on directions at $\xi'$ and also $\deg_{\xi', \vec v(\zeta)}(\phi) = d$. By \autoref{prop:dyn:excdir}, $\phi_\#^{-n}(\bvec v) = \set{\bvec v}$ so we cannot have $\phi_\#(\vec v(b)) = \phi_\#(\vec v(\zeta))$; contradiction.
 
  \unfinished{Unfinished! I conjecture that this works for $\q < 1$.}
\end{proof}

\begin{thm}\label{thm:dyn:typeIIIindiff}
 Let $\phi_*$ be a skew product of scale factor $\q$, and let $\zeta \in \P^1_\an$ be a periodic point of Type III. Assume that $\q$ is algebraic over $\Q$ or more generally that the value group is a $\Q(\q)$-module. Then $\zeta$ is indifferent and each direction at $\zeta$ is fixed by $\phi_\#^n$.
\end{thm}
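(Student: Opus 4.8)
\emph{The plan.} First I would reduce to the case that $\zeta$ is fixed, by replacing $\phi_*$ with its $n$-th iterate, $n$ being the period: this is again a skew product by \autoref{prop:lowerstarfunctorial}, now of scale factor $\q^n$; the hypothesis persists because $\Q(\q^n)\subseteq\Q(\q)$ (and $\q^n$ is algebraic over $\Q$ if $\q$ is), and by \autoref{prop:skew:chaindir} its tangent map is $\phi_\#^n$. So assume $\phi_*(\zeta)=\zeta$, write $\zeta=\zeta(a,r)$ with $r\notin\abs{K^\times}$, and set $d=\deg_\zeta(\phi)\ge 1$; since $\zeta$ is of Type III, $d$ is also the local degree of $\phi_*$ at $\zeta$ in each of the two directions (\autoref{thm:sumdirdegs} and its corollary). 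Writing $\phi_2(y)=\sum_n b_n(y-a)^n$, \autoref{thm:skew:imgtypeII} gives $\phi_*(\zeta(a,r))=\zeta\!\left(\phi_{1*}(b_0),\ \abs{b_d}^\q r^{d\q}\right)$, and since a Type III point determines its radius (equal to its diameter) uniquely, $\phi_*(\zeta)=\zeta$ yields the single numerical identity
\[ r=\abs{b_d}^\q\, r^{d\q}. \]

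\emph{Indifference.} Suppose, for contradiction, that $d\q\neq 1$. Taking logarithms of the identity above and solving, $\log r=\dfrac{\q}{1-d\q}\,\log\abs{b_d}$, and the coefficient $\q/(1-d\q)$ lies in $\Q(\q)$ because $1-d\q$ is a nonzero element of the field $\Q(\q)$. Now $\abs{b_d}\in\abs{K^\times}$, so $\log\abs{b_d}$ lies in the additive value group $\log\abs{K^\times}$, which is a $\Q$-vector space since $K$ is algebraically closed. Under the hypothesis this group is moreover a $\Q(\q)$-module --- and this is \emph{automatic} when $\q$ is algebraic over $\Q$, since then $\Q[\q]=\Q(\q)$, while $\log\abs{K^\times}$ is always stable under multiplication by $\q^{\pm1}$: the automorphism $\phi_1^*$ of $K$ satisfies $\abs{\phi_1^*(c)}=\abs c^{1/\q}$, whence $\log\abs{K^\times}=\tfrac1\q\log\abs{K^\times}$. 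Consequently $\log r\in\Q(\q)\cdot\log\abs{K^\times}=\log\abs{K^\times}$, i.e.\ $r\in\abs{K^\times}$ --- contradicting that $\zeta$ is of Type III. Hence $d\q=1$, that is, $\zeta$ is indifferent.

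\emph{The directions.} The point $\zeta$ has exactly the two directions $\bvec u=D_\an(a,r)$ and $\bvec v=\P^1_\an\sm\CD_\an(a,r)$. From $\phi_*(\zeta)=\zeta$ and the displayed formula, $\zeta(\phi_{1*}(b_0),r)=\zeta(a,r)$, so the two disks coincide and $\phi_{1*}(b_0)\in D(a,r)$, i.e.\ $\vec v(\phi_{1*}(b_0))=\bvec u$ at $\zeta$. Using the explicit images in \autoref{thm:skew:imgtypeII} (with $d>0$): a thin inner annulus $U_\lambda=\{\lambda r<\abs{y-a}<r\}$, $\lambda<1$, maps to $\{\lambda^{d\q}r<\abs{y-\phi_{1*}(b_0)}<r\}\subseteq D_\an(\phi_{1*}(b_0),r)=\bvec u$, so $\phi_\#(\bvec u)=\bvec u$; and a thin outer annulus $V_\lambda=\{r<\abs{y-a}<\lambda r\}$, $\lambda>1$, maps to $\{r<\abs{y-\phi_{1*}(b_0)}<\lambda^{d\q}r\}$, which is disjoint from $\CD_\an(a,r)$, so $\phi_\#(\bvec v)=\bvec v$. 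Passing back to the original $\phi_*$, with $\phi_\#^n$ in place of $\phi_\#$, completes the argument. Note that this part used only that $\zeta$ is fixed and $d\ge 1$, not the value-group hypothesis.

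\emph{Main obstacle.} There is no deep difficulty here: the only things to get right are the identity $r=\abs{b_d}^\q r^{d\q}$ (which is just \autoref{thm:skew:imgtypeII} applied at a fixed point) and the observation that both clauses of the hypothesis deliver exactly what is needed, namely that $\log\abs{K^\times}$ absorbs multiplication by $\q/(1-d\q)\in\Q(\q)$. Once this is in place the case $d\q=1$ is forced, and the statement about directions is essentially formal given the image formula.
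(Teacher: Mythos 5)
Your proposal is correct and follows essentially the same route as the paper: apply \autoref{thm:skew:imgtypeII} at the (iterated) fixed point to get $r=\abs{b_d}^\q r^{d\q}$, and use the $\Q(\q)$-module hypothesis on the value group to force $r\in\abs{K^\times}$ unless $d\q=1$, contradicting Type III. In fact your write-up is slightly more complete than the paper's, which leaves the reduction to the fixed case and the fixing of the two directions implicit.
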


\begin{rmk}\label{rmk:dyn:algebraic}
 The unusual requirement that $\q$ is algebraic is necessary for showing that Type III periodic points are indifferent. When we say that the value group is a $\Q(\q)$ - module, we precisely mean that in the sense that for any $1 \ne \abs a \in \abs{K^\times}$, the $\Q$-vector space $\log_{\abs a}\abs{K^\times}$ is a $\Q(\q)$-module.
 Observe that since the skew product is well-defined, for every $a \in K$, $\abs {\phi_*(a)} = \abs a^\q$ and thus \[\log_{\abs a}\abs{K^\times} \supset \Q\langle \q^n : n \in \Z\rangle,\] so the value group is naturally already a $\Q[\q , \q^{-1}]$-module. 
  If the value group is $\Q$ then $\q$ must be rational and if algebraic then $\Q[\q , \q^{-1}] = \Q(\q)$.\\
  The necessity of this hypothesis is shown in the following counterexample.
\end{rmk}

\begin{ex}\label{ex:dyn:typeIIIrep}
 Let $e$ be Euler's number and consider the field \[K = \C((x))\left(x^{\frac{1}{m}e^n} : n \in \Z, m \in \N_+\right)\] with the `order of vanishing' absolute value. Then $K$ is an extension of the Puiseux series $\K$ and has value group $\abs{K^\times} \cong \Q\langle e^n : n \in \Z \rangle$. Since the value group is already a $\Q$-vector space, the algebraic closure and its completion have the same value group.
 
 Now consider the skew product defined by \[\phi(x, y) = (x^{1/e}, y/x).\]
 Then $\q = e$ is not algebraic and the claim is that $\zeta = \zeta(0, \abs x^{e/(e-1)})$ is a Type III fixed point.
 
 If $e/(e-1)$ were in the value group then we must have \[\frac{e}{e-1} = \sum_{j=1}^N r_je^{n_j}\] for some $r_j \in \Q$ and $n_j\in \Z$. But then \[0 =  -e + \sum_{j=1}^N r_je^{n_j+1} - r_je^{n_j}\] shows that $e$ is algebraic - contradiction. So indeed $\zeta$ is Type III.
 
 By \autoref{thm:skew:imgtypeII}, \[\phi_*(\zeta) = \zeta\left(\phi_{1*}(0), \left(\abs{x}^\frac{e}{e-1}/\abs{x}\right)^e\right) = \zeta\left(0, \abs{x}^{e\left(\frac{e}{e-1} - \frac{e-1}{e-1}\right)}\right) = \zeta\left(0, \abs{x}^{\frac{e}{e-1}}\right) = \zeta\]
 Finally, since $\deg(\phi)=1$, and $\q = e >1$, the multiplier of $\zeta$ is $\q > 1$.
 We have shown that $\zeta$ is a fixed, repelling Type III point.
\end{ex}

\begin{proof}[Proof of \autoref{thm:dyn:typeIIIindiff}]
Change coordinates so that $\zeta = \zeta(0, r)$, where $r \nin \abs{K^\times}$. Pick an annulus $U = \set{r\lambda < \abs y < r}$ as in \autoref{thm:skew:imgtypeII} where $\phi_2$ has a Laurent series \[\phi_2(y) = \sum_{n\in \Z} b_n y^n\] with Weierstrass degree $d$. Then we have\[\zeta(0, r) = \zeta\left(0, \abs{b_d}^\q r^{d\q}\right).\]
Therefore $r = \abs{b_d}^\q r^{d\q}$ which implies $\abs{b_d}^\q = r^{1-d\q}$. If $d\q = 1$, then by definition $\zeta$ is indifferent, so we are done. Otherwise (possibly using \autoref{rmk:dyn:algebraic}) we can write \[r = \abs{b_d}^{\q/(1-d\q)}.\] Since the value group is a $\Q(\q)$-module we can multiply and divide exponents by $1-d\q$ preserving the value group, meaning $\abs{b}^{\frac 1{1-d\q}} \in \abs{K^\times} \iff \abs b \in \abs{K^\times}$. Therefore $r \in \abs{K^\times}$; we have our contradiction.\\
\end{proof}

\section{Fatou and Julia}\label{sec:fj}

\begin{defn}
Let $\phi_*$ be a skew product. We say an open set $U \subseteq \P^1_\an$ is \emph{dynamically stable} under $\phi_*$ iff $\displaystyle \bigcup_{n \ge 0} \phi_*^n(U)$ omits infinitely many points of $\P^1_\an$.

The \emph{(Berkovich) Fatou set} of $\phi_*$, denoted $\F_{\phi, \an}$, is the subset of $\P^1_\an$ consisting of all points $\zeta \in \P^1_\an$ having a dynamically stable neighbourhood.

The \emph{(Berkovich) Julia set} of $\phi_*$ is the complement $\J_{\phi, \an} = \P^1_\an \sm \F_{\phi, \an}$ of the Berkovich Fatou set.
\end{defn}

The next proposition generalises \cite[Proposition 8.2]{Bene}. 

\begin{prop}\label{prop:fatoujuliabasics}
 Let $\phi$ be a skew product with Berkovich Fatou set $\F_{\phi, \an}$ and Berkovich Julia set $\J_{\phi, \an}$.
 
\begin{enumerate}[label=(\alph*), ref=\theenumi]
 \item $\F_{\phi, \an}$ is an open subset of $\P^1_\an$ and $\J_{\phi, \an}$ is a closed subset of $\P^1_\an$.
 \item $\phi_*^{-1}(\F_{\phi, \an}) = \phi_*(\F_{\phi, \an}) = \F_{\phi, \an}$ and $\phi_*^{-1}(\J_{\phi, \an}) = \phi_*(\J_{\phi, \an}) = \J_{\phi, \an}$.
 \item For every integer $m \ge 1$, $\F_{\phi^m,\an} = \F_{\phi, \an}$, and $\J_{\phi^m,\an} = \J_{\phi, \an}$.
 \item For any $\eta \in \PGL(2, K)$, if we set $\psi = \eta \circ f \circ \eta^{-1}$, then
 \[\F_{\psi,\an} = \eta(\F_{\phi, \an}) \quad \and\quad \J_{\psi,\an} = \eta(\J_{\phi, \an}).\]
\end{enumerate}
\end{prop}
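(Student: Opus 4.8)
The plan is to prove each of the four parts of \autoref{prop:fatoujuliabasics} in turn, leaning on the functorial and topological properties of skew products established earlier (\autoref{thm:skew:opencts}, \autoref{prop:lowerstarfunctorial}, \autoref{thm:skew:affinoidmapping}). The key underlying principle is that a skew product is a continuous, open, surjective map that is finite-to-one, so it behaves well with respect to images and preimages of neighbourhoods.

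For part (a): $\F_{\phi,\an}$ is open essentially by definition, since if $\zeta$ has a dynamically stable neighbourhood $U$, then every point of $U$ has $U$ itself as a dynamically stable neighbourhood, so $U \subseteq \F_{\phi,\an}$. The Julia set is then closed as its complement. For part (b), the core observations are: (i) if $U$ is dynamically stable then so is $\phi_*(U)$ (its forward orbit is a sub-collection of that of $U$, hence omits infinitely many points), and since $\phi_*$ is open, $\phi_*(U)$ is a neighbourhood of $\phi_*(\zeta)$; this gives $\phi_*(\F_{\phi,\an}) \subseteq \F_{\phi,\an}$. (ii) Conversely, if $\zeta$ has a dynamically stable neighbourhood $U$, then $\phi_*^{-1}(U)$ is an open neighbourhood of each of the finitely many preimages of $\zeta$, and $\bigcup_n \phi_*^n(\phi_*^{-1}(U)) \subseteq \{\zeta\} \cup \bigcup_n \phi_*^n(U)$ which still omits infinitely many points; so $\phi_*^{-1}(\F_{\phi,\an}) \subseteq \F_{\phi,\an}$. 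Combining these with surjectivity of $\phi_*$ (from \autoref{thm:skew:opencts}, since it is a non-constant open continuous map of the compact connected $\P^1_\an$, or from the affinoid mapping theorem) closes the loop: $\F_{\phi,\an} \subseteq \phi_*^{-1}(\phi_*(\F_{\phi,\an})) \subseteq \phi_*^{-1}(\F_{\phi,\an}) \subseteq \F_{\phi,\an}$, forcing equality throughout, and likewise $\phi_*(\F_{\phi,\an}) = \F_{\phi,\an}$. The Julia statements follow by taking complements and using that $\phi_*$ is surjective. One subtlety to handle carefully: in (ii) I need $\bigcup_n \phi_*^n(\phi_*^{-1}(U))$ to still omit infinitely many points even though passing through $\phi_*^{-1}$ could a priori enlarge the set — but since $\phi_*(\phi_*^{-1}(U)) \subseteq U$, we get $\phi_*^{n}(\phi_*^{-1}(U)) \subseteq \phi_*^{n-1}(U)$ for $n \geq 1$, so the orbit is contained in $\phi_*^{-1}(U) \cup \bigcup_{n\geq 0}\phi_*^n(U)$, and $\phi_*^{-1}$ of a co-infinite set is co-infinite since $\phi_*$ is finite-to-one (bounded by $\rdeg(\phi)$).

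For part (c): this follows from (b) by a direct set-theoretic argument. If $U$ is dynamically stable for $\phi_*$, it is dynamically stable for $\phi_*^m$ since $\bigcup_k \phi_*^{mk}(U) \subseteq \bigcup_n \phi_*^n(U)$; hence $\F_{\phi,\an} \subseteq \F_{\phi^m,\an}$. Conversely, if $U$ is dynamically stable for $\phi_*^m$, then by (b) applied to $\phi_*$ (the forward-invariance $\phi_*^j(\F_{\phi,\an})=\F_{\phi,\an}$ is what one wants, or more elementarily) one shows $V = \bigcap_{j=0}^{m-1}\phi_*^{-j}(\phi_*^j(U))$... actually the cleanest route: the set $\bigcup_{n\geq 0}\phi_*^n(U) = \bigcup_{j=0}^{m-1}\bigcup_{k\geq 0}\phi_*^{mk}(\phi_*^j(U))$, and each $\phi_*^j(U)$ is an open neighbourhood-type set whose $\phi_*^m$-orbit omits infinitely many points (since a finite-to-one continuous map sends co-infinite sets forward... no — forward images can fill up). The safe argument: use (b) to note $\F_{\phi^m,\an}$ is totally invariant under $\phi_*^m$, and combined with continuity of $\phi_*$ one checks $\phi_*(\F_{\phi^m,\an}) = \F_{\phi^m,\an}$, then the neighbourhood $U$ of $\zeta \in \F_{\phi^m,\an}$ can be shrunk to one contained in $\F_{\phi^m,\an}$, and $\bigcup_{j=0}^{m-1}\phi_*^j(U) \subseteq \F_{\phi^m,\an}$, whose total $\phi_*$-orbit equals the $\phi_*^m$-orbit of that finite union, still omitting infinitely many points. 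I expect this bookkeeping — ensuring the omitted set stays infinite under the finitely many shifts — to be the main nuisance, though it is not deep.

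For part (d): this is the conjugation-equivariance statement and is the most mechanical. Since $\eta \in \PGL(2,K)$ induces a homeomorphism $\eta_* : \P^1_\an \to \P^1_\an$ (a special case of a skew product with $\q=1$, relative degree $1$, or just an automorphism), and $\psi_* = \eta_* \circ \phi_* \circ \eta_*^{-1}$ by \autoref{prop:lowerstarfunctorial}, we have $\psi_*^n = \eta_* \circ \phi_*^n \circ \eta_*^{-1}$. A set $U$ is open iff $\eta_*^{-1}(U)$ is open (homeomorphism), and $\bigcup_n \psi_*^n(U) = \eta_*(\bigcup_n \phi_*^n(\eta_*^{-1}(U)))$ omits infinitely many points iff $\bigcup_n \phi_*^n(\eta_*^{-1}(U))$ does (since $\eta_*$ is a bijection). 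Hence $U$ is a dynamically stable neighbourhood of $\eta(\zeta)$ for $\psi_*$ iff $\eta_*^{-1}(U)$ is one of $\zeta$ for $\phi_*$, which gives $\F_{\psi,\an} = \eta(\F_{\phi,\an})$ directly, and the Julia statement by complementation. The only thing to be careful about is whether $\eta^{-1}$ needs coefficients in $K$ — it does, and since $K$ is a field and $\eta \in \PGL(2,K)$, $\eta^{-1} \in \PGL(2,K)$ too, so there is no issue.
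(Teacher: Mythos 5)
Your parts (a), (b) and (d) are essentially the intended (Benedetto-style) argument and are correct. One small repair in (b): the co-infiniteness of the orbit of $\phi_*^{-1}(U)$ should be justified by noting that this orbit lies inside $\phi_*^{-1}\bigl(\bigcup_{n\ge 0}\phi_*^n(U)\bigr)$, whose complement is $\phi_*^{-1}(E)$ with $E$ the infinite omitted set, and this is infinite because $\phi_*$ is \emph{surjective} (every point has at least one preimage, e.g.\ by \autoref{thm:skew:chainrule}) — being finite-to-one is not the relevant property. That is cosmetic.

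Part (c) has a genuine gap in the inclusion $\F_{\phi^m,\an}\subseteq\F_{\phi,\an}$, and it is exactly the difficulty you flag and then do not resolve. First, the invariance $\phi_*(\F_{\phi^m,\an})=\F_{\phi^m,\an}$ does not ``follow from (b)'': part (b) applied to the skew product $\phi_*^m$ only gives invariance under $\phi_*^m$, not under $\phi_*$. It can be proved, but needs its own short argument; for instance, if $U$ is $\phi_*^m$-stable with orbit $X=\bigcup_k\phi_*^{mk}(U)$ and infinite complement $E$, then the $\phi_*^m$-orbit of the open set $\phi_*(U)$ is $\phi_*(X)$, and $\phi_*(X)$ is disjoint from the infinite set $\phi_*^{-(m-1)}(E)$, since $z=\phi_*(w)$ with $w\in X$ forces $\phi_*^{m-1}(z)=\phi_*^m(w)\in X$. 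Second, and more seriously, your concluding step is a non sequitur: knowing that $\bigcup_{j=0}^{m-1}\phi_*^j(U)\subseteq\F_{\phi^m,\an}$, or even that each $\phi_*^j(U)$ is individually dynamically stable for $\phi_*^m$, does not show that the $\phi_*^m$-orbit of this finite union ``still omits infinitely many points''. A finite union of dynamically stable sets need not be dynamically stable, because the omitted infinite sets can intersect in a finite set — this is precisely the ``forward images can fill up'' phenomenon you noticed earlier. What must be produced is a single infinite set avoided by the full orbit $\bigcup_{j=0}^{m-1}\bigcup_k\phi_*^{mk+j}(U)$. Your argument delivers this only when $\J_{\phi^m,\an}$ is infinite, in which case $\F_{\phi^m,\an}$ itself (being open and, by the invariance above, $\phi_*$-forward invariant) is a dynamically stable neighbourhood of each of its points; when $\J_{\phi^m,\an}$ is finite or empty — which does occur for skew products, cf.\ \autoref{ex:dyn:superrepellingfatou} — containment of the orbit in $\F_{\phi^m,\an}$ gives nothing, and that case is left unaddressed. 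So as written the converse inclusion in (c) is not proved and needs a genuinely different argument (or a case analysis) at exactly this point.
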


%
 

 Let $\phi_* = \phi_{1*} \circ \phi_{2*}$ be a skew product. Recall that we say $\phi_*$ has explicit good reduction iff $\phi_2$ has explicit good reduction as a rational function.

%

\unsure[inline]{Contrapositive phrasing. Dear committee, please tell me: Which is better?}

\begin{thm}\label{thm:perptdynone}
 Let $\phi_*$ be a skew product of scale factor $\q$, and let $\zeta \in \P^1_\an$ be a periodic point of Type II, III, or IV of period $n \ge 1$.
 Then $\zeta \in \J_{\phi, \an}$ is Julia if and only if either
\begin{enumerate}
 \item $\zeta$ is \weaklyrepelling{} i.e. $\deg_\zeta(\phi)\q > 1$; or
 \item $\zeta$ is Type II and either none of the directions $\bvec v \in \Dir\zeta$ are exceptional or any of the bad directions $\bvec v \in \Dir\zeta$ is not exceptional.
\end{enumerate}
 Moreover if $\zeta \in \F_{\phi, \an}$ is Fatou then every direction intersecting $\J_{\phi, \an}$ is exceptional and is $\phi_\#^{n'}(\bvec v)$ for some bad direction $\bvec v$.
\end{thm}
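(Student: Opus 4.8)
The plan is to reduce to the case where $\zeta$ is fixed (replacing $\phi_*$ by $\phi_*^n$ and $\q$ by $\q^n$, using \autoref{prop:fatoujuliabasics}(c) so that the Fatou/Julia classification is unchanged, and noting that exceptionality of a direction is also invariant under this replacement since it is defined on the orbit of $\zeta$). Once $\zeta$ is fixed, I would handle the ``Julia'' implications and their converse separately, organising everything around the dichotomy between the weakly repelling case and the weakly non-repelling case.

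First I would prove that if $\zeta$ is weakly repelling, i.e.\ $m\q > 1$ where $m = \deg_\zeta(\phi)$, then $\zeta \in \J_{\phi,\an}$. The idea is to use \autoref{thm:skew:bigintervalstretch} (via \autoref{prop:skew:preintervalstretch}): there is an interval $[\zeta,\xi]$ on which $\phi_*$ acts as a homeomorphism scaling hyperbolic distance by $m\q > 1$, so $\phi_*$ locally pushes points away from $\zeta$ along this interval. Then any neighbourhood $U$ of $\zeta$ contains a small sub-annulus around $\zeta$ whose forward images under iteration expand; combined with the fact that $\phi_\#$ is surjective on directions and each $\phi_*^k$ is open (\autoref{thm:skew:opencts}), the union $\bigcup_k \phi_*^k(U)$ swallows all of $\bH$ in a neighbourhood of $\zeta$ and in fact eventually covers all but finitely many points — so $U$ is not dynamically stable. (This is the skew-product analogue of the classical argument that repelling periodic points are Julia; the new feature is simply that the expansion factor is $m\q$ rather than $m$.) Next, for the Type II bad-direction clause: if $\zeta$ is Type II and some direction is non-exceptional, I would use \autoref{thm:skew:reduction} — a bad direction $\bvec v$ has $\phi_*(\bvec v) = \P^1_\an$, so if a bad direction is itself non-exceptional (or if there are non-exceptional directions at all and hence, after tracking $\phi_\#$-orbits, a preimage situation forcing a bad direction into play) then some small affinoid at $\zeta$ already maps onto $\P^1_\an$ after finitely many steps, destroying dynamical stability. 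The cleanest way to organise this is: if a bad direction is non-exceptional, its $\phi_\#$-backward orbit is infinite, so iterating $\phi_*^{-1}$ produces infinitely many directions at (orbit points of) $\zeta$ each of which maps onto $\P^1_\an$; but then no neighbourhood of $\zeta$ can be dynamically stable. The ``none are exceptional'' clause is handled the same way, since then in particular every bad direction is non-exceptional (if there are any), and if there are no bad directions at all then $\phi_*$ restricted near $\zeta$ is essentially a good-reduction map of degree $m$, and one reruns the repelling/exceptionality bookkeeping.

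Then I would prove the converse: if neither (1) nor (2) holds then $\zeta \in \F_{\phi,\an}$. So assume $m\q \le 1$ and — when $\zeta$ is Type II — that there is an exceptional direction and that every bad direction is exceptional. The case $m\q < 1$ (weakly attracting) is easy: by \autoref{cor:berk:lipschitz} and the local picture from \autoref{thm:intervalstretch}, a small neighbourhood of $\zeta$ is contracted toward $\zeta$ and its orbit stays in a fixed small affinoid, which omits infinitely many points; hence $\zeta$ is Fatou. The case $m\q = 1$ requires, by \autoref{thm:dyn:typeIIIindiff} and \autoref{thm:dyn:typeIVindiff}, a look at what $m$ and $\q$ can be — for Type III/IV (under the stated hypotheses) $m=1$ and $\q=1$, so the map is a local isometry near $\zeta$ and one builds an invariant (or forward-bounded) affinoid neighbourhood directly. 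For Type II with $m\q=1$: if $m=1$ then $\q=1$ and $\phi_*$ is a simple skew product fixing $\zeta$ with all directions of degree $1$; one chooses a $\phi_\#$-invariant finite set of directions containing all the (exceptional, hence $\phi_\#$-preperiodic-into-periodic, and in fact eventually backward-invariant) bad directions, and forms the affinoid obtained from $\zeta$ by deleting small disks in those directions — by \autoref{thm:skew:affinoidmapping} and \autoref{prop:skew:dto1} this affinoid is forward invariant up to shrinking, hence dynamically stable. If instead $m > 1$ but $m\q = 1$ (so $\q < 1$), then $\zeta$ is weakly indifferent but has attracting directions; one uses the exceptionality hypothesis to ensure that every direction meeting $\J_{\phi,\an}$ is exceptional (this is exactly the ``moreover'' clause, which I would extract as a lemma proved here), so the Julia set sits in finitely many directions, each eventually mapped (by $\phi_\#^{j}$, following a bad direction) in a controlled way — and one again builds a dynamically stable affinoid by excising small disks in those finitely many exceptional directions. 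The ``moreover'' statement itself follows by contraposition from what precedes: a non-exceptional direction meeting $\J_{\phi,\an}$ would, by the surjectivity of $\phi_\#$ and \autoref{thm:skew:reduction}, force either infinitely many preimage-directions carrying Julia-set mass or a bad direction to be non-exceptional, contradicting Fatou-ness.

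The main obstacle I anticipate is the $m\q = 1$, $\q<1$, $m>1$ sub-case for Type II points — the ``weakly indifferent but genuinely ramified'' situation, which has no analogue for rational maps (where $m\q=1$ forces $m=1$). Here the geometry near $\zeta$ genuinely mixes attracting directions with the indifferent overall multiplier, and the delicate point is to verify that excising small closed disks in the (finitely many) exceptional directions really does produce a $\phi_*$-forward-invariant open affinoid: one must check, using \autoref{thm:skew:affinoidmapping}, \autoref{prop:skew:dto1}, and a careful count of how bad directions map onto $\P^1_\an$, that the image affinoid does not re-enter the excised disks and does not need infinitely many further excisions. Getting the bookkeeping of $\phi_\#$-orbits of bad and exceptional directions exactly right — and confirming it is consistent with $\sum_{\bvec v}\deg_{\zeta,\bvec v}(\phi) = m$ from \autoref{thm:sumdirdegs} — is where the real work lies.
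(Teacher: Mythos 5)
The main gap is in your ``weakly repelling $\Rightarrow$ Julia'' step. You assert that expansion by $m\q>1$ along an interval (\autoref{thm:skew:bigintervalstretch} via \autoref{prop:skew:preintervalstretch}), together with openness of $\phi_*$ and surjectivity of $\phi_\#$, makes $\bigcup_k \phi_*^k(U)$ ``swallow all of $\bH$ in a neighbourhood of $\zeta$ and in fact eventually cover all but finitely many points''. The first claim does not give the second: dynamical stability is about omitting infinitely many points of $\P^1_\an$, and a hyperbolic neighbourhood of $\zeta$ omits \emph{every} Type I point, so expansion along the skeleton proves nothing by itself. What is actually needed --- and what the paper isolates as its key technical step, \autoref{lem:dyn:repellingdirn}, the skew-product extension of \cite[Lemma 8.11]{Bene} to points of Type II, III \emph{and} IV --- is that for a fixed repelling direction $\bvec v$ at $\zeta$, the forward images of a small annulus (or disk, in the Type IV case) inside $\bvec v$ exhaust all of $\bvec v$ except at most one Type I point. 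Proving this requires showing inductively that each image $U_{n+1}=\phi_*(U_n)$ is again an annulus $\set{r_{n+1}<\abs{\zeta-a_{n+1}}<R}$ (or contains the whole disk), with $r_n=(r/R)^{(m\q)^n}R\to 0$ and nested inner disks $\CD(a_n,r_n)$; this rests on the Weierstrass-degree analysis of \autoref{thm:seminorms:mapannulus} (including composing with $1/(y-a_{n+1})$ when the inner degree jumps across poles), none of which follows from interval stretching or openness. One also needs a periodic, hence after iteration fixed, direction in which to run this: automatic for Type III/IV, but for Type II the paper extracts it from the finitely many directions not covered by the orbit of a putative stable neighbourhood (showing via \autoref{thm:skew:reduction} that their residues form a finite backward-invariant, hence permuted, set), rather than assuming it.

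A secondary problem is your ``easy'' case $m\q<1$: Lipschitz contraction of the skeleton (note \autoref{cor:berk:lipschitz} gives constant $d\q$, not $m\q$) does not make a small weak-topology neighbourhood forward-stable, because any such neighbourhood of a Type II point contains all but finitely many directions in their entirety, and a bad direction it contains maps onto $\P^1_\an$ in a single step regardless of how the multipliers contract. The stable neighbourhood must be built exactly as in your $m=1$, $\q=1$ discussion --- excise closed disks in the finite, $\phi_\#$-backward-invariant family of exceptional directions containing all bad ones, and verify $\phi_*(U)\subseteq U$ direction by direction --- and the paper runs this single construction uniformly for all $m\q\le 1$, so there is neither a genuinely easier attracting subcase nor a genuinely harder $m\q=1$, $m>1$ subcase; the anticipated ``real work'' you describe there is already handled by \autoref{thm:skew:bigintervalstretch} plus exceptionality, whereas the real work you omit is \autoref{lem:dyn:repellingdirn}.
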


We state the contrapositive for the benefit of the reader.

\begin{thm}[\autoref{thm:perptdynone} contrapositive]
 Let $\phi_*$ be a skew product of scale factor $\q$, and let $\zeta \in \P^1_\an$ be a periodic point of Type II, III, or IV of period $n \ge 1$.
 Then $\zeta \in \F_{\phi, \an}$ is Fatou if and only if both
\begin{enumerate}
 \item $\zeta$ is \weakly{} non-repelling i.e. $\deg_\zeta(\phi)\q \le 1$; and
 \item at least one direction $\bvec v \in \Dir\zeta$ is exceptional and also every bad direction is exceptional when $\zeta$ is Type II. \improvement{Could remove Type II}
\end{enumerate}
 Moreover if $\zeta \in \F_{\phi, \an}$ is Fatou then every direction intersecting $\J_{\phi, \an}$ is exceptional and is $\phi_\#^{n'}(\bvec v)$ for some bad direction $\bvec v$. 
\end{thm}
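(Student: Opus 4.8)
\emph{Strategy and reductions.} The plan is to prove the two implications separately. By \autoref{prop:fatoujuliabasics}(c) I may replace $\phi_*$ by $\phi_*^n$ and assume $\zeta$ is fixed, and by \autoref{prop:fatoujuliabasics}(d) I may conjugate so that $\zeta=\zeta(0,1)$ is the Gauss point when $\zeta$ is of Type II; replacing $\phi_*$ by a further iterate I may also assume $\phi_\#$ fixes each of the two directions at $\zeta$ when $\zeta$ is Type III, and fixes each bad direction and a chosen exceptional direction when $\zeta$ is Type II. Write $m=\deg_\zeta(\phi)$; by the corollary to \autoref{thm:sumdirdegs} this equals $\deg(\overline\phi_2)$ (Type II), the common directional degree $\deg_{\zeta,\bvec u}(\phi)$ (Type III), or $\wdeg_{a,r}(\phi_2)$ on a small disc (Type IV). Throughout I use the neighbourhood criterion: $\zeta\in\J_{\phi,\an}$ iff every open $W\ni\zeta$ has $\bigcup_{k\ge0}\phi_*^k(W)$ with finite complement in $\P^1_\an$, and $\zeta\in\F_{\phi,\an}$ iff some $W\ni\zeta$ has $\bigcup_{k\ge0}\phi_*^k(W)$ omitting infinitely many points.

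\emph{Conditions $(1)$ or $(2)$ imply $\zeta\in\J_{\phi,\an}$.} Suppose first $m\q>1$. For $\zeta$ of Type III or IV every direction at $\zeta$ carries local degree $m$ and so is repelling; taking $W$ a thin annulus (resp.\ disc) about $\zeta$, \autoref{thm:skew:imgtypeII} together with the fixed-point relation pinning the boundary radius shows the iterates $\phi_*^k(W)$ strictly enlarge, roughly like $\{\lambda^{(m\q)^k}r<|\zeta-a|<r\lambda^{-(m\q)^k}\}$, and absorbing the zeros and poles of $\phi_2$ only accelerates this; hence $\phi_*^k(W)$ eventually contains all Type I points outside two arbitrarily small discs, and one more iterate (using that $\phi_*$ is surjective on $\P^1(K)$ by \autoref{thm:skew:opencts}) makes $\phi_*^k(W)$ contain a cofinite set of Type I points. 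By \autoref{thm:skew:affinoidmapping} $\phi_*^k(W)$ is $\P^1_\an$ or a connected Berkovich affinoid, and a proper one has a hole containing infinitely many Type I points; so $\phi_*^k(W)=\P^1_\an$, and $W$ is not dynamically stable. For $\zeta$ of Type II with $m\q>1$ one has $\q>1$ or $m\ge2$: if $\q>1$, every direction is repelling and the same widening argument applies (now driven by $\q>1$ on the discs $D_\an(0,R)$ and the shrinking of the excised discs); if $m\ge2$, then $\overline\phi$ has degree $\ge2$ on $\P^1(\k)$, and either it has a totally ramified direction (where we argue as for Type III) or its exceptional set is empty, in which case the cardinalities $|F_k|$ of the complements of $\overline\phi^k(\P^1(\k)\setminus F)$ are nonincreasing and cannot stabilise at a positive value without producing a finite backward orbit, so $\phi_*^k(W)$ eventually contains every full direction, hence all of $\P^1(K)$, hence equals $\P^1_\an$. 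Finally, if $(2)$ holds and $\zeta$ is Type II: a non-exceptional bad direction $\bvec v$ has, by \autoref{prop:dyn:excdir}(1), an infinite $\phi_\#$-backward orbit, which therefore meets the cofinitely many full directions contained in $W$, so some $\phi_*^k(W)\supseteq\bvec v$ and $\phi_*^{k+1}(W)\supseteq\phi_*(\bvec v)=\P^1_\an$ by \autoref{thm:skew:reduction}(b); and if no direction is exceptional then, since a degree-$1$ reduction always has an exceptional fixed direction by \autoref{prop:dyn:excdir}(4), $\overline\phi$ has degree $\ge2$ with empty exceptional set, and the filling-up argument again gives $\phi_*^k(W)=\P^1_\an$.

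\emph{If $\zeta$ is numerically non-repelling and not-$(2)$, then $\zeta\in\F_{\phi,\an}$.} For $\zeta$ of Type III the two directions have multiplier $m\q\le1$, so by \autoref{thm:skew:imgtypeII} a thin enough annulus $W$ about $\zeta$ satisfies $\phi_*(W)\subseteq W$ (with equality when $m\q=1$); for $\zeta$ of Type IV, \autoref{thm:intervalstretch} shows $\phi_*$ is non-expanding along the unique direction, so a small disc $W$ about $\zeta$ has $\phi_*(W)\subseteq W$; either way $\bigcup_k\phi_*^k(W)\subseteq W$ omits infinitely many points. For $\zeta$ of Type II we are given an exceptional direction $\bvec v_0$, of some $\phi_\#$-period $n_0$; applying \autoref{thm:sumdirdegs} to $\phi_*$ and to $\phi_*^{n_0}$ forces $\bvec v_0$ to be the only $\phi_\#$-preimage of $\phi_\#(\bvec v_0)$ and, iterating, shows the whole cycle $C_0$ of $\bvec v_0$ is totally $\phi_\#$-backward invariant; likewise the finite set $T^+:=\bigcup_{j\ge0}\phi_\#^j(T)$ is $\phi_\#$-backward invariant since each bad direction is exceptional. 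Put $W=\P^1_\an\setminus\bigcup_{\bvec v\in C_0\cup T^+}E_{\bvec v}$, where each $E_{\bvec v}\subsetneq\bvec v$ is a small closed Berkovich disc, chosen to avoid the zeros and poles of $\phi_2$ inside $\bvec v$ when $\bvec v$ is bad and chosen compatibly around each $\phi_\#$-cycle so that $\phi_*^{-1}(E_{\bvec v})$ meets $\phi_\#^{-1}(\bvec v)$ inside $E_{\phi_\#^{-1}(\bvec v)}$ — possible because the relevant return maps have multiplier $\le1$ and hence do not expand these discs. An induction then shows each $\phi_*^k(W)$ is a neighbourhood of $\zeta$ containing no bad direction fully, hence a proper connected affinoid, and that the complement of $\bigcup_k\phi_*^k(W)$, namely $\{\xi:\Orb^-_\phi(\xi)\subseteq\bigcup_{\bvec v}E_{\bvec v}\}$, contains the non-trivial discs $E_{\bvec v}$ for $\bvec v$ in a $C_0$-cycle and is therefore infinite; so $W$ is dynamically stable and $\zeta\in\F_{\phi,\an}$. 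For the ``moreover'': $W\subseteq\F_{\phi,\an}$ forces any direction meeting $\J_{\phi,\an}$ to lie in $C_0\cup T^+$, and a separate argument (choosing $E_{\bvec v_0}$ inside $\F_{\phi,\an}$, which is available because $C_0$ is ``good'') removes the $C_0$-part, leaving $\phi_\#^j(\bvec v)$ for some bad $\bvec v$; such a direction is exceptional by hypothesis.

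\emph{Main obstacle.} The delicate point is the Type II Fatou construction: producing and consistently propagating the excised discs $E_{\bvec v}$ around the finitely many $\phi_\#$-cycles using only $m\q\le1$ together with the exceptionality data, and then carrying out the bookkeeping that each $\phi_*^k(W)$ remains a proper affinoid and that $\bigcup_k\phi_*^k(W)$ genuinely omits infinitely many points; the case of a direction that is simultaneously bad and exceptional (so $E_{\bvec v}$ must dodge the zeros and poles of $\phi_2$ in $\bvec v$) needs particular care, as does isolating the Fatou part of $C_0$ for the ``moreover''. The other subtlety is the Type II numerically repelling case with $m\ge2$ but no repelling direction, which does occur for suitable $\q<1$: there one cannot quote the rational-map theory and must argue directly with the dynamics of $\overline\phi$ on $\P^1(\k)$.
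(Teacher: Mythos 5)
Your Type III and Type IV arguments, and your invariant-affinoid construction proving the Fatou implication at a Type II point, follow the same lines as the paper (which packages the ``repelling fixed direction fills out its direction'' step that you re-derive inline as \autoref{lem:dyn:repellingdirn}, and uses \autoref{thm:skew:bigintervalstretch} exactly as you do). The genuine gap is in your direct attack on the Julia implication at Type II points. First, when $\deg_\zeta(\phi)\ge 2$ and $\deg_\zeta(\phi)\q>1$ but $\q\le 1$, you propose to use a totally ramified direction of $\overline\phi$ ``as for Type III''; but a totally ramified direction need not be periodic under $\phi_\#$, and the filling argument requires a \emph{fixed} (or at least periodic) repelling direction to iterate inside -- without that, the images wander among directions and nothing is gained. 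Second, in the sub-case ``no direction is exceptional'' you claim a degree-one reduction always has an exceptional fixed direction via \autoref{prop:dyn:excdir}(4); that proposition only says periodic directions are exceptional when the local degree is $1$ -- it does not produce a periodic direction, and since $\overline\phi=\overline\phi_1\circ\overline\phi_2$ is twisted by a residue-field automorphism it may have no periodic points at all (compare \autoref{ex:dyn:zerofixed}, where even fixed points of a twisted degree-one map fail to exist). So the case $\deg(\overline\phi_2)=1$ with no exceptional direction is simply not covered by your dichotomy, and your counting argument (which needs degree $\ge 2$ to force the complements $F_k$ to shrink) says nothing there. Third, the stabilisation branch of that count (``cannot stabilise at a positive value without producing a finite backward orbit'') is asserted, not proved: making it work requires showing stabilisation forces every point of $F_{k+1}$ to be a totally ramified value and then a Riemann--Hurwitz bound, and it breaks outright for inseparable reductions, where $\overline\phi$ can act bijectively on $\P^1(\k)$ with every point totally ramified, so $|F_k|$ stays constant with no exceptional point in sight -- yet such points are still Julia.

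The paper never proves ``numerically repelling $\Rightarrow$ Julia'' head-on for Type II. Instead it assumes a dynamically stable neighbourhood $U\ni\zeta$ and analyses the finitely many directions not contained in $\bigcup_n\phi_*^n(U)$: this set is nonempty (else the union omits nothing), its residues are backward invariant under $\overline\phi$ and hence permuted, so after passing to an iterate each such direction is fixed and totally $\phi_\#$-invariant; by \autoref{thm:skew:reduction} and \autoref{thm:sumdirdegs} its directional degree equals $\deg_\zeta(\phi)$, so it is exceptional, and every bad direction must lie among them (a bad direction swallowed by the union forces the union to be all of $\P^1_\an$). If moreover $\deg_\zeta(\phi)\q>1$, that fixed direction is repelling and \autoref{lem:dyn:repellingdirn} applied to the annulus $U\cap\bvec v_j$ contradicts stability. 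This single analysis simultaneously yields the non-repelling condition, the exceptionality statements, and the ``moreover'' clause -- precisely the three points where your direct filling-up strategy is incomplete. I recommend you restructure the Type II Julia direction along these lines (or supply full proofs of the three missing steps above, including the twisted and inseparable cases).
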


For a Type II point, we can rephrase this theorem a la Benedetto:
If $\zeta$ is Type III or IV then $\zeta$ non-repelling if and only if it is Fatou. 
 In particular by \autoref{thm:dyn:typeIIIindiff}, if $\abs{K^\times}$ is a $\Q(\q)$-module then every Type III point is indifferent and Fatou. By \autoref{thm:dyn:typeIVindiff}, \change{conjecture when $\q < 1$} every Type IV point is indifferent and Fatou when $\q \ge 1$. 
 If $\zeta$ is of Type II and \weakly{} non-repelling, let $T$ be the finite set of bad directions of $\phi_*^n$ at $\zeta$, as in \autoref{thm:skew:reduction}. If every $\bvec v \in T$ is exceptional under the action $\phi_\#^n$ at $\zeta$, then there is an open Berkovich affinoid $U$ containing $\zeta$ such that $\phi_*^n(U) \subseteq U$, and hence $\zeta \in \F_{\phi, \an}$. Otherwise $\zeta \in \J_{\phi, \an}$.

\begin{rmk}
 Observe that any \weaklyrepelling{} Type II point must be Julia, even when it is saddle or attracting! These are easy to construct. \improvement[inline]{Write and link to examples. e.g. $(x^2, y(y-1)/(y-x))$.}
\end{rmk}
 
 

The following lemma says that a repelling direction of a Type II, III or IV point will fill out the whole direction under iteration; this is needed to prove the next theorem. It is inspired by \cite[Lemma 8.11]{Bene} which only applies to Type II points.

\begin{lem}\label{lem:dyn:repellingdirn}
 Let $\phi_*$ be a skew product and let $\zeta \in \P^1_\an$ be Type II, III or IV. Suppose that $\phi_*$ fixes both $\zeta$ and also a direction $\bvec v$ at $\zeta$ with multiplicity $m = \deg_{\zeta, \bvec v}(\phi)$ and multiplier $m\q > 1$. If $\zeta$ is Type IV let $U$ be a disk neighbourhood of $\zeta$, otherwise let $U$ be a Berkovich annulus in $\bvec v$ with $\zeta$ as one boundary point. Then $\bigcup_n \phi_*^n(U)$ contains all but at most one point of $\bvec v$. Moreover, if there is one such point $b \in \bvec v$, then it is of Type I, and for any disk $D \ni b$, there is some $N \ge 1$ such that \[\bvec v \sm D \subseteq \phi_*^N(U).\]
\end{lem}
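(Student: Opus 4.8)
The plan is to reduce to the one-dimensional expanding picture provided by \autoref{thm:skew:bigintervalstretch} and then to propagate the expansion from the core interval out across the whole direction $\bvec v$. First I would set $\xi_1 = \zeta$ and use \autoref{prop:skew:preintervalstretch} together with \autoref{thm:skew:bigintervalstretch} to find $\xi_2 \in \bvec v$ so that $\phi_*$ maps $[\zeta,\xi_2]$ homeomorphically onto $[\zeta,\phi_*(\xi_2)]$, expanding hyperbolic distance by the factor $m\q > 1$, and so that $\phi_*(\CD_\an) \supsetneq \CD_\an$ for the closed Berkovich disk in $\bvec v$ bounded by $\xi_2$ (in the Type IV case, $U$ itself already plays this role, and the interval $[\zeta,\xi_2]$ runs toward the Type IV endpoint). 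Shrinking $U$ if necessary, I may assume one boundary point of $U$ is $\zeta$, the other is some $\xi_0 \in (\zeta,\xi_2]$, and $\phi_*$ has no poles in $U$, so every direction in $U$ other than toward $\zeta$ is good by \autoref{thm:skew:reduction}.

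Next I would show $\bigcup_n \phi_*^n(U)$ exhausts the ``interval part'' of $\bvec v$: because $\phi_*$ expands $[\zeta,\xi_2]$ by $m\q>1$ and fixes $\zeta$, the images $\phi_*^n([\zeta,\xi_0])$ are nested increasing intervals in $\bvec v$ whose hyperbolic lengths tend to $\infty$, hence their union is a half-open ray $[\zeta, b)$ (or all of $(\zeta,\cdot]$ toward a Type I endpoint) — here $b$ is either a Type I point of $\bvec v$ or, a priori, we run out of room only at such an endpoint, since $\bvec v$ is a direction at a Type II/III/IV point and thus its ``far end'' consists precisely of the Type I points it contains (for Type II/III $\bvec v$ is an open Berkovich disk, whose only endpoints besides the boundary are Type I and Type IV). I would rule out a Type IV obstruction: if the increasing intervals accumulated at a Type IV point, that point would be a limit of points in $\phi_*^n(U)$ and, running the expansion one more step, it would lie in the image of a slightly larger annulus — more carefully, since $\phi_*$ is open and the $\phi_*^n(U)$ are open, their union is open, and a single omitted point of an open exhaustion of a branch is forced to be Type I (an open set omitting a Type IV point still contains a whole neighbourhood basis around it). So the union of the $\phi_*^n(U)$ contains $\bvec v$ minus at most one Type I point $b$.

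Then I would upgrade from ``the ray'' to ``all of $\bvec v$'': a general point $\eta \in \bvec v \sm \set b$ lies in some open Berkovich disk $D_\eta \subset \bvec v$ with $b \notin D_\eta$, and $D_\eta$ attaches to the ray $[\zeta,b)$ at a point $\eta_0 \prec$-below it on the interval. Once $\eta_0 \in \phi_*^N(U)$ for some $N$ (which holds by the previous paragraph), I claim $\phi_*^N(U) \supseteq \bvec v \sm D$ for any disk $D \ni b$: choosing $N$ large enough that $\eta_0 = \phi_*^N(\xi_0')$ for a point of $[\zeta,\xi_0]$ sitting below the boundary point $\zeta(a,\rho)$ of $D$, the affinoid $\phi_*^N(U)$ is an open Berkovich affinoid (by \autoref{thm:skew:affinoidmapping}) containing the point $\zeta(a,\rho)$ on the interval, hence by \autoref{thm:berk:topbasis} it contains all but finitely many directions at $\zeta(a,\rho)$; since $\phi_*$ is open and we may take $N$ even larger so that the image annulus wraps around (using that $\phi_*$ is $m$-to-$1$ on the good annulus and surjective on directions, \autoref{thm:skew:reduction}(c)), no direction at $\zeta(a,\rho)$ other than the one toward $b$ is omitted, giving $\bvec v \sm D \subseteq \phi_*^N(U)$. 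Taking the union over all such $D$ (equivalently all $\eta$) shows $\bigcup_n \phi_*^n(U)$ contains every point of $\bvec v$ except possibly the single Type I point $b$.

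The main obstacle I expect is the last step — showing that the forward images do not merely creep up the central ray but genuinely fill out \emph{every} side branch of $\bvec v$, uniformly enough to get the ``$\bvec v \sm D \subseteq \phi_*^N(U)$'' with a \emph{single} $N$. This requires carefully combining the expansion along $[\zeta,\xi_2]$ with the surjectivity of $\phi_\#$ on good directions and the affinoid-mapping theorem, and being attentive to the (residue) characteristic $p$ subtleties flagged in the remark after \autoref{thm:skew:bigintervalstretch}; the Type I exceptional point $b$ arises exactly as a totally invariant classical point contained in $\bvec v$, and the bookkeeping that there is at most one such $b$ mirrors the rational-map argument in \cite[Lemma 8.11]{Bene}.
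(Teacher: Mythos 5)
Your plan diverges from the paper's argument, and the place where it diverges is exactly where the real work lies. The stretching statement \autoref{thm:skew:bigintervalstretch} (via \autoref{prop:skew:preintervalstretch}) only controls $\phi_*$ on the initial annulus where the inner and outer Weierstrass degrees both equal $m$; it does not iterate by itself. Once the image extends beyond that region, the local degrees along $\bvec v$ can change, and when $\q<1$ the map can even \emph{contract} on degree-one stretches further down the direction, so your claim that the images $\phi_*^n([\zeta,\xi_0])$ are nested increasing intervals along a single ray with hyperbolic lengths tending to $\infty$ is precisely what has to be proved, not a consequence of one application of the stretching theorem. Moreover the sets $\phi_*^n(U)$ do not follow a single ray: the inner boundary of the $n$-th image is a closed disk $\CD_\an(a_n,r_n)$ whose centre $a_n$ can move at every step. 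The paper's proof confronts this directly: it shows by induction that each $U_n=\phi_*^n(U)$ either contains the whole open disk $D_\an(0,R)$ or is an annulus $\{r_n<|\zeta-a_n|<R\}$ with $r_n=(r/R)^{(m\q)^n}R$, by composing with $1/(y-a_{n+1})$ to remove the pole, comparing inner and outer Weierstrass degrees (the fixed boundary $\zeta$ and fixed direction pin the outer degree at $\pm m$, and degree monotonicity bounds the inner one), and applying \autoref{thm:seminorms:mapannulus}. The multiplier $m\q>1$ then forces the doubly exponential shrinking of $r_n$; this quantitative engine is absent from your proposal.

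Two further steps collapse without it. First, your reason that an omitted point must be Type I (``an open set omitting a Type IV point still contains a whole neighbourhood basis around it'') is not valid: singletons are closed, so the complement of a single Type IV point is open, and an increasing union of open annuli whose inner disks shrink onto a Type IV point omits exactly that Type IV point. In the paper, the omitted set is $\bigcap_n\CD(a_n,r_n)$ with $r_n\to 0$, and it is the shrinking radii together with completeness of the field that force it to be a single Type I point $b$. Second, the uniform conclusion $\bvec v\setminus D\subseteq\phi_*^N(U)$ for a \emph{single} $N$ does not follow from your side-branch argument: an open set containing a Type II point of the interval contains all but finitely many directions there, and ``take $N$ larger so the image wraps around'' is not a proof that none of the remaining finitely many (other than the one toward $b$) is omitted. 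In the paper this is immediate because $U_N$ is shown to equal $D_\an(b,R)\setminus\CD_\an(b,r_N)$, so any disk $D\ni b$ eventually contains $\CD_\an(b,r_N)$. To repair your approach you would essentially have to prove the paper's inductive claim about the shape of $\phi_*^n(U)$, so I recommend reworking the proof around that induction rather than around interval images.
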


\begin{proof}
First suppose $\zeta = \zeta(a, R)$ but change coordinates in the domain and range such that $a = 0$, $\bvec v = \vec v(0)$, and $R < 1$ (possibly irrational), we may assume that $\vec v(0)$ is fixed with multiplier $m\q > 1$ where $m = \deg_{\zeta(0, R), \vec v(0)}(\phi)$. Then let 
\[U = \set{r < \abs{\zeta} < R} = D_\an(0, R)\sm D_\an(0, r),\] for some radii $0 < r < R$.
We may also assume $r$ is large enough so that $U = U_1$ has inner and outer Weierstrass degree $m$ and generally we define $U_{n+1} = \phi_*(U_n)$. It follows that $U \subseteq U_2 = \set{(r/R)^{m\q}R< \abs\zeta < R}$ moreover we can deduce that
\[U = U_1 \subseteq U_2 \subseteq U_2 \subseteq U_3 \subseteq \cdots.\]
We claim that every $U_n$ either contains $D_\an(0, R)$ or is of the form $\set{r_n < \abs{\zeta - a_n} < R}$ where $r_n = (r/R)^{(m\q)^n}R$. The proof is by induction and is clear in the initial case. Now suppose the claim is true for $U_n$. If it contains $D_\an(0, R)$ then done, otherwise $U_n$ is an open annulus with two boundary $\zeta(0, R)$ and $\zeta(a_n, r_n)$. Further suppose that $U_{n+1} \nsupset D_\an(0, R)$, does not contain some point namely $a_{n+1}$. Consider the map $\psi_* \circ \phi_*$ where $\psi_*$ is the rational map $\psi(y) = 1/(y - a_{n+1})$. Then $\psi_* \circ \phi_*$ does not have any poles in $U_n$. This map has outer Weierstrass degree $\wdeg_{0, R}(\psi_* \circ \phi_*) = -m$ and inner Weierstrass degree $\overline\wdeg_{a_n, r_n}(\psi_* \circ \phi_*) \le -m$. Observe that $\psi_*(\zeta(0, R)) = \zeta(0, R^{-1})$ and $\psi_*(\zeta(0, r_n)) = \zeta(0, r_n^{-1})$.

If the inner Weierstrass degree is equal to $-m$ then by \autoref{thm:seminorms:mapannulus} the image is the annulus $\set{s < \abs\zeta < t}$ where $s = CR^{-m\q}$ and $t = Cr_n^{-m\q}$ for some $C > 0$ given by a coefficient in a Laurent expansion. We can deduce that $s = R^{-1}$ because of the fixed boundary point, and thus $t = (R/r_n)^{m\q}R^{-1} = 1/r_{n+1}$. Applying $\psi_*^{-1}$ gives us that $U_{n+1} = \set{r_{n+1} < \abs{\zeta - a_{n+1}} < R}$.

Otherwise if the inner Weierstrass degree is not $-m$, then \autoref{thm:seminorms:mapannulus} says that $\psi_* \circ \phi_*(U_n) = D(0, \max\set{s, t})$ where similarly to the above we can deduce that $s = R^{-1}$ and $t > 1/r_{n+1}$. Therefore $U_{n+1} \supset \P^1_\an \sm \CD(a_{n+1}, r_{n+1})$.

To finish the proof of this case, suppose that none of the $U_n$ are equal $D_\an(0, R)$. Given that $r_n \to 0$ as $n \to \infty$, we have that $\bigcap_n \CD(a_n, r_n)$ can only be a Type I point, say $b \in \bvec v$. Then $U_n = D_\an(b, R) \sm \CD_\an(b, r_n)$ and the rest is clear.

Now we address the case that $\zeta$ is Type IV - this is easier and similar to the above. If $\bigcup_n \phi_*^n(U) \ne \P^1_\an$ then it must omit a Type I point so we can move this to $\infty$. Let $\zeta$ have diameter $R$ and let $\eps > 0$ be very small and pick $a \in D(a, R+\eps)$ such that $\zeta(a, R + \eps) \in [\zeta, \infty]$ has local degree $m$. Then $U = U_0 = D_\an(a, R+\eps)$ contains $\zeta$; we let $r = R+\eps$ and we define $U_{n+1} = \phi_*(U_n)$. We claim that every $U_n$ is of the form $D_\an(a, r_n)$ where $r_n \ge (r/R)^{(m\q)^n}R$\unsure{check/rewrite}. Given the claim we are done since $r_n \to \infty$ and so $\bigcup_n \phi_*^n(U)$ omits at most the point $\infty$.

Firstly, $U_1 = D_\an(a, (r/R)^{m\q}R)$ is clear from \autoref{thm:skew:bigintervalstretch}.
Assume the case of $n \ge 1$, and recall that $U_n$ is a disk not containing any poles of $\phi_*$. Let $U_n' = \set{r < \abs{\zeta - a} < r_n}$. The $\phi_*$ on $U_n'$ has outer Weierstrass degree $\wdeg_{a, r_n}(\phi_*) = \ge m$ and inner Weierstrass degree $\overline\wdeg_{a, r}(\phi_*) = m$. Recall that in the absence of poles, Weierstrass degree can only increase with diameter, see \autoref{prop:seminorms:rationaldegrees}. 
If the outer Weierstrass degree is equal to $m$ then by \autoref{thm:seminorms:mapannulus} the image is the annulus $\set{s < \abs\zeta < t}$ where $s = Cr^{m\q}$ and $t = Cr_n^{m\q}$ for some $C > 0$ given by the $m$th coefficient in a Taylor expansion. Otherwise if the outer Weierstrass degree is not $m$, then in fact $t > Cr_n^{m\q}$. We can deduce that $s \approx R$ because of the fixed boundary point, and thus $t = r_{n+1} \ge (r_n/R)^{m\q}R$ and $U_{n+1} = D_\an(a, r_{n+1})$ as desired.
\end{proof}

Now we are ready to prove the theorem. This generalises the similar result for rational maps \cite[Theorem 8.7]{Bene} to skew products. However the situation is far more complicated in the case of Type III and IV points. Without the hypotheses and results of \autoref{thm:dyn:typeIIIindiff}, \autoref{thm:dyn:typeIVindiff}, as we had for rational maps, we cannot just dismiss these points as indifferent (or attracting) a priori to prove they are Fatou; indeed we treat the cases that they are repelling, showing that they are Julia.


\begin{proof}[Proof of \autoref{thm:perptdynone}]
After replacing $\phi_*$ with $\phi_*^n$ we may assume that $\zeta$ is fixed.

First suppose $\zeta$ is Type III. Each direction is exceptional and we may assume fixed after replacing $\phi_*$ by $\phi_*^2$. Let $m = \deg_\zeta(\phi)$. We need to show that $\zeta$ is Fatou if and only if $m\q \le 1$. 
Suppose that $\zeta = \zeta(a, R)$ is Fatou. Then $\zeta$ has an annular neighbourhood of the form $U = \set{R-\eps < \abs{\zeta - a} < R+\eps}$ such that $\bigcup_n \phi_*^n(U)$ omits infinitely many points. By \autoref{lem:dyn:repellingdirn}, if $m\q > 1$ then $\bigcup_n \phi_*^n(U)$ omits at most two points, hence we can conclude that $m\q \le 1$. Conversely, assuming that $d\q \le 1$, \autoref{thm:skew:bigintervalstretch} with \autoref{thm:skew:imgtypeII} shows that both annuli $V_1 = \set{R-\eps < \abs{\zeta - a} < R}$ and $V_2 = \set{R < \abs{\zeta - a} < R +\eps}$ are forward invariant. Therefore the Berkovich open annulus $U = \set{R-\eps < \abs{\zeta - a} < R+\eps}$ contains $\zeta$ and $\phi_*(U) \subseteq U$. Hence $\bigcup_n \phi_*^n(U) \subseteq U$ is a dynamically stable neighbourhood; whence $\zeta \in U \subseteq \F_{\phi, \an}$.

Next suppose $\zeta$ is Type IV and $m = \deg_\zeta(\phi)$; this is very similar to the last case. If $m \q \le 1$ then we can find a neighbourhood $U = D_\an(a, r) \ni \zeta$ such that $\zeta \in \phi_*(U) \subseteq U$ is Fatou. Conversely if $m \q > 1$ then \autoref{lem:dyn:repellingdirn} says that for any neighbourhood $U$ of $\zeta$ we have $\bigcup_n \phi_*^n(U)$ omitting at most one point; whence $\zeta\in \J_{\phi, \an}$ is Julia.

Finally we address the case $\zeta$ is Type II. After a change of coordinates we may assume $\zeta(0, 1)$ is the Gauss point of local degree $m = \deg_\zeta(\phi)$.

First suppose that $m\q \le 1$ and that the directions $\bvec v_1, \dots, \bvec v_N$ are exceptional, $N \ge 1$ and these include all of the bad directions. Note that for any exceptional direction, its (finite) grand (forward and backward) orbit of directions is also exceptional too, so let us assume all these orbits are contained in our list. By further replacing $\phi_*$ by an iterate we may assume that every $\bvec v_j$ is fixed; it is important to note that $\phi_\#^{-1}(\bvec v_j) = \bvec v_j$. Change coordinates so that $\bvec v_j = \vec v(a_j)$. Let $r < 1$ be large enough according to \autoref{thm:skew:bigintervalstretch} so that for every $1 \le j \le N$, we have $\overline \wdeg_{a_j, r}(\phi) = \deg_{\zeta(0, 1)\bvec v_j}(\phi) = m$ and $U_j = U \cap \bvec v_j = D_\an(a_j, 1) \sm \CD(a_j, r)$ has no poles or zeros, mapping to an open annulus in $\bvec v_j$. We construct a dynamically stable neighbourhood as follows.
\[U = \P^1_\an \sm \left(\CD(a_1, r) \cup \cdots \cup \CD(a_N, r)\right)\]
Suppose $\bvec w \ne \bvec v_j$ for any $j$. Then $\bvec w$ is a good direction and $\phi_\#(\bvec w) = \ne \bvec v_j$ for any $j$ because $\bvec v_j$ is exceptional and fixed. Hence $\phi_*(\bvec w) = \phi_\#(\bvec w) \subset U$. Otherwise consider the image of $U_j = U \cap \bvec v_j = D_\an(a_j, 1) \sm \CD(a_j, r)$. By our assumption on the size of $r$, \autoref{thm:skew:bigintervalstretch} says that if $m\q \le 1$ then $\phi_*(U_j) = D_\an(a_j, 1) \sm \CD(a_j, r^{m\q}) \subseteq U_j$. Overall we have just shown, direction by direction, that $\phi_*(U) \subseteq U$, whence $\zeta \in U \subseteq \F_{\phi, \an}$.

Conversely suppose that $\zeta(0, 1) \in \F_{\phi, \an}$ is Fatou. Then there must be a dynamically stable $U \ni \zeta(0, 1)$ and we may assume by \autoref{thm:berk:topbasis} that $U$ is a connected Berkovich open affinoid \[U = \P^1_\an \sm (V_1 \cup \cdots \cup V_M),\] and that each $V_j$ is a closed disk of the form $\CD_\an(a_j, r_j)$.
Let $\bvec v_1, \dots, \bvec v_l$ be the finite (at most $M$ but we will show $l=0$) set of directions not contained in $\bigcup_n \phi_*^n(U)$, and let $S = \set{\overline{a_1}, \dots, \overline{a_l}}$ be the corresponding set of residues in $\k$. Firstly, we observe that any other direction $\bvec w \subset U$, must be a good direction; otherwise we would have $\phi_*(\bvec w) = \P^1_\an \subset \bigcup_n \phi_*^n(U)$ by \autoref{thm:skew:reduction}. 
Further consider \autoref{thm:skew:reduction} and how these are mapped under $\overline \phi$; we can deduce that $\overline \phi^{-1}(S) \subset S$ and hence furthermore, $\overline \phi$ is a permutation on $S$. By replacing $\phi_*$ with an iterate, we may assume each of these directions is fixed and hence $\bvec v_j$ is totally invariant under $\phi_\#$, therefore $\deg_{\zeta, \bvec v_j}(\phi) = \deg_{\zeta}(\phi) = m$ by \autoref{thm:skew:reduction} and \autoref{thm:sumdirdegs}. In conclusion we have shown that each of the directions $\bvec v_j$ is exceptional, including every bad direction. It only remains to show that $m\q \le 1$, below.

We can see that $U$ contains an annulus $U_j = U \cap \bvec v_j$ of the form $\set{r < \abs{\zeta - a_j} < 1}$ which maps into the same direction $\bvec v_j$. Now suppose that $m\q > 1$; then by \autoref{lem:dyn:repellingdirn} $\bigcup_n \phi_*^n(U_j)$ contains all but (at most) one point (WLOG $a_j$) of $\bvec v_j$. Overall, this contradicts our assumption that $U$ is dynamically stable, since \[\bigcup_n \phi_*^n(U) \supseteq \P^1_\an \sm \set{a_1, \dots, a_l}.\]

Finally, since $\bigcup_n \phi_*^n(U)$ is dynamically stable itself, we can infer that it is a Fatou neighbourhood, so the entire Julia set must be contained in the directions $\bvec v_1, \dots, \bvec v_l$. On the other hand, we could construct these from scratch using only the finite orbits of bad directions, proving that every Julia direction is in the orbit of a bad direction.
\end{proof}

\begin{prop}\label{prop:dyn:classicalnonrepelling}
 Let $\phi_*$ be a skew product and $a \in \P^1$ be a periodic classical point which is non-repelling. Then $a \in \F_{\phi, \an}$.
\end{prop}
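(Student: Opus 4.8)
The plan is to reduce to the case of a fixed point by replacing $\phi_*$ with $\phi_*^m$ (using \autoref{prop:fatoujuliabasics}(c), so that $\F_{\phi^m,\an}=\F_{\phi,\an}$), and then to exhibit an explicit dynamically stable neighbourhood of $a$. By \autoref{defn:dyn:classicalfixed}, "$a$ non-repelling" means either $a$ is superattracting, or $d\q=1$ with $\abs{b_d}\le 1$ where $d=\deg_a(\phi)$ and $\phi_2(y)=\phi^*(a)+b_d(y-a)^d+\cdots$; in all these cases we have $\abs{b_d}^\q r^{d\q}\le r$ for all sufficiently small $r$. First I would change coordinates so that $a=0$ and $\phi_*(a)=0$, and pick $\eps>0$ small enough that $\phi_2-b_0$ has Weierstrass degree $d$ on $\CD(0,\eps)$ with no other zeros or poles there, so that for every $r\le\eps$ the computation in the proof of \autoref{prop:dyn:classicalfixed} gives $\phi_*(D(0,r))=D(0,\abs{b_d}^\q r^{d\q})$.

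The key point is that in each non-repelling subcase this image is contained in $D(0,r)$: if $d\q>1$ (superattracting) then $\abs{b_d}^\q r^{d\q}<r$ once $r$ is small; if $d\q=1$ and $\abs{b_d}\le 1$ then $\abs{b_d}^\q r^{d\q}=\abs{b_d}^\q r\le r$. Hence there is $r_0>0$ with $\phi_*(D_\an(0,r_0))\subseteq D_\an(0,r_0)$ (passing to the Berkovich disk via \autoref{prop:berk:disks}, since $\phi_*$ maps $D(0,r_0)$ into itself on classical points and $\phi_*$ is continuous, or directly from \autoref{thm:skew:scaledhomeo}/\autoref{thm:skew:imgtypeII} applied to $\phi_{1*}$ and $\phi_{2*}$). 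Then $U=D_\an(0,r_0)$ is forward invariant, so $\bigcup_{n\ge 0}\phi_*^n(U)\subseteq U$, which omits infinitely many points of $\P^1_\an$ (indeed everything outside $D_\an(0,r_0)$). Thus $U$ is dynamically stable, $a\in U\subseteq\F_{\phi,\an}$, and undoing the change of coordinates and the passage to $\phi_*^m$ finishes the fixed case and then the periodic case.

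The only mild subtlety — and the step I would be most careful about — is the transition from the classical statement $\phi_*(D(0,r))\subseteq D(0,r)$ to the Berkovich statement $\phi_*(D_\an(0,r))\subseteq D_\an(0,r)$, since a priori $D_\an(0,r)$ contains non-classical points whose images must also be controlled. This is handled cleanly by decomposing $\phi_*=\phi_{1*}\circ\phi_{2*}$: \autoref{thm:seminorms:mapdisk} together with \autoref{thm:skew:imgtypeII} gives $\phi_{2*}(D_\an(0,r))=D_\an(b_0,\abs{b_d}r^d)$ and then \autoref{thm:skew:scaledhomeo} gives $\phi_{1*}(D_\an(b_0,\abs{b_d}r^d))=D_\an(0,\abs{b_d}^\q r^{d\q})$ (recalling $\phi_{1*}(b_0)=(\phi_1^*)^{-1}(b_0)=0$ since $a=0$ is fixed), so the Berkovich image of the whole disk is literally $D_\an(0,\abs{b_d}^\q r^{d\q})\subseteq D_\an(0,r)$, with no gap to fill. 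Everything else is the routine inequality bookkeeping above.
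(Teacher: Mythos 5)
Your proposal is correct and takes essentially the same route as the paper: the paper's proof simply invokes \autoref{prop:dyn:classicalfixed} to obtain a forward-invariant disk neighbourhood $D_\an(a,r)$ with $\phi_*(D_\an(a,r))\subseteq D_\an(a,r)$, so that $\bigcup_n\phi_*^n(D_\an(a,r))$ omits infinitely many points. Your extra care in reducing to the fixed case via $\phi_*^m$ and in passing from the classical to the Berkovich disk via the decomposition $\phi_*=\phi_{1*}\circ\phi_{2*}$ is just an unpacking of that cited proposition's proof, not a different argument.
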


\begin{proof}
 By \autoref{prop:dyn:classicalfixed}, $a$ has a disk neighbourhood $D_\an(a, r)$ such that $\phi_*(D_\an(a, r)) \subseteq D_\an(a, r)$. Thus $\bigcup_n \phi_*^n(D_\an(a, r)) \subseteq D_\an(a, r)$ omits infinitely many points.
\end{proof}

\begin{rmk}
 The question of repelling points is complicated when $\q < 1$. A superrepelling point can be Fatou, for example consider the following.
\end{rmk}

\begin{ex}\label{ex:dyn:superrepellingfatou}
Define a $\C$-rational skew product by $\phi(x, y) = (x^2, y)$. Every direction at the Gauss point is fixed and attracting, whilst every complex number $c \in \C \subset \K$ is superrepelling. In fact $\J_{\phi, \an} = \emp$. Note however that $\rdeg(\phi)\q = \half < 1$.\unfinished{With more time I want to prove that $\rdeg(\phi)\q \ge 1$ implies the Julia set is non-empty.}
\end{ex}

\begin{prop}\label{prop:dyn:classicalrepelling}
 Let $\phi_*$ be a skew product and $a \in \P^1$ be a periodic classical point which is repelling but not superrepelling (meaning $\q\deg_a(\phi) = 1$ with multiplier $\lambda > 1$). Then $a \in \J_{\phi, \an}$.
\end{prop}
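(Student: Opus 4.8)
The plan is to prove the contrapositive of dynamical stability: show that \emph{no} neighbourhood of $a$ is dynamically stable, by iterating a small disk about $a$ forward and checking that its forward orbit omits only finitely many points of $\P^1_\an$. First I would reduce to a fixed point. Replacing $\phi_*$ by $\phi_*^m$ (with $m$ the period of $a$) leaves $\F_{\phi,\an}$ and $\J_{\phi,\an}$ unchanged by \autoref{prop:fatoujuliabasics}, replaces $\q$ by $\q^m$ and $\deg_a(\phi)$ by $\deg_a(\phi)^m$, so the hypothesis $\q\deg_a(\phi)=1$ persists and the new multiplier is $\lambda^m>1$. Conjugating by a suitable $\eta\in\PGL(2,K)$ — which likewise preserves $\F$, $\J$, the scale factor, $\deg_a$, and the multiplier — I would move $a$ to $0$ and, composing with a scaling $y\mapsto\varepsilon y$, $|\varepsilon|$ large, arrange that every pole of $\phi_2$ and every zero of $\phi_2$ other than the one at $0$ lies at absolute value greater than some huge $R_0$. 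Then $\phi_2(y)=b_dy^d+O(y^{d+1})$ with $b_d\neq 0$, $d=\deg_0(\phi)$, $d\q=1$, $|b_d|^\q=\lambda>1$, and for every $\rho<R_0$ the Weierstrass degree of $\phi_2$ on $D(0,\rho)$ equals $d$; hence by \autoref{thm:seminorms:mapdisk} and \autoref{thm:skew:scaledhomeo},
\[\phi_*\bigl(D_\an(0,\rho)\bigr)=\phi_{1*}\bigl(D_\an(0,|b_d|\rho^{d})\bigr)=D_\an\!\bigl(0,|b_d|^\q\rho^{d\q}\bigr)=D_\an(0,\lambda\rho)\qquad(\rho<R_0).\]

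Iterating this (and using $\lambda>1$) gives $\phi_*^{\,k}(D_\an(0,r))=D_\an(0,\lambda^k r)$ as long as $\lambda^k r<R_0$, so the orbit radii escape the range $[0,R_0)$; and the same computation, applied wherever the orbit disk contains no pole of $\phi_2$ (only extra zeros have entered, so the Weierstrass degree $m$ is $\geq d$ and $|b_m|^\q\rho^{m\q}\geq|b_d|^\q\rho^{d\q}=\lambda\rho$), shows the orbit stays a disk $D_\an(0,\rho_k)$ with radii growing by a factor $\geq\lambda$ \emph{until} the orbit disk $D_\an(0,\rho_N)$ first swallows a pole of $\phi_2$. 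If $\phi_2$ is a polynomial this never happens, so $\rho_k\to\infty$ and $\bigcup_k\phi_*^{\,k}(D_\an(0,r))=\A^1_\an$, which omits exactly one point of $\P^1_\an$; hence $D_\an(0,r)$ is not dynamically stable and $a\in\J_{\phi,\an}$. Otherwise, once $D(0,\rho_N)$ contains a pole, $\phi_{2*}(D_\an(0,\rho_N))$ contains $\infty$, so $V:=\bigcup_k\phi_*^{\,k}(D_\an(0,r))$ is an open, forward-invariant set ($\phi_*(V)=V$, using $D_\an(0,r)\subsetneq\phi_*(D_\an(0,r))$) containing $\infty$ together with the arbitrarily large disk $D_\an(0,\rho_N)$. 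Writing $E=\P^1_\an\setminus V$, one gets $E$ closed with $\phi_*^{-1}(E)\subseteq E$, and $E$ disjoint from neighbourhoods of both $0$ and $\infty$. It then suffices to prove $E$ is finite: $V$ would omit only finitely many points, so again $D_\an(0,r)$ is not dynamically stable and $a\in\J_{\phi,\an}$.

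The main obstacle is exactly this last step — ruling out an infinite $E$ — which is the analogue, for skew products, of the classical fact that a totally backward-invariant closed set is either contained in the Julia set or is a finite "exceptional" set; since our $E$ meets the Fatou set near $0$, it must be the finite case. Concretely I would pass to the maximal totally backward-invariant closed subset $E_\infty=\bigcap_k\phi_*^{-k}(E)$, which is also forward-invariant and avoids neighbourhoods of $0,\infty$, and argue that if $E_\infty$ were infinite it would contain a point $\xi$ of Type II of arbitrarily large scale in the orbit $V$; tracking the components of $\phi_*^{-1}$ of a connected affinoid via \autoref{thm:skew:affinoidmapping} and then invoking \autoref{lem:dyn:repellingdirn} at $\xi$ would produce a repelling direction whose forward iterates fill all but one point of that direction, contradicting $V\supseteq$ (a neighbourhood of $\xi$). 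Along the way this also shows the forward orbit of a neighbourhood of $a$ accumulates at a repelling periodic Type II point, which is Julia by \autoref{thm:perptdynone}, consistent with the claimed conclusion; indeed the whole argument can be phrased as: if $a$ were Fatou, $V$ would be a forward-invariant Fatou neighbourhood omitting infinitely many points, and the above analysis of $V$ forces a repelling periodic Type II point into its closure's interior, which is impossible.
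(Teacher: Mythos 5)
Your radius-growth computation is the same one the paper uses, but by normalising so that the \emph{poles} of $\phi_2$ are far away you create a case (a pole eventually entering the orbit disk) that the paper's proof eliminates at the outset, and it is exactly there that your argument has a genuine gap. The paper's trick: to show $\bigcup_n\phi_*^n(D)$ omits at most one point of $\P^1_\an$, assume it omits some point and change coordinates so that an \emph{omitted} point is $\infty$. Then no pole of $\phi_2$ can ever lie in any image disk $\phi_*^n(D)$ --- otherwise the next image would contain $\infty$, contradicting that $\infty$ is omitted --- so the Taylor series $\phi_2(y)=b_dy^d+b_{d+1}y^{d+1}+\cdots$ converges on every image, each image is a disk $D(0,r_n)$ by \autoref{thm:seminorms:mapdisk}, and $r_{n+1}=\abs{b_e}^\q r_n^{e\q}\ge\abs{b_d}^\q r_n^{d\q}=\lambda r_n$ with $e=\wdeg_{0,r_n}(\phi_2)$, so $r_n\to\infty$ and the union is $\P^1\setminus\set\infty$. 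Hence in every case the forward orbit of a small disk about $a$ omits at most one point, it is never dynamically stable, and $a\in\J_{\phi,\an}$. Your normalisation only postpones the difficulty: once a pole is swallowed you must prove that the complement $E$ of $V=\bigcup_k\phi_*^{k}(D)$ is finite, and you acknowledge this step but only sketch it.

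That sketch does not close the gap. The assertion that an infinite $E_\infty=\bigcap_k\phi_*^{-k}(E)$ ``would contain a Type II point of arbitrarily large scale in the orbit $V$'' is not coherent as stated, since $E_\infty$ is disjoint from $V$; and the exceptional-set dichotomy you are implicitly reaching for is essentially \autoref{thm:juliasetfacts}, whose hypotheses ($\rdeg(\phi)\q>1$ and the value group being a $\Q(\q)$-module) are not available here: the assumption $\q\deg_a(\phi)=1$ allows $\rdeg(\phi)\q=1$. Invoking \autoref{lem:dyn:repellingdirn} also requires a fixed Type II, III or IV point together with a fixed repelling direction, which you have not produced. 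So the polynomial case of your argument is complete, but the general case is not; the repair is simply to send an omitted point, rather than the poles, to $\infty$ before iterating, after which your own growth estimate finishes the proof.
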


\begin{proof}
 Let $D$ be a disk small enough so that $\phi_*(D) \supsetneq D$, using \autoref{prop:dyn:classicalfixed}. We shall prove that $\bigcup_n \phi_*^n(D)$ omits at most one point, so assume on the contrary that it omits at least one and change coordinates so that $a = 0$ and an omitted point is $\infty$. Now write \[\phi_2(y) = b_dy^d + b_{d+1}y^{d+1} + \cdots\] and where $d = \deg_0(\phi)$ i.e. $b_d \ne 0$. Note that this series converges on any $\phi_*^n(D)$ since $\infty$ is omitted from every such image. Moreover, we can write $\phi_*^n(D) = D(0, r_n)$ with $r_0 = r$ by \autoref{thm:seminorms:mapdisk}. By hypothesis, $a$ is repelling with $\abs{b_d}^\q r^{d\q} = \abs{b_d}^\q r > r$ so we may assume that $\lambda = \abs{b_d}^\q > 1$. 
 For a general $n$, again by \autoref{thm:seminorms:mapdisk} we have that 
 \[r_{n+1} = \abs{b_e}^\q r_n^{e\q} \ge \abs{b_m}^\q r_n^{m\q} \quad \forall m \in \N\] where $e = \wdeg_{0, r_n}(\phi_2)$. In particular we have \[\abs{b_e}^\q r_n^{e\q} \ge \abs{b_d}^\q r_n^{d\q} = \abs{b_d}^\q r_n = \lambda r_n.\]
 This shows inductively that $r_n \ge \lambda^n r$ for every $n \in \N$ and therefore $\bigcup_n D(0, r_n) = \P^1\sm\set\infty$.
\end{proof}


Next we continue to generalise results from \cite[\S8]{Bene}. Some require different hypotheses, but most go through without modification to their proofs. Note that for rational maps the degree $d$ is an integer and $\q=1$, so the condition $d \ge 2$ is the same as $d\q > 1$. The latter is the correct generalisation in the results below because the key idea is that a point with maximal degree (good reduction) must be repelling and Julia.

\begin{prop}
 Let $\phi_*$ be a skew product of relative degree $d$ and scale factor $\q$. If $d\q > 1$ then the following statements are equivalent.
\begin{enumerate}[label=(\alph*), ref=\theenumi]
 \item $\phi$ has explicit good reduction.
 \item The Gauss point $\zeta(0, 1)$ is a fixed point of multiplier $d\q$.
 \item $\phi_*^{-1}(\zeta(0, 1)) = \set{\zeta(0, 1)}$.
 \item The Berkovich Julia set $\J_{\phi, \an}$ is the singleton set $\set{\zeta(0, 1)}$.
\end{enumerate}
\end{prop}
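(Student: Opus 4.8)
The plan is to run the cycle of implications (a)$\Rightarrow$(b)$\Rightarrow$(c)$\Rightarrow$(d)$\Rightarrow$(a), leaning throughout on the decomposition $\phi_* = \phi_{1*}\circ\phi_{2*}$ of \autoref{thm:skew:comp} and on the fact (\autoref{thm:skew:scaledhomeo}) that $\phi_{1*}$ always fixes the Gauss point $\zeta(0,1)$, so that $\phi_*$ fixes $\zeta(0,1)$ exactly when $\phi_{2*}$ does, which by \autoref{thm:skew:reduction} happens exactly when $\overline\phi_2$ is nonconstant. The implications among (a), (b), (c) are the routine part. For (a)$\Rightarrow$(b): explicit good reduction means $\deg(\overline\phi_2)=d\ge 1$, so $\overline\phi_2$ is nonconstant, $\zeta(0,1)$ is fixed, and \autoref{thm:skew:reduction} gives $\deg_{\zeta(0,1)}(\phi)=\deg(\overline\phi_2)=d$, i.e.\ multiplier $d\q$. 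For (b)$\Rightarrow$(c): a multiplier of $d\q$ forces $\deg_{\zeta(0,1)}(\phi)=d=\rdeg(\phi)$, and since $\sum_{\xi\in\phi_*^{-1}(\zeta(0,1))}\deg_\xi(\phi)=\rdeg(\phi)$ by \autoref{thm:skew:chainrule} with every summand at least $1$, the fixed point $\zeta(0,1)$ must be the only preimage. Running the same count in reverse settles (d)$\Rightarrow$(a): from (d), total invariance of the Julia set (\autoref{prop:fatoujuliabasics}) gives $\phi_*^{-1}(\zeta(0,1))=\J_{\phi,\an}=\{\zeta(0,1)\}$, so $\zeta(0,1)$ is fixed, $\overline\phi_2$ is nonconstant, and $\deg(\overline\phi_2)=\deg_{\zeta(0,1)}(\phi)=\sum_{\xi\in\phi_*^{-1}(\zeta(0,1))}\deg_\xi(\phi)=\rdeg(\phi)=\deg(\phi_2)$.

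The substantive step is (c)$\Rightarrow$(d). One half is quick: (c) re-runs the degree count to give $\deg_{\zeta(0,1)}(\phi)=d$, so $\zeta(0,1)$ is \weaklyrepelling{} (since $d\q>1$) and lies in $\J_{\phi,\an}$ by \autoref{thm:perptdynone}; and (c) also forces explicit good reduction (the same count shows $\deg(\overline\phi_2)=\deg_{\zeta(0,1)}(\phi)=\rdeg(\phi)=\deg\phi_2$), so the set $T$ of bad directions at $\zeta(0,1)$ is empty. The real work is to show every $\zeta\ne\zeta(0,1)$ is Fatou. Here is the plan: $\zeta$ lies in some residue class (direction) $\bvec v_0$ at $\zeta(0,1)$; choose a closed Berkovich disk $W_0$ with $\zeta\in W_0\subsetneq\bvec v_0$. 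Since $T=\emp$, \autoref{thm:skew:reduction} gives $\phi_*(\bvec v)=\phi_\#(\bvec v)$ for \emph{every} direction $\bvec v$ at $\zeta(0,1)$; so by \autoref{thm:skew:affinoidmapping} and induction, $W_n:=\phi_*^n(W_0)$ is a closed Berkovich disk contained in the residue class $\bvec v_n:=\phi_\#^n(\bvec v_0)$, and a closed Berkovich disk sitting inside an open residue class is automatically a proper subset of it (an open disk is never closed).

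It then remains to see that $\bigcup_n W_n\subseteq\bigcup_n\bvec v_n$ omits infinitely many points, which reduces to: the $\phi_\#$-orbit of $\bvec v_0$ misses some residue class. Identifying $\Dir{\zeta(0,1)}$ with $\P^1(\k)$ and $\phi_\#$ with $\overline\phi=\overline\phi_1\circ\overline\phi_2$ (the identification established alongside \autoref{thm:skew:reduction}), this is the forward orbit of a point under a \emph{surjective} self-map of the \emph{infinite} set $\P^1(\k)$ — surjective because $\overline\phi_1$ is a bijection and the nonconstant $\overline\phi_2$ surjects $\P^1(\k)$, and infinite because $\k$ is algebraically closed. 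A short set-theoretic remark finishes it: were the forward orbit of a point under a surjective self-map of a set equal to the whole set, a preimage of the starting point would itself lie in that orbit, forcing the orbit to be periodic and hence the set finite. So no forward $\overline\phi$-orbit fills $\P^1(\k)$; some residue class $D_{\bvec w}$ is never visited, $\bigcup_n W_n$ misses its infinitely many points, $W_0$ is dynamically stable, and $\zeta\in\F_{\phi,\an}$. Together with $\zeta(0,1)\in\J_{\phi,\an}$ this is (d). The main obstacle I expect is purely bookkeeping in the inductive step above — for instance when $W_0$ meets a pole of $\phi_2$ (so its image is a complement of a disk) or when $\zeta$ is itself a pole — but \autoref{thm:skew:affinoidmapping} keeps the image a closed Berkovich disk inside a single good residue class in every case, which is all the argument uses.
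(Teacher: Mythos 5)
Your argument is correct, and since the paper states this proposition without writing out a proof (it is offered as one of the results of \cite[\S 8]{Bene} that ``go through'' for skew products), your cycle (a)$\Rightarrow$(b)$\Rightarrow$(c)$\Rightarrow$(d)$\Rightarrow$(a) is a legitimate standalone substitute: the degree counts via \autoref{thm:skew:chainrule}(c) and \autoref{thm:skew:reduction}, the fact that $\phi_{1*}$ fixes the Gauss point so that fixedness is governed by $\overline\phi_2$, the total invariance from \autoref{prop:fatoujuliabasics}, and the use of \autoref{thm:perptdynone} to place the numerically repelling Gauss point in $\J_{\phi,\an}$ are exactly the right ingredients.

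Two small points to tidy in (c)$\Rightarrow$(d). First, \autoref{thm:skew:affinoidmapping} by itself only gives that $\phi_*^n(W_0)$ is a closed connected affinoid with at most one boundary point; such a set need not be a disk in general (it could be the complement of several residue classes at that boundary point), so the disk claim needs a touch more argument (only one direction at the image of the boundary point can fail to be hit by an inward direction, the outward direction having a single $\phi_\#$-image). But you never actually need the disk shape: the containment $\phi_*^n(W_0)\subseteq \bvec v_n$, which follows purely from $T=\emp$ and $\phi_*(\bvec v)=\phi_\#(\bvec v)$ for every direction at $\zeta(0,1)$, already shows the forward orbit of $W_0$ avoids the unvisited residue class $\bvec w$, which contains infinitely many points. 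Second, dynamical stability is defined for open sets, so formally take an open disk containing $\zeta$ inside $W_0$ (its forward images sit inside those of $W_0$). Your closing observation — that a forward orbit under a surjective self-map of an infinite set cannot exhaust the set, applied to $\phi_\#\cong\overline\phi$ on $\P^1(\k)$, surjective because $\overline\phi_2$ is nonconstant over the algebraically closed residue field and $\overline\phi_1$ is a bijection — is correct and cleanly finishes the argument.
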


\begin{prop}
 Let $\phi_*$ be a skew product of relative degree $d \ge 2$ and scale factor $\q > 1/d$, and assume that $\abs{K^\times}$ is a $\Q(\q)$-module. Suppose there is a point $\zeta \in \P^1_\an(K) \sm \P^1(K)$ of Type II, III, or IV with finite grand orbit under $\phi_*$. Then there is only one such point, and it is a repelling fixed point of Type II, with multiplier $d\q$.
In that case, $\J_{\phi, \an} = \set{\zeta}$, and there exists $\eta \in \PGL(2, K)$ such that $\eta(\zeta) = \zeta(0,1)$. 
\end{prop}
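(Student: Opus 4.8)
The plan is to exploit the finite grand orbit to pass to an iterate for which $\zeta$ is a totally invariant fixed point of maximal local degree, and then, after moving $\zeta$ to the Gauss point, to recognise this situation as explicit good reduction and quote the preceding Proposition.

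\textbf{Step 1: from a finite grand orbit to a totally invariant fixed point.} Let $E=\GO_\phi(\zeta)$. Straight from the definition of the grand orbit, $x\in E\iff\phi_*(x)\in E$, so $\phi_*^{-1}(E)=E$; since a non-constant skew product is surjective on $\P^1_\an$ (e.g.\ by \autoref{thm:skew:affinoidmapping}) and $E$ is finite, $\phi_*$ restricts to a permutation of $E$. Hence for each $x\in E$ the set $\phi_*^{-1}(x)\subseteq\phi_*^{-1}(E)=E$ is a single point, and by \autoref{thm:skew:chainrule} the unique preimage $x'$ of $x$ has $\deg_{x'}(\phi)=\rdeg(\phi)=d$; thus every point of $E$ has local degree $d$. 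Also $E\subseteq\bH$ since $\phi_*$ preserves the type of a point (\autoref{thm:skew:opencts}). Replacing $\phi_*$ by $\phi_*^{N}$, where $N$ is a common multiple of the cycle lengths of $\phi_*|_E$, we may assume $\zeta$ is fixed with $\phi_*^{-1}(\zeta)=\{\zeta\}$ and $\deg_\zeta(\phi)=\rdeg(\phi)$. This preserves every hypothesis: the relative degree becomes $d^{N}\ge2$, the scale factor $\q^{N}$ still satisfies $d^{N}\q^{N}=(d\q)^{N}>1$, the value group is still a $\Q(\q^{N})$-module, and $\J_{\phi,\an}$ is unchanged (\autoref{prop:fatoujuliabasics}). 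We write $d,\q$ again for the new data.

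\textbf{Step 2: $\zeta$ is of Type II, moved to the Gauss point.} The fixed point $\zeta$ has multiplier $\deg_\zeta(\phi)\q=d\q>1$, so it is numerically repelling. By \autoref{thm:dyn:typeIIIindiff} every periodic Type III point is indifferent (this is where the $\Q(\q)$-module hypothesis enters), so $\zeta$ is not Type III; and by \autoref{thm:dyn:typeIVindiff}, when $\q\ge1$ every periodic Type IV point is indifferent, so $\zeta$ is not Type IV. Hence $\zeta$ is Type II. Pick $\eta\in\PGL(2,K)$ with $\eta(\zeta)=\zeta(0,1)$ (possible since $\zeta$ is Type II with radius in the value group) and replace $\phi_*$ by $\eta\circ\phi_*\circ\eta^{-1}$: the relative degree and scale factor are unchanged and $\J_{\eta\phi\eta^{-1},\an}=\eta(\J_{\phi,\an})$. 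Now $\phi_{1*}$ fixes $\zeta(0,1)$, so $\phi_{2*}(\zeta(0,1))=\zeta(0,1)$ and $\deg_{\zeta(0,1)}(\phi)=d$; by \autoref{thm:skew:reduction} the reduction $\overline\phi_2$ is nonconstant of degree $d=\deg(\phi_2)$, i.e.\ $\phi_2$ has explicit good reduction.

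\textbf{Step 3: conclusion and uniqueness.} Since $d\q>1$, the preceding Proposition forces $\J_{\phi,\an}=\{\zeta(0,1)\}$. Un-conjugating by $\eta$ and undoing the iteration (using $\J_{\phi^{N},\an}=\J_{\phi,\an}$), the original skew product satisfies $\J_{\phi,\an}=\{\zeta\}$. As $\J_{\phi,\an}$ is totally invariant and a singleton, $\zeta$ is a genuine fixed point with $\phi_*^{-1}(\zeta)=\{\zeta\}$, so $\deg_\zeta(\phi)=\rdeg(\phi)=d$ and its multiplier is $d\q>1$: it is a repelling fixed point of Type II. Uniqueness is immediate: any other Type II, III or IV point with finite grand orbit would, by the same argument, equal the unique point of $\J_{\phi,\an}$; and the $\eta$ above exhibits the required element of $\PGL(2,K)$.

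\textbf{The main obstacle} is ruling out $\zeta$ of Type IV in the range $1/d<\q<1$, which is outside the scope of \autoref{thm:dyn:typeIVindiff}. I would treat it by imitating the proof of \autoref{thm:dyn:typeIIIindiff}: combining \autoref{thm:skew:bigintervalstretch} with the image formula $\phi_*(\zeta(a,r))=\zeta\bigl(\phi_{1*}(b_0),\,|b_d|^{\q}r^{d\q}\bigr)$ of \autoref{thm:skew:imgtypeII} along a short interval running from $\zeta$ up to a rational Type II point forces $\diam(\zeta)=|b_d|^{\q/(1-d\q)}$ with $b_d\in K$, so the $\Q(\q)$-module hypothesis puts $\diam(\zeta)\in|K^\times|$; one must then turn this, together with total invariance and $\deg_\zeta(\phi)=d\ge2$, into a contradiction --- for instance by using \autoref{lem:dyn:repellingdirn} to produce a Julia point other than $\zeta$ (contradicting $\J_{\phi,\an}=\{\zeta\}$ once the good-reduction route is run at the Type II point just above $\zeta$), or by a direct reduction argument there. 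Everything else is routine given the cited results.
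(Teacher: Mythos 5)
Your route is the intended one: the paper gives no proof of this proposition, presenting it in a block of results said to carry over from Benedetto's treatment of rational maps with adjusted hypotheses, and your Steps 1--3 are exactly that argument. Passing to an iterate so that $\zeta$ becomes a totally invariant fixed point with $\deg_\zeta(\phi)=\rdeg(\phi)$, excluding Type III by \autoref{thm:dyn:typeIIIindiff} (precisely what the $\Q(\q)$-module hypothesis is for), moving $\zeta$ to the Gauss point, reading off explicit good reduction from \autoref{thm:skew:reduction}, and feeding this into the preceding equivalence to get $\J_{\phi,\an}=\set{\zeta(0,1)}$ are all correct; so is the bookkeeping when you undo the iterate and the conjugation (total invariance of $\J_{\phi,\an}$ recovering that $\zeta$ is fixed for $\phi_*$ itself with $\deg_\zeta(\phi)=d$ and multiplier $d\q$), and the uniqueness argument.

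The one genuine gap is the one you flag yourself: excluding a Type IV point when $1/d<\q<1$, which lies outside \autoref{thm:dyn:typeIVindiff}. You inherit this gap from the paper rather than create it: the paper only establishes Type IV indifference for $\q\ge1$ (and even there marks its proof as unfinished for the conjectured $\q<1$ extension), yet states the proposition for all $\q>1/d$. Be aware, however, that your sketched repair does not work as written. Deriving $\diam(\zeta)=\abs{b_d}^{\q/(1-d\q)}\in\abs{K^\times}$ is no contradiction for a Type IV point --- unlike the Type III case, a Type IV point may perfectly well have diameter in the value group --- and the ``Type II point just above $\zeta$'' is not fixed, so the good-reduction equivalence cannot be run there; \autoref{lem:dyn:repellingdirn} only shows that $\zeta$ itself is Julia, which you already know. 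A repair that stays inside the paper's toolkit would be: $\zeta\in\bH$ is not exceptional (since $E_{\phi_*}\subseteq\P^1(K)$), it is \weaklyrepelling{} and hence Julia by \autoref{thm:perptdynone}, and its backward orbit is finite, so \autoref{thm:juliasetfacts}(c) makes $\J_{\phi,\an}$ a finite nonempty set and the dichotomy \autoref{thm:juliasetfacts}(e) forces it to be a single Type II point, necessarily $\zeta$ --- with no case analysis on the type of $\zeta$ at all. Before relying on this you must check that the imported proofs of those parts of \autoref{thm:juliasetfacts} (taken from Benedetto) do not themselves invoke the present proposition or the Type IV exclusion; if they do, the case $1/d<\q<1$ genuinely remains open here, exactly as it does in the paper.
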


\begin{cor}
 Let $\phi_*$ be a skew product of relative degree $d \ge 2$ and scale factor $\q > 1/d$. Suppose that $\phi_*^n$ has explicit good reduction for some $n \ge 1$. Then $\phi_*$ itself has explicit good reduction.
\end{cor}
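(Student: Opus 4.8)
The idea is to route everything through the already-established characterisation of explicit good reduction in terms of the Berkovich Julia set, together with the invariance of the Julia set under passing to an iterate (\autoref{prop:fatoujuliabasics}(c)).

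First I would check that the hypotheses transfer to $\phi_*^n$. By \autoref{prop:lowerstarfunctorial} and induction, $\phi_*^n = \bigl((\phi^*)^n\bigr)_*$, and iterating the scaling identity $\abs{\phi^*(a)} = \abs{a}^{1/\q}$ shows that $(\phi^*)^n$ is \equivar{} with scale factor $\q^n$. Its relative degree is the degree of the rational function $(\phi^*)^n(y) \in K(y)$; since $(\phi^*)^{k+1}(y)$ is obtained from $(\phi^*)^k(y)$ by first applying the field automorphism $\phi_1^*$ to its coefficients (which does not change the degree of a rational function) and then substituting $y \mapsto \phi_2(y)$, an easy induction gives $\rdeg(\phi^n) = d^n$. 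Since the hypothesis $\q > 1/d$ is exactly $d\q > 1$, we obtain $d^n\q^n = (d\q)^n > 1$. Hence the proposition characterising explicit good reduction — the equivalence of ``$\phi$ has explicit good reduction'' with ``$\J_{\phi, \an} = \set{\zeta(0,1)}$'', valid whenever the relative degree times the scale factor exceeds $1$ — is available both for $\phi_*$ and for $\phi_*^n$.

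Now apply that equivalence to $\phi_*^n$: since $\phi_*^n$ has explicit good reduction and $d^n\q^n > 1$, we get $\J_{\phi^n, \an} = \set{\zeta(0,1)}$. By \autoref{prop:fatoujuliabasics}(c) we have $\J_{\phi^n, \an} = \J_{\phi, \an}$, so $\J_{\phi, \an} = \set{\zeta(0,1)}$. Applying the same equivalence to $\phi_*$ — legitimate because $d\q > 1$ — this equality forces $\phi$ to have explicit good reduction, which is the claim.

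There is no genuinely hard step: the only points needing care are the bookkeeping that $\rdeg(\phi^n) = d^n$ and that the scale factor of $\phi^n$ is $\q^n$ (so that the good-reduction criterion is applicable to the iterate), and the trivial observation $\q > 1/d \iff d\q > 1$ (so that the criterion is also applicable to $\phi_*$ itself). Everything else is a direct appeal to the established equivalence and to $\J_{\phi^n,\an} = \J_{\phi,\an}$.
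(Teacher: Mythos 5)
Your argument is correct and is essentially the intended derivation: the paper states this corollary without a separate proof, as an immediate consequence of the preceding equivalence (explicit good reduction $\iff \J_{\phi, \an} = \{\zeta(0,1)\}$ whenever the product of relative degree and scale factor exceeds $1$) combined with $\J_{\phi^n, \an} = \J_{\phi, \an}$ from \autoref{prop:fatoujuliabasics}. The only point genuinely needing verification is your bookkeeping that $\phi_*^n$ has relative degree $d^n$ and scale factor $\q^n$, so that $(d\q)^n > 1$ makes the equivalence applicable to the iterate as well as to $\phi_*$ itself, and you check this correctly.
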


Next we recall some elementary (non-Archimedean) dynamics and extend it to our setting; see \cite[\S1.4]{Bene} for more detail.

\begin{defn}
 Let $K$ be a field, and $\phi_*$ be a skew product of relative degree $\rdeg(\phi)$.
\begin{itemize}
 \item We say $\phi_*$ is \emph{totally ramified} at a point $a \in \P^1(K)$ iff $\deg_a(\phi) = \rdeg(\phi) \ge 2$.
 \item We say that $a \in \P^1(K)$ is an \emph{exceptional point} of $\phi_*$ if the grand orbit $\GO_\phi(a)$ of $a$ is finite.
 \item We define the set $E_{\phi_*}$ of all exceptional points of $\phi_*$, called the \emph{exceptional set} of $\phi_*$.
\end{itemize}
\end{defn}

It is not hard to show that every exceptional point is totally ramified. If $a \in E_{\phi_*}$ then the mapping $\phi_* : \GO_\phi(a) \to \GO_\phi(a)$ is bijective, and one can check it is a cyclic permutation with $\phi_*^{-n}(a) = \set a$ where $n = |\GO_\phi(a)|$. We can therefore limit the number of exceptional points by limiting the number of totally ramified points. Before we state the Riemann-Hurwitz formula we should discuss separability.

If $\chara K = p >0$ and a rational map can be written of the form $\phi(z) = \psi(z^p) \in K(z^p)$ then $\phi(z)$ has  zero derivative. We call such a map \emph{inseparable}, otherwise \emph{separable} (including the case where $\chara K = 0$). Conversely, one can prove that if $\phi(z)$ has zero derivative, it must either be constant or $\chara K = p > 0$ and $\phi(z) = \psi(z^{p^j})$ for some maximal $j \ge 1$ and separable $\psi \in K(z)$. Every point is critical for $\phi$ with multiplicity divisible by $p^j$. Further if $\deg(\psi) = 1$ then we say $\psi$ is \emph{totally inseparable}.

\begin{thm}\label{thm:separabledecomposition}
 Let $\phi_* = \phi_{1*} \circ \phi_{2*}$ be a skew product of relative degree $d$ and scale factor $\q$. If $\chara K = p > 0$ then we can find an \equivar{} skew endomorphism $\psi^*$ such that
 \[\phi_* = \psi_* = \psi_{1*} \circ \psi_{2*}\] is a skew product of degree $d'$ and scale factor $\q'$ where $\psi_2$ is a separable rational map and $d\q = d'\q'$.
\end{thm}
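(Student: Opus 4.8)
The plan is to isolate the inseparable part of $\phi_2$ and push it onto the field-automorphism factor. Write $\phi_* = \phi_{1*}\circ\phi_{2*}$ as in \autoref{thm:skew:comp}, so that $\phi_2 \in K(y)$ is a rational function of degree $d$. Since $\chara K = p > 0$, the standard structure theorem for rational maps says there is a maximal $j \ge 0$ and a separable rational map $\rho \in K(y)$ with $\phi_2(y) = \rho(y^{p^j})$; equivalently, $\phi_2 = \rho \circ \mathrm{Frob}^j$ where $\mathrm{Frob}(y) = y^{p^j}$ composed $j$ times sends $y \mapsto y^{p^j}$. (If $\phi_2$ is already separable, take $j = 0$ and there is nothing to do.) At the level of function fields, $\phi_2^* = \mathrm{Frob}^{j*}\circ\rho^*$, where $\rho^*$ acts on $y$ by $y \mapsto \rho(y)$ and fixes $K$, and $\mathrm{Frob}^{j*}$ is the $K$-algebra map $y \mapsto y^{p^j}$, $a \mapsto a$ for $a\in K$.

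First I would record that $\mathrm{Frob}^{j*}$, while a $K$-algebra endomorphism, is \emph{not} the identity on $K$ after we absorb it — the point is rather that on $\P^1_\an$ the associated map $\mathrm{Frob}^j_*$ is, up to the identification coming from the perfection of $K$ (which is algebraically closed, hence perfect when $\chara k = p$), a homeomorphism: concretely $\mathrm{Frob}^j_*$ sends $\zeta(a,r) \mapsto \zeta(a^{1/p^j}, r^{1/p^j})$ on classical-disk data, because $\norm[\zeta]{(y^{p^j}-a)} = \norm[\zeta]{(y - a^{1/p^j})^{p^j}}$ in characteristic $p$ and $a$ has a unique $p^j$-th root in the algebraically closed field $K$. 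So $\mathrm{Frob}^j$ behaves exactly like the inverse of a field automorphism on geometric points, scaling radii by $1/p^j$. This is precisely the shape of a trivially-extended automorphism factor $\Psi_1$ with scale factor $p^j$ in \autoref{thm:skew:scaledhomeo}; in particular the "$\phi_{1}$-type" factor absorbing $\mathrm{Frob}^j$ multiplies the scale factor by $p^j$.

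The second step is the bookkeeping of the new skew endomorphism. Define $\psi^* $ on $K(y)$ by: on $K$, $\psi^*|_K = \phi_1^* \circ (\text{the automorphism } a \mapsto a^{p^j})$ — more carefully, let $\sigma : K \to K$ be the automorphism $a\mapsto a^{p^j}$ (an isomorphism since $K$ is perfect and algebraically closed) and set $\psi_1^* = \sigma^{-1}\circ\phi_1^*$ extended trivially to $K(y)$, $\psi_2^* = \rho^*$ (which sends $y\mapsto\rho(y)$, fixes $K$). Put $\psi^* = \psi_2^*\circ\psi_1^*$. One checks $\psi^*$ is a skew endomorphism (it extends the automorphism $\psi_1^*|_K$ of $K$) and, using $\abs{\sigma(a)} = \abs{a}^{p^j}$, that it is \equivar{} with scale factor $\q' = p^j \q$. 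The relative degree is $d' = \deg(\rho) = d/p^j$, so $d'\q' = (d/p^j)(p^j\q) = d\q$, as required. The identity $\phi_* = \psi_*$ on $\P^1_\an$ then follows from \autoref{prop:lowerstarfunctorial} and \autoref{thm:skew:comp}: it suffices to verify $\phi^* = \psi^*$ as homomorphisms of $K(y)$, and this reduces to checking agreement on the generator $y$ and on $a\in K$, where it is the computation $\phi_2^* = \mathrm{Frob}^{j*}\circ\rho^* $ reorganised so that the $\mathrm{Frob}^{j*}$ piece is merged with $\phi_1^*$ via $\sigma$ — here one uses that the trivially-extended $\sigma^{-1}$ and the $y$-only map $\rho^*$ commute, and that $\mathrm{Frob}^{j*}$ composed with the trivially-extended $\sigma$ equals the trivially-extended identity on $K$ (the $p^j$-th power "undoes" the $p^j$-th root), leaving exactly $\rho^*$ acting on $y$.

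The main obstacle I anticipate is purely organisational rather than deep: keeping straight the direction of the maps (the contravariance between $\phi^*$ and $\phi_*$, and the fact that $\phi_{1*}$ realises $(\phi_1^*)^{-1}$ on classical points) while verifying that $\mathrm{Frob}^j$ really does combine with $\phi_1$ into a single \equivar{} trivially-extended automorphism with the claimed scale factor. A secondary subtlety worth a sentence is the need for $K$ to be perfect so that $\sigma : a\mapsto a^{p^j}$ is invertible — this holds since throughout this section $K$ (or $\hv$) is algebraically closed, but it should be flagged, since over a non-perfect field the statement would require passing to a completion or would fail outright. Once those points are pinned down, the degree and scale-factor identities $d' = d/p^j$, $\q' = p^j\q$ are immediate and the proof is complete.
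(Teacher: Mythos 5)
Your overall plan (strip the inseparable part of $\phi_2$ and absorb it into the field-automorphism factor, with $d'=d/p^j$ and $\q'=p^j\q$) is the right one, and your $\psi_1^*=\sigma^{-1}\circ\phi_1^*$ agrees with the correct choice; the gap is in the choice of $\psi_2$ and in the verification. Writing $\phi_2(y)=\rho(y^{p^j})$ puts the Frobenius on the \emph{inside}, $\phi_2=\rho\circ F_j$ with $F_j(y)=y^{p^j}$, whereas the factor you want it to merge with sits on the \emph{outside} of $\phi_*=(\phi_1^*)^{-1}\circ\phi_2$ on classical points. Moving $F_j$ past $\rho$ is not free: the trivially-extended $\sigma^{-1}$ does \emph{not} commute with $\rho^*$ (conjugating by it replaces $\rho$ by its coefficientwise $p^j$-th root), and $F_j^{*}$ composed with the trivially-extended $\sigma$ is not the identity of $K(y)$ but the total Frobenius $f\mapsto f^{p^j}$. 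Concretely, with your $\psi_2=\rho$ one gets on classical points $\psi_*(a)=(\phi_1^*)^{-1}\bigl(\rho(a)^{p^j}\bigr)$, while $\phi_*(a)=(\phi_1^*)^{-1}\bigl(\rho(a^{p^j})\bigr)$; these differ unless the coefficients of $\rho$ are fixed by the $p^j$-power map, so in general $\psi_*\ne\phi_*$. Relatedly, it cannot ``suffice to verify $\phi^*=\psi^*$ as homomorphisms'': they are necessarily distinct (already $\phi^*(y)$ has degree $d$ in $y$ while $\psi^*(y)$ has degree $d/p^j$); the whole point of the theorem is that two \emph{different} skew endomorphisms induce the same map on $\P^1_\an$, so equality must be checked at the level of $\phi_*$. (A small symptom of the same confusion: the rational map $y\mapsto y^{p^j}$ sends $\zeta(a,r)$ to $\zeta(a^{p^j},r^{p^j})$, not to $\zeta(a^{1/p^j},r^{1/p^j})$; the latter is the skew product of the trivially-extended automorphism $a\mapsto a^{p^j}$.)

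The fix is the one the paper uses: since $K$ is perfect, pull the power to the \emph{outside}, i.e.\ take the unique rational $\psi_2$ with $\psi_2(y)^{p^j}=\phi_2(y)$ (equivalently, apply the inverse Frobenius to the coefficients of your $\rho$); it is separable of degree $d/p^j$ by maximality of $j$. With $\psi_1^*=F_j^{-1}\circ\phi_1^*$ one then has on classical points
\[\phi_*=(\phi_1^*)^{-1}\circ F_j\circ\psi_2=(F_j^{-1}\circ\phi_1^*)^{-1}\circ\psi_2=(\psi_1^*)^{-1}\circ\psi_2=\psi_{1*}\circ\psi_{2*},\]
and $\phi_*=\psi_*$ on all of $\P^1_\an$ by unique continuous extension (\autoref{thm:skew:opencts}). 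Alternatively, note that with this $\psi_2$ one has $\phi^*=F\circ\psi^*$ where $F:f\mapsto f^{p^j}$ is the total Frobenius of $K(y)$, and the $\q$-normalisation in \autoref{defn:skew:skew} gives $\norm[\zeta]{F(g)}^{\q}=\norm[\zeta]{g}^{p^j\q}=\norm[\zeta]{g}^{\q'}$, whence $\norm[\phi_*(\zeta)]{f}=\norm[\psi_*(\zeta)]{f}$ directly. Your bookkeeping of $\q'=p^j\q$, $d'=d/p^j$ and the appeal to perfectness are fine once $\psi_2$ is corrected in this way.
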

\improvement{Will eventually incorporate this into the ``irrationality'' section. This is the only way skew products can be written non-uniquely.}

\begin{rmk}
 We call this a \emph{separable decomposition}. This shows that in some sense degree was never well defined, but it is for a separable decomposition. Nothing is lost because we have not yet equated two skew products and made claims about the degree. This concept will prove useful in eliminating and understanding unnecessary Frobenius-related pathologies in our analysis.
\end{rmk}

\begin{proof}
As above, we could write $\phi_2(y) = \psi_2(y^{p^j})$ with $\psi_2$ separable. In fact, since $K$ is algebraically closed (perfect), we can \emph{instead} pull the power outside to find a rational $\psi_2$ with $\phi_2(y) = \psi_2(y)^{p^j}$ and then define $\psi_1^*(a) = F_j^{-1} \circ \phi_1^*$ by composing $\phi^*$ with the inverse of the Frobenius automorphism $F_j : a \mapsto a^{p^j}$. On the classical points $\P^1(K)$ we have
  \[\phi_* = \phi_{1*} \circ \phi_{2*} = (\phi_1^*)^{-1} \circ F_j \circ \psi_{2*} = (F_j^{-1} \circ \phi_1^*)^{-1} \circ \psi_{2*} = (\psi_1^*)^{-1} \circ \psi_{2*} = \psi_{1*} \circ \psi_{2*}.\]
  We define the endomorphism in general by $\psi^* = \psi_2^* \circ \psi_1^*$ then and applying \autoref{thm:skew:scaledhomeo} to show $\phi_* = \psi_*$. The last part that $d' = d / p^j$ and $\q' = \q p^j$ is left as an exercise in the definitions.
\end{proof}

Given a critical point $a \in \P^1$, define $\rho_a \ge 1$ to be the order of vanishing of $\phi'$ at $a$; let $\rho_a = 0$ otherwise. Then $\rho_a \ge \deg_a(\phi) - 1$ with equality if and only if $p \ndivides \deg_a(\phi)$.

\begin{thm}[Riemann-Hurwitz Formula, {\cite[Corollary IV.2.4]{Hart}}]
 Let $\phi(z) \in K(z)$ be a separable rational function of degree $d \ge 1$. Then \[\sum_{a \in \P^1(K)} \rho_a = 2d - 2.\]
\end{thm}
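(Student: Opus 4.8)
The statement is classical: it is the Riemann--Hurwitz formula for a separable finite morphism of smooth projective curves, and the plan is simply to recall the reduction to \cite[Corollary IV.2.4]{Hart}. First I would observe that a non-constant $\phi \in K(z)$ of degree $d$ defines a finite morphism $\phi : \P^1_K \to \P^1_K$ of smooth projective curves which is separable exactly when $\phi$ is separable in the sense introduced above (so that $\Omega_{\P^1/\P^1}$ is a torsion sheaf of finite length). I would then quote the Hurwitz formula
\[
2g(\P^1) - 2 = d\bigl(2g(\P^1) - 2\bigr) + \deg R,
\]
with $R$ the ramification divisor of $\phi$; since both source and target have genus $0$, this collapses to $-2 = -2d + \deg R$, that is, $\deg R = 2d - 2$.

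The remaining step is to identify $\deg R$ with $\sum_{a \in \P^1(K)} \rho_a$. I would write $R = \sum_a \ell_a\,[a]$, where $\ell_a$ is the length over $\mathcal O_{\P^1,a}$ of the stalk of $\Omega_{\P^1/\P^1}$ at $a$. In an affine chart with coordinate $z$ on the source mapping to the chart $u = \phi(z)$ on the target, this stalk is $\mathcal O_{\P^1,a}\,dz/(d\phi)$ with $d\phi = \phi'(z)\,dz$, so $\ell_a = \ord_a(\phi')$, which is exactly $\rho_a$ by definition (and is $0$ precisely when $\phi'$ is a unit at $a$). At the finitely many remaining points — the poles of $\phi$ and the point $z = \infty$ — I would recompute in the coordinate $1/z$ (and $1/u$ where needed) and verify that the local length is again the order of vanishing of the derivative computed in those coordinates, i.e. $\rho_a$. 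Summing over all $a \in \P^1(K)$ then gives $\deg R = \sum_a \rho_a = 2d - 2$.

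The only genuinely fiddly point is this last chart check at the poles of $\phi$ and at infinity: one must confirm that the local length of $\Omega_{\P^1/\P^1}$ is coordinate-independent and really coincides there with ``the order of vanishing of $\phi'$''. This is routine — it follows from the transformation rule for differentials together with the fact, guaranteed by separability, that $\Omega_{\P^1/\P^1}$ has finite total length — but it is the step where the separability hypothesis is actually used, and where an inseparable $\phi$ would break the formula, since then the sheaf would fail to be torsion and $R$ would be ill-defined.
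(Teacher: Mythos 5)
Your proposal is correct and follows the same route as the paper, which offers no argument of its own beyond the citation to Hartshorne's Hurwitz formula: you simply make the citation explicit by specialising $2g-2=-2$ on both sides and identifying $\deg R$ with $\sum_{a}\rho_a$ via the local lengths of $\Omega_{\P^1/\P^1}$. The chart check at poles and at $\infty$ that you flag is indeed the only point of care (the paper's definition of $\rho_a$ as ``the order of vanishing of $\phi'$'' tacitly assumes such a coordinate choice), and your use of separability to ensure the sheaf is torsion is exactly where that hypothesis enters.
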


It follows that a rational map $\phi$ can have at most $2d-2$ critical points. Crucially, Riemann-Hurwitz implies that if $\phi$ is not totally inseparable then it has at most $2$ totally ramified points. 
 The next theorem is a generalisation of \cite[Theorem 1.19]{Bene}. The proof runs exactly the same, noting that when we change coordinates to make three fixed points equal to $0, 1, \infty$, this is not effected by $\phi_{1*}$ because $\phi^*$ as a field automorphism has to fix these numbers.

\begin{thm}\label{thm:dyn:classicalexceptional}
Let $\phi_* = \phi_{1*} \circ \phi_{2*}$ be a skew product of relative degree $d \ge 2$.
\begin{enumerate}[label=(\alph*), ref=(\alph*)]
 \item If $\#E_{\phi_*} > 2$, then $\chara K = p > 0$, and $\phi_2$ is totally inseparable, i.e. $\phi_*$ is conjugate $\phi_{1*} \circ \psi_{2*}$ where $\psi_2(y) = y^{p^m}$ for some $m \ge 1$. In particular, $\#E_{\phi_*} = \infty$.
 \item If $\#E_{\phi_*} = 2$, then $\phi_*$ is conjugate to $\phi_{1*} \circ \psi_{2*}$ where either $\psi_2(y) =y^d$ or $y^{-d}$.
\item If $\#E_{\phi_*} = 1$, then $\phi_*$ is conjugate to $\phi_{1*} \circ \psi_{2*}$ where $\psi_2(y)$ is a polynomial.
\end{enumerate}
\end{thm}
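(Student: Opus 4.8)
The plan is to reduce the statement to the corresponding classical fact about rational maps (Benedetto's \cite[Theorem 1.19]{Bene}) by exploiting the decomposition $\phi_* = \phi_{1*} \circ \phi_{2*}$ and the fact that $\phi_{1*}$ is a homeomorphism. The crucial preliminary observation, already flagged in the text, is that $\phi_{1*}$ restricted to $\P^1(K)$ is the field automorphism $(\phi_1^*)^{-1}$, so that a change of coordinates $\eta \in \PGL(2,K)$ moving finitely many points to $0, 1, \infty$ commutes harmlessly with $\phi_{1*}$ up to conjugation: indeed since $\phi_1^*$ fixes $0, 1, \infty \in K$ (they are in the prime field, or at worst we first replace the three points by an $\eta$-image), conjugating $\phi_*$ by $\eta$ replaces $\phi_{2*}$ by $\eta \circ \phi_{2*} \circ \eta^{-1}$ while $\phi_{1*}$ is essentially unchanged.

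The main step is to identify the exceptional set $E_{\phi_*}$ with a set determined by $\phi_2$ alone. Here I would argue as follows. A point $a \in \P^1(K)$ is totally ramified for $\phi_*$ iff $\deg_a(\phi) = \rdeg(\phi)$, and by the definition of local degree for skew products this is $\deg_a(\phi_2) = \deg(\phi_2)$, i.e.\ $a$ is totally ramified for the rational map $\phi_2$. Since $\phi_{1*}$ is a bijection on $\P^1(K)$, the grand orbit $\GO_\phi(a)$ is finite iff $\GO$ of $a$ under the ``twisted'' dynamics is finite; one checks directly that $\phi_*^{-n}(a)$ has the same cardinality as the preimage sets of $\phi_{2*}$ (because $\phi_{1*}$ is injective), so $a$ has finite grand orbit for $\phi_*$ precisely when the forward orbit $\{\phi_*^n(a)\}$ is finite and each point in it is totally ramified. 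The content is then to run exactly Benedetto's proof: using the separable decomposition of Theorem~\ref{thm:separabledecomposition} we may assume $\psi_2$ separable, then Riemann--Hurwitz (applied to $\psi_2$, whose degree is $d' = d/p^j$) bounds the number of totally ramified points of $\psi_2$ by $2$ unless $\psi_2$ is totally inseparable (degree $1$), which forces $\phi_2(y)$ to be a power of $y$ up to the purely inseparable part --- giving case (a). When $\#E_{\phi_*} = 2$ the two exceptional points, being fixed by $\phi_*^N$ for $N = |\GO|$ and totally ramified, can be moved to $0, \infty$ by an $\eta \in \PGL(2,K)$, and a monomial analysis of $\phi_2$ with $0, \infty$ totally ramified fixed (after replacing $\phi_*$ by an iterate, or noting $\phi_*$ permutes $\{0,\infty\}$) shows $\psi_2(y) = y^{\pm d}$ --- case (b). When $\#E_{\phi_*} = 1$ we move the exceptional point to $\infty$, which must be totally ramified and fixed, forcing $\phi_2$ to be a polynomial --- case (c).

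The step I expect to be the main obstacle is the bookkeeping around \emph{where} the conjugating $\eta$ lives and whether $\phi_1^*$ interferes. In Benedetto's setting $\PGL(2,K)$-conjugation is transparent, but here one must check that conjugating by $\eta$ genuinely yields a skew product of the form $\phi_{1*} \circ \psi_{2*}$ with the \emph{same} $\phi_{1*}$ (or an innocuous variant), and in particular that moving exceptional points to $0,1,\infty$ is legitimate --- this is where the remark in the text, that $\phi^*$ as a field automorphism fixes these rational numbers, does the work, but it needs to be stated carefully: if the exceptional points are not themselves rational one first moves them by an $\eta \in \PGL(2,K)$, then after conjugation they sit at $0,1,\infty$, and $\phi_1^*$ fixes those. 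A secondary subtlety is that when $\phi_*$ only \emph{permutes} (rather than fixes) the one or two exceptional points, one should pass to an iterate $\phi_*^N$ (whose Fatou/Julia sets and exceptional sets agree with those of $\phi_*$ by Proposition~\ref{prop:fatoujuliabasics}, and whose relative degree is $d^N$) and then descend the conclusion; I would handle this by the same cyclic-permutation argument Benedetto uses, noting $\phi_*^{-n}(a) = \{a\}$ for $a \in E_{\phi_*}$.
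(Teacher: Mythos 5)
Your proposal is correct and follows essentially the same route as the paper, whose proof is simply the remark that Benedetto's argument for \cite[Theorem 1.19]{Bene} carries over verbatim once one notes that the coordinate change sending the exceptional points to $0,1,\infty$ interacts harmlessly with $\phi_{1*}$, since the field automorphism $\phi_1^*$ fixes these values. You supply more detail than the paper does (the identification $\deg_a(\phi)=\deg_a(\phi_2)$, the use of the separable decomposition of \autoref{thm:separabledecomposition} with Riemann--Hurwitz, and the passage to an iterate when the exceptional points are permuted), all of which is consistent with the intended argument.
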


\begin{rmk}
 Here lies the utility of separable decomposition. A priori, $\phi_*$ could have infinitely many exceptional points, but if we assume that $\phi_* = \psi_{1*} \circ \psi_{2*}$ has separable decomposition and $\psi_2$ \emph{still} has degree at least $2$, then case (a) above is impossible.
\end{rmk}

\begin{thm}\label{thm:dyn:contraction}
 Let $\phi_* = \phi_{1*} \circ \phi_{2*}$ be a skew product of relative degree $d \ge 2$ and scale factor $\q < 1/d$. Then $\phi_* : \bH \to \bH$ is a contraction mapping with respect to the hyperbolic metric, with unique attracting fixed point $\xi$. Moreover, $\phi_*^n(a) \to \xi$ for every non-exceptional Type I point $a \nin E_{\phi_*}$. Every periodic Type I point is superrepelling.
\end{thm}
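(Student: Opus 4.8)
The plan splits into three pieces: the contraction statement and the superrepelling statement (both short), and the convergence of classical orbits (the substance). For the contraction, note that by \autoref{cor:berk:lipschitz} the map $\phi_*$ is Lipschitz on $(\bH,d_\bH)$ with constant at most $\rdeg(\phi)\,\q = d\q$, and since $\q<1/d$ we have $d\q<1$, so $\phi_*|_\bH$ is a strict contraction; composing, $\phi_*^n$ is $(d\q)^n$-Lipschitz. As $(\bH,d_\bH)$ is a complete metric space, the Banach fixed point theorem yields a unique $\xi\in\bH$ with $\phi_*(\xi)=\xi$ and $d_\bH(\phi_*^n(\zeta),\xi)\le (d\q)^n d_\bH(\zeta,\xi)\to 0$ for every $\zeta\in\bH$; since the $d_\bH$-topology refines the weak topology, $\phi_*^n(\zeta)\to\xi$ weakly as well. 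Every $\phi_*^n$ is itself a contraction with the same fixed point, so $\xi$ is the only periodic point of $\phi_*$ in $\bH$, and for each $\bvec v\in\Dir\xi$ the bound in \autoref{prop:skew:chaindir} gives multiplier $\q\deg_{\xi,\bvec v}(\phi)\le \q\deg_\xi(\phi)\le d\q<1$, so every direction at $\xi$ is attracting and $\xi$ is an attracting fixed point. For the superrepelling claim, let $b$ be a Type I point of period $n$, viewed as a fixed point of $\phi_*^n$ (a skew product of scale factor $\q^n$); by the local-degree chain rule \autoref{thm:skew:chainrule}, $\deg_b(\phi^n)=\prod_{j=0}^{n-1}\deg_{\phi_*^j(b)}(\phi)\le d^n$, so the multiplier $\deg_b(\phi^n)\,\q^n\le (d\q)^n<1$, and $b$ is superrepelling by \autoref{defn:dyn:classicalfixed}.

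For the convergence of classical orbits, fix a Type I point $a$ and set $\Omega=\bigcap_{N\ge 0}\overline{\{\phi_*^n(a):n\ge N\}}$, its $\omega$-limit set in the compact space $\P^1_\an$; then $\Omega$ is nonempty and closed, and continuity together with surjectivity of $\phi_*$ gives $\phi_*(\Omega)=\Omega$. The goal is $\Omega=\{\xi\}$. Since $\phi_*$ preserves point types and maps $\bH$ onto $\bH$, the set $S:=\Omega\cap\bH$ satisfies $\phi_*(S)=S$; once $S$ is known to have finite $d_\bH$-diameter, strict contractivity forces $\diam_{d_\bH}(S)\le d\q\,\diam_{d_\bH}(S)$, so $S$ is empty or $\{\xi\}$. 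If $\xi\in\Omega$, then applying the contraction of the first paragraph to any point of $\Omega$ and using that $\Omega$ is closed forces $\Omega=\{\xi\}$, i.e.\ $\phi_*^n(a)\to\xi$. It remains to rule out $S=\emptyset$: in that case $\Omega$ would consist of Type I points only; by the previous paragraph every periodic point occurring in $\Omega$ is superrepelling, and by \autoref{prop:dyn:classicalfixed}(ii) a superrepelling point repels an entire neighbourhood, so the orbit $(\phi_*^n(a))$ cannot accumulate at such a point unless it actually lands on a Type I cycle. Hence for $a$ not (pre)periodic to a Type I cycle we get $S\ne\emptyset$, so $S=\{\xi\}$, so $\xi\in\Omega$, and we conclude $\phi_*^n(a)\to\xi$.

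The routine parts are the two short statements; the real work is the convergence argument, and within it the step I expect to be the main obstacle: showing that $S=\Omega\cap\bH$ has finite $d_\bH$-diameter. The intended mechanism is that for each $n$ the geodesic $[\phi_*^n(a),\xi]$ meets $\bH$ in a ray whose initial segment can be controlled uniformly in $n$ via the interval-stretching and hyperbolicity theorems (\autoref{thm:intervalstretch}, \autoref{thm:skew:hyperbolicity}), which prevents the orbit from drifting arbitrarily far in $d_\bH$ from $\xi$; making this precise is the technical heart. A secondary point requiring care is matching the precise hypothesis $a\nin E_{\phi_*}$: the argument above produces convergence for all $a$ outside the (countable) backward orbit of the set of Type I periodic points, and one should verify that this is the class intended by ``non-exceptional'' (or otherwise read $E_{\phi_*}$ as enlarged by that backward orbit), since a Type I point mapping onto a superrepelling Type I periodic point need not have finite grand orbit.
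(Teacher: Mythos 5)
Your first and last paragraphs are fine and coincide with the paper's argument: the contraction statement is exactly the Lipschitz bound of \autoref{cor:berk:lipschitz} plus the Banach fixed point theorem on the complete space $(\bH, d_\bH)$, and the superrepelling claim is the one-line multiplier estimate $\deg_a(\phi^n)\q^n \le (d\q)^n < 1$. The problem is the middle: your $\omega$-limit argument does not prove the convergence statement. You yourself flag the decisive step -- that $S = \Omega \cap \bH$ has finite $d_\bH$-diameter, equivalently that the orbit of $a$ cannot drift off towards the ends of the tree or stay hyperbolically far from $\xi$ -- and only gesture at interval-stretching for it. That step \emph{is} the theorem; without it nothing has been shown. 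Moreover the way you dispose of the case $S = \emptyset$ is unsound on two counts: a closed, fully invariant set $\Omega \subseteq \P^1(K)$ need not contain any periodic point (a priori it could be a Cantor-type invariant set of wandering classical points), and even at a genuinely (super)repelling cycle the inclusion $\phi_*(D(b,r)) \supsetneq D(b,r)$ of \autoref{prop:dyn:classicalfixed} does not prevent an orbit from accumulating there without landing on the cycle -- repelling cycles are accumulated by orbits all the time (this is the usual picture of Julia dynamics). So both halves of the convergence argument have genuine gaps.

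The paper avoids $\omega$-limit sets entirely and argues by pulling back a neighbourhood of $\xi$: fix an open Berkovich disk $D = D_0 \ni \xi$ with boundary point $\zeta_0$, and let $D_{n+1}$ be the component of $\phi_*^{-1}(D_n)$ containing $\xi$, with boundary $\zeta_{n+1} \in \phi_*^{-1}(\zeta_n)$ (using \autoref{thm:skew:affinoidmapping}). The Lipschitz bound applied backwards gives $d_\bH(\zeta_n, \xi) \ge (d\q)^{-n} d_\bH(\zeta_0, \xi) \to \infty$, so the increasing disks $D_n$ exhaust $\P^1_\an$ with at most one omitted point, and a point omitted by every $D_n$ is forced to be exceptional; hence every $a \nin E_{\phi_*}$ eventually enters every neighbourhood of $\xi$, which is the convergence claim. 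Note that this scheme also resolves the issue you raise at the end: it shows directly that a Type I periodic point not converging to $\xi$ must lie in $E_{\phi_*}$, so the ``backward orbit of Type I cycles'' does not enlarge the exceptional set (preimages of an exceptional point stay in its finite grand orbit). If you want to salvage your approach, the missing ingredient is exactly this kind of quantitative backward estimate; the forward Lipschitz bound alone cannot control $\Omega$.
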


\begin{proof}
 By \autoref{cor:berk:lipschitz}, for every $\zeta_1, \zeta_2 \in \bH$, \[d_\bH(\phi_*(\zeta_1) , \phi_*(\zeta_2)) \le (d\q) d_\bH(\zeta_1 , \zeta_2).\]
Hence $\phi_*|_\bH$ is a contraction mapping on a complete metric space, so by the contraction mapping theorem there is a unique attractor $\xi$ with $\phi_*^n(\zeta) \to \zeta_0$ as $n \to \infty$. For the second part, it is enough to show that for any open Berkovich disk $D \in \P^1_\an$ containing $\xi$, and any $a \nin E_{\phi_*}$, there is an $n \in \N$ such that $a \in \phi_*^{-n}(D)$. Let $a \in \P^1$, and WLOG let $D = D_0 = D_\an(a, r) \ni \xi$, and let $\zeta_0 = \zeta(a, r)$ be the boundary of $D$. Now let $D_1$ be the component of $\phi_*^{-1}(D)$ containing $\xi$; this is a disk with boundary say $\zeta_1 \in \phi_*^{-1}(\zeta_0)$. Continuing this way we obtain disks $D_n$ with boundary $\zeta_n$ such that $\phi_*(D_{n+1}) = D_n$ and $\phi_*(\zeta_{n+1}) = \zeta_n$. Let $t_n = d_\bH(\zeta_n, \xi)$; then by \autoref{cor:berk:lipschitz} \[t_n = d_\bH(\zeta_n , \xi) \le (d\q) d_\bH(\zeta_{n+1} , \xi) = (d\q)t_{n+1}.\] Hence $t_n \ge t_0(d\q)^{-n} \to \infty$ as $n \to \infty$. If $a \nin D_n$ for any $n$, then the disks are nested, and by change of coordinates, we may assume that $a = \infty$ so that $D_n = D_\an(a, r_n)$ with $r_n = e^{t_n} \to \infty$ as $n \to \infty$. Clearly, $a = \infty$ is the only point omitted by $\bigcup_n D_n \subseteq \bigcup_n \phi_*^{-n}(D)$, so we conclude that $a$ is an exceptional point. Every periodic Type I point $a$ is superrepelling since $\deg_a(\phi)\q \le d\q < 1$.
\end{proof}

\begin{defn}
 Let $\phi_* = \phi_{1*} \circ \phi_{2*}$ be a skew product of relative degree $d \ge 2$ and scale factor $\q$. If $\q < 1/d$ then we say $\phi_*$ is a \emph{contracting} skew product and we call its unique fixed point the \emph{attractor}. 
\end{defn}
\unfinished{Extra Theorem in comments}
%

\begin{thm}\label{thm:juliasetfacts}
 Let $\phi_*$ be a skew product of relative degree $d \ge 2$ and scale factor $\q > 1/d$, and assume that $\abs{K^\times}$ is a $\Q(\q)$-module. Let $E_{\phi_*}$ be the exceptional set of $\phi_*$ in $\P^1(K)$. Then
\begin{enumerate}[label=(\alph*), ref=(\alph*)]
 \item Let $U \subseteq \P^1_\an$ be an open set intersecting $\J_{\phi, \an}$. Then
 \[\P^1_\an \sm \bigcup_{n \ge 0} \phi^n(U)\]
 is a finite set contained in $E_{\phi_*}$.
 \item Assume that $\k$ is uncountable. Then $\J_{\phi, \an}$ has empty interior and the Fatou set $\F_{\phi, \an}$ is non-empty. Moreover, given $\zeta \in \J_{\phi, \an}$, all but countably many directions (disks) at $\zeta$ are Fatou components.\label{thm:juliasetfacts:b}
 \item For any point $\zeta \in \P^1_\an \sm E_{\phi_*}$, the closure of the backward orbit of $\zeta$ contains $\J_{\phi, \an}$, with equality iff $\zeta \in \J_{\phi, \an}$.
 \item Let $S \subseteq \P^1_\an$ be a closed subset of $\P^1_\an$ that is not contained in $E_{\phi_*}$ and for which $\phi_*^{-1}(S) \subseteq S$. then $\J_{\phi, \an} \subseteq S$.
 \item Assuming $\J_{\phi, \an}$ is nonempty, either
\begin{enumerate}
\item $\J_{\phi, \an}$ consists of a single Type II point, and $\phi$ has potential good reduction; or
\item $\J_{\phi, \an}$ is a perfect set, and it is uncountable.
\end{enumerate}
\item If $\phi_2$ is a polynomial and we define
\[\mathcal{K}_{\phi, \an} = \set[\zeta \in \P^1_\an]{\lim_{n \to \infty} \phi^n(\zeta) \ne \infty}\]
to be the Berkovich filled Julia set of $\phi$, and then $\J_{\phi, \an} = \partial\mathcal{K}_{\phi, \an}$.
\end{enumerate}
\end{thm}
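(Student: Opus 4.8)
The plan is to follow Benedetto's treatment for rational maps \cite[\S8]{Bene}, with two structural changes. First, wherever the rational case uses ``degree $\ge 2$'' we instead use $d\q > 1$, so that a Berkovich point of maximal local degree is \weaklyrepelling{}; this is exactly the hypothesis under which \autoref{lem:dyn:repellingdirn} and \autoref{thm:perptdynone} apply. Second, the hypothesis that $\abs{K^\times}$ is a $\Q(\q)$-module enters only through the results on Type~III and IV periodic points (\autoref{thm:dyn:typeIIIindiff}, \autoref{thm:dyn:typeIVindiff}, and the preceding proposition on Berkovich points with finite grand orbit): their upshot is that the \emph{only} point of $\P^1_\an$ that can have a finite grand orbit and lie off $\P^1(K)$ is a single repelling Type~II fixed point $\zeta_0$ with potential good reduction, so that the exceptional set $E_{\phi_*}$ genuinely lives inside $\P^1(K)$, as in the rational case. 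Throughout I use that $\F_{\phi, \an}$ and $\J_{\phi, \an}$ are totally invariant (\autoref{prop:fatoujuliabasics}) and that $\phi_*$ is open and surjective of relative degree $\ge 1$ (\autoref{thm:skew:opencts}, \autoref{thm:skew:chainrule}). Part (a) is the engine, and I would prove it first, then deduce (c), (d), then (b), then (e) and (f).

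For (a): if an open $U$ meets $\J_{\phi, \an}$ then $U$ is not dynamically stable (otherwise the Julia point inside it would be Fatou), so $E := \P^1_\an \sm \bigcup_{n \ge 0}\phi_*^n(U)$ is finite. If some $c \in \phi_*^{-1}(E)$ lay in $\phi_*^m(U)$, then $\phi_*(c) \in \phi_*^{m+1}(U)$ would not lie in $E$; hence $\phi_*^{-1}(E) \subseteq E$. The preimage sets $\phi_*^{-1}(b)$, $b \in E$, are nonempty, pairwise disjoint, and contained in $E$, so they force $\phi_*^{-1}(E) = E$ with $\phi_*|_E$ a bijection; thus every point of $E$ has finite grand orbit. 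A non-classical such point would be the exceptional point $\zeta_0$ above with $\J_{\phi, \an} = \set{\zeta_0}$; but then $\zeta_0 \in U$ and $\zeta_0 \in \bigcup_n \phi_*^n(U)$, contradicting $\zeta_0 \in E$. Hence $E \subseteq \P^1(K)$ consists of points of finite grand orbit, i.e.\ $E \subseteq E_{\phi_*}$. Parts (c), (d) are then formal. For (d): if $S$ is closed with $\phi_*^{-1}(S) \subseteq S$ and $\J_{\phi, \an} \not\subseteq S$, take $\zeta \in \J_{\phi, \an} \sm S$ and an open $U \ni \zeta$ disjoint from $S$; then $\P^1_\an \sm S$ is forward invariant, so $\bigcup_n \phi_*^n(U) \subseteq \P^1_\an \sm S$, whence (a) gives $S \subseteq E_{\phi_*}$, a contradiction. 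For (c): $\overline{\Orb^-_\phi(\zeta)}$ is closed and backward invariant (openness of $\phi_*$ gives $\phi_*^{-1}(\overline A) \subseteq \overline{\phi_*^{-1}(A)}$) and contains $\zeta \nin E_{\phi_*}$; applying (d) gives $\J_{\phi, \an} \subseteq \overline{\Orb^-_\phi(\zeta)}$, with equality precisely when $\zeta \in \J_{\phi, \an}$ since then $\Orb^-_\phi(\zeta) \subseteq \J_{\phi, \an}$.

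For (b): at a Type~II point $\zeta \in \J_{\phi, \an}$ (automatically $\zeta \nin E_{\phi_*}$), part (c) gives $\J_{\phi, \an} = \overline{\Orb^-_\phi(\zeta)}$, so any direction $\bvec v$ at $\zeta$ whose disk meets $\J_{\phi, \an}$ meets the countable set $\Orb^-_\phi(\zeta)$; as distinct directions give disjoint disks, only countably many directions at $\zeta$ are of this kind, and every other direction disk is contained in $\F_{\phi, \an}$ with boundary $\zeta$, hence is a Fatou component. If $\J_{\phi, \an}$ had nonempty interior it would contain a Type~II point $\zeta$ together with a basic affinoid neighbourhood, and by \autoref{thm:berk:topbasis} such a neighbourhood lies inside all but finitely many of the direction disks at $\zeta$; since $\k$ is uncountable this forces uncountably many ``bad'' directions, contradicting the previous sentence. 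Therefore $\J_{\phi, \an}$ has empty interior, so $\F_{\phi, \an} = \P^1_\an \sm \J_{\phi, \an}$ is nonempty, and the last assertion of (b) is what we just established (the cases $\J_{\phi, \an} = \emptyset$ or $\set{\zeta_0}$ being immediate).

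For (e), assume $\J_{\phi, \an} \ne \emptyset$. If $\J_{\phi, \an}$ is finite then, as in (a), $\phi_*$ permutes it bijectively, so every point of $\J_{\phi, \an}$ is totally ramified (its unique preimage, which lies in $\J_{\phi, \an}$, has local degree $d$); passing to an iterate, a classical such point then has multiplier $d\q > 1$, so it is superattracting (\autoref{defn:dyn:classicalfixed}), hence Fatou by \autoref{prop:dyn:classicalnonrepelling}, a contradiction; so $\J_{\phi, \an} = \set{\zeta_0}$ is the single Type~II point of potential good reduction. If $\J_{\phi, \an}$ is infinite, its derived set $\J_{\phi, \an}'$ is closed, nonempty, and both forward and backward invariant (again using that $\phi_*$ is open and finite-to-one); were $\J_{\phi, \an}'$ finite it would, by the previous case and total invariance, be at most the totally invariant point $\zeta_0$, giving $\phi$ potential good reduction and $\J_{\phi, \an} = \set{\zeta_0}$, contradicting infiniteness; so $\J_{\phi, \an}'$ is infinite, not contained in $E_{\phi_*}$, and (d) gives $\J_{\phi, \an}' = \J_{\phi, \an}$, i.e.\ $\J_{\phi, \an}$ is perfect, and a nonempty perfect closed subset of the compact Hausdorff space $\P^1_\an$ is uncountable. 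For (f), if $\phi_2$ is a polynomial then $\phi_*(\infty) = \infty$ with local degree $d$, so $\infty$ is superattracting and its basin $\P^1_\an \sm \mathcal{K}_{\phi, \an}$ is an open, totally invariant union of Fatou components; hence $\partial\mathcal{K}_{\phi, \an}$ is closed and backward invariant, so $\J_{\phi, \an} \subseteq \partial\mathcal{K}_{\phi, \an}$ by (d) (the single-point case handled directly), while conversely a point of $\partial\mathcal{K}_{\phi, \an}$ lying in a Fatou component $V$ would force $V$ to meet, hence be a component of, hence be contained in the basin, contradicting that the point is a boundary point; thus $\J_{\phi, \an} = \partial\mathcal{K}_{\phi, \an}$. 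The step I expect to be the real obstacle is (e): ruling out a small Julia set rests squarely on $d\q > 1$ making totally ramified classical points superattracting, and on having already confined every non-classical point with finite grand orbit to $\zeta_0$ --- the derived-set bookkeeping and the invocation of the good-reduction characterisation of $\J_{\phi, \an} = \set{\zeta(0,1)}$ are where the care is needed --- whereas (a)--(d), (b) and (f) follow the rational-map template closely once \autoref{lem:dyn:repellingdirn} is in hand.
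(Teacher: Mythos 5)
Your plan is correct, and on parts (a), (c), (d), (e), (f) it takes the same route as the paper, which simply observes that Benedetto's proofs of \cite[Theorem 8.15]{Bene} go through under the hypotheses $d\q>1$ and the $\Q(\q)$-module condition; your reconstruction identifies exactly the two adaptations the paper relies on, namely that a totally ramified periodic classical point has multiplier $(d\q)^m>1$ (note: $(d\q)^m$, not $d\q$, after passing to the iterate --- harmless, it is still $>1$), hence is superattracting and Fatou, and that the only non-classical point with finite grand orbit is the single repelling Type II fixed point of potential good reduction. Where you genuinely diverge is part (b), the one part the paper proves in detail. The paper passes to a separable decomposition so that $\#E_{\phi_*}\le 2$, picks a non-exceptional classical point $\gamma$ with infinite backward orbit, and shows that at a Type II point every direction disjoint from $\Orb^-_\phi(\gamma)$ is a good direction whose forward iterates stay disjoint from that orbit, hence is dynamically stable; countability of $\Orb^-_\phi(\gamma)$ versus uncountability of $\Dir\zeta\cong\P^1(\k)$ then gives both $\F_{\phi,\an}\ne\emp$ and the direction count. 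You instead derive (b) from part (c): at a Type II Julia point $\zeta$ one has $\J_{\phi,\an}=\overline{\Orb^-_\phi(\zeta)}$, an open direction meeting $\J_{\phi,\an}$ must meet the countable backward orbit, so all but countably many directions are Fatou components, and a Type II point interior to $\J_{\phi,\an}$ would force uncountably many Julia directions, a contradiction giving empty interior and $\F_{\phi,\an}\ne\emp$. Your route is shorter and avoids the separable decomposition and any control of $E_{\phi_*}$ (it needs only that $\phi_*$ is finite-to-one, plus density of Type II points), while the paper's argument is independent of (c)/(d) and yields a bit more: explicit forward-invariant Fatou direction disks at \emph{every} Type II point, not just Julia ones. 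One wording slip to fix: in your empty-interior step the basic affinoid neighbourhood of $\zeta$ \emph{contains} all but finitely many direction disks (it cannot ``lie inside'' them); that corrected reading is what your conclusion actually uses.
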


\begin{proof}
 The proofs from Benedetto \cite[Theorem 8.15]{Bene} all hold except for (b); see \autoref{rmk:dyn:holofixedpt} below. With our hypotheses we can prove it as follows.
 
 If $U \subset \J_{\phi, \an}$ is open then by (a), $\bigcup_{n \ge 0} \phi^n(U) \subseteq \J_{\phi, \an}$ contains all but a finite set of points. Since the Julia set is closed, $\J_{\phi, \an} = \P^1_\an$. So to find a contradiction to this and prove the other statement we will show that the Fatou set is non-empty.
 
 Pick a separable decomposition $\phi_* = \phi_{1*} \circ \phi_{2*}$ so that $\# E_{\phi_*} \le 2$ and let $\gamma \in \P^1\sm E_{\phi_*}$. Then $\Orb_\phi^-(\gamma)$ is infinite. Given a Type II $\zeta \in \P^1_\an$ define the following subset of directions \[S_\zeta = \set[\bvec v \in \Dir\zeta]{\bvec v \cap \Orb_\phi^-(\gamma) = \emp}\]
 Any $\bvec v \in S_\zeta$ is a good direction because $\gamma \nin \phi_*(\bvec v)$ and that $\phi_\#(\bvec v) = \phi_*(\bvec v) \in S_{\phi_*(\zeta)}$; therefore for every $n \ge 0$, $\phi_*^n(\bvec v)\cap \Orb_\phi^-(\gamma) = \emp$. Thus \[U = \bigcup_{n=0}^\infty \phi_*^n(\bvec v)\] is also disjoint from $\Orb_\phi^-(\gamma)$ and since $\Orb_\phi^-(\gamma)$ is infinite then we have proved that $\bvec v \subset \F_{\phi, \an}$.
\end{proof}
 
\unfinished[inline]{Will write theorem finding a (\weakly) repelling point, hence the Julia set is non-empty. Similar in flavour to the result by Rivera-Letelier extended by Baker and Rumely, \cite[\S12]{Bene}\\ Further ideas: if we found an attracting basin then its boundary must have a repelling point (use symbolic dynamics in the Cantor case).}

\begin{rmk}\label{rmk:dyn:holofixedpt}
 In complex dynamics and the dynamics of non-Archimedean rational maps, one relies on the Holomorphic fixed point formula to find non-repelling fixed points. In the absence of fixed classical points with multiplier $1$, the formula says there must be attracting (in fact also repelling) fixed classical points. Indifferent and attracting classical points are naturally Fatou. We shall use another strategy to find such points.
\end{rmk}


\section{Fatou Components of Skew Products}\label{sec:class}

The aim of this section is to classify the connected components of the Fatou set $\F_{\phi, \an}$. Originally this was completed by J. Rivera-Letelier \cite{RL1, RL2, RL4} for rational maps, and as far as possible we follow the ideas in \cite[\S9]{Bene} although many new proofs are required. For this section we will restrict to simple skew products over a residue characteristic $0$ field and, where necessary, those defined over a discretely valued subfield. We shall also be able to assume the hypothesis used occasionally in the previous section that $\abs{K^\times}$ is a $\Q(\q)$-module; this is trivial for simple skew products, because $\q = 1$.

\begin{rmk}
 Note that any simple $\k$-rational skew product satisfies these conditions. Recall that a skew product $\phi_* : \P^1_\an(\K) \to \P^1_\an(\K)$ is $\k$-rational iff the base field is the field $\K$ of Puiseux series (or $\hk$) with $\k$ coefficients. We can write $\phi^*(x) = \phi_1(x) = \lambda x^n + \bO(x^{n+1})\in \k[[x]]$, and $\phi^*(y) = \phi_2(x, y) \in \k((x))(y)$. Then $\q = 1$ if and only if $n=1$, whence $\phi_1$ is also defined over $\k((x))$ whose value group is $\abs{\k((x))^\times} = (\set{\abs x^r : r \in \Z}, \times) \cong (\Z, +)$.
\end{rmk}

\subsection{Berkovich Fatou Components}

The following generalises \cite[Theorem 9.1]{Bene} to skew products. 

\begin{thm}\label{thm:dyn:comps}
Let $\phi$ be a skew product of relative degree $d \ge 2$, and let $U$ be a connected component of $\F_{\phi, \an}$. Then the following hold.
\begin{enumerate}[label=(\alph*), ref=\theenumi]
 \item $\phi_*(U)$ is also a connected component of $\F_{\phi, \an}$.
 \item There is an integer $1 \le m \le d$ such that every point in $\phi_*(U)$ has
exactly $m$ preimages in $U$, counting multiplicity.
\item The inverse image $\phi_*^{-1}(U)$ is a finite union of components of $\F_{\phi, \an}$,
each of which is mapped onto $U$ by $\phi$.
\end{enumerate}
\end{thm}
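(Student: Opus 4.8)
The plan is to prove the three statements of \autoref{thm:dyn:comps} together, using the decomposition $\phi_* = \phi_{1*} \circ \phi_{2*}$ and leaning on the affinoid-mapping and local-degree machinery already established (especially \autoref{thm:skew:affinoidmapping}, \autoref{prop:skew:dto1}, and the forward/backward invariance of the Fatou set \autoref{prop:fatoujuliabasics}). The overall strategy mirrors the rational case in \cite[Theorem 9.1]{Bene}, but the key input that makes it go through is that every analytical tool we need was already generalised to skew products.

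First I would establish (a). Since $\phi_*$ is an open continuous map (\autoref{thm:skew:opencts}) and $\phi_*(\F_{\phi, \an}) = \F_{\phi, \an}$ by \autoref{prop:fatoujuliabasics}, the image $\phi_*(U)$ is an open connected subset of $\F_{\phi, \an}$, hence contained in some component $U'$. For the reverse inclusion I would use that $U$ is a \emph{maximal} connected open dynamically stable set: take the component $U'$ of $\F_{\phi, \an}$ containing $\phi_*(U)$, and consider the component $\tilde U$ of $\phi_*^{-1}(U')$ containing $U$. By \autoref{thm:skew:affinoidmapping} applied to affinoid exhaustions of $U'$ (or directly to the connected-components description of preimages), $\tilde U$ is open, connected, and maps onto $U'$; since $\tilde U \subseteq \F_{\phi, \an}$ and $\tilde U \supseteq U$ with $U$ a component, we get $\tilde U = U$ and therefore $\phi_*(U) = U'$ is a full component.

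Next, (c) follows from (a) together with properness: $\phi_*^{-1}(U)$ is open (continuity), its components are open, and each such component $V$ maps \emph{onto} $U$ by the component-wise surjectivity in \autoref{thm:skew:affinoidmapping}(b) applied along an affinoid exhaustion of $U$; each $V$ lies in $\F_{\phi, \an}$ by invariance, so each is a union of components, but connectedness forces each $V$ to be a single component. Finiteness of the collection of such components follows from \autoref{prop:skew:dto1}: for any connected Berkovich affinoid $W \Subset U$, $\phi_*^{-1}(W)$ has at most $d = \rdeg(\phi)$ components, and a standard exhaustion/limiting argument (the components of $\phi_*^{-1}(U)$ are increasing unions of components of $\phi_*^{-1}(W)$ as $W \nearrow U$) bounds the number of components of $\phi_*^{-1}(U)$ by $d$ as well. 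Finally (b): fix $W \Subset U$ a connected affinoid; by \autoref{prop:skew:dto1} each $\phi_*$-preimage component $V_i$ of $W$ carries a well-defined integer $d_i$ with $\sum d_i = d$, and these $d_i$ are constant as $W$ exhausts $U$ (since the generic preimage count is locally constant away from the Julia set and the $V_i$ track the components of $\phi_*^{-1}(U)$); reindexing so that $U \supseteq$ the $V_i$'s landing inside it gives the integer $m = \sum_{V_i \subseteq U} d_i$ with $1 \le m \le d$.

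The main obstacle I anticipate is the exhaustion/limiting step: passing from the clean $d$-to-$1$ statement for \emph{connected affinoids} in \autoref{prop:skew:dto1} to the statement for the (generally non-affinoid, possibly complicated) Fatou component $U$. I would handle this by writing $U$ as an increasing union of connected open Berkovich affinoids $W_k$ with $\overline{W_k} \subseteq W_{k+1}$ (possible since $U$ is open in the locally connected space $\P^1_\an$ with an affinoid basis, \autoref{thm:berk:topbasis}), checking that the components $V_{i,k}$ of $\phi_*^{-1}(W_k)$ nest into components $V_{i,k+1}$, and that the local degrees $d_{i,k}$ stabilise because each $\phi_* : V_{i,k} \to W_k$ is proper with boundary mapping to boundary (\autoref{thm:skew:affinoidmapping}(c),(d)); then $V_i = \bigcup_k V_{i,k}$ are exactly the components of $\phi_*^{-1}(U)$, the count stabilises to some $d_i$, and $\sum_i d_i = d$ is inherited from each finite stage. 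Everything else is bookkeeping with the already-established generalisations of Benedetto's results.
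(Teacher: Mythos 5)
Your proposal is correct and follows essentially the same route as the paper, which proves the theorem by combining the skew-product analogue of Benedetto's Lemma 9.2 (a Fatou component is the union of the open connected Berkovich affinoids containing a given point and contained in $\F_{\phi, \an}$) with \autoref{thm:skew:affinoidmapping} and \autoref{prop:skew:dto1} -- precisely the exhaustion-by-affinoids argument you outline, together with invariance of the Fatou set. The only slip is notational: in part (b) the affinoids $W$ should exhaust $\phi_*(U)$ rather than $U$, so that $m$ is the sum of the $d_i$ over the preimage components of $W$ contained in $U$; this does not affect the argument.
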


\begin{lem}[{\cite[Lemma 9.2]{Bene}}]
 Let $\phi$ be a skew product of relative degree $d \ge 2$, let $U$ be a connected component of $\F_{\phi, \an}$, and let $x \in U$. Then $U$ is the union of all open connected Berkovich affinoids containing $x$ and contained in $\F_{\phi, \an}$.
\end{lem}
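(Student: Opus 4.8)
The plan is to mimic the standard argument for rational maps, which requires only two ingredients from the skew-product theory developed above: that Fatou components are open and that the collection of open connected Berkovich affinoids forms a basis for the topology. Write $W$ for the union of all open connected Berkovich affinoids $V$ with $x \in V \subseteq \F_{\phi, \an}$, and set out to show $W = U$. The inclusion $W \subseteq U$ is immediate: each such $V$ is a connected subset of the Fatou set containing $x$, so it lies in the connected component $U$ of $\F_{\phi, \an}$ containing $x$. Hence $W \subseteq U$ and $x \in W$ (since $\F_{\phi, \an}$ is open by \autoref{prop:fatoujuliabasics}(a) and affinoids form a basis by \autoref{thm:berk:topbasis}, there is at least one such $V$).

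For the reverse inclusion $U \subseteq W$, first I would check that $W$ is connected. Every $V$ in the defining family contains $x$, and each $V$ is connected (by \autoref{thm:berk:topbasis}, connected Berkovich affinoids are connected), so $W$ is a union of connected sets with the common point $x$, hence connected. Next I would check that $W$ is open: given any $y \in W$, $y$ lies in some $V$ in the family, and since $V$ is open and contained in $\F_{\phi, \an}$, every point of $V$ — in particular a whole affinoid neighbourhood of $y$ inside $V$ — witnesses membership in $W$; so $W$ is open. Thus $W$ is a connected open subset of $\F_{\phi, \an}$ containing $x$, so $W$ is contained in the connected component $U$; combined with $W \subseteq U$ already shown, we need $W$ to be all of $U$, which follows because a connected component is the \emph{maximal} connected subset containing $x$, and any point $z \in U$ has, by local connectedness of $\P^1_\an$ (\autoref{thm:berk:interval}) and the basis property, an open connected affinoid neighbourhood $V_z \subseteq \F_{\phi, \an}$; but then $V_z \cup W$ is connected (both contain points of the connected set $U$... more carefully: $W \cup V_z$ need not contain $x$ in $V_z$).

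To make that last step airtight I would argue instead as follows. Since $U$ is open and connected and $\P^1_\an$ is locally connected, $U$ is itself a connected open set; pick any $z \in U$. Because the open connected affinoids form a basis and $U$ is open, for each point $w$ on the (unique) path $[x,z] \subseteq U$ there is an open connected affinoid $V_w$ with $w \in V_w \subseteq U \subseteq \F_{\phi, \an}$. The path $[x,z]$ is compact, so finitely many $V_{w_1}, \dots, V_{w_k}$ cover it; since consecutive ones overlap along the path, and a union of two connected affinoids with nonempty intersection is a connected affinoid (stated in the excerpt just after the definition of connected affinoid), $V := V_{w_1} \cup \cdots \cup V_{w_k}$ is an open connected affinoid containing both $x$ and $z$ and contained in $\F_{\phi, \an}$. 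Hence $z \in V \subseteq W$, so $U \subseteq W$, giving $U = W$.

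The main (minor) obstacle is the bookkeeping in that covering argument: one must ensure the overlapping unions stay connected affinoids — this is exactly what is guaranteed by the remark in the excerpt that $U \cap V \ne \emptyset$ implies $U \cup V$ is a connected affinoid — and one should order the $V_{w_i}$ along the path so that the partial unions remain connected at each stage. Everything else is formal point-set topology using openness of $\F_{\phi, \an}$, local connectedness of $\P^1_\an$, and the affinoid basis, all of which are available from earlier results; no skew-product-specific input beyond those is needed.
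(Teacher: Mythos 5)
Your proof is correct and takes essentially the same route the paper does: the paper gives no new argument for this lemma, citing Benedetto's Lemma 9.2 and noting the proofs are purely topological, and your chaining of open connected affinoids along the unique path $[x,z]$ — using openness of $\F_{\phi,\an}$, the affinoid basis for the weak topology, and the fact that overlapping connected affinoids have connected affinoid union — is exactly that argument. The only step worth making explicit is why $[x,z]\subseteq U$: since $\P^1_\an$ is locally connected and uniquely path-connected, the open connected component $U$ is path-connected and hence contains the unique arc between any two of its points.
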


The proofs are topological, following \cite[\S9]{Bene}. To prove \autoref{thm:dyn:comps}, one essentially combines the lemma with \autoref{thm:skew:affinoidmapping}.
By \autoref{thm:dyn:comps} (a) \[\phi_*^m(U) \cap \phi_*^n(U) \ne \emp \iff \phi_*^m(U) = \phi_*^n(U),\] hence the following definitions make sense.

\begin{defn}
 Let $U$ be a connected component of the Fatou set $\F_{\phi, \an}$. Then we call $U$ a \emph{Fatou component}. 
 \begin{enumerate}[label=(\alph*), ref=\theenumi]
 \item If additionally $\phi_*^m(U) = U$ for some minimal $m > 0$ then we say $U$ is a \emph{periodic} Fatou component with \emph{period} $m$.
 \item If there is an $m_0$ such that $\phi_*^{m_0}(U)$ is periodic then we say that $U$ is \emph{preperiodic}; or in other words, $\phi_*^{m + m_0}(U) = \phi_*^{m_0}(U)$ for some minimal $m, m_0$. 
 \item Otherwise (if $U$ is not preperiodic) we say $U$ is \emph{wandering}; equivalently $\phi_*^m(U) \cap \phi_*^n(U) = \emp$ for every distinct $m, n \in \N$.
 \end{enumerate}
\end{defn}

As already mentioned, the goal of this section will be to prove that all preperiodic Fatou components are `attracting' or `indifferent', \autoref{thm:dyn:class}. Any other Fatou component is wandering, and these will be studied in \autoref{sec:wandering}.

\subsection{Attracting Components}

Recall from \autoref{defn:dyn:classicalfixed} and \autoref{prop:dyn:classicalfixed} that $a \in \P^1$ is an attracting periodic point of (minimal) period $m \ge 1$ iff there is an open (Berkovich) disk $D \ni a$ such that
 \[\lim_{n \to \infty} \phi_*^{mn}(b) = a\quad \forall\ b \in D \cap \P^1(K).\]

\begin{defn}
 Let $\phi_*$ be a skew product of relative degree $d \ge 2$. Let $a \in \P^1(K)$ be an attracting periodic point of period $m$. Let $D$ be a disk as above with $\phi_*^{mn}(D)$ converging to $a$.
\begin{enumerate}[label=(\alph*), ref=\theenumi]
 \item The \emph{attracting basin} of $\phi$ is \[\mathfrak{U}_\phi(a) = \bigcup_{n \ge 0} \phi_*^{-n}(D).\]
 \item The \emph{immediate attracting basin} of $\phi_*$ is the connected component $U$ of $\mathfrak{U}_\phi(a)$ containing $a$. We call $U$ an \emph{attracting component} for $a$.
\end{enumerate}
\end{defn}

Recall also the definition of a domain of Cantor type from \cite[Definition 9.4]{Bene}.

\begin{defn}
 Let $U \subseteq \P^1_\an$ be an open set. We say that $U$ is a \emph{domain of Cantor type} if there is an increasing sequence of open connected Berkovich affinoids $U_1 \subsetneq U_2 \subsetneq U_3 \subsetneq \cdots$ such that
\begin{enumerate}
 \item for every $n \ge 1$, each connected component of $\P^1_\an \sm U_n$ contains at
least two points of $\partial U_{n+1}$; and 
\item $U = \bigcup_{n\ge1}U_n$.
\end{enumerate}
\end{defn}

The following is a generalisation to skew products of the theorem due to Rivera-Letelier \cite{RL2}.

\begin{thm}\label{thm:attractingperpt}\improvement{last check for dropping the rationality requirement in the future. I think it's not needed but CHECK.}
 Let $\phi_*$ be a skew product of relative degree $d \ge 2$, let $a \in \P^1(K)$ be an attracting periodic point of minimal period $m \ge 1$, and let $U \subseteq \P^1_\an$ be the immediate attracting basin of $\phi_*$. Then the following hold.
\begin{enumerate}
 \item $U$ is a periodic Fatou component of $\F_{\phi, \an}$, of period $m$.
 \item $U$ is either an open Berkovich disk or a domain of Cantor type.
 \item If $U$ is a disk, then its unique boundary point is a Type II repelling periodic point, of minimal period dividing $m$.
 \item If $U$ is of Cantor type, then its boundary is homeomorphic to the
Cantor set and is contained in the Berkovich Julia set.
\item The mapping $\phi_*^m : U \to U$ is $l$-to-$1$, for some integer $l \ge 2$.
\end{enumerate}
\end{thm}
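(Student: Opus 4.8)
The plan is to reduce the statement to the corresponding result for rational maps via the decomposition theorem \autoref{thm:skew:comp}, just as was done throughout \autoref{sec:skew}, and then deduce the five items by following the structure of the Rivera-Letelier argument reproduced in \cite[\S9]{Bene}. Since $\phi_{1*}$ is a homeomorphism scaling the hyperbolic metric by $\q$ and preserving types, disks, affinoids, and the poset structure (\autoref{thm:skew:scaledhomeo}, \autoref{thm:skew:autaffinoidmapping}), the topological and combinatorial skeleton of the argument is unaffected by the skew twist; only the multiplier bookkeeping changes, and for this one uses \autoref{prop:dyn:classicalfixed} and \autoref{defn:dyn:classicalfixed} in place of the classical multiplier.

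First I would establish (1): after replacing $\phi_*$ by $\phi_*^m$, we may take $a$ fixed and attracting, so by \autoref{prop:dyn:classicalfixed} there is an open disk $D \ni a$ with $\phi_*(D) \subsetneq D$; then $\bigcup_n \phi_*^n(D) \subseteq D$ is dynamically stable, so $a \in \F_{\phi,\an}$, and $U$, being the component of $\mathfrak U_\phi(a)$ containing $a$, is a component of $\F_{\phi,\an}$ by \autoref{thm:dyn:comps}; its periodicity and the fact that $\phi_*^m(U)=U$ follows from $\phi_*(D) \subseteq D \subseteq U$ together with \autoref{thm:dyn:comps}(a). For (5), \autoref{thm:dyn:comps}(b) already gives an integer $1 \le l \le d$ with $\phi_*^m$ being $l$-to-$1$ on $U$; the point is to rule out $l=1$. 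If $l=1$ then $\phi_*^m : U \to U$ would be a bijection, and since $\phi_*^m$ strictly shrinks the disk $D$ (diameters go to $0$), one derives a contradiction using that a bijective self-map must preserve, not strictly decrease, the ``size'' of the relevant region — concretely, $U$ would have to contain points arbitrarily far (hyperbolically) from $a$ mapping closer, violating injectivity once one tracks the boundary; this is the argument of \cite[Lemma 9.5]{Bene} and goes through verbatim since $\phi_{1*}$ is an isometry up to the scalar $\q=1$ assumption is not needed here — wait, $\q$ may differ from $1$, but the shrinking $\phi_*(D)\subsetneq D$ is exactly what is used, not $\q$.

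Next, items (2), (3), (4): following \cite[\S9.1]{Bene}, consider the nested exhaustion $D = D_0 \subseteq D_1 \subseteq D_2 \subseteq \cdots$ where $D_{n}$ is the component of $\phi_*^{-n}(D)$ (under $\phi_*^m$) containing $a$; each $D_n$ is an open connected Berkovich affinoid by \autoref{thm:skew:affinoidmapping}, and $U = \bigcup_n D_n$. A dichotomy arises according to whether the number of ``holes'' of the $D_n$ stabilizes: if from some point on each $D_n$ is a disk, then $U$ is an increasing union of disks and hence a disk, and its unique boundary point $\zeta$ satisfies $\phi_*^m(\zeta) = \zeta$ with $\deg_\zeta(\phi^m)\cdot\q^m$ forced to be $>1$ (it is repelling) because $U$ is a proper Fatou component and a non-repelling Type II boundary point would, by \autoref{thm:perptdynone}, force a larger Fatou neighbourhood swallowing part of $\partial U$; one must also check $\zeta$ is Type II, which follows since $\zeta = \partial D_n$ for a rational affinoid and is fixed — Type III is excluded by \autoref{thm:dyn:typeIIIindiff} (an indifferent point cannot bound an attracting basin this way, since the disk would then be forward-invariant and equal $\phi_*^m$-image, contradicting $l \ge 2$), and Type IV by \autoref{cor:dyn:approxfixed}/\autoref{thm:dyn:typeIVindiff}. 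If instead the number of holes grows without bound, then the $D_n$ form the exhaustion of a domain of Cantor type: one verifies that each component of $\P^1_\an \sm D_n$ contains at least two points of $\partial D_{n+1}$ by a counting argument on preimages of the $\le 1$-hole pieces, exactly as in \cite[Proof of Theorem 9.3, Cantor case]{Bene}, and then the boundary is a Cantor set lying in $\J_{\phi,\an}$ because each of its points is a limit of preimages of boundary points and has no dynamically stable neighbourhood.

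\textbf{Main obstacle.} The hard part will be the Cantor-type case of (4) and the control of boundary points in (3): specifically, showing that when $U$ is not a disk the holes genuinely proliferate (rather than stabilizing at some number $\ge 1$), and that the resulting boundary points are all Julia. For rational maps this rests on the Riemann-Hurwitz count of preimages and on the fact that a component of $\phi_*^{-1}$ of a many-holed affinoid is forced to be many-holed unless ramification absorbs the difference; in the skew setting one must re-examine this using \autoref{prop:skew:dto1} and \autoref{thm:skew:reduction} (bad directions could in principle mimic ramification). I expect this to require a careful but ultimately routine adaptation, since $\phi_{1*}$ contributes nothing to the hole count. A secondary subtlety is the exclusion of Type III and IV boundary points in (3) when $\q \ne 1$: here the results of \autoref{sec:per} (\autoref{thm:dyn:typeIIIindiff}, \autoref{thm:dyn:typeIVindiff}) are invoked, and one should be mindful of the unresolved $\q<1$ Type IV case flagged there — but for an \emph{attracting} basin boundary the relevant points turn out to be Type II regardless, which sidesteps the issue.
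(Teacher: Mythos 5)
Your skeleton matches the paper's proof — a small forward-invariant disk $V_0 \ni a$ from \autoref{prop:dyn:classicalfixed}, the increasing exhaustion by the components of $\phi_*^{-n}(V_0)$ containing $a$, the disk/Cantor dichotomy, and the purely topological Cantor-case analysis deferred to Benedetto — but your argument for item (3) has a genuine gap. You deduce that the unique boundary point $\zeta$ of a disk component is repelling on the grounds that ``a non-repelling Type II boundary point would, by \autoref{thm:perptdynone}, force a larger Fatou neighbourhood.'' That is the false converse of \autoref{thm:perptdynone}: for skew products a numerically non-repelling Type II periodic point is Fatou only if, in addition, some direction is exceptional and every bad direction is exceptional; non-repelling Type II Julia points are perfectly possible. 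Sitting on $\partial U$ does make $\zeta$ Julia, but Julia does not imply repelling here, so your argument gives no control on the multiplier. The paper closes exactly this step by a direct computation: writing $U = D_\an(0,R)$, expanding $\phi_2(y) = \sum c_n y^n$, using invariance of the boundary disk to get $\max_n \abs{c_n}^\q R^{n\q} = R$, and using attraction on a small disk of radius $r_0$ to get $\abs{c_l}^\q r_0^{l\q} < r_0$, where $l = \overline\wdeg_{0,R}(\phi_2)$; comparing yields $(r_0/R)^{l\q-1} < 1$, hence $l\q > 1$, so the direction $\vec v(0)$ is repelling, $\zeta(0,R)$ is \weaklyrepelling{}, and \emph{then} \autoref{thm:perptdynone} gives Julia. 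Some such quantitative step is indispensable, and it also delivers $l \ge 2$ in (5) for the disk case (given $\q \le 1$), whereas your ``a bijection cannot strictly shrink'' argument is not a proof: injectivity of $\phi_*^m$ on $U$ is a priori compatible with shrinking sub-disks unless one already controls the boundary multiplier.

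Two further omissions relative to the paper. First, before the dichotomy has any content you must show the basin omits at least two points of $\P^1_\an$ (otherwise $\bigcup_n V_n$ could be everything, or everything minus one exceptional point, and there is no boundary to analyse); the paper does this with \autoref{thm:dyn:classicalexceptional} together with, in the polynomial case, \autoref{thm:juliasetfacts}(f). Your proposal never addresses this. Second, your exclusion of Type III and IV boundary points leans on \autoref{thm:dyn:typeIIIindiff} and \autoref{thm:dyn:typeIVindiff}, which carry hypotheses (the value group being a $\Q(\q)$-module, $\q \ge 1$) that are not among the assumptions of this theorem, so they cannot simply be invoked; the paper does not route the disk case through those results but through the multiplier computation above.
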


\begin{proof}
Note that by \autoref{defn:dyn:classicalfixed} it is immediate that $\q \le 1$. The proof is similar to Benedetto's proof of \cite[Theorem 9.5]{Bene}.

By \autoref{prop:dyn:classicalfixed} there is an open disk $V_0$ containing $a$ such that $\phi_*(V_0) \subset V_0$. For $n \in \N$ we define $V_{n+1}$ to be the connected component of $\phi_*^{-1}(V_n)$ containing $V_n$. It follows that $V_0 \subset V_1 \subset V_2 \subset \cdots$, and define $\tilde U = \bigcup_n V_n$. Evidently $\phi_*(\tilde U) = \tilde U$ and $\tilde U \subseteq U$. 

To establish that $\tilde U$ omits two points, we must instead use \autoref{thm:dyn:classicalexceptional}. If the complement of this were empty, then by compactness, for some $n$, $\phi_*^{-n}(D) = \P^1_\an$, but this is impossible. If the complement were a single point then this is an exceptional point (WLOG $\infty$) and we must have that $\phi_2(y)$ is a polynomial. In this case, by \autoref{thm:juliasetfacts}(f) the Julia set separates our attracting point $a$ from $\infty$, but then $U$ would have to contain the Julia set; contradiction.

Suppose every $V_n$ is a disk, then so is $\tilde U$. By changing coordinates, we may assume that $a = 0$ and $\infty \nin \tilde U = D_\an(a, R)$. We may also write that $V_n = D_\an(a, r_n)$; so $r_n \to R$.

On this disk, we may write the power series for $\phi_2(y)$ as \[\phi_2(y) = \sum_{n=d_0}^\infty c_n y^n,\]
where $d_0 = \deg_a(\phi) \ge 1$. It follows that $\max_n \abs{c_n}^\q R^{n\q} = R$ since $D_\an(a, R)$ is fixed; suppose the Weierstrass degree $\overline\wdeg_{0, R}(\phi_2) = l$. Because $a=0$ is attracting, assuming $r_0$ is sufficiently small, we have \[\abs{c_l}^\q {r_0}^{l\q} \le \max_n \abs{c_n}^\q {r_0}^{n\q} = \abs{c_{d_0}}^\q {r_0}^{d_0\q} < {r_0}.\]
Thus $(r_0/R)^{l\q} < r_0/R \implies (r_0/R)^{l\q-1} < 1$ and hence $l\q > 1$. This proves that the multiplier of $\vec v(0)$ is $\deg_{\zeta(0, R), \vec v(0)}(\phi)\q = l\q > 1$, and hence $\vec v(0)$ is a repelling direction. Therefore $\zeta(0, R)$ is numerically repelling, and by \autoref{thm:perptdynone}, this is a Julia point.

In the other case we assume that ($\tilde U$ and) some $V_n$ is not a disk. The rest of the proof is purely topological and identical to the corresponding remainder of Benedetto's proof of \cite[Theorem 9.5]{Bene}.
\improvement{Type out proof. Figure out the three fixed points problem. Answer: prove non-emptiness of Julia set. If empty then wtf.}
\end{proof}

\subsection{Indifferent Components}

\begin{defn}
Let $\phi_*$ be a simple skew product of relative degree $d \ge 2$. We define the \emph{indifference domain}, $\mathfrak I$, of $\phi_*$ to be the set of points $\zeta \in \P^1_\an$ for which there is an open connected affinoid $W$ containing $\zeta$ and an integer $m \ge 1$ such that $\phi_*^m$ maps $W$ bijectively onto itself.
A connected component of the indifference domain is called an \emph{indifferent component} for $\phi$.
\end{defn}

Studying indifference domains for skew products requires greater care than for rational maps, as we will see in the next few results.

\begin{lem}\label{lem:intervalmap}
 Let $\phi_*$ be a simple skew product, $r > 0$ and $\zeta = \zeta(a, r) \in \P^1_\an$. 
 Suppose that $\deg_{\zeta, \vec v(a)}(\phi) = 1$, $\phi_*(\zeta) = \zeta(b, s)$ and $\phi_\#(\vec v(a)) = \vec v(b)$; then there exists an $r_0 < r$ such that $\phi_*(\zeta(a, t)) = \zeta(b, st/r)$ for all $t \in [r_0, r]$.
 
 In the case that $\phi_*$ maps $D(a, r)$ bijectively to $D(b, s)$, $r_0 = \frac{|\phi_*(a) - b|r}{s}$ is the minimum possible.
\end{lem}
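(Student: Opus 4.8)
The plan is to analyse the Taylor/Laurent expansion of $\phi_2$ at $a$ and track what $\phi_{2*}$ does to the circles $\zeta(a,t)$ for $t$ slightly less than $r$, then apply $\phi_{1*} = \mathrm{id}$ (since $\phi_*$ is simple, $\q=1$). Write $\phi_2(y) = b_0 + \sum_{n \ge 1} c_n (y-a)^n$ convergent on an annulus $\{\lambda r < |y-a| < r\}$ for $\lambda$ close to $1$. The hypothesis $\deg_{\zeta,\vec v(a)}(\phi) = \wdeg_{a,r}(\phi_2) = 1$ means that on this annulus the supremum $\sup_n |c_n| t^n$ is attained (and first attained) at $n=1$ as $t \nearrow r$; equivalently $|c_1| r = \sup_n |c_n| r^n$ and for $n \ge 2$ one has $|c_n| r^{n-1} \le |c_1|$, while for no index below $1$. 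By the standard argument (compare the proof of the Type II/III sup-norm proposition in the excerpt, and \autoref{thm:skew:imgtypeII}) there is an $r_0 < r$ such that for all $t \in [r_0, r]$ the term $c_1(y-a)$ strictly dominates on the circle of radius $t$: for $t \in (r_0, r]$ and $|y-a| = t$ we get $|\phi_2(y) - b_0| = |c_1| t$. Then by \autoref{thm:seminorms:mapdisk} (or directly) $\phi_{2*}(\zeta(a,t)) = \zeta(b_0, |c_1| t)$. Taking $t = r$ identifies $s = |c_1| r$ and $b = b_0$ (consistent with $\phi_*(\zeta) = \zeta(b,s)$ and $\phi_\#(\vec v(a)) = \vec v(b)$), so $|c_1| = s/r$ and hence $\phi_*(\zeta(a,t)) = \phi_{2*}(\zeta(a,t)) = \zeta(b, st/r)$ for all $t \in [r_0, r]$, which is the first assertion. (Here I use $\phi_{1*} = \mathrm{id}$; in general $\phi_{1*}$ would merely translate the center, but for a simple skew product — indeed for any skew product of scale factor $1$ — distances and radii are preserved.)

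Wait — I should be careful: the statement does not assume $\phi_1$ trivial, only that $\phi_*$ is \emph{simple}, i.e. $\q = 1$. By \autoref{thm:skew:scaledhomeo} with $\q=1$, $\phi_{1*}$ is a radius-preserving homeomorphism, $\phi_{1*}(\zeta(c,\rho)) = \zeta(\phi_1^{-1}(c), \rho)$. So from $\phi_{2*}(\zeta(a,t)) = \zeta(b_0, |c_1|t)$ I get $\phi_*(\zeta(a,t)) = \zeta(\phi_1^{-1}(b_0), |c_1| t)$, and comparing with $\phi_*(\zeta) = \zeta(b,s)$ at $t=r$ gives $b = \phi_1^{-1}(b_0)$ and $s = |c_1| r$, so again $\phi_*(\zeta(a,t)) = \zeta(b, st/r)$ on $[r_0,r]$.

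For the second assertion: suppose $\phi_*$ maps $D(a,r)$ bijectively onto $D(b,s)$. Bijectivity forces $\deg_a(\phi_2) = 1$, so $c_1 \ne 0$, and moreover $\phi_2$ has no other zeros or poles of $\phi_2 - b_0$ in $D(a,r)$ with respect to Weierstrass degree — precisely $\wdeg_{a,t}(\phi_2 - b_0) = 1$ for \emph{all} $t < r$. Then the domination $|\phi_2(y) - b_0| = |c_1| \cdot |y-a|$ holds for every $y \in D(a,r)$, hence $|\phi_2(a') - b_0| = |c_1| \cdot |a' - a|$ where $a' = \phi_2^{-1}$ applied... actually more directly: the image point $\phi_*(a)$ has $|\phi_*(a) - b| = |\phi_1^{-1}(b_0) - \phi_1^{-1}(b_0)|$... let me instead say $|\phi_*(a') - b|$ for $a' \in D(a,r)$. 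The circle $\zeta(a,t)$ maps to $\zeta(b, |c_1| t) = \zeta(b, st/r)$ \emph{as long as} $a \in \CD(a,t)$ — but if $t < |\phi_*^{-1}(\text{center issue})|$... The point is that the formula $\phi_*(\zeta(a,t)) = \zeta(b, st/r)$ persists down to the radius $t$ where the path $[\,\zeta(a,t), a\,]$ stops mapping isometrically, which by \autoref{thm:intervalstretch}/\autoref{cor:skew:injinterval} is exactly when $t$ reaches $|\phi_*(a) - b| \cdot r / s$: for $t$ below this, $a$ and the center $b$'s preimage... Concretely, $\phi_*(a) \in D(b, st/r)$ iff $|\phi_*(a) - b| < st/r$ iff $t > |\phi_*(a)-b| r/s =: r_0$, and for such $t$, $\zeta(a,t)$ and $a$ lie in the same direction at nothing problematic, giving $\phi_*(\zeta(a,t)) = \zeta(b, st/r)$; at $t = r_0$ the point $\phi_*(a)$ lies on the boundary circle, so the formula can fail below. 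Hence $r_0 = |\phi_*(a) - b| r / s$ is the minimal such value. I would phrase this last part via the chain-rule/interval-stretch machinery (\autoref{thm:intervalstretch} with $m=1$, $\q=1$) applied to the segment $[a, \zeta(a,r)]$: it maps isometrically onto $[\phi_*(a), \zeta(b,s)]$, and $\zeta(a,t) \in [a,\zeta(a,r)]$ corresponds under this isometry to the point of $[\phi_*(a),\zeta(b,s)]$ at the matching diameter, which is $\zeta(b, st/r)$ precisely when $st/r \ge |\phi_*(a) - b|$, i.e. $t \ge r_0$.

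\textbf{Main obstacle.} The delicate point is pinning down the \emph{exact} threshold $r_0$ in the bijective case and showing it is sharp (not merely that \emph{some} $r_0$ works): one must verify that for $t < |\phi_*(a)-b|r/s$ the image $\phi_*(\zeta(a,t))$ genuinely ceases to be $\zeta(b,st/r)$ — because then $\zeta(a,t)$ lies on the segment from $a$ to $\zeta(a,r)$ \emph{beyond} the join $a \vee \phi_*^{-1}(\zeta(b, st/r))$ type configuration, so $\phi_*(\zeta(a,t))$ is instead $\phi_*(a) \vee (\text{something})$ at a strictly smaller distance from $\zeta(b,s)$ than $\zeta(b,st/r)$ would be. This is really a statement about how $\phi_{2*}$ reshuffles the tree below $\zeta(a,r)$, and the cleanest route is the interval-isometry description above rather than bare-hands Laurent series estimates. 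Everything else is a routine application of \autoref{thm:skew:imgtypeII}, \autoref{thm:seminorms:mapdisk}, and $\q=1$ homogeneity.
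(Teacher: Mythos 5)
Your proposal is correct and follows essentially the same route as the paper's proof: expand $\phi_2$ about $a$, use the Weierstrass-degree-one hypothesis to make the linear term dominate on an annulus $[r_0, r]$, apply the radius-preserving ($\q = 1$) homeomorphism $\phi_{1*}$, and in the bijective case read the sharp threshold $r_0 = |\phi_*(a)-b|r/s$ off the constant term; the paper only streamlines the bookkeeping by normalising $a = b = 0$ so that $\phi_{1*}$ fixes the segment $[0,\infty]$. One small imprecision: comparing images at $t = r$ yields only $|b - \phi_{1*}(b_0)| < s$, not $b = \phi_{1*}(b_0)$, but this is harmless because the hypothesis $\phi_\#(\vec v(a)) = \vec v(b)$ lets you enlarge $r_0$ so that $st/r \ge |b - \phi_{1*}(b_0)|$ on $[r_0, r]$, exactly as your own discussion of the bijective case (where $\phi_*(a) = \phi_{1*}(b_0)$ plays the role of the image centre) already makes clear.
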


In other words given any two intervals $[a, \zeta(a, r)]$ and $[b, \zeta(b, s)]$ where $\phi_\#(\vec v(a)) = \vec v(b)$ with degree $1$, there is at least a short final segment of the former mapping isometrically onto a final segment of the latter. The lemma provides an effective estimate of this subinterval using similar ideas to \autoref{thm:skew:bigintervalstretch}.

\begin{proof}
 By changing coordinates we may assume WLOG $a = b = 0$. Also, by considering that $\phi_{1*}$ fixes $\zeta(0, t)$ for every $t$, we have that $\phi_* = \phi_{2*}$ on $[0, \infty]$.
 \[\phi_2(y) = \sum_na_ny^n\]
 Because $\deg_{\zeta, \vec v(a)}(\phi) = 1$, we know that there is also an inner radius $r_0$ large enough so that the inner Weierstrass degree there is also $1$; if $\phi_2$ is convergent in $D(0, r)$, then the only restriction will be that $|\phi_*(a) - b| = |\phi_0| \le |\phi_1|r_0$ so that the image of $\zeta(0, r_0)$ remains centred on $b = 0$. So for any $t \in [r_0, r)$ \[|a_1|t >  |a_n|t^n \quad \forall n \ge 2\] and \[|a_n|t^n \le  |a_1|t \quad \forall n \le 0.\] This shows that $\phi_*(\zeta(0, t)) = \zeta(0, |\phi_1|t)$ whereas we know that $|\phi_1|r = s$ meaning $|\phi_1| = \frac sr$. Thus $\phi_*(\zeta(0, t)) = \zeta(0, st/r)$. In the case that $\phi_*$ has no poles in $D(a, r)$, the minimal working inner radius is \[r_0 = \frac{|\phi_*(a)-b|}{\abs {a_1}} = \frac{|\phi_*(a) - b|r}{s}.\]
 \end{proof}
 
 The next lemma generalises \cite[Lemma 9.8]{Bene}.

\begin{lem}\label{lem:periodicstring}
 Let $L$ be a discretely valued subfield of $K$, $a \in L$, $r > 0$, and let $\phi_*$ be a simple skew product defined over $L$. Suppose that $\phi_*$ maps $D(a, r)$ to itself bijectively. 
 Given $0 < s < r$, any $b \in D(a, r)$ such that $\CD(b, s) \cap L \ne \emp$, either
\begin{enumerate}[label=(\alph*), ref=\theenumi]
 \item $\zeta(b, s)$ is periodic under $\phi_*$, or
 \item there is a $t \in \abs{L^\times} \cap (s, r)$, such that $\zeta(b, t)$ is periodic under $\phi_*$ and there are infinitely many directions at $\zeta(b, t)$ that contain an iterate $\phi_*^m(b)$.
\end{enumerate}
\end{lem}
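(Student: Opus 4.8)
\textbf{Proof proposal for Lemma~\ref{lem:periodicstring}.}

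The plan is to exploit the interplay between the discretely valued subfield $L$ and the dynamics of $\phi_*$ on the interval $[a,\zeta(a,r)]$, via the orbit of $b$. Since $\phi_*$ maps $D(a,r)$ bijectively onto itself, $\phi_*$ acts as a bijection on $\Inj(\phi)\cap D_\an(a,r)$, and in particular the entire interval from any point of $D(a,r)$ up to $\zeta(a,r)$ lies in the injectivity locus there (otherwise the Extreme Value Theorem, \autoref{thm:skew:extvalue}, would force non-injectivity of $\phi_*$ on $D_\an(a,r)$). First I would observe that because $\phi_*$ is defined over $L$ and $b$ can be taken in $L$ (picking a point of $\CD(b,s)\cap L$ and using \autoref{prop:berk:disks} to replace $b$), every iterate $\phi_*^m(b)$ also lies in $L$; in particular, for each $m$, the point $\phi_*^m(b) \vee \zeta(a,r)$ is of the form $\zeta(b_m, t_m)$ with $b_m\in L$ and $t_m = |b - \phi_*^m(b)|\in \abs{L^\times}\cup\{0\}$ — a value in the \emph{discrete} value group of $L$. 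Since $\abs{L^\times}\cap[s,r]$ is a finite set (discreteness!), as $m$ ranges over $\N$ the radii $t_m$ that fall in $[s,r)$ take only finitely many values.

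The core dichotomy then proceeds as follows. Consider the forward orbit $\{\phi_*^m(b)\}_{m\ge 0}\subseteq D(a,r)$. For each $m$, let $\eta_m$ be the point $\phi_*^m(b)\vee \zeta(a,r) = \zeta(b_m,t_m)$ on the interval $[\phi_*^m(b),\zeta(a,r)]$. Using \autoref{lem:intervalmap} (with scale factor $\q=1$, since $\phi_*$ is simple), $\phi_*$ maps an initial-from-the-top segment of $[\phi_*^m(b),\zeta(a,r)]$ isometrically onto an initial-from-the-top segment of $[\phi_*^{m+1}(b),\zeta(a,r)]$, sending $\zeta(b_m,t)$ to $\zeta(b_{m+1},t)$ for all $t$ in a suitable range $[\rho_m, r)$; because $\phi_*$ is bijective on $D(a,r)$ and the value group is discrete, one can in fact take this isometric identification to hold on $[t^*, r)$ for a fixed $t^*\in\abs{L^\times}$ independent of $m$ (this uses that the slopes along $[a,\zeta(a,r)]$ are all $1$, again from $(\zeta,\zeta(a,r))\subseteq\Inj(\phi)$ and \autoref{cor:skew:injinterval}). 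Now split into two cases. \emph{Case (a):} if $s \ge t^*$, i.e.\ $\zeta(b,s)$ lies in the region where $\phi_*$ acts isometrically and type-preservingly along these intervals, then $\phi_*^m(\zeta(b,s)) = \zeta(b_m, s)$ for all $m$; since the $b_m$ all lie in $D(a,r)$ and only finitely many distinct points $\zeta(\,\cdot\,, s)$ of that fixed radius can be joins-with-$\zeta(a,r)$ realised in $L$ up to the relevant equivalence — here I would pin this down by noting $\zeta(b_m,s)$ depends only on the residue-disk data, and the pigeonhole/finiteness forces $\zeta(b_{m_1},s)=\zeta(b_{m_2},s)$ for some $m_1<m_2$ — we conclude $\zeta(b,s)$ is preperiodic, and being in the injectivity locus with $\phi_*$ bijective on a neighbourhood, preperiodic forces periodic. \emph{Case (b):} if $s < t^*$, then I would \emph{raise} the radius: the points $\eta_m=\zeta(b_m,t_m)$ with $t_m := |b-\phi_*^m(b)|$ — at least for those $m$ with the orbit point outside $\CD(b,s)$ — have radii in the finite set $\abs{L^\times}\cap(s,r)$, so some radius value $t$ is attained infinitely often; at that radius, $\phi_*$ acts isometrically along the tops of intervals (choosing $t\ge t^*$, which we may by enlarging as needed within $(s,r)$, using that $r$ itself works), hence $\phi_*$ permutes the finitely many candidate points $\zeta(\,\cdot\,,t)$ that arise, so $\zeta(b,t)$ for the appropriate representative is periodic; and the infinitely many iterates $\phi_*^m(b)$ landing in distinct residue classes of $\CD(b,t)$ (because $t_m = |b-\phi_*^m(b)|$ takes infinitely many smaller values, or the orbit is infinite inside $\CD(b,t)$) give infinitely many directions at $\zeta(b,t)$ meeting the orbit.

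The main obstacle I anticipate is Case (b): making precise the claim that one can \emph{find} the right intermediate radius $t\in\abs{L^\times}\cap(s,r)$ at which $\zeta(b,t)$ is genuinely periodic, and simultaneously that infinitely many orbit points of $b$ distribute into infinitely many directions there. The subtlety is that the orbit $\{\phi_*^m(b)\}$ need not be infinite a priori inside $\CD(b,s)$ — if it were finite, $b$ would be preperiodic and we'd be in Case (a)-type territory — so I would first dispose of the case where the orbit of $b$ is finite (then $\zeta(b,s)$ is trivially preperiodic hence periodic, giving (a)), and otherwise use infinitude of the orbit together with the discreteness of $\abs{L^\times}$ to extract both the periodic radius and the infinitely-many-directions statement by a pigeonhole argument on the finitely many values $\abs{L^\times}\cap(s,r)$. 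A secondary technical point to handle carefully is that \autoref{lem:intervalmap} only guarantees the isometric identification on \emph{some} final segment whose length depends on $|\phi_*^m(a)-b_m|$; I would need the discreteness of $L$ and the global bijectivity of $\phi_*$ on $D(a,r)$ to get a uniform lower cut-off $t^*$, which is the linchpin making the pigeonhole finite.
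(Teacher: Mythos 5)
Your opening moves are the right ones and match the intended argument: replace $b$ by a point $b_0\in \CD(b,s)\cap L$ (harmless, since $\zeta(b,s)=\zeta(b_0,s)$ and the orbits of $b$ and $b_0$ stay within distance $s$ of each other), note that the orbit then stays in $L$, and exploit discreteness of $|L^\times|$. One simplification you are missing: because $\phi_*$ is simple and maps $D(a,r)$ bijectively to itself, it is an \emph{isometry} of the classical disk, so $\phi_*(\zeta(c,t))=\zeta(\phi_*(c),t)$ for every $c\in D(a,r)$ and every $t<r$; your uniform cut-off $t^*$ extracted from \autoref{lem:intervalmap} is therefore a red herring, and the case split $s\ge t^*$ versus $s<t^*$ does not correspond to the dichotomy in the statement. (Also, $\phi_*^m(b)\vee\zeta(a,r)$ is just $\zeta(a,r)$; the join you want is $b\vee\phi_*^m(b)=\zeta\bigl(b,\,|b-\phi_*^m(b)|\bigr)$.)

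The genuine gap is in the pigeonhole. You apply it to ``finitely many candidate points $\zeta(\cdot,s)$ (or $\zeta(\cdot,t)$) realised in $L$'', but that set is infinite: distinct residue disks of radius $s$ inside $D(a,r)$ containing points of $L$ give infinitely many distinct points $\zeta(c,s)$. Concretely, take $L=\k((x))$, $a=b=0$, $r=1$, $s=|x|^2$, and $\phi_2(y)=y+x$ with trivial skew part: the disk maps bijectively to itself, the orbit points $nx$ are pairwise at distance $|x|>s$, so the points $\zeta(nx,s)$ are pairwise distinct and $\zeta(0,s)$ is \emph{not} periodic, yet your Case (a) argument would conclude that it is. The finiteness that actually does the work lives in the value group, not in the set of centres: assuming $\zeta(b,s)$ is not periodic, every distance $|\phi_*^n(b)-b|$ exceeds $s$, so the set $A=\{\,|\phi_*^n(b)-b| : n\ge 1\,\}\subset |L^\times|\cap(s,r)$ is finite. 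The missing device is then minimality: let $t=\min A$ and let $j$ be the first iterate with $|\phi_*^j(b)-b|=t$. The isometry gives $\phi_*^j(\zeta(b,t))=\zeta(\phi_*^j(b),t)=\zeta(b,t)$, so $\zeta(b,t)$ is periodic; and iterating, $|\phi_*^{j(n+1)}(b)-\phi_*^{jn}(b)|=t$ together with minimality of $t$ forces $|\phi_*^{jm}(b)-\phi_*^{jn}(b)|=t$ for all $m\ne n$, so the points $\phi_*^{jn}(b)$ occupy pairwise distinct directions at $\zeta(b,t)$, which is exactly conclusion (b). Your sketch establishes neither that some specific $\zeta(b,t)$ is actually mapped back to itself by an iterate, nor that the orbit points fall into \emph{distinct} directions (infinitely many orbit points inside $\CD(b,t)$ could a priori crowd into few directions); both facts come only from this minimal-distance argument, which is the heart of the paper's proof.
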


\begin{proof}
 Write $a = b = b_0 + b_1$ where $|b_1| \le s$ and $b_0 \in L$ (emphasis on $b_0$ having finitely many terms).
 
If $\zeta(b, s)$ is not periodic then $|\phi_*^n(b) - b| > s$ for every $n \in \N_+$.
 
 Since $|b - b_0| \le s$ and $\phi_*$ is an isometry in $D(b, r)$, we have $|\phi_*^n(b) - \phi_*^n(b_0)| \le s$ for every $n \in \N$. This shows that $|\phi_*^n(b) - b| = |\phi_*^n(b_0) - b_0| > s$ for every $n$. Effectively we can now replace $b$ by $b_0$, assuming $b \in L$. Consider the collection of numbers
 \[A = \set[|\phi_*^n(b) - b|]{n \in \N_+}.\] Since $L$ is discretely valued and $\phi_*$ is defined over $L$, it is immediate that $A\subset \abs{L^\times}$. It is also immediate that $A \subset [s, r)$. Therefore $A$ is finite.

 Now we take the first $j$ such that $|\phi_*^j(b) - b| = \min A = t$, hence $\phi_*^j(\zeta(b, t)) = \zeta(b, t)$. By the isometry property we have $|\phi_*^{j(n+1)}(b) - \phi_*^{jn}(b)| = t$. By the triangle inequality $|\phi_*^{jn}(b) - b| \le t$ for every $n > 0$, but then by minimality of $t$ we must have $|\phi_*^{jn}(b) - b| = t$. Finally applying the isometry again we have $|\phi_*^{jm}(b) - \phi_*^{jn}(b)| = t$ for every $m \ne n$. So we have shown that $(\phi_*^{jn}(b))_{n=1}^\infty$ is an infinite sequence with distinct directions at $\zeta(b, t)$.
\end{proof}

%

\begin{lem}[Rivera-Letelier's approximation lemma]
 Let $b, c \in K$ and $r > s > 0$. Let $\phi$ be a simple skew product that maps the open Berkovich annulus
$\set{s<|\zeta-b|<r}$ bijectively onto $\set{s<|\zeta-c|<r}$ with $\phi(\zeta(b,r)) = \zeta(c,r)$. Then there is a simple skew product $\psi_*$ such that
\begin{enumerate}[label=(\alph*), ref=\theenumi]
\item $\psi_*$ maps $D_\an(b, r)$ onto $D_\an(c, r)$ with Weierstrass degree 1, and
\item $\psi_*(\zeta) = \phi_*(\zeta)$ for any $\zeta \in D_\an(b, r)$ with $\diam(\zeta) \ge s$.
\end{enumerate}
\end{lem}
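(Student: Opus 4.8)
The plan is to build $\psi_*$ by truncating the Laurent series of the rational part $\phi_2$ of $\phi_* = \phi_{1*} \circ \phi_{2*}$ at the appropriate point and appending a suitable monomial tail, then checking that the field-automorphism part $\phi_{1*}$ is unaffected by the modification. After changing coordinates by the affine maps $z \mapsto z - b$ in the source and $z \mapsto z - c$ in the range (which commute with $\phi_{1*}$ since $\phi_1^*$, being a field automorphism fixing the prime field, fixes these translation constants up to the usual $\phi_{1*}$-action and in any case does not interfere with diameters), I would assume $b = c = 0$. Write
\[
\phi_2(y) = \sum_{n \in \Z} a_n y^n
\]
for the Laurent expansion of $\phi_2$ converging on the annulus $\set{s < |z| < r}$. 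The bijectivity of $\phi_*$ on $\set{s < |\zeta| < r}$ together with $\phi_*(\zeta(0,r)) = \zeta(0,r)$ and the scaling relations of \autoref{thm:skew:imgtypeII} (with $\q = 1$) force the inner and outer Weierstrass degrees of $\phi_2 - a_0$ on this annulus to both equal $1$; in particular $|a_1| r = r$, so $|a_1| = 1$, and $|a_n| r^n < |a_1| r$ for all $n \ne 1$ (and $|a_n| s^n \le |a_1| s$ for $n \le 0$, with appropriate boundary behaviour).

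The key step is to define
\[
\psi_2(y) = a_0 + a_1 y
\]
and set $\psi^* = \psi_2^* \circ \phi_1^*$, i.e. $\psi_*$ is the skew product with the same automorphism part $\phi_{1*}$ but linear rational part $y \mapsto a_0 + a_1 y$. Then $\psi_2$ is a degree-$1$ rational function, so $\psi_*$ is a simple skew product, and $\psi_{2*}$ maps $D_\an(0, r)$ onto $D_\an(a_0, |a_1| r) = D_\an(a_0, r)$ with Weierstrass degree $1$; composing with the homeomorphism $\phi_{1*}$ gives part (a) after noting $\phi_{1*}$ fixes $\zeta(0,r)$ and its residue structure. For part (b), I would take any $\zeta \in D_\an(0, r)$ with $\diam(\zeta) \ge s$. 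Since $\phi_{1*}$ acts identically in both the $\phi_*$ and $\psi_*$ pictures, it suffices to show $\phi_{2*}(\zeta) = \psi_{2*}(\zeta)$. Writing $\zeta = \zeta(a, t)$ (or a Type IV limit of such with $t \ge s$) with $t \le r$, one evaluates $\norm[\zeta]{\phi_2(y) - w}$ and $\norm[\zeta]{\psi_2(y) - w}$ against test polynomials $y - w$: for $t$ in $[s, r)$ the estimates $|a_n| t^n < |a_1| t$ ($n \ne 1$) and $|a_n| t^n \le |a_1| t$ ($n \le 0$) — these follow by the same interpolation-of-inequalities argument as in \autoref{lem:intervalmap} and \autoref{thm:skew:bigintervalstretch} — show that the $a_1 y$ term dominates $\phi_2(y) - a_0$ at $\zeta$, so $\norm[\zeta]{\phi_2(y) - a_0} = \norm[\zeta]{a_1 y} = \norm[\zeta]{\psi_2(y) - a_0}$, and more generally $\phi_{2*}(\zeta)$ and $\psi_{2*}(\zeta)$ agree as seminorms because their difference $\phi_2 - \psi_2 = \sum_{n \ne 1} a_n y^n$ has $\norm[\zeta]{\phi_2 - \psi_2} < \norm[\zeta]{\psi_2 - a_0}$.

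The main obstacle I anticipate is handling the boundary radii carefully: at $t = s$ exactly, the hypothesis only gives $|a_n| s^n \le |a_1| s$ for the negative-index terms (not strict), so one must argue that even with equality the seminorm values still coincide — this is where one uses that the annulus is \emph{open}, so only $\diam(\zeta) \ge s$ is required and the relevant dominance is genuinely strict for the non-$a_1$ positive terms while the negative terms, being bounded above by $|a_1| s \le |a_1| t$, cannot exceed the dominant term; combined with the strong triangle inequality this pins down the seminorm. A secondary point requiring care is verifying that the coordinate changes and the decomposition $\phi_* = \phi_{1*} \circ \phi_{2*}$ genuinely isolate $\phi_{1*}$ as a common factor, so that "replacing $\phi_2$ by $\psi_2$" is a well-defined operation producing a bona fide \equivar{} skew endomorphism of the same scale factor $\q = 1$; this follows from \autoref{thm:skew:comp} and \autoref{thm:skew:scaledhomeo} but should be stated explicitly.
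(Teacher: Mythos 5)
Your outer strategy --- isolate the rational part via $\phi_* = \phi_{1*}\circ\phi_{2*}$, approximate only $\phi_2$, and recombine as $\psi^* = \psi_2^*\circ\phi_1^*$ --- is exactly the paper's route: the paper simply applies Rivera-Letelier's lemma for rational maps to $\phi_{2*}$ (whose hypotheses hold because $\phi_{1*}$ is a diameter-preserving bijection when $\q=1$) and composes with $\phi_{1*}$. The gap is in your explicit construction of $\psi_2$. Two image points agree only if $\norm[\zeta]{\phi_2-w}=\norm[\zeta]{\psi_2-w}$ for \emph{every} test polynomial $y-w$, and the smallest of the norms $\norm[\zeta]{\psi_2-w}$ is $\inf_w\norm[\zeta]{\psi_2-w}=\diam(\psi_{2*}(\zeta))=\diam(\zeta)$, not $\norm[\zeta]{\psi_2-a_0}$. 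So your dominance criterion $\norm[\zeta]{\phi_2-\psi_2}<\norm[\zeta]{\psi_2-a_0}$ is not sufficient: what is needed is $\norm[\zeta]{\phi_2-\psi_2}<\diam(\zeta)$, and truncating to $\psi_2(y)=a_0+a_1y$ discards positive-power terms whose norm at a point $\zeta(a,t)$ with $\abs a$ close to $r$ and $t$ close to $s$ is far larger than $t$.

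Concretely, over the Puiseux series take $\phi_1^*$ to be the identity, $b=c=0$, $r=1$, $s=\abs x$, and $\phi_2(y)=y+\beta y^2$ with $\abs\beta=\abs x^{1/2}$. Then $\phi_*=\phi_{2*}$ is a simple skew product mapping $\set{s<\abs{\zeta}<1}$ bijectively onto itself with $\phi_*(\zeta(0,1))=\zeta(0,1)$, and the inner and outer Weierstrass degrees of $\phi_2$ on the annulus are both $1$, so all your coefficient estimates hold; your recipe gives $\psi_2(y)=y$. Now take $\zeta=\zeta(a,s)$ with $\abs a=\abs x^{1/8}$, so $\diam(\zeta)=s$. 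Then $\norm[\zeta]{y-a}=\abs x$ while $\norm[\zeta]{\beta y^2}=\abs x^{3/4}$, so by the strong triangle equality $\norm[\zeta]{\phi_2-a}=\abs x^{3/4}\ne\abs x=\norm[\zeta]{\psi_2-a}$, i.e.\ $\phi_*(\zeta)\ne\psi_*(\zeta)$ --- even though your inequality $\norm[\zeta]{\phi_2-\psi_2}=\abs x^{3/4}<\abs x^{1/8}=\norm[\zeta]{\psi_2-a_0}$ does hold at this $\zeta$, which confirms it cannot imply (b). (The genuine delicacy is thus not the one you flagged at the inner radius for the negative-index terms, but the discarded positive tail at points far from the centre.) The repair is the construction in the rational-map version of the lemma that the paper invokes: split $\phi_2$ by partial fractions, keep the entire part of $\phi_2$ having no poles in $D(b,r)$, and discard only the principal part coming from the poles in $\CD(b,s)$; one then checks, using both Weierstrass degrees being $1$ and the bijectivity on the annulus, that this difference never moves a point of diameter at least $s$. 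With that $\psi_2$, your recombination $\psi^*=\psi_2^*\circ\phi_1^*$ and the reduction of the hypotheses to $\phi_{2*}$ go through unchanged, which is precisely the paper's short proof.
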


Rivera-Letelier proved this for rational maps, meaning there is a $\psi_{2*}$ which approximates $\phi_{2*}$ in this way. Composing both with $\phi_{1*}$ gives the approximation to $\phi_*$. The next theorem is also largely due to Rivera-Letelier, adapted to skew products using the modifications above.

\begin{thm}\label{thm:rivindiff}
Let $L$ be a discretely valued subfield of $K$, and let $\phi_*$ be a simple skew product defined over $L$ of relative degree $d \ge 2$. Let $U \subseteq \P^1_\an$ be an indifferent component for $\phi_*$. Then the following hold.
\begin{enumerate}[label=(\alph*), ref=\theenumi]
\item $U$ is a periodic connected component of $\F_{\phi, \an}$, of some minimal period $m \ge 1$.
\item $U$ is a rational open connected Berkovich affinoid.
\item Each of the finitely many points of the boundary $\partial U$ is a Type II point lying in the Berkovich Julia set.
\item $\phi_*^m$ permutes the boundary points of $U$; in particular, each is periodic.
\item The mapping $\phi_*^m : U \to U$ is bijective.
\end{enumerate}
\end{thm}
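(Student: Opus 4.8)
The plan is to mirror the structure of Benedetto's proof of \cite[Theorem 9.6]{Bene} for rational maps, substituting the skew-product analogues that have already been set up: \autoref{thm:dyn:comps} (forward/backward invariance and finite degree of Fatou components), \autoref{lem:intervalmap}, \autoref{lem:periodicstring}, and the Rivera-Letelier approximation lemma just stated. Throughout I would work over the discretely valued subfield $L$ and exploit that, since $\q = 1$, the map is genuinely an isometry on $\bH$ wherever the local degree is $1$ (by \autoref{cor:skew:injinterval}), and that $\phi_*$ restricted to an indifferent domain piece is locally bijective by definition.

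First I would show (a): if $W \ni \zeta$ is an open connected affinoid with $\phi_*^m(W) = W$ bijectively, then $W$ is dynamically stable (its forward orbit is contained in $W \cup \phi_*(W) \cup \cdots \cup \phi_*^{m-1}(W)$, a finite union of affinoids, each omitting infinitely many points), so $\mathfrak I \subseteq \F_{\phi,\an}$; thus each indifferent component $U$ lies in a Fatou component $U'$. Conversely, using \autoref{thm:dyn:comps} one checks that the whole Fatou component $U'$ must itself be an indifferent component: pick a periodic point or use that $\phi_*^{-1}$ permutes Fatou components with bounded degree, so some iterate of $U'$ returns to $U'$; the key point is that the bijectivity on the small affinoid $W$ propagates to all of $U'$ because any larger affinoid in $\F_{\phi,\an}$ containing $W$ on which $\phi_*^m$ failed to be injective would, by the Extreme Value Theorem \autoref{thm:skew:extvalue}, force a ramification point into $\F_{\phi,\an}$ — and one shows this ramification point would have to be in a periodic orbit contradicting the indifference hypothesis. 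This simultaneously gives (a), (e), and the fact that $U$ is a periodic component of period $m$.

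Next, for (b), (c), (d): the argument is that $\partial U$ is finite and consists of Type II points. Here is where \autoref{lem:periodicstring} does the heavy lifting. Take a boundary point $\xi \in \partial U$; since $\phi_*^m(U) = U$ bijectively and $\phi_*^m(\partial U) \supseteq \partial(\phi_*^m(U)) = \partial U$ (by \autoref{thm:skew:affinoidmapping}(a)), the map $\phi_*^m$ acts on $\partial U$, and I would argue it acts as a permutation, giving (d) once $\partial U$ is known finite. To see $\partial U$ is finite and Type II: suppose $\xi = \zeta(b,s)$ is a boundary point that is not Type II (or not periodic); pushing into $U$ along the direction pointing into $U$, apply \autoref{lem:periodicstring} with a point $b \in L$ near $\xi$ — either $\zeta(b,s)$ is periodic (and then by the isometry/\autoref{lem:intervalmap} analysis it must be Type II since periodic Type III points are indifferent Fatou by \autoref{thm:dyn:typeIIIindiff}, contradicting $\xi \in \partial U \subseteq \J_{\phi,\an}$ which I establish next), or there is a periodic $\zeta(b,t)$ with $t \in |L^\times|$ with infinitely many directions containing forward iterates of $b$. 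The latter is the crucial mechanism: a Type II periodic point with infinitely many "active" directions cannot be Fatou but its boundary role forces it to be the boundary of $U$; combined with the discreteness of $|L^\times|$ this pins $\partial U$ down to finitely many rational Type II points, giving (b) and (c) — each boundary point being Julia because if it were Fatou it would lie in a strictly larger indifferent component. The Rivera-Letelier approximation lemma is used to replace $\phi_*^m$ near each boundary annulus by an honest skew product of Weierstrass degree $1$, so that the local picture (an annulus mapping isometrically to an annulus with a fixed boundary circle) is rigid.

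The main obstacle I anticipate is the same subtlety flagged in the excerpt's own commentary: in residue characteristic $0$ (our standing assumption here) the pathologies are milder, but the non-analyticity of $\phi_*$ under iteration means I cannot directly invoke power-series fixed-point theorems on $\phi_*^m - \mathrm{id}$ to locate the periodic boundary points. Instead the entire location argument must go through the geometry — the isometry property on injectivity intervals, \autoref{lem:periodicstring}'s dichotomy, and the discreteness of $|L^\times|$ — which is precisely why the discretely valued hypothesis on $L$ is indispensable (it forces the set of distances $\{|\phi_*^n(b) - b|\}$ to be finite, yielding periodicity). A secondary technical point requiring care is verifying that $\phi_*^m : U \to U$ is bijective rather than merely $l$-to-$1$: this follows from (e) being built into the definition of $\mathfrak I$ plus the fact that bijectivity on one affinoid neighbourhood spreads to the whole component, but one must check no "hidden" ramification appears as the affinoid is enlarged toward $U$, again via \autoref{thm:skew:extvalue} and the observation that any such ramification point would be preperiodic and hence its forward orbit would meet $\partial U$, contradicting that $\partial U \subseteq \J_{\phi,\an}$ while ramification-induced turning points in a Fatou component stay Fatou.
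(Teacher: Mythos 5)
Your overall route is the one the paper itself takes: its proof of this theorem is essentially ``the proof goes through as in Benedetto,'' with \autoref{lem:intervalmap}, \autoref{lem:periodicstring} (reinterpreted for arbitrary projective Berkovich disks), and the Rivera-Letelier approximation lemma supplying the skew-product substitutes, and with \autoref{thm:dyn:typeIIIindiff} and \autoref{thm:perptdynone} excluding non--Type II possibilities for the boundary. Your reliance on the discreteness of $\abs{L^\times}$ and on the $\q=1$ isometry is exactly the intended mechanism, and your remark that $\partial U \subseteq \J_{\phi, \an}$ is correct --- indeed, once (a) is known this is automatic, since a boundary point of a Fatou component lying in the open set $\F_{\phi, \an}$ would lie in a component meeting $U$ and hence in $U$ itself.

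The genuine gap is in your mechanism for propagating bijectivity from the defining affinoid $W$ to the whole component, i.e.\ for (e) and the identification in (a). Twice you argue that if $\phi_*^m$ failed to be injective on a larger affinoid inside $\F_{\phi, \an}$, the Extreme Value Theorem would produce a ramification point in the component, and that such a point ``would have to be periodic'' (later, ``preperiodic, hence its forward orbit would meet $\partial U$''), contradicting indifference. Nothing forces a ramification point lying in a Fatou component to be periodic or preperiodic: critical points in attracting components are typically not preperiodic, and at this stage you cannot assume the component at hand is not of attracting type --- that dichotomy is \autoref{thm:dyn:class}, which comes afterwards and uses the present theorem. The argument the paper imports from Benedetto does not exclude ramification by any such a priori claim; instead it exhibits $U$ as an increasing union of open connected affinoids, each mapped bijectively onto itself by $\phi_*^m$, constructed by pushing toward the boundary with \autoref{lem:periodicstring} (the discreteness of $\abs{L^\times}$ pinning down the radii) and filling in residue classes using the approximation lemma together with \autoref{lem:intervalmap}. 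Injectivity on all of $U$ then follows because any two points with the same image lie in a common member of the exhaustion, and the same construction yields the finitely many periodic Type II boundary points and the rationality in (b). So your skeleton and choice of lemmas are right, but the step you supplied in place of that construction is unjustified and would fail as stated.
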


\info[inline]{Benedetto was confusing in the way he worded the proof of Theorem 9.7. The map $(y-x)(1-y)/(y+x)$ gives an example where the Gauss point is fixed, degree $1$, with $\vec v(0)$ and $\vec v(1)$ in a $2$-cycle; however the former is a bad direction and the latter is good. He seems to suggest that the finite orbit of a bad direction must be bad and therefore ``each good residue class maps bijectively onto a good residue class''. The focus should be on the application of \autoref{thm:perptdynone} to Type II points. Will fix this later by re-typing the whole proof.}

 The proof goes through as in Benedetto, however 
 we need to interpret \autoref{lem:periodicstring} for an arbitrary projective Berkovich disk e.g. $D = \P^1_\an \sm \overline D_\an(1, |x|)$.

\improvement[inline, disable]{\autoref{lem:periodicstring} should be rewritten in terms of the indifference domain. One could even try a compactness argument on the closed interval. This can then provide a way to target boundary points in the theorem.}

\subsection{Classification}

We are ready to state the classification of Fatou components. Rivera-Letelier originally proved this for the field of p-adic numbers in his PhD thesis \cite{RL2}, and gave extensions to the general non-Archimedean setting in \cite{RL1,RL4}; many of his methods live on in this chapter. 

\begin{prop}\label{prop:perptsindiff}
Let $L$ be a discretely valued subfield of $K$, and $\phi_*$ be a simple skew product defined over $L$ of relative degree $d \ge 2$,\unsure{CHECK again. Only needs to be rational if \autoref{thm:attractingperpt} does.} and with indifference domain $\mathfrak I$. Then $\mathfrak I$ contains
\begin{itemize}
\item every Type I indifferent periodic point,
\item every Type II Fatou periodic point,
\item every Type III periodic point, and
\item every Type IV periodic point.
\end{itemize}
Moreover, $\mathfrak I$ does not contain any other periodic points.
\end{prop}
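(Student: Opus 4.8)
The plan is to classify each type of periodic point using the dynamical results already established, and then verify the negative statement via \autoref{thm:perptdynone}.

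\textbf{Approach.} Let $\zeta$ be a periodic point of $\phi_*$ of minimal period $n \ge 1$, so $\zeta$ is a fixed point of $\psi_* = \phi_*^n$, which is again a simple skew product defined over $L$ of relative degree $d^n \ge 2$ (by \autoref{prop:skew:chaindir}(c) the relative degree is multiplicative). I would treat the four types in turn.

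First, suppose $\zeta = a$ is a Type I indifferent periodic point. By \autoref{prop:dyn:classicalfixed}(iii) applied to $\psi_*$, there is an $\eps>0$ so that $\psi_*(D(a,r)) = D(a,r)$ for all $r < \eps$. Fixing such an $r$ and taking $W = D_\an(a,r)$, the map $\psi_* = \phi_*^n$ carries $W$ bijectively onto itself (the Weierstrass degree is $1$ since $d\q = 1$ in the indifferent case, and the image equals $W$), so $a \in \mathfrak I$ by definition. Second, suppose $\zeta$ is a Type II Fatou periodic point. Then by the contrapositive of \autoref{thm:perptdynone}, $\zeta$ is \weakly{} non-repelling and every bad direction of $\psi_*$ at $\zeta$ is exceptional; since $\q=1$ and $\deg_\zeta(\psi) \ge 1$ this forces $\deg_\zeta(\psi)=1$, so $\zeta$ is in fact \weaklyindifferent{}. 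The construction in the proof of \autoref{thm:perptdynone} (for the Type II Fatou case) produces an open connected affinoid $U = \P^1_\an \sm (\CD_\an(a_1,r) \cup \dots \cup \CD_\an(a_N, r))$ around the exceptional (fixed, after passing to a further iterate) directions with $\psi_*(U) \subseteq U$; since $\deg_\zeta(\psi)=1$, \autoref{thm:skew:bigintervalstretch} gives that on each $U_j = U \cap \bvec v_j$ the map is an isometry with $\psi_*(U_j) = D_\an(a_j,1)\sm \CD_\an(a_j, r) = U_j$, and on every other (good, fixed-target) direction it is injective onto the full direction; hence $\psi_*$ maps $U$ bijectively onto $U$, so $\zeta \in \mathfrak I$. (Replacing $\psi_*$ by a power only increases the period $m$ in the definition of $\mathfrak I$, which is allowed.)

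Third, suppose $\zeta$ is a Type III periodic point. By \autoref{thm:dyn:typeIIIindiff} (whose hypothesis $\abs{K^\times}$ is a $\Q(\q)$-module holds trivially as $\q=1$), $\zeta$ is indifferent and each direction at $\zeta$ is fixed by $\phi_\#^n = \psi_\#$. As in the Type III case of the proof of \autoref{thm:perptdynone}, the two invariant annuli $V_1 = \set{r-\eps < \abs{\zeta-a} < r}$ and $V_2 = \set{r < \abs{\zeta - a} < r+\eps}$ are mapped isometrically to themselves by \autoref{thm:skew:bigintervalstretch} and \autoref{thm:skew:imgtypeII} (with $m\q = 1$), so $W = \set{r-\eps < \abs{\zeta - a} < r+\eps}$ is an open connected affinoid with $\psi_*(W) = W$ bijectively; thus $\zeta \in \mathfrak I$. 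Fourth, for a Type IV periodic point $\zeta$, we have $\q = 1 \ge 1$, so \autoref{thm:dyn:typeIVindiff} applies: $\zeta$ is indifferent, and in particular $\deg_\zeta(\phi)=1$. Then a sufficiently small disk neighbourhood $D_\an(a,r) \ni \zeta$ with no poles and inner/outer Weierstrass degree $1$ is mapped bijectively onto itself by $\psi_*$ (using \autoref{thm:skew:bigintervalstretch} with $m\q=1$, exactly as in the Type IV argument inside the proof of \autoref{thm:perptdynone}), so $\zeta \in \mathfrak I$.

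\textbf{The negative statement.} It remains to show $\mathfrak I$ contains no other periodic points. If $\zeta \in \mathfrak I$ is periodic of period $n$, pick an open connected affinoid $W \ni \zeta$ and $m \ge 1$ with $\phi_*^m(W) = W$ bijectively; replacing both by $\mathrm{lcm}(m,n)$ we may assume $m = n$ and $\psi_* = \phi_*^n$ fixes $\zeta$ and maps $W$ bijectively to itself. Since $\psi_*$ is injective on $W \ni \zeta$, \autoref{thm:skew:chainrule}(c) (or directly \autoref{thm:sumdirdegs}) forces $\deg_\zeta(\psi) = 1$, i.e. $\zeta$ is \weaklyindifferent{} for $\psi_*$ (as $\q=1$). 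If $\zeta$ is Type I, then $W$ being a neighbourhood on which $\psi_*$ is bijective, together with $\deg_a(\psi)=1$ and $\psi_*(W)=W$, gives via \autoref{prop:dyn:classicalfixed}(iii) that $a$ is indifferent — so $\zeta$ is not, say, attracting or repelling. (A superattracting or superrepelling Type I point would have $\deg_a(\phi)\q \ne 1$, contradicting $\deg_\zeta(\psi)=1$; and $\abs{b_d} \ne 1$ would contradict $\psi_*(W)=W$ once $W$ is shrunk to a small disk, by \autoref{prop:dyn:classicalfixed}.) If $\zeta$ is Type II, then $W$ is a dynamically stable neighbourhood ($\bigcup_k \psi_*^k(W) = W$ omits infinitely many points), so $\zeta$ is Fatou; thus $\zeta$ is a Type II Fatou periodic point, already on the list. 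Types III and IV are on the list unconditionally. Hence every periodic point of $\mathfrak I$ is one of the four listed kinds.

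\textbf{Main obstacle.} The delicate step is the Type II case: verifying that the affinoid $U$ constructed around the exceptional directions in the proof of \autoref{thm:perptdynone} is mapped \emph{bijectively} (not merely into itself) by the appropriate iterate — one must check that on the good directions whose target differs from $\zeta$'s exceptional ones the map is injective with full image, and that total invariance $\phi_\#^{-1}(\bvec v_j) = \bvec v_j$ of the exceptional directions (which is exactly the content of "exceptional" via \autoref{prop:dyn:excdir}) prevents any collapsing. This bookkeeping of directions, rather than any new estimate, is where care is needed; everything else reduces to citing \autoref{thm:perptdynone}, \autoref{thm:dyn:typeIIIindiff}, \autoref{thm:dyn:typeIVindiff}, and \autoref{thm:skew:bigintervalstretch}.
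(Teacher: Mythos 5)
Your four positive inclusions follow essentially the same route as the paper: Type I indifferent via \autoref{prop:dyn:classicalfixed}, Type II Fatou via the invariant affinoid built in the proof of \autoref{thm:perptdynone} (with $\q=1$ forcing $\deg_\zeta(\psi)=1$), and Types III/IV via \autoref{thm:dyn:typeIIIindiff}, \autoref{thm:dyn:typeIVindiff} and \autoref{thm:skew:bigintervalstretch}; that part is fine, and your bookkeeping of good/exceptional directions for bijectivity is exactly the care the paper glosses over.

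The gap is in the negative statement, specifically the exclusion of attracting (and, as you argue it, repelling) Type I periodic points. You write that $\abs{b_d}\ne 1$ ``would contradict $\psi_*(W)=W$ once $W$ is shrunk to a small disk, by \autoref{prop:dyn:classicalfixed}.'' But the bijective invariance $\psi_*(W)=W$ is a property of the particular affinoid $W$ and does not descend to sub-disks: if you shrink $W$ to a small disk $D\ni a$, you only know $\psi_*(D)\subsetneq D$ (attracting case), which is perfectly compatible with $D\subset W$ and $\psi_*(W)=W$; no contradiction falls out of \autoref{prop:dyn:classicalfixed} alone, since that proposition says nothing about $W$. Excluding an attracting classical periodic point from $\mathfrak I$ genuinely requires global input: the paper's route is that the Fatou component of such a point is its immediate attracting basin, on which $\phi_*^m$ is $l$-to-$1$ with $l\ge 2$ by \autoref{thm:attractingperpt}(5), while an indifferent component is an entire Fatou component on which $\phi_*^m$ is bijective by \autoref{thm:rivindiff}(a),(e) --- so the point cannot lie in $\mathfrak I$. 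For the repelling (non-super) case the quick correct argument is also different from yours: such a point is Julia by \autoref{prop:dyn:classicalrepelling}, whereas every point of $\mathfrak I$ has the dynamically stable neighbourhood $W$ and hence is Fatou. As a smaller point, your claim that injectivity of $\psi_*$ on $W$ forces $\deg_\zeta(\psi)=1$ is not literally given by \autoref{thm:skew:chainrule}(c) or \autoref{thm:sumdirdegs}; it needs the preimage-counting lemma for local degree together with the standing residue-characteristic-$0$ hypothesis of this section (otherwise inseparability-type behaviour would defeat it).
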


\begin{proof}
 Suppose $\zeta$ is a Type III or IV periodic point; we apply \autoref{thm:dyn:typeIIIindiff} and \autoref{thm:dyn:typeIVindiff} respectively to show they are indifferent. 
 Hence $\zeta$ has a disk or annular neighbourhood which is fixed, as seen in the proof of \autoref{thm:perptdynone}, and a consequence of \autoref{thm:skew:bigintervalstretch}.
 \improvement{If $\q < 1$ we could potentially have an attracting Type IV point? Not for a discretely valued subfield, right? At least any such attracting basin would have to omit any elements of $L$. For the second sentence, things are already funky because we lack injectivity\dots}
 
 If $\zeta$ is Type II then \autoref{thm:perptdynone} says that if $\zeta$ is Fatou then there is an affinoid $U \ni \zeta$, containing all but finitely many (at least one) directions at $\zeta$ which are all exceptional. Again, given that $\phi_*$ is simple, $\zeta$ must be \stronglyindifferent{} and these finitely many directions must be indifferent, hence in fact $U$ is periodic, thus $\zeta \in \mathfrak U_\phi$. By \autoref{thm:rivindiff} no Type II Julia point would be in the indifference domain.\improvement{For $\q < 1$, could definitely have an attracting Type II Fatou point here, then you get no such $U$.}
 
 If $a$ is Type I and indifferent, then by \autoref{prop:dyn:classicalfixed}, it has a neighbourhood $D(a, r)$ which is fixed by an iterate of $\phi_*$, hence $a \in \mathfrak U_\phi$.
 
 If $\zeta$ is periodic but not in this list, then it must be Type I attracting or repelling. By \autoref{prop:dyn:classicalrepelling} a repelling point is Julia so cannot be in the indifference domain. If $\zeta = a$ is attracting, let $U$ be its Fatou component. By \autoref{thm:attractingperpt}, $U$ is the immediate attracting basin of $a$, and $\phi_*^n(U) = U$ is $l$-to-$1$ for some $l \ge 2$. If $U$ was also in the indifference domain then it would be a bijective map by \autoref{thm:rivindiff}.
\end{proof}

\begin{thm}[Classification of Fatou Components]\label{thm:dyn:class}
 Let $L$ be a discretely valued subfield of $K$. Let $\phi_* : \P^1_\an(K) \to \P^1_\an(K)$ be a simple \unsure{This likely only needs to be rational if \autoref{thm:rivindiff} needs to be (see proof).} skew product defined over $L$ of relative degree $d \ge 2$ with Berkovich Fatou set $\F_{\phi, \an}$, and let $U \subset \F_{\phi, \an}$ be a periodic Fatou component. Then $U$ is either an indifferent component or an attracting component, but not both.
\end{thm}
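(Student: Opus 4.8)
The strategy is the standard Sullivan-style trichotomy argument adapted to this setting: a periodic Fatou component $U$ of period $m$ is controlled by whether the return map $\phi_*^m : U \to U$ is bijective or genuinely many-to-one, and by the dynamics of periodic points inside $U$. First I would replace $\phi_*$ by $\phi_*^m$ so that $U$ is fixed, which is harmless by \autoref{prop:fatoujuliabasics}(c) and does not disturb simplicity (the scale factor stays $\q=1$, and an iterate of a skew product defined over $L$ is again defined over $L$). By \autoref{thm:dyn:comps}(b) there is an integer $1 \le l \le d$ such that $\phi_* : U \to U$ is $l$-to-$1$ counting multiplicity. The two cases are $l = 1$ (candidate indifferent component) and $l \ge 2$ (candidate attracting component), and the bulk of the work is showing each case genuinely lands in the claimed class, and that the two classes are disjoint.

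In the case $l \ge 2$, the plan is to produce an attracting periodic classical point whose immediate basin is $U$. Since $\phi_* : U \to U$ is a proper $l$-to-$1$ self-map of a connected affinoid-exhausted open set, one expects a fixed point of high local degree inside $U$; the key is to locate a Type I point $a \in U$ that is attracting in the sense of \autoref{defn:dyn:classicalfixed}. I would argue as follows: pick a point $\zeta_0 \in U$ and consider the decreasing sequence of preimage affinoids obtained by pulling back a small neighbourhood; because the map is $l$-to-$1$ with $l \ge 2$ and omits infinitely many points (dynamical stability of $U$), a Schwarz-type / contraction argument on the hyperbolic metric — or directly the fact that on a forward-invariant disk the Weierstrass degree argument in the proof of \autoref{thm:attractingperpt} forces $\deg_{\zeta,\bvec v}(\phi) \cdot \q > 1$ along some direction — yields that $U$ must contain an attracting cycle, hence $U$ is the immediate attracting basin of that cycle, an attracting component. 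Conversely, if $U$ is already known to be an attracting component for some attracting periodic $a$, then \autoref{thm:attractingperpt}(5) gives $l \ge 2$. So: attracting component $\iff$ $l \ge 2$.

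In the case $l = 1$, the return map $\phi_* : U \to U$ is bijective, and I want to conclude $U$ is an indifferent component, i.e. $U \subseteq \mathfrak I$. The content here is that a bijective proper self-map of this open set, fixing it, must be an isometry on the hyperbolic part and hence locally realises the defining condition of $\mathfrak I$; more precisely, by \autoref{cor:skew:injinterval} and \autoref{thm:skew:bigintervalstretch}, bijectivity of $\phi_*^m$ on $U$ together with $\q = 1$ forces every point of $U$ to have local degree $1$, so $U \subseteq \Inj(\phi)$, and then $U$ is a connected component of the indifference domain by definition. (One should check $U$ is not also attracting: an attracting component has $l \ge 2$ by the previous paragraph, contradicting $l=1$; this gives the "but not both".) It also remains to verify that these are the \emph{only} possibilities for a periodic component — but that is exactly the dichotomy $l = 1$ versus $l \ge 2$, so no case is missed.

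\textbf{Main obstacle.} The delicate step is the $l \ge 2$ case: producing an actual attracting \emph{classical} fixed (or periodic) point inside $U$. Unlike rational maps, a skew product need not have any classical fixed points at all (\autoref{ex:dyn:zerofixed}), and \cite[Proposition 4.17]{Bene} fails for $\phi_* - \id$; the only general existence tool is \autoref{cor:dyn:approxfixed}, which for $\q \le 1$ and a strictly contracting disk ($\phi_*(D) \subsetneq D$) does deliver a genuine attracting Type I point via \autoref{prop:dyn:attrfixed}. So the real task is to show that $l \ge 2$ on a fixed Fatou component forces, after finitely many steps, a forward-invariant disk $D \subseteq U$ with $\phi_*(D) \subsetneq D$; this is where the hyperbolic-metric bookkeeping from the proof of \autoref{thm:attractingperpt}, the exceptional-point analysis of \autoref{thm:dyn:classicalexceptional} (to rule out the complement of the nested union being one point, which would force $\phi_2$ polynomial and the Julia set to separate $U$ — impossible), and the discretely-valued hypothesis on $L$ all enter. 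I expect this to occupy most of the proof, with the $l = 1$ case and the disjointness being comparatively short.
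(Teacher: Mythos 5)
Your dichotomy on the mapping degree $l$ of the return map is not where the difficulty of this theorem lives, and both branches of your argument have gaps precisely at the points the paper has to work hardest. In the $l \ge 2$ branch you need an attracting \emph{classical} periodic point inside $U$, and you correctly flag this as the main obstacle, but you never supply the argument: the gesture toward ``a Schwarz-type / contraction argument'' or re-running ``the Weierstrass degree argument in the proof of \autoref{thm:attractingperpt}'' is circular, since that computation presupposes the attracting point and its basin. This is exactly the step that fails na\"ively for skew products (no holomorphic fixed-point formula, $\phi_* - \id$ is not a skew product, and \autoref{ex:dyn:zerofixed} shows there may be no classical fixed points at all), and \autoref{cor:dyn:approxfixed}/\autoref{prop:dyn:attrfixed} only apply once you already have a disk $D$ with $\phi_*(D) \subsetneq D$, which is what needs to be produced. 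The paper's proof sidesteps the $l$-dichotomy entirely: it applies Rivera-Letelier's stable fixed-point lemma (\autoref{lem:rivfixedpt}, built on \autoref{lem:squeezedfixedpt} and \autoref{lem:typeIIIfixed}) to the invariant connected set $U$, concluding that either $U$ contains a Type II fixed point or $\overline U$ contains an attracting fixed point; the first case is finished by \autoref{prop:perptsindiff} and \autoref{thm:rivindiff}, the second by \autoref{thm:attractingperpt}. Without \autoref{lem:rivfixedpt} or a genuine substitute, your central case is an announcement rather than a proof.

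The $l = 1$ branch also has a real gap. From $l=1$ you may conclude that every point of $U$ has local degree $1$ for $\phi_*^m$, but the jump ``then $U$ is a connected component of the indifference domain by definition'' is unjustified: the definition of $\mathfrak I$ demands, around each point, an open connected \emph{affinoid} $W$ that some iterate maps bijectively \emph{onto itself}. Injectivity of the return map on the open set $U$ does not produce such a periodic affinoid; manufacturing it is the substance of \autoref{lem:intervalmap}, \autoref{lem:periodicstring} and \autoref{thm:rivindiff}, and it is there (not in the attracting case) that the discretely-valued hypothesis on $L$ is consumed. In the paper this branch is reached differently: the Type II fixed point in $U$ supplied by \autoref{lem:rivfixedpt} is placed in $\mathfrak I$ by \autoref{prop:perptsindiff} (whose proof builds the invariant affinoid via \autoref{thm:perptdynone} and simplicity), and then \autoref{thm:rivindiff} identifies $U$ as an indifferent component; the disjointness of the two classes is also drawn from \autoref{prop:perptsindiff} rather than from counting $l$.
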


The following two lemmas replicate \cite[Lemma 9.15, Lemma 9.16]{Bene}, and the proofs are essentially identical. In the proof of \autoref{lem:squeezedfixedpt}, instead of showing that $\deg_{\zeta(a, s), \vec v(a)}(\phi) = 1$, instead we would show that the multiplier is $\deg_{\zeta(a, s), \vec v(a)}(\phi)\q \le 1$; with the same desired consequence. For \autoref{lem:typeIIIfixed} we must replace the application of \cite[Theorem 8.7]{Bene} with the combination of \autoref{thm:dyn:typeIIIindiff} and \autoref{thm:perptdynone}.

\begin{lem}\label{lem:squeezedfixedpt}
 Let $\phi_* : \P^1_\an(K) \to \P^1_\an(K)$ be a skew product of relative degree $d \ge 2$ and scale factor $\q$, let $X \subseteq \P^1_\an$ be a connected set, let $a \in K$, and let $(s_n)_{n\ge1}$ be a strictly increasing sequence of positive real numbers with $s_n \nearrow s \in \R$. Suppose that for all $n\ge1$, we have
 
\begin{itemize}
 \item $\zeta(a, s_n) \in X$
 \item $\phi_*(\zeta(a, s_n)) \in \{s_n \le |\zeta - a| < s\} = D_\an(a, s) \sm D_\an(a, s_n)$
\end{itemize}
Then either $X$ contains a Type II fixed point, or $\zeta(a, s)$ is fixed with an attracting direction $\vec v(a)$. In either case $\q \le 1$.
\end{lem}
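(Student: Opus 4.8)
The plan is to combine the interval-stretching technology of \autoref{lem:intervalmap} and \autoref{thm:skew:bigintervalstretch} with a limiting argument along the ray $[a, \zeta(a,s)]$. First I would normalise coordinates so that $a = 0$, and write $\phi_2(y) = \sum_n b_n y^n$ as a Laurent series convergent on an annulus $\{s_1 \le |y| < s\}$ near $\zeta(0,s)$ (we may shrink the starting radius; no pole can accumulate at $\zeta(0,s)$ since $\phi_2$ has only finitely many poles, so pass to a cofinal subsequence of the $s_n$ if necessary). Since $\phi_{1*}$ fixes every point $\zeta(0,t)$ on the ray, the action of $\phi_*$ on $[0,\infty]$ agrees with that of $\phi_{2*}$. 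By hypothesis $\phi_*(\zeta(0,s_n)) \in D_\an(0,s)\setminus D_\an(0,s_n)$, so writing $\phi_*(\zeta(0,s_n)) = \zeta(c_n, \rho_n)$ we have $s_n \le \rho_n < s$ and $|c_n| < s$; in particular $d_\bH(\zeta(0,s_n), \phi_*(\zeta(0,s_n))) \to 0$ as $n \to \infty$ because both $\rho_n$ and $s_n$ converge to $s$, so the image stays a bounded hyperbolic distance (going to zero) up the ray.

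Next I would take the limit. The points $\zeta(0, s_n)$ converge (in the weak topology, monotonically up the ray) to $\zeta(0,s)$. By continuity of $\phi_*$ (\autoref{thm:skew:opencts}), $\phi_*(\zeta(0,s_n)) \to \phi_*(\zeta(0,s))$. I claim $\phi_*(\zeta(0,s)) = \zeta(0,s)$: indeed $\phi_*(\zeta(0,s)) \succeq \phi_*(\zeta(0,s_n))$ for all $n$ by order-preservation of the decomposition pieces (Type~II/III points dominate the points below them, and $\phi_{2*}$, $\phi_{1*}$ both respect $\preceq$ in the relevant range), while $\phi_*(\zeta(0,s_n))$ lies in $D_\an(0,s) \setminus D_\an(0,s_n)$, forcing $\phi_*(\zeta(0,s))$ to have diameter exactly $s$ and to lie in $\CD_\an(0,s)$; by the last bullet of \autoref{prop:berk:disks} the unique such point is $\zeta(0,s)$. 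So $\zeta(0,s)$ is fixed.

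Now I would determine the local behaviour at $\zeta(0,s)$ in the direction $\vec v(0)$ (the direction pointing back down the ray toward $a$). Write $m = \deg_{\zeta(0,s), \vec v(0)}(\phi)$ and let $\lambda = \abs{b_m}^\q s^{m\q}/s = \abs{b_m}^\q s^{m\q - 1}$ be (up to the fixed-point normalisation) the local multiplier, so that by \autoref{thm:skew:imgtypeII} applied to a thin inner annulus $\{s - \eps < |y| < s\}$ the image is the annulus of outer radius $s$ and the map expands hyperbolic distance on the ray by the factor $m\q$. Since each $\zeta(0, s_n)$ with $s_n$ close to $s$ maps to a point of diameter $\rho_n \ge s_n$ \emph{below} $\zeta(0,s)$ in its own direction, the map on the ray near $\zeta(0,s)$ cannot be strictly expanding toward $\zeta(0,s)$ — a strictly repelling direction $\vec v(0)$ (multiplier $m\q > 1$) would push $\zeta(0,s_n)$ strictly \emph{down} the ray, i.e. to a point of diameter $< s_n$, contradicting $\rho_n \ge s_n$. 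Hence $m\q \le 1$, so $\vec v(0)$ is either indifferent or attracting, and in particular $\q \le m\q \le 1$.

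It remains to produce the dichotomy. If $\vec v(0)$ is attracting at $\zeta(0,s)$ we are in the second alternative and done. If $\vec v(0)$ is indifferent ($m\q = 1$), then by \autoref{lem:intervalmap} there is $r_0 < s$ with $\phi_*(\zeta(0,t)) = \zeta(0, t)$ for all $t \in [r_0, s]$ — wait, more carefully: \autoref{lem:intervalmap} gives $\phi_*(\zeta(0,t)) = \zeta(c, \rho t / s)$ for $t$ in a final segment, and since $\zeta(0,s)$ is fixed with indifferent multiplier the stretch factor is $1$, so $\phi_*$ restricts to an isometry of a final segment $[\zeta(0,r_0), \zeta(0,s)]$ of the ray onto an interval of the same hyperbolic length inside $D_\an(0,s)$. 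Pick $n$ large enough that $s_n \in (r_0, s)$; then $\zeta(0, s_n)$ lies on this segment, and by the formula $\phi_*(\zeta(0,s_n)) = \zeta(c, \rho_n)$ with $\rho_n = |b_m|^\q s_n^{m\q}= \lambda s_n = s_n$ (using $m\q = 1$ and $\lambda = 1$) — so $\zeta(0,s_n)$ is itself a fixed point, of Type~II, and it lies in $X$ by hypothesis. This gives the first alternative.

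\textbf{Main obstacle.} The delicate point is the limiting step establishing $\phi_*(\zeta(0,s)) = \zeta(0,s)$ together with the sign/direction analysis: one must be careful that the images $\zeta(c_n, \rho_n)$ genuinely lie in the direction $\vec v(0)$ at $\zeta(0,s)$ (equivalently $|c_n| < s$, which the hypothesis $\phi_*(\zeta(a,s_n)) \in D_\an(a,s) \setminus D_\an(a,s_n)$ does guarantee) so that the monotonicity forcing $m\q \le 1$ is valid, and that no pole of $\phi_2$ interferes near $\zeta(0,s)$ (handled by passing to a cofinal subsequence). Getting the Type~II fixed point cleanly in the indifferent case also requires that the exact radius is preserved — this is where discreteness is \emph{not} needed but the indifference ($\lambda = 1$, not merely $m\q = 1$ with $|b_m| \ne 1$) is, and one should double-check that $m\q = 1$ with $\vec v(0)$ not attracting indeed forces $\lambda = 1$ by the definition of indifferent direction. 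That check, and the careful bookkeeping of which normalisation ("fixed-point centred") is in force when invoking \autoref{thm:skew:imgtypeII}, is the bulk of the work; everything else is a routine application of results already in hand.
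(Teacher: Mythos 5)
Your overall route---pass to a pole-free terminal annulus, read off the image of $\zeta(a,t)$ from the Laurent coefficients, rule out a repelling direction, then split into attracting versus indifferent---is the same circle of ideas as the paper's proof, which simply adapts Benedetto's Lemma 9.15 with $\deg_{\zeta(a,s),\vec v(a)}(\phi)=1$ replaced by the multiplier bound $\deg_{\zeta(a,s),\vec v(a)}(\phi)\,\q\le 1$. However, the two load-bearing steps have genuine gaps as written. First, your proof that $\zeta(a,s)$ is fixed rests on two false ingredients: the claim that $\phi_{1*}$ fixes every $\zeta(0,t)$ holds only when $\q=1$ (in general $\phi_{1*}(\zeta(0,t))=\zeta(0,t^{\q})$ by \autoref{thm:skew:scaledhomeo}, and since $\q\le 1$ is part of the conclusion, assuming it is circular), and the assertion that $\phi_{2*}$ ``respects $\preceq$ in the relevant range'' is false for rational maps in general, so $\phi_*(\zeta(0,s))\succeq\phi_*(\zeta(0,s_n))$ is unjustified. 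What continuity actually yields is $\norm[\phi_*(\zeta(0,s))]{y}=s$, i.e.\ the image lies in $\CD_\an(0,s)\sm D_\an(0,s)$; besides $\zeta(0,s)$ this set contains every point hanging strictly below it in a side direction $D_\an(c,s)$ with $\abs{c}=s$, so ``diameter exactly $s$'' is a non sequitur (you are conflating $\norm[\xi]{y-a}$ with $\diam(\xi)$; the same conflation appears in your unproved claim $\rho_n\ge s_n$, since the hypothesis only gives $\max(\abs{c_n},\rho_n)\ge s_n$). The missing argument is exactly where the strict inclusion $\phi_*(\zeta(0,s_n))\in D_\an(0,s)$ must be used: on a pole-free terminal annulus the image of $\zeta(0,t)$ is $\zeta(c,\abs{b_m}^{\q}t^{m\q})$ with the centre $c=\phi_{1*}(b_0)$ independent of $t$; if $\abs{c}=s$ then already $\norm[\phi_*(\zeta(0,s_n))]{y}=s$, contradicting the hypothesis, so $\abs{c}<s$, forcing $\abs{b_m}^{\q}s^{m\q}=s$ (fixedness) and, if $m\q>1$, $\rho(s_n)<s_n$ hence $\abs{c}\ge s_n$ for all $n$, hence $\abs{c}\ge s$, a contradiction---which is how both fixedness and $m\q\le 1$ should be extracted.

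Second, in the indifferent case you invoke \autoref{lem:intervalmap}, which is stated only for simple skew products, at a point where you know only $m\q\le 1$, and the fixed point you produce is problematic on two counts: the image of $\zeta(0,s_n)$ is $\zeta(c,s_n)$, which equals $\zeta(0,s_n)$ only once $s_n\ge\abs{c}$, and nothing places $s_n$ in $\abs{K^\times}$, so $\zeta(0,s_n)$ need not be of Type II as the conclusion requires. The repair is to observe that the entire terminal segment $\set[\zeta(0,t)]{\max(\abs{c},s_1)<t<s}$ lies in $X$ (by connectedness of $X$ and unique path-connectedness) and consists of fixed points, and then use density of $\abs{K^\times}$ to pick a Type II radius. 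With these corrections your argument becomes essentially the intended one; as submitted, the fixedness step and the multiplier step do not stand.
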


\begin{lem}\label{lem:typeIIIfixed}
 Let $\phi_*$ be a skew product with scale factor $\q$ such that $\abs{K^\times}$ is a $\Q(\q)$-module, and let $X \subseteq \P^1_\an$ be a connected set with at least two points. If $X$ contains a Type III fixed point, then it also contains a Type II fixed point.
\end{lem}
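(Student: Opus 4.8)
The plan is to place $\zeta$ in a normal form by a $\PGL(2,K)$ change of coordinates, apply \autoref{thm:dyn:typeIIIindiff} for the one genuinely dynamical input, and then read off a whole sub-arc of Type~II fixed points directly from \autoref{thm:skew:imgtypeII}.

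First I would exploit connectedness: since $X$ contains $\zeta$ and some other point $\xi$, it contains the closed arc $[\zeta,\xi]$, a non-degenerate interval whose interior consists of Type~II and III points by \autoref{thm:berk:interval}. This arc issues from $\zeta$ into one of the two directions at $\zeta$. Conjugating by a suitable element of $\PGL(2,K)$ (which preserves the scale factor and the class of skew products, and carries fixed points of the conjugate back to fixed points of $\phi_*$), and then translating by a point of $D(0,r)$ (which fixes $\zeta(0,r)$), I may assume that $\zeta=\zeta(0,r)$ with $r\notin|K^\times|$, that the arc issues in the direction $\vec v(0)=D_\an(0,r)$, and that $X$ contains the spine segment $\{\zeta(0,t):r_0\le t\le r\}$ for some $r_0<r$.

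Next, \autoref{thm:dyn:typeIIIindiff} --- this is exactly where the hypothesis that $|K^\times|$ is a $\Q(\q)$-module enters --- tells us that $\zeta$ is indifferent, so $m\q=1$ where $m=\deg_\zeta(\phi)$, and that $\phi_\#$ fixes $\vec v(0)$. Choosing $r_1<r$ close enough to $r$ that $\phi_2$ has no poles in the annulus $\{r_1<|y|<r\}$ and, writing $\phi_2(y)=\sum_{n\in\Z}b_ny^n$ there, $\phi_2-b_0$ has no zeros and common inner and outer Weierstrass degree $m$ (possible since there are only finitely many such zeros and poles; cf.\ \autoref{prop:skew:preintervalstretch}), \autoref{thm:skew:imgtypeII} gives for every $t\in(r_1,r)$
\[
\phi_*(\zeta(0,t))=\zeta\bigl(\phi_{1*}(b_0),\;|b_m|^{\q}t^{m\q}\bigr)=\zeta\bigl(c_0,\;|b_m|^{\q}t\bigr),\qquad c_0:=\phi_{1*}(b_0),
\]
with $c_0$ and $|b_m|$ independent of $t$. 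Evaluating at the boundary radius $r$ and using $\phi_*(\zeta(0,r))=\zeta(0,r)$ forces $|b_m|^{\q}=1$ and $|c_0|\le r$; since $|c_0|\in|K^\times|\cup\{0\}$ while $r\notin|K^\times|$, in fact $|c_0|<r$. Hence $\phi_*(\zeta(0,t))=\zeta(c_0,t)$ for all $t\in(r_1,r)$.

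Finally, since $|K^\times|$ is a $\Q(\q)$-module it is a non-trivial divisible, hence dense, subgroup of $(0,\infty)$, so I may pick $t\in|K^\times|$ with $\max\{r_0,r_1,|c_0|\}<t<r$. Then $\zeta(0,t)\in X$ is of Type~II, and $\zeta(c_0,t)=\zeta(0,t)$ because $|c_0|<t$, so $\phi_*(\zeta(0,t))=\zeta(0,t)$ is the desired Type~II fixed point in $X$. The computations here are routine once the set-up is fixed; the substantive ingredient is \autoref{thm:dyn:typeIIIindiff}, and the part needing the most care is the coordinate reduction --- straightening the arc of $X$ at $\zeta$ onto the spine $\{\zeta(0,t)\}$ and passing to a sub-annulus with constant Weierstrass data so that the image radius is linear in $t$.
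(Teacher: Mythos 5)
Your proof is correct and takes essentially the paper's route: the paper proves this lemma by running Benedetto's argument for \cite[Lemma~9.16]{Bene} with the old input replaced by \autoref{thm:dyn:typeIIIindiff}, which is exactly your normalisation of $\zeta$ to $\zeta(0,r)$, the indifference relation $m\q=1$ with fixed directions, the isometric action $\phi_*(\zeta(0,t))=\zeta(c_0,t)$ on a thin spine segment, and the choice of a rational radius $t$ via density of the value group. The only cosmetic difference is that you carry out the local computation directly from \autoref{thm:skew:imgtypeII} (using the direction-fixing clause of \autoref{thm:dyn:typeIIIindiff} to know the relevant Weierstrass degree is $+m$ rather than $-m$) instead of quoting \autoref{thm:perptdynone} as the paper suggests, which is a perfectly adequate substitute.
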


\begin{lem}[Rivera-Letelier's stable fixed-point lemma]\label{lem:rivfixedpt}
Let $\phi$ be a simple skew product, and let $X \subseteq \P^1_\an$ be a connected set with at least two points. Suppose that $\phi_*(X) \subseteq X$. Then either $X$ contains a Type II fixed point, or the closure of $X$ in $\P^1_\an$ contains an attracting fixed point.
\end{lem}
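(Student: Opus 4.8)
\textbf{Proof proposal for Lemma \ref{lem:rivfixedpt} (Rivera-Letelier's stable fixed-point lemma).}

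The plan is to imitate the proof of the corresponding statement for rational maps in \cite[\S9]{Bene}, using the fact that a simple skew product ($\q = 1$) behaves, on the hyperbolic metric, exactly like a rational map: by \autoref{cor:berk:lipschitz} it is $1$-Lipschitz on $\bH$ (Lipschitz constant $d\q = d$, but more importantly by \autoref{cor:skew:injinterval} it is an \emph{isometry} on any interval meeting the injectivity locus), and by \autoref{thm:skew:scaledhomeo} the field-automorphism factor $\phi_{1*}$ is an isometry of $\bH$. So the only genuinely new input over the rational case is that $\phi_*$ is nowhere analytic; but the proof below is purely topological/metric and never uses analyticity beyond what \autoref{thm:skew:extvalue}, \autoref{thm:intervalstretch}, and \autoref{thm:skew:bigintervalstretch} already give us.

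First I would fix two distinct points $\zeta_0, \zeta_1 \in X$ and work with the compact arc $[\zeta_0,\zeta_1] \subseteq \P^1_\an$; since $X$ is connected it contains this arc. I would then consider the continuous function $\delta : [\zeta_0,\zeta_1] \to [0,\infty]$ defined (after truncating away from Type I endpoints, or working inside $\bH$) by $\delta(\xi) = d_\bH(\xi, \phi_*(\xi))$ together with a sign/side datum recording on which side of $\xi$ the image $\phi_*(\xi)$ lies along the tree. The strategy, exactly as in Rivera-Letelier's argument, is to produce a fixed point by a ``squeezing'' or intermediate-value argument: if no Type II point of $X$ is fixed, then along suitable nested arcs the iterates $\phi_*^n$ push points monotonically toward an end of the tree, and that end is either a Type I attracting fixed point (giving the second alternative, since the closure of $X$ then contains it) or leads to a contradiction. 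The two lemmas \autoref{lem:squeezedfixedpt} and \autoref{lem:typeIIIfixed} are precisely the tools for the squeezing step: \autoref{lem:squeezedfixedpt} handles the case where a sequence $\zeta(a,s_n)$ with $s_n \nearrow s$ stays in $X$ with images pushed outward, concluding either a Type II fixed point in $X$ or that $\zeta(a,s)$ is fixed with an attracting direction (and then $\q \le 1$, automatic here); \autoref{lem:typeIIIfixed} upgrades a Type III fixed point in $X$ to a Type II one. A Type IV fixed point case is disposed of by \autoref{thm:dyn:typeIVindiff} (with $\q=1$ it is indifferent, hence has a fixed disk neighbourhood, which then meets $X$ in an arc on which we rerun the argument) — or more simply, a Type IV fixed point is an endpoint and one pushes back into $\bH$.

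More concretely I would argue: using $\phi_*(X) \subseteq X$, if $\phi_*$ has no fixed point on $X \cap \bH$ of Type II, then I claim the dynamics on $X$ is ``gradient-like'' toward some end. Pick $\xi \in X \cap \bH$; the arc $[\xi, \phi_*(\xi)]$ lies in $X$, and by \autoref{thm:skew:extvalue} (Extreme Value / Rolle) either $\phi_*$ is injective on it or it contains a ramification point that is forced (by the usual turning-point analysis, as in the proof of \autoref{thm:perptdynone}) to be Julia rather than fixed. Iterating, $\phi_*^n(\xi)$ traces out a monotone path in the tree (monotonicity follows because a reversal of direction would, via \autoref{thm:skew:extvalue}, yield a point where two $T$-directions collapse, and one then checks as in \autoref{lem:intervalmap}/\autoref{lem:periodicstring} that the relevant intermediate point must be fixed, contradiction). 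A monotone path in a compact tree converges to a point $\xi_\infty \in \overline X$. By continuity $\phi_*(\xi_\infty) = \xi_\infty$ if $\xi_\infty \in X$; if $\xi_\infty$ is Type II or III we are done (Type III via \autoref{lem:typeIIIfixed}), if Type IV we invoke \autoref{thm:dyn:typeIVindiff} as above, and if $\xi_\infty$ is Type I it is a fixed classical point lying in $\overline X$, and the fact that nearby points of $X$ are attracted to it (the path approached it) forces $\xi_\infty$ to be attracting or indifferent by \autoref{prop:dyn:classicalfixed}; an indifferent Type I fixed point has a fixed disk neighbourhood meeting $X$ in an arc carrying a Type II point fixed by \autoref{cor:skew:fixedinterval}, so we may assume $\xi_\infty$ is attracting, which is the second alternative.

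The main obstacle I anticipate is making the monotonicity claim fully rigorous in the Berkovich tree when $\phi_*$ has ramification on $X$: one must rule out ``oscillation'' where $\phi_*^n(\xi)$ wanders back and forth. In the rational-map proof this is handled by Rivera-Letelier's analysis of how a map acts on a totally invariant or partially invariant sub-tree, and the key point is that a local reversal of direction produces, via \autoref{thm:skew:extvalue}, a point $\zeta$ with $\phi_\#(\bvec u) = \phi_\#(\bvec v)$ for two $X$-directions $\bvec u, \bvec v$; combining this with $\phi_*(X)\subseteq X$ and the structure theorem \autoref{thm:skew:bigintervalstretch} for how $\phi_*$ stretches intervals, one extracts either a periodic (hence, since $\q=1$ and it sits on an interval between directions mapping together, actually fixed) Type II point or a contradiction with $X$ being forward-invariant. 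Pinning down this case analysis — and in particular ensuring the resulting fixed point genuinely lies in $X$ and not merely in $\overline X$ when we need it in $X$ — is where the real work lies; everything else is a transcription of \cite[Lemma 9.14]{Bene} with \autoref{cor:berk:lipschitz}, \autoref{thm:skew:scaledhomeo}, and \autoref{thm:perptdynone} substituted for their rational-map counterparts.
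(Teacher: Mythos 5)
Your high-level plan --- run the Rivera-Letelier squeezing argument with \autoref{lem:squeezedfixedpt} and \autoref{lem:typeIIIfixed} standing in for Benedetto's Lemmas 9.15 and 9.16 --- is the same route the paper takes: its proof is a transcription of Benedetto's Lemma 9.17, with exactly two modifications, namely (i) the observation that at the end of that argument one must have $\phi_*(\zeta(y_1,s_n)) \in D_\an(y_1,r_1)$, since otherwise the point $\zeta(y_1,r_1) \nin X$ would lie between $\zeta(y_1,s_n)$ and its image, contradicting connectedness of $X$ together with $\phi_*(X)\subseteq X$, and (ii) the replacement of Benedetto's Proposition 4.18 by \autoref{prop:dyn:attrfixed} to produce the attracting fixed point. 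So you have identified the right supporting lemmas.

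As written, though, your argument has a genuine gap exactly where you flag it. The claim that the iterates $\phi_*^n(\xi)$ trace a monotone path toward an end of the tree is not established, and it is not how the actual proof runs: Benedetto's argument does not follow a single orbit, it analyses the invariant connected set $X$ itself (the disk or annulus structure at the top of $X$, radii $s_n \nearrow r_1$) and feeds that into the squeezing lemma \autoref{lem:squeezedfixedpt}. Your proposed patch for ruling out oscillation appeals to \autoref{lem:intervalmap} and \autoref{lem:periodicstring}, but the latter requires $\phi_*$ to be defined over a discretely valued subfield, a hypothesis that \autoref{lem:rivfixedpt} does not carry, so it is unavailable here; and the step ``a ramification point on the arc is forced to be Julia rather than fixed'' is not a dichotomy that yields anything. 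The endgame is also not sound as stated: convergence of one orbit to a classical fixed point does not, via \autoref{prop:dyn:classicalfixed}, let you conclude that the point is attracting or indifferent --- the correct mechanism (and the paper's explicit substitute for Benedetto's Prop.\ 4.18) is \autoref{prop:dyn:attrfixed}, which extracts the attracting fixed point from a disk $U$ with $\phi_*(U) \subsetneq U$ arising in the case analysis. Until the non-oscillation/case analysis is actually carried out --- or, more efficiently, Benedetto's Lemma 9.17 is transcribed with the two modifications above --- the proposal remains a plan rather than a proof.
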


\begin{proof}[Proof of \autoref{lem:rivfixedpt}]
 Goes through with modifications to the proof of \cite[Lemma 9.17]{Bene}. We note that in end of Benedetto's proof we must have $\phi_*(\zeta(y_1, s_n)) \in D_\an(y_1, r_1)$ because otherwise $\zeta(y_1, r_1) \nin X$ would be between $\zeta(y_1, s_n)$ and $\phi_*(\zeta(y_1, s_n))$. The application of \cite[4.18]{Bene} must be replaced by \autoref{prop:dyn:attrfixed}.
\end{proof}

\begin{proof}[Proof of \autoref{thm:dyn:class}]
 Replacing $\phi_*$ by $\phi_*^n$ if necessary, we may assume that $\phi_*(U) = U$. By \autoref{lem:rivfixedpt}, either $U$ contains a Type II fixed point, or else $\overline U$ contains an attracting fixed point.
 
If there is a Type II fixed point $\zeta \in U \subseteq \F_{\phi, \an}$, then by \autoref{prop:perptsindiff}, $\zeta$ belongs to the indifference domain $\mathfrak I$ of $\phi$. By \autoref{thm:rivindiff}, $U$ must be a component of $\mathfrak I$ and hence an indifferent component.

If $\overline U$ contains an attracting fixed point $a$, then by \autoref{thm:attractingperpt} we have $a \in \F_{\phi, \an}$ and hence its immediate attracting basin must intersect $U$. Therefore $a \in U$ and $U$ is the immediate attracting basin of $a$.

Finally, by \autoref{prop:perptsindiff} again, no indifferent component can contain an attracting periodic point. Thus, $U$ cannot be both an attracting and an indifferent component.
\end{proof}


\subsection{Wandering Domains}\label{sec:wandering}


Recall that a Fatou component $U \subset \F_{\phi, \an}$ of $\phi_*$ is wandering iff the iterates $U, \phi_*(U), \phi_*^2(U), \dots$ are all distinct. In short, we will call $U$ a \emph{wandering domain}. In this section we will explain how wandering domains of simple skew products are eventually disks, with periodic boundary points. So in some sense, they do not wander very much.

\begin{lem}[{Benedetto \cite[Lemma 10.14]{Bene}}]\label{lem:boundcxdisks}
 Let $\phi_*$ be a skew product of relative degree $d \ge 1$, and let $D_1, \dots, D_m \subseteq \P^1_\an$ be disks whose closures are pairwise disjoint. Suppose that for each $i = 1, \dots, m$, at least one connected component of $\phi_*^{-1}(D_i)$ is not a disk. Then $m \le d-1$.
\end{lem}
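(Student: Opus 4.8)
The plan is to reduce the statement to the corresponding assertion for ordinary rational maps, namely Benedetto's \cite[Lemma 10.14]{Bene}, by means of the canonical decomposition $\phi_* = \phi_{1*} \circ \phi_{2*}$ of \autoref{thm:skew:comp}. Here $\phi_{2*}$ is a rational map on $\P^1_\an$ of degree $\rdeg(\phi) = d$, while $\phi_{1*}$ is the skew product attached to the field automorphism $\phi_1^*$; by \autoref{thm:skew:scaledhomeo} the latter is a homeomorphism of $\P^1_\an$ that preserves the type of every point and carries each Berkovich disk bijectively onto a Berkovich disk (and, by \autoref{thm:skew:autaffinoidmapping}, each connected Berkovich affinoid onto one of the same type, if any). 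The guiding observation is that all of the ``holes'' in preimages of disks, and all of the degree-theoretic content, are concentrated in the rational factor $\phi_{2*}$.

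First I would transport the data through $\phi_{1*}$. Set $D_i' = \phi_{1*}^{-1}(D_i)$ for $i = 1, \dots, m$. Since $\phi_{1*}^{-1} = (\phi_1^{*-1})_*$ is again a homeomorphism of the kind just described, each $D_i'$ is a Berkovich disk, and because $\phi_{1*}$ is a homeomorphism we have $\overline{D_i'} = \phi_{1*}^{-1}(\overline{D_i})$, so the closures $\overline{D_1'}, \dots, \overline{D_m'}$ are again pairwise disjoint. From $\phi_* = \phi_{1*}\circ\phi_{2*}$ one gets $\phi_*^{-1}(D_i) = \phi_{2*}^{-1}\bigl(\phi_{1*}^{-1}(D_i)\bigr) = \phi_{2*}^{-1}(D_i')$ as subsets of $\P^1_\an$, so the connected components of $\phi_*^{-1}(D_i)$ coincide with those of $\phi_{2*}^{-1}(D_i')$. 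Hence the hypothesis of the lemma says exactly that, for each $i$, at least one connected component of $\phi_{2*}^{-1}(D_i')$ is not a disk.

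Then I would conclude by invoking \cite[Lemma 10.14]{Bene} for the rational map $\phi_{2*}$, of degree $d$, applied to the disks $D_1', \dots, D_m'$: their closures are pairwise disjoint and each has a non-disk component in its $\phi_{2*}$-preimage, so $m \le d-1$, as required. The degenerate case $d=1$ is consistent with this: then $\phi_{2*}$ is an automorphism, every preimage of a disk is a disk, the hypothesis is vacuous, and $m = 0$.

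I do not expect a genuine obstacle here; the argument is essentially bookkeeping, and the only points to verify are the compatibility of $\phi_{1*}$ with disks, closures, and preimages, all of which is supplied by \autoref{thm:skew:scaledhomeo} and \autoref{thm:skew:autaffinoidmapping}. The substance of the lemma — that, roughly speaking, each non-disk component occurring in a preimage of one of the disjoint disks consumes part of the available degree and so limits the number of such disks — is precisely the content of \cite[Lemma 10.14]{Bene}; and since $\rdeg(\phi) = \deg(\phi_2)$ is exactly the integer governing that count, the field-automorphism factor $\phi_{1*}$ contributes nothing new that must be addressed. Should one want a self-contained proof avoiding the citation, one would rerun Benedetto's ramification-counting directly for $\phi_{2*}$, using \autoref{thm:skew:affinoidmapping} and \autoref{prop:skew:dto1} (noting in particular, via the Extreme Value Theorem \autoref{thm:skew:extvalue}, that a non-disk component of $\phi_{2*}^{-1}(D_i')$ contains a point of $\Ram(\phi_{2})$ in its interior), but the reduction above is the cleanest route.
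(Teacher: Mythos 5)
Your reduction via the decomposition $\phi_* = \phi_{1*}\circ\phi_{2*}$, using that $\phi_{1*}$ is a type- and disk-preserving homeomorphism so that the hypotheses transfer to the rational factor $\phi_{2*}$ of degree $d$, is exactly how the paper handles this: it states the lemma as a corollary of Benedetto's result via \autoref{thm:skew:comp}. Your writeup is correct and simply makes that one-line reduction explicit.
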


This lemma, via \autoref{thm:skew:comp}, is a corollary of Benedetto's theorem, originally from \cite{BeneThesis}. The following theorem generalises \cite[Theorem 11.2]{Bene} to skew products. The proof by Benedetto carries through assuming that $\q \ge 1$, using \autoref{lem:boundcxdisks} and \autoref{prop:wand:hyperbolicity}. However the techniques clearly fail when $\q$ is small. Indeed, the proof relies on the idea that the hyperbolic width of the iterates of an affinoid $\phi_*^n(V)$ in a wandering domain are never decreasing, given the conclusion of \autoref{prop:wand:hyperbolicity}, and these iterates $\phi_*^n(V)$ cover a finite set of intervals of finite length. If $\q < 1$ their lengths may shrink like $\q^n$ and it is conceivable they remain affinoids but not disks forever.

\begin{thm}\label{thm:wand:wanddomsaredisks}
Let $\phi_*$ be a skew product of relative degree $d \ge 2$ and scale factor $\q \ge 1$, with Fatou set $\F_{\phi, \an}$. Let $U \subseteq \F_{\phi, \an}$ be a wandering domain of $\phi_*$. Then there is some $N \ge 0$ such that $\phi_*^n(U)$ is a disk for all $n \ge N$.
\end{thm}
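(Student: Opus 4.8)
The plan is to mimic Benedetto's proof of \cite[Theorem 11.2]{Bene}, taking advantage of the hypothesis $\q \ge 1$ so that $\phi_{1*}$ only \emph{expands} hyperbolic distances. First I would set up the machinery: let $U$ be a wandering domain, pick a connected open Berkovich affinoid $V \subseteq U$ that is not a single disk (if no such $V$ exists, then $U$ itself is exhausted by disks and one argues directly that $U$ is a disk), and consider the iterates $V_n = \phi_*^n(V)$. By \autoref{thm:skew:affinoidmapping} and \autoref{prop:skew:dto1} each $V_n$ is a connected Berkovich affinoid, and the number of ``holes'' (boundary points, equivalently the first Betti number of the convex hull of $\partial V_n$) is nonincreasing in $n$; so after replacing $V$ by some iterate we may assume this number is constant, say $k \ge 1$, for all $n \ge 0$. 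The goal is to derive a contradiction from $k \ge 1$, forcing $k = 0$, i.e.\ $V_n$ a disk.

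The key quantitative input is a hyperbolic-width estimate. For a connected affinoid $W$ with $\ge 2$ boundary points, let its ``core'' be the convex hull $\Hull(\partial W) \subseteq \bH$, a finite subtree of positive total hyperbolic length $\ell(W)$. Because $\phi_*^n$ maps $V_n$'s core into $V_{n+1}$'s core, and because along every edge of the core $\phi_*$ is, by \autoref{thm:intervalstretch} and \autoref{cor:berk:lipschitz}, piecewise linear with slope $m\q \ge \q \ge 1$ (here using $\q \ge 1$ crucially), the total length cannot decrease: $\ell(V_{n+1}) \ge \ell(V_n) \ge \ell(V_0) > 0$. On the other hand the cores $\Hull(\partial V_n)$ all lie inside $\P^1_\an$, are pairwise disjoint (this uses that $U$ is \emph{wandering}, so the $V_n$ are disjoint, together with \autoref{thm:dyn:comps}), and each has length bounded below by the fixed constant $\ell(V_0)$. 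But infinitely many pairwise disjoint subtrees of $\bH$ each of length $\ge \ell(V_0) > 0$ would require infinite total length along, e.g., any fixed bounded region — this needs to be reconciled with the fact that the relevant part of $\P^1_\an$ is not compact in $d_\bH$. So the clean contradiction instead comes via \autoref{lem:boundcxdisks}: among $V_0, V_1, \dots, V_d$ (which are $d+1$ disks-or-not with pairwise disjoint closures), if each has a non-disk component among preimages under suitable iterates of $\phi_*$, we exceed the bound $d-1$. Concretely, I would argue that if $V_n$ is never a disk for $n \ge 0$, then picking the disks $D_i$ appropriately (complements of the appropriate components, using that each $V_{n}$ maps \emph{onto} $V_{n+1}$ with a non-disk preimage) contradicts \autoref{lem:boundcxdisks} once we take $d$ of them. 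This is exactly Benedetto's strategy, and the role of $\q \ge 1$ is only to guarantee the monotonicity $\ell(V_{n+1}) \ge \ell(V_n)$ that prevents the cores from shrinking away before we can extract enough disjoint ones.

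The one genuinely new bookkeeping step, relative to the rational-map case, is the reduction via the decomposition $\phi_* = \phi_{1*} \circ \phi_{2*}$: I would verify that \autoref{lem:boundcxdisks} applies to $\phi_*$ (which the excerpt already asserts, as a corollary of Benedetto's theorem through \autoref{thm:skew:comp}), and that the slope/width estimates survive the extra factor of $\q$. Since $\phi_{1*}$ is a homeomorphism scaling $d_\bH$ by exactly $\q$ (\autoref{thm:skew:scaledhomeo}) and $\phi_{2*}$ is an honest rational map for which Benedetto's width inequalities hold, composing gives slope $\ge \q \ge 1$ on every core edge, which is all that is needed. The main obstacle — and the reason the theorem restricts to $\q \ge 1$ — is precisely the monotonicity of hyperbolic width: when $\q < 1$ the iterates $\phi_*^n(V)$ can have cores of length $\sim \q^n \to 0$, so they never become disks yet also never contradict \autoref{lem:boundcxdisks} (whose hypothesis about non-disk preimages may simply fail to propagate). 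Thus the whole difficulty is localized in the inequality $\ell(V_{n+1}) \ge \ell(V_n)$, which $\q \ge 1$ supplies for free; everything else is a transcription of Benedetto's argument with $\deg_\zeta$ replaced by $\rdeg$ and $d_\bH$-distances rescaled by $\q$.
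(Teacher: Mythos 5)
Your overall strategy coincides with the paper's: the paper proves this theorem precisely by observing that Benedetto's proof of \cite[Theorem 11.2]{Bene} carries over once one has \autoref{lem:boundcxdisks} for skew products together with a width--monotonicity statement that $\q\ge1$ supplies, and your diagnosis of why everything breaks when $\q<1$ (widths shrinking like $\q^n$) is exactly the paper's own remark. The genuine problem is the monotonicity step as you formulate it. You claim that the total hyperbolic length of the core $\Hull(\partial V_n)$ cannot decrease because $\phi_*$ has slope $m\q\ge1$ along every edge, and that $\phi_*$ carries the core of $V_n$ into the core of $V_{n+1}$. Neither assertion is justified: $\phi_*$ may identify distinct boundary points of $V_n$, so $\Hull(\partial V_{n+1})\subseteq\Hull\bigl(\phi_*(\partial V_n)\bigr)$ can be a strictly smaller tree than $\Hull(\partial V_n)$ even for a rational map ($\q=1$) --- an affinoid with three boundary points, two of which share an image, has a tripod core while the image core is a single segment whose length is controlled only by the pairwise separations, not by the total length --- and by \autoref{thm:sumdirdegs} such an identification forces ramification at the branch point, so the image of the core can also fold and leave the core of the image. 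Note too that \autoref{cor:berk:lipschitz} is an upper bound on distortion and gives nothing in this direction. The quantity Benedetto's argument actually propagates, and which the paper isolates and proves as \autoref{prop:wand:hyperbolicity}, is the \emph{minimum} hyperbolic distance between two distinct boundary points: if $\delta$ is this minimum for a connected affinoid $W$ with at least two boundary points, then any two distinct boundary points of $\phi_*(W)$ are at distance at least $\q\delta$. The paper obtains this not from slope bookkeeping but from the path-lifting (converse) half of the Hyperbolicity Theorem \autoref{thm:skew:hyperbolicity} together with \autoref{thm:skew:affinoidmapping}.

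With \autoref{prop:wand:hyperbolicity} substituted for your length inequality, the rest of your plan is the paper's: for $\q\ge1$ the minimal boundary separation of the iterates never decreases, and Benedetto's argument, including the appeal to \autoref{lem:boundcxdisks}, goes through (the paper summarises the contradiction as the non-shrinking widths having to fit disjointly into a finite set of intervals of finite length). Be aware, though, that your sketch of that endgame is not yet an argument: in the contradiction scenario the iterates are precisely \emph{not} disks, so \autoref{lem:boundcxdisks} cannot be applied to them directly, and the choice of disks in Benedetto's proof is more delicate than ``take $d$ of the $V_n$ and their complements''. Since the paper itself defers those details to \cite[Theorem 11.2]{Bene}, the substantive repair your write-up needs is the replacement of the core-length claim by the boundary-separation statement \autoref{prop:wand:hyperbolicity}.
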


\begin{qtn}
 Does \autoref{thm:wand:wanddomsaredisks} hold when $\q < 1$?
\end{qtn}

To prove \autoref{thm:wand:wanddomsaredisks} using the techniques of Benedetto we need to reprove a version of \cite[Proposition 11.3]{Bene}. We provide a somewhat different proof of this using the hyperbolicity theorem \autoref{thm:skew:hyperbolicity}.

\begin{prop}\label{prop:wand:hyperbolicity}
 Let $\phi_*$ be a non-constant skew product of scale factor $\q$, and let $U \subseteq \P^1_\an$ be a connected Berkovich affinoid with at least two boundary points. Let $\delta > 0$ be the minimum hyperbolic distance between two distinct boundary points of $U$. Then for any two distinct boundary points $\xi_1, \xi_2 \in \partial(\phi_*(U))$, we have $d_\bH(\xi_1, \xi_2) \ge \q\delta$.
\end{prop}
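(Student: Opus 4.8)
The plan is to deduce this directly from the Hyperbolicity Theorem \autoref{thm:skew:hyperbolicity}, specifically its ``converse'' half, applied to each pair of boundary points of $\phi_*(U)$. Let $\xi_1, \xi_2 \in \partial(\phi_*(U))$ be distinct. By \autoref{thm:skew:affinoidmapping}(a), we have $\partial(\phi_*(U)) \subseteq \phi_*(\partial U)$, so there exist boundary points $\eta_1, \eta_2 \in \partial U$ with $\phi_*(\eta_1) = \xi_1$ and $\phi_*(\eta_2) = \xi_2$. The naive hope would be that $d_\bH(\eta_1, \eta_2) \ge \delta$ gives the bound after applying the Lipschitz-type inequality in the right direction; but \autoref{cor:berk:lipschitz} only bounds $d_\bH(\phi_*(\eta_1), \phi_*(\eta_2))$ from \emph{above} by $d\q\, d_\bH(\eta_1, \eta_2)$, which is the wrong direction. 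Instead we use the second part of \autoref{thm:skew:hyperbolicity}: starting from $\xi_1, \xi_2$ (and any choice of preimage $\zeta_1 \in \phi_*^{-1}(\xi_1)$), there is a preimage $\zeta_2 \in \phi_*^{-1}(\xi_2)$ such that $\phi_* : [\zeta_1, \zeta_2] \to [\xi_1, \xi_2]$ is a homeomorphism, and whenever $\xi_1, \xi_2 \in \bH$ this yields
\[
d_\bH(\xi_1, \xi_2) \ge \q\, d_\bH(\zeta_1, \zeta_2).
\]
Reading this inequality the other way, $d_\bH(\zeta_1, \zeta_2) \le \q^{-1} d_\bH(\xi_1, \xi_2)$; that too is the wrong direction. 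So the real content must come from choosing the preimage $\zeta_1$ (and hence $\zeta_2$) to lie on $\partial U$.

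The key step I would carry out is this: take $\zeta_1 = \eta_1 \in \partial U$ as above. By the freedom in \autoref{thm:skew:hyperbolicity} to pick $\zeta_2$ in any direction $\bvec v \in \Dir{\zeta_1}$ with $\phi_*(\bvec v) \ni \xi_2$, and since $\eta_2 \in \phi_*^{-1}(\xi_2)$ witnesses that the direction of $\eta_2$ at $\eta_1$ does map over $\xi_2$, we can run the edge-by-edge argument of the proof of \autoref{thm:skew:hyperbolicity} \emph{towards} $\eta_2$; the path $[\zeta_1, \zeta_2]$ produced is the segment of the finite subtree $T = \Hull(\phi_*^{-1}(\xi_1) \cup \phi_*^{-1}(\xi_2))$ from $\eta_1$ heading toward $\eta_2$, and $\zeta_2$ is the first preimage of $\xi_2$ encountered. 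Because $U$ is a connected affinoid and $\phi_*$ is proper on $U$ with $\phi_*(\partial U) \supseteq \partial(\phi_*(U)) \ni \xi_1, \xi_2$, the component of $\phi_*^{-1}(\phi_*(U))$ containing $\eta_1$ is exactly $U$ (Properness Criterion), so this path $[\eta_1, \zeta_2]$ lies in $\overline U$ and $\zeta_2 \in \partial U$ — we have $\zeta_2 \in U \cap \phi_*^{-1}(\xi_2)$ but $\xi_2 \in \partial(\phi_*(U))$ forces $\zeta_2$ onto $\partial U$, since any preimage of a boundary point of $\phi_*(U)$ inside $U$ must be a boundary point of $U$ (otherwise $\phi_*$ would carry an interior point to a boundary point, contradicting openness, \autoref{thm:skew:opencts}). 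Then $\zeta_2 \ne \eta_1$ are two distinct boundary points of $U$, so $d_\bH(\eta_1, \zeta_2) \ge \delta$ by definition of $\delta$, and since $\phi_*$ restricted to $[\eta_1, \zeta_2]$ is a homeomorphism onto $[\xi_1, \xi_2]$, the first (expansion) half of \autoref{thm:skew:hyperbolicity} gives
\[
d_\bH(\xi_1, \xi_2) = d_\bH(\phi_*(\eta_1), \phi_*(\zeta_2)) \ge \q\, d_\bH(\eta_1, \zeta_2) \ge \q\delta,
\]
which is exactly the claim.

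The main obstacle I anticipate is the bookkeeping in the middle step: verifying that the preimage $\zeta_2$ produced by the \autoref{thm:skew:hyperbolicity} construction genuinely lands on $\partial U$ and not merely somewhere in $\overline U$, and handling the degenerate possibility that the homeomorphic path from $\eta_1$ toward $\xi_2$ hits $\partial U$ at some $\zeta_2$ with $\phi_*(\zeta_2) \ne \xi_2$ before reaching a genuine preimage of $\xi_2$. In that case one still wins: if $\phi_* : [\eta_1, \zeta_2] \to [\xi_1, \phi_*(\zeta_2)]$ is a homeomorphism with $\zeta_2 \in \partial U$, $\zeta_2 \ne \eta_1$, then $d_\bH(\eta_1, \zeta_2) \ge \delta$ and $d_\bH(\xi_1, \phi_*(\zeta_2)) \ge \q\delta$; but $\phi_*(\zeta_2) \in \partial(\phi_*(U))$ (again by openness, an interior point of $U$ cannot map to $\partial(\phi_*(U))$, and $\zeta_2 \in \partial U$ so $\phi_*(\zeta_2) \in \phi_*(\partial U)$, and it lies on the path $[\xi_1,\xi_2] \subseteq \overline{\phi_*(U)}$ but the endpoints of that path in $\partial(\phi_*(U))$ are the only boundary points on it — so in fact $\phi_*(\zeta_2) \in \{\xi_1, \xi_2\}$, hence equals $\xi_2$). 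So the degenerate case collapses back to the main case. I would also need the elementary fact that $\partial(\phi_*(U))$ consists of Type II or III points lying in $\bH$ (true since $U$ is an affinoid and $\phi_*$ preserves types, \autoref{thm:skew:opencts}), so that the hyperbolic distances involved are all finite and the metric inequalities of \autoref{thm:skew:hyperbolicity} apply. A final remark: one should double-check the edge case where $\phi_*(U)$ has exactly two boundary points coinciding after the map (impossible by hypothesis, since we assumed $\xi_1 \ne \xi_2$) — this is vacuous.
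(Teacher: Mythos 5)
Your strategy is the same as the paper's: pick a boundary preimage $\eta_1 \in \partial U$ of $\xi_1$ via \autoref{thm:skew:affinoidmapping}, use the converse half of the Hyperbolicity Theorem \autoref{thm:skew:hyperbolicity} to produce a preimage $\zeta_2$ of $\xi_2$ in the direction at $\eta_1$ containing $U$ with $\phi_*:[\eta_1,\zeta_2]\to[\xi_1,\xi_2]$ a homeomorphism, bound $d_\bH(\eta_1,\zeta_2)\ge\delta$, and conclude $d_\bH(\xi_1,\xi_2)\ge\q\,d_\bH(\eta_1,\zeta_2)\ge\q\delta$. That is exactly the published argument, and your final inequality chain is correct.

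Two of your intermediate justifications, however, do not hold as stated, and both are unnecessary. First, you invoke the Properness Criterion to conclude that $U$ is a component of $\phi_*^{-1}(\phi_*(U))$, hence that $[\eta_1,\zeta_2]\subseteq\overline U$ and $\zeta_2\in\partial U$; but the criterion requires $\phi_*(U)\cap\phi_*(\partial U)=\emptyset$, which is not part of the hypotheses (boundary points of $U$ may well map into the interior of $\phi_*(U)$, in which case $U$ is a proper subset of its component of $\phi_*^{-1}(\phi_*(U))$ and $\zeta_2$ may lie strictly beyond $\partial U$). Second, in your ``degenerate case'' you deduce $\phi_*(\zeta_2)\in\partial(\phi_*(U))$ from $\zeta_2\in\partial U$; only the inclusion $\partial(\phi_*(U))\subseteq\phi_*(\partial U)$ is available, not its reverse, so $\phi_*(\zeta_2)$ could be an interior point of $\phi_*(U)$. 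Neither claim is needed. The paper's observation is simply that, since $\xi_2\in\partial(\phi_*(U))$ and $\phi_*$ is open, no preimage of $\xi_2$ lies in the interior of $U$; hence any $\zeta_2\in\phi_*^{-1}(\xi_2)$ lying in the direction $\vec v(U)$ at $\eta_1$ is either a boundary point of $U$ or lies beyond one, and in either case $d_\bH(\eta_1,\zeta_2)\ge\delta$. Likewise your degenerate case is already finished once you have a boundary point $\zeta_2\in\partial U$ on the path with $d_\bH(\xi_1,\phi_*(\zeta_2))\ge\q\delta$: since $\phi_*(\zeta_2)$ lies on the geodesic $[\xi_1,\xi_2]$ and $d_\bH$ is additive along intervals, $d_\bH(\xi_1,\xi_2)\ge d_\bH(\xi_1,\phi_*(\zeta_2))\ge\q\delta$, so the attempt to force $\phi_*(\zeta_2)=\xi_2$ can be dropped. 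With those repairs your write-up coincides with the paper's proof.
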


\begin{proof}
 Consider the two points $\xi_1, \xi_2 \in \partial(\phi_*(U))$. Firstly, $[\xi_1, \xi_2] \subset \phi_*(U)$ because $U$ and hence $\phi_*(U)$ is connected. Pick any $\zeta_1 \in \partial U$ such that $\phi_*(\zeta_1) = \xi_1$, and by \autoref{thm:skew:affinoidmapping} there is at least one such point. Further by the same theorem, every $\zeta_2 \in \phi_*^{-1}(\xi_2)$ cannot be in $U$ so in the direction of $\vec v(U)$ at $\zeta_1$, it must be in $\partial U$, or outside of $\overline U$ beyond another boundary point. Either way, for $d_\bH(\zeta_1, \zeta_2) \ge \delta$ for any $\zeta_2 \in \phi_*^{-1}(\xi_2) \cap \vec v(U)$.
 By \autoref{thm:skew:hyperbolicity} we have that 
 \[d_\bH(\xi_1, \xi_2) \ge \q \min_{\zeta_2 \in \phi_*^{-1}(\xi_2) \cap \vec v(U)} d_\bH(\zeta_1, \zeta_2) \ge \q\delta.\]
\end{proof}

\begin{defn}
 Let $\phi_*$ be a simple skew product of relative degree $d \ge 2$, let $\F_{\phi, \an}$ be the Berkovich Fatou set of $\phi_*$, and let $\zeta \in \P^1_\an$ be a Type II periodic point of $\phi_*$ of minimal period $m$. We say that a wandering component $U$ of $\F_{\phi, \an}$ is in the \emph{attracting basin} of $\zeta$ if there is some integer $N \ge 0$ such that for all $n \ge 0$,
$\phi_*^{N+nm}(U)$ is a residue class at $\zeta$.
\end{defn}

The following is a modification of \cite[Theorem 11.23]{Bene}.

\begin{thm}\label{thm:wanderingdombasin}
 Let $L$ be a discretely valued subfield of $K$. Let $\phi_*$ be a non-constant simple skew product defined over $L$. Then any wandering domain of $\phi_*$ lies in the attracting basin of a Type II Julia periodic point.
\end{thm}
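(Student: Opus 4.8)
The plan is to follow the strategy of Benedetto's proof of \cite[Theorem 11.23]{Bene} and adapt it, using the results established for skew products earlier in this article. Let $U$ be a wandering domain of $\phi_*$. First I would invoke \autoref{thm:wand:wanddomsaredisks} to reduce to the case where $\phi_*^n(U)$ is an open Berkovich disk for every $n \ge N$; after replacing $U$ by $\phi_*^N(U)$ we may assume every iterate $U_n = \phi_*^n(U)$ is a disk. Each $U_n$ has a single boundary point $\zeta_n = \partial U_n$, which is a Type II or III point by \autoref{thm:berk:interval} together with the fact that $\phi_*$ preserves types (\autoref{thm:skew:opencts}); since $\phi_*$ is simple and defined over the discretely valued field $L$, I would argue the $\zeta_n$ are in fact all Type II — this follows because a disk in the Fatou set defined (eventually) over $L$ has a boundary point with radius in the value group of $L$, which is discrete. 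Then $\phi_*(\zeta_n) = \zeta_{n+1}$, so $(\zeta_n)$ is a forward orbit of Type II points, and $U$ being wandering forces the $U_n$ to be distinct, though the $\zeta_n$ need not be.

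The crux is to show that the sequence $(\zeta_n)$ is \emph{preperiodic}: there exist $N' \ge 0$ and $m \ge 1$ with $\zeta_{N'+m} = \zeta_{N'}$. Here I would mimic Benedetto's argument, which uses the hyperbolic metric. Each $U_n$ is a residue class at $\zeta_n$, i.e.\ a direction in $\Dir{\zeta_n}$. Consider the hyperbolic distances $d_\bH(\zeta_0, \zeta_n)$; using \autoref{cor:berk:lipschitz}, $\phi_*$ is Lipschitz on $\bH$ with constant $d\q$, and more to the point, the key structural input is that since $\phi_*$ is defined over the discretely valued $L$, the points $\zeta_n$ live in a discrete subset of $\bH$ — their diameters lie in $\abs{L^\times}$, a cyclic group, and along the relevant geodesics only countably (in fact boundedly, on any bounded hyperbolic region) many such points occur. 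Combined with the fact that the forward orbit of a wandering disk cannot escape to the Type I points (by the omitted-points / dynamical stability property of a Fatou component, \autoref{prop:fatoujuliabasics} and the definition of dynamically stable), one concludes the $\zeta_n$ are confined to a finite set, hence eventually periodic. This is where I expect the main obstacle to lie: one must rule out the possibility that the $\zeta_n$ march off to infinity in $\bH$ or oscillate forever among infinitely many Type II points; Benedetto handles this with a careful accounting of hyperbolic widths of the $U_n$ and an application of \autoref{lem:boundcxdisks} (at most $d-1$ disks can have a non-disk preimage component), and the same counting should go through since $\q \ge 1$ is not needed once we already know the $U_n$ are disks — the discreteness of $L$ does the work.

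Once $(\zeta_n)$ is preperiodic, say $\zeta_{N'+m} = \zeta_{N'} =: \zeta$ with $\zeta$ of period dividing $m$ under $\phi_*$, then for all $n \ge 0$ the disk $\phi_*^{N'+nm}(U)$ is a residue class at $\zeta$, which is exactly the definition of $U$ lying in the attracting basin of $\zeta$. It remains to check $\zeta$ is a \emph{Julia} point. If $\zeta$ were Fatou, then by \autoref{thm:perptdynone} (applied to $\phi_*^m$) $\zeta$ is numerically non-repelling and all but finitely many directions at $\zeta$ are exceptional with finite grand orbit; but the directions $\phi_*^{N'+nm}(U)$ are pairwise distinct (since $U$ is wandering), giving infinitely many directions at $\zeta$ in a single grand orbit of $\phi_\#^m$ — contradicting that the non-exceptional directions are finite in number and that exceptional ones have finite grand orbit. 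Hence $\zeta$ is Julia, completing the proof. The final bookkeeping — translating "residue class at $\zeta$" back through the $N'$ initial iterates and matching the definition — is routine.
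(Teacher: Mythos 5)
Your overall plan—reduce to disks via \autoref{thm:wand:wanddomsaredisks} and then show the boundary points $\zeta_n = \partial\phi_*^n(U)$ form an eventually periodic orbit—is the right outline, and indeed the paper offers no written proof here beyond flagging the result as a modification of \cite[Theorem 11.23]{Bene}. But the heart of the theorem is precisely the eventual periodicity of $(\zeta_n)$, and your proposal does not actually establish it. The reasons you give do not suffice. First, the claim that the $\zeta_n$ are Type II with diameters in $\abs{L^\times}$ is unjustified: a wandering Fatou component is not ``defined over $L$'' in any sense provided by the paper, and nothing proved earlier puts its boundary diameter in the value group of $L$ (all one gets for free is that $\partial U_n$ is a single Type II or III point). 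Second, even granting discreteness of the possible diameters, this does not confine the $\zeta_n$ to a finite set: there are infinitely many Type II points with any fixed diameter (one per residue class), and the diameters along the orbit could still tend to $0$ or $\infty$; dynamical stability of a Fatou component only says the union of forward images omits infinitely many points, and does not prevent the disks from shrinking towards a classical point or marching off in $\bH$. Third, the tools you point to for this step—the hyperbolic-width accounting and \autoref{lem:boundcxdisks}—are exactly the ingredients of \autoref{thm:wand:wanddomsaredisks} (ruling out non-disk images), which you have already consumed; they say nothing about recurrence of the boundary orbit. In Benedetto's Theorem 11.23 the discrete valuation enters through a separate, genuinely dynamical argument, and that argument (or its skew-product analogue, e.g.\ via \autoref{lem:intervalmap} and \autoref{lem:periodicstring}-type control over $L$) is what is missing from your sketch.

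Two smaller points. The final step is overcomplicated and misquotes \autoref{thm:perptdynone}: that theorem does not assert that all but finitely many directions at a Fatou periodic point are exceptional, so your contradiction does not come from where you say it does. It is also unnecessary: once $\zeta$ is known to be a periodic boundary point of a Fatou component, it lies in $\J_{\phi,\an}$ simply because boundary points of Fatou components are Julia, and it is Type II because a periodic Type III point would be indifferent and hence Fatou (\autoref{thm:dyn:typeIIIindiff} together with \autoref{thm:perptdynone}), which is how the Type II claim should be obtained—at the end, not asserted at the outset. Finally, note that \autoref{thm:wand:wanddomsaredisks} assumes relative degree $d\ge 2$, which the statement being proved does not; this is a mismatch worth acknowledging even if the degree-one case is degenerate.
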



We end this chapter with the remarkable notion that every Type II Julia point should be preperiodic, as it is for a rational map.

\begin{cor}\label{cor:dyn:typeiijuliaperperiodic}
 Let $L$ be a discretely valued subfield of $K$, whose residue field $\k$ is uncountable, and let $\phi_*$ be a non-constant simple skew product defined over $L$. Then any Type II Julia point is preperiodic.
\end{cor}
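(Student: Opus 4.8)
Looking at this statement, it is Corollary \ref{cor:dyn:typeiijuliaperperiodic}: over a discretely valued subfield $L$ with uncountable residue field, every Type II Julia point of a non-constant simple skew product is preperiodic. I want to derive this from the preceding machinery, particularly Theorem \ref{thm:wanderingdombasin} on wandering domains and Theorem \ref{thm:juliasetfacts}.

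My plan is as follows. First I would reduce to the case of relative degree $d \ge 2$: if $d \le 1$ then $\phi_{2*}$ is a Möbius map and the dynamics are easy to handle directly (or the Julia set is empty or finite, so there is nothing to prove). With $d\ge 2$ and $\q = 1$ (simple), all the hypotheses requiring $\abs{K^\times}$ to be a $\Q(\q)$-module are automatic. Now take a Type II Julia point $\zeta$. By Theorem \ref{thm:juliasetfacts}\ref{thm:juliasetfacts:b}, since $\k$ is uncountable, all but countably many directions (residue classes) at $\zeta$ are Fatou components; in particular there exist directions $\bvec v$ at $\zeta$ whose associated open Berkovich disk $D_{\bvec v}$ is a Fatou component $U$. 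The key point is that $U$ has $\zeta$ as its unique boundary point, so if I can show $U$ is preperiodic with the right structure, then $\zeta$ — being $\partial U$ — inherits periodicity.

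Next I would dichotomize on whether such a Fatou component $U$ (a disk at $\zeta$) is preperiodic or wandering. If $U$ is preperiodic, then after iteration $\phi_*^{m_0}(U)$ is a periodic Fatou component; by the classification (Theorem \ref{thm:dyn:class}) it is either attracting or indifferent. In the indifferent case, Theorem \ref{thm:rivindiff}(c,d) tells us the boundary points are Type II Julia points permuted by $\phi_*^m$, hence periodic; since $\phi_*$ maps boundary to boundary (Theorem \ref{thm:skew:affinoidmapping}(a)), $\phi_*^{m_0}(\zeta) \in \partial(\phi_*^{m_0}(U))$ is periodic, so $\zeta$ is preperiodic. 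In the attracting case, Theorem \ref{thm:attractingperpt}(3) says that if the immediate basin is a disk its boundary is a Type II repelling periodic point; but I need to be careful since $U$ is a disk at $\zeta$ which may or may not be the immediate basin — however $\phi_*^{m_0}(U)$ being a periodic Fatou component forces it to be the immediate attracting basin of some attracting periodic point (by the structure of attracting components), and if that component is a disk its boundary point is periodic, giving $\zeta$ preperiodic. I would need to rule out or handle the Cantor-type case: a Cantor-type attracting component has infinitely many boundary points, and a priori $\zeta$ could be one of them; but a disk at $\zeta$ mapping eventually onto a Cantor-type domain would contradict the structure — actually the cleaner route is that if $U$ is a disk neighbourhood direction at $\zeta$ that is a Fatou component, and $\phi_*$ maps disks to disks or $\P^1_\an$ (Theorem \ref{thm:seminorms:mapdisk}-style behaviour via \ref{thm:skew:affinoidmapping}), the forward orbit of $U$ consists of disks, so the eventual periodic component is a disk, excluding the Cantor case.

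Finally, the wandering case: if $U$ (a disk Fatou component with $\partial U = \{\zeta\}$) is wandering, then by Theorem \ref{thm:wanderingdombasin}, $U$ lies in the attracting basin of a Type II Julia periodic point $\xi$ — meaning for some $N$ and all $n$, $\phi_*^{N + nm}(U)$ is a residue class at $\xi$. But $\phi_*^N(U)$ is again a disk with a single boundary point, namely $\phi_*^N(\zeta)$, and a residue class at $\xi$ has boundary point $\xi$; hence $\phi_*^N(\zeta) = \xi$, which is periodic, so $\zeta$ is preperiodic. I expect the main obstacle to be the bookkeeping around the attracting/Cantor-type case and verifying carefully that a disk Fatou component's forward iterates remain disks (so the eventually-periodic component cannot be Cantor type) — plus making sure that at least one genuine disk-shaped Fatou component exists at $\zeta$, which is exactly where the uncountability of $\k$ via Theorem \ref{thm:juliasetfacts}\ref{thm:juliasetfacts:b} is indispensable. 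A secondary subtlety is that I should first pass to the orbit of $\zeta$: $\zeta$ Julia and Type II, and its forward orbit under $\phi_*$ consists of Type II points; it suffices to show one of them is periodic.
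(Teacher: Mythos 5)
Your proposal takes essentially the same route as the paper's proof: use \autoref{thm:juliasetfacts}\ref{thm:juliasetfacts:b} (uncountability of $\k$) to find a direction at the Type II Julia point that is a Fatou disk with the point as its unique boundary point, then split into the indifferent, attracting, and wandering cases via \autoref{thm:rivindiff}, \autoref{thm:attractingperpt} and \autoref{thm:wanderingdombasin}, concluding in each case that the image of the boundary point is periodic. The only quibble is your parenthetical reduction for relative degree $1$ (the claim that the Julia set is then empty or finite is false in general, e.g.\ $y \mapsto y/x$ over the Puiseux series has Julia set equal to the segment $[0,\infty]$ with a non-preperiodic Gauss point), but the paper's own proof likewise implicitly assumes $d \ge 2$ through the theorems it cites, so this does not affect the comparison.
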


\begin{proof}
 By \autoref{thm:juliasetfacts}, all but countably many directions at a Type II point $\zeta$ are Fatou disks. If the any such disk is in the indifference domain then by \autoref{thm:rivindiff}, its boundary points, including $\zeta$ are preperiodic. Similarly, if one disk is an attracting component, then by \autoref{thm:attractingperpt} $\zeta$ is the backward orbit of the unique periodic boundary point of an attracting component. If any such disk is wandering then we are done by the previous theorem.\unsure{could this be replaced with just countably many?}
\end{proof}

%
%
%
%






 \bibliographystyle{alpha}
\bibliography{Thesis}

\end{document}